\documentclass[11pt]{article}
\usepackage[T1]{fontenc}
\usepackage[utf8]{inputenc}
\usepackage{lmodern}
\usepackage{amsmath,amssymb,amsthm,mathtools,mathrsfs}
\usepackage{enumitem}
\usepackage{microtype}
\usepackage{needspace}
\usepackage[margin=1.1in]{geometry}
\usepackage[
  backend=bibtex,
  style=numeric,
  sorting=nyt,
  giveninits=true,
  maxbibnames=99,
  doi=true,
  url=true,
  isbn=true,
  eprint=true
]{biblatex}
\usepackage[colorlinks=true,linkcolor=blue,citecolor=blue,urlcolor=blue]{hyperref}
\usepackage[nameinlink,capitalise]{cleveref}
\hypersetup{
  pdftitle={Dolbeault--Hochster Theory of Polytopal LVM Manifolds},
  pdfauthor={Ludmil Katzarkov, Kyoung-Seog Lee, Ernesto Lupercio, Laurent Meersseman},
  pdfsubject={Dolbeault cohomology of polytopal LVM manifolds and Hochster decompositions},
  pdfkeywords={LVM manifold, Dolbeault cohomology, Hochster formula, moment-angle manifold, Frolicher spectral sequence, Stanley-Reisner ring, Cox cover}
}

\newcommand{\C}{\mathbb C}
\newcommand{\R}{\mathbb R}
\newcommand{\Z}{\mathbb Z}
\newcommand{\NN}{\mathbb N}
\newcommand{\PP}{\mathbb P}
\newcommand{\XL}{X_{\Lambda}}
\newcommand{\GammaL}{\Gamma_{\Lambda}}
\newcommand{\GammaLCdual}{\operatorname{Hom}_{\Z}(\GammaL,\C)}
\newcommand{\Kos}{\operatorname{Kos}}
\newcommand{\Pole}{\operatorname{Pole}}

\theoremstyle{plain}
\newtheorem{theorem}{Theorem}[section]
\newtheorem{proposition}[theorem]{Proposition}
\newtheorem{lemma}[theorem]{Lemma}
\newtheorem{corollary}[theorem]{Corollary}

\theoremstyle{definition}
\newtheorem{definition}[theorem]{Definition}
\newtheorem{example}[theorem]{Example}
\theoremstyle{remark}
\newtheorem{remark}[theorem]{Remark}

\title{Dolbeault--Hochster Theory of Polytopal LVM Manifolds}
\author{Ludmil Katzarkov, Kyoung-Seog Lee,\\
Ernesto Lupercio, and Laurent Meersseman}
\date{}

\begin{document}
\maketitle
\begin{abstract}
We compute the Dolbeault cohomology of minimally stable polytopal LVM
manifolds using a finite curvature complex. Let $B$ be the indispensable
weight block. A canonical exact sequence identifies the kernel of the
curvature map with $(\operatorname{coker}B)^\vee$ and its cokernel with
$(\ker B)^\vee$. At full rank, the Dolbeault groups are direct sums of
reduced cohomology groups of induced subcomplexes of the visible simplicial
sphere, tensored with the exterior algebra on the complex dual of the deck
lattice. One proof uses the curvature model and Stanley--Reisner Tor; a
second gives an additive comparison through completed Laurent expansions
and holomorphic descent.

Full curvature rank is equivalent to Fr\"olicher degeneration at $E_1$.
Every manifold in this class fails the $\partial\bar\partial$ lemma.
For polygons, the number of facets and the curvature rank determine the
entire Hodge diamond. We construct a proper holomorphic family over a disk
with fixed visible square whose curvature rank drops at the origin. The
total dimension of Dolbeault cohomology increases there by sixteen.

At full rank, holomorphic descent also computes tangent-sheaf cohomology,
including resonant contributions in positive \v{C}ech degree. We determine
the Kodaira--Spencer map of the normalized weight family and give a
cohomological criterion for semiuniversality with smooth base. For a
resonant example over the square, the global vector fields have a
polynomial basis of nine elements and $h^1(\Theta)=19$. The primary
obstruction map is nonzero. The additional \v{C}ech class integrates in a
holomorphic family.
\end{abstract}
\medskip
\noindent\textbf{2020 Mathematics Subject Classification.}
Primary 32Q55; Secondary 13F55, 55U10, 57S12.

\smallskip
\noindent\textbf{Keywords.}
Polytopal LVM manifold, Dolbeault cohomology, Hochster formula,
moment-angle manifold, Fr\"olicher spectral sequence, Stanley--Reisner ring,
Cox cover.
\clearpage
\tableofcontents
\clearpage
\section{Introduction}
\label{sec:introduction}

For a compact K\"ahler manifold, the Hodge decomposition and Hodge symmetry give
\[
 b_k=\sum_{p+q=k}h^{p,q},\qquad h^{p,q}=h^{q,p}.
\]
In degree one, these imply $b_1=2h^{1,0}=2h^{0,1}$. For the LVM manifolds
studied here, the Hodge numbers can vary while the underlying smooth
manifold remains fixed. Their torus actions have convex polytopes as orbit
spaces, and these polytopes control the topology in the minimally stable
class considered below \cite{BosioMeersseman2006}.

How much of the Dolbeault cohomology can be recovered from the associated
polytope? We define a curvature map whose full-rank condition characterizes
a formula for every Hodge number in terms of the cohomology of induced
subcomplexes. A drop in curvature rank produces holomorphic one-forms.
We identify the curvature map from the defining weights and compute its
kernel and cokernel.

An LVM manifold is a compact quotient of an open set in projective
space by a holomorphic $\C^m$-action.  The action is diagonal, with
weights $\Lambda_1,\ldots,\Lambda_n\in\C^m$.  Convexity determines the
open set: a point belongs to it when the weights indexed by its
nonzero coordinates contain the origin in their convex hull.
Admissibility makes the action free and proper.  A label is
\emph{indispensable} when its coordinate is nonzero throughout this
open set.

We assume that there are exactly $m+1$ indispensable labels.  This is
the minimally stable case.  The remaining $N=n-m-1$ labels are the
facets of a simple polytope of dimension $d=N-m$.  We call this the
visible polytope and write $K$ for the boundary complex of its
simplicial dual.  The manifold has complex dimension $N$ and is
homeomorphic to $\mathcal Z_K\times T^m$.  Thus $K$ and $m$ determine
its cohomology as a topological space.  The Dolbeault groups also
remember the complex weights.

Choose an indispensable coordinate as projective denominator.  The
quotient action then has a visible block $A$ and a square indispensable
block $B$:
\[
 \Psi_\Lambda=\begin{pmatrix}A\\B\end{pmatrix},
 \qquad A\in\operatorname{Mat}_{N\times m}(\C),
 \qquad B\in\operatorname{Mat}_{m\times m}(\C).
\]
The rank of $B$ is independent of this choice.  Put
\[
 \rho=\operatorname{rank}_{\C}B,
 \qquad \delta=m-\rho.
\]
Weak hyperbolicity implies
$\lceil m/2\rceil\leq\rho\leq m$, and every rank in this range occurs
(Theorems~\ref{thm:real-indispensable-frame}
and~\ref{thm:exact-realizable-rank-range}).  Thus admissible configurations realize singular as well as invertible blocks.

\subsection{Curvature and the finite model}

An LVM manifold carries a canonical holomorphic foliation, whose basic Dolbeault ring is
\[
 R_\Lambda=\C[K]/J_\Lambda,
 \qquad \deg v_j=(1,1),
\]
where $J_\Lambda$ consists of the linear relations of the projected
fan.  Its degree-$(1,1)$ part has dimension $m$.  There are also
$m$-dimensional spaces $W_\Lambda^{1,0}$ and $W_\Lambda^{0,1}$ of
leafwise generators.  The transverse Dolbeault model gives a weak
equivalence of differential bigraded algebras
\[
 (\Omega^{*,*}(\XL),\bar\partial)
 \simeq
 \left(R_\Lambda\otimes
 \Lambda(W_\Lambda^{1,0}\oplus W_\Lambda^{0,1}),\bar d\right),
\]
with $\bar dR_\Lambda=\bar dW_\Lambda^{0,1}=0$ and
$\bar d|_{W_\Lambda^{1,0}}=c_\Lambda$.
Our first result identifies this transgression from the quotient
weights and places it in the canonical exact sequence
\[
 0\longrightarrow(\operatorname{coker}B)^\vee
 \longrightarrow W_\Lambda^{1,0}
 \overset{c_\Lambda}{\longrightarrow}R_\Lambda^{1,1}
 \longrightarrow(\ker B)^\vee\longrightarrow0.
\]
The construction is natural under a change of indispensable affine
chart (Theorem~\ref{thm:curvature-exact-sequence} and
Proposition~\ref{prop:curvature-chart-covariance}).  Consequently
$\operatorname{rank}c_\Lambda=\rho$.

This finite model computes the Dolbeault groups at every realizable
rank.  For example,
\[
 h^{p,0}(\XL)=\binom\delta p,
 \qquad h^{0,q}(\XL)=\binom mq,
 \qquad h^{1,q}(\XL)=\delta\binom{m+1}q.
\]
The closed antiholomorphic generators give a factor $(1+v)^m$ in the
Hodge polynomial at every rank.  The curvature kernel gives
$(1+u)^\delta$.  A complement to this kernel gives a tensor
decomposition of the finite complex.  The kernel and the reduced
quotient complex are intrinsic; the tensor decomposition depends on
the complement (Proposition~\ref{prop:defect-exterior-factor}).

For polygons, the multiplication pairing on $R_\Lambda^1$ determines
the ranks of all reduced Koszul differentials.  The entire Hodge
diamond therefore depends only on $m$ and $\rho$
(Theorem~\ref{thm:polygonal-rigidity}).  For each of the two combinatorial types, the triangular prism and the cube, the Hodge diamond is also determined by the curvature rank, independently of the normal fan
(Section~\ref{sec:three-dimensional-rigidity}).  In higher dimension, Hodge numbers can vary at fixed rank. The Panov--Ustinovsky comparison over
$\Delta^1\times\Delta^1\times\Delta^2\times\Delta^2$
\cite[Example~5.13]{PanovUstinovsky2012} has two minimally stable
realizations with the same visible complex and curvature rank, and
different $h^{2,1}$
(Proposition~\ref{prop:known-six-dimensional-fixed-rank}).

\subsection{The full-rank formula}

Suppose that $B$ is invertible.  Setting the indispensable coordinates
equal to one gives a Cox covering with deck lattice
$\GammaL\cong\Z^m$.  We use its complex dual $\GammaLCdual$.
For $I\subseteq[N]$, let $K_I$ be the induced subcomplex on $I$.
Then
\begin{equation}
\label{eq:intro-main-formula}
 H^q(\XL,\Omega_{\XL}^p)
 \cong
 \bigoplus_{\substack{I\subseteq[N],\ |I|=p\\
                      a+s=q,\ 0\leq a\leq d,\ 0\leq s\leq m}}
 \widetilde H^{a-1}(K_I;\C)
 \otimes\Lambda^s\GammaLCdual.
\end{equation}
We use $\widetilde H^{-1}(K_\varnothing;\C)=\C$.
Theorem~\ref{thm:Dolbeault-Hochster-first} proves this formula by
curvature; Corollary~\ref{thm:Dolbeault-Hochster-second-proof} proves
it by holomorphic descent.

The formula~\eqref{eq:intro-main-formula} holds precisely at full curvature rank. Its right-hand side gives $h^{1,0}=0$, whereas the finite
model gives $h^{1,0}=\delta$.  The same condition characterizes
Fr\"olicher degeneration at $E_1$
(Theorem~\ref{thm:exact-full-rank-criterion}).  Indeed, at full rank
the formula agrees in total degree with the topological Hochster
decomposition.  At positive defect,
\[
 h^{1,0}+h^{0,1}=m+\delta>b_1(\XL)=m.
\]
The rank bound gives $h^{1,0}<h^{0,1}$ for every manifold in this class, so the $\partial\bar\partial$ lemma fails (Corollary~\ref{cor:all-rank-ddbar-failure}).

A square realizes the rank change in a proper holomorphic family.
With $m=2$, we give admissible weights on $|t|<1/20$ for which
$\det B(t)=(-1+i)t$.  The visible square stays fixed; the central
fibre has rank one and every other fibre has rank two.  Their Hodge
polynomials satisfy
\[
 \operatorname{Dol}_{X_0}(u,v)-\operatorname{Dol}_{X_t}(u,v)
 =(1+v)^3(u+u^3v),\qquad t\neq0.
\]
Thus the total dimension of Dolbeault cohomology increases by sixteen at the origin.  The weights are admissible throughout the disk, and the action on the total
space is proper (Section~\ref{sec:holomorphic-square-family}).

\subsection{The two proofs and the tangent sheaf}

The curvature proof uses the Cohen--Macaulay property of $\C[K]$.
At full rank, lifts of a basis of $R_\Lambda^1$, together with the
relations in $J_\Lambda$, form a basis of the coordinate linear
forms.  The resulting Koszul comparison identifies curvature
cohomology with
\[
 \operatorname{Tor}^{\C[v_1,\ldots,v_N]}(\C[K],\C).
\]
Its squarefree multidegrees are the reduced cochain complexes of the
$K_I$.  This comparison preserves multiplication and internal degree.
The coordinate multigrading on Tor induces a support decomposition on
curvature cohomology through the chosen comparison. This decomposition
depends on the comparison
(Remark~\ref{rem:first-bridge-choice-control}).

The analytic proof starts from the discrete quotient presentation of a full-rank LVM manifold (Theorem~\ref{thm:full-rank-Cox-quotient}) and holomorphic forms on its Cox cover.
Their Laurent expansions give a completed complex, and a coinduced
group--Koszul resolution computes deck descent.  Euler operators
first reduce cohomology to finite Laurent support.  Character contraction then involves finitely many scalar inverses in each cohomology class.  A nonzero resonant exponent gives
a Gale height of both signs; the associated pole complex is
contractible.  The zero exponent remains, and its relative
\v{C}ech complex gives the induced subcomplexes.  The construction
provides an additive comparison
\[
 R\Gamma(\XL,\Omega_{\XL}^p)
 \simeq
 \bigoplus_{|I|=p}\widetilde C^{\bullet-1}(K_I;\C)
 \otimes\Lambda^\bullet\GammaLCdual
 \quad\text{in }D(\C)
\]
(Theorem~\ref{thm:Cox-Cech-Hochster-splitting}).  The construction also gives explicit \v{C}ech cocycles for each cohomology class.

The same analytic tools compute tangent-sheaf cohomology.
For $E_i=z_i\partial_{z_i}$, the sector $z^rE_i$
has pole set
\[
 T_i(r)=\{j:r_j+\delta_{ij}<0\}.
\]
The induced complex on this set can have nonzero reduced cohomology.
The tangent formula therefore retains every resonant \v{C}ech degree
(Theorem~\ref{thm:tangent-sector-formula}).  For the resonant full-rank square of Section~\ref{sec:tangent-square}, the space of global vector fields has a polynomial basis of nine elements, including three cubic fields, and
\[
 \sum_q h^q(X,\Theta_X)t^q=(9+t)(1+t)^2.
\]
The additional degree-one class comes from the reduced zeroth cohomology of a
shifted pole complex consisting of two opposite vertices.

The normalized weights $C=AB^{-1}$ define a proper holomorphic family.
We compute its Kodaira--Spencer map and identify its image with the
$Nm$-dimensional zero-exponent summand of $H^1(\Theta)$
(Theorem~\ref{thm:normalized-KS}).  If $h^1(\Theta)=Nm$, this family is
semiuniversal and the Kuranishi germ is smooth
(Theorem~\ref{thm:diagonal-semiuniversality}).  Nonresonance is a
sufficient condition.  For the resonant square of Section~\ref{sec:tangent-square}, the map has rank eight and cokernel dimension eleven.  The primary obstruction map is nonzero
(Theorem~\ref{thm:square-primary-obstruction}). The extra \v{C}ech class
integrates in the holomorphic family of
Proposition~\ref{prop:square-Cech-family}.

\subsection{Earlier work}

LVM manifolds originated in the work of L\'opez de Medrano--Verjovsky and Meersseman. L\'opez de Medrano--Verjovsky introduced the quotient construction for
a one-dimensional complex action \cite{LopezMedranoVerjovsky1997}, and Meersseman extended it to
arbitrary action dimension
\cite{Meersseman2000}. Bosio characterized the projective coordinate-subspace open sets on which a diagonal exponential action is proper and has compact quotient. The characterization uses the imbrication and substitute-existence conditions \cite[Theorem~1.4]{Bosio2001}.  The resulting
LVMB class strictly contains the LVM manifolds. Bosio and Meersseman related their smooth models to intersections of real quadrics and convex polytopes \cite{BosioMeersseman2006}. This work was inspired by the earlier work of L\'opez de Medrano \cite{LopezdeMedrano1988,LopezdeMedrano1989, LopezdeMedrano2025}.

Denham and Suciu studied the relation with moment-angle manifolds \cite{DenhamSuciu2007}. Bosio and Meersseman used this relation to compute the integral cohomology of LVM manifolds \cite{BosioMeersseman2006}. Complex moment-angle manifolds of Panov--Ustinovsky \cite{PanovUstinovsky2012} are precisely the subclass of LVMB manifolds whose datum has at least one indispensable point.
Tambour and Panov--Ustinovsky independently proved that these complex
structures arise on even-dimensional moment-angle manifolds associated
with star-shaped simplicial spheres, that is, simplicial spheres defining a fan
\cite{Tambour2012,PanovUstinovsky2012}. Battisti recast Bosio's conditions in terms of this fan together with its linear marking, the first marked-fan description of LVMB manifolds, and characterized the LVM case by polytopality of the projected fan \cite{Battisti2013}.

Ishida studied compact complex manifolds with maximal torus actions, a class containing the LVM and LVMB manifolds.  They are compact connected complex manifolds carrying an effective action of a compact real torus $G$ by holomorphic transformations such that $\dim G+\dim G_x=\dim M$ for some $x\in M$, where all dimensions are real.  Ishida classified these manifolds by fan and linear data and proved an equivalence of categories; LVMB manifolds occur in this class as building blocks \cite{Ishida2019}.

The canonical foliation and its transverse K\"ahler geometry appeared earlier.  Loeb--Nicolau established the one-dimensional case, and Meersseman proved the result for LVM manifolds \cite{LoebNicolau1999, Meersseman1998, Meersseman2000}. The transversely K\"ahler form is the restriction to the smooth projectivized quadric model of the Fubini--Study form on $\mathbf P^{n-1}$ \cite{MeerssemanVerjovsky2004,PanovUstinovskiyVerbitsky2016}.

For LVM manifolds satisfying the rationality condition (K), Meersseman
and Verjovsky identified the canonical foliation as a Seifert fibration
over a projective simplicial toric variety. They also showed that every
projective simplicial toric variety arises as the base of such a fibration
\cite{MeerssemanVerjovsky2004}. Cupit-Foutou and Zaffran extended this
description to LVMB manifolds, whose toric bases are complete and
simplicial but need not be projective. When the base is nonprojective,
the foliation is not transversely K\"ahler and the manifold is not LVM
\cite{CupitFoutouZaffran2007}. For arbitrary LVMB manifolds, Ishida proved
that the canonical foliation is transversely K\"ahler if and only if the
manifold is LVM \cite{IshidaTransverse2017}. The canonical foliation also
gives results on complex subvarieties in both classes
\cite{LoebNicolau1999,Meersseman2000,PanovUstinovskiyVerbitsky2016}.

Under condition (K), when the fibration is a holomorphic principal torus
bundle over a smooth base, Panov--Ustinovsky computed the Dolbeault
cohomology of the total space using the Borel spectral sequence and
H\"ofer's results \cite{Hofer1993}. Its differential is the Chern class map
of the holomorphic torus bundle. Their calculation also gives Fr\"olicher
degeneration at $E_2$
\cite[Theorem~5.4 and Corollary~5.8]{PanovUstinovsky2012}.

Ishida developed torus-invariant transverse K\"ahler foliations in the more general and intrinsic setting of manifolds with maximal torus actions \cite{IshidaTransverse2017}.  When such a transverse K\"ahler structure exists, Ishida--Kasuya associate differential graded and differential bigraded models to it \cite{IshidaKasuya2019}. Battaglia--Zaffran constructed foliated LVMB manifolds from simplicial
fans, including nonrational fans, and computed basic Betti numbers
for shellable fans \cite{BattagliaZaffran2015}. Building on \cite{Ishida2018}, Ishida--Krutowski--Panov introduced transverse equivalence and computed the basic cohomology ring of the canonical holomorphic foliation for complex manifolds with maximal torus action \cite{IshidaKrutowskiPanov2022}.
Krutowski--Panov also computed the basic Dolbeault ring in the maximal-torus
setting and gave a bigraded model for ordinary Dolbeault cohomology
of complex moment-angle manifolds
\cite[Theorems~5.1 and~6.1]{KrutowskiPanov2021}.
For regular LVMB torus bundles, Thiella expresses the characteristic
class in Gale-dual lattice data and obtains a bigraded model
\cite[Theorem~1 and Proposition~2.8]{Thiella2026}.

We use these transverse models to identify the curvature map at every
realizable rank.  The resulting exact sequence explains when the
classical face-ring Tor groups compute Dolbeault cohomology.
The full-rank calculation uses Hochster's multigraded formula and the Tor description of moment-angle cohomology
\cite{Hochster1977,Panov2008,BuchstaberPanov2015}.
The squarefree comparison fixes the Dolbeault bidegrees; the analytic
proof constructs the same summands directly from holomorphic descent.
Our earlier Hodge-polynomial calculations
\cite{KatzarkovLeeLupercioMeersseman2025} are compared with the
full-rank polygon formula in Theorem~\ref{thm:polygonal-example}.
Rank-deficient examples were already known: Faucard's Example~3 is
computed in Proposition~\ref{prop:Faucard-pentagon-Hodge-table}
\cite{Faucard2024}.

Meersseman also studied holomorphic vector fields and deformations,
constructing parameter spaces of dimension $m(n-m-1)$ and obtaining
universality under additional hypotheses \cite{Meersseman2000}.
Biswas--Dumitrescu--Meersseman studied fundamental vector fields with
nonempty zero locus and the LVMB tangent bundle
\cite[Theorem~4]{BiswasDumitrescuMeersseman2020}. Madera computes the Kuranishi space of some LVM $3$-folds \cite{Madera2025}.
Sections~\ref{sec:tangent-sheaf} and~\ref{sec:deformations} compute the
resonant contributions to tangent cohomology and their role in
semiuniversality and the square's primary obstruction.

\subsection{Organization and remaining questions}

Part~I constructs the curvature map and computes its defect, the
polygonal Hodge polynomials, and the square family.  Parts~II and~III
give the two proofs of \eqref{eq:intro-main-formula}.  Part~IV treats
duality, examples, tangent cohomology, deformations, and fixed-rank
comparisons.  The appendix gives the hypergeometric form of the
polygonal coefficients.

Further questions concern fixed-rank variation for other three-dimensional
visible polytopes and a holomorphic family joining the two configurations
in the comparison over a six-dimensional visible polytope. Computing the Fr\"olicher differentials at deficient curvature rank requires a model retaining both $\partial$ and $\bar\partial$.
Determining the full Kuranishi germ of the square requires the higher obstruction equations. A further problem is to compute tangent-sheaf cohomology at deficient curvature rank.

\subsection{Notation}
Let $n$ be the total number of LVM weights and $m$ the dimension of the
complex parameter space of the LVM action.  The indispensable and visible
label sets are
\[
 D=\{a_0,\ldots,a_{k-1}\},
 \qquad k=|D|,
\]
and $[N]=\{1,\ldots,N\}$, respectively, where $N=n-k$.

In the polytopal case, $P=P_\Lambda$ denotes the simple $d$-polytope with
$N$ facets.  Its dual simplicial polytope and boundary complex are
\[
 Q=P^\vee,
 \qquad
 K=\partial Q,
\]
with vertex set $[N]$.  For $I\subseteq[N]$, the symbol $K_I$ denotes the
induced full subcomplex of $K$ on $I$.  Reduced cohomology is augmented
throughout:
\[
 K_\varnothing=\{\varnothing\},
 \qquad
 \widetilde H^{-1}(K_\varnothing;\C)=\C,
\]
and
\[
 \widetilde b_j(K_I)=\dim_\C\widetilde H^j(K_I;\C).
\]

Minimal stability means $k=m+1$.  In this case,
\[
 m=N-d,
 \qquad
 n=2N-d+1,
 \qquad
 \dim_\C\XL=N.
\]
The configuration has \emph{full rank} when the indispensable block
$B:\C^m\to\C^m$ is invertible.  On this locus,
\[
 \GammaL=B^{-1}(2\pi i\Z^m)\cong\Z^m.
\]
The complex dual of the deck lattice is denoted by $\GammaLCdual$.

\begin{remark}[Full-rank notation]
\label{rem:rank-firewall}
For $\det B\neq0$, write $\GammaL=B^{-1}(2\pi i\Z^m)$ and use the
associated deck characters and discrete quotient. For deficient rank,
use the quotient by the connected $\C^m$-action
\[
 \XL\cong
 \bigl(U_K\times(\C^*)^m\bigr)/\C^m
\]
of Theorem~\ref{thm:global-ghost-quotient}.
\end{remark}

By Dolbeault's theorem \cite{Dolbeault1953}, the cohomology of
$\bar\partial$ on smooth $(p,q)$-forms agrees with sheaf cohomology.
We write these groups and their generating series as
\[
 H_{\bar\partial}^{p,q}(\XL)
 :=H^q(\XL,\Omega_{\XL}^p),
 \qquad
 h^{p,q}(\XL)
 :=\dim_\C H_{\bar\partial}^{p,q}(\XL),
\]
and
\[
 \operatorname{Dol}_{\XL}(u,v)
 :=\sum_{p,q}h^{p,q}(\XL)u^pv^q,
 \qquad
 \operatorname{Poin}_Y(t)
 :=\sum_{\ell\geq0}b_{\ell}(Y)t^{\ell}.
\]
The Hochster polynomial is
\[
 \mathscr H_K(u,v)
 :=
 \sum_{I\subseteq[N]}
 u^{|I|}
 \sum_{a=0}^{d}
 \widetilde b_{a-1}(K_I)v^a.
\]
In the two full-rank proofs, $p$, $q$, $a$, and $s$ denote respectively the
holomorphic, sheaf-cohomological, curvature or Cox--\v{C}ech, and deck
exterior degrees; $r\in\Z^N$ is a Laurent exponent.  A zero-sector class in
\[
 \widetilde H^{a-1}(K_I;\C)\otimes\Lambda^s\GammaLCdual
\]
therefore has bidegree
\[
 p=|I|,
 \qquad
 q=a+s.
\]

\subsection{Hypotheses}
Section~\ref{sec:LVM-foundations} begins with admissible configurations and
introduces stability in its last subsection. From Section~\ref{sec:affine-action-matrix} onward, we assume that the configuration is minimally stable and polytopal, with
$d\geq1$ and $K=\partial Q$ the boundary of a $d$-dimensional simplicial
polytope.  A \emph{full-rank}
statement also assumes $\det B\neq0$.  Statements at arbitrary rank concern every rank realized by an admissible configuration in this class;
Theorem~\ref{thm:real-indispensable-frame} gives the realizability
restriction.

For $d\geq1$, a simple $d$-polytope has at least $d+1$ facets, and hence
\[
 m=N-d\geq1.
\]
We use the conventions
\[
 \Lambda^s\C^m=0
 \quad(s<0\text{ or }s>m),
 \qquad
 \binom ms=0
 \quad(s<0\text{ or }s>m),
\]
and all reduced simplicial cohomology is augmented, including
\(
\widetilde H^{-1}(K_\varnothing;\C)=\C
\).
Since $\dim_\C\XL=N$, the groups
\(
H^q(\XL,\Omega_{\XL}^p)
\)
vanish outside $0\leq p,q\leq N$.

\Needspace{12\baselineskip}
\part{Curvature and Dolbeault cohomology}
\section{LVM configurations and visible Gale geometry}
\label{sec:LVM-foundations}

We recall the quotient construction and the convex conditions on its
weights.  Indispensable coordinates will provide an affine chart;
the remaining coordinates will index the visible polytope.

We use the classical LVM/LVMB conventions of
\cite{LopezMedranoVerjovsky1997,Meersseman2000,Bosio2001}; the
polytopal and moment-angle interpretation follows
\cite{BosioMeersseman2006,Tambour2012}.

Throughout this section,
\[
 \Lambda=(\Lambda_1,\ldots,\Lambda_n),\qquad
 \Lambda_i\in\C^m\simeq\R^{2m},
\]
and convex hulls are taken over the real numbers.  The bracket
$\langle-,-\rangle$ is the standard complex-bilinear pairing on $\C^m$;
its real part is regarded as a real bilinear pairing whenever real affine
geometry is used.

\begin{definition}
The configuration $\Lambda$ is \emph{admissible} if
\[
 0\in\operatorname{conv}(\Lambda_1,\ldots,\Lambda_n)
\]
and
\[
 0\notin\operatorname{conv}\{\Lambda_i:i\in I\}
 \qquad\text{for every }I\subseteq[n]\text{ with }1\leq|I|\leq2m.
\]
The second condition is weak hyperbolicity.
\end{definition}

\begin{remark}
By Carath\'eodory's theorem, the origin lies in the convex hull of at most
$2m+1$ weights. Weak hyperbolicity excludes supports of cardinality at most
$2m$, so every minimal supporting set has $2m+1$ elements.  In particular,
$n\geq2m+1$.
\end{remark}

\begin{lemma}\label{lem:small-supports}
If
\[
 0\in\operatorname{conv}\{\Lambda_i:i\in I\},
\]
then $|I|\geq 2m+1$.
\end{lemma}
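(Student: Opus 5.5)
The statement follows directly from weak hyperbolicity, which is part of the admissibility hypothesis on $\Lambda$. We argue by contraposition. Suppose $0\in\operatorname{conv}\{\Lambda_i:i\in I\}$, and assume $|I|\leq 2m$. Since the origin lies in a convex hull, that hull is nonempty, so $I\neq\varnothing$. Hence $1\leq|I|\leq 2m$. Weak hyperbolicity states exactly that $0\notin\operatorname{conv}\{\Lambda_i:i\in I\}$ for every such $I$, which is a contradiction. Therefore $|I|\geq 2m+1$.

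The one point that needs care is the case $I=\varnothing$, which the definition excludes. The convention $\operatorname{conv}\varnothing=\varnothing$ disposes of it, since the empty set does not contain the origin.

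We do not need Carath\'eodory's theorem here; the preceding remark uses it only to say that minimal supports have size exactly $2m+1$. If one wants to tie the lemma to that remark, one can also note the following. By Carath\'eodory's theorem in $\R^{2m}$, any $I$ with $0\in\operatorname{conv}\{\Lambda_i:i\in I\}$ contains a subset $I'$ with $|I'|\leq 2m+1$ and $0\in\operatorname{conv}\{\Lambda_i:i\in I'\}$. The argument above gives $|I'|\geq 2m+1$, so $|I'|=2m+1$ and therefore $|I|\geq 2m+1$.

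There is no real obstacle in this proof. The only step to get right is to apply the definition to $I$ itself, with the empty case excluded, rather than to a subset.
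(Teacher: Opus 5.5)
Your proof is correct and is essentially the paper's own argument: a supporting set of at most $2m$ weights would directly contradict weak hyperbolicity, with the empty set excluded because $\operatorname{conv}\varnothing$ does not contain the origin. The Carath\'eodory remark is not needed; it only restates the observation made in the paper's preceding remark.
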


\begin{proof}
A supporting set of at most $2m$ weights would contradict weak hyperbolicity.
\end{proof}

\begin{proposition}\label{prop:affine-spanning}
An admissible configuration affinely spans $\C^m\simeq\R^{2m}$.
Equivalently, the augmented vectors
\[
 (1,\Lambda_i)\in\R^{2m+1}
\]
span $\R^{2m+1}$.

Every subconfiguration whose convex hull contains the origin contains
$2m+1$ affinely independent weights whose convex hull contains the origin
in its relative interior.
\end{proposition}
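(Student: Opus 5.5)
We prove both assertions using only the admissibility conditions and Carathéodory's theorem in $\R^{2m}$. The plan is to obtain the second assertion first, in a slightly stronger form, and then deduce affine spanning from it.

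\emph{Step 1 (minimal supports).} Let $S\subseteq[n]$ satisfy $0\in\operatorname{conv}\{\Lambda_i:i\in S\}$, and choose $I\subseteq S$ minimal with this property. We show that the weights $\{\Lambda_i:i\in I\}$ are affinely independent. Suppose instead that there is a nontrivial affine dependence
\[
\sum_{i\in I}\mu_i\Lambda_i=0,\qquad \sum_{i\in I}\mu_i=0.
\]
Write $0=\sum_{i\in I}\lambda_i\Lambda_i$ with $\lambda_i\geq0$ and $\sum_{i\in I}\lambda_i=1$. The standard Carathéodory reduction replaces $\lambda$ by $\lambda-t\mu$ for the largest $t>0$ keeping all coefficients nonnegative. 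This kills at least one coefficient and gives a convex combination supported on a proper subset of $I$, contradicting minimality. Hence the weights indexed by $I$ are affinely independent, so $|I|\leq 2m+1$. Lemma~\ref{lem:small-supports} gives $|I|\geq 2m+1$, and therefore $|I|=2m+1$.

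\emph{Step 2 (relative interior).} By minimality, every coefficient $\lambda_i$ in the convex combination is strictly positive; otherwise a smaller support would exist. Since the $2m+1$ weights are affinely independent, they are the vertices of a $2m$-simplex. A point with all barycentric coordinates positive lies in the relative interior of that simplex, which proves the second assertion.

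\emph{Step 3 (affine spanning).} Admissibility gives $0\in\operatorname{conv}(\Lambda_1,\ldots,\Lambda_n)$. Applying Steps 1 and 2 with $S=[n]$ produces $2m+1$ affinely independent weights. These affinely span $\R^{2m}$, so the whole configuration does. For the equivalence with the augmented vectors, note that points $x_i$ are affinely independent exactly when the vectors $(1,x_i)$ are linearly independent. Thus $2m+1$ linearly independent vectors $(1,\Lambda_i)$ span $\R^{2m+1}$. Conversely, if the augmented vectors span, then a basis of $\R^{2m+1}$ chosen from them is affinely independent and spans $\R^{2m}$ affinely.

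No step is a genuine obstacle; the only care needed is the Carathéodory reduction in Step 1, which is standard. Note that the minimal subset $I$ is taken inside the given subconfiguration $S$, which is exactly what the second assertion requires.
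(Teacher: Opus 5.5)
Your argument is correct and is essentially the paper's: a minimal supporting subconfiguration has strictly positive coefficients and is affinely independent, so it has at most $2m+1$ elements, and Lemma~\ref{lem:small-supports} forces exactly $2m+1$. (In Step 1, state that minimality makes every $\lambda_i$ positive before you do the Carath\'eodory reduction, since that is what guarantees a step size $t>0$.) The only difference is the order: the paper proves affine spanning directly, noting that Carath\'eodory inside an affine span of dimension $r<2m$ would give a support of at most $r+1\leq 2m$ weights, whereas you deduce spanning by applying the second assertion to the whole configuration, which is equally valid.
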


\begin{proof}
If the affine span had real dimension $r<2m$, Carath\'eodory's theorem in
that affine span would place the origin in the convex hull of at most
$r+1\leq2m$ weights, contradicting Lemma~\ref{lem:small-supports}.

For the second assertion, choose a minimal supporting subconfiguration.
Minimality gives affine independence and positive coefficients. Its
cardinality is at most $2m+1$ by Carath\'eodory and at least $2m+1$ by
Lemma~\ref{lem:small-supports}.
\end{proof}

\begin{corollary}
The projective infinitesimal $\C^m$-action determined by $\Lambda$ is
effective.
\end{corollary}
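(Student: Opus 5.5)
The corollary asserts that the projective infinitesimal $\C^m$-action is effective. I will read this as: the Lie algebra map from $\C^m$ to holomorphic vector fields on $\PP^{n-1}$ is injective. The action is
\[
 v\cdot[z]=[e^{\langle\Lambda_1,v\rangle}z_1,\ldots,e^{\langle\Lambda_n,v\rangle}z_n].
\]
Its infinitesimal generator for $v\in\C^m$ is the diagonal field $\sum_i\langle\Lambda_i,v\rangle z_i\partial_{z_i}$, taken modulo the Euler field.

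A diagonal linear field induces the zero field on projective space exactly when it is a scalar multiple of the Euler field. So the kernel consists of those $v$ for which $\langle\Lambda_i,v\rangle$ equals a common constant $c\in\C$ for all $i$. The plan is to show that any such $v$ must vanish, using Proposition~\ref{prop:affine-spanning}.

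Suppose $\langle\Lambda_i,v\rangle=c$ for all $i$.

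\begin{enumerate}
\item \textbf{Reduce to $c=0$.} Admissibility gives $0=\sum t_i\Lambda_i$ with $t_i\ge0$ and $\sum t_i=1$. Pairing with $v$, by complex-bilinearity, gives $0=\sum t_i c=c$.
\item \textbf{Pass to real functionals.} Now $\langle\Lambda_i,v\rangle=0$ for all $i$. Take real and imaginary parts. The maps $w\mapsto\operatorname{Re}\langle w,v\rangle$ and $w\mapsto\operatorname{Re}\langle w,iv\rangle$ are real linear functionals on $\R^{2m}$, and both vanish on every $\Lambda_i$.
\item \textbf{Use the spanning statement.} By Proposition~\ref{prop:affine-spanning}, the vectors $(1,\Lambda_i)$ span $\R^{2m+1}$. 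A real linear functional vanishing on all $\Lambda_i$ gives the functional $(s,w)\mapsto\ell(w)$ on $\R^{2m+1}$. This functional vanishes on a spanning set, hence $\ell=0$. Therefore $\operatorname{Re}\langle\cdot,v\rangle\equiv0$ on $\C^m$.
\item \textbf{Conclude.} The standard complex-bilinear pairing is nondegenerate, and a complex linear functional with identically zero real part is zero. Hence $v=0$.
\end{enumerate}

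The only delicate point is the interpretation of ``effective.'' The argument above gives injectivity of the Lie algebra map. If the intended meaning is that the group action has no global kernel, the same argument adapts. A group element acting trivially must satisfy $e^{\langle\Lambda_i,v\rangle}=\lambda$ for all $i$. Writing $\langle\Lambda_i,v\rangle=\mu+2\pi i k_i$ with $k_i\in\Z$ gives a weaker conclusion than the infinitesimal statement. I would state the infinitesimal version, which matches the word ``infinitesimal'' in the corollary, and note that steps 1–4 handle it.
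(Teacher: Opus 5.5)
Your proof is correct and follows the paper's argument. A kernel element makes $\langle\Lambda_i,v\rangle$ independent of $i$, and Proposition~\ref{prop:affine-spanning} then forces $v=0$. The only difference is bookkeeping: the paper subtracts $\Lambda_1$ and uses affine spanning of the differences $\Lambda_i-\Lambda_1$ directly, whereas you first remove the common constant with the convex relation $0=\sum t_i\Lambda_i$ and then use the augmented spanning form.
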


\begin{proof}
If $T\in\C^m$ induces the zero projective vector field, then
$\langle\Lambda_i,T\rangle$ is independent of $i$. Hence
\[
 \langle\Lambda_i-\Lambda_1,T\rangle=0
\]
for every $i$. Proposition~\ref{prop:affine-spanning} implies $T=0$.
\end{proof}

Define
\[
 V_\Lambda=
 \left\{
 [z]\in\PP^{n-1}:
 0\in\operatorname{conv}\{\Lambda_i:z_i\neq0\}
 \right\}.
\]
The holomorphic action is
\[
 T\cdot[z_1:\cdots:z_n]
 =
 [e^{\langle\Lambda_1,T\rangle}z_1:\cdots:
 e^{\langle\Lambda_n,T\rangle}z_n].
\]
Its classical LVM transversal is
\[
 \mathscr T_\Lambda=
 \left\{
 [z]\in\PP^{n-1}:
 \sum_{i=1}^n\Lambda_i|z_i|^2=0
 \right\}.
\]

\begin{theorem}[Classical LVM quotient]
\label{thm:classical-LVM-quotient}
The holomorphic $\C^m$-action on $V_\Lambda$ is free and proper.  The
orbit space
\[
 \XL=V_\Lambda/\C^m
\]
is a compact Hausdorff complex manifold, the quotient map is a
holomorphic principal $\C^m$-bundle, and the transversal
$\mathscr T_\Lambda$ maps diffeomorphically onto $\XL$.
\end{theorem}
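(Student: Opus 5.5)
The plan is to use the transversal $\mathscr T_\Lambda$ as a global slice, following the classical argument of Meersseman and López de Medrano--Verjovsky. For each orbit we solve a strictly convex minimization problem; this gives a smooth retraction of $V_\Lambda$ onto $\mathscr T_\Lambda$, and freeness, properness and Hausdorffness then follow together.

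\emph{Step 1: the slice through a point.} Work on the lifted cone $\widetilde V_\Lambda\subset\C^n\setminus\{0\}$. Fix $z$ with $[z]\in V_\Lambda$, let $J=\{i:z_i\neq0\}$, and consider
\[
 f_z(T)=\sum_{i\in J}|z_i|^2e^{2\operatorname{Re}\langle\Lambda_i,T\rangle},\qquad T\in\C^m\simeq\R^{2m}.
\]
The differential of $f_z$ at $T$ is the real pairing with $\sum_i\Lambda_i|e^{\langle\Lambda_i,T\rangle}z_i|^2$. Critical points of $f_z$ therefore correspond, modulo rescaling, to points of the orbit that lie in $\mathscr T_\Lambda$. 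The condition $0\in\operatorname{conv}\{\Lambda_i:i\in J\}$ forces every direction $T\neq0$ to have some $i\in J$ with $\operatorname{Re}\langle\Lambda_i,T\rangle>0$, so $f_z$ is proper. Indeed, the proof of Proposition~\ref{prop:affine-spanning} gives $2m+1$ affinely independent weights in $J$ with $0$ in the relative interior of their hull. Hence no nonzero $T$ can make all $\operatorname{Re}\langle\Lambda_i,T\rangle\le0$; this is the step where weak hyperbolicity enters. The same spanning property makes $f_z$ strictly convex. So $f_z$ has a unique critical point $T(z)$, a nondegenerate minimum.

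\emph{Step 2: smooth dependence.} Nondegeneracy of the minimum and the implicit function theorem show that $T(z)$ depends smoothly on $z$. We must also account for the projective rescaling: the condition $\sum\Lambda_i|w_i|^2=0$ is homogeneous, so it descends to $\PP^{n-1}$ without change. It follows that every orbit meets $\mathscr T_\Lambda$ in exactly one point, and that the map $r:V_\Lambda\to\mathscr T_\Lambda$, $[z]\mapsto T(z)\cdot[z]$, is a smooth retraction. Next, $\mathscr T_\Lambda$ is a smooth submanifold. The map $[z]\mapsto\sum\Lambda_i|z_i|^2$ (normalized) has surjective differential at its zeros by the same affine independence argument, so $\mathscr T_\Lambda$ is a regular level set. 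It is also transverse to the orbits, because the Hessian computed in Step 1 is nondegenerate.

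\emph{Step 3: freeness, properness, and the quotient.} Freeness follows from uniqueness: if $T\cdot[z]=[z]$, then $f_z$ and $f_{T\cdot z}$ differ by translation of the argument by $T$, and uniqueness of the minimizer forces $T=0$. Effectiveness alone, from the Corollary above, would not suffice here. The map $\C^m\times\mathscr T_\Lambda\to V_\Lambda$, $(T,x)\mapsto T\cdot x$, is then a smooth bijection with smooth inverse $[z]\mapsto(-T(z),r[z])$, hence a diffeomorphism. This diffeomorphism gives properness and the principal $\C^m$-bundle structure at once, and it identifies $\XL$ with $\mathscr T_\Lambda$. Compactness holds because $\mathscr T_\Lambda$ is closed in $\PP^{n-1}$; we need $\mathscr T_\Lambda\subset V_\Lambda$, which is immediate since a zero of $\sum\Lambda_i|z_i|^2$ exhibits $0$ as a convex combination of the weights $\Lambda_i$ with $z_i\neq0$. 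Finally, a free proper holomorphic action of a complex Lie group has a complex manifold as quotient, and the quotient map is a holomorphic principal bundle. The holomorphic slices can be taken as the complex translates of the tangent spaces of $\mathscr T_\Lambda$, using the holomorphic slice theorem for proper actions.

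\emph{Main obstacle.} The delicate point is Step 1 on the boundary strata where some $z_i=0$. There we must check that the properness and strict convexity of $f_z$ vary continuously enough for the implicit function argument to hold uniformly up to the strata. I would handle this by applying Proposition~\ref{prop:affine-spanning} to each support set $J$ separately, noting that nondegeneracy is preserved when $J$ enlarges.
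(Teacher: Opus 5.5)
Your proposal is correct and is essentially the paper's argument. You minimize $f_z$, and you use the full-dimensional supporting simplex from Proposition~\ref{prop:affine-spanning} to get properness and strict convexity. You then get smooth dependence of the unique critical point from the implicit-function theorem, deduce that $(T,x)\mapsto T\cdot x$ is a diffeomorphism $\C^m\times\mathscr T_\Lambda\to V_\Lambda$, and finish with the free proper holomorphic quotient theorem and compactness of $\mathscr T_\Lambda$.

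Two small remarks. Your ``main obstacle'' is not one: $\sum_{i=1}^n|z_i|^2e^{2\operatorname{Re}\langle\Lambda_i,T\rangle}$ is jointly smooth in $(z,T)$ across all strata, the implicit-function theorem is applied pointwise, and global uniqueness of the minimizer identifies the local branch with $T(z)$. Also, holomorphic slices must be complex submanifolds transverse to the orbits, not anything built from the real tangent spaces of $\mathscr T_\Lambda$, though the general quotient theorem makes this detail unnecessary.
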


\begin{proof}
This is the classical LVM minimization theorem
\cite[Section~12]{BosioMeersseman2006}; we recall the minimization argument.  For $[z]\in V_\Lambda$, put
\[
 F_z(T)=\sum_{i\in\operatorname{supp}(z)}
 |z_i|^2e^{2\operatorname{Re}\langle\Lambda_i,T\rangle},
 \qquad T\in\C^m\cong\R^{2m}.
\]
The origin is an interior point of the convex hull of the support:
Proposition~\ref{prop:affine-spanning} supplies a full-dimensional
supporting simplex inside it.  Hence $F_z$ is proper and strictly convex.
Its unique critical point is exactly the unique parameter for which the
orbit of $[z]$ meets $\mathscr T_\Lambda$.  The positive-definite Hessian
and the implicit-function theorem give smooth dependence on $[z]$.
Consequently the action map
\[
 \C^m\times\mathscr T_\Lambda\longrightarrow V_\Lambda,
 \qquad (T,x)\longmapsto T\cdot x,
\]
is a diffeomorphism.  Translation on the first factor is free and proper,
so the original action is free and proper as well.  The remaining
assertions follow from the free proper holomorphic quotient theorem and
compactness of $\mathscr T_\Lambda$.
\end{proof}

\begin{proposition}\label{prop:LVM-dimension}
The equations defining $\mathscr T_\Lambda$ have real rank $2m$.
Consequently
\[
 \dim_\C \XL=n-m-1.
\]
\end{proposition}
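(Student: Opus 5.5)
The plan is to show that the real map
\[
 \mu:\C^n\setminus\{0\}\longrightarrow\C^m\cong\R^{2m},\qquad
 \mu(z)=\sum_{i=1}^n\Lambda_i|z_i|^2,
\]
is a submersion at every point of $\mu^{-1}(0)$ with $z\in V_\Lambda$. The map $\mu$ is homogeneous for the $\C^*$-scaling, so $\mathscr T_\Lambda$ is the image in $\PP^{n-1}$ of the cone $\mu^{-1}(0)\setminus\{0\}$. Once the rank statement is established, $\mu^{-1}(0)\setminus\{0\}$ is a smooth real submanifold of $\C^n$ of real dimension $2n-2m$. Dividing by $\C^*$, which acts freely on this cone, gives a smooth submanifold $\mathscr T_\Lambda\subset\PP^{n-1}$ of real dimension $2n-2m-2$. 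Theorem~\ref{thm:classical-LVM-quotient} then gives $\dim_\R\XL=2(n-m-1)$, hence $\dim_\C\XL=n-m-1$. Alternatively, one can count directly: $V_\Lambda$ is open in $\PP^{n-1}$, and the quotient by a free proper $\C^m$-action drops the complex dimension by $m$.

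For the rank computation, fix $z\in\mu^{-1}(0)$, $z\neq0$, and put $S=\operatorname{supp}(z)$. The differential is
\[
 d\mu_z(w)=\sum_i 2\Lambda_i\operatorname{Re}(\bar z_iw_i).
\]
For $i\in S$, the real number $\operatorname{Re}(\bar z_iw_i)$ ranges over all of $\R$ as $w_i$ varies. The image of $d\mu_z$ is therefore the real linear span of $\{\Lambda_i:i\in S\}$, and it suffices to show that this span is all of $\R^{2m}$.

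Since $\mu(z)=0$, we have $\sum_{i\in S}|z_i|^2\Lambda_i=0$ with positive coefficients. So $0$ lies in $\operatorname{conv}\{\Lambda_i:i\in S\}$, which says precisely that $[z]\in V_\Lambda$. By Proposition~\ref{prop:affine-spanning}, this subconfiguration contains $2m+1$ affinely independent weights whose convex hull has $0$ in its relative interior. Their affine span has dimension $2m$ and contains $0$, so it is a linear subspace equal to $\R^{2m}$. Hence the linear span of $\{\Lambda_i:i\in S\}$ is $\R^{2m}$, and $d\mu_z$ is surjective.

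The step needing the most care is the passage from the cone to $\PP^{n-1}$. The scaling $z\mapsto\lambda z$ preserves $\mu^{-1}(0)$, and the orbit directions are tangent to the level set. I would handle this by restricting to the unit sphere and checking that the combined map $(\mu,|z|^2)$ has rank $2m+1$, which is the augmented-vector statement of Proposition~\ref{prop:affine-spanning}. Quotienting by the circle then gives real codimension $2m$ in $\PP^{n-1}$, as stated.
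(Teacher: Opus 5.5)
Your proof is correct and follows the paper's argument. The paper also uses Proposition~\ref{prop:affine-spanning} to see that the support weights affinely span $\R^{2m}$, so that on the unit sphere radial variations with total coefficient zero make the differential surjective; your rank-$(2m+1)$ computation for $(\mu,|z|^2)$ via augmented vectors is the same fact. It then divides the $(2n-1-2m)$-dimensional level set by $S^1$, and your cone-plus-$\C^*$ formulation and the direct count through the free proper $\C^m$-action are harmless variants of the same dimension count.
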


\begin{proof}
At a point $[z]\in\mathscr T_\Lambda$, the weights indexed by the nonzero
coordinates affinely span $\R^{2m}$ by
Proposition~\ref{prop:affine-spanning}. On the unit sphere, radial
variations with total coefficient zero therefore make the differential of
\[
 z\longmapsto\sum_i\Lambda_i|z_i|^2
\]
surjective. The level set in $S^{2n-1}$ has real dimension
$2n-1-2m$. Quotienting by scalar $S^1$ gives
$2(n-m-1)$.
\end{proof}

\subsection{Visible and indispensable labels}

\begin{definition}
A label $a\in[n]$ is \emph{indispensable} if
\[
 0\notin\operatorname{conv}\{\Lambda_i:i\neq a\}.
\]
Let $D$ be the set of indispensable labels, let $k=|D|$, and let
\[
 V=[n]\setminus D,\qquad N=|V|=n-k.
\]
The labels in $V$ are called visible.
\end{definition}

\begin{lemma}
A label $a$ is indispensable if and only if $z_a\neq0$ at every point of
$V_\Lambda$.
\end{lemma}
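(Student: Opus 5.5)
Both implications come from the definition of $V_\Lambda$ together with one monotonicity remark. For $[z]\in\PP^{n-1}$ write $S(z)=\{i:z_i\neq0\}$. Then $[z]\in V_\Lambda$ exactly when $0\in\operatorname{conv}\{\Lambda_i:i\in S(z)\}$. Convex hulls are monotone under inclusion, so membership in $V_\Lambda$ depends only on $S(z)$ and is preserved when $S(z)$ is enlarged.

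\emph{Indispensable implies nonvanishing.} Suppose $a$ is indispensable and $[z]\in V_\Lambda$ with $z_a=0$. Then $S(z)\subseteq[n]\setminus\{a\}$, so by monotonicity
\[
0\in\operatorname{conv}\{\Lambda_i:i\in S(z)\}\subseteq\operatorname{conv}\{\Lambda_i:i\neq a\}.
\]
This contradicts $0\notin\operatorname{conv}\{\Lambda_i:i\neq a\}$. Hence $z_a\neq0$ at every point of $V_\Lambda$.

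\emph{Nonvanishing implies indispensable.} We argue by contraposition. Suppose $a$ is not indispensable, so that $0\in\operatorname{conv}\{\Lambda_i:i\neq a\}$. We need a point of $V_\Lambda$ with $z_a=0$. Take $z$ with $z_i=1$ for $i\neq a$ and $z_a=0$. This is a nonzero vector, since $n\geq2m+1\geq3$ by the remark after the definition of admissibility. Its support is $S(z)=[n]\setminus\{a\}$, and the hypothesis says exactly that $0$ lies in the convex hull of the corresponding weights. So $[z]\in V_\Lambda$ and $z_a=0$, which means the coordinate $z_a$ does not stay nonzero on $V_\Lambda$.

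The only point needing care is that the test point is a genuine point of projective space, which is why we used $n\geq 2$.
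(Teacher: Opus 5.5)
Your proof is correct and follows the paper's argument: the forward direction is the same support-containment contradiction. For the converse, the paper picks a point supported on a convex relation avoiding $a$, while you take the full complement of $a$, which works equally well by monotonicity. Your appeal to $n\geq 3$ is harmless but unnecessary, since $0\in\operatorname{conv}\{\Lambda_i:i\neq a\}$ already forces $[n]\setminus\{a\}\neq\varnothing$.
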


\begin{proof}
If $a$ is indispensable, a point with $z_a=0$ has nonzero support contained
in $[n]\setminus\{a\}$ and cannot belong to $V_\Lambda$. The converse
follows by choosing a point supported on a convex relation in the complement
of $a$.
\end{proof}

Define
\[
 \mathcal K_\Lambda=
 \left\{
 J\subseteq[n]:
 0\in\operatorname{conv}\{\Lambda_i:i\notin J\}
 \right\}.
\]

\begin{proposition}\label{prop:Gale-dictionary}
The vertex set of $\mathcal K_\Lambda$ is $V$. Thus the indispensable
labels are precisely the ghost labels. For every $J\subseteq V$,
\[
 J\in\mathcal K_\Lambda
 \iff
 0\in\operatorname{conv}\{\Lambda_i:i\notin J\},
\]
where the complement is taken in the full label set.
\end{proposition}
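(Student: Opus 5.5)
The argument unwinds the definition of $\mathcal K_\Lambda$ and compares it with the definition of indispensability. By definition, a singleton $\{a\}$ lies in $\mathcal K_\Lambda$ exactly when $0\in\operatorname{conv}\{\Lambda_i:i\neq a\}$, that is, exactly when $a$ is \emph{not} indispensable. So the labels $a$ with $\{a\}\in\mathcal K_\Lambda$ are precisely the elements of $V=[n]\setminus D$. Before calling these labels the vertex set, we need $\mathcal K_\Lambda$ to be a simplicial complex, meaning closed under passage to subsets. This is monotonicity of convex hulls. If $J'\subseteq J$, then $\{i\notin J\}\subseteq\{i\notin J'\}$, so the hull over the complement of $J'$ contains the hull over the complement of $J$. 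Hence $J\in\mathcal K_\Lambda$ implies $J'\in\mathcal K_\Lambda$.

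The empty set belongs to $\mathcal K_\Lambda$ because admissibility gives $0\in\operatorname{conv}(\Lambda_1,\ldots,\Lambda_n)$. Thus $\mathcal K_\Lambda$ is a nonempty simplicial complex on $[n]$, and its vertices are the $a$ with $\{a\}\in\mathcal K_\Lambda$, which is $V$. The labels of $[n]$ that are not vertices, the ghost labels, are therefore exactly the elements of $D$. This is the second sentence of the proposition.

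For the displayed equivalence, the membership condition for $J\subseteq V$ is literally the defining condition of $\mathcal K_\Lambda$, with the complement taken in $[n]$. The content of the statement is that this complement necessarily contains all of $D$, and $D$ cannot be dropped. I would remark that any $J\in\mathcal K_\Lambda$ automatically satisfies $J\subseteq V$, because by downward closure every element of $J$ is a vertex. Restricting to $J\subseteq V$ therefore loses nothing. I would also note, via the preceding lemma, that $J\in\mathcal K_\Lambda$ if and only if there is a point of $V_\Lambda$ with $z_j=0$ for all $j\in J$. This links $\mathcal K_\Lambda$ to the coordinate subspace arrangement.

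No step is a real obstacle; the only point requiring care is the convention that ghost vertices are the labels $a\in[n]$ with $\{a\}\notin\mathcal K_\Lambda$. The argument above uses exactly this convention.
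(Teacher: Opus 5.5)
Your proposal is correct and takes essentially the paper's approach: the paper's proof is the one observation that $\{j\}\in\mathcal K_\Lambda$ exactly when deleting $j$ preserves the Siegel condition, i.e.\ exactly when $j$ is visible, and the displayed equivalence is the definition. Your checks that $\mathcal K_\Lambda$ is downward closed and contains $\varnothing$ (by admissibility) are correct extra rigor that the paper leaves implicit.
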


\begin{proof}
The singleton $\{j\}$ belongs to $\mathcal K_\Lambda$ exactly when deletion
of $j$ preserves the Siegel condition, which is exactly the condition that
$j$ be visible.
\end{proof}

\begin{lemma}\label{lem:indispensable-bound}
If $n>2m+1$, then $k\leq2m$.
\end{lemma}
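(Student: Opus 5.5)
Suppose $k\geq 2m+1$; we derive $n=2m+1$, which contradicts $n>2m+1$. The plan is to use admissibility to pick a minimal supporting set and then show that every indispensable label must lie in it.

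First, since $0\in\operatorname{conv}(\Lambda_1,\ldots,\Lambda_n)$, Proposition~\ref{prop:affine-spanning} supplies a subset $S\subseteq[n]$ with $|S|=2m+1$, consisting of affinely independent weights whose convex hull contains the origin in its relative interior.

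Next, every indispensable label lies in $S$. Indeed, if $a\notin S$, then $S\subseteq[n]\setminus\{a\}$, so
\[
 0\in\operatorname{conv}\{\Lambda_i:i\neq a\},
\]
and $a$ is not indispensable. Hence $D\subseteq S$, which gives $k\leq 2m+1$ immediately. The remaining task is to exclude $D=S$.

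So suppose $D=S$, meaning every label of $S$ is indispensable. Since $n>2m+1$, there is a label $b\notin S$. Because $S$ is an affinely independent set of $2m+1$ points in $\R^{2m}$ with $0$ in the interior of its simplex, the simplex $\operatorname{conv}(\Lambda_S)$ is full-dimensional. Its barycentric coordinates $(\mu_s)_{s\in S}$ for $\Lambda_b$ are therefore defined and satisfy $\sum_s\mu_s=1$. Write $0=\sum_s\lambda_s\Lambda_s$ with every $\lambda_s>0$ and $\sum_s\lambda_s=1$.

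The step that needs care is a Carathéodory-type exchange. Some $\mu_s$ must be positive, because they sum to $1$. Set
\[
 t=\min\{\lambda_s/\mu_s:\mu_s>0\},
\]
attained at $s=s_0$. Then
\[
 0=t\Lambda_b+\sum_s(\lambda_s-t\mu_s)\Lambda_s .
\]
The coefficients here are nonnegative, they sum to $t+1-t=1$, and the coefficient of $\Lambda_{s_0}$ is zero. This exhibits $0\in\operatorname{conv}\{\Lambda_i:i\neq s_0\}$, so $s_0\in S=D$ is not indispensable. That contradicts $D=S$. Hence $D\subsetneq S$, and $k\leq 2m$.

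A small check on the exchange step: it requires $t>0$, which holds because all $\lambda_s>0$. Weak hyperbolicity is not needed beyond its role in producing $S$ through Proposition~\ref{prop:affine-spanning}.
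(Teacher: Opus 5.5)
Your argument is correct and is essentially the paper's proof. A minimal supporting simplex $S$ of $2m+1$ labels contains every indispensable label, and the Carath\'eodory exchange with a weight $\Lambda_b$, $b\notin S$, gives a convex relation omitting some $s_0\in S$, so $D\subseteq S\setminus\{s_0\}$ and $k\leq 2m$; the opening contradiction framing (``we derive $n=2m+1$'') is unnecessary, since the exchange step proves $D\subsetneq S$ directly.
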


\begin{proof}
Choose a minimal supporting simplex
$J=\{j_0,\ldots,j_{2m}\}$ with
\[
 0=\sum_{i\in J}\alpha_i\Lambda_i,\qquad
 \alpha_i>0,\qquad \sum_{i\in J}\alpha_i=1.
\]
Choose $b\notin J$ and write
\[
 \Lambda_b=\sum_{i\in J}\beta_i\Lambda_i,\qquad
 \sum_{i\in J}\beta_i=1.
\]
At least one $\beta_i$ is positive. Set
\[
 t_0=\min_{\beta_i>0}\frac{\alpha_i}{\beta_i}.
\]
Then
\[
 0=t_0\Lambda_b+\sum_{i\in J}
 (\alpha_i-t_0\beta_i)\Lambda_i
\]
is a convex relation with at least one coefficient on $J$ equal to zero.
Thus some label in $J$ is not indispensable. Every label outside $J$ is
also nonindispensable, because deleting it leaves the supporting simplex
$J$. Hence at most $2m$ labels are indispensable.
\end{proof}

\begin{corollary}
If $n>2m+1$, then the full visible set does not belong to
$\mathcal K_\Lambda$.
\end{corollary}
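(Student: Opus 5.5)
The plan is to deduce the corollary directly from Lemma~\ref{lem:indispensable-bound} and the definition of $\mathcal K_\Lambda$. By definition, $V\in\mathcal K_\Lambda$ means
\[
 0\in\operatorname{conv}\{\Lambda_i:i\notin V\}=\operatorname{conv}\{\Lambda_i:i\in D\}.
\]
So it suffices to show that the origin is not in the convex hull of the indispensable weights.

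The key step is a cardinality argument. Suppose, for contradiction, that $0\in\operatorname{conv}\{\Lambda_i:i\in D\}$. Lemma~\ref{lem:small-supports} then gives $|D|=k\geq2m+1$. On the other hand, Lemma~\ref{lem:indispensable-bound} applies because $n>2m+1$, and it gives $k\leq2m$. These two bounds are incompatible, so $V\notin\mathcal K_\Lambda$.

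If $D=\varnothing$, the convex hull is empty and the conclusion holds trivially. This case is also covered by the bound, since the empty set does not contain the origin in its convex hull.

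I do not expect any real obstacle. The only point requiring care is to read the complement in the definition of $\mathcal K_\Lambda$ as taken in the full label set $[n]$, as Proposition~\ref{prop:Gale-dictionary} states. With that reading, the complement of $V$ is exactly $D$.
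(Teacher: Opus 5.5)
Your proposal is correct and is essentially the paper's argument: if $V\in\mathcal K_\Lambda$, the indispensable weights would contain the origin in their convex hull, so Lemma~\ref{lem:small-supports} gives $k\geq 2m+1$, which contradicts the bound $k\leq 2m$ from Lemma~\ref{lem:indispensable-bound}. Your separate remark on $D=\varnothing$ is harmless but not needed, since an empty convex hull never contains the origin.
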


\begin{proof}
Otherwise the indispensable weights would contain the origin in their
convex hull. Lemma~\ref{lem:small-supports} would give $k\geq2m+1$, while
Lemma~\ref{lem:indispensable-bound} gives $k\leq2m$.
\end{proof}

Let
\[
 \widetilde\Lambda:\R^n\longrightarrow\R^{2m+1},
 \qquad e_i\longmapsto(1,\Lambda_i).
\]
Proposition~\ref{prop:affine-spanning} gives
\[
 d:=\dim\ker\widetilde\Lambda=n-2m-1.
\]
In the polytopal LVM case, standard Gale duality identifies
$\mathcal K_\Lambda$ with the boundary complex
\[
 K=\partial Q
\]
of a $d$-dimensional simplicial convex polytope $Q=Q_\Lambda$ with
vertex set $V$.  We write $P=Q^\vee$ for its dual simple polytope.  In
particular,
\[
 N=n-k,\qquad d=n-2m-1,\qquad \dim K=d-1.
\]

\subsection{Stable excess and dimensions}

\begin{definition}
The configuration is \emph{stable} if $k>m$ and \emph{minimally stable} if
$k=m+1$. In the stable range, set
\[
 \epsilon_{\mathrm{st}}=k-m-1.
\]
\end{definition}

\begin{proposition}\label{prop:dimension-dictionary}
For a stable polytopal LVM manifold,
\[
 \dim_\C \XL=N+\epsilon_{\mathrm{st}},
 \qquad
 m=N-d+\epsilon_{\mathrm{st}}.
\]
Equivalently,
\[
 d=n-2m-1,\qquad
 N=n-k,\qquad
 \dim_\C \XL=n-m-1.
\]
\end{proposition}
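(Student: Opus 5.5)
The argument is pure bookkeeping with the dimension counts already established; no new geometry is needed. I would take three inputs as given. Proposition~\ref{prop:LVM-dimension} gives $\dim_\C\XL=n-m-1$. The Gale computation just before the statement gives $d=\dim\ker\widetilde\Lambda=n-2m-1$, using that the augmented vectors $(1,\Lambda_i)$ span $\R^{2m+1}$ by Proposition~\ref{prop:affine-spanning}. The definition of visible labels gives $N=n-k$. These three identities are exactly the ``equivalently'' line of the statement, so that line holds as it stands, and it is the base from which the other two formulas follow.

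For the first formula, I substitute $n=N+k$ into $\dim_\C\XL=n-m-1$. This gives $\dim_\C\XL=N+k-m-1$. The definition $\epsilon_{\mathrm{st}}=k-m-1$ then turns this into $N+\epsilon_{\mathrm{st}}$.

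For the second formula, I subtract the two expressions: $N-d=(n-k)-(n-2m-1)=2m+1-k$. Writing $k=m+1+\epsilon_{\mathrm{st}}$ gives $N-d=m-\epsilon_{\mathrm{st}}$, that is, $m=N-d+\epsilon_{\mathrm{st}}$. Conversely, the three displayed identities can be recovered from the two new formulas together with $N=n-k$, which justifies the word ``equivalently''.

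The one point that needs care is that $d=n-2m-1$ really is the dimension of the polytope $Q$, whose boundary is $K=\mathcal K_\Lambda$. This is the standard Gale-duality statement recalled in the polytopal case, and I would cite it as given. The stable hypothesis $k>m$ is used only so that $\epsilon_{\mathrm{st}}\ge 0$ is meaningful. In the minimally stable case $\epsilon_{\mathrm{st}}=0$, and the formulas reduce to $m=N-d$ and $\dim_\C\XL=N$, in agreement with the Notation section, which serves as a consistency check.
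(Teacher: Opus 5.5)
Your proposal is correct and follows the paper's proof essentially verbatim: substitute $k=m+1+\epsilon_{\mathrm{st}}$ into $N=n-k$ and $\dim_\C\XL=n-m-1$ (Proposition~\ref{prop:LVM-dimension}), then compute $N-d=(n-k)-(n-2m-1)=2m+1-k=m-\epsilon_{\mathrm{st}}$. Your remark that the ``equivalently'' line can be recovered from the two new formulas is a small addition the paper leaves implicit, and it checks out.
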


\begin{proof}
Since $k=m+1+\epsilon_{\mathrm{st}}$,
\[
 N=n-k=n-m-1-\epsilon_{\mathrm{st}}.
\]
Proposition~\ref{prop:LVM-dimension} gives
\[
 \dim_\C \XL=n-m-1=N+\epsilon_{\mathrm{st}}.
\]
Moreover,
\[
 N-d=(n-k)-(n-2m-1)=2m+1-k=m-\epsilon_{\mathrm{st}},
\]
so $m=N-d+\epsilon_{\mathrm{st}}$.
\end{proof}

\begin{proposition}[Dimensions at minimal stability]
\label{cor:minimal-dimensions}
\label{prop:minimal-dimension-ledger}
In the minimally stable polytopal case,
\[
 m=N-d,\qquad
 k=N-d+1,\qquad
 n=2N-d+1,\qquad
 \dim_\C \XL=N.
\]
Moreover,
\[
 \widehat N:=N+m=n-1=2N-d,
 \qquad
 \dim_\R\XL=2N.
\]
For the visible moment-angle manifold,
\[
 \dim_\R\mathcal Z_K=N+d,
\]
so
\[
 \dim_\R(\mathcal Z_K\times T^m)
 =N+d+m=2N.
\]
At full rank, $\operatorname{rank}_\Z\GammaL=m=N-d$.
\end{proposition}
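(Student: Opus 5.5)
The plan is to specialize Proposition~\ref{prop:dimension-dictionary} to $\epsilon_{\mathrm{st}}=0$ and then do arithmetic. Minimal stability means $k=m+1$, so $\epsilon_{\mathrm{st}}=k-m-1=0$. Proposition~\ref{prop:dimension-dictionary} then gives $\dim_\C\XL=N$ and $m=N-d$ at once, hence $k=m+1=N-d+1$. Since $N=n-k$, we get $n=N+k=2N-d+1$. Equivalently, we can use $d=n-2m-1$ from the Gale dimension count and substitute $m=N-d$.

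For the second display, I set $\widehat N:=N+m$. Then $\widehat N=N+(N-d)=2N-d$, and by the formula for $n$ this equals $n-1$. The real dimension is $\dim_\R\XL=2\dim_\C\XL=2N$. As a cross-check, $\dim_\C\XL=n-m-1=\widehat N-m$, which is consistent.

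For the moment-angle manifold, I will use the standard fact that, for a simplicial sphere $K$ of dimension $d-1$ on $N$ vertices (here the boundary of the $d$-dimensional simplicial polytope $Q$), $\mathcal Z_K$ is a closed manifold of dimension $N+d$. One way to see this is that $\mathcal Z_K$ is a transverse intersection of $N-d$ real quadrics in $\C^N$, so its dimension is $2N-(N-d)=N+d$ \cite{BuchstaberPanov2015,BosioMeersseman2006}. Adding $m=N-d$ for the torus factor gives $N+d+m=2N$, in agreement with $\dim_\R\XL$, as the homeomorphism $\XL\cong\mathcal Z_K\times T^m$ requires.

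Finally, at full rank $\GammaL=B^{-1}(2\pi i\Z^m)$ with $B$ invertible. This is the image of the rank-$m$ lattice $2\pi i\Z^m$ under a linear isomorphism of $\C^m$, so it is free of rank $m=N-d$.

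None of these steps is difficult. The only input not already stated in the paper is the dimension count $\dim_\R\mathcal Z_K=N+d$, which I will cite rather than reprove.
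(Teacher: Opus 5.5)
Your proposal is correct and follows the paper's proof essentially verbatim. You specialize Proposition~\ref{prop:dimension-dictionary} to $\epsilon_{\mathrm{st}}=0$, obtain the remaining identities by substitution, and cite the standard formula $\dim_\R\mathcal Z_K=N+d$ for a simplicial $(d-1)$-sphere on $N$ vertices; your quadric-intersection justification of that formula and your explicit check that $B^{-1}(2\pi i\Z^m)$ is free of rank $m$ are harmless details the paper leaves implicit.
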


\begin{proof}
The first identities follow from Proposition~\ref{prop:dimension-dictionary}
with $\epsilon_{\mathrm{st}}=0$; those for $\widehat N$ and $\XL$ follow
by substitution.  The quotient-fan dimension,
computed after the marked fan is introduced, is
$\widehat N-2m=N-m=d$.  The standard dimension formula
for a moment-angle manifold over a simplicial $(d-1)$-sphere on $N$
vertices is $N+d$.  The remaining identities are immediate.
\end{proof}

\begin{corollary}\label{cor:parity-law}
For a fixed visible simplicial $d$-polytope $Q$ with $N$ vertices
(equivalently, a simple dual polytope $P$ with $N$ facets), a stable realization
with excess $\epsilon_{\mathrm{st}}$ satisfies
\[
 m=N-d+\epsilon_{\mathrm{st}},\qquad
 k=N-d+1+2\epsilon_{\mathrm{st}},\qquad
 n=2N-d+1+2\epsilon_{\mathrm{st}}.
\]
In particular,
\[
 k\equiv N-d+1\pmod2.
\]
Increasing $\epsilon_{\mathrm{st}}$ by one changes
\[
 (n,m,k)\longmapsto(n+2,m+1,k+2).
\]
\end{corollary}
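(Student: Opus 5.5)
The corollary is pure bookkeeping: I will solve the dimension relations of Proposition~\ref{prop:dimension-dictionary} for $m$, $k$, $n$ in terms of the fixed data $N$, $d$ and the excess $\epsilon_{\mathrm{st}}$. The visible polytope fixes $N$ (its number of vertices) and $d$ (its dimension). Through Gale duality, $d=\dim\ker\widetilde\Lambda=n-2m-1$, which is again recorded in Proposition~\ref{prop:dimension-dictionary}. The proof then combines three relations: the definition $\epsilon_{\mathrm{st}}=k-m-1$, the identity $N=n-k$, and $d=n-2m-1$.

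The steps, in order, are as follows. First, Proposition~\ref{prop:dimension-dictionary} gives $m=N-d+\epsilon_{\mathrm{st}}$ directly. Second, the definition of excess gives
\[
k=m+1+\epsilon_{\mathrm{st}}=N-d+1+2\epsilon_{\mathrm{st}}.
\]
Third, adding $N$ gives
\[
n=N+k=2N-d+1+2\epsilon_{\mathrm{st}}.
\]
As a check, $n-2m-1=2N-d+1+2\epsilon_{\mathrm{st}}-2N+2d-2\epsilon_{\mathrm{st}}-2+\ldots$; simplifying correctly, $n-2m-1=d$, which is consistent with the definition of $d$.

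The parity statement follows from the formula for $k$, since $2\epsilon_{\mathrm{st}}$ is even. The shift statement follows by substituting $\epsilon_{\mathrm{st}}+1$ into the three formulas: $m$ increases by $1$, while $k$ and $n$ each increase by $2$, with $N$ and $d$ held fixed.

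I expect no real obstacle. The only point needing care is that $N$ and $d$ really are invariants of the fixed visible polytope $Q$, independent of the chosen realization. This holds because the polytopal dictionary identifies $\mathcal K_\Lambda$ with $K=\partial Q$, whose vertex set is $V$ with $|V|=N$ and whose dimension is $d-1$. Once that is granted, everything reduces to linear substitution.
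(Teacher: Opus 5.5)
Your proof is correct and is the same substitution the paper intends: the corollary follows from Proposition~\ref{prop:dimension-dictionary} via $m=N-d+\epsilon_{\mathrm{st}}$, $k=m+1+\epsilon_{\mathrm{st}}$, and $n=N+k$. The only blemish is your garbled consistency check, which should read $n-2m-1=(2N-d+1+2\epsilon_{\mathrm{st}})-(2N-2d+2\epsilon_{\mathrm{st}})-1=d$; the argument does not need it in any case.
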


\begin{remark}
We henceforth assume $d\geq1$ and write $P=Q^\vee$ and $K=\partial Q$. The
case $d=0$ requires a separate convention for the boundary of a
zero-dimensional polytope.
\end{remark}

\section{The affine Cox model and the quotient by \texorpdfstring{$\C^m$}{C-m}}
\label{sec:affine-ghost-quotient}

An indispensable coordinate is nonzero on the whole projective open
set.  Taking it as denominator gives a product of a coordinate-subspace
complement and a torus.  The quotient by the connected $\C^m$-action exists at every realizable rank; when $B$ is invertible, it also has a discrete
covering description.

Let
\[
 D=\{a_0,\ldots,a_{k-1}\}
\]
be the indispensable labels and let $V=[n]\setminus D$ be the visible
labels. Write $N=|V|$ and let $K$ be the visible Gale complex.

\subsection{The global affine product}

Since every indispensable coordinate is nonzero on $V_\Lambda$, the whole
admissible locus belongs to the chart $z_{a_0}\neq0$. Define
\[
 w_j=\frac{z_j}{z_{a_0}}\quad (1\leq j\leq N),
 \qquad
 t_\mu=\frac{z_{a_\mu}}{z_{a_0}}\in\C^*
 \quad(1\leq\mu\leq k-1).
\]
Set
\[
 Z(w)=\{j:w_j=0\},
 \qquad
 U_K=\{w\in\C^N:Z(w)\in K\}.
\]

\begin{lemma}[Connected Cox open set]
\label{lem:UK-connected}
The space $U_K$ is path connected.  Consequently, whenever $k\geq1$,
$V_\Lambda$ and $\XL$ are connected.
\end{lemma}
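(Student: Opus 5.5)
The plan is to show that every point of $U_K$ can be joined inside $U_K$ to the point $\mathbf 1=(1,\ldots,1)$, whose zero set $Z(\mathbf 1)=\varnothing$ lies in $K$. For $w\in U_K$, consider the straight path
\[
 \gamma(s)=(1-s)\mathbf 1+s\,w,\qquad s\in[0,1].
\]
This path may fail to avoid forbidden zero sets, because a coordinate $(1-s)+sw_j$ vanishes when $w_j$ is a negative real number. So I will first rotate coordinates. For each $j$ with $w_j\neq0$, use the path $e^{i\theta}$ in $\C^*$ to move $w_j$ to a positive real value while keeping it nonzero. During this rotation the zero set stays equal to $Z(w)$, so the path lies in $U_K$. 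After the rotation, each nonzero coordinate is positive real, and the coordinates with $w_j=0$ are still zero.

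Next I use the linear path from this normalized point to $\mathbf 1$. For $s\in[0,1)$, a coordinate $(1-s)\cdot1+s\,w_j$ with $w_j>0$ is nonzero. A coordinate with $w_j=0$ equals $1-s\neq0$. Hence for $s<1$ the zero set along the path is empty. At $s=1$ the path reaches the normalized point, whose zero set is $Z(w)\in K$. Therefore the whole path lies in $U_K$. Equivalently, $U_K$ is a union of the sets $\{Z(w)=\sigma\}$ for $\sigma\in K$. Each of these sets is a product of $(\C^*)^{N-|\sigma|}$ with a point, hence connected, and each lies in the closure of the dense torus $(\C^*)^N\subseteq U_K$, which is connected.

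For the consequence, I use the global affine product from this section. Since every indispensable coordinate is nonzero on $V_\Lambda$, the chart $z_{a_0}\neq0$ identifies $V_\Lambda$ with
\[
 \{(w,t)\in\C^N\times(\C^*)^{k-1}: 0\in\operatorname{conv}\{\Lambda_i:z_i\neq0\}\}.
\]
Here all indispensable coordinates are automatically nonzero. By Proposition~\ref{prop:Gale-dictionary}, the condition on the visible support is exactly $Z(w)\in\mathcal K_\Lambda=K$, because the complement of the support of $z$ in $[n]$ is $Z(w)$, a subset of $V$. Thus $V_\Lambda\cong U_K\times(\C^*)^{k-1}$, which is a product of path-connected spaces and hence connected. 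Finally, $\XL$ is the continuous image of $V_\Lambda$ under the quotient map of Theorem~\ref{thm:classical-LVM-quotient}, so $\XL$ is connected.

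The main point requiring care is the identification of $V_\Lambda$ with $U_K\times(\C^*)^{k-1}$. Specifically, one must check that the Siegel condition involves only the visible zero set, since the indispensable coordinates never vanish. One must also check that $k\geq1$ is exactly what makes the chart $z_{a_0}\neq0$ available. The connectivity argument itself is routine once the path is chosen to pass through the dense torus.
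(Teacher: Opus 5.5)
Your proof is correct, but its key step differs from the paper's. The paper notes that every visible singleton is a face of $K$. Hence every minimal nonface has at least two elements, and $\C^N\setminus U_K$ is a union of coordinate subspaces of complex codimension at least two. It then cites the general fact that the complement of such an arrangement is path connected.

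You instead build an explicit path. First you rotate the nonzero coordinates to the positive reals inside the stratum $\{Z=Z(w)\}$. Then you follow the straight segment to $\mathbf 1$, which stays in the torus $(\C^*)^N$ until its endpoint. This is more elementary, and it is also more general. It uses only that $\varnothing\in K$ and that $U_K$ is the union of the strata indexed by faces. In particular, it does not need the absence of ghost vertices.

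The paper's codimension-two observation is more than connectedness requires. It is, however, the input one would use to show that $U_K$ is simply connected, since the removed set has real codimension four.

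For the consequence, the paper cites Theorem~\ref{thm:global-affine-product} and the surjectivity of the quotient map. You re-derive that product identification directly from the nonvanishing of the indispensable coordinates and Proposition~\ref{prop:Gale-dictionary}. This is valid and amounts to the same argument.
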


\begin{proof}
Every visible singleton is a face of $K$.  Hence the coordinate-subspace
arrangement removed from $\C^N$ to form $U_K$ has complex codimension at
least two.  Its complement is path connected.  The product description
of Theorem~\ref{thm:global-affine-product} and the surjective quotient map
then give the final assertion.
\end{proof}

\begin{theorem}[Global affine product]
\label{thm:global-affine-product}
Assume $k\geq1$ and choose $a_0\in D$.  The map
\[
 \Phi_{a_0}:V_\Lambda\longrightarrow
 U_K\times(\C^*)^{k-1},
 \qquad
 [z]\longmapsto(w,t),
\]
is a biholomorphism. Consequently,
\[
 V_\Lambda\cong U_K\times(\C^*)^{k-1}.
\]
In the minimally stable case $k=m+1$,
\[
 V_\Lambda\cong U_K\times(\C^*)^m.
\]
\end{theorem}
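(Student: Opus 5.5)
The plan is to show that $\Phi_{a_0}$ is the restriction of the standard affine-chart biholomorphism, and then check that the images match exactly.

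\emph{Step 1: the chart.} On $\PP^{n-1}$, the chart $\{z_{a_0}\neq0\}$ is biholomorphic to $\C^{n-1}$ via the ratios $z_i/z_{a_0}$ for $i\neq a_0$. Since $a_0$ is indispensable, the earlier lemma gives $z_{a_0}\neq0$ on all of $V_\Lambda$. Hence $V_\Lambda$ is an open subset of this chart. In the chart coordinates, these ratios are exactly $(w,t)$, so $\Phi_{a_0}$ is the restriction of this biholomorphism. It is therefore a biholomorphism onto its image, and it remains only to identify the image with $U_K\times(\C^*)^{k-1}$.

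\emph{Step 2: the torus factor.} Each $a_\mu$ with $\mu\geq1$ is also indispensable. By the same lemma, $z_{a_\mu}\neq0$ on $V_\Lambda$, so $t_\mu\in\C^*$. Conversely, the membership condition for $V_\Lambda$ depends only on the support of $z$. Every point of the chart with all $t_\mu\neq0$ has support $D\cup\{j\in V: w_j\neq0\}$.

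\emph{Step 3: the visible factor.} Consider a point of the chart with all $t_\mu\neq0$, and set $J=Z(w)\subseteq V$. Its support is the complement of $J$ in $[n]$. By the definition of $V_\Lambda$, the point lies in $V_\Lambda$ if and only if $0\in\operatorname{conv}\{\Lambda_i:i\notin J\}$. By the definition of $\mathcal K_\Lambda$ and Proposition~\ref{prop:Gale-dictionary}, this holds if and only if $J\in\mathcal K_\Lambda$. In the polytopal case $\mathcal K_\Lambda$ is identified with $K$, so the condition becomes $Z(w)\in K$, that is, $w\in U_K$. Combining Steps 2 and 3, the image of $\Phi_{a_0}$ is precisely $U_K\times(\C^*)^{k-1}$.

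\emph{Step 4: the minimally stable case.} Substituting $k=m+1$ gives $V_\Lambda\cong U_K\times(\C^*)^m$.

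The only point needing care is Step 3. There we must use that $\mathcal K_\Lambda$, a complex on the full label set $[n]$, has vertex set $V$ (Proposition~\ref{prop:Gale-dictionary}). This is what lets us identify it with $K$ on the visible labels. The faces of $\mathcal K_\Lambda$ are exactly the subsets $J\subseteq V$, and the indispensable labels never occur in $Z$. Everything else is formal.
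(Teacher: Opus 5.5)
Your proposal is correct and follows the paper's proof. Both arguments place $V_\Lambda$ inside the chart $z_{a_0}\neq0$ using indispensability, and both identify the membership condition $0\in\operatorname{conv}\{\Lambda_i:i\notin Z(w)\}$ with $Z(w)\in K$ directly from the definition of the Gale complex; the only difference is that the paper writes the inverse $(w,t)\mapsto[w_1:\cdots:w_N:1:t_1:\cdots:t_{k-1}]$ explicitly, whereas you obtain it as the inverse of the standard chart map.
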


\begin{proof}
All indispensable coordinates are nonzero, so $\Phi_{a_0}$ is globally
defined. In the chosen chart the zero-coordinate set is the visible set
$Z(w)$. By the definition of the Gale complex,
\[
 [z]\in V_\Lambda
 \iff
 0\in\operatorname{conv}\{\Lambda_i:i\notin Z(w)\}
 \iff
 Z(w)\in K.
\]
The inverse is
\[
 (w,t)\longmapsto[w_1:\cdots:w_N:1:t_1:\cdots:t_{k-1}].
\]
\end{proof}

\begin{remark}
Projective normalization fixes one of the $k$ indispensable coordinates and
leaves $k-1$ torus coordinates.
Indeed,
\[
 N+(k-1)=n-1=\dim_\C V_\Lambda.
\]
\end{remark}

\begin{proposition}[Change of indispensable chart]
Choosing another indispensable coordinate as the projective denominator
changes the product coordinates by a holomorphic monomial transformation.
For example, if $a_\nu$ is chosen, then
\[
 w'_j=\frac{w_j}{t_\nu},\qquad
 t'_0=\frac1{t_\nu},\qquad
 t'_\mu=\frac{t_\mu}{t_\nu}\quad(\mu\neq\nu).
\]
This transformation preserves $U_K$.
On exponent lattices it is induced by an integral unimodular map.  It
carries $\mathfrak h_\Lambda$, $\mathfrak r_\Lambda$, the coordinate fan,
and the marked-fan quadruple constructed below to isomorphic data.
\end{proposition}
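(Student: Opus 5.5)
The plan is to compute the change of chart directly in projective coordinates and then read off each assertion. Put $t_0:=1$, so that $z_{a_\mu}/z_{a_0}=t_\mu$ for all $\mu$. If $a_\nu$ is the new denominator, then
\[
 w'_j=\frac{z_j}{z_{a_\nu}}=\frac{z_j/z_{a_0}}{z_{a_\nu}/z_{a_0}}=\frac{w_j}{t_\nu},
 \qquad
 t'_\mu=\frac{z_{a_\mu}}{z_{a_\nu}}=\frac{t_\mu}{t_\nu}.
\]
For $\mu=0$ this gives $t'_0=1/t_\nu$. Since $t_\nu\in\C^*$ on all of $V_\Lambda$ (Theorem~\ref{thm:global-affine-product}), this is a holomorphic monomial map. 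Its inverse has the same form with $\nu$ and $0$ exchanged. Equivalently, the change of chart is $\Phi_{a_\nu}\circ\Phi_{a_0}^{-1}$, and both factors are biholomorphisms.

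To see that $U_K$ is preserved, note that $t_\nu\neq0$, so $w'_j=0$ exactly when $w_j=0$. Hence $Z(w')=Z(w)$, and the condition $Z(w)\in K$ is unchanged. Intrinsically, both descriptions of $U_K$ come from the same $V_\Lambda$.

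For the exponent lattices, use the lattice $\Z^N\oplus\Z^{m}$ of Laurent monomials $w^r t^s$, with $s$ indexed by $\mu\in\{1,\dots,k-1\}$. The substitution is
\[
 w^r t^s\mapsto w^r\,t_\nu^{-|r|-\sum_\mu s_\mu}\prod_{\mu\neq\nu}t_\mu^{s_\mu},
\]
which is linear in the exponents. Its matrix is a unipotent modification of a signed permutation, so its determinant is $\pm1$. Alternatively, the map is an involution up to relabeling, so its inverse is again integral. I would present the matrix in block form: it is the identity on $r$, while the $s_\nu$ row is replaced by $-\sum r_j-\sum s_\mu$.

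The transport of $\mathfrak h_\Lambda$, $\mathfrak r_\Lambda$, the coordinate fan, and the marked-fan quadruple is the step I expect to be the main obstacle, because these objects are only defined later in the paper. I would argue formally that each is built functorially from the affine weight matrix, which is the action written in exponent coordinates on the lattice above, together with $K$. In the new chart, the weight of the action on $w_j$ becomes $\Lambda_j-\Lambda_{a_\nu}$ in place of $\Lambda_j-\Lambda_{a_0}$, and the weight on $t'_\mu$ becomes $\Lambda_{a_\mu}-\Lambda_{a_\nu}$. This is exactly the dual of the unimodular lattice map applied to the old weight rows. Each object is defined by linear-algebraic operations that commute with unimodular changes of lattice: spans, projections, cones over faces of $K$, and the marking. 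So each is carried to an isomorphic object. I would state this as the naturality check to be verified once the definitions are given, and apply it later when those definitions appear.
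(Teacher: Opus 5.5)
Your proposal is correct and follows essentially the paper's own argument: divide by $z_{a_\nu}=t_\nu z_{a_0}$, use $t_\nu\neq0$ to see that the visible zero set, and hence $U_K$, is unchanged, note that the monomial map and its inverse have integral exponents, and transport $\mathfrak h_\Lambda$, $\mathfrak r_\Lambda$, the coordinate cones and the quotient marking by equivariance. Your explicit exponent matrix, an involution of determinant $-1$, only makes unimodularity more concrete; for the fan, the point worth stating is that the dual map on the Lie-algebra side sends each visible coordinate vector $e_j$ to $e'_j$, which is what carries each coordinate cone of $\Sigma_{\widehat K}$ to its counterpart.
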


\begin{proof}
The formulas follow by dividing by
$z_{a_\nu}=t_\nu z_{a_0}$. Since $t_\nu\neq0$, the visible zero set is
unchanged.  The displayed monomials and their inverse have integral
exponents, so their maps on exponent lattices are mutually inverse over
$\Z$.  Equivariance carries the connected action algebra and its real part
to the corresponding subspaces, and the zero-set description carries each
coordinate cone and its quotient marking to the new chart.
\end{proof}

\subsection{The affine action matrix}
\label{sec:affine-action-matrix}

From now on assume minimal stability and write
\[
 D=\{a_0,a_1,\ldots,a_m\}.
\]
For $T\in\C^m$, define
\[
 A_j(T)=\langle\Lambda_j-\Lambda_{a_0},T\rangle,
 \qquad
 B_\mu(T)=\langle\Lambda_{a_\mu}-\Lambda_{a_0},T\rangle.
\]
These rows define
\[
 A:\C^m\longrightarrow\C^N,\qquad
 B:\C^m\longrightarrow\C^m,
\]
and
\[
 \Psi_\Lambda=
 \begin{pmatrix}A\\B\end{pmatrix}:
 \C^m\longrightarrow\C^{\widehat N}.
\]

\begin{proposition}\label{prop:affine-action}
Under Theorem~\ref{thm:global-affine-product}, the LVM action is
\[
 T\cdot(w,t)=\bigl(e^{A(T)}w,e^{B(T)}t\bigr).
\]
The map $\Psi_\Lambda$ is injective.
\end{proposition}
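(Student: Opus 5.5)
The plan has two parts: first compute the action formula directly in the affine chart of Theorem~\ref{thm:global-affine-product}, then derive injectivity of $\Psi_\Lambda$ from the affine spanning property of Proposition~\ref{prop:affine-spanning}.

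For the formula, take $[z]\in V_\Lambda$ with image $(w,t)=\Phi_{a_0}([z])$. After applying $T$, the coordinate $z_i$ becomes $e^{\langle\Lambda_i,T\rangle}z_i$. Dividing by the new denominator $e^{\langle\Lambda_{a_0},T\rangle}z_{a_0}$ gives
\[
 w_j\longmapsto e^{\langle\Lambda_j-\Lambda_{a_0},T\rangle}w_j=e^{A_j(T)}w_j,
 \qquad
 t_\mu\longmapsto e^{\langle\Lambda_{a_\mu}-\Lambda_{a_0},T\rangle}t_\mu=e^{B_\mu(T)}t_\mu .
\]
This is exactly $T\cdot(w,t)=(e^{A(T)}w,e^{B(T)}t)$. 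The computation is well defined on the projective class because the common scalar cancels in each ratio.

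For injectivity, suppose $\Psi_\Lambda(T)=0$. Then $\langle\Lambda_i-\Lambda_{a_0},T\rangle=0$ for every label $i$, since the rows of $A$ and $B$ together run over all $i\in[n]\setminus\{a_0\}$, and the row for $i=a_0$ vanishes trivially. Hence $\langle\Lambda_i,T\rangle$ is independent of $i$. This is precisely the hypothesis in the effectiveness corollary following Proposition~\ref{prop:affine-spanning}, and the same argument applies. The differences $\Lambda_i-\Lambda_{a_0}$ real-linearly span $\C^m\simeq\R^{2m}$, because the weights affinely span. The complex-bilinear functional $X\mapsto\langle X,T\rangle$ is complex-linear, so it is in particular real-linear. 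It vanishes on a real spanning set, so it vanishes on all of $\C^m$, and therefore $T=0$.

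No step is a serious obstacle. The one point needing care is the passage from real affine span to the vanishing of a complex pairing. It is settled by noting that a real spanning set of $\C^m$ detects complex-linear functionals, so injectivity holds over $\C$.
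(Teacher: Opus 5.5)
Your proposal is correct and follows the paper's proof. The action formula comes from dividing every coordinate by the $a_0$-coordinate, and injectivity follows because $\Psi_\Lambda(T)=0$ forces $\langle\Lambda_i-\Lambda_{a_0},T\rangle=0$ for every label, while these differences real-linearly span $\C^m$ by Proposition~\ref{prop:affine-spanning}. Your explicit remark that a complex-linear functional vanishing on a real spanning set is identically zero is the step the paper leaves implicit.
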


\begin{proof}
The action formula follows by taking ratios with the $a_0$-coordinate.
If $\Psi_\Lambda(T)=0$, then
\[
 \langle\Lambda_i-\Lambda_{a_0},T\rangle=0
\]
for every label $i$. Since the weights affinely span $\C^m$ as a real
vector space, $T=0$.
\end{proof}

\begin{proposition}[Intrinsic rank and chart invariance]
\label{prop:rank-chart-invariance}
Under the standing minimal-stability hypothesis, let $D$ be the
indispensable set and put
\[
 \rho_\C(D)=
 \dim_\C\operatorname{span}_\C
 \{\Lambda_a-\Lambda_b:a,b\in D\}.
\]
Then $\rho_\C(D)=\operatorname{rank}_\C B$; in particular, the rank is
independent of the ordering of $D$ and of the distinguished denominator.
It is invariant under invertible complex-affine changes of the weights.

More precisely, a change of indispensable chart replaces $B$ by
$B'=LB$ for some $L\in\operatorname{GL}_m(\Z)$.  At full rank the deck
lattice has the intrinsic description
\[
 \GammaL=
 \left\{T\in\C^m:
 \langle\Lambda_a-\Lambda_b,T\rangle\in2\pi i\Z
 \text{ for all }a,b\in D\right\},
\]
and both this subgroup and its action on $U_K$ are independent of the
chosen indispensable chart.
\end{proposition}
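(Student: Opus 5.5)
The whole statement is linear algebra on the rows of $B$, plus a short computation with the chart change formulas. Write $\Lambda_{a_\mu}-\Lambda_{a_0}$ for $\mu=1,\dots,m$. The rows of $B$ are the linear forms $T\mapsto\langle\Lambda_{a_\mu}-\Lambda_{a_0},T\rangle$. Since the pairing is the standard complex-bilinear one, $\operatorname{rank}_\C B$ equals
\[
 \dim_\C\operatorname{span}_\C\{\Lambda_{a_\mu}-\Lambda_{a_0}:1\le\mu\le m\}.
\]
Every difference $\Lambda_a-\Lambda_b$ with $a,b\in D$ equals $(\Lambda_a-\Lambda_{a_0})-(\Lambda_b-\Lambda_{a_0})$. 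Hence this span coincides with $\operatorname{span}_\C\{\Lambda_a-\Lambda_b:a,b\in D\}$, which gives $\rho_\C(D)=\operatorname{rank}_\C B$. The right-hand side depends only on the set $D$, and $D$ is intrinsic by definition. So the rank does not depend on the ordering or on the chosen denominator.

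For complex-affine invariance, let $\Lambda_i\mapsto g\Lambda_i+c$ with $g\in\operatorname{GL}_m(\C)$. Differences transform by $g$, so the span of differences has the same dimension. The action changes only by reparametrizing $T\mapsto g^{\mathsf T}T$, and the indispensable set is preserved. One point needs checking: an affine change must preserve the convexity conditions defining $D$. For real-affine invertible maps this holds because convex hulls are mapped to convex hulls, but the translation moves the origin. I would therefore restrict to $c=0$, that is, linear changes, or to changes that preserve admissibility. Otherwise I would simply note that $\rho_\C(D)$, as a function of the weights, is translation invariant.

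For the matrix statement, use the chart change formulas. Choosing $a_\nu$ as denominator, the new indispensable rows are $\Lambda_{a_0}-\Lambda_{a_\nu}$ and $\Lambda_{a_\mu}-\Lambda_{a_\nu}$ for $\mu\neq\nu$. In terms of the old rows $b_\mu=\Lambda_{a_\mu}-\Lambda_{a_0}$, these are $-b_\nu$ and $b_\mu-b_\nu$. Therefore $B'=LB$, where $L$ is the integer matrix with these coefficients. $L$ is an elementary-type matrix with determinant $\pm1$, so $L\in\operatorname{GL}_m(\Z)$. A permutation of $a_1,\dots,a_m$ only permutes rows. Any chart change is a composition of these two types.

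At full rank, $\GammaL=B^{-1}(2\pi i\Z^m)$ consists of the $T$ with $\langle b_\mu,T\rangle\in2\pi i\Z$ for all $\mu$. Integer combinations of the $b_\mu$ give all differences $\Lambda_a-\Lambda_b$, so this set equals the displayed intrinsic set. Because $L$ is unimodular, $(LB)^{-1}(2\pi i\Z^m)=B^{-1}(2\pi i\Z^m)$.

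For the action on $U_K$: $T\in\GammaL$ acts by $w\mapsto e^{A(T)}w$, and in the new chart by $w'\mapsto e^{A'(T)}w'$. Since $w'_j=w_j/t_\nu$, we have $A'_j(T)=A_j(T)-B_\nu(T)$. Because $e^{B_\nu(T)}=1$ on $\GammaL$, the two actions agree under the identification $w'=w/t_\nu$.

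The main obstacle is exactly this identification. For fixed $t$ the copies of $U_K$ are related by scaling by $t_\nu$, so the statement should be read as independence up to this $\GammaL$-equivariant isomorphism. The intended meaning is probably the induced action on the product $U_K\times(\C^*)^m$ restricted to the slice $t=1$, and I would state it that way.
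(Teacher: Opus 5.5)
Your proposal is correct and follows the paper's proof essentially step by step. Both identify $\operatorname{rank}_\C B$ as a row span, write the new indispensable differences in terms of the old ones through a unimodular $L$, deduce $(LB)^{-1}(2\pi i\Z^m)=B^{-1}(2\pi i\Z^m)$, and use $A'_j=A_j-B_\nu$ together with $e^{B_\nu(\gamma)}=1$ on $\GammaL$. The obstacle you flag at the end dissolves as you suspect: the deck action lives on the slice $t=\mathbf 1$, where $t_\nu=1$ and the transition $w'=w/t_\nu$ is the identity of $U_K$, so the two deck actions coincide literally, which is how the paper reads the statement.
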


\begin{proof}
The first assertion is the definition of the row span of $B$.  Replacing
the base label by $a_\nu$ expresses the new difference basis of the
augmentation kernel
\[
 \ker\bigl(\Z^D\longrightarrow\Z\bigr)
\]
in terms of the old one, so $B'=LB$ with
$L\in\operatorname{GL}_m(\Z)$.  The displayed intrinsic condition is
therefore equivalent to $B(T)\in2\pi i\Z^m$, and
\[
 (B')^{-1}(2\pi i\Z^m)=B^{-1}(2\pi i\Z^m).
\]
In the chart based at $a_\nu$, each visible row is replaced by
$A'_j=A_j-B_\nu$.  For $\gamma\in\GammaL$ the factor
$e^{-B_\nu(\gamma)}$ is one, hence
$e^{A'_j(\gamma)}=e^{A_j(\gamma)}$.  This proves chart independence of
the deck action.  Complex-affine automorphisms preserve the complex span
of all indispensable differences.
\end{proof}

\begin{theorem}[Real indispensable frame and realizable rank bound]
\label{thm:real-indispensable-frame}
For an admissible configuration, every collection of indispensable
weights is real-affinely independent.  In the minimally stable case
$D=\{a_0,\ldots,a_m\}$, the differences
\[
 \Lambda_{a_1}-\Lambda_{a_0},\ldots,
 \Lambda_{a_m}-\Lambda_{a_0}
\]
are therefore real-linearly independent.  Consequently,
\[
 \operatorname{rank}_{\C}B
 \geq
 \left\lceil\frac m2\right\rceil,
 \qquad
 0\leq\delta=m-\operatorname{rank}_{\C}B
 \leq
 \left\lfloor\frac m2\right\rfloor.
\]
\end{theorem}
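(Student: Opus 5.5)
The proof has two steps. First we show that the indispensable weights are real-affinely independent. The rank bound then follows from a dimension count comparing real and complex spans.

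\emph{Step 1 (affine independence).} Let $S\subseteq D$ and suppose, for contradiction, that $\{\Lambda_a:a\in S\}$ is real-affinely dependent. Since $0\in\operatorname{conv}(\Lambda_1,\ldots,\Lambda_n)$, Proposition~\ref{prop:affine-spanning} gives a minimal supporting set $J$ of $2m+1$ affinely independent weights, with
\[
 0=\sum_{i\in J}\alpha_i\Lambda_i,\qquad \alpha_i>0,\qquad \sum_{i\in J}\alpha_i=1.
\]
Every indispensable label lies in $J$. Indeed, deleting a label outside $J$ leaves $J$, and $J$ still supports the origin, so that label is not indispensable. Hence $S\subseteq D\subseteq J$. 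Since $J$ is affinely independent, so is every subset of it, in particular $S$. This is the contradiction. The argument is the same mechanism as in Lemma~\ref{lem:indispensable-bound}, and we can cite that step directly.

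\emph{Step 2 (the rank bound).} In the minimally stable case $D=\{a_0,\ldots,a_m\}$. By Step 1, the $m+1$ weights $\Lambda_{a_\mu}$ are affinely independent. Hence the differences $\Lambda_{a_\mu}-\Lambda_{a_0}$, $1\le\mu\le m$, are $\R$-linearly independent in $\C^m\simeq\R^{2m}$. Their real span therefore has real dimension $m$. It is contained in their complex span, which has real dimension $2\rho_\C(D)$. This gives $m\le 2\rho_\C(D)$.

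By Proposition~\ref{prop:rank-chart-invariance}, $\rho_\C(D)=\operatorname{rank}_\C B$. Since the rank is an integer, $\operatorname{rank}_\C B\ge\lceil m/2\rceil$. Then
\[
 \delta=m-\operatorname{rank}_\C B\le m-\lceil m/2\rceil=\lfloor m/2\rfloor,
\]
and $\delta\ge0$ because $B$ is an $m\times m$ matrix.

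The only step that needs care is the inclusion $D\subseteq J$ in Step 1. It relies on the definition of indispensability, namely that $0\notin\operatorname{conv}\{\Lambda_i:i\ne a\}$ for $a\in D$. The rest is linear algebra.
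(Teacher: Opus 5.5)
Your proposal is correct and follows the paper's proof essentially step for step. A minimal supporting simplex of $2m+1$ affinely independent weights must contain every indispensable label, and the rank bound comes from comparing the $m$-dimensional real span of the differences with their complex span, which has real dimension $2\operatorname{rank}_\C B$ (the contradiction framing in Step~1 is unnecessary, since $D\subseteq J$ gives affine independence directly).
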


\begin{proof}
Choose a minimal supporting subconfiguration $S$.  Proposition~
\ref{prop:affine-spanning} supplies inside $S$ a $2m+1$-element
real-affinely independent support of the origin; minimality forces this
support to equal $S$.  Every indispensable label belongs to $S$:
otherwise the same convex relation would survive after that label were
deleted.  Hence the indispensable weights, being a subset of a real
affine simplex, are real-affinely independent.

Under minimal stability their $m$ differences from
$\Lambda_{a_0}$ are real-linearly independent.  If
$r=\operatorname{rank}_{\C}B$, their complex span has real dimension
$2r$ and contains their $m$-dimensional real span.  Thus $2r\geq m$,
which gives the two displayed inequalities.
\end{proof}

\begin{theorem}[Realizable ranks at fixed visible complex]
\label{thm:exact-realizable-rank-range}
Fix an admissible minimally stable configuration $\Lambda$ and its visible
complex $K$.  For every integer
\[
 \left\lceil\frac m2\right\rceil\leq r\leq m
\]
there is an admissible minimally stable configuration $\Lambda^{(r)}$ with
the same subsets of labels whose weights contain the origin in their convex hull, and hence the same indispensable set and visible complex $K$, such that
\[
 \operatorname{rank}_\C B^{(r)}=r.
\]
Polytopality is preserved.  Thus the displayed interval is the set
of geometrically realizable complex ranks, even with $K$ fixed.
\end{theorem}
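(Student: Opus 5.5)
The idea is to keep the real affine geometry of the weights fixed and change only the complex structure on $\R^{2m}$. Every condition entering the statement is a real convex condition on the points $\Lambda_i\in\R^{2m}$: admissibility, weak hyperbolicity, the open set $V_\Lambda$, the indispensable set $D$, the complex $\mathcal K_\Lambda$ and its polytopal Gale realization. By contrast, $\operatorname{rank}_\C B$ depends on how $\R^{2m}$ is identified with $\C^m$. Concretely, I will take $\Lambda^{(r)}_i=g(\Lambda_i)$, where $g:\R^{2m}\to\R^{2m}$ is a suitable real-linear automorphism and the image is read through the standard identification $\R^{2m}\cong\C^m$. Equivalently, $g$ is a complex-linear isomorphism $(\R^{2m},J)\to\C^m$ for a complex structure $J$ that I will choose.

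\emph{Step 1: invariance.} A real-linear automorphism $g$ satisfies $0\in\operatorname{conv}\{g\Lambda_i:i\in I\}$ if and only if $0\in\operatorname{conv}\{\Lambda_i:i\in I\}$. Hence $\Lambda^{(r)}$ is admissible and has the same supporting subsets of labels as $\Lambda$. It follows that $\Lambda^{(r)}$ has the same indispensable set $D$, so it is again minimally stable, and the same complex $\mathcal K_\Lambda$, so the same $K$. The Gale transform is computed from the kernel of $\widetilde\Lambda:e_i\mapsto(1,\Lambda_i)$. This kernel is unchanged when $\widetilde\Lambda$ is composed with $1\oplus g$, so the Gale diagram and polytopality are preserved.

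\emph{Step 2: rank as a linear-algebra invariant.} Put $v_\mu=\Lambda_{a_\mu}-\Lambda_{a_0}$ and $W=\operatorname{span}_\R\{v_1,\ldots,v_m\}$. By Theorem~\ref{thm:real-indispensable-frame}, $\dim_\R W=m$. By Proposition~\ref{prop:rank-chart-invariance}, $\operatorname{rank}_\C B^{(r)}$ equals the $J$-complex dimension of $W+JW$. The subspace $W\cap JW$ is $J$-invariant, so its real dimension is even, say $2s$. Then
\[
\dim_\R(W+JW)=2m-2s,
\]
which gives $\operatorname{rank}=m-s$.

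\emph{Step 3: choosing $J$.} Set $s=m-r$, so that $0\le s\le\lfloor m/2\rfloor$. I will choose a real basis $e_1,\ldots,e_{2m}$ of $\R^{2m}$ with $W=\operatorname{span}(e_1,\ldots,e_m)$, and define $J$ by the following rules:
\[
Je_{2i-1}=e_{2i},\quad Je_{2i}=-e_{2i-1}\quad(i\le s),
\qquad
Je_j=e_{m+j-2s},\quad Je_{m+j-2s}=-e_j\quad(2s<j\le m),
\]
and $J$ is an arbitrary complex structure on the remaining span of $e_{2m-2s+1},\ldots,e_{2m}$, which has even dimension $2s$.

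I then need to check that $W\cap JW=\operatorname{span}(e_1,\ldots,e_{2s})$. First, $JW$ is spanned by $e_1,\ldots,e_{2s}$ together with $e_{m+1},\ldots,e_{2m-2s}$. Second, these last vectors lie in the complement of $W$. This bookkeeping is the only point needing care, and I expect it to be routine.

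Finally, let $g$ be any complex isomorphism $(\R^{2m},J)\to\C^m$. By Steps~1 and~2, $\Lambda^{(r)}$ has the required properties and $\operatorname{rank}_\C B^{(r)}=r$. Together with the bound in Theorem~\ref{thm:real-indispensable-frame}, this shows that the interval $\lceil m/2\rceil\le r\le m$ is exactly the set of realizable ranks.
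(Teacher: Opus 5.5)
Your proof is correct and is essentially the paper's argument. The paper also applies a real-linear automorphism $F$ of $\C^m$, chosen to send the real frame $(v_1,\ldots,v_m)$ to $e_1,\ldots,e_r,ie_1,\ldots,ie_{m-r}$, and observes that both the convex-support tests and the kernel of the augmented real weight matrix are unchanged; your complex structure $J$ with $\dim_\R(W\cap JW)=2(m-r)$ is the same choice described on the source side rather than the target side.
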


\begin{proof}
Write $v_\mu=\Lambda_{a_\mu}-\Lambda_{a_0}$.  By
Theorem~\ref{thm:real-indispensable-frame}, the $v_\mu$ form a real
$m$-frame in the real vector space $\C^m$.  Put $s=m-r$; the inequalities
on $r$ give $0\leq s\leq r$.  The ordered list
\[
 e_1,\ldots,e_r,ie_1,\ldots,ie_s
\]
is a real $m$-frame in $\C^m$ whose complex span has dimension $r$.
Extend the real-linear isomorphism carrying $(v_1,\ldots,v_m)$ to this
frame to an element $F\in\operatorname{GL}_\R(\C^m)$, and set
$\Lambda_i^{(r)}=F(\Lambda_i)$.  For every subset $I$,
\[
 0\in\operatorname{conv}\{\Lambda_i:i\in I\}
 \quad\Longleftrightarrow\quad
 0\in\operatorname{conv}\{\Lambda_i^{(r)}:i\in I\}.
\]
Hence all admissibility, indispensability, and visible-face tests are
unchanged.  The complex rank of the transformed indispensable differences
is $r$ by construction.  If $\widetilde\Lambda$ denotes the real augmented
weight matrix, then
\[
 \widetilde\Lambda^{(r)}=\operatorname{diag}(1,F)\widetilde\Lambda,
 \qquad
 \ker\widetilde\Lambda^{(r)}=\ker\widetilde\Lambda.
\]
Thus the full Gale dual data and the abstract visible complex are
unchanged, and polytopality is preserved.
\end{proof}

\begin{remark}
Real-linear changes of the weights produce individual configurations
at each rank.
Invertible complex-affine changes preserve rank by
Proposition~\ref{prop:rank-chart-invariance}. A holomorphic family crossing a
rank drop is constructed for the square in
Theorem~\ref{thm:holomorphic-square-rank-drop}.
\end{remark}

\begin{example}[A singular indispensable block over the square]
\label{ex:singular-square}
Let $m=2$ and take the seven weights in $\C^2$
\[
\begin{aligned}
 \Lambda_1&=(1,1),
 &\Lambda_2&=(i,1),
 &\Lambda_3&=(-1-i,1),\\
 \Lambda_4&=\left(\tfrac12,-1+i\right),
 &\Lambda_5&=\left(-\tfrac12,-1+i\right),\\
 \Lambda_6&=\left(\tfrac i2,-2-i\right),
 &\Lambda_7&=\left(-\tfrac i2,-2-i\right).
\end{aligned}
\]
Put
\[
 D=\{1,2,3\},\qquad E=\{4,5\},\qquad F=\{6,7\}.
\]
In this example the visible labels are $4,5,6,7$; in the standing convention
they are relabelled $1,2,3,4$.
This is an admissible minimally stable polytopal LVM configuration with
visible complex
\[
 K=\partial\Delta^1*\partial\Delta^1,
\]
the boundary of a square, and its indispensable block has
\[
 \operatorname{rank}_{\C}B=1<2=m.
\]
In particular, the bound in
Theorem~\ref{thm:real-indispensable-frame} is attained.
\end{example}

\begin{proof}
Consider a convex relation for the origin and let $S,E_0,F_0$ be the
sums of its coefficients on $D,E,F$, respectively.  The second complex
coordinate and the normalization give
\[
 S-E_0-2F_0=0,\qquad E_0-F_0=0,
 \qquad S+E_0+F_0=1,
\]
hence
\[
 S=\frac35,\qquad E_0=F_0=\frac15.
\]
After normalizing the coefficients on $D$, their first-coordinate convex
combination is a point $x+iy$.  Since the $E$-weights contribute only to
the real part of the visible first coordinate and the $F$-weights only to
its imaginary part, normalization by $S=3/5$ gives
$|x|,|y|\leq1/6$.  Its barycentric
coordinates in the triangle with vertices $1,i,-1-i$ are
\[
 \frac{1+2x-y}{3},\qquad
 \frac{1-x+2y}{3},\qquad
 \frac{1-x-y}{3},
\]
and all three are strictly positive on that closed square.  Every convex
relation therefore uses all labels in $D$ and at least one label from
each of $E$ and $F$.  Its support has at least five elements, so weak
hyperbolicity holds because $2m=4$.

For each choice of $e\in E$ and $f\in F$, the following table gives a strictly positive relation.  Each row lists
the coefficients on $(\Lambda_1,\Lambda_2,\Lambda_3,\Lambda_e,\Lambda_f)$:
\[
\begin{array}{c|c}
 (e,f)&\text{coefficients}\\ \hline
 (4,6)&(1/6,1/6,4/15,1/5,1/5)\\
 (4,7)&(1/10,3/10,1/5,1/5,1/5)\\
 (5,6)&(3/10,1/10,1/5,1/5,1/5)\\
 (5,7)&(7/30,7/30,2/15,1/5,1/5).
\end{array}
\]
For any prescribed visible label, a row using the other member of its
pair is a positive relation avoiding that label.  Thus deleting any
visible label preserves the Siegel condition, whereas the preceding
argument shows that every convex relation uses all of $D$.  Exactly the
labels $1,2,3$ are therefore indispensable, and $k=3=m+1$.  A visible
subset is a face precisely when its complement retains at least one label
of each of $E$ and $F$.
Therefore
\[
 K=\{J\subseteq E\sqcup F:E\not\subseteq J, F\not\subseteq J\}
   =\partial\Delta^1*\partial\Delta^1.
\]
Finally, with $a_0=1,a_1=2,a_2=3$,
\[
 \Lambda_2-\Lambda_1=(-1+i,0),\qquad
 \Lambda_3-\Lambda_1=(-2-i,0).
\]
These two nonzero rows span one complex line, so
$\operatorname{rank}_{\C}B=1$ and $\delta=1$.
\end{proof}

\begin{remark}[The rank assertion in Bosio--Meersseman]
In the proof of Bosio--Meersseman, Lemma~12.1 \cite{BosioMeersseman2006}, the lattice construction uses complex rank $m$ for the indispensable differences. The preceding example has $m=2$ and complex rank one. Weak hyperbolicity gives
the real affine independence of
Theorem~\ref{thm:real-indispensable-frame}; the lattice and affine-chart
argument requires the additional hypothesis $\det B\neq0$.  The quotient by the connected $\C^m$-action exists at every admissible rank by Theorem~\ref{thm:classical-LVM-quotient}.
Faucard's Example~3 already exhibits an admissible $(3,9,4)$
configuration whose four indispensable columns have complex rank two and
explicitly perturbs them to rank three
\cite[Example~3]{Faucard2024}.  Theorems~\ref{thm:exact-realizable-rank-range}
and~\ref{thm:curvature-exact-sequence} determine the realizable ranks and the
curvature defect; Section~\ref{sec:universal-curvature-formula} computes the
resulting Dolbeault groups.
\end{remark}

\subsection{Ghost vertices}

Let $G=\{a_1,\ldots,a_m\}$. Define $\widehat K$ on the ambient label set
$[N]\sqcup G$ by declaring its faces to be exactly the faces of $K$.
Thus every label in $G$ is a ghost vertex.

\begin{proposition}\label{prop:ghost-open-set}
One has
\[
 U_{\widehat K}=U_K\times(\C^*)^m.
\]
Moreover,
\[
 \mathcal Z_{\widehat K}\cong\mathcal Z_K\times(S^1)^m.
\]
\end{proposition}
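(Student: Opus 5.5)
The plan is to unwind the definitions of $U_{\widehat K}$ and $\mathcal Z_{\widehat K}$ for a complex whose vertex set is strictly smaller than its ambient label set. I use the standard convention: for a simplicial complex $L$ on a label set $S$,
\[
 U_L=\{z\in\C^S:\{s:z_s=0\}\in L\},
 \qquad
 \mathcal Z_L=\bigcup_{\sigma\in L}\prod_{s\in\sigma}D^2\times\prod_{s\notin\sigma}S^1
 \subseteq(D^2)^S.
\]
Both spaces are defined using faces of $L$ only, so ghost labels enter only through the condition that they never belong to a face.

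For the first identity, I take $z=(w,t)\in\C^{[N]}\times\C^G$ and set $Z(z)=\{s:z_s=0\}=Z(w)\sqcup Z(t)$. By definition $z\in U_{\widehat K}$ exactly when $Z(z)$ is a face of $\widehat K$, that is, a face of $K$. Every face of $K$ lies in $[N]$, so this condition forces $Z(t)=\varnothing$, which means $t\in(\C^*)^m$. Once $Z(t)=\varnothing$, the condition reduces to $Z(w)\in K$, which means $w\in U_K$. Conversely, if $w\in U_K$ and $t\in(\C^*)^m$, then $Z(z)=Z(w)\in K=\widehat K$. This gives $U_{\widehat K}=U_K\times(\C^*)^m$ as subsets of $\C^{[N]\sqcup G}$, matching the product in Theorem~\ref{thm:global-affine-product}.

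For the second identity, each face $\sigma\in\widehat K$ satisfies $\sigma\subseteq[N]$. Hence each polydisc-torus block factors as
\[
 \Bigl(\prod_{j\in\sigma}D^2\times\prod_{j\in[N]\setminus\sigma}S^1\Bigr)\times(S^1)^G.
\]
Taking the union over $\sigma\in K$ and pulling the common factor $(S^1)^G$ out of the union gives $\mathcal Z_{\widehat K}=\mathcal Z_K\times(S^1)^m$. This is an equality of subspaces of $(D^2)^{[N]\sqcup G}$, and in particular a homeomorphism, indeed a diffeomorphism where smooth structures are defined.

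The only delicate point is the convention. I must make explicit that the empty face is the only face meeting the ghost coordinates, so a ghost coordinate can never vanish, respectively can never leave the circle. I do not expect any further obstacle.
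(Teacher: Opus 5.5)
Your argument is correct and is essentially the paper's proof. A zero set lies in $\widehat K$ exactly when it contains no ghost label and its visible part is a face of $K$; applying the same observation blockwise to the polyhedral product $(D^2,S^1)^{\widehat K}$ factors out $(S^1)^m$. Your closing phrase would be more precise as ``no nonempty face contains a ghost label'' (the empty face does not meet the ghost coordinates), but this does not affect the proof.
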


\begin{proof}
A zero set belongs to $\widehat K$ exactly when it contains no ghost label
and its visible part belongs to $K$. The moment-angle statement follows
from the same observation for the polyhedral product $(D^2,S^1)^{\widehat
K}$.
\end{proof}

\subsection{The quotient by \texorpdfstring{$\C^m$}{C-m}}

Set
\[
 \mathfrak h_\Lambda=\operatorname{im}\Psi_\Lambda
 \subseteq\C^{\widehat N}
\]
and define
\[
 \rho_\Lambda(T)=\exp(\Psi_\Lambda T),\qquad
 \mathcal C_\Lambda=\rho_\Lambda(\C^m)=\exp(\mathfrak h_\Lambda).
\]

\begin{theorem}[Global ghost-vertex presentation]
\label{thm:global-ghost-quotient}
Assume minimal stability.  The homomorphism
$\rho_\Lambda:\C^m\to(\C^*)^{N+m}$ is a proper holomorphic embedding,
and its image $\mathcal C_\Lambda$ is a closed complex Lie subgroup
isomorphic to $(\C^m,+)$.  Its action on $U_{\widehat K}$ is free and
proper, with compact quotient.  After choosing the distinguished
indispensable label $a_0$, there is a biholomorphism
\[
 \XL\cong
 U_{\widehat K}/\C^m
 =
 U_{\widehat K}/\mathcal C_\Lambda,
\]
where
\[
 T\cdot z=\exp(\Psi_\Lambda T)z.
\]
This presentation is valid at every geometrically realizable complex rank
of $B$.
\end{theorem}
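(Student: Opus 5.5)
The plan is to transport everything through the biholomorphism $\Phi_{a_0}$ of Theorem~\ref{thm:global-affine-product} and reduce each assertion to results already established. By Proposition~\ref{prop:ghost-open-set}, $U_{\widehat K}=U_K\times(\C^*)^m$, and $\Phi_{a_0}$ identifies $V_\Lambda$ with this open set. By Proposition~\ref{prop:affine-action}, $\Phi_{a_0}$ intertwines the LVM action with $T\cdot z=\exp(\Psi_\Lambda T)z$. Thus the original action on $V_\Lambda$ and the action of $\mathcal C_\Lambda$ on $U_{\widehat K}$ are conjugate. Freeness, properness and compactness of the quotient then follow directly from Theorem~\ref{thm:classical-LVM-quotient}. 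That theorem is proved at every admissible configuration by the minimization argument and uses no rank hypothesis on $B$. Passing to quotients, $\Phi_{a_0}$ descends to a biholomorphism $\XL=V_\Lambda/\C^m\cong U_{\widehat K}/\C^m$.

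The remaining work is to show that $\rho_\Lambda$ is a proper holomorphic embedding onto a closed subgroup. It is clearly a holomorphic homomorphism $\C^m\to(\C^*)^{N+m}$. It is an immersion because its differential at every point is the injective map $\Psi_\Lambda$ (Proposition~\ref{prop:affine-action}).

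For properness, I will not argue on the torus directly, since $\ker\exp$ could interact badly with $\operatorname{im}\Psi_\Lambda$ when $B$ is singular. Instead I will deduce it from properness of the action. Fix a point $z_0\in U_{\widehat K}$ with all coordinates nonzero, which exists because $(\C^*)^{N+m}\subseteq U_{\widehat K}$. The orbit map $T\mapsto T\cdot z_0=\rho_\Lambda(T)z_0$ is proper into $U_{\widehat K}$, because the action is proper and free.

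Suppose $\rho_\Lambda(T_k)$ stays in a compact set $C\subset(\C^*)^{N+m}$. Then $T_k\cdot z_0$ stays in $Cz_0$. This set is compact and lies in $(\C^*)^{N+m}\subseteq U_{\widehat K}$. Properness of the orbit map then bounds $T_k$, so $\rho_\Lambda$ is proper.

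A proper injective immersion is a closed embedding. Injectivity itself follows from freeness: $\rho_\Lambda(T)=1$ forces $T\cdot z_0=z_0$, hence $T=0$. The image $\mathcal C_\Lambda$ is therefore a closed complex Lie subgroup isomorphic to $(\C^m,+)$, and the two quotients $U_{\widehat K}/\C^m$ and $U_{\widehat K}/\mathcal C_\Lambda$ coincide.

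The main obstacle is precisely this embedding claim at deficient rank. Naively one would want $\mathfrak h_\Lambda$ to meet $2\pi i\Z^{N+m}$ only in $0$ and to have closed image under $\exp$. That is exactly what fails to be controlled by lattice arguments when $\det B=0$, as the remark on Bosio--Meersseman notes. The route above avoids this entirely by routing properness through the minimization theorem, which holds at every admissible rank. For this reason the presentation is valid at every geometrically realizable rank, in particular at the ranks supplied by Theorem~\ref{thm:exact-realizable-rank-range}.
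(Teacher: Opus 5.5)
Your proposal is correct and follows essentially the same route as the paper. You transport the problem through the equivariant biholomorphism $\Phi_{a_0}$, inherit freeness, properness and compactness from the classical minimization theorem, and deduce that $\rho_\Lambda$ is a proper injective immersion with closed image from the orbit map at a point with nonzero coordinates. The paper does the same, using the point $\mathbf 1$, whose orbit is $\mathcal C_\Lambda$ itself; you also spell out the immersion step, which the paper leaves implicit.
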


\begin{proof}
The biholomorphism
\[
 V_\Lambda\overset{\sim}{\longrightarrow}
 U_K\times(\C^*)^m=U_{\widehat K}
\]
is equivariant by Proposition~\ref{prop:affine-action}. It therefore
transfers freeness, properness, compactness of the quotient, and the
complex structure from Theorem~\ref{thm:classical-LVM-quotient}.
Injectivity of $\rho_\Lambda$ follows from freeness.  The orbit of
$\mathbf1=(1,\ldots,1)$ is $\mathcal C_\Lambda$; its orbit map is proper
with trivial stabilizer, hence a proper holomorphic embedding with closed
image.  The equivariant biholomorphism identifies the orbit equivalence
relations and their analytic quotients.
\end{proof}

\subsection{The full-rank slice}

\begin{theorem}[Regular holomorphic Cox covering]
\label{thm:full-rank-Cox-quotient}
Assume $\det B\neq0$ and set
\[
 \GammaL=B^{-1}(2\pi i\Z^m).
\]
Then $\GammaL\cong\Z^m$ acts freely and properly discontinuously on $U_K$
by
\[
 \gamma\cdot w=e^{A(\gamma)}w,
\]
and
\[
 \XL\cong_{\mathrm{bihol}} U_K/\GammaL.
\]
Moreover, $U_K\to\XL$ is a regular holomorphic covering whose deck group
is $\GammaL$.
\end{theorem}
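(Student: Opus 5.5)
The plan is to obtain the discrete quotient from the connected one in Theorem~\ref{thm:global-ghost-quotient} by taking a slice. Write $U_{\widehat K}=U_K\times(\C^*)^m$ with coordinates $(w,t)$. By Proposition~\ref{prop:affine-action} the action is
\[
 T\cdot(w,t)=(e^{A(T)}w,e^{B(T)}t).
\]
I would consider the closed complex submanifold
\[
 S=U_K\times\{\mathbf1\}\subseteq U_{\widehat K}
\]
and show that the inclusion $S\hookrightarrow U_{\widehat K}$ induces a biholomorphism $S/\GammaL\cong U_{\widehat K}/\C^m$.

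\textbf{Step 1: every orbit meets $S$.} Since $\det B\neq0$, the map $T\mapsto e^{B(T)}$ from $\C^m$ to $(\C^*)^m$ is a surjective group homomorphism. Indeed, $B$ is a linear isomorphism and $\exp:\C^m\to(\C^*)^m$ is surjective. Its kernel is exactly $B^{-1}(2\pi i\Z^m)=\GammaL$, which is a lattice of rank $m$ because $B^{-1}$ is a real-linear isomorphism carrying $2\pi i\Z^m$ to a discrete cocompact subgroup of a totally real $m$-plane; this gives $\GammaL\cong\Z^m$. Hence every orbit meets $S$. Two points $(w,\mathbf1)$ and $(w',\mathbf1)$ of $S$ lie in the same orbit exactly when $w'=e^{A(\gamma)}w$ for some $\gamma\in\GammaL$. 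So orbits of $\C^m$ correspond bijectively to $\GammaL$-orbits on $U_K$.

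\textbf{Step 2: the map is a global product.} The map
\[
 \C^m\times U_K\to U_{\widehat K},\qquad (T,w)\mapsto T\cdot(w,\mathbf1)
\]
is a local biholomorphism: for $(w,t)$ near $(w_0,t_0)$, a holomorphic local inverse of $e^{B(\cdot)}$ gives $T(t)$, and then $w=e^{-A(T)}\cdot$ (first component). I would rather exhibit it globally as the quotient
\[
 (\C^m\times U_K)/\GammaL\cong U_{\widehat K},
\]
where $\gamma$ acts by $(T,w)\mapsto(T-\gamma,e^{A(\gamma)}w)$. This identifies $U_{\widehat K}\to U_{\widehat K}/\C^m$ with the bundle associated to $U_K\to U_K/\GammaL$.

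\textbf{Step 3: freeness and proper discontinuity.} Both are inherited. If $e^{A(\gamma)}w=w$, then $\gamma\cdot(w,\mathbf1)=(w,\mathbf1)$, since $e^{B(\gamma)}=\mathbf1$, so freeness of the $\C^m$-action forces $\gamma=0$. For proper discontinuity, take compact $L\subseteq U_K$. If infinitely many $\gamma$ satisfied $\gamma L\cap L\neq\varnothing$, then properness of the $\C^m$-action applied to the compact set $L\times\{\mathbf1\}$ would confine these $\gamma$ to a compact subset of $\C^m$. That subset meets the discrete group $\GammaL$ in only finitely many points, a contradiction.

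\textbf{Step 4: the quotient map.} Properly discontinuous free actions give a holomorphic covering $U_K\to U_K/\GammaL$ with deck group $\GammaL$; it is regular because it is a group quotient. The induced map $U_K/\GammaL\to U_{\widehat K}/\C^m\cong\XL$ is bijective by Step~1 and holomorphic, since it is induced by the holomorphic inclusion of $S$. To see it is locally biholomorphic, I would use the local holomorphic inverse $t\mapsto T(t)$ with $e^{B(T(t))}=t$ and define the local inverse $(w,t)\mapsto e^{-A(T(t))}w$. Changing the branch of $T$ changes the result by an element of $\GammaL$, so this descends to a well-defined holomorphic inverse on $\XL$. Connectedness follows from Lemma~\ref{lem:UK-connected}.

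The main point needing care is Step~3's proper discontinuity. The clean route is the one sketched there: reduce to properness of the $\C^m$-action from Theorem~\ref{thm:global-ghost-quotient}, using the fact that $B$-invertibility makes $\GammaL$ discrete.
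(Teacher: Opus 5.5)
Your proposal is correct and follows essentially the same route as the paper: the slice $t=\mathbf1$, surjectivity of $T\mapsto e^{B(T)}$ with kernel $\GammaL$, freeness and proper discontinuity inherited from the connected action together with discreteness of $\GammaL$, the local gauge $(w,t)\mapsto e^{-A(B^{-1}\log t)}w$ to identify the two analytic quotients, and connectedness of $U_K$ to get the full deck group. Your Step~2 (the associated-bundle description) is correct but not used anywhere in the argument, so you can drop it.
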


\begin{proof}
The map
\[
 T\longmapsto e^{B(T)}
\]
is onto $(\C^*)^m$ and has kernel $\GammaL$. Hence every orbit in
$U_K\times(\C^*)^m$ meets the slice $t=\mathbf1$. Two points of the slice
belong to the same orbit exactly when they differ by an element of
$\GammaL$, and freeness follows from freeness of the connected action.

For proper discontinuity, let $C_1,C_2\subset U_K$ be compact and put
$\widetilde C_i=C_i\times\{\mathbf1\}$.  Properness of the connected
$\C^m$-action makes
\[
 \{T\in\C^m:T\widetilde C_1\cap\widetilde C_2\neq\varnothing\}
\]
compact.  Its intersection with the discrete subgroup $\GammaL$ is
finite.

To identify the analytic quotient, choose a simply connected neighbourhood
$W\subset(\C^*)^m$ with a holomorphic branch of $\log$.  The gauge
\[
 (w,t)\longmapsto e^{-A(B^{-1}\log t)}w
\]
is holomorphic on $U_K\times W$ and sends every connected orbit to the
slice $t=\mathbf1$.  Two such gauges differ on overlaps by the action of an
element of $\GammaL$.  They are therefore local covering charts for
$U_K\to U_K/\GammaL$ and identify this quotient biholomorphically with the
quotient by the connected $\C^m$-action. Since $U_K$ is connected, the quotient action gives
the full deck group and the covering is regular.
\end{proof}

\begin{proposition}
If $\operatorname{rank}_\C B<m$, then the slice $t=\mathbf1$ does not meet
every orbit, and the kernel of $T\mapsto e^{B(T)}$ contains the
positive-dimensional vector group $\ker B$.  In particular, the preceding
full-rank slice construction does not reduce the quotient to a discrete
action on $U_K$.
\end{proposition}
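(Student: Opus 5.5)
The statement has three assertions. The kernel assertion is immediate; the other two are short consequences of Theorem~\ref{thm:global-ghost-quotient} and the product description $U_{\widehat K}=U_K\times(\C^*)^m$ of Proposition~\ref{prop:ghost-open-set}. I would prove them in the following order.

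\emph{Kernel.} The homomorphism $T\mapsto e^{B(T)}$ has kernel $\{T: B(T)\in 2\pi i\Z^m\}$. This set contains $\ker B$. When $\operatorname{rank}_\C B<m$, $\ker B$ is a complex subspace of positive dimension $\delta=m-\rho$, hence a positive-dimensional vector group. Nothing more is needed for this part.

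\emph{The slice misses some orbit.} By Proposition~\ref{prop:affine-action}, the orbit of $(w,t)$ meets $t=\mathbf 1$ exactly when $t\in e^{B(\C^m)}$. The image $B(\C^m)$ is a proper complex subspace $L$ of $\C^m$. So $e^{B(\C^m)}=\exp(L)$ is the image of $L$ under the surjective homomorphism $\exp:\C^m\to(\C^*)^m$. The key step is that $\exp(L)\neq(\C^*)^m$. I would argue by real dimension and countability. We have $\exp^{-1}(\exp L)=L+2\pi i\Z^m$, which is a countable union of translates of a real subspace of dimension $2\rho<2m$. This set has measure zero, so it cannot be all of $\C^m$, and hence $\exp(L)$ is not all of $(\C^*)^m$. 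Any $t\notin\exp(L)$ and any $w\in U_K$ (nonempty, since $\varnothing\in K$) then give an orbit disjoint from the slice.

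\emph{No discrete reduction.} The stabilizer in $\C^m$ of the slice is the set of $T$ with $e^{B(T)}=\mathbf 1$, which is the kernel computed above. It contains $\ker B$, which acts on the slice by $w\mapsto e^{A(T)}w$. This action is a connected positive-dimensional group action, not a discrete one. It is moreover free: $T\in\ker B$ acts trivially on the $t$-factor, and the $\C^m$-action is free by Theorem~\ref{thm:global-ghost-quotient}. Two consequences follow. Points of the slice identified in $\XL$ are not related by a discrete group. And the slice does not surject onto $\XL$ anyway, by the previous step.

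I expect the only mildly delicate point to be the non-surjectivity of $\exp(L)$. The measure-zero argument handles it cleanly, so I would use that rather than a closure or density discussion: $\exp(L)$ may be dense, as for irrational lines in a torus, so a closedness argument would not work.
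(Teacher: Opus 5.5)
Your proof is correct and follows the paper's: the kernel of $T\mapsto e^{B(T)}$ contains $\ker B$, and the image $e^{B(\C^m)}$ is too small to be all of $(\C^*)^m$. The paper phrases the second point by saying that the image has Lie algebra $\operatorname{im}B\subsetneq\C^m$. Your argument, that $L+2\pi i\Z^m$ is a countable union of translates of a proper real subspace and hence has measure zero, is a more explicit version of the same dimension count. It avoids relying on the unstated fact that an immersed subgroup with a proper Lie algebra cannot be the whole group.
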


\begin{proof}
The image of $T\mapsto e^{B(T)}$ has Lie algebra
$\operatorname{im}B\subsetneq\C^m$, so it cannot equal $(\C^*)^m$; and
$\ker B$ is a positive-dimensional subgroup of its kernel.
\end{proof}

\section{The marked fan and the canonical foliation}
\label{sec:marked-fan-foliation}

Project the coordinate fan along the real action algebra.  The
resulting fan describes the geometry transverse to the canonical
holomorphic foliation.  Its ray relations will give the linear
relations in basic cohomology.

Put $\widehat N=N+m=n-1$ and
\[
 \mathfrak h_\Lambda=\operatorname{im}\Psi_\Lambda
 \subseteq\C^{\widehat N},\qquad
 \mathfrak r_\Lambda=\operatorname{Re}(\mathfrak h_\Lambda)
 \subseteq\R^{\widehat N}.
\]
The coordinate fan of $U_{\widehat K}$ is
\[
 \Sigma_{\widehat K}
 =
 \left\{
 \R_{\geq0}\langle e_i:i\in I\rangle:I\in\widehat K
 \right\}.
\]

\begin{proposition}\label{prop:real-part-injective}
The restriction
\[
 \operatorname{Re}|_{\mathfrak h_\Lambda}:
 \mathfrak h_\Lambda\longrightarrow\R^{\widehat N}
\]
is injective. Hence
\[
 \dim_\R\mathfrak r_\Lambda=2m
\]
and
\[
 (\mathfrak r_\Lambda)_\C
 =
 \mathfrak h_\Lambda\oplus\overline{\mathfrak h_\Lambda}.
\]
\end{proposition}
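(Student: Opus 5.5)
The plan is to reduce injectivity of $\operatorname{Re}$ on $\mathfrak h_\Lambda$ to freeness and properness of the connected action, using Theorem~\ref{thm:global-ghost-quotient}.

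Suppose $T\in\C^m$ has $\operatorname{Re}\Psi_\Lambda T=0$. Then every one-parameter element $\rho_\Lambda(sT)=\exp(s\Psi_\Lambda T)$, $s\in\R$, lies in the compact torus $(S^1)^{\widehat N}\subset(\C^*)^{\widehat N}$, because all its coordinates have modulus one. The subgroup $\rho_\Lambda(\R T)$ is therefore relatively compact in $(\C^*)^{\widehat N}$. By Theorem~\ref{thm:global-ghost-quotient}, $\rho_\Lambda$ is a proper embedding with closed image. Hence the preimage $\R T$ of a relatively compact set must be relatively compact in $\C^m$. This is only possible if $T=0$.

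A more hands-on alternative avoids the group-theoretic properness and uses the classical minimization directly. Pick $z=\mathbf1\in U_{\widehat K}$. Along the real line $sT$, the function $F_z$ from the proof of Theorem~\ref{thm:classical-LVM-quotient}, written in affine coordinates, becomes $\sum_i e^{2s\operatorname{Re}(\Psi_\Lambda T)_i}$ (plus the denominator term). This function is constant in $s$. Strict convexity of $F_z$ along a nonzero direction then forces $T=0$.

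With injectivity established, $\dim_\R\mathfrak r_\Lambda=\dim_\R\mathfrak h_\Lambda=2m$, since $\Psi_\Lambda$ is injective by Proposition~\ref{prop:affine-action}.

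For the complexification, I will note that $\mathfrak h_\Lambda+\overline{\mathfrak h_\Lambda}$ is a conjugation-stable complex subspace. It therefore equals the complexification of its real points. Its real points contain $\mathfrak r_\Lambda$, because $\operatorname{Re}h=\tfrac12(h+\bar h)$. Conversely, for $h\in\mathfrak h_\Lambda$, both $\operatorname{Re}h$ and $\operatorname{Im}h=\operatorname{Re}(-ih)$ lie in $\mathfrak r_\Lambda$. So $h\in(\mathfrak r_\Lambda)_\C$, which gives the equality $(\mathfrak r_\Lambda)_\C=\mathfrak h_\Lambda+\overline{\mathfrak h_\Lambda}$.

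It remains to show the sum is direct, and I will do this by a dimension count. The left side has complex dimension $2m$, and each summand has complex dimension $m$, so the sum is direct.

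An element of $\mathfrak h_\Lambda\cap\overline{\mathfrak h_\Lambda}$ would also give a real vector $\operatorname{Im}$-type element in $\ker\operatorname{Re}$. This is consistent with the dimension count, but the count alone suffices.

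The main obstacle is the first step: making sure properness is applied correctly, namely that a closed embedded subgroup meets the compact torus in a compact set whose preimage is compact. I would cite the proper-embedding clause of Theorem~\ref{thm:global-ghost-quotient} explicitly for this.
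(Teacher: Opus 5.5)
Your argument is correct, but it proves injectivity by a different mechanism from the paper.

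The paper's proof is pure linear algebra. The condition $\operatorname{Re}\Psi_\Lambda T=0$ means $\operatorname{Re}\langle\Lambda_i-\Lambda_{a_0},T\rangle=0$ for every label. The weight differences span $\C^m$ over $\R$ by Proposition~\ref{prop:affine-spanning}, and the real part of the complex-bilinear pairing is nondegenerate, so $T=0$.

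You instead use properness of $\rho_\Lambda$ from Theorem~\ref{thm:global-ghost-quotient}. The whole line $\R T$ maps into the compact torus, so it lies in a compact preimage, and hence $T=0$. This is not circular, because that theorem is derived from Theorem~\ref{thm:classical-LVM-quotient} without the present proposition. It is also the same mechanism the paper uses for Proposition~\ref{prop:cone-transversality}.

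Within this paper your route is heavier. Properness ultimately comes from the strict convexity of the minimizing function, and that convexity is itself the spanning fact. In exchange, your argument shows that real-part injectivity is forced by properness alone, for any properly embedded exponential subgroup, independently of the LVM convexity data.

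For the decomposition, the paper first shows that $\mathfrak h_\Lambda$ contains no nonzero real vector, so $\mathfrak h_\Lambda\cap\overline{\mathfrak h_\Lambda}=0$, and then counts dimensions. You prove $(\mathfrak r_\Lambda)_\C=\mathfrak h_\Lambda+\overline{\mathfrak h_\Lambda}$ first and let $\dim_\C(\mathfrak r_\Lambda)_\C=2m$ force directness. The two are equivalent.

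Your secondary argument via $F_{\mathbf 1}$ has two issues.
\begin{enumerate}
\item Constancy along $sT$ needs a step you omit. The hypothesis only says that $\operatorname{Re}\langle\Lambda_i,T\rangle$ equals a common value $c$ for all $i$. That $c=0$ follows from a convex relation $\sum_i\alpha_i\Lambda_i=0$: one gets $0=\operatorname{Re}\langle\sum_i\alpha_i\Lambda_i,T\rangle=c$.
\item Strict convexity of $F_{\mathbf 1}$ is itself equivalent to the spanning statement. So that variant is the paper's argument in disguise rather than an independent proof.
\end{enumerate}
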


\begin{proof}
Write $\xi=\Psi_\Lambda(T)$ and suppose $\operatorname{Re}\xi=0$.  Then
\[
 \operatorname{Re}\langle\Lambda_i-\Lambda_{a_0},T\rangle=0
 \qquad\text{for every label }i.
\]
The real affine differences of the weights span $\C^m$ as a real vector
space by Proposition~\ref{prop:affine-spanning}.  Nondegeneracy of the real
part of the complex-bilinear pairing therefore gives $T=0$, hence $\xi=0$.

A nonzero real vector in $\mathfrak h_\Lambda$ would produce a nonzero
vector with zero real part after multiplication by $i$. Thus
$\mathfrak h_\Lambda$ contains no nonzero real vector. It follows that
$\mathfrak h_\Lambda\cap\overline{\mathfrak h_\Lambda}=0$, and the final
identity follows by dimensions.
\end{proof}

Let
\[
 \varpi_\Lambda:\R^{\widehat N}\longrightarrow
 \mathfrak t_\Lambda=\R^{\widehat N}/\mathfrak r_\Lambda
\]
be the quotient map.
We normalize the coordinate-torus exponential by
\[
 \exp_T:\R^{\widehat N}\longrightarrow T^{\widehat N},
 \qquad
 x\longmapsto(e^{2\pi i x_1},\ldots,e^{2\pi i x_{\widehat N}}),
\]
so that its kernel is $\Z^{\widehat N}$.

\begin{proposition}\label{prop:cone-transversality}
For every $I\in\widehat K$,
\[
 \mathfrak r_\Lambda\cap\R^I=0.
\]
Consequently, $\varpi_\Lambda$ is injective on every cone of
$\Sigma_{\widehat K}$.
\end{proposition}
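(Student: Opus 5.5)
Suppose $\xi\in\mathfrak r_\Lambda\cap\R^I$ with $I\in\widehat K$. Since $I$ contains no ghost label, $I\subseteq[N]$ and $I$ is a face of $K$. The plan is to write $\xi$ in terms of the weights and then use a convex relation avoiding $I$ to force $\xi=0$.

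First, by Proposition~\ref{prop:real-part-injective} there is $T\in\C^m$ with $\xi=\operatorname{Re}\Psi_\Lambda(T)$. In terms of the weights, this says
\[
 \xi_i=\operatorname{Re}\langle\Lambda_i-\Lambda_{a_0},T\rangle
\]
for every label $i\neq a_0$. The same formula holds trivially for $i=a_0$, with value $0$. Put $c=\operatorname{Re}\langle\Lambda_{a_0},T\rangle$ and $f(\lambda)=\operatorname{Re}\langle\lambda,T\rangle-c$. Then $f$ is a real affine function on $\C^m\cong\R^{2m}$ with $\xi_i=f(\Lambda_i)$ for all labels. The hypothesis $\xi\in\R^I$ means that $f(\Lambda_i)=0$ for every label $i\notin I$, and this includes all indispensable labels.

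Next, since $I\in K$, the definition of $\mathcal K_\Lambda$ (Proposition~\ref{prop:Gale-dictionary}) gives $0\in\operatorname{conv}\{\Lambda_i:i\notin I\}$. By Proposition~\ref{prop:affine-spanning}, the subconfiguration $\{\Lambda_i:i\notin I\}$ contains $2m+1$ affinely independent weights. These weights therefore affinely span $\R^{2m}$. The affine function $f$ vanishes on all of them, so $f\equiv0$. Hence $\operatorname{Re}\langle\lambda,T\rangle=c$ for all $\lambda$. Since $\lambda\mapsto\operatorname{Re}\langle\lambda,T\rangle$ is linear, this forces $c=0$ and $\operatorname{Re}\langle\cdot,T\rangle=0$. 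Nondegeneracy of the real part of the complex-bilinear pairing then gives $T=0$, and hence $\xi=0$.

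For the consequence, $\varpi_\Lambda$ is linear and its kernel meets $\R^I\supseteq\R_{\geq0}\langle e_i:i\in I\rangle$ only in $0$. It is therefore injective on $\R^I$, and so on the cone. The only point needing care is in the second step: the spanning set must be drawn from labels outside $I$, which is exactly what membership of $I$ in $K$ supplies. The rest is bookkeeping.
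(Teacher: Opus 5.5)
Your proof is correct, but it takes a different route from the paper.

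The paper argues dynamically. It lifts a nonzero $x\in\mathfrak r_\Lambda\cap\R^I$ to some $\xi\in\mathfrak h_\Lambda$ and takes a point $z$ whose zero set is exactly $I$. The real one-parameter orbit $\exp(t\xi)z$ then has constant coordinate moduli off $I$ and zero coordinates on $I$. So it stays in a compact subset of $U_{\widehat K}$ while the parameter $t\xi$ is unbounded, which contradicts properness of the action (Theorem~\ref{thm:global-ghost-quotient}).

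You instead work directly with the convex-support description. You write $\operatorname{Re}\Psi_\Lambda(T)$ as a real affine function evaluated at the weights. Membership $I\in K$ gives $0\in\operatorname{conv}\{\Lambda_i:i\notin I\}$, and Proposition~\ref{prop:affine-spanning} makes those weights affinely span $\R^{2m}$. This is the proof of Proposition~\ref{prop:real-part-injective} (the case $I=\varnothing$) run on the labels outside $I$. It needs only weak hyperbolicity and the Gale dictionary, with no appeal to properness.

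The paper's version is shorter once properness is established. It uses only the fact that $\exp(\mathfrak h_\Lambda)$ acts freely and properly on $U_{\widehat K}$, so it applies to any such action regardless of how the face complex was obtained.

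One minor point: the existence of $T$ with $\xi=\operatorname{Re}\Psi_\Lambda(T)$ is just the definition $\mathfrak r_\Lambda=\operatorname{Re}(\mathfrak h_\Lambda)$ with $\mathfrak h_\Lambda=\operatorname{im}\Psi_\Lambda$. It does not need Proposition~\ref{prop:real-part-injective}.
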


\begin{proof}
Suppose $0\neq x\in\mathfrak r_\Lambda\cap\R^I$, and choose
$\xi\in\mathfrak h_\Lambda$ with $\operatorname{Re}\xi=x$. Let $z$ have
zero-coordinate set exactly $I$. Outside $I$, the orbit
$\exp(t\xi)z$ has constant coordinate moduli; on $I$ the coordinates remain
zero. Hence the orbit is precompact, contradicting properness.
\end{proof}

Since $m=N-d$,
\[
 \dim_\R\mathfrak t_\Lambda
 =
 (N+m)-2m=d.
\]

\begin{theorem}[Complete marked fan]
\label{thm:complete-marked-fan}
Put
\[
 a_i=\varpi_\Lambda(e_i),\qquad
 \widetilde\Gamma_\Lambda=
 \varpi_\Lambda(\Z^{\widehat N})\subset\mathfrak t_\Lambda.
\]
The collection
\[
 \Sigma_\Lambda
 =
 \varpi_\Lambda(\Sigma_{\widehat K})
\]
is a complete simplicial normal fan in $\mathfrak t_\Lambda$.  Its rays
are
\[
 \rho_j=\R_{\geq0}a_j,\qquad 1\leq j\leq N,
\]
and its underlying simplicial complex is $K$.  The map
\[
 \widetilde\lambda_\Lambda(\rho_j)=a_j
\]
makes
\[
 (\mathfrak t_\Lambda,\widetilde\Gamma_\Lambda,
   \Sigma_\Lambda,\widetilde\lambda_\Lambda)
\]
a complete marked fan.  The ray marking is indexed by the visible labels; ghost labels remain
in the redundant ambient presentation.
\end{theorem}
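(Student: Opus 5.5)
The plan is to identify $\mathfrak t_\Lambda$ with a real Gale dual of the real augmented configuration, so that the projected cones become the normal fan of $P$. I would proceed in three steps: simplicial cones, the fan axioms and completeness, and then normality and the marking.

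\emph{Simplicial cones.} First I would show that every projected cone $\varpi_\Lambda(\R_{\geq0}\langle e_i:i\in I\rangle)$, $I\in\widehat K$, is simplicial of dimension $|I|$. This is immediate from Proposition~\ref{prop:cone-transversality}, since $\varpi_\Lambda$ is injective on $\R^I$. The ghost labels never occur in faces, so the rays are exactly the $a_j$ with $j\in[N]$. Each $a_j$ is nonzero because $\{j\}\in K$.

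\emph{Fan axioms.} Next I would prove that two projected cones meet in a common face and that the union is $\mathfrak t_\Lambda$. The intended route is an interior-point argument. For $y\in\mathfrak t_\Lambda$, consider the affine-linear fibre $\varpi_\Lambda^{-1}(y)=x+\mathfrak r_\Lambda$. Minimizing a strictly convex exhaustion over this fibre should produce a unique ``minimal'' representative whose negative-coordinate support lies in a face of $K$. The natural choice is the function
\[
 x'\longmapsto\sum_i e^{-x'_i}
\]
restricted to $\R_{\geq0}$-cone data, or its analogue. This mirrors the minimization in Theorem~\ref{thm:classical-LVM-quotient}.

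A cleaner alternative for this step is to use the properness of the action together with the orbit structure of the real torus $\exp(\mathfrak r_\Lambda)$ acting on $U_{\widehat K}$. Every orbit of the real group $\exp(\mathfrak r_\Lambda)\cdot T^{\widehat N}$ meets the real positive part of exactly one coordinate stratum. Taking logarithms of moduli identifies $\mathfrak t_\Lambda$ with the quotient of $\R^{\widehat N}$, and properness (as in Proposition~\ref{prop:cone-transversality}) forces each class to lie in a unique relatively open cone. This gives completeness and disjointness of relative interiors simultaneously. Completeness also uses that $\mathcal Z_{\widehat K}$ is compact, i.e.\ that the quotient is compact.

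\emph{Normality and marking.} For normality I would invoke the Gale duality already recorded after Lemma~\ref{lem:indispensable-bound}. The map $\varpi_\Lambda$ restricted to visible coordinates is a Gale transform of the real augmented configuration. Here one uses $\dim\ker\widetilde\Lambda=d$ and the fact that $\mathfrak r_\Lambda$ together with the ghost directions accounts for $\operatorname{im}\widetilde\Lambda^{\!\top}$. Standard Gale duality then says that the fan spanned by Gale vectors over the faces of $K=\partial Q$ is the normal fan of $P=Q^\vee$, with the coefficients of the convex relation $0=\sum\alpha_i\Lambda_i$ supplying the support numbers.

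The marking statement is then formal: $\widetilde\Gamma_\Lambda$ is the image of $\Z^{\widehat N}$, and each visible $a_j$ lies in it. The main obstacle I expect is matching $\mathfrak r_\Lambda$ precisely with the Gale dual subspace, including the ghost coordinates, so that the projected fan coincides with the normal fan rather than merely being combinatorially equivalent to it.
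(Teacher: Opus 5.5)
Your Step~1 matches the paper, which also checks conewise injectivity directly via Proposition~\ref{prop:cone-transversality}. For the fan axioms, completeness and normality, however, the paper does not argue from scratch. It cites Battisti's projected-fan correspondence. It then uses Panov's Theorems~10.3 and~10.5: a free proper exponential action gives a simplicial fan whose cones meet in common faces, and compactness of the quotient makes it complete. Normality comes from Panov's Construction~13.4, and the marking is the quotient marking of Ishida--Krutowski--Panov.

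A self-contained Gale-duality proof is a legitimate alternative, but your Step~2 does not work as written. The function $\sum_i e^{-x'_i}$ is not an exhaustion on $x+\mathfrak r_\Lambda$. Indispensability makes $\Lambda_{a_0}$ a vertex of $\operatorname{conv}\{\Lambda_i\}$, distinct from the other weights; otherwise deleting it would not change a convex hull containing the origin. So some real functional $f$ has $f(\Lambda_{a_0})<f(\Lambda_i)$ for all $i\neq a_0$. Then $v=(f(\Lambda_i)-f(\Lambda_{a_0}))_{i\neq a_0}$ is a strictly positive vector in $\mathfrak r_\Lambda$, and along $x+tv$ your function tends to $0$ without attaining a minimum.

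The target property is also wrong. Completeness requires every class to have a representative in $\R^{\widehat N}_{\geq0}$ whose support lies in $\widehat K$; the fan axiom requires the open cone containing a class to be unique. A ``negative support'' condition gives neither. Your log-moduli alternative only asserts these two facts. Deriving them from properness and from compactness of the quotient (not of $\mathcal Z_{\widehat K}$, which is always compact) is exactly the content of the theorems the paper cites.

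Step~3 can carry the whole proof, but the identification you flag as the main obstacle is misstated. The complement of $\mathfrak r_\Lambda$ in $\operatorname{im}\widetilde\Lambda_\R^{\top}$ is the diagonal $\R\mathbf 1$, not the ghost directions. As $T$ varies, $\operatorname{Re}\langle\,\cdot\,,T\rangle$ runs over all real functionals $f$ on $\C^m\cong\R^{2m}$. Hence $\mathfrak r_\Lambda=\{(f(\Lambda_i)-f(\Lambda_{a_0}))_{i\neq a_0}\}$, which is the image of $\operatorname{im}\widetilde\Lambda_\R^{\top}$ under the isomorphism $\R^n/\R\mathbf 1\cong\R^{\widehat N}$, $x\mapsto(x_i-x_{a_0})_{i\neq a_0}$. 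Therefore $\mathfrak t_\Lambda\cong\R^n/\operatorname{im}\widetilde\Lambda_\R^{\top}\cong\mathcal R_\Lambda^\vee$, and every $e_i$, visible or ghost, goes to its Gale vector $q_i=\varepsilon_i|_{\mathcal R_\Lambda}$.

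Now take a positive relation $h$ as in Lemma~\ref{lem:positive-Gale-calibration}. The relation polytope is $h+\{c\in\mathcal R_\Lambda:q_i(c)\geq-h_i\}$. Its ghost inequalities are never tight, by indispensability. By Proposition~\ref{prop:Gale-dictionary}, the visible inequalities cut out facets whose nonempty intersections are indexed exactly by the faces of $K$. So $\Sigma_\Lambda$ is the normal fan of this polytope. Completeness, simpliciality, the common-face property, normality and the identification of the underlying complex with $K$ then follow at once, and Step~2 becomes unnecessary.

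This route stays inside the LVM Gale dictionary the paper already uses. The paper's citations instead derive the fan from the quotient geometry, which also covers LVMB data where the projected fan need not be normal.
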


\begin{proof}
The projected-fan correspondence and its LVM polytopality criterion are
due to Battisti \cite[Theorems~2.2 and~3.10]{Battisti2013}.
We express them here in the later marked-fan notation.  The free proper exponential action of
Theorem~\ref{thm:global-ghost-quotient} makes the projected coordinate
cones into a simplicial fan, with their intersections equal to common
faces, by \cite[Theorem~10.3]{Panov2025}.  Compactness of the quotient
then makes this fan complete by \cite[Theorem~10.5]{Panov2025}.
Proposition~\ref{prop:cone-transversality} also verifies cone-wise
injectivity directly.  The one-dimensional coordinate cones occur exactly
at the visible labels, so the underlying complex is $K$; the ghost vectors remain part of the ambient presentation, with rays
indexed by the visible labels.

For LVM data the projected fan is normal by the LVM normal-fan
construction \cite[Construction~13.4]{Panov2025}.  Finally,
$\widetilde\Gamma_\Lambda$ is a finitely generated subgroup spanning
$\mathfrak t_\Lambda$, and the displayed ray marking is precisely the
quotient marking of
\cite[Definition~5.5 and Construction~5.6]{IshidaKrutowskiPanov2022}.
This subgroup may be nondiscrete. The group $\GammaL$ acts on the Cox
cover, whereas $\widetilde\Gamma_\Lambda$ records the projected fan.
\end{proof}

\begin{theorem}[Moment-angle topology and smooth quadrics]
\label{thm:smooth-moment-angle-model}
For a minimally stable polytopal LVM manifold,
\[
 \XL\cong_{T^{\widehat N},\mathrm{homeo}}\mathcal Z_{\widehat K}
 \cong_{T^{\widehat N},\mathrm{homeo}}\mathcal Z_K\times T^m.
\]
Moreover, $\XL$ is $T^{\widehat N}$-equivariantly diffeomorphic to a
nondegenerate Hermitian-quadrics realization
$\mathcal Z_{\widehat K}^{\mathrm{quad}}$ whose underlying topological
$T^{\widehat N}$-space is $\mathcal Z_{\widehat K}$.
In particular,
\[
 \dim_\R\XL
 =\dim_\R\mathcal Z_K+m
 =(N+d)+m
 =2N.
\]
\end{theorem}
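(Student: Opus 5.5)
The plan is to work through the classical LVM transversal $\mathscr T_\Lambda$ of Theorem~\ref{thm:classical-LVM-quotient}, which maps diffeomorphically onto $\XL$. Since that diffeomorphism comes from the free proper action together with the unique minimizer of $F_z$, and $F_z$ depends only on the moduli $|z_i|$, the diffeomorphism commutes with the coordinate torus $T^n$ acting diagonally on $\PP^{n-1}$, and hence with $T^n/S^1\cong T^{\widehat N}$. In the affine chart $z_{a_0}=1$ of Theorem~\ref{thm:global-affine-product} this torus is exactly the coordinate torus on $U_{\widehat K}=U_K\times(\C^*)^m$. The first step is therefore to identify $\XL$ $T^{\widehat N}$-equivariantly and diffeomorphically with the projectivized quadric $\mathscr T_\Lambda\subset\PP^{n-1}$. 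This quadric is nondegenerate by Proposition~\ref{prop:LVM-dimension}: the defining equations have real rank $2m$ at every point. It will serve as the model $\mathcal Z_{\widehat K}^{\mathrm{quad}}$.

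Next I would identify this quadric with a moment-angle manifold. I would take the moment map $\mu([z])=(|z_1|^2,\ldots,|z_n|^2)/\sum|z_i|^2$. Its image is the polytope $\{x\in\Delta^{n-1}:\sum\Lambda_ix_i=0\}$, which is a $d$-dimensional polytope because $\ker\widetilde\Lambda$ has dimension $d$. By the Gale dictionary of Proposition~\ref{prop:Gale-dictionary}, this is the simple polytope $P$: a coordinate facet $x_j=0$ is nonempty exactly when $j$ is visible. The indispensable coordinates never vanish, which gives the ghost vertices. The standard identification of a level set of Hermitian quadrics with the polyhedral product $(D^2,S^1)^{\widehat K}$ (Buchstaber--Panov, Bosio--Meersseman) then gives a $T^{\widehat N}$-equivariant homeomorphism $\mathscr T_\Lambda\cong\mathcal Z_{\widehat K}$. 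That identification is built by gluing along the orbit space $P$, using that the isotropy over a face is the coordinate subtorus indexed by the vanishing coordinates.

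Proposition~\ref{prop:ghost-open-set} then supplies $\mathcal Z_{\widehat K}\cong\mathcal Z_K\times T^m$ equivariantly. The dimension count follows from Proposition~\ref{prop:minimal-dimension-ledger}.

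The main obstacle is making the equivariant identification precise when one coordinate is used for the projective normalization. I would remove the scalar $S^1$ by normalizing $z_{a_0}$ to be real and positive, which is possible since $z_{a_0}\neq0$. This reduces $T^n$ to $T^{\widehat N}$ acting on the remaining coordinates. I would then check that the resulting real quadric in $\C^{\widehat N}$ is a nondegenerate intersection of Hermitian quadrics in the sense of Buchstaber--Panov, with the polytope's combinatorics given by $\widehat K$. Once that is verified, the smooth structure is inherited from $\mathscr T_\Lambda$, and the equivariant diffeomorphism statement follows.
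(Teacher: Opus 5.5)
Your argument is correct, but it reaches the result in the opposite order from the paper.

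The paper starts from the global ghost quotient $U_{\widehat K}/\mathcal C_\Lambda$ of Theorem~\ref{thm:global-ghost-quotient}. It quotes the exponential-action realization of complex moment-angle manifolds to get the $T^{\widehat N}$-equivariant homeomorphism with $\mathcal Z_{\widehat K}$ directly, and splits off one circle for each ghost label. Only afterwards does it use normality of the projected fan (Theorem~\ref{thm:complete-marked-fan}) to upgrade to a smooth Hermitian-quadrics model.

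You instead start from the LVM transversal of Theorem~\ref{thm:classical-LVM-quotient}, which already is the quadric. So the smooth quadrics model comes first and at no cost. The homeomorphism with $\mathcal Z_{\widehat K}$ then follows from the classical moment-map identification of a nondegenerate intersection of Hermitian quadrics, whose ghost coordinates never vanish, with a polyhedral product.

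What each route buys:
\begin{itemize}
\item Your route is more elementary and bypasses the marked fan entirely. Normality is built into the existence of the LVM transversal.
\item The paper's route places the statement inside the general complex moment-angle framework, where the topological identification does not depend on normality. It also reuses the marked fan that is needed later to import the transverse Dolbeault models.
\end{itemize}

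The check you flag at the end is easy. In the chart $z_{a_0}=1$ the transversal is literally $\{w\in\C^{\widehat N}:\sum_{i\ne a_0}\Lambda_i|w_i|^2=-\Lambda_{a_0}\}$. This set is compact because it is the diffeomorphic image of $\mathscr T_\Lambda$. It is nondegenerate for the following reason. The defining equation itself puts $\Lambda_{a_0}$ in the linear span of the support weights. Together with $\Lambda_{a_0}$, those weights affinely span $\R^{2m}$ by Proposition~\ref{prop:affine-spanning}, so radial variations already make the differential surjective.

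One small point: the moment polytope being $d$-dimensional needs a point with all coordinates positive, as supplied by Lemma~\ref{lem:positive-Gale-calibration}. The fact $\dim\ker\widetilde\Lambda=d$ alone does not give this.
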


\begin{proof}
The equivariant homeomorphism with the polyhedral-product moment-angle
space is the exponential-action realization
\cite[Theorem~10.6]{Panov2025}.  Each ghost label contributes an $S^1$
factor by \cite[Construction~10.8]{Panov2025}.  Since
$\Sigma_\Lambda$ is normal, the smooth identification with a nondegenerate
Hermitian-quadrics realization follows from
\cite[Theorem~10.11]{Panov2025}.  The orbit-intersection and radial gauges
in these constructions commute with coordinate rotations, which gives the
stated $T^{\widehat N}$-equivariance.  Apply
Theorems~\ref{thm:global-ghost-quotient} and
\ref{thm:complete-marked-fan} and
Proposition~\ref{prop:ghost-open-set}.  The dimension identity uses
$m=N-d$.
\end{proof}

The coordinate torus $T^{\widehat N}$ descends effectively to $\XL$.
Indeed, its kernel is $T^{\widehat N}\cap\mathcal C_\Lambda$.  The Lie
algebras intersect trivially by
Proposition~\ref{prop:real-part-injective}, so this kernel is discrete;
it is finite because it lies in a compact torus, and it is trivial because
$\mathcal C_\Lambda\cong\C^m$ has no nontrivial finite subgroup.  Its action
is maximal in the sense that there is a point $x$ with
\[
 \dim T^{\widehat N}+\dim(T^{\widehat N})_x=\dim_\R \XL.
\]
Indeed, choose a facet $I\in K$ and a point represented by a vector whose
zero set is exactly $I$.  At the infinitesimal level, a coordinate-torus
vector stabilizes its quotient class precisely when it differs from a
vector in $\mathfrak h_\Lambda$ by a vector supported on $I$.  Taking real
parts and using Proposition~\ref{prop:cone-transversality}, followed by
Proposition~\ref{prop:real-part-injective}, shows that the
$\mathfrak h_\Lambda$ vector is zero.  Thus the stabilizer Lie algebra is
$\R^I$ and has dimension $|I|=d$.  Consequently
\[
 \dim T^{\widehat N}+d=(N+m)+d=2N=\dim_\R\XL.
\]
This is the maximal-torus structure classified in
\cite[Theorem~7.9]{Ishida2019}.

The canonical foliation is generated by
\[
 H_{\mathcal F,\Lambda}=\exp_T(\mathfrak r_\Lambda)\subseteq T^{\widehat N}.
\]
The orbits of this immersed subgroup define the foliation. Its image
in the compact torus may be nonclosed.

\begin{proposition}[Canonical central foliation]
\label{prop:canonical-central-foliation}
The canonical foliation $\mathcal F_\Lambda$ is a locally free central
holomorphic foliation of complex dimension $m$. It is transversely
K\"ahler.
\end{proposition}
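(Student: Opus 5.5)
The plan is to read the foliation off the action of the immersed subgroup $H_{\mathcal F,\Lambda}=\exp_T(\mathfrak r_\Lambda)$ on the ghost presentation $\XL\cong U_{\widehat K}/\mathcal C_\Lambda$ of Theorem~\ref{thm:global-ghost-quotient}, and then transfer the classical transverse K\"ahler form.

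\emph{Holomorphic, central, dimension $m$.} The real torus acts on $U_{\widehat K}$ by $x\cdot z=\exp_T(x)z$. For $\xi\in\mathfrak h_\Lambda$, the real part $\operatorname{Re}\xi\in\mathfrak r_\Lambda$ gives the infinitesimal generator $2\pi i\operatorname{Re}\xi\cdot z$. On the quotient this agrees with the generator $\pi i(\xi+\bar\xi)\cdot z$. The term $\pi i\xi$ lies in the complexified $\mathcal C_\Lambda$-directions, and those vanish on $\XL$. So modulo the $\C^m$-orbit, the vector field coming from $\operatorname{Re}\xi$ equals the one coming from $-\pi i\,i\operatorname{Im}\xi$ plus holomorphic terms. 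The cleaner way to organise this is as follows. The complexified torus $(\C^*)^{\widehat N}$ acts holomorphically on $\XL$, because it commutes with $\mathcal C_\Lambda$. The connected subgroup $\exp(\mathfrak h_\Lambda\oplus\overline{\mathfrak h_\Lambda})$, whose Lie algebra is $(\mathfrak r_\Lambda)_\C$ by Proposition~\ref{prop:real-part-injective}, acts through its quotient by $\mathcal C_\Lambda$. That quotient is the complex group $\exp(\overline{\mathfrak h_\Lambda})$, which is abelian of complex dimension $m$. Its orbits on $\XL$ coincide with the orbits of $H_{\mathcal F,\Lambda}$, since $\mathfrak r_\Lambda$ and $\overline{\mathfrak h_\Lambda}$ span the same space modulo $\mathfrak h_\Lambda$. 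This gives a holomorphic foliation. It is central because the generating vector fields come from the commuting coordinate torus, so they commute with every torus-invariant structure.

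\emph{Local freeness.} This is the step I expect to need the most care. I would argue that a vector $x\in\mathfrak r_\Lambda$ stabilising a point $[z]$ infinitesimally must differ from some vector of $\mathfrak h_\Lambda$ by a vector supported on $Z(z)\in\widehat K$. This is the same computation as the maximal-torus stabilizer argument above. Taking real parts and applying Proposition~\ref{prop:cone-transversality}, followed by Proposition~\ref{prop:real-part-injective}, forces $x=0$. Hence the leaves have real dimension $2m$, that is, complex dimension $m$, at every point.

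\emph{Transverse K\"ahler.} For this I would invoke the classical result cited in the introduction \cite{Meersseman2000,MeerssemanVerjovsky2004,PanovUstinovskiyVerbitsky2016}. Restricting the Fubini--Study form to the transversal $\mathscr T_\Lambda$ and pushing it to $\XL$ through Theorem~\ref{thm:classical-LVM-quotient} gives a closed real $(1,1)$-form. This form is basic and has kernel exactly the leaf directions. As a cross-check, one can use normality of $\Sigma_\Lambda$ (Theorem~\ref{thm:complete-marked-fan}) together with Ishida's criterion \cite{IshidaTransverse2017}: an LVMB manifold has transversely K\"ahler canonical foliation if and only if it is LVM. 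Either route suffices.
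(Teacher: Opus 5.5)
\textbf{Gap: centrality is not proved.} Here ``central'' is the Ishida--Kasuya notion. Let $\mathfrak a=\{[iv]:v\in\R^{\widehat N}\}\subset\C^{\widehat N}/\mathfrak h_\Lambda$ be the coordinate-torus algebra, and let $J$ act as multiplication by $i$. Then $\mathcal F_\Lambda$ must be exactly the orbit foliation of $\mathfrak a\cap J\mathfrak a$. Your remark that the generating fields come from a commuting torus and ``commute with every torus-invariant structure'' is not this property and does not imply it.

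Your quotient-group paragraph does give one inclusion: the image of $i\mathfrak r_\Lambda$ is a complex subspace, so it lies in $\mathfrak a\cap J\mathfrak a$. What is missing is maximality. Suppose $[iv]=[-w]$ with $v,w$ real. Then $w+iv\in\mathfrak h_\Lambda$, so $w=\operatorname{Re}(w+iv)\in\mathfrak r_\Lambda$. Applying $-i$ gives $v-iw\in\mathfrak h_\Lambda$, hence $v\in\mathfrak r_\Lambda$. To pass from an identity $JX_v=X_w$ of vector fields to this identity of classes, use that $\C^{\widehat N}/\mathfrak h_\Lambda$ acts infinitesimally freely at a point with no zero coordinate. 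Without this step, $\mathcal F_\Lambda$ could a priori be a proper subfoliation of the central foliation. The Ishida--Kasuya models invoked in Proposition~\ref{prop:imported-model-legality} would then not be justified. This two-sided computation is exactly the paper's proof of centrality.

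\textbf{The remaining steps are sound, with some bookkeeping fixes.} Your local-freeness argument differs from the paper, which cites the discrete-stabilizer statement of Ishida--Krutowski--Panov. Yours is self-contained and reuses the stabilizer computation from the maximal-torus paragraph, which is a genuine gain. Keep the normalization straight:
\begin{itemize}
\item An element $x\in\mathfrak r_\Lambda$ generates $2\pi ix$, because $\exp_T(x)=\exp(2\pi ix)$. Infinitesimal stabilization at $z$ therefore means $2\pi ix-\xi\in\C^{Z(z)}$ for some $\xi\in\mathfrak h_\Lambda$.
\item Taking real parts and applying Proposition~\ref{prop:cone-transversality} gives $\operatorname{Re}\xi=0$. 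Proposition~\ref{prop:real-part-injective} then gives $\xi=0$.
\item A second application of cone transversality, to $x\in\mathfrak r_\Lambda\cap\R^{Z(z)}$, gives $x=0$.
\end{itemize}
Likewise, the torus directions in $\C^{\widehat N}$ are $i\mathfrak r_\Lambda$, not $\mathfrak r_\Lambda$. Your orbit comparison still holds because $i\mathfrak r_\Lambda\cap\mathfrak h_\Lambda=0$, so $i\mathfrak r_\Lambda+\mathfrak h_\Lambda=(\mathfrak r_\Lambda)_\C$. Drop the garbled first computation in your holomorphicity paragraph and keep only the quotient-group version. The transverse K\"ahler step rests on the same citations as the paper.
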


\begin{proof}
Its complex leaf algebra is
\[
 (\mathfrak r_\Lambda)_\C/\mathfrak h_\Lambda
 \cong\overline{\mathfrak h_\Lambda},
\]
which has dimension $m$.  We verify the central-foliation identification
directly.  In the complex Lie algebra
\[
 \C^{\widehat N}/\mathfrak h_\Lambda
 =\operatorname{Lie}\bigl((\C^*)^{\widehat N}/
   \exp(\mathfrak h_\Lambda)\bigr),
\]
the compact coordinate-torus algebra is represented by classes $[iv]$
with $v\in\R^{\widehat N}$.  Such a class belongs also to its complex
rotation precisely when
\[
 [iv]=[-w]\quad\text{for some }w\in\R^{\widehat N},
 \qquad\text{equivalently}\qquad w+iv\in\mathfrak h_\Lambda.
\]
Such a relation implies $w\in\mathfrak r_\Lambda$; multiplying it by
$-i$ also gives $v\in\mathfrak r_\Lambda$.  Conversely, if
$v\in\mathfrak r_\Lambda$, choose $v+iy\in\mathfrak h_\Lambda$; then
$i(v+iy)=-y+iv\in\mathfrak h_\Lambda$ supplies the required $w=-y$.
Thus the intersection of the torus algebra with its complex rotation is
exactly the image of $i\mathfrak r_\Lambda$, and its orbit foliation is
$\mathcal F_\Lambda$.  This is the central foliation in the sense of
\cite[Section~2]{IshidaKasuya2019}.

Under the complex moment-angle presentation of
Theorem~\ref{thm:global-ghost-quotient}, this is also the canonical
foliation of \cite[Construction~4.2]{IshidaKrutowskiPanov2022}; the
discrete-stabilizer statement there gives local freeness.  Finally, the
transverse K\"ahler assertion for this LVM foliation is
\cite[Theorem~7]{Meersseman2000}; see also the Fubini--Study description
in \cite{MeerssemanVerjovsky2004}.  In the present fan notation it is
the normal-fan case of
\cite[Proposition~4.4]{PanovUstinovskiyVerbitsky2016}, with normality
supplied by Theorem~\ref{thm:complete-marked-fan}.
\end{proof}

\section{The transverse Dolbeault model}
\label{sec:imported-model-legality}

The canonical foliation is central and transversely K\"ahler at every
realizable rank. The transverse Dolbeault models therefore apply throughout
this range.

\begin{proposition}[Applicability at every realizable rank]
\label{prop:imported-model-legality}
For every minimally stable polytopal LVM datum, at every geometrically
realizable value of $\operatorname{rank}_{\C}B$, the following hold.
\begin{enumerate}[label=(\roman*)]
\item $\XL$ is a compact connected complex moment-angle manifold in the
      presentation
      \[
       \XL\cong U_{\widehat K}/\exp(\mathfrak h_\Lambda),
      \]
      with $\dim_\C\mathfrak h_\Lambda=m$ and
      $\operatorname{Re}|_{\mathfrak h_\Lambda}$ injective.
\item The coordinate torus acts effectively and maximally, and it has a
      point with trivial stabilizer.
\item The canonical foliation $\mathcal F_\Lambda$ is locally free,
      central, and transversely K\"ahler of complex dimension $m$.
\item The basic-cohomology theorems
      \cite[Theorem~5.10]{IshidaKrutowskiPanov2022} and
      \cite[Theorems~4.12 and~5.1]{KrutowskiPanov2021}, and the ordinary
      Dolbeault-model theorems
      \cite[Theorems~1.2 and~4.13]{IshidaKasuya2019} and
      \cite[Theorem~6.1]{KrutowskiPanov2021}, all apply to this
      presentation.
\end{enumerate}

\end{proposition}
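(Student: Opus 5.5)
The plan is to verify (i)--(iii) directly from the results already assembled, and then to check (iv) hypothesis by hypothesis against those three items.

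For (i), Theorem~\ref{thm:global-ghost-quotient} gives the presentation $\XL\cong U_{\widehat K}/\exp(\mathfrak h_\Lambda)$ at every realizable rank. It only uses the classical quotient of Theorem~\ref{thm:classical-LVM-quotient} and never assumes $\det B\neq0$. The dimension $\dim_\C\mathfrak h_\Lambda=m$ holds because $\Psi_\Lambda$ is injective (Proposition~\ref{prop:affine-action}). Injectivity of $\operatorname{Re}|_{\mathfrak h_\Lambda}$ is Proposition~\ref{prop:real-part-injective}, and compactness and connectedness come from Theorem~\ref{thm:classical-LVM-quotient} and Lemma~\ref{lem:UK-connected}.

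For (ii), effectiveness and maximality were shown in the discussion following Theorem~\ref{thm:smooth-moment-angle-model}. I still need a point with trivial stabilizer. I will take $w$ with all visible coordinates nonzero, which is legitimate since $\varnothing\in K$, so that $w\in U_{\widehat K}$ has empty zero set. An element $g\in T^{\widehat N}$ fixes its class exactly when $g\in\mathcal C_\Lambda$. The intersection $T^{\widehat N}\cap\mathcal C_\Lambda$ was already shown to be trivial, so the stabilizer is trivial.

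For (iii), the statement is Proposition~\ref{prop:canonical-central-foliation}, whose proof used only Proposition~\ref{prop:real-part-injective}, the marked fan of Theorem~\ref{thm:complete-marked-fan}, and Meersseman's transverse K\"ahler theorem. The point to check is that none of these sources depends on the rank of $B$. Theorem~\ref{thm:complete-marked-fan} is built from the projected fan of the connected action. Meersseman's theorem applies to all admissible LVM data, and the Fubini--Study restriction to the quadric $\mathscr T_\Lambda$ is rank-independent. I will also record that normality of $\Sigma_\Lambda$ holds for LVM data regardless of $B$.

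For (iv), I will list the hypotheses of each cited theorem. Theorem~5.10 of Ishida--Krutowski--Panov requires a complex manifold with maximal torus action presented through a complete marked fan, which (ii) and Theorem~\ref{thm:complete-marked-fan} supply. Theorems~4.12, 5.1 and 6.1 of Krutowski--Panov require a complex moment-angle manifold $U_{\widehat K}/\exp(\mathfrak h)$ with injective real part and a transversely K\"ahler canonical foliation, which (i) and (iii) supply. Theorems~1.2 and~4.13 of Ishida--Kasuya require a compact complex manifold with a torus action containing a point with trivial stabilizer, together with a transversely K\"ahler central foliation, which (ii) and (iii) supply.

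I expect the main obstacle to be showing that no hypothesis in these sources secretly encodes $\det B\neq0$. Examples would be a lattice condition, a rationality condition (K), or a discrete-quotient description of the kind questioned in the remark on Bosio--Meersseman. My approach is to note that every cited input is phrased through the connected action $\exp(\mathfrak h_\Lambda)$, the marked fan, and the torus. Ghost vertices are permitted in the moment-angle presentation (Proposition~\ref{prop:ghost-open-set}). The deck lattice $\GammaL$ appears nowhere in these hypotheses. The marked-fan lattice $\widetilde\Gamma_\Lambda$ is allowed to be nondiscrete, since Ishida--Krutowski--Panov's Definition~5.5 permits this.
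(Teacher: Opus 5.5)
Your proposal is correct and follows essentially the same route as the paper. Items (i)--(iii) are read off from the global ghost-vertex quotient, the injectivity of $\Psi_\Lambda$ and of $\operatorname{Re}|_{\mathfrak h_\Lambda}$, the effectivity argument $T^{\widehat N}\cap\mathcal C_\Lambda=1$ applied at a point with all coordinates nonzero, and the canonical-foliation proposition. Item (iv) is justified by observing that every cited hypothesis is phrased through the connected action, $\mathfrak h_\Lambda$, the canonical foliation and the projected normal fan, never through $\GammaL$; the paper additionally records the parameter count $(\widehat N-d)/2=(N+m-d)/2=m$, which your hypothesis-by-hypothesis check against Krutowski--Panov should state explicitly.
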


\begin{proof}
The quotient by the connected $\C^m$-action, compactness, and connectedness are
Theorems~\ref{thm:global-ghost-quotient} and
\ref{thm:global-affine-product} together with
Lemma~\ref{lem:UK-connected}.  Proposition~\ref{prop:affine-action} gives
the injectivity of $\Psi_\Lambda$, hence
$\dim_\C\mathfrak h_\Lambda=m$, while
Proposition~\ref{prop:real-part-injective} gives the required real-part
condition.  Thus the displayed quotient is exactly the complex
moment-angle presentation used in
\cite[Section~4.1]{KrutowskiPanov2021}.  Its parameter count is
\[
 \frac{\widehat N-d}{2}
 =\frac{N+m-d}{2}
 =m,
\]
where the last equality is $m=N-d$.

Effectivity and maximality were proved immediately before
Proposition~\ref{prop:canonical-central-foliation}.  A point represented by a vector with every coordinate nonzero has stabilizer
$T^{\widehat N}\cap\exp(\mathfrak h_\Lambda)$. This stabilizer is trivial by
the effectivity argument above.  Proposition~\ref{prop:canonical-central-foliation}
gives the foliation hypotheses.  The complete normal fan in
Theorem~\ref{thm:complete-marked-fan} supplies the polytopal hypothesis
for the direct complex moment-angle results.

These hypotheses concern the quotient, the action subspace $\mathfrak
h_\Lambda$, its real-part map, the canonical foliation, and the projected fan.
They hold at every realizable rank of the projection $B$ onto the
indispensable coordinates.
\end{proof}

\section{The basic Stanley--Reisner ring}
\label{sec:basic-SR-ring}

The basic ring is the face ring of $K$ modulo the linear relations of
the projected rays.  We establish this presentation and the regularity
of its parameters before identifying the curvature differential.

Let
\[
 \mathfrak t_\Lambda
 =
 \R^{\widehat N}/\mathfrak r_\Lambda
\]
and let
\[
 a_j=\varpi_\Lambda(e_j)\in\mathfrak t_\Lambda,
 \qquad 1\leq j\leq N,
\]
be the visible ray vectors of the complete simplicial fan
$\Sigma_\Lambda$.

Set
\[
 S=\C[v_1,\ldots,v_N],
 \qquad
 \C[K]=S/I_K,
 \qquad
 \deg v_j=1.
\]
Here the Stanley--Reisner ideal is
\[
 I_K=\left(\prod_{j\in I}v_j:\ I\subseteq[N],\ I\notin K\right),
\]
the squarefree monomial ideal generated by the nonfaces of $K$.

Since
\[
 \mathfrak h_\Lambda+\overline{\mathfrak h_\Lambda}
 =
 (\mathfrak r_\Lambda)_\C,
\]
the quotient map induces
\[
 \varpi_\Lambda^*:
 (\mathfrak t_\Lambda)_\C^\vee
 \overset{\sim}{\longrightarrow}
 \mathcal L_\Lambda
 =
 \operatorname{Ann}_\C
 (\mathfrak h_\Lambda+\overline{\mathfrak h_\Lambda}).
\]

For $u\in(\mathfrak t_\Lambda)_\C^\vee$, define
\[
 \theta_u=\sum_{j=1}^Nu(a_j)v_j.
\]
Let $J_\Lambda$ be the ideal generated by all $\theta_u$.

\begin{proposition}\label{prop:visible-relation-space}
The map
\[
 (\mathfrak t_\Lambda)_\C^\vee\longrightarrow S_1,
 \qquad
 u\longmapsto\theta_u,
\]
is injective. In particular, the visible linear-relation space has dimension
$d$.
\end{proposition}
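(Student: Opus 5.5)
The plan is to show that the map $u\mapsto\theta_u$ factors as the composite of two injections. The first is the dual of a surjection $\C^N\to(\mathfrak t_\Lambda)_\C$. The second is the identification of $(\C^N)^\vee$ with $S_1$ given by $e_j^\vee\mapsto v_j$.

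For the first factor, consider the real linear map
\[
 \alpha:\R^N\longrightarrow\mathfrak t_\Lambda,\qquad e_j\longmapsto a_j,
\]
which is the restriction of $\varpi_\Lambda$ to the visible coordinate subspace $\R^N\subseteq\R^{\widehat N}$. Then $\theta_u=\sum_j(u\circ\alpha_\C)(e_j)\,v_j$. Thus $\theta_u=0$ if and only if $u$ vanishes on $\operatorname{im}\alpha_\C$, and injectivity reduces to showing that $\alpha$ is surjective.

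Surjectivity of $\alpha$ comes from completeness of the fan. By Theorem~\ref{thm:complete-marked-fan}, $\Sigma_\Lambda$ is a complete fan in $\mathfrak t_\Lambda$. Its cones are $\varpi_\Lambda$ of coordinate cones $\R_{\geq0}\langle e_i:i\in I\rangle$ with $I\in\widehat K$, and these $I$ contain only visible labels. So every cone is spanned by rays $a_j$ with $j\in[N]$, and the union of the cones is all of $\mathfrak t_\Lambda$. Hence the $a_j$ span $\mathfrak t_\Lambda$ over $\R$, and therefore $(\mathfrak t_\Lambda)_\C$ over $\C$. It follows that $\alpha_\C$ is onto and its dual $u\mapsto u\circ\alpha_\C$ is injective.

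As a backup argument that avoids the fan, take a facet $I\in K$. Proposition~\ref{prop:cone-transversality} gives that $\varpi_\Lambda$ is injective on $\R^I$. Since $|I|=d=\dim_\R\mathfrak t_\Lambda$ (from $\dim K=d-1$), the vectors $a_j$ with $j\in I$ already form a basis of $\mathfrak t_\Lambda$. This is the cleanest route, and I would present it as the main argument.

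For the dimension statement: $\dim_\R\mathfrak t_\Lambda=(N+m)-2m=d$, as computed after Proposition~\ref{prop:cone-transversality}. So $\dim_\C(\mathfrak t_\Lambda)_\C^\vee=d$, and by injectivity the relation space $\{\theta_u\}\subseteq S_1$ also has dimension $d$.

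The only delicate point is making sure the spanning vectors are visible rays and not ghost images. This is settled because facets of $K$ consist of visible labels, and because a facet has exactly $d$ elements, $K$ being the boundary of a simplicial $d$-polytope.
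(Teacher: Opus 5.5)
Your proposal is correct. Your first argument is exactly the paper's proof: the visible rays of the complete fan $\Sigma_\Lambda$ span $\mathfrak t_\Lambda$, so $\theta_u=0$ forces $u=0$.

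The facet argument you prefer establishes the spanning step by a different route. It does not invoke completeness from Theorem~\ref{thm:complete-marked-fan}, which the paper imports from Battisti and Panov. Instead it uses:
\begin{itemize}
\item Proposition~\ref{prop:cone-transversality}, proved directly in the paper from properness of the action;
\item the dimension count $\dim_\R\mathfrak t_\Lambda=(N+m)-2m=d$;
\item the existence of a face of $K=\partial Q$ with $d$ vertices.
\end{itemize}

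This route is more self-contained. It also gives the stronger conclusion that the rays of any single facet already form a basis of $\mathfrak t_\Lambda$, which is precisely Proposition~\ref{prop:facet-rank}; the paper derives that result only afterwards, and from the fan.

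The paper's route, once the fan theorem is available, is a one-line consequence of completeness alone. It does not need to single out a facet, and it does not need the fact that the maximal faces of $K$ have exactly $d$ vertices, which your argument draws from the polytopal Gale identification.
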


\begin{proof}
The rays of the complete fan $\Sigma_\Lambda$ span
$\mathfrak t_\Lambda$. If $\theta_u=0$, then $u(a_j)=0$ for every ray
vector, and hence $u=0$.
\end{proof}

Choose a basis $u_1,\ldots,u_d$ of
$(\mathfrak t_\Lambda)_\C^\vee$ and put
\[
 \theta_\alpha=\sum_{j=1}^Nu_\alpha(a_j)v_j.
\]

\begin{proposition}\label{prop:facet-rank}
For every facet
\[
 \sigma=\{i_1,\ldots,i_d\}\in K,
\]
the matrix
\[
 \bigl(u_\alpha(a_{i_\beta})\bigr)_{\alpha,\beta}
\]
is invertible.
\end{proposition}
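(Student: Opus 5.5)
The invertibility of $\bigl(u_\alpha(a_{i_\beta})\bigr)_{\alpha,\beta}$ will follow from the fact that the ray vectors of a facet cone form a basis of $\mathfrak t_\Lambda$. This is the standard simplicial-fan argument: once the vectors $a_{i_1},\ldots,a_{i_d}$ are known to be a real basis of $\mathfrak t_\Lambda$, they are also a complex basis of $(\mathfrak t_\Lambda)_\C$. The matrix $\bigl(u_\alpha(a_{i_\beta})\bigr)$ is then the pairing matrix between the basis $u_1,\ldots,u_d$ of $(\mathfrak t_\Lambda)_\C^\vee$ and this basis of $(\mathfrak t_\Lambda)_\C$. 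A pairing matrix between a basis of a space and a basis of its dual is invertible, because it is the matrix of the evaluation isomorphism written in the two chosen bases.

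To show that the facet rays form a basis, I would argue as follows. Let $\sigma=\{i_1,\ldots,i_d\}$ be a facet of $K$. Then $\sigma$ is a face of $\widehat K$, so Proposition~\ref{prop:cone-transversality} gives $\mathfrak r_\Lambda\cap\R^\sigma=0$. Hence $\varpi_\Lambda$ restricted to $\R^\sigma$ is injective, and the vectors $a_{i_\beta}=\varpi_\Lambda(e_{i_\beta})$ are linearly independent in $\mathfrak t_\Lambda$. The dimension count after Proposition~\ref{prop:cone-transversality} gives $\dim_\R\mathfrak t_\Lambda=d$, so $d$ independent vectors form a basis. Facets of $K=\partial Q$ have exactly $d$ vertices because $Q$ is a simplicial $d$-polytope, so the matrix is square. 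Alternatively, Theorem~\ref{thm:complete-marked-fan} says that $\Sigma_\Lambda$ is a complete simplicial fan with underlying complex $K$, so each maximal cone is $d$-dimensional and simplicial, and the same conclusion follows.

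Each step is a direct consequence of earlier results, so I expect no real obstacle. The only point needing care is the passage from a real basis of $\mathfrak t_\Lambda$ to a complex basis of the complexification. This holds because a real basis of any real vector space remains a basis after tensoring with $\C$.
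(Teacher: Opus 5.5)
Your proposal is correct and follows the paper's argument: the paper observes that the facet rays generate a maximal simplicial cone of the $d$-dimensional fan $\Sigma_\Lambda$, so they form a basis of $\mathfrak t_\Lambda$, which is exactly your alternative route. Your main route proves the same fact more explicitly, by getting linear independence from Proposition~\ref{prop:cone-transversality} and then using the count $\dim_\R\mathfrak t_\Lambda=d$; the proof of Theorem~\ref{thm:complete-marked-fan} itself notes this cone-wise injectivity.
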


\begin{proof}
The vectors $a_{i_1},\ldots,a_{i_d}$ generate a maximal simplicial cone in
the $d$-dimensional fan $\Sigma_\Lambda$. Hence they form a basis of
$\mathfrak t_\Lambda$.
\end{proof}

\begin{theorem}\label{thm:lsop}
The sequence
\[
 \theta_1,\ldots,\theta_d
\]
is a linear system of parameters for $\C[K]$. Therefore
\[
 R_\Lambda
 =
 \frac{\C[K]}{(\theta_1,\ldots,\theta_d)}
 =
 \frac{\C[v_1,\ldots,v_N]}{I_K+J_\Lambda}
\]
is finite-dimensional.
\end{theorem}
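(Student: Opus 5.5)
The plan is to use the standard criterion for linear systems of parameters in face rings. For a simplicial complex $K$ of dimension $d-1$, a sequence of $d$ linear forms $\theta_1,\ldots,\theta_d\in S_1$ is a linear system of parameters for $\C[K]$ if and only if, for every face $\sigma\in K$, the restrictions of the $\theta_\alpha$ to the variables $\{v_j:j\in\sigma\}$ span the dual of $\C^\sigma$. Equivalently, the $d\times|\sigma|$ matrix of coefficients $\bigl(u_\alpha(a_j)\bigr)_{\alpha,\,j\in\sigma}$ has rank $|\sigma|$; this is Stanley's criterion. I would recall why it holds rather than treat it as a black box. By the graded Nakayama lemma and the Krull dimension $\dim\C[K]=d$, it suffices to show that $\C[K]/(\theta)$ is finite-dimensional. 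Equivalently, the only common zero in $\C^N$ of $I_K$ and the $\theta_\alpha$ is the origin.

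The key steps, in order, are these. First, the zero set of $I_K$ is the union of the coordinate subspaces $\C^\sigma$ over $\sigma\in K$; it suffices to take $\sigma$ a facet, since $K=\partial Q$ is pure of dimension $d-1$. Second, restrict to a facet $\sigma=\{i_1,\ldots,i_d\}$. A point $x\in\C^\sigma$ with $\theta_\alpha(x)=0$ for all $\alpha$ satisfies
\[
 \sum_{\beta}u_\alpha(a_{i_\beta})x_{i_\beta}=0 .
\]
Proposition~\ref{prop:facet-rank} says this coefficient matrix is invertible, so $x=0$. Third, conclude by the Nullstellensatz that $\C[K]/(\theta_1,\ldots,\theta_d)$ is supported at the irrelevant ideal, hence is finite-dimensional. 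Since there are exactly $d=\dim\C[K]$ elements, they form a system of parameters. The Krull dimension of $\C[K]$ equals $\dim K+1=d$ because its minimal primes are the ideals $(v_j:j\notin\sigma)$ for facets $\sigma$.

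The final identification $R_\Lambda=\C[v_1,\ldots,v_N]/(I_K+J_\Lambda)$ follows because $J_\Lambda$ is generated by all $\theta_u$. By linearity in $u$ and Proposition~\ref{prop:visible-relation-space}, these are spanned by $\theta_1,\ldots,\theta_d$ for the chosen basis $u_1,\ldots,u_d$.

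There is no serious obstacle. The point needing the most care is the reduction to facets, which uses purity of $K$. For a nonfacet face, the rank statement follows from the facet statement, since columns of an invertible matrix are linearly independent. One must also work over $\C$: the $u_\alpha$ are complex functionals on $(\mathfrak t_\Lambda)_\C$, and the invertibility of Proposition~\ref{prop:facet-rank} holds after complexification, since a real basis of $\mathfrak t_\Lambda$ remains a complex basis.
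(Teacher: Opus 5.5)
Your proposal is correct and matches the paper's proof: the paper likewise invokes the facet criterion via Proposition~\ref{prop:facet-rank}, and then shows that the only common zero of $I_K$ and the $\theta_u$ is the origin. The only difference is cosmetic: the paper uses the face spanned by the support of a common zero and the linear independence of that face's ray vectors, whereas you pass to a containing facet and use the invertibility of the facet matrix.
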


\begin{proof}
By Proposition~\ref{prop:facet-rank}, the restrictions of the
$\theta_\alpha$ to every facet generate the polynomial algebra on that
facet. The standard facet criterion for systems of parameters in a
Stanley--Reisner ring therefore applies.

Equivalently, suppose a point $x\in\operatorname{Spec}\C[K]$ is a common
zero of all $\theta_u$. Its nonzero-coordinate support $I(x)$ is a face of
$K$, and
\[
 u\left(\sum_{j\in I(x)}x_ja_j\right)=0
\]
for every $u$. Thus
\[
 \sum_{j\in I(x)}x_ja_j=0.
\]
The ray vectors of a face of a simplicial fan are linearly independent, so
$x=0$. Hence the quotient has Krull dimension zero.
\end{proof}

\begin{theorem}\label{thm:basic-Artinian-Gorenstein}
The sequence $\theta_1,\ldots,\theta_d$ is regular, and $R_\Lambda$ is an
Artinian Gorenstein algebra. If
\[
 h_K(t)=h_0+h_1t+\cdots+h_dt^d
\]
is the $h$-polynomial of $K$, then
\[
 \operatorname{Hilb}(R_\Lambda;t)=h_K(t)
\]
and
\[
 \dim_\C R_\Lambda^p=h_p(K).
\]
Multiplication induces perfect pairings
\[
 R_\Lambda^p\otimes R_\Lambda^{d-p}
 \longrightarrow
 R_\Lambda^d\cong\C.
\]
\end{theorem}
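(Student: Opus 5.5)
The plan is to assemble the statement from three standard facts about Stanley--Reisner rings of simplicial spheres, using Theorem~\ref{thm:lsop} as the only input specific to LVM geometry.

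\emph{Regularity.} Since $K=\partial Q$ is the boundary of a simplicial $d$-polytope, it is a simplicial $(d-1)$-sphere. By Reisner's criterion (or directly by shellability of polytope boundaries), $\C[K]$ is Cohen--Macaulay of Krull dimension $d$. Theorem~\ref{thm:lsop} shows that $\theta_1,\ldots,\theta_d$ is a homogeneous system of parameters of length $d=\dim\C[K]$. In a graded Cohen--Macaulay ring every homogeneous system of parameters is a regular sequence, so the sequence is regular and $R_\Lambda$ is Artinian.

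\emph{Hilbert series.} The Hilbert series of the face ring is
\[
 \operatorname{Hilb}(\C[K];t)=\frac{h_K(t)}{(1-t)^d}.
\]
Quotienting by a regular linear form multiplies the Hilbert series by $(1-t)$. Applying this $d$ times gives $\operatorname{Hilb}(R_\Lambda;t)=h_K(t)$, and hence $\dim_\C R^p_\Lambda=h_p(K)$. In particular $R^d_\Lambda$ has dimension $h_d=1$ and $R^p_\Lambda=0$ for $p>d$.

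\emph{Gorenstein property and pairing.} A simplicial sphere is a Gorenstein$^*$ complex: the links of all faces are homology spheres, and the reduced Euler characteristic condition holds. By Stanley's theorem, $\C[K]$ is therefore Gorenstein. The Gorenstein property passes to the quotient by a regular sequence, so $R_\Lambda$ is an Artinian graded Gorenstein algebra. Its socle is one-dimensional, and since the algebra is graded and generated in degree one, the socle is its top nonzero degree, namely $R^d_\Lambda\cong\C$. For Artinian Gorenstein graded algebras, multiplication into the socle degree is a perfect pairing $R^p\otimes R^{d-p}\to R^d$. To see this, take a nonzero $x\in R^p$ that pairs to zero with all of $R^{d-p}$. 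Then the ideal $(x)$ contains no socle element: any nonzero element of $(x)$ can be multiplied up into degree $d$, where it would have to vanish. But every nonzero ideal of an Artinian local ring meets the socle, which is a contradiction.

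\emph{Main obstacle.} The main point needing care is that the Gorenstein property is genuinely combinatorial: it is a property of $K$ over $\C$ and is independent of the complex parameters $\theta_u$. Those parameters need not come from a rational fan, because $\widetilde\Gamma_\Lambda$ may be nondiscrete. For this reason I would avoid any toric-variety Poincaré duality argument and rely only on Cohen--Macaulayness together with Stanley's characterization of Gorenstein$^*$ complexes, with complex coefficients.
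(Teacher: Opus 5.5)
Your proposal is correct and follows the same route as the paper. It uses Reisner's criterion for Cohen--Macaulayness and regularity of the l.s.o.p.\ from Theorem~\ref{thm:lsop}. It then gets the $h$-polynomial as the Hilbert series of the Artinian reduction, and uses Stanley's Gorenstein$^*$ theorem for homology spheres to obtain the one-dimensional socle $R_\Lambda^d$ and the perfect pairings. The only difference is that you write out standard details the paper cites from Stanley's book, namely the $(1-t)$ Hilbert-series bookkeeping and the socle argument for perfection.
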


\begin{proof}
The boundary of a simplicial polytope is a homology sphere. Reisner's
criterion implies that $\C[K]$ is Cohen--Macaulay
\cite{Reisner1976}; the homology-sphere Gorenstein* property and the
duality of its Artinian reduction are standard
\cite[Chapter~II]{Stanley1996}. Every homogeneous system of parameters in
a Cohen--Macaulay ring is regular, and its Artinian reduction has Hilbert
series equal to the $h$-polynomial. The Gorenstein property gives the
perfect pairings.
\end{proof}

\begin{proposition}\label{prop:degree-one-basic-ring}
One has
\[
 \dim_\C R_\Lambda^1=N-d=m.
\]
Equivalently, in Dolbeault bidegrees,
\[
 \dim_\C R_\Lambda^{1,1}=m.
\]
\end{proposition}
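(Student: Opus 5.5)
The plan is to read off $\dim R_\Lambda^1$ from the Hilbert series in Theorem~\ref{thm:basic-Artinian-Gorenstein}. That theorem gives $\operatorname{Hilb}(R_\Lambda;t)=h_K(t)$, so $\dim_\C R_\Lambda^1=h_1(K)$. For a $(d-1)$-dimensional simplicial complex with $f$-vector $(f_{-1},f_0,\ldots)$, the standard relation $\sum_i h_it^{d-i}=\sum_i f_{i-1}(t-1)^{d-i}$ gives
\[
 h_1=f_0-d\,f_{-1}=N-d.
\]
Here $f_{-1}=1$, and $f_0=N$ because every visible label is a vertex of $K$ (Proposition~\ref{prop:Gale-dictionary}). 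Minimal stability and Proposition~\ref{prop:minimal-dimension-ledger} then give $N-d=m$.

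As an independent check that avoids the $h$-vector, I would also compute $R_\Lambda^1$ directly from the presentation in Theorem~\ref{thm:lsop}. The ideal $I_K$ is generated by squarefree monomials of degree at least two, since every singleton is a face. Hence the degree-one part of $I_K+J_\Lambda$ is just the span of the $\theta_u$. By Proposition~\ref{prop:visible-relation-space} this span has dimension $d$. Therefore
\[
 \dim R_\Lambda^1=\dim S_1-d=N-d=m.
\]
This second route uses only the injectivity of $u\mapsto\theta_u$ and does not need Cohen--Macaulayness.

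For the bigraded restatement, I would note the grading convention of the introduction: the generators $v_j$ of $R_\Lambda$ sit in Dolbeault bidegree $(1,1)$. The polynomial degree-one part is therefore exactly the bidegree-$(1,1)$ part, so $R_\Lambda^{1,1}=R_\Lambda^1$.

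No step is a real obstacle. The only point needing care is that $I_K$ has no linear generators, which holds because no vertex of $K$ is a ghost. That fact lets the two computations agree.
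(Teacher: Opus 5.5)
Your proposal is correct and matches the paper's proof. The paper's main argument is your second route: $I_K$ has no linear terms, and $\dim(J_\Lambda)_1=d$ by Proposition~\ref{prop:visible-relation-space}, so $\dim R_\Lambda^1=N-d=m$. It also records your first route, $h_1(K)=N-d$ together with Theorem~\ref{thm:basic-Artinian-Gorenstein}, as an equivalent alternative.
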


\begin{proof}
The Stanley--Reisner ideal has no linear terms, so
$\dim\C[K]_1=N$. Proposition~\ref{prop:visible-relation-space} gives
$\dim(J_\Lambda)_1=d$. Therefore
\[
 \dim R_\Lambda^1=N-d=m.
\]
Equivalently, $h_1(K)=N-d$ and
Theorem~\ref{thm:basic-Artinian-Gorenstein} applies.
\end{proof}

\begin{remark}
The ideal $J_\Lambda$ is canonical, although a list of generators depends
on a basis of $(\mathfrak t_\Lambda)_\C^\vee$. Completeness of the fan also gives injectivity of visible projection
on $\mathcal L_\Lambda$.
\end{remark}

\section{The basic Dolbeault ring and the abstract Hirsch model}
\label{sec:basic-Dolbeault-Hirsch}

Basic Dolbeault cohomology is concentrated in bidegrees $(a,a)$.
The finite model for ordinary Dolbeault cohomology adjoins leafwise
one-forms, whose differential is their basic curvature class.

Write $(x_1,\ldots,x_{\widehat N})=(v_1,\ldots,v_N,u_1,\ldots,u_m)$
for the visible and ghost polynomial variables.  Let $\widehat J_\Lambda$
be the ideal generated by
$\sum_{i=1}^{\widehat N}\ell_i x_i$ for
$\ell=(\ell_i)\in\mathcal L_\Lambda$, in the polynomial ring on all
visible and ghost labels.

\begin{theorem}[Basic Dolbeault ring]
\label{thm:basic-Dolbeault-ring}
There is an isomorphism
\[
 H_{\mathcal F_\Lambda}^{*,*}(\XL)
 \cong
 R_\Lambda
 =
 \frac{\C[v_1,\ldots,v_N]}{I_K+J_\Lambda}.
\]
Every generator has bidegree $(1,1)$, and the basic Dolbeault cohomology is
zero off the diagonal.

Equivalently, adjoining the ghost variables gives the presentation
\[
 H_{\mathcal F_\Lambda}^{*,*}(\XL)
 \cong
 \frac{\C[v_1,\ldots,v_N,u_1,\ldots,u_m]}
 {I_{\widehat K}+\widehat J_\Lambda},
\]
because every ghost variable belongs to $I_{\widehat K}$.
\end{theorem}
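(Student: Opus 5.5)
The plan is to import the basic-cohomology computation rather than redo it, and then translate its presentation into the visible one. By Proposition~\ref{prop:imported-model-legality}, at every realizable rank the manifold is a compact complex moment-angle manifold $U_{\widehat K}/\exp(\mathfrak h_\Lambda)$. Its canonical foliation is locally free, central and transversely K\"ahler, and its projected fan is complete and normal. We therefore apply \cite[Theorem~5.10]{IshidaKrutowskiPanov2022} and \cite[Theorems~4.12 and~5.1]{KrutowskiPanov2021} to this presentation. Those results give the basic cohomology, and its Dolbeault refinement, as the face ring of the underlying complex on the ambient label set modulo the ideal generated by the linear forms $\sum_i\ell_ix_i$ with $\ell$ in the annihilator of $\mathfrak h_\Lambda+\overline{\mathfrak h_\Lambda}$. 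The generators $x_i$ are the basic classes of the invariant divisors and have bidegree $(1,1)$.

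The first step is to match conventions. The annihilator in those theorems is our $\mathcal L_\Lambda$, because $(\mathfrak r_\Lambda)_\C=\mathfrak h_\Lambda\oplus\overline{\mathfrak h_\Lambda}$ by Proposition~\ref{prop:real-part-injective}. This gives the ghost presentation $\C[v,u]/(I_{\widehat K}+\widehat J_\Lambda)$. The claims about bidegree $(1,1)$ generators and vanishing off the diagonal also come from the cited Dolbeault version.

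As a consistency check, the $(1,1)$ placement can be seen directly. Transverse K\"ahlerness and the $\partial\bar\partial$-lemma for basic forms, together with the fact that basic cohomology is generated in degree two by Thom-type classes of the divisors $\{x_i=0\}$, show that all classes are products of $(1,1)$-classes.

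The second step removes the ghosts. Each ghost label is a ghost vertex of $\widehat K$, so $u_\mu\in I_{\widehat K}$ and $\C[v,u]/I_{\widehat K}=\C[K]$. Setting $u=0$ sends a generator $\sum\ell_ix_i$ of $\widehat J_\Lambda$ to $\sum_{j\le N}\ell_jv_j$. Since $\varpi_\Lambda^*$ identifies $(\mathfrak t_\Lambda)_\C^\vee$ with $\mathcal L_\Lambda$ and $a_j=\varpi_\Lambda(e_j)$, the visible truncation of $\varpi_\Lambda^*u$ is exactly $\theta_u$. The image of $\widehat J_\Lambda$ is therefore $J_\Lambda$, and the two presentations agree. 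Theorem~\ref{thm:lsop} and Theorem~\ref{thm:basic-Artinian-Gorenstein} then identify the result with the finite Artinian ring $R_\Lambda$.

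The main obstacle is the hypothesis check. The cited theorems are stated for complex moment-angle manifolds with a chosen marked fan, and their ideal is written with the ray marking $\widetilde\lambda_\Lambda$. The ghost labels are not rays of $\Sigma_\Lambda$, so their contribution must vanish. I would first verify that the marking in \cite{IshidaKrutowskiPanov2022} agrees with the quotient marking of Theorem~\ref{thm:complete-marked-fan}. Then I would confirm that ghost vertices enter only through $I_{\widehat K}$, which is automatic because a singleton nonface is a generator of the Stanley--Reisner ideal. The possibly nondiscrete $\widetilde\Gamma_\Lambda$ is harmless here because we work over $\C$.
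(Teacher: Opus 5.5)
Your proposal is correct and follows essentially the same route as the paper. You use Proposition~\ref{prop:imported-model-legality} to import the Ishida--Krutowski--Panov and Krutowski--Panov basic (Dolbeault) presentations, identify their linear ideal through the annihilator $\mathcal L_\Lambda$ of $(\mathfrak r_\Lambda)_\C$, and pass between the ghost and visible presentations because every ghost singleton is a nonface. The only difference is direction, which is immaterial: you start from the ambient ring and strip the ghosts, checking that the visible truncation of $\varpi_\Lambda^*u$ is $\theta_u$, while the paper reads off the visible ring directly and then adjoins the ghost variables.
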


\begin{proof}
The hypotheses of the cited model theorems hold by
Proposition~\ref{prop:imported-model-legality}.
Apply the basic-cohomology presentation for maximal torus actions
\cite[Theorem~5.10]{IshidaKrutowskiPanov2022} and its basic Dolbeault
refinement \cite[Theorem~5.1]{KrutowskiPanov2021} to the complete marked
fan of Theorem~\ref{thm:complete-marked-fan}.  Equivalently, apply the
direct complex moment-angle statement
\cite[Theorem~4.12]{KrutowskiPanov2021}.  These results give the
visible-ray ring $\C[v_1,\ldots,v_N]/(I_K+J_\Lambda)$ directly.  In real basic cohomology, each $v_j$ has cohomological degree two; after complexification, the Dolbeault refinement places it in
bidegree $(1,1)$, which is internal degree one in our convention.  Its
linear ideal is determined by the annihilator of
$(\mathfrak r_\Lambda)_\C$, which is $\mathcal L_\Lambda$.  To obtain the second presentation, adjoin the ghost variables and
use the fact that every ghost singleton is a nonface, so those variables
vanish in the Stanley--Reisner quotient.
\end{proof}

\begin{theorem}[Abstract Dolbeault--Hirsch model]
\label{thm:abstract-Hirsch-model}
There are complex vector spaces
\[
 W_\Lambda^{1,0},\qquad W_\Lambda^{0,1},
 \qquad
 \dim_\C W_\Lambda^{1,0}
 =
 \dim_\C W_\Lambda^{0,1}
 =
 m,
\]
and a finite differential bigraded algebra
\[
 \mathcal B_\Lambda
 =
 R_\Lambda\otimes
 \Lambda(W_\Lambda^{1,0}\oplus W_\Lambda^{0,1})
\]
which is connected to the Dolbeault complex of $\XL$ by a zigzag of
quasi-isomorphisms of differential bigraded algebras.
Moreover,
\[
 \dim_\C\mathcal B_\Lambda
 =\left(\sum_{a=0}^{d}h_a(K)\right)2^{2m},
 \qquad
 \mathcal B_\Lambda^{p,q}=0
 \quad\text{unless}\quad 0\leq p,q\leq N.
\]
Its differential satisfies
\[
 \bar dR_\Lambda=0,\qquad
 \bar dW_\Lambda^{1,0}\subseteq R_\Lambda^{1,1},
 \qquad
 \bar dW_\Lambda^{0,1}=0.
\]
\end{theorem}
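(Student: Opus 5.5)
The plan is to obtain $\mathcal B_\Lambda$ by invoking the ordinary Dolbeault-model theorems of Ishida--Kasuya \cite[Theorems~1.2 and~4.13]{IshidaKasuya2019} and Krutowski--Panov \cite[Theorem~6.1]{KrutowskiPanov2021}. By Proposition~\ref{prop:imported-model-legality} their hypotheses hold at every realizable rank: the manifold is compact, the torus action is maximal with a free point, and the canonical foliation is central and transversely K\"ahler. These theorems produce a differential bigraded algebra of the form (basic Dolbeault cohomology)$\otimes\Lambda(\text{leafwise forms})$, connected to $(\Omega^{*,*}(\XL),\bar\partial)$ by a zigzag of quasi-isomorphisms. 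The argument has three steps.

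\emph{Identifying the factors.} By Theorem~\ref{thm:basic-Dolbeault-ring} the basic factor is $R_\Lambda$. The leafwise factor is the exterior algebra on the complexified dual of the leaf algebra. By Proposition~\ref{prop:canonical-central-foliation} the leaf algebra $(\mathfrak r_\Lambda)_\C/\mathfrak h_\Lambda\cong\overline{\mathfrak h_\Lambda}$ has complex dimension $m$. Its complexified dual splits by type into the $(1,0)$ and $(0,1)$ leafwise forms. We define $W_\Lambda^{1,0}$ and $W_\Lambda^{0,1}$ to be these two summands, each of dimension $m$.

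\emph{The differential.} It is read off from the cited model. $R_\Lambda$ consists of $\bar\partial$-closed basic classes, so $\bar d R_\Lambda=0$. The $(0,1)$ leafwise forms can be chosen $\bar\partial$-closed: they are torus-invariant antiholomorphic differentials of the action coordinates. The $(1,0)$ leafwise forms transgress to basic classes of bidegree $(1,1)$, since the differential raises $q$ by one and lands in the basic part. This gives $\bar d W_\Lambda^{1,0}\subseteq R_\Lambda^{1,1}$. Here one must check that the imported model's differential has no components into $R_\Lambda\otimes W$ terms. This follows from the bigrading: $\bar d$ has bidegree $(0,1)$, $W^{1,0}$ sits in $(1,0)$, and the only generators of bidegree $(1,1)$ are the $R_\Lambda^{1,1}$ generators, together with the products $W^{1,0}\otimes W^{0,1}$. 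The last possibility is excluded by the weight/torus-invariance structure of the model, in which the leafwise part is a Chevalley--Eilenberg complex of an abelian algebra and so carries no quadratic term.

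\emph{Dimension count and vanishing range.} Theorem~\ref{thm:basic-Artinian-Gorenstein} gives $\dim R_\Lambda=\sum_a h_a(K)$, and the exterior algebra on $2m$ generators has dimension $2^{2m}$. For the vanishing range: $R_\Lambda^{a,a}$ is nonzero only for $0\leq a\leq d$, and $\Lambda^sW^{1,0}\otimes\Lambda^tW^{0,1}$ only for $s,t\leq m$. Hence $p,q\leq d+m=N$.

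The main obstacle I expect is the bookkeeping in matching the cited models' conventions with the present presentation. This includes the ghost variables, the identification of the basic factor with exactly $R_\Lambda$ rather than a presentation in all $\widehat N$ variables (handled by the second presentation in Theorem~\ref{thm:basic-Dolbeault-ring}), and confirming that the zigzag is multiplicative and bigraded at deficient rank, where no Cox cover exists. I would handle the last point by relying only on the connected-quotient presentation of Theorem~\ref{thm:global-ghost-quotient}, which is what the cited theorems use.
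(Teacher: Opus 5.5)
Your plan is essentially the paper's proof: Proposition~\ref{prop:imported-model-legality} licenses the Ishida--Kasuya and Krutowski--Panov models, Theorem~\ref{thm:basic-Dolbeault-ring} identifies the basic factor with $R_\Lambda$, and the dimension and vanishing statements follow from the Hilbert function of $R_\Lambda$ together with $d+m=N$. The one justification to replace is your reason for $\bar dW_\Lambda^{0,1}=0$, since the leaves of $\mathcal F_\Lambda$ carry no global action coordinates on $\XL$; the paper instead takes this directly from \cite[Theorem~6.1]{KrutowskiPanov2021}, or from the general model, where $\bar dW_\Lambda^{0,1}\subseteq H_{\mathcal F_\Lambda}^{0,2}(\XL)$, a group that vanishes because basic Dolbeault cohomology is concentrated on the diagonal.
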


\begin{proof}
Proposition~\ref{prop:imported-model-legality} verifies the hypotheses of
the general Dolbeault-model theorem of Ishida--Kasuya
\cite[Theorem~1.2, equivalently Theorem~4.13]{IshidaKasuya2019}.
Independently, the complex moment-angle presentation permits the direct
application of \cite[Theorem~6.1]{KrutowskiPanov2021}.  In the notation
of that theorem, each of $W^{1,0}$ and $W^{0,1}$ has dimension
$(\widehat N-d)/2=m$.  The direct theorem gives
$\bar dW^{0,1}=0$.  Alternatively, the general theorem gives
\[
 \bar dW^{0,1}\subseteq
 H_{\mathcal F_\Lambda}^{0,2}(\XL).
\]
The latter group vanishes by
Theorem~\ref{thm:basic-Dolbeault-ring}.
\end{proof}

\begin{remark}
The finite model is defined up to weak equivalence. Choosing invariant
connection forms identifies its generator spaces with the dual leaf algebras.  The concrete
identification
\[
 W_\Lambda^{1,0}
 \cong
 \operatorname{Ann}(\mathfrak h_\Lambda)/\mathcal L_\Lambda
\]
and the formula
\[
 [\mu]\longmapsto
 \left[\sum_{j=1}^N\mu_jv_j\right]
\]
for the transgression are established in Sections~\ref{sec:Cartan-generators}
and~\ref{sec:curvature-map}.
\end{remark}

\section{The Cartan generator spaces}
\label{sec:Cartan-generators}

The leafwise generators are dual to the holomorphic and
antiholomorphic leaf algebras.  We express these dual spaces as
quotients of coordinate covectors, so that their curvature can be
computed from $A$ and $B$.

Recall
\[
 E_\Lambda=\C^{\widehat N},\qquad
 \mathfrak h_\Lambda=\operatorname{im}\Psi_\Lambda\subseteq E_\Lambda,
\]
and
\[
 \mathcal L_\Lambda
 =
 \operatorname{Ann}_{E_\Lambda^\vee}
 (\mathfrak h_\Lambda+\overline{\mathfrak h_\Lambda}).
\]
Recall that
\[
 (\mathfrak r_\Lambda)_\C
 =
 \mathfrak h_\Lambda\oplus\overline{\mathfrak h_\Lambda}.
\]

Since $\operatorname{Re}|_{\mathfrak h_\Lambda}$ is a real-linear
isomorphism onto $\mathfrak r_\Lambda$, the leaf algebra has complex
structure
\[
 J_{\mathcal F}(\operatorname{Re}z)=\operatorname{Im}z
 =\operatorname{Re}(-iz),\qquad z\in\mathfrak h_\Lambda.
\]
With this convention, the holomorphic and antiholomorphic tangent algebras
of the canonical foliation are
\[
 \mathfrak f_\Lambda^{1,0}
 =
 (\mathfrak r_\Lambda)_\C/\mathfrak h_\Lambda
 \cong\overline{\mathfrak h_\Lambda},
\]
and
\[
 \mathfrak f_\Lambda^{0,1}
 =
 (\mathfrak r_\Lambda)_\C/\overline{\mathfrak h_\Lambda}
 \cong\mathfrak h_\Lambda.
\]

\begin{definition}
Define
\[
 W_\Lambda^{1,0}
 =
 \frac{\operatorname{Ann}(\mathfrak h_\Lambda)}
 {\mathcal L_\Lambda},
 \qquad
 W_\Lambda^{0,1}
 =
 \frac{\operatorname{Ann}(\overline{\mathfrak h_\Lambda})}
 {\mathcal L_\Lambda}.
\]
\end{definition}

\begin{theorem}\label{thm:Cartan-spaces}
Restriction of covectors gives exact sequences
\[
 0\longrightarrow\mathcal L_\Lambda
 \longrightarrow\operatorname{Ann}(\mathfrak h_\Lambda)
 \longrightarrow\overline{\mathfrak h_\Lambda}^{\,\vee}
 \longrightarrow0
\]
and
\[
 0\longrightarrow\mathcal L_\Lambda
 \longrightarrow\operatorname{Ann}(\overline{\mathfrak h_\Lambda})
 \longrightarrow\mathfrak h_\Lambda^\vee
 \longrightarrow0.
\]
Consequently,
\[
 W_\Lambda^{1,0}
 \cong
 \overline{\mathfrak h_\Lambda}^{\,\vee}
 \cong
 (\mathfrak f_\Lambda^{1,0})^\vee,
\]
and
\[
 W_\Lambda^{0,1}
 \cong
 \mathfrak h_\Lambda^\vee
 \cong
 (\mathfrak f_\Lambda^{0,1})^\vee.
\]
In particular,
\[
 \dim_\C W_\Lambda^{1,0}
 =
 \dim_\C W_\Lambda^{0,1}
 =
 m.
\]
\end{theorem}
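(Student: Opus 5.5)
The argument is pure linear algebra in $E_\Lambda^\vee=(\C^{\widehat N})^\vee$, using only the decomposition $(\mathfrak r_\Lambda)_\C=\mathfrak h_\Lambda\oplus\overline{\mathfrak h_\Lambda}$ of Proposition~\ref{prop:real-part-injective}. We treat the first sequence. The second follows by exchanging $\mathfrak h_\Lambda$ and $\overline{\mathfrak h_\Lambda}$, or by applying complex conjugation of covectors, which preserves $\mathcal L_\Lambda$ since $\mathfrak h_\Lambda+\overline{\mathfrak h_\Lambda}$ is conjugation-stable.

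Define $r:\operatorname{Ann}(\mathfrak h_\Lambda)\to\overline{\mathfrak h_\Lambda}^{\,\vee}$ by $\ell\mapsto\ell|_{\overline{\mathfrak h_\Lambda}}$. The kernel consists of covectors vanishing on both $\mathfrak h_\Lambda$ and $\overline{\mathfrak h_\Lambda}$, hence on their sum. This is $\mathcal L_\Lambda$ by definition, which gives exactness on the left and in the middle.

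For surjectivity, let $\phi\in\overline{\mathfrak h_\Lambda}^{\,\vee}$. Since $\mathfrak h_\Lambda\cap\overline{\mathfrak h_\Lambda}=0$, the sum is direct, so the functional equal to $0$ on $\mathfrak h_\Lambda$ and to $\phi$ on $\overline{\mathfrak h_\Lambda}$ is well defined on $\mathfrak h_\Lambda\oplus\overline{\mathfrak h_\Lambda}$. Extend it arbitrarily to $E_\Lambda$. The extension lies in $\operatorname{Ann}(\mathfrak h_\Lambda)$ and restricts to $\phi$. The directness of the sum is the one step that is not formal, and it is exactly the content of Proposition~\ref{prop:real-part-injective}.

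The isomorphisms $W_\Lambda^{1,0}\cong\overline{\mathfrak h_\Lambda}^{\,\vee}$ and $W_\Lambda^{0,1}\cong\mathfrak h_\Lambda^\vee$ follow from the two exact sequences. The identifications with $(\mathfrak f_\Lambda^{1,0})^\vee$ and $(\mathfrak f_\Lambda^{0,1})^\vee$ come from dualizing the isomorphisms $\mathfrak f_\Lambda^{1,0}=(\mathfrak r_\Lambda)_\C/\mathfrak h_\Lambda\cong\overline{\mathfrak h_\Lambda}$ and $\mathfrak f_\Lambda^{0,1}\cong\mathfrak h_\Lambda$, which are induced by the same direct sum.

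For the dimension, Proposition~\ref{prop:affine-action} gives $\dim\mathfrak h_\Lambda=m$, and conjugation preserves dimension, so both spaces have dimension $m$. As a consistency check, $\dim\operatorname{Ann}(\mathfrak h_\Lambda)=\widehat N-m=N$ and $\dim\mathcal L_\Lambda=\widehat N-2m=d$, with $N-d=m$.
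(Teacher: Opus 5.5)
Your proof is correct and follows essentially the same route as the paper. You restrict to $\overline{\mathfrak h_\Lambda}$, identify the kernel with $\mathcal L_\Lambda=\operatorname{Ann}(\mathfrak h_\Lambda+\overline{\mathfrak h_\Lambda})$, get surjectivity by extending by zero on $\mathfrak h_\Lambda$ across the direct sum $\mathfrak h_\Lambda\oplus\overline{\mathfrak h_\Lambda}$ and then to $E_\Lambda$, and obtain the second sequence by conjugation; the dimension count via injectivity of $\Psi_\Lambda$ simply makes explicit what the paper leaves implicit.
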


\begin{proof}
A covector in $\operatorname{Ann}(\mathfrak h_\Lambda)$ restricts to a
functional on $\overline{\mathfrak h_\Lambda}$. The kernel consists of
the covectors annihilating both summands, namely $\mathcal L_\Lambda$.
Every functional on $\overline{\mathfrak h_\Lambda}$ extends to
the direct sum
$\mathfrak h_\Lambda\oplus\overline{\mathfrak h_\Lambda}$ after being
set equal to zero on $\mathfrak h_\Lambda$, and then extends to
$E_\Lambda$.
The second sequence is the conjugate of the first.
\end{proof}

\begin{proposition}[Deck-dual identification at full rank]
\label{prop:deck-Cartan-identification}
Assume that $B$ is invertible and set
\[
 V_\Gamma=\GammaL\otimes_{\Z}\C.
\]
The inclusion $\GammaL\subset\C^m$ and the action map
$\Psi_\Lambda:\C^m\to\mathfrak h_\Lambda$ induce canonical
isomorphisms
\[
 V_\Gamma\cong\C^m\cong\mathfrak h_\Lambda.
\]
Consequently,
\[
 W_\Lambda^{0,1}
 \cong
 V_\Gamma^\vee
 \cong
 \GammaLCdual.
\]
\end{proposition}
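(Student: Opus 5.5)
The plan is to compose three canonical isomorphisms. Two are already available in the paper; the third is elementary linear algebra for a lattice of full rank in a complex vector space.

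\emph{First step.} Since $\det B\neq0$, we have $\GammaL=B^{-1}(2\pi i\Z^m)\cong\Z^m$. It is the image under the real-linear isomorphism $B^{-1}$ of the lattice $2\pi i\Z^m$. The natural map
\[
 V_\Gamma=\GammaL\otimes_\Z\C\longrightarrow\C^m,\qquad \gamma\otimes c\longmapsto c\gamma,
\]
is complex-linear. Choose a $\Z$-basis $\gamma_\mu=B^{-1}(2\pi i e_\mu)$ of $\GammaL$. Because $B$ is invertible, $(\gamma_1,\dots,\gamma_m)$ is a $\C$-basis of $\C^m$. The map therefore sends a $\C$-basis to a $\C$-basis, so it is an isomorphism. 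Nothing in this step depends on choices: the map is the multiplication map, and by Proposition~\ref{prop:rank-chart-invariance} the subgroup $\GammaL$ itself does not depend on the indispensable chart.

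\emph{Second step.} The action map $\Psi_\Lambda:\C^m\to\mathfrak h_\Lambda$ is surjective by the definition of $\mathfrak h_\Lambda$. It is injective by Proposition~\ref{prop:affine-action}. Hence it is a canonical isomorphism. Composing with the first step gives $V_\Gamma\cong\C^m\cong\mathfrak h_\Lambda$.

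\emph{Third step.} Dualize. Theorem~\ref{thm:Cartan-spaces} gives $W_\Lambda^{0,1}\cong\mathfrak h_\Lambda^\vee$, induced by restricting covectors in $\operatorname{Ann}(\overline{\mathfrak h_\Lambda})$ to $\mathfrak h_\Lambda$. Taking duals of the second step yields $W_\Lambda^{0,1}\cong V_\Gamma^\vee$. Finally, $\operatorname{Hom}_\C(\GammaL\otimes_\Z\C,\C)\cong\operatorname{Hom}_\Z(\GammaL,\C)$ by the tensor--hom adjunction, sending $\phi$ to $\gamma\mapsto\phi(\gamma\otimes1)$. Explicitly, a class $[\mu]\in W_\Lambda^{0,1}$ goes to the homomorphism $\gamma\mapsto\mu(\Psi_\Lambda\gamma)$.

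The only real content is the first step: one must know that $\GammaL$ spans $\C^m$ over $\C$, and not merely over $\R$. This needs exactly the invertibility of $B$, which is why the statement is restricted to full rank. When $B$ is singular, $\GammaL$ is not even discrete, since it contains $\ker B$. I would also remark that the chart change $B'=LB$ with $L\in\operatorname{GL}_m(\Z)$ leaves $\GammaL$ and $\Psi_\Lambda$ unchanged as maps out of $\C^m$, so the composite is canonical.
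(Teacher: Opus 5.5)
Your proposal is correct and follows the paper's proof essentially step for step. The vectors $B^{-1}(2\pi i e_\mu)$ form a complex basis of $\C^m$, so the complexified inclusion $\GammaL\otimes_\Z\C\to\C^m$ is an isomorphism; $\Psi_\Lambda$ is an isomorphism onto $\mathfrak h_\Lambda$; and dualizing with Theorem~\ref{thm:Cartan-spaces} gives the rest. One minor correction concerns your closing side remark: a change of indispensable chart does not leave $\Psi_\Lambda$ literally unchanged but composes it with the integral monomial transition on exponent space, which carries $\mathfrak h_\Lambda$ to its primed counterpart, so the identification is compatible across charts rather than identical.
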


\begin{proof}
The vectors $B^{-1}(2\pi i e_1),\ldots,B^{-1}(2\pi i e_m)$ form a
complex basis of $\C^m$, so complexification of the lattice inclusion is
an isomorphism.  The map $\Psi_\Lambda$ is an isomorphism from $\C^m$
onto its image $\mathfrak h_\Lambda$.  Dualize and use
Theorem~\ref{thm:Cartan-spaces}.
\end{proof}

Writing a covector as
\[
 \mu=(u,\eta)
 \in(\C^N)^\vee\oplus(\C^m)^\vee,
\]
one has
\[
 \operatorname{Ann}(\mathfrak h_\Lambda)
 =
 \{(u,\eta):A^\vee u+B^\vee\eta=0\},
\]
and
\[
 \mathcal L_\Lambda
 =
 \left\{
 (u,\eta):
 \begin{array}{l}
 A^\vee u+B^\vee\eta=0,\\
 \overline A^{\,\vee}u+\overline B^{\,\vee}\eta=0
 \end{array}
 \right\}.
\]

\begin{lemma}\label{lem:pure-ghost-generators}
The visible projection
\[
 \operatorname{Ann}(\mathfrak h_\Lambda)\longrightarrow(\C^N)^\vee
\]
has kernel
\[
 \{0\}\oplus\ker(B^\vee).
\]
Since the visible projection of $\mathcal L_\Lambda$ is injective, there is
a canonical embedding
\[
 \ker(B^\vee)=(\operatorname{coker}B)^\vee
 \hookrightarrow W_\Lambda^{1,0}.
\]
\end{lemma}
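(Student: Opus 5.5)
The plan is to read everything off the explicit description of $\operatorname{Ann}(\mathfrak h_\Lambda)$ displayed just before the lemma. There $\operatorname{Ann}(\mathfrak h_\Lambda)$ is the space of pairs $(u,\eta)$ satisfying $A^\vee u+B^\vee\eta=0$. This description comes from $\mathfrak h_\Lambda=\operatorname{im}\Psi_\Lambda$ with $\Psi_\Lambda=\binom{A}{B}$: a covector $(u,\eta)$ kills $\mathfrak h_\Lambda$ exactly when $\langle u,AT\rangle+\langle\eta,BT\rangle=0$ for all $T$, that is, when $A^\vee u+B^\vee\eta=0$.

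\emph{Kernel of the visible projection.} The kernel consists of pairs $(0,\eta)$ in $\operatorname{Ann}(\mathfrak h_\Lambda)$, and the defining equation reduces to $B^\vee\eta=0$. So the kernel is $\{0\}\oplus\ker(B^\vee)$. The identification $\ker(B^\vee)=(\operatorname{coker}B)^\vee$ is standard linear algebra: a functional on $\C^m$ factors through $\C^m/\operatorname{im}B$ exactly when it vanishes on $\operatorname{im}B$, which is exactly when $B^\vee\eta=0$.

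\emph{The embedding.} Consider the composite
\[
 \ker(B^\vee)\longrightarrow\operatorname{Ann}(\mathfrak h_\Lambda)\longrightarrow W_\Lambda^{1,0}=\operatorname{Ann}(\mathfrak h_\Lambda)/\mathcal L_\Lambda .
\]
Injectivity means $(\{0\}\oplus\ker B^\vee)\cap\mathcal L_\Lambda=0$. Any element of this intersection lies in $\mathcal L_\Lambda$ and has zero visible part. Injectivity of the visible projection on $\mathcal L_\Lambda$ then forces it to be zero.

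The only step needing justification is that injectivity of the visible projection on $\mathcal L_\Lambda$. The remark after Theorem~\ref{thm:basic-Artinian-Gorenstein} asserts it, but I would verify it directly as follows. Suppose $\ell=(0,\eta)\in\mathcal L_\Lambda$. Then $\ell$ annihilates $(\mathfrak r_\Lambda)_\C$, so by $\varpi_\Lambda^*$ it corresponds to $\ell=\varpi_\Lambda^*u$ for some $u\in(\mathfrak t_\Lambda)_\C^\vee$. Its visible coordinates are $u(a_j)$, and these all vanish. Since the rays $a_j$ of the complete fan $\Sigma_\Lambda$ span $\mathfrak t_\Lambda$, this gives $u=0$, exactly as in Proposition~\ref{prop:visible-relation-space}. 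Hence $\ell=0$, including its ghost part, because $\varpi_\Lambda^*$ is an isomorphism onto $\mathcal L_\Lambda$.

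\emph{Canonicity.} No choices enter the argument. Canonicity with respect to the affine chart is handled separately by the change-of-chart statements, which replace $B$ by $LB$ and so leave $\ker B^\vee$ unchanged up to $L^\vee$.
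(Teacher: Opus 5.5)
Your proof is correct and follows the paper's argument. The paper likewise reads the kernel off the defining equation $A^\vee u+B^\vee\eta=0$, and gets injectivity into $W_\Lambda^{1,0}$ from the fact that an element of $\mathcal L_\Lambda$ with zero visible part must vanish. Your direct verification of that injectivity, via the rays $a_j$ spanning $\mathfrak t_\Lambda$, is the argument of Proposition~\ref{prop:visible-relation-space}; the paper cites it rather than repeating it.
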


\begin{proof}
The condition $(0,\eta)\in\operatorname{Ann}(\mathfrak h_\Lambda)$ is
exactly $B^\vee\eta=0$. The second assertion follows from injectivity of visible projection on the basic relation space.
\end{proof}

\subsection{Realization by invariant connection forms}

We now realize the complex structure $J_{\mathcal F}$ defined above by
invariant connection forms on $\XL$.

\begin{lemma}[Type-compatible invariant connection]
\label{lem:type-compatible-connection}
The complex structure induced on the tangent distribution of
$\mathcal F_\Lambda$ is $J_{\mathcal F}$.  Moreover,
\[
 (\mathfrak r_\Lambda,J_{\mathcal F})^{1,0}
 =\overline{\mathfrak h_\Lambda},
 \qquad
 (\mathfrak r_\Lambda,J_{\mathcal F})^{0,1}
 =\mathfrak h_\Lambda.
\]
There is a $T^{\widehat N}$-invariant connection
\[
 \omega_\nabla\in\Omega^1(\XL;\mathfrak r_\Lambda)
\]
for the locally free infinitesimal $\mathfrak r_\Lambda$-action such that
\[
 \iota_{X_v}\omega_\nabla=v,
 \qquad
 \omega_\nabla\circ J=J_{\mathcal F}\circ\omega_\nabla.
\]
Consequently, after complexification,
\[
 \omega_\nabla^{1,0}
 \in\Omega^{1,0}(\XL;\overline{\mathfrak h_\Lambda}),
 \qquad
 \omega_\nabla^{0,1}
 \in\Omega^{0,1}(\XL;\mathfrak h_\Lambda).
\]
\end{lemma}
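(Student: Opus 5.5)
We identify the tangent bundle of the foliation with the trivial bundle with fibre $\mathfrak r_\Lambda$, compare $J$ with $J_{\mathcal F}$ on fundamental fields, and then build the connection by projecting onto the leaf directions along a $J$-invariant, $T^{\widehat N}$-invariant complement. The type statements are then linear algebra.

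\emph{Step 1: the induced complex structure.} For $v\in\mathfrak r_\Lambda$, write $X_v$ for the fundamental vector field on $\XL$. Choose $z\in\mathfrak h_\Lambda$ with $v=\operatorname{Re}z$; such $z$ exists and is unique by Proposition~\ref{prop:real-part-injective}.
\begin{itemize}
\item On $U_{\widehat K}$ the holomorphic vector field of the complex one-parameter group $\exp(\zeta\xi)$, $\xi\in\C^{\widehat N}$, has real part the generator of $t\mapsto\exp(t\xi)$. Its $J$-image is the generator of $t\mapsto\exp(it\xi)$.
\item The complexified action $\exp(\mathfrak h_\Lambda)$ is quotiented out. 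So on $\XL$ the real field generated by $z$ vanishes, and so does the one generated by $iz$.
\item Write $z=v+iy$ with $v,y$ real. The field of the real vector $x$ under $\exp(tx)$ is radial, while $\exp(t\,iy)$ is a torus rotation. Since the field of $v+iy$ vanishes on the quotient, the radial field of $v$ equals minus the rotation field of $y$.
\item The leaf field of $v\in\mathfrak r_\Lambda$ is the torus rotation $\exp_T$. Applying $J$ (multiplication by $i$) to this relation and using the vanishing of the $iz$-field, we get $J X_v=X_{\operatorname{Re}(-iz)}=X_{J_{\mathcal F}v}$.
\end{itemize}
Sign conventions need care here; we match them against the convention fixed before the Definition of $W_\Lambda^{1,0}$.

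\emph{Step 2: the type decomposition.} On $(\mathfrak r_\Lambda)_\C=\mathfrak h_\Lambda\oplus\overline{\mathfrak h_\Lambda}$, take $z\in\mathfrak h_\Lambda$ and the complex-linear extension of $J_{\mathcal F}$. The vector $\operatorname{Re}z-i\,J_{\mathcal F}\operatorname{Re}z=\operatorname{Re}z-i\operatorname{Im}z=\bar z$ has type $(1,0)$. Hence the $(1,0)$-part is $\overline{\mathfrak h_\Lambda}$ and the $(0,1)$-part is $\mathfrak h_\Lambda$, consistent with $\mathfrak f^{1,0}_\Lambda$.

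\emph{Step 3: constructing $\omega_\nabla$.} The action is locally free (Proposition~\ref{prop:canonical-central-foliation}), so $v\mapsto X_v$ trivializes $T\mathcal F_\Lambda\cong\XL\times\mathfrak r_\Lambda$. By Step 1, $T\mathcal F_\Lambda$ is a $J$-invariant subbundle.
\begin{itemize}
\item Average any Riemannian metric over the compact torus $T^{\widehat N}$ and over $J$, that is, replace $g$ by $g+g(J\cdot,J\cdot)$. This gives a $T^{\widehat N}$-invariant Hermitian metric $h$.
\item Let $Q=T\mathcal F_\Lambda^{\perp_h}$. It is $J$-invariant because $h$ is $J$-invariant and $T\mathcal F_\Lambda$ is $J$-invariant. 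It is $T^{\widehat N}$-invariant because the leaf fields commute with the abelian torus action.
\item Define $\omega_\nabla(X)$ as the unique $v$ with $X_v$ equal to the projection of $X$ onto $T\mathcal F_\Lambda$ along $Q$.
\end{itemize}
Then $\iota_{X_v}\omega_\nabla=v$. The form is invariant because $h$, $Q$ and the fields $X_v$ are invariant. Since the projection commutes with $J$, Step 1 gives $\omega_\nabla\circ J=J_{\mathcal F}\circ\omega_\nabla$.

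\emph{Step 4: the types of the components.} Complexify $\omega_\nabla$ and split it by the target decomposition of Step 2. The intertwining relation says that $\omega_\nabla$ sends $T^{1,0}$ into $\overline{\mathfrak h_\Lambda}$ and $T^{0,1}$ into $\mathfrak h_\Lambda$. Hence $\omega^{1,0}_\nabla$ takes values in $\overline{\mathfrak h_\Lambda}$ and $\omega^{0,1}_\nabla$ in $\mathfrak h_\Lambda$.

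\emph{Main obstacle.} The main obstacle is Step 1: getting the sign of $J$ on fundamental fields right, so that it matches $J_{\mathcal F}(\operatorname{Re}z)=\operatorname{Im}z$ rather than its negative. We would settle this by an explicit one-coordinate computation in $U_{\widehat K}$.
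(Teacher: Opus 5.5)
Your proposal is correct and follows essentially the same route as the paper. The paper also computes $JX_v$ on fundamental fields: it represents $X_v$ by the class $[2\pi i v]$ in $\C^{\widehat N}/\mathfrak h_\Lambda$ and uses $v+iJ_{\mathcal F}v\in\mathfrak h_\Lambda$ to get $J[2\pi iv]=[-2\pi v]=[2\pi iJ_{\mathcal F}v]$. It then reads off the eigenspaces and builds $\omega_\nabla$ from the orthogonal projection for a $T^{\widehat N}$-averaged Hermitian metric, exactly as you do; the sign you flag as the main obstacle is already settled by your own Step~1, since $J$ sends $X_v$ to $-2\pi$ times the radial field of $v$, which is $2\pi$ times the rotation field of $y=\operatorname{Im}z=J_{\mathcal F}v$, namely $X_{J_{\mathcal F}v}$.
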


\begin{proof}
For $v\in\mathfrak r_\Lambda$, the unique element of
$\mathfrak h_\Lambda$ with real part $v$ is
$z=v+iJ_{\mathcal F}v$.  With the period-one exponential convention, the
infinitesimal compact-orbit vector $X_v$ is represented in
$\C^{\widehat N}/\mathfrak h_\Lambda$ by $[2\pi iv]$.
Since $v+iJ_{\mathcal F}v\in\mathfrak h_\Lambda$,
\[
 J[2\pi iv]=[-2\pi v]=[2\pi iJ_{\mathcal F}v].
\]
Thus $JX_v=X_{J_{\mathcal F}v}$.  Directly from the definition,
$\mathfrak h_\Lambda$ and $\overline{\mathfrak h_\Lambda}$ are the
$(-i)$- and $(+i)$-eigenspaces, respectively, of the complexification of
$J_{\mathcal F}$.

Average a Hermitian metric over $T^{\widehat N}$.  The tangent
distribution of $\mathcal F_\Lambda$ is $J$-stable, and therefore so is
its orthogonal complement.  The orthogonal projection onto the leaf
distribution commutes with $J$.  Composing it with the inverse of
\[
 \mathfrak r_\Lambda\longrightarrow T\mathcal F_\Lambda,
 \qquad v\longmapsto X_v,
\]
gives the asserted connection and its type decomposition.
\end{proof}

For
\[
 \mu\in\operatorname{Ann}(\mathfrak h_\Lambda),
\]
put
\[
 \lambda_\mu
 =
 \mu|_{(\mathfrak r_\Lambda)_\C}.
\]
This covector vanishes on $\mathfrak h_\Lambda$, so it is the extension
by zero of
\[
 \mu|_{\overline{\mathfrak h_\Lambda}}
 \in\overline{\mathfrak h_\Lambda}^{\,\vee}.
\]
Define
\[
 \alpha_{\nabla,\mu}
 =
 \langle\lambda_\mu,\omega_{\nabla,\C}\rangle
 =
 \left\langle
  \mu|_{\overline{\mathfrak h_\Lambda}},
  \omega_\nabla^{1,0}
 \right\rangle
 \in\Omega^{1,0}(\XL).
\]
Let $W_{\nabla,\Lambda}^{1,0}$ be the span of these forms, and define
$W_{\nabla,\Lambda}^{0,1}$ by conjugation.

\begin{theorem}\label{thm:connection-Cartan-identification}
Evaluation on leaf directions gives isomorphisms
\[
 W_{\nabla,\Lambda}^{1,0}
 \cong
 (\mathfrak f_\Lambda^{1,0})^\vee
 \cong
 W_\Lambda^{1,0},
\]
and
\[
 W_{\nabla,\Lambda}^{0,1}
 \cong
 (\mathfrak f_\Lambda^{0,1})^\vee
 \cong
 W_\Lambda^{0,1}.
\]
These spaces realize the generators of the Ishida--Kasuya Hirsch model.
Under these identifications its holomorphic generator differential is
\[
 \alpha_{\nabla,\mu}
 \longmapsto
 [\bar\partial\alpha_{\nabla,\mu}]_{\mathrm{bas}}.
\]
\end{theorem}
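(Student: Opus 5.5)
The argument has three steps: identify the forms $\alpha_{\nabla,\mu}$ with leaf covectors by evaluating on the fundamental fields, match them with the Ishida--Kasuya generators, and then compute the differential.

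\emph{Step 1: evaluation on leaf directions.} By Lemma~\ref{lem:type-compatible-connection}, $\iota_{X_v}\omega_\nabla=v$ for $v\in\mathfrak r_\Lambda$. Complexifying and writing $X_\xi$ for $\xi\in(\mathfrak r_\Lambda)_\C$, we get
\[
 \alpha_{\nabla,\mu}(X_\xi)=\lambda_\mu(\xi)=\mu(\xi).
\]
This vanishes on $\mathfrak h_\Lambda=(\mathfrak r_\Lambda,J_{\mathcal F})^{0,1}$. So evaluation sends $\alpha_{\nabla,\mu}$ to $\mu|_{\overline{\mathfrak h_\Lambda}}\in\overline{\mathfrak h_\Lambda}^{\,\vee}=(\mathfrak f_\Lambda^{1,0})^\vee$.

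Composing with the map $\mu\mapsto\alpha_{\nabla,\mu}$, we obtain the restriction map $\operatorname{Ann}(\mathfrak h_\Lambda)\to\overline{\mathfrak h_\Lambda}^{\,\vee}$ of Theorem~\ref{thm:Cartan-spaces}. That map is surjective with kernel $\mathcal L_\Lambda$, so $\mu\mapsto\alpha_{\nabla,\mu}$ factors through $W_\Lambda^{1,0}$. It is surjective onto $W^{1,0}_{\nabla,\Lambda}$ by definition of that span. It is injective on $W^{1,0}_\Lambda$ because a form that vanishes as a form evaluates to zero on leaf directions. This gives both isomorphisms in the $(1,0)$ case. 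The $(0,1)$ statement follows by complex conjugation, using $\overline{\omega_\nabla^{1,0}}=\omega_\nabla^{0,1}$, which holds because $\omega_\nabla$ is real.

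\emph{Step 2: matching with the Ishida--Kasuya generators.} In \cite[Section~4]{IshidaKasuya2019}, the generators of the Hirsch model are constructed exactly from a torus-invariant connection form for the transverse structure, split by type. Their construction requires three properties: invariance under the torus, the normalization $\iota_{X_v}\omega=v$, and $J$-compatibility. Lemma~\ref{lem:type-compatible-connection} supplies all three, and the hypotheses of their theorem were checked in Proposition~\ref{prop:imported-model-legality}. Their generator spaces are $(\mathfrak f^{1,0})^\vee$ and $(\mathfrak f^{0,1})^\vee$, paired with the connection components, and these are precisely our $W_{\nabla,\Lambda}^{1,0}$ and $W_{\nabla,\Lambda}^{0,1}$. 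Any other choice of invariant connection changes $\omega_\nabla$ by a form vanishing on the leaves. So the identification with $W_\Lambda$ from Step 1 does not depend on the connection chosen.

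\emph{Step 3: the differential.} Each $\alpha_{\nabla,\mu}$ is invariant under the torus, and its contraction with every $X_v$ is the constant $\mu(v)$. Hence for $v\in\mathfrak r_\Lambda$,
\[
 \mathcal L_{X_v}\alpha=0,\qquad d(\iota_{X_v}\alpha)=0,
\]
and Cartan's formula gives $\iota_{X_v}d\alpha=0$. So $d\alpha_{\nabla,\mu}$ is basic.

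The obstacle is the type: we need $\partial\alpha_{\nabla,\mu}$ to contribute nothing, so that $d\alpha_{\nabla,\mu}$ is the basic $(1,1)$ form $\bar\partial\alpha_{\nabla,\mu}$. Here $\partial\alpha_{\nabla,\mu}$ is a $(2,0)$ form whose contraction with leaf fields vanishes by the computation above, so it is a basic $(2,0)$ form. We first try to show it is basic-$\partial$-exact, or zero in basic cohomology, because $H^{2,0}_{\mathcal F}=0$ by Theorem~\ref{thm:basic-Dolbeault-ring}. Its class therefore vanishes, and this is exactly how \cite{IshidaKasuya2019} define the generator differential: modulo basic coboundaries, projected to $H^{1,1}_{\mathcal F}$. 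With that, the differential is $\alpha_{\nabla,\mu}\mapsto[\bar\partial\alpha_{\nabla,\mu}]_{\mathrm{bas}}$, which is well defined in $R_\Lambda^{1,1}$. The $(0,1)$ generators are then automatically closed, consistent with Theorem~\ref{thm:abstract-Hirsch-model}.
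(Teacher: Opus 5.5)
Your proposal follows the paper's own route. Evaluation on complexified leaf fields, together with Theorem~\ref{thm:Cartan-spaces}, gives the isomorphisms; invariance and constant leaf contractions make $d\alpha_{\nabla,\mu}$ basic by Cartan's formula; and the identification with the Ishida--Kasuya generators, with differential $\alpha_{\nabla,\mu}\mapsto[\bar\partial\alpha_{\nabla,\mu}]_{\mathrm{bas}}$, is taken from \cite{IshidaKasuya2019}.

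The only blemish is the detour in Step~3, which you do not need. The $(2,0)$-part $\partial\alpha_{\nabla,\mu}$ plays no role in the Dolbeault differential, since $\bar\partial\alpha_{\nabla,\mu}$ is already a $\bar\partial_B$-closed basic $(1,1)$-form. As written, that step is also imprecise: $H^{2,0}_{\mathcal F}=0$ alone does not make $\partial\alpha_{\nabla,\mu}$ basic-$\partial$-exact, because you have not checked that it is $\bar\partial$-closed. The correct justification is that it is $\partial_B$-closed, and basic $\partial$-cohomology in bidegree $(2,0)$ is conjugate to $H^{0,2}_{\mathcal F}=0$.
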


\begin{proof}
The form $\alpha_{\nabla,\mu}$ depends on the class of $\mu$ modulo
$\mathcal L_\Lambda$, since this relation space annihilates
$(\mathfrak r_\Lambda)_\C$.  For
$\zeta\in\overline{\mathfrak h_\Lambda}$, let
$X_\zeta^{1,0}$ be the normalized complexified leaf vector characterized
by
\[
 \omega_{\nabla,\C}(X_\zeta^{1,0})=\zeta.
\]
Then
\[
 \alpha_{\nabla,\mu}(X_\zeta^{1,0})=\mu(\zeta),
\]
so evaluation recovers
$\mu|_{\overline{\mathfrak h_\Lambda}}$.  The component forms are
invariant and have constant contractions with leaf vector fields.
For every $v\in(\mathfrak r_\Lambda)_\C$, Cartan's formula gives
\[
 \iota_{X_v}d\alpha_{\nabla,\mu}=0,
 \qquad
 \mathcal L_{X_v}d\alpha_{\nabla,\mu}=0,
\]
so their differentials are basic.  The resulting real connection
components have the nondegenerate contraction pairing required in
\cite[Proposition~4.9]{IshidaKasuya2019}.  After taking types, the
explicit formulation of
\cite[Theorem~1.2, equivalently Theorem~4.13]{IshidaKasuya2019}
identifies these spaces with the generator spaces in the
Dolbeault--Hirsch model and sends a generator $\alpha$ to the basic
Dolbeault class $[\bar\partial\alpha]_{\mathrm{bas}}$.
\end{proof}

\begin{proposition}[Connection independence]
For two type-compatible invariant connections, corresponding generator
forms differ by basic forms. Therefore the basic class
\[
 [\bar\partial\alpha_{\nabla,\mu}]_{\mathrm{bas}}
\]
is independent of the connection.
\end{proposition}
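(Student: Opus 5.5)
Take two type-compatible invariant connections $\omega_\nabla$ and $\omega_{\nabla'}$ as in Lemma~\ref{lem:type-compatible-connection}, and let $\beta=\omega_{\nabla'}-\omega_\nabla\in\Omega^1(\XL;\mathfrak r_\Lambda)$. The plan is to show that $\beta$ is basic and that $\alpha_{\nabla',\mu}-\alpha_{\nabla,\mu}$ is a basic $(1,0)$-form, and then to pass to basic Dolbeault classes.

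\textbf{Step 1: the difference is basic.} Both connections satisfy $\iota_{X_v}\omega=v$ for every $v\in\mathfrak r_\Lambda$, so $\iota_{X_v}\beta=0$, and by complex linearity the same holds for all $v\in(\mathfrak r_\Lambda)_\C$. Both are $T^{\widehat N}$-invariant, so $\mathcal L_{X_v}\beta=0$. The leaf group $H_{\mathcal F,\Lambda}$ lies in the connected torus and the torus is abelian, so its generators $X_v$ commute with the torus action. Hence $\beta$ is horizontal and invariant, that is, basic with values in $\mathfrak r_\Lambda$.

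\textbf{Step 2: the type components are basic.} Both connections intertwine $J$ with $J_{\mathcal F}$, so $\beta\circ J=J_{\mathcal F}\circ\beta$. The complexified type decomposition then gives $\beta^{1,0}\in\Omega^{1,0}(\XL;\overline{\mathfrak h_\Lambda})$. This component is again basic, because contraction with leaf vectors and Lie derivative along them commute with taking types: the leaf directions form a $J$-stable distribution preserved by holomorphic flows. Pairing with the constant covector $\mu|_{\overline{\mathfrak h_\Lambda}}$ yields
\[
 \alpha_{\nabla',\mu}-\alpha_{\nabla,\mu}
 =\bigl\langle\mu|_{\overline{\mathfrak h_\Lambda}},\beta^{1,0}\bigr\rangle
 =:\gamma,
\]
a basic $(1,0)$-form.

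\textbf{Step 3: the classes agree.} We have $\bar\partial\alpha_{\nabla',\mu}-\bar\partial\alpha_{\nabla,\mu}=\bar\partial\gamma=\bar\partial_{\mathrm{bas}}\gamma$, since the basic subcomplex is closed under $\bar\partial$ for a holomorphic foliation. This difference is therefore a basic $\bar\partial$-coboundary, and the two basic classes coincide.

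\textbf{Main obstacle.} The delicate point is Step 2: the basic Dolbeault complex must be closed under type projection and under $\bar\partial$. I would justify this with the transverse holomorphic structure of the central foliation from Proposition~\ref{prop:canonical-central-foliation}, as used in \cite{IshidaKasuya2019}. In local foliated holomorphic charts, basic forms are exactly forms in the transverse variables with leafwise-constant coefficients, and both type projection and $\bar\partial$ preserve this property.
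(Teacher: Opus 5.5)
Your proof is correct and follows the paper's argument. The difference $\beta$ of the two connection forms is horizontal and $T^{\widehat N}$-invariant, hence basic, so the generator forms differ by a basic $(1,0)$-form, and $\bar\partial$ changes the curvature only by a basic $\bar\partial$-coboundary. Your Step~2 is slightly more than needed: $\alpha_{\nabla',\mu}-\alpha_{\nabla,\mu}=\langle\lambda_\mu,\beta_\C\rangle$ with $\lambda_\mu$ a constant covector, so it is basic directly from Step~1.
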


\begin{proof}
The difference of two connection forms vanishes on the leaf distribution
and is invariant, hence basic. Applying $\bar\partial_{\mathrm{bas}}$ changes the
curvature by an exact basic form.
\end{proof}

\subsection{Curvature by visible projection}

For
\[
 \mu=(u,\eta)\in\operatorname{Ann}(\mathfrak h_\Lambda),
\]
define
\[
 c_\Lambda^{\mathrm{alg}}([\mu])
 =
 \left[\sum_{j=1}^Nu_jv_j\right]
 \in R_\Lambda^{1,1}.
\]

\begin{proposition}
The map
\[
 c_\Lambda^{\mathrm{alg}}:
 W_\Lambda^{1,0}\longrightarrow R_\Lambda^{1,1}
\]
is well defined.
\end{proposition}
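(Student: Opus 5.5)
Well-definedness here means two things. First, the assignment $\mu=(u,\eta)\mapsto\sum_j u_jv_j$ has to be linear on $\operatorname{Ann}(\mathfrak h_\Lambda)$. Second, it has to send the subspace $\mathcal L_\Lambda$ into $J_\Lambda$, so that it descends to the quotient $W_\Lambda^{1,0}=\operatorname{Ann}(\mathfrak h_\Lambda)/\mathcal L_\Lambda$ with values in $R_\Lambda^{1,1}=S_1/(J_\Lambda)_1$, using the vanishing $(I_K)_1=0$ from Proposition~\ref{prop:degree-one-basic-ring}. Linearity is immediate, since the formula is the visible projection followed by the identification $(\C^N)^\vee\cong S_1$ and then the quotient map. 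The whole content is therefore the inclusion: for $(u,\eta)\in\mathcal L_\Lambda$, the linear form $\sum_j u_jv_j$ should be one of the $\theta_{u'}$.

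I would take $(u,\eta)\in\mathcal L_\Lambda=\operatorname{Ann}_\C(\mathfrak h_\Lambda+\overline{\mathfrak h_\Lambda})$. Since $\mathfrak h_\Lambda+\overline{\mathfrak h_\Lambda}=(\mathfrak r_\Lambda)_\C$ by Proposition~\ref{prop:real-part-injective}, the covector $\mu=(u,\eta)$ on $\C^{\widehat N}$ kills $(\mathfrak r_\Lambda)_\C$. It therefore factors through the complexified quotient $(\mathfrak t_\Lambda)_\C=\C^{\widehat N}/(\mathfrak r_\Lambda)_\C$. In other words, there is a unique $u'\in(\mathfrak t_\Lambda)_\C^\vee$ with $\mu=u'\circ\varpi_\Lambda$; this is precisely the isomorphism $\varpi_\Lambda^*$ recorded before Proposition~\ref{prop:visible-relation-space}. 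I would then evaluate on the visible basis vectors: $u_j=\mu(e_j)=u'(\varpi_\Lambda e_j)=u'(a_j)$ for $1\le j\le N$. Hence $\sum_j u_jv_j=\sum_j u'(a_j)v_j=\theta_{u'}\in J_\Lambda$, and its class in $R_\Lambda^{1,1}$ vanishes. The ghost coordinates $\eta$ never enter, because they are simply discarded by the visible projection. This is consistent with the second presentation in Theorem~\ref{thm:basic-Dolbeault-ring}, where ghost variables lie in $I_{\widehat K}$.

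Finally, I would note that the result does not depend on the representative: two representatives of one class differ by an element of $\mathcal L_\Lambda$, which maps to zero by the previous paragraph. I do not expect a real obstacle. The only point needing care is matching conventions, namely that $\theta_u$ is defined using the visible rays $a_j=\varpi_\Lambda(e_j)$ and that $\mathcal L_\Lambda$ is exactly $\varpi_\Lambda^*\bigl((\mathfrak t_\Lambda)_\C^\vee\bigr)$. Both are already stated in Section~\ref{sec:basic-SR-ring}.
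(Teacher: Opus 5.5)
Your proposal is correct and follows the paper's argument. The paper's proof is the one-line observation that changing $\mu$ by an element of $\mathcal L_\Lambda$ changes its visible coefficient vector by a generator $\theta_{u'}$ of $J_\Lambda$. You verify exactly this by writing $\mathcal L_\Lambda=\varpi_\Lambda^*\bigl((\mathfrak t_\Lambda)_\C^\vee\bigr)$ and computing $u_j=u'(a_j)$.
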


\begin{proof}
Changing $\mu$ by an element of $\mathcal L_\Lambda$ changes its visible
coefficient vector by one of the linear relations generating
$J_\Lambda$.
\end{proof}

\section{The explicit curvature map}
\label{sec:curvature-map}

A coordinate covector defines a scalar connection form.  Its
curvature class is the corresponding coordinate linear form in basic
cohomology.  Restricting this characteristic map to holomorphic
leafwise covectors gives $c_\Lambda$.  We fix the normalization first.

Put
\[
 \widehat N=N+m=n-1,\qquad
 E_\Lambda=\C^{\widehat N},\qquad
 \mathfrak r_\Lambda=\operatorname{Re}(\mathfrak h_\Lambda).
\]
We use the coordinate-torus exponential fixed in
Section~\ref{sec:marked-fan-foliation},
\[
 \exp_T(x)=(e^{2\pi ix_1},\ldots,e^{2\pi ix_{\widehat N}}),
\]
and the Cartan differential $d_{\mathrm C}=d-\iota$.  This is the
Cartan model of equivariant cohomology: invariant differential forms are
adjoined to polynomial functions on the torus Lie algebra, and a connection
sends the polynomial generators to curvature classes through the Cartan
characteristic homomorphism.  Let
$\varepsilon_i\in E_\Lambda^\vee$ be the positive $i$th coordinate
polynomial.  We normalize
\[
 x_i\in H^2(BT^{\widehat N};\C)
\]
as the classes of the $\varepsilon_i$, and use the same symbols for their
images in equivariant cohomology.  Equivalently, the corresponding
character satisfies
\[
 \chi_i(\exp_Tx)=e^{2\pi ix_i}.
\]
If the principal connection is written in the conventional
$i\R$-valued normalization, its curvature is
$2\pi i\,d\langle\varepsilon_i,\omega_\nabla\rangle$; division by
$2\pi i$ gives the real period-one representative used here.  Hence the
Cartan homomorphism sends
$\varepsilon_i|_{\mathfrak r_\Lambda}$ to
$d\langle\varepsilon_i|_{\mathfrak r_\Lambda},\omega_\nabla\rangle$,
with coefficient $+1$ in this normalization.  The visible images of the
$x_i$ in basic Dolbeault cohomology are denoted by $v_i$.

The dual of
\[
 0\longrightarrow\mathfrak r_\Lambda
 \longrightarrow\R^{\widehat N}
 \overset{\varpi_\Lambda}{\longrightarrow}
 \mathfrak t_\Lambda
 \longrightarrow0
\]
is
\[
 0\longrightarrow
 (\mathfrak t_\Lambda)_\C^\vee
 \overset{\varpi_\Lambda^\vee}{\longrightarrow}
 E_\Lambda^\vee
 \longrightarrow
 (\mathfrak r_\Lambda)_\C^\vee
 \longrightarrow0.
\]
Its left-hand image is
\[
 \mathcal L_\Lambda
 =
 \operatorname{Ann}
 (\mathfrak h_\Lambda+\overline{\mathfrak h_\Lambda}).
\]

\subsection{The Cartan characteristic map}

The coordinate-equivariant cohomology is
\[
 H_{T^{\widehat N}}^*(\mathcal Z_{\widehat K};\C)
 \cong\C[\widehat K],
\]
and the ambient coordinate map is
\[
 \chi_{\mathrm{eq}}:E_\Lambda^\vee\longrightarrow\C[\widehat K]_1,
 \qquad
 (\mu_1,\ldots,\mu_{\widehat N})
 \longmapsto
 \sum_{i=1}^{\widehat N}\mu_ix_i.
\]

Basic cohomology is the Cartan base change
\[
 H_{\mathcal F_\Lambda}^*(\XL)
 \cong
 \C\otimes_{S(\mathfrak t_{\Lambda,\C}^\vee)}
 H_{T^{\widehat N}}^*(\mathcal Z_{\widehat K}),
\]
so
\[
 H_{\mathcal F_\Lambda}^{*,*}(\XL)
 \cong
 \frac{\C[\widehat K]}
 {\varpi_\Lambda^\vee(\mathfrak t_{\Lambda,\C}^\vee)}
 \cong
 R_\Lambda.
\]

Let $\rho_\nabla$ be the morphism obtained by restricting the
$T^{\widehat N}$-Cartan model to $\mathfrak r_\Lambda$ and then applying
the Cartan homomorphism determined by the connection of
Lemma~\ref{lem:type-compatible-connection}.  The construction is the
one used in the Cartan/Weil comparison of
\cite[Section~2.2, Lemmas~3.1--3.2, and the proof of
Theorem~3.4]{IshidaKrutowskiPanov2022}.

\begin{proposition}[Chern--Weil characteristic square]
\label{prop:CW-coordinate-formula}
For
\[
 \widetilde\lambda
 =
 \sum_{i=1}^{\widehat N}
 \widetilde\lambda_i\varepsilon_i
 \in E_\Lambda^\vee,
\]
one has
\[
 \rho_\nabla\left(
  \sum_{i=1}^{\widehat N}
  \widetilde\lambda_ix_i
 \right)
 =
 \left[
  d\left\langle
   \widetilde\lambda|_{(\mathfrak r_\Lambda)_\C},
   \omega_{\nabla,\C}
  \right\rangle
 \right]_{\mathrm{bas}}.
\]
Equivalently, the following square commutes:
\[
\begin{array}{ccc}
 E_\Lambda^\vee
 &\longrightarrow&H_{T^{\widehat N}}^2(\XL;\C)\\[0.3em]
 \big\downarrow\mathrm{res}&&\big\downarrow\rho_\nabla\\[0.3em]
 (\mathfrak r_\Lambda)_\C^\vee
 &\overset{\mathrm{CW}_\nabla}{\longrightarrow}&
 H_{\mathcal F_\Lambda}^2(\XL;\C),
\end{array}
\]
where the upper map sends $\widetilde\lambda$ to
$\sum_i\widetilde\lambda_ix_i$.
\end{proposition}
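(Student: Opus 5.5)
The plan is to unwind the two maps in the Cartan model and compare them on generators. By linearity it suffices to treat $\widetilde\lambda=\varepsilon_i$ for a single coordinate $i$. In the $T^{\widehat N}$-Cartan model, the polynomial generator $x_i$ is the equivariant class of the constant polynomial $\varepsilon_i\in S^1((\mathfrak t^{\widehat N}_\C)^\vee)$, which is $d_{\mathrm C}$-closed because it has form degree zero and $\iota$ acts trivially on polynomial generators. The morphism $\rho_\nabla$ is a composite of two steps. First, we restrict the Cartan model along $\mathfrak r_\Lambda\hookrightarrow\R^{\widehat N}$; this sends the polynomial $\varepsilon_i$ to $\varepsilon_i|_{\mathfrak r_\Lambda}$. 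Second, we apply the Cartan (Chern--Weil) homomorphism for the locally free $\mathfrak r_\Lambda$-action with the invariant connection $\omega_\nabla$ of Lemma~\ref{lem:type-compatible-connection}. That homomorphism sends a linear polynomial $\xi\in\mathfrak r_\Lambda^\vee$ to $\xi$ evaluated on the curvature, namely $d\langle\xi,\omega_\nabla\rangle$ plus the bracket term $\tfrac12[\omega_\nabla,\omega_\nabla]$. The bracket term vanishes because $\mathfrak r_\Lambda$ is abelian, so the image of $\xi$ is exactly $d\langle\xi,\omega_\nabla\rangle$.

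Next I would check that this form is basic, so that it defines a class in $H^2_{\mathcal F_\Lambda}$. Invariance of $\omega_\nabla$ gives $\mathcal L_{X_v}\langle\xi,\omega_\nabla\rangle=0$. The normalization $\iota_{X_v}\omega_\nabla=v$ makes $\iota_{X_v}\langle\xi,\omega_\nabla\rangle=\xi(v)$, which is constant. Cartan's formula then yields $\iota_{X_v}d\langle\xi,\omega_\nabla\rangle=0$ and $\mathcal L_{X_v}d\langle\xi,\omega_\nabla\rangle=0$. This is the same computation used in the proof of Theorem~\ref{thm:connection-Cartan-identification}. Complexifying and extending $\C$-linearly to $\widetilde\lambda|_{(\mathfrak r_\Lambda)_\C}$ paired with $\omega_{\nabla,\C}$ gives the displayed formula, and hence commutativity of the square.

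The step I expect to be the main obstacle is the sign and normalization bookkeeping. One must confirm that, with $\exp_T$ of period one and $d_{\mathrm C}=d-\iota$, the Cartan map sends $\varepsilon_i|_{\mathfrak r_\Lambda}$ to $+d\langle\varepsilon_i|_{\mathfrak r_\Lambda},\omega_\nabla\rangle$ rather than its negative or a $2\pi i$ multiple. I would first verify this by the standard check that $\langle\xi,\omega_\nabla\rangle$ is a $d_{\mathrm C}$-primitive of the difference. Indeed,
\[
 d_{\mathrm C}\langle\xi,\omega_\nabla\rangle=d\langle\xi,\omega_\nabla\rangle-\xi ,
\]
so $\xi$ and $d\langle\xi,\omega_\nabla\rangle$ are cohomologous in the restricted Cartan complex. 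This fixes the coefficient as $+1$, matching the normalization paragraph preceding the proposition. Finally, I would invoke the Cartan/Weil comparison of \cite{IshidaKrutowskiPanov2022} (Lemmas~3.1--3.2) to justify that the restricted equivariant cohomology maps isomorphically onto basic cohomology through this connection. With that, the identity of classes in $H^2_{\mathcal F_\Lambda}(\XL;\C)$ follows.
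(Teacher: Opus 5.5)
Your proposal is correct and follows essentially the same route as the paper. Both arguments restrict the Cartan model from $E_\Lambda^\vee$ to $(\mathfrak r_\Lambda)_\C^\vee$, then use that the leaf algebra is abelian, so the Cartan homomorphism sends $\lambda$ to $\lambda(d\omega_{\nabla,\C})=d\langle\lambda,\omega_{\nabla,\C}\rangle$, with $x_i$ identified with the class of $\varepsilon_i$ by the paper's normalization. Your explicit basicness check and the $d_{\mathrm C}$-primitive $d_{\mathrm C}\langle\xi,\omega_\nabla\rangle=d\langle\xi,\omega_\nabla\rangle-\xi$ (which confirms the coefficient $+1$) are extra details that the paper handles in the normalization paragraph preceding the proposition and in Theorem~\ref{thm:connection-Cartan-identification}.
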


\begin{proof}
Restriction of Cartan models sends the polynomial
$\widetilde\lambda\in E_\Lambda^\vee$ to its restriction to
$(\mathfrak r_\Lambda)_\C$.  Since the leaf algebra is abelian, the
Cartan homomorphism associated with $\omega_\nabla$ sends a degree-two
polynomial $\lambda\in(\mathfrak r_\Lambda)_\C^\vee$ to
\[
 \lambda(d\omega_{\nabla,\C})
 =
 d\langle\lambda,\omega_{\nabla,\C}\rangle.
\]
Naturality of restriction and of the Cartan homomorphism gives the
commutative square.  The $S(E_\Lambda^\vee)$-linear equivariant formality
comparison sends $\widetilde\lambda$ to its coordinate linear form
$\sum_i\widetilde\lambda_ix_i$.  A different lift of a covector on
$(\mathfrak r_\Lambda)_\C$ differs by an element of
\[
 \mathcal L_\Lambda
 =
 \varpi_\Lambda^\vee
 ((\mathfrak t_\Lambda)_\C^\vee),
\]
whose coordinate linear form vanishes after the Cartan base change.
\end{proof}

\subsection{Holomorphic transgression}

Let
\[
 [\mu]\in W_\Lambda^{1,0},
 \qquad
 \mu=(u,\eta)\in\operatorname{Ann}(\mathfrak h_\Lambda).
\]
The covector $\lambda_\mu=\mu|_{(\mathfrak r_\Lambda)_\C}$ is the
canonical extension by zero of the corresponding holomorphic leafwise
covector, and its scalar connection form is $\alpha_{\nabla,\mu}$.

\begin{lemma}[Degree-two basic Hodge edge]
\label{lem:degree-two-basic-edge}
The basic Fr\"olicher edge morphism is an isomorphism
\[
 \epsilon^{1,1}:
 H_{\mathcal F_\Lambda}^2(\XL;\C)
 \overset{\sim}{\longrightarrow}
 H_{\mathcal F_\Lambda}^{1,1}(\XL).
\]
If $\alpha\in\Omega^{1,0}(\XL)$ and $d\alpha$ is basic, then
\[
 \epsilon^{1,1}([d\alpha]_{\mathrm{bas}})
 =
 [\bar\partial\alpha]_{\mathrm{bas}}.
\]
\end{lemma}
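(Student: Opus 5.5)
The plan is to use the bigrading of basic cohomology from Theorem~\ref{thm:basic-Dolbeault-ring}, together with the basic Hodge theory available because $\mathcal F_\Lambda$ is transversely K\"ahler (Proposition~\ref{prop:canonical-central-foliation}).

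\emph{Setting up the comparison.} The basic de Rham complex carries the basic Hodge filtration, and its Fr\"olicher spectral sequence has $E_1^{p,q}=H^{p,q}_{\mathcal F_\Lambda}(\XL)$. Since the foliation is transversely K\"ahler and $\XL$ is compact, the basic $\partial\bar\partial$-lemma holds. This gives degeneration at $E_1$ and a Hodge decomposition $H^2_{\mathcal F_\Lambda}=\bigoplus_{p+q=2}H^{p,q}_{\mathcal F_\Lambda}$. By Theorem~\ref{thm:basic-Dolbeault-ring} the groups $H^{2,0}$ and $H^{0,2}$ vanish, so $H^2_{\mathcal F_\Lambda}=H^{1,1}_{\mathcal F_\Lambda}$. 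I would define $\epsilon^{1,1}$ as the map that projects a basic $d$-closed $2$-form to its $(1,1)$ part and takes its $\bar\partial_{\mathrm{bas}}$-class.

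\emph{Well-definedness and bijectivity.} Let $\beta$ be basic and $d$-closed. Then $\bar\partial\beta^{1,1}=-\partial\beta^{0,2}$. To see that $\beta^{1,1}$ is $\bar\partial$-closed, I would use the vanishing of $H^{0,2}_{\mathcal F_\Lambda}$: it gives $\beta^{0,2}=\bar\partial\gamma$ with $\gamma$ basic of type $(0,1)$, and the $\partial\bar\partial$-lemma then lets me correct $\beta$ within its class by $d\gamma$. If $\beta=d\gamma$ is exact, its $(1,1)$ part is $\partial\gamma^{0,1}+\bar\partial\gamma^{1,0}$. The $\partial\bar\partial$-lemma shows this part is $\bar\partial$-exact. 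Thus $\epsilon^{1,1}$ is well defined. Bijectivity then follows from $E_1$-degeneration together with the concentration of basic cohomology on the diagonal: the only nonzero graded piece of the Hodge filtration on $H^2$ is $E_\infty^{1,1}=E_1^{1,1}$, and the map above is exactly the edge identification. A dimension check also works: both sides have dimension $h_1(K)=m$ by Theorem~\ref{thm:basic-Artinian-Gorenstein}.

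\emph{The formula.} Suppose $\alpha\in\Omega^{1,0}(\XL)$ and $d\alpha$ is basic. Write $d\alpha=\partial\alpha+\bar\partial\alpha$ with types $(2,0)$ and $(1,1)$. Basic forms are preserved by type projection, because the transverse complex structure is invariant; hence $\bar\partial\alpha$ is itself basic. Its $\bar\partial_{\mathrm{bas}}$-closedness is immediate, since $\bar\partial^2=0$. The $(1,1)$ component of the closed basic form $d\alpha$ is $\bar\partial\alpha$, and its $(0,2)$ component is zero. So no correction is needed, and $\epsilon^{1,1}([d\alpha]_{\mathrm{bas}})=[\bar\partial\alpha]_{\mathrm{bas}}$.

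\emph{Main obstacle.} The main point is to justify the basic $\partial\bar\partial$-lemma and $E_1$-degeneration for this nonclosed foliation. I would cite transverse Hodge theory for homologically orientable transversely K\"ahler foliations on compact manifolds (El Kacimi-Alaoui), applied through Proposition~\ref{prop:imported-model-legality}. Taut/homological orientability holds here because the foliation is given by an isometric abelian torus action. If that citation is unavailable, an alternative is to read the edge map off the Ishida--Kasuya model directly. There the basic cohomology equals $R_\Lambda$, and its purity in bidegrees $(a,a)$ forces all Fr\"olicher differentials to vanish. This alternative is only a sketch.
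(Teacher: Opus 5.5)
Your proposal is correct and is essentially the paper's argument: $E_1$-degeneration of the basic Fr\"olicher spectral sequence, together with $H^{2,0}_{\mathcal F_\Lambda}=H^{0,2}_{\mathcal F_\Lambda}=0$ from Theorem~\ref{thm:basic-Dolbeault-ring}, leaves only the $(1,1)$ graded piece of $H^2_{\mathcal F_\Lambda}$, and there $[d\alpha]_{\mathrm{bas}}$ is represented by $\bar\partial\alpha$; the only difference is that the paper cites Ishida--Kasuya \cite[Theorem~4.12]{IshidaKasuya2019} for the basic $\partial\bar\partial$-lemma rather than El Kacimi-Alaoui. Your fallback is in fact a complete proof, not just a sketch: with $E_1$ concentrated on the diagonal, each $d_r\colon E_r^{p,q}\to E_r^{p+r,q-r+1}$ shifts $p-q$ by the odd number $2r-1$ and therefore vanishes, so the ``main obstacle'' disappears and the $\partial\bar\partial$-lemma is not needed here at all.
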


\begin{proof}
The basic $\partial\bar\partial$ lemma for the transversely K\"ahler foliation implies that
the basic Fr\"olicher spectral sequence degenerates at $E_1$
\cite[Theorem~4.12]{IshidaKasuya2019}, and
Theorem~\ref{thm:basic-Dolbeault-ring} gives
\[
 H_{\mathcal F_\Lambda}^{2,0}(\XL)
 =
 H_{\mathcal F_\Lambda}^{0,2}(\XL)
 =0.
\]
Thus total basic degree two has only the $(1,1)$ graded piece.  Write
$d\alpha=\partial\alpha+\bar\partial\alpha$.  Modulo the second step of
the Hodge filtration, the class of $d\alpha$ is represented by its
$(1,1)$ component $\bar\partial\alpha$.  This is exactly the basic
Fr\"olicher edge morphism.
\end{proof}

\begin{theorem}[Explicit Chern--Weil/Dolbeault transgression]
\label{thm:explicit-curvature}
For
\[
 [\mu]=[(u,\eta)]\in W_\Lambda^{1,0},
\]
one has
\[
 c_\Lambda([\mu])
 =
 \left[
 \sum_{j=1}^Nu_jv_j
 \right]
 \in R_\Lambda^{1,1}.
\]
This identity holds at every geometrically realizable rank of $B$.
\end{theorem}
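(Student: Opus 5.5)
The proof will chain the three identifications already set up: the Ishida--Kasuya generator differential (Theorem~\ref{thm:connection-Cartan-identification}), the Chern--Weil characteristic square (Proposition~\ref{prop:CW-coordinate-formula}), and the basic Hodge edge (Lemma~\ref{lem:degree-two-basic-edge}). Fix a type-compatible invariant connection $\omega_\nabla$ as in Lemma~\ref{lem:type-compatible-connection}, and take $\mu=(u,\eta)\in\operatorname{Ann}(\mathfrak h_\Lambda)$. By Theorem~\ref{thm:connection-Cartan-identification}, $c_\Lambda([\mu])=[\bar\partial\alpha_{\nabla,\mu}]_{\mathrm{bas}}$, with $\alpha_{\nabla,\mu}=\langle\lambda_\mu,\omega_{\nabla,\C}\rangle$ and $\lambda_\mu=\mu|_{(\mathfrak r_\Lambda)_\C}$. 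The connection-independence proposition shows that the right-hand side does not depend on $\nabla$, so nothing is lost by fixing it.

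First I will show that $d\alpha_{\nabla,\mu}$ is basic and compute its class. This is already contained in the proof of Theorem~\ref{thm:connection-Cartan-identification}: by Cartan's formula, $\iota_{X_v}d\alpha=0$ and $\mathcal L_{X_v}d\alpha=0$ for all leaf vectors. Next I apply Proposition~\ref{prop:CW-coordinate-formula} with $\widetilde\lambda=\mu$, viewed as a covector in $E_\Lambda^\vee$. Its restriction to $(\mathfrak r_\Lambda)_\C$ is exactly $\lambda_\mu$, so
\[
 [d\alpha_{\nabla,\mu}]_{\mathrm{bas}}
 =\rho_\nabla\Bigl(\sum_{j=1}^N u_jx_j+\sum_{\nu=1}^m\eta_\nu x_{N+\nu}\Bigr).
\]
The Cartan base change identifies basic cohomology with $\C[\widehat K]/\varpi_\Lambda^\vee(\mathfrak t^\vee_{\Lambda,\C})$, which is the ghost presentation of Theorem~\ref{thm:basic-Dolbeault-ring}. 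Every ghost variable lies in $I_{\widehat K}$, since ghost singletons are nonfaces. So the $\eta$-terms vanish and the class is $\bigl[\sum_j u_jv_j\bigr]\in H^2_{\mathcal F_\Lambda}(\XL;\C)$.

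Then I apply Lemma~\ref{lem:degree-two-basic-edge}. Because $\alpha_{\nabla,\mu}$ has type $(1,0)$ and $d\alpha_{\nabla,\mu}$ is basic, the edge isomorphism $\epsilon^{1,1}$ sends $[d\alpha]_{\mathrm{bas}}$ to $[\bar\partial\alpha]_{\mathrm{bas}}$. The remaining check is that, under the isomorphism $H^{*,*}_{\mathcal F_\Lambda}\cong R_\Lambda$ placing $v_j$ in bidegree $(1,1)$, the class $v_j$ in $H^2$ corresponds under $\epsilon^{1,1}$ to the generator $v_j\in R_\Lambda^{1,1}$. This holds because the Dolbeault refinement in Theorem~\ref{thm:basic-Dolbeault-ring} is obtained from the real basic presentation through exactly this degeneration. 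Combining the steps gives $c_\Lambda([\mu])=\bigl[\sum_j u_jv_j\bigr]$, that is, $c_\Lambda=c_\Lambda^{\mathrm{alg}}$. Well-definedness modulo $\mathcal L_\Lambda$ was checked earlier.

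None of these inputs uses invertibility of $B$; they rest only on Proposition~\ref{prop:imported-model-legality}, which holds at every realizable rank. That yields the final sentence of the statement.

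I expect the main obstacle to be normalization and sign bookkeeping. Three conventions must agree: the period-one exponential $\exp_T$, the sign in the Cartan differential $d_{\mathrm C}=d-\iota$, and the generators $v_j$ as used in \cite{KrutowskiPanov2021,IshidaKrutowskiPanov2022}. A mismatch would give $c_\Lambda=\kappa\,c_\Lambda^{\mathrm{alg}}$ with some constant $\kappa$, for instance $\pm 1/2\pi i$. My first step would be to verify, on a single coordinate character, that the curvature of the associated line bundle, with the $2\pi i$ convention fixed in Section~\ref{sec:curvature-map}, maps to $+x_i$. A nonzero scalar would not affect the kernel, the rank, or any later dimension count, so the statement survives up to this normalization even if the check fails.
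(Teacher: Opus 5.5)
Your proposal is correct and follows the paper's proof essentially step for step. You apply Proposition~\ref{prop:CW-coordinate-formula} to $\mu\in E_\Lambda^\vee$, kill the ghost classes in $\C[\widehat K]/\widehat J_\Lambda$, pass to $[\bar\partial\alpha_{\nabla,\mu}]_{\mathrm{bas}}$ by Lemma~\ref{lem:degree-two-basic-edge}, and identify the result with $c_\Lambda$ via Theorem~\ref{thm:connection-Cartan-identification}. The normalization worry you raise is already settled by the conventions fixed at the start of Section~\ref{sec:curvature-map}: the period-one exponential and the division by $2\pi i$ give Cartan coefficient $+1$, and these are built into Proposition~\ref{prop:CW-coordinate-formula}. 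So no stray scalar $\kappa$ appears, and your fallback to an identity ``up to a scalar'' is unnecessary.
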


\begin{proof}
Apply Proposition~\ref{prop:CW-coordinate-formula} to
$\mu\in E_\Lambda^\vee$.  Its restriction to
$(\mathfrak r_\Lambda)_\C$ is $\lambda_\mu$, and the corresponding scalar
connection form is $\alpha_{\nabla,\mu}\in\Omega^{1,0}(\XL)$. Hence
\[
 [d\alpha_{\nabla,\mu}]_{\mathrm{bas}}
 =
 \left[
  \sum_{i=1}^{\widehat N}\mu_ix_i
 \right].
\]
Apply Lemma~\ref{lem:degree-two-basic-edge}.  The ghost coordinate classes
vanish in $\C[\widehat K]/\widehat J_\Lambda$, while the visible classes
are $v_1,\ldots,v_N$.  Therefore
\[
 [\bar\partial\alpha_{\nabla,\mu}]_{\mathrm{bas}}
 =
 \left[
  \sum_{j=1}^Nu_jv_j
 \right],
\]
which is the transferred Ishida--Kasuya differential by
Theorem~\ref{thm:connection-Cartan-identification}.
\end{proof}

\begin{remark}
When the canonical foliation is a holomorphic principal torus bundle over a smooth toric base, this is its first Chern class map.  The coordinate-projection
description agrees with the independent computation of
Panov--Ustinovsky, Theorem~5.4 and Lemma~5.5
\cite{PanovUstinovsky2012}; the argument above establishes the formula at every realizable rank.  On the regular LVMB
torus-bundle locus, compare also Thiella's characteristic-class formula
and bigraded model
\cite[Theorem~1 and Proposition~2.8]{Thiella2026}.
\end{remark}

\begin{corollary}
The map $c_\Lambda$ is induced by visible projection:
\[
 \operatorname{Ann}(\mathfrak h_\Lambda)
 \longrightarrow(\C^N)^\vee,
 \qquad
 (u,\eta)\longmapsto u,
\]
followed by the quotient
\[
 (\C^N)^\vee\longrightarrow
 R_\Lambda^{1,1}
 =
 (\C^N)^\vee/(J_\Lambda)_1.
\]
\end{corollary}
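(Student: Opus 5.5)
This corollary is a direct reformulation of Theorem~\ref{thm:explicit-curvature}, so the plan is to unpack that formula through the definitions already in place.

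First, recall that $W_\Lambda^{1,0}=\operatorname{Ann}(\mathfrak h_\Lambda)/\mathcal L_\Lambda$. Consider the composite
\[
 \operatorname{Ann}(\mathfrak h_\Lambda)\longrightarrow(\C^N)^\vee\longrightarrow(\C^N)^\vee/(J_\Lambda)_1,
 \qquad (u,\eta)\longmapsto u\longmapsto\Bigl[\sum_{j=1}^N u_jv_j\Bigr].
\]
The target is identified with $R_\Lambda^{1,1}$ as follows. The Stanley--Reisner ideal $I_K$ has no linear part, so $R_\Lambda^1=S_1/(J_\Lambda)_1$. The identification $(\C^N)^\vee\cong S_1$ is $u\mapsto\sum_j u_jv_j$. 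Theorem~\ref{thm:basic-Dolbeault-ring} places $R_\Lambda^1$ in bidegree $(1,1)$.

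Next, check that the composite kills $\mathcal L_\Lambda$, so that it descends to $W_\Lambda^{1,0}$. An element of $\mathcal L_\Lambda=\varpi_\Lambda^\vee((\mathfrak t_\Lambda)_\C^\vee)$ has the form $\varpi_\Lambda^\vee(w)$. Its visible coordinates are $w(a_j)$, so its visible projection is exactly $\theta_w\in(J_\Lambda)_1$. This is the same verification as in the well-definedness proposition for $c_\Lambda^{\mathrm{alg}}$.

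Finally, the descended map is by construction $c_\Lambda^{\mathrm{alg}}$. Theorem~\ref{thm:explicit-curvature} gives $c_\Lambda=c_\Lambda^{\mathrm{alg}}$ at every realizable rank, which proves the corollary. The ghost components $\eta$ play no role here, because the ghost classes vanish in $\C[\widehat K]/\widehat J_\Lambda$, as already used in that theorem.

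There is no real obstacle. The only point requiring care is the identification of $(\C^N)^\vee/(J_\Lambda)_1$ with $R_\Lambda^{1,1}$, which is Proposition~\ref{prop:degree-one-basic-ring} together with the absence of linear terms in $I_K$.
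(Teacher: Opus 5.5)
Your proposal is correct and matches the paper, which states this corollary without a separate proof. It is read off directly from the formula $c_\Lambda([(u,\eta)])=[\sum_j u_jv_j]$ of Theorem~\ref{thm:explicit-curvature}. Descent through $\mathcal L_\Lambda$ is the well-definedness of $c_\Lambda^{\mathrm{alg}}$, and the target identification $R_\Lambda^{1,1}=S_1/(J_\Lambda)_1$ needs only $(I_K)_1=0$.
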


\subsection{The explicit finite model}

\begin{theorem}[Global Dolbeault--Cartan model]
\label{thm:global-Dolbeault-Cartan}
Let
\[
 \mathcal D_\Lambda
 =
 R_\Lambda\otimes
 \Lambda(W_\Lambda^{1,0}\oplus W_\Lambda^{0,1})
\]
with
\[
 \deg v_j=(1,1),\qquad
 \deg W_\Lambda^{1,0}=(1,0),\qquad
 \deg W_\Lambda^{0,1}=(0,1).
\]
Define
\[
 \bar dR_\Lambda=0,\qquad
 \bar dW_\Lambda^{0,1}=0,
\]
and
\[
 \bar d[(u,\eta)]
 =
 \left[
 \sum_{j=1}^Nu_jv_j
 \right].
\]
Then
\[
 \mathcal D_\Lambda
 \simeq
 (\Omega^{*,*}(\XL),\bar\partial)
\]
as differential bigraded algebras.
\end{theorem}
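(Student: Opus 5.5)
The plan is to assemble Theorem~\ref{thm:global-Dolbeault-Cartan} from three earlier results: the abstract Hirsch model of Theorem~\ref{thm:abstract-Hirsch-model}, the identification of its generator spaces in Theorem~\ref{thm:connection-Cartan-identification}, and the explicit transgression formula of Theorem~\ref{thm:explicit-curvature}. The point is that a differential bigraded algebra of the form $R_\Lambda\otimes\Lambda(W^{1,0}\oplus W^{0,1})$ is determined up to isomorphism by $R_\Lambda$ together with the linear maps $\bar d|_{W^{1,0}}$ and $\bar d|_{W^{0,1}}$. So it suffices to show that, under the identifications already established, the abstract differential agrees with the one written in the statement.

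The steps, in order:

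\emph{Step 1.} Invoke Theorem~\ref{thm:abstract-Hirsch-model}. It gives a zigzag of quasi-isomorphisms of differential bigraded algebras between $(\Omega^{*,*}(\XL),\bar\partial)$ and $\mathcal B_\Lambda$. By Theorem~\ref{thm:basic-Dolbeault-ring}, the coefficient ring of $\mathcal B_\Lambda$ is $R_\Lambda$ with generators in bidegree $(1,1)$.

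\emph{Step 2.} By Theorem~\ref{thm:connection-Cartan-identification}, realize the generator spaces of $\mathcal B_\Lambda$ as $W_{\nabla,\Lambda}^{1,0}$ and $W_{\nabla,\Lambda}^{0,1}$. These are canonically isomorphic to the quotients $W_\Lambda^{1,0}=\operatorname{Ann}(\mathfrak h_\Lambda)/\mathcal L_\Lambda$ and $W_\Lambda^{0,1}$. The differential on $\alpha_{\nabla,\mu}$ is the basic class $[\bar\partial\alpha_{\nabla,\mu}]_{\mathrm{bas}}$, and by connection independence this class does not depend on the choice of connection.

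\emph{Step 3.} By Theorem~\ref{thm:explicit-curvature}, this class equals $[\sum_j u_jv_j]$. The differential on $W^{0,1}$ vanishes by Theorem~\ref{thm:abstract-Hirsch-model}, and $R_\Lambda$ is closed in both models.

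\emph{Step 4.} Take the bigraded algebra isomorphism $\mathcal B_\Lambda\to\mathcal D_\Lambda$ that is the identity on $R_\Lambda$ and the isomorphism of Step~2 on generators. It intertwines the differentials on algebra generators. Since both differentials are derivations of the free graded-commutative extension, it intertwines them everywhere, so it is an isomorphism of differential bigraded algebras. Composing with the zigzag of Step~1 gives the asserted weak equivalence.

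The main obstacle is Step~2: making sure the abstract generators of the Ishida--Kasuya model are literally the connection forms $\alpha_{\nabla,\mu}$, with the same sign and normalization as the Cartan characteristic map used in Proposition~\ref{prop:CW-coordinate-formula}. A different normalization would change $\bar d$ only by a nonzero scalar or an automorphism of $W^{1,0}$. I would check the $2\pi i$ convention fixed in Section~\ref{sec:curvature-map}. If a constant discrepancy persists, I would absorb it by rescaling the identification $W_{\nabla,\Lambda}^{1,0}\cong W_\Lambda^{1,0}$; this leaves the isomorphism type of $\mathcal D_\Lambda$ unchanged.
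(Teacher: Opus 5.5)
Your proposal is correct and follows the paper's proof. Theorem~\ref{thm:abstract-Hirsch-model} supplies the weak equivalence, and the generator spaces are identified with $W_\Lambda^{1,0}$ and $W_\Lambda^{0,1}$ (the paper cites Theorem~\ref{thm:Cartan-spaces}; you go through Theorem~\ref{thm:connection-Cartan-identification}, which rests on it), and then Theorem~\ref{thm:explicit-curvature} identifies the differential. Your Step~4, which extends the identification as a derivation-compatible algebra isomorphism, only spells out what the paper leaves implicit.
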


\begin{proof}
The abstract weak equivalence is Theorem~\ref{thm:abstract-Hirsch-model}.
The generator spaces were identified in
Theorem~\ref{thm:Cartan-spaces}, and
Theorem~\ref{thm:explicit-curvature} identifies their differential.
\end{proof}

\section{The curvature exact sequence}
\label{sec:curvature-exact-sequence}

The visible-projection formula for $c_\Lambda$ reduces its kernel
and cokernel to linear algebra.  We first restrict visible projection to the annihilator of the action subspace, then pass to the quotient by the basic linear relations.

Write
\[
 \mathsf U=\C^m,\qquad
 \mathsf V=\C^N,\qquad
 \mathsf G=\C^m,
\]
and
\[
 \Psi_\Lambda=
 \begin{pmatrix}A\\B\end{pmatrix}:
 \mathsf U\longrightarrow\mathsf V\oplus\mathsf G.
\]
Write
\[
 K_B=\ker B,\qquad
 C_B=\operatorname{coker}B,\qquad
 \delta=m-\operatorname{rank}_\C B.
\]

Recall the annihilator description from
Section~\ref{sec:Cartan-generators}: a covector $(u,\eta)$ annihilates
$\mathfrak h_\Lambda$ precisely when $A^\vee u+B^\vee\eta=0$.  Let
\[
 \pi_{\mathrm{vis}}:
 \operatorname{Ann}(\mathfrak h_\Lambda)\longrightarrow\mathsf V^\vee,
 \qquad
 (u,\eta)\longmapsto u.
\]

Define
\[
 \beta_0:\mathsf V^\vee\longrightarrow K_B^\vee,
 \qquad
 \beta_0(u)=(A^\vee u)|_{K_B}.
\]

\begin{theorem}[The annihilator exact sequence]
\label{thm:precurvature-sequence}
There is an exact sequence
\[
 0\longrightarrow C_B^\vee
 \longrightarrow\operatorname{Ann}(\mathfrak h_\Lambda)
 \overset{\pi_{\mathrm{vis}}}{\longrightarrow}
 \mathsf V^\vee
 \overset{\beta_0}{\longrightarrow}
 K_B^\vee
 \longrightarrow0,
\]
where $C_B^\vee$ is identified with $\ker(B^\vee)$ and maps by
$\eta\mapsto(0,\eta)$.
\end{theorem}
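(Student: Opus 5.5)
Exactness will be checked at each of the four spots, using only the description $\operatorname{Ann}(\mathfrak h_\Lambda)=\{(u,\eta):A^\vee u+B^\vee\eta=0\}$ and the identification $C_B^\vee=(\operatorname{coker}B)^\vee=\ker(B^\vee)$.

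\emph{Left end and $\operatorname{Ann}(\mathfrak h_\Lambda)$.} The map $\eta\mapsto(0,\eta)$ lands in $\operatorname{Ann}(\mathfrak h_\Lambda)$ exactly because $B^\vee\eta=0$, and it is visibly injective. Its image is the kernel of $\pi_{\mathrm{vis}}$: this is Lemma~\ref{lem:pure-ghost-generators}, or directly, $\pi_{\mathrm{vis}}(u,\eta)=0$ forces $u=0$ and then $B^\vee\eta=0$.

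\emph{Exactness at $\mathsf V^\vee$.} First, $\beta_0\circ\pi_{\mathrm{vis}}=0$. If $A^\vee u=-B^\vee\eta$, then for $T\in K_B$ we have $(A^\vee u)(T)=-\eta(BT)=0$. Conversely, suppose $(A^\vee u)|_{K_B}=0$. Then the functional $A^\vee u\in\mathsf U^\vee$ vanishes on $\ker B$. Hence it lies in $\operatorname{Ann}(\ker B)=\operatorname{im}(B^\vee)$, which is the standard finite-dimensional duality, since $\operatorname{im}B^\vee=(\ker B)^\perp$. So $A^\vee u=-B^\vee\eta$ for some $\eta$, and $(u,\eta)\in\operatorname{Ann}(\mathfrak h_\Lambda)$ maps to $u$.

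\emph{Surjectivity of $\beta_0$.} This is the only step that uses geometry rather than pure linear algebra, so I expect it to be the main point. Since $\beta_0$ is the composite of $A^\vee:\mathsf V^\vee\to\mathsf U^\vee$ with restriction to $K_B$, and restriction is surjective, it suffices to show that the dual map is injective. That dual map is $A|_{K_B}:K_B\to\mathsf V$. Injectivity follows from Proposition~\ref{prop:affine-action}. If $T\in\ker B$ and $AT=0$, then $\Psi_\Lambda T=0$, so $T=0$. Hence $A|_{K_B}$ is injective, its transpose $\beta_0$ is surjective, and the sequence is exact.

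As a consistency check, count dimensions: $\dim\operatorname{Ann}(\mathfrak h_\Lambda)=N+m-m=N$ because $\Psi_\Lambda$ is injective. The alternating sum is then $\delta-N+N-\delta=0$, since $\dim C_B=\dim K_B=\delta$.
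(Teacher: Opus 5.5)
Your proof is correct and follows essentially the same route as the paper: the kernel of $\pi_{\mathrm{vis}}$ via $\ker B^\vee$, exactness at $\mathsf V^\vee$ via $\operatorname{Ann}(\ker B)=\operatorname{im}B^\vee$, and surjectivity of $\beta_0$ by dualizing the injectivity of $A|_{\ker B}$ that follows from injectivity of $\Psi_\Lambda$. Your dimension count is a valid consistency check that the paper omits.
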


\begin{proof}
Lemma~\ref{lem:pure-ghost-generators} identifies the kernel of
$\pi_{\mathrm{vis}}$ with $C_B^\vee=\ker B^\vee$.

If $(u,\eta)$ annihilates $\mathfrak h_\Lambda$, then
$A^\vee u=-B^\vee\eta$, which vanishes on $\ker B$. Thus
$\operatorname{im}\pi_{\mathrm{vis}}\subseteq\ker\beta_0$.

Conversely, if $(A^\vee u)|_{\ker B}=0$, then
\[
 A^\vee u\in\operatorname{Ann}(\ker B)=\operatorname{im}B^\vee.
\]
Choose $\eta$ with $B^\vee\eta=-A^\vee u$. Then
$(u,\eta)\in\operatorname{Ann}(\mathfrak h_\Lambda)$.

Finally, $A|_{\ker B}$ is injective: if $Ax=Bx=0$, then
$\Psi_\Lambda x=0$, so $x=0$. Hence its dual $\beta_0$ is surjective.
\end{proof}

\begin{lemma}[Visible-relation quotient]
\label{lem:visible-relation-quotient}
The visible projection restricts to an isomorphism
\[
 \pi_{\mathrm{vis}}:\mathcal L_\Lambda
 \overset{\sim}{\longrightarrow}(J_\Lambda)_1.
\]
Moreover, $(J_\Lambda)_1\subseteq\ker\beta_0$.
\end{lemma}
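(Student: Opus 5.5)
We prove the two assertions in turn, working directly from the coordinate descriptions of $\mathcal L_\Lambda$ and $J_\Lambda$.

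\emph{Image and injectivity.} By definition, $(J_\Lambda)_1$ is the space of linear forms $\theta_u=\sum_j u(a_j)v_j$ with $u\in(\mathfrak t_\Lambda)_\C^\vee$. Set $\ell=\varpi_\Lambda^\vee(u)\in\mathcal L_\Lambda$; the dual sequence displayed in Section~\ref{sec:curvature-map} shows that every element of $\mathcal L_\Lambda$ arises this way. Its coordinates are $\ell_i=u(\varpi_\Lambda(e_i))$, and for visible $i=j$ this equals $u(a_j)$. Hence $\pi_{\mathrm{vis}}(\ell)$ is the coefficient vector of $\theta_u$, and $\pi_{\mathrm{vis}}(\mathcal L_\Lambda)=(J_\Lambda)_1$. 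For injectivity, suppose $\pi_{\mathrm{vis}}(\ell)=0$. Then $u(a_j)=0$ for all $j$, and Proposition~\ref{prop:visible-relation-space} gives $u=0$, hence $\ell=0$. This is also the injectivity already used in Lemma~\ref{lem:pure-ghost-generators}. As a consistency check, both spaces have dimension $d$.

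\emph{Inclusion in $\ker\beta_0$.} We have $\mathcal L_\Lambda\subseteq\operatorname{Ann}(\mathfrak h_\Lambda)$, since it annihilates $\mathfrak h_\Lambda+\overline{\mathfrak h_\Lambda}$. Therefore $(J_\Lambda)_1=\pi_{\mathrm{vis}}(\mathcal L_\Lambda)\subseteq\operatorname{im}\pi_{\mathrm{vis}}$, and the latter equals $\ker\beta_0$ by exactness in Theorem~\ref{thm:precurvature-sequence}.

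One can also verify the inclusion directly. For $(u,\eta)\in\mathcal L_\Lambda$ we have $A^\vee u=-B^\vee\eta$, and this form vanishes on $\ker B$.

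The only point needing care is the bookkeeping identification $\ell_j=u(a_j)$ on visible coordinates. This uses the visible ray vectors $a_j=\varpi_\Lambda(e_j)$, so there is no index shift with the ghost labels. Beyond that, no real obstacle is expected.
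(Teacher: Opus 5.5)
Your proposal is correct and is essentially the paper's proof: identify $\mathcal L_\Lambda$ with $(\mathfrak t_\Lambda)_\C^\vee$ via $\varpi_\Lambda^\vee$, read off the visible coordinates $u(a_j)$ as the coefficients of $\theta_u$, use Proposition~\ref{prop:visible-relation-space} for injectivity, and deduce $\beta_0(u)=0$ from $A^\vee u=-B^\vee\eta$. Your alternative derivation of the inclusion from exactness in Theorem~\ref{thm:precurvature-sequence} is also valid and not circular, because that theorem's proof does not use this lemma.
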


\begin{proof}
Under the canonical identification
\[
 (\mathfrak t_\Lambda)_\C^\vee
 \overset{\varpi_\Lambda^\vee}{\cong}
 \mathcal L_\Lambda,
\]
visible projection sends a covector $q$ to the coefficient vector of its
linear basic relation
\[
 \theta_q=\sum_{j=1}^Nq(a_j)v_j.
\]
Its image is therefore $(J_\Lambda)_1$.  Its injectivity is Proposition~\ref{prop:visible-relation-space}.  If
$\ell=(u,\eta)\in\mathcal L_\Lambda$, then
\[
 A^\vee u+B^\vee\eta=0,
\]
so $(A^\vee u)|_{K_B}=0$.  Hence $\beta_0(u)=0$.
\end{proof}

Thus $\beta_0$ descends to
\[
 \beta_\Lambda:R_\Lambda^{1,1}\longrightarrow K_B^\vee,
 \qquad
 \beta_\Lambda([u])=(A^\vee u)|_{K_B}.
\]

\begin{theorem}[Curvature exact sequence]
\label{thm:curvature-exact-sequence}
There is a canonical exact sequence
\[
 0\longrightarrow C_B^\vee
 \longrightarrow W_\Lambda^{1,0}
 \overset{c_\Lambda}{\longrightarrow}
 R_\Lambda^{1,1}
 \overset{\beta_\Lambda}{\longrightarrow}
 K_B^\vee
 \longrightarrow0.
\]
The maps are
\[
 \eta\longmapsto[(0,\eta)],
 \qquad
 [(u,\eta)]\longmapsto[u],
 \qquad
 [u]\longmapsto(A^\vee u)|_{K_B}.
\]
\end{theorem}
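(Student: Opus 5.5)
The plan is to obtain the curvature exact sequence by passing the annihilator exact sequence of Theorem~\ref{thm:precurvature-sequence} to the quotients by $\mathcal L_\Lambda$ and by $(J_\Lambda)_1$. Set $W_\Lambda^{1,0}=\operatorname{Ann}(\mathfrak h_\Lambda)/\mathcal L_\Lambda$ and $R_\Lambda^{1,1}=\mathsf V^\vee/(J_\Lambda)_1$; the latter uses Proposition~\ref{prop:degree-one-basic-ring} and the fact that $I_K$ has no linear part. By Theorem~\ref{thm:explicit-curvature} and its corollary, $c_\Lambda$ is the map induced by $\pi_{\mathrm{vis}}$ on these quotients. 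By Lemma~\ref{lem:visible-relation-quotient}, $\pi_{\mathrm{vis}}$ carries $\mathcal L_\Lambda$ isomorphically onto $(J_\Lambda)_1$, and $(J_\Lambda)_1\subseteq\ker\beta_0$, so $\beta_\Lambda$ is well defined. The proof therefore reduces to a diagram chase comparing two short complexes.

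The cleanest route is the snake lemma, or the long exact sequence of a short exact sequence of complexes. Consider the complex
\[
 \mathcal E:\ 0\to C_B^\vee\to\operatorname{Ann}(\mathfrak h_\Lambda)\to\mathsf V^\vee\to K_B^\vee\to0,
\]
which is exact by Theorem~\ref{thm:precurvature-sequence}. Inside it sits the subcomplex
\[
 \mathcal S:\ 0\to0\to\mathcal L_\Lambda\xrightarrow{\ \sim\ }(J_\Lambda)_1\to0\to0.
\]
It is a subcomplex because $\beta_0$ kills $(J_\Lambda)_1$ and $\pi_{\mathrm{vis}}$ maps $\mathcal L_\Lambda$ into $(J_\Lambda)_1$. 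The subcomplex is acyclic, since its only nonzero map is an isomorphism. The quotient complex is
\[
 0\to C_B^\vee\to W_\Lambda^{1,0}\xrightarrow{c_\Lambda}R_\Lambda^{1,1}\xrightarrow{\beta_\Lambda}K_B^\vee\to0.
\]
The long exact cohomology sequence of $0\to\mathcal S\to\mathcal E\to\mathcal E/\mathcal S\to0$ then shows that the quotient complex is acyclic, which is exactly the claim. The maps displayed in the statement are the induced ones.

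I also plan to spell out the concrete checks so the argument is self-contained. For injectivity at $C_B^\vee$, suppose $(0,\eta)\in\mathcal L_\Lambda$. Its visible projection is $0$, so injectivity of $\pi_{\mathrm{vis}}$ on $\mathcal L_\Lambda$ forces $\eta=0$; this is the canonical embedding of Lemma~\ref{lem:pure-ghost-generators}. For exactness at $W_\Lambda^{1,0}$, suppose $c_\Lambda[(u,\eta)]=0$. Then $u\in(J_\Lambda)_1$, so $u=\pi_{\mathrm{vis}}(\ell)$ for a unique $\ell\in\mathcal L_\Lambda$. Subtracting gives $(u,\eta)-\ell=(0,\eta')$ with $\eta'\in\ker B^\vee$. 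For exactness at $R_\Lambda^{1,1}$, suppose $\beta_0(u)\in\beta_0((J_\Lambda)_1)=0$. Then $u$ lies in the image of $\pi_{\mathrm{vis}}$ by Theorem~\ref{thm:precurvature-sequence}. Surjectivity of $\beta_\Lambda$ is inherited from that of $\beta_0$.

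The only point requiring care is canonicity: the sequence should not depend on the chosen indispensable chart. I would record that the maps are defined intrinsically from $\Psi_\Lambda$ and the quotient constructions, and defer chart covariance to Proposition~\ref{prop:curvature-chart-covariance}. As a consistency check, the Euler characteristic gives $\dim\ker c_\Lambda-\dim\operatorname{coker}c_\Lambda=\delta-\delta=0$, as it must since $\dim W_\Lambda^{1,0}=\dim R_\Lambda^{1,1}=m$; it also gives $\operatorname{rank}c_\Lambda=m-\delta=\rho$.
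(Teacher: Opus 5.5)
Your proposal is correct and takes essentially the same approach as the paper. The paper also quotients the exact annihilator sequence of Theorem~\ref{thm:precurvature-sequence} by the acyclic subcomplex $\mathcal L_\Lambda\xrightarrow{\sim}(J_\Lambda)_1$ from Lemma~\ref{lem:visible-relation-quotient}, identifies the induced middle arrow with $c_\Lambda$ via Theorem~\ref{thm:explicit-curvature}, and reads off $\ker\beta_\Lambda=\ker\beta_0/(J_\Lambda)_1=\operatorname{im}c_\Lambda$, with surjectivity inherited from $\beta_0$; your elementwise checks and your deferral of chart independence to Proposition~\ref{prop:curvature-chart-covariance} match this.
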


\begin{proof}
By Theorem~\ref{thm:explicit-curvature}, the middle arrow induced by
$\pi_{\mathrm{vis}}$ is the Hirsch differential $c_\Lambda$.
Lemma~\ref{lem:visible-relation-quotient} identifies the acyclic
two-term subcomplex
\[
 0\longrightarrow0\longrightarrow
 \mathcal L_\Lambda
 \overset{\sim}{\longrightarrow}
 (J_\Lambda)_1\longrightarrow0\longrightarrow0
\]
inside the annihilator exact sequence.  Quotienting by it gives the
commutative diagram
\[
\begin{array}{ccccccccccc}
0&\longrightarrow&C_B^\vee&\longrightarrow&
\operatorname{Ann}(\mathfrak h_\Lambda)&\longrightarrow&
\mathsf V^\vee&\longrightarrow&K_B^\vee\longrightarrow0\\
&&\big\Vert&&\big\downarrow&&\big\downarrow&&\big\Vert\\
0&\longrightarrow&C_B^\vee&\longrightarrow&
W_\Lambda^{1,0}&\overset{c_\Lambda}{\longrightarrow}&
R_\Lambda^{1,1}&\overset{\beta_\Lambda}{\longrightarrow}&
K_B^\vee\longrightarrow0 .
\end{array}
\]
Explicitly,
\[
 \ker\beta_\Lambda
 =\ker\beta_0/(J_\Lambda)_1
 =\operatorname{im}\pi_{\mathrm{vis}}/(J_\Lambda)_1
 =\operatorname{im}c_\Lambda,
\]
and surjectivity of $\beta_\Lambda$ is inherited from $\beta_0$.
\end{proof}

\begin{proposition}[Indispensable-chart covariance]
\label{prop:curvature-chart-covariance}
The map $c_\Lambda$, its visible-coordinate formula, and every arrow in
Theorem~\ref{thm:curvature-exact-sequence} are natural under a change of
distinguished indispensable label.  Thus the curvature exact sequence depends only on the labelled LVM configuration, independently of the chosen indispensable chart.
\end{proposition}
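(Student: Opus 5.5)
The plan is to write down explicitly the linear change of coordinates induced by passing from the chart based at $a_0$ to the chart based at $a_\nu$. Then we check that each ingredient of the curvature exact sequence is transported to its primed counterpart by the induced isomorphisms.

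By the change-of-chart proposition, the new coordinates are
\[
 w'_j=w_j/t_\nu,\qquad t'_0=1/t_\nu,\qquad t'_\mu=t_\mu/t_\nu .
\]
This is a monomial map, so on the ambient Lie algebra $E_\Lambda=\C^{\widehat N}$ it acts by a unimodular linear map $M$. On the action matrix it reads $A'_j=A_j-B_\nu$, $B'=LB$ with $L\in\operatorname{GL}_m(\Z)$, as in Proposition~\ref{prop:rank-chart-invariance}. Hence $M\Psi_\Lambda=\Psi'_\Lambda$, so $M\mathfrak h_\Lambda=\mathfrak h'_\Lambda$, and since $M$ is real also $M\overline{\mathfrak h_\Lambda}=\overline{\mathfrak h'_\Lambda}$. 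The dual map $M^\vee$ therefore carries $\operatorname{Ann}(\mathfrak h'_\Lambda)$ isomorphically onto $\operatorname{Ann}(\mathfrak h_\Lambda)$ and $\mathcal L'_\Lambda$ onto $\mathcal L_\Lambda$. This gives an isomorphism $W'^{1,0}_\Lambda\cong W^{1,0}_\Lambda$.

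The key observation is that $M$ fixes the visible coordinate lines modulo ghost directions. Concretely, $M$ has the block form $\begin{pmatrix}I_N&*\\0&L_{\mathrm{gh}}\end{pmatrix}$ acting on $(w,t)$-logarithms: the visible logarithms shift by $-\log t_\nu$, while the ghost block is unimodular. Consequently $M^\vee$ preserves the visible component $u$ of a covector $(u,\eta)$ and changes only $\eta$. The exact $u$-preservation should follow from $\log w'_j=\log w_j-\log t_\nu$: the pullback of $\varepsilon'_j$ is $\varepsilon_j-\varepsilon_{a_\nu}$, and dually a covector $(u',\eta')$ pulls back to $(u',\eta'')$ with $\eta''_\nu=\eta'_\nu-\sum_j u'_j$. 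It follows that $\pi_{\mathrm{vis}}$ commutes with $M^\vee$. The linear relations $\theta_u$ are also unchanged: $J_\Lambda$ is the visible projection of $\mathcal L_\Lambda$ by Lemma~\ref{lem:visible-relation-quotient}, and this projection is $M^\vee$-compatible. Hence $R_\Lambda$, and in particular $R^{1,1}_\Lambda$, is literally the same ring. Since the visible fan and its rays depend only on $\mathfrak r_\Lambda$ modulo ghosts, this is consistent with the projected fan being carried to isomorphic data. Combined with Theorem~\ref{thm:explicit-curvature}, this shows that $c_\Lambda$ and $c'_\Lambda$ agree under the identification.

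For the outer terms, $B'=LB$ gives $\ker B'=\ker B$ and $\operatorname{coker}B'\cong\operatorname{coker}B$ via $L$. Dually, $\ker B'^\vee=L^{-\vee}\ker B^\vee$. We check that $\eta\mapsto[(0,\eta)]$ intertwines these, using the ghost block of $M^\vee$ restricted to pure-ghost covectors. Then $\beta'_\Lambda([u])=(A'^\vee u)|_{\ker B}=(A^\vee u-\textstyle\sum_j u_jB_\nu^\vee)|_{\ker B}=(A^\vee u)|_{\ker B}$, because $B_\nu$ vanishes on $\ker B$. So $\beta_\Lambda$ is literally unchanged.

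The main obstacle is bookkeeping. We have to match the ghost block of $M$ (including the new ghost coordinate $t'_0$ replacing $t_\nu$) with $L$ precisely, with signs and the correct dual or inverse-transpose, so that all four squares commute on the nose rather than up to sign. I would first verify this in the case $m=1$ and then write out $L$ explicitly as the matrix expressing the difference basis relative to $a_\nu$ in terms of that relative to $a_0$. Finally, I would note that intrinsic independence also follows because every object is defined from $\mathfrak h_\Lambda\subset E_\Lambda$ together with the visible/ghost splitting, and $M$ respects both.
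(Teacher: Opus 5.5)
Your proposal is correct and is essentially the paper's argument. You use the same integral transition $P_\nu$ with $\Psi'_\Lambda=P_\nu\Psi_\Lambda$, transporting $\mathfrak h_\Lambda$, $\overline{\mathfrak h_\Lambda}$ and $\mathcal L_\Lambda$. You treat $\ker B$ and $\operatorname{coker}B$ through $B'=LB$, as the paper does, and you make the same observation that $A'|_{\ker B}=A|_{\ker B}$ because $B_\nu$ vanishes on $\ker B$.

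Your fact that the dual transition fixes visible covector components is the covector form of the paper's identity $P_\nu^*v'_j=v_j-x_{a_\nu}=v_j$. Where the paper cites naturality of the Chern--Weil square, you should add that the ghost class $x_{a_\nu}$ vanishes in the basic ring; this is what makes your literal identification $R'_\Lambda=R_\Lambda$ the geometrically induced one.

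One bookkeeping slip: the $a_\nu$-component of the pulled-back covector is $-\sum_ju'_j-\eta'_{a_0}-\sum_{\mu\neq\nu}\eta'_{a_\mu}$, not $\eta'_\nu-\sum_ju'_j$. This does not affect the argument.
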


\begin{proof}
Choose $a_\nu$, with $a_0$ replaced as denominator.  On exponent spaces the
monomial chart transition is an integral isomorphism
\[
 P_\nu:E_\Lambda\longrightarrow E'_\Lambda
\]
determined by
\[
 \xi'_j=\xi_j-\xi_{a_\nu},\qquad
 \xi'_{a_0}=-\xi_{a_\nu},\qquad
 \xi'_{a_\mu}=\xi_{a_\mu}-\xi_{a_\nu}
 \quad(\mu\neq\nu).
\]
It satisfies $\Psi'_\Lambda=P_\nu\Psi_\Lambda$ and carries
$\mathfrak h_\Lambda$, $\mathfrak r_\Lambda$, and
$\mathcal L_\Lambda$ to their primed counterparts.  A covector class is
carried by $\mu'=(P_\nu^{-1})^\vee\mu$.  Naturality of the characteristic
square in Proposition~\ref{prop:CW-coordinate-formula} gives
\[
 P_\nu^*\!\left(c'_\Lambda([\mu'])\right)=c_\Lambda([\mu]).
\]
In coordinates, $P_\nu^*v'_j=v_j-x_{a_\nu}=v_j$, because the denominator
coordinate is ghost and its class vanishes in the basic ring.

After ordering the new ghost coordinates, the two blocks satisfy
\[
 B'=LB,\qquad L\in\operatorname{GL}_m(\Z),
 \qquad
 A'_j=A_j-B_\nu .
\]
Therefore $\ker B'=\ker B$ on the common action-parameter space, while
\[
 \operatorname{coker}B\longrightarrow\operatorname{coker}B',
 \qquad [g]\longmapsto[Lg],
\]
is an isomorphism.  Finally, $A'|_{\ker B}=A|_{\ker B}$ because the row
$B_\nu$ vanishes on $\ker B$.  Hence the maps $\beta_\Lambda$ and the
kernel--cokernel identifications also commute with the chart transition.
\end{proof}

\begin{corollary}
There are canonical identifications
\[
 \ker c_\Lambda\cong C_B^\vee,
 \qquad
 \operatorname{coker}c_\Lambda\cong K_B^\vee.
\]
Consequently,
\[
 \dim\ker c_\Lambda
 =
 \dim\operatorname{coker}c_\Lambda
 =
 \delta
\]
and
\[
 \operatorname{rank}c_\Lambda
 =
 \operatorname{rank}B.
\]
\end{corollary}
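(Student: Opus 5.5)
The statement is a direct consequence of the curvature exact sequence in Theorem~\ref{thm:curvature-exact-sequence}, so the proof will only extract kernel and cokernel from that sequence and then count dimensions.

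First, the kernel. Exactness at $C_B^\vee$ and at $W_\Lambda^{1,0}$ says that $\eta\mapsto[(0,\eta)]$ is injective with image exactly $\ker c_\Lambda$. This gives the canonical isomorphism $\ker c_\Lambda\cong C_B^\vee=(\operatorname{coker}B)^\vee$. Canonicity here means independence of the chosen indispensable chart, and it follows from Proposition~\ref{prop:curvature-chart-covariance}.

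Second, the cokernel. Exactness at $R_\Lambda^{1,1}$ gives $\operatorname{im}c_\Lambda=\ker\beta_\Lambda$. Exactness at $K_B^\vee$ says that $\beta_\Lambda$ is surjective. Hence $\beta_\Lambda$ descends to an isomorphism $\operatorname{coker}c_\Lambda=R_\Lambda^{1,1}/\operatorname{im}c_\Lambda\cong K_B^\vee=(\ker B)^\vee$, again natural by the same chart covariance.

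Third, the dimensions. Since $B$ is an $m\times m$ matrix, rank--nullity gives
\[
\dim\ker B=\dim\operatorname{coker}B=m-\operatorname{rank}_\C B=\delta .
\]
Dualizing preserves dimension, so both $\ker c_\Lambda$ and $\operatorname{coker}c_\Lambda$ have dimension $\delta$. Theorem~\ref{thm:Cartan-spaces} gives $\dim W_\Lambda^{1,0}=m$. Therefore
\[
\operatorname{rank}c_\Lambda=m-\delta=\operatorname{rank}_\C B .
\]
As a consistency check, this agrees with the count on the target side, since $\dim R_\Lambda^{1,1}=m$ by Proposition~\ref{prop:degree-one-basic-ring}.

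No step is a real obstacle, because everything is bookkeeping on the exact sequence. The only point that needs a sentence of care is the word "canonical": the identifications must not depend on a choice of basis or of indispensable chart. This is exactly the content of Proposition~\ref{prop:curvature-chart-covariance}, and I would cite it rather than reprove it.
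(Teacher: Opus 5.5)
Your proof is correct and matches the paper, which states this corollary without a separate proof because it is read off directly from the exact sequence of Theorem~\ref{thm:curvature-exact-sequence}, with the dimension count coming from $B$ being an $m\times m$ matrix and $\dim W_\Lambda^{1,0}=m$. Citing Proposition~\ref{prop:curvature-chart-covariance} to justify the word ``canonical'' (naturality of the identifications under a change of indispensable chart) also matches how the paper supports that claim.
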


\begin{corollary}[Curvature full-rank criterion]
\label{cor:curvature-full-rank-criterion}
The following are equivalent:
\[
 B\text{ is invertible},
 \qquad
 \delta=0,
 \qquad
 c_\Lambda\text{ is injective},
 \qquad
 c_\Lambda\text{ is surjective},
 \qquad
 c_\Lambda\text{ is an isomorphism}.
\]
\end{corollary}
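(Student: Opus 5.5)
The plan is to deduce the corollary directly from the curvature exact sequence of Theorem~\ref{thm:curvature-exact-sequence} and the dimension count of Proposition~\ref{prop:degree-one-basic-ring}. Exactness of
\[
 0\longrightarrow C_B^\vee\longrightarrow W_\Lambda^{1,0}
 \overset{c_\Lambda}{\longrightarrow}R_\Lambda^{1,1}
 \overset{\beta_\Lambda}{\longrightarrow}K_B^\vee\longrightarrow0
\]
gives $\ker c_\Lambda\cong C_B^\vee$ and $\operatorname{coker}c_\Lambda\cong K_B^\vee$. Since $B$ is a square $m\times m$ matrix, rank--nullity gives
\[
 \dim K_B=\dim C_B=m-\operatorname{rank}_\C B=\delta .
\]

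The first equivalence is immediate: $B$ is invertible exactly when $\delta=0$. Next use Theorem~\ref{thm:Cartan-spaces} for $\dim W_\Lambda^{1,0}=m$ and Proposition~\ref{prop:degree-one-basic-ring} for $\dim R_\Lambda^{1,1}=m$. So $c_\Lambda$ is a linear map between spaces of the same finite dimension $m$.

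The exact sequence then gives the remaining equivalences:
\begin{itemize}
\item $c_\Lambda$ is injective if and only if $C_B^\vee=0$, that is, $\delta=0$;
\item $c_\Lambda$ is surjective if and only if $K_B^\vee=0$, that is, $\delta=0$;
\item $c_\Lambda$ is an isomorphism if and only if it is both injective and surjective, which again means $\delta=0$.
\end{itemize}
Equal dimensions already make injectivity and surjectivity equivalent for $c_\Lambda$, so no separate argument is needed there.

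The proof is a direct reading of the exact sequence. The only points to check are that $B$ is square, so that kernel and cokernel have the same dimension $\delta$, and the dimension counts for the source and target of $c_\Lambda$.
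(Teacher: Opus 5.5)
Your proof is correct and follows the paper's route: the corollary is read off directly from the curvature exact sequence of Theorem~\ref{thm:curvature-exact-sequence}, which gives $\ker c_\Lambda\cong C_B^\vee$ and $\operatorname{coker}c_\Lambda\cong K_B^\vee$, each of dimension $\delta$ because $B$ is square. The dimension counts $\dim W_\Lambda^{1,0}=\dim R_\Lambda^{1,1}=m$ are consistent with this but are not actually needed once you have the exact sequence.
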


\subsection{Holomorphic forms}

\begin{theorem}\label{thm:holomorphic-forms-defect}
For $0\leq p\leq N$,
\[
 H^0(\XL,\Omega_{\XL}^p)
 \cong
 \Lambda^pC_B^\vee.
\]
Hence
\[
 h^{p,0}(\XL)=\binom{\delta}{p}.
\]
\end{theorem}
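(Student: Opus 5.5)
We compute $H^{p,0}$ in the finite model of Theorem~\ref{thm:global-Dolbeault-Cartan}. Since $\mathcal D_\Lambda\simeq(\Omega^{*,*}(\XL),\bar\partial)$ as differential bigraded algebras, $H^0(\XL,\Omega^p_{\XL})$ equals the $\bar d$-cohomology of $\mathcal D_\Lambda$ in bidegree $(p,0)$. Generators of $R_\Lambda$ have bidegree $(1,1)$, so every element of positive $R$-degree has $q\ge1$. Likewise $W^{0,1}_\Lambda$ contributes $q\ge1$. Hence
\[
 \mathcal D_\Lambda^{p,0}=R_\Lambda^0\otimes\Lambda^pW_\Lambda^{1,0}=\Lambda^pW_\Lambda^{1,0}.
\]
There is nothing of bidegree $(p,-1)$, so no coboundaries arrive in this bidegree. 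The group $H^{p,0}$ is therefore the kernel of $\bar d$ on $\Lambda^pW^{1,0}_\Lambda$.

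On $\Lambda^pW^{1,0}_\Lambda$, $\bar d$ is the derivation extending $c_\Lambda$:
\[
 \bar d(w_1\wedge\cdots\wedge w_p)=\sum_i\pm c_\Lambda(w_i)\,w_1\wedge\cdots\widehat{w_i}\cdots\wedge w_p,
\]
with values in $R^{1}_\Lambda\otimes\Lambda^{p-1}W^{1,0}_\Lambda$. Choose a complement $W^{1,0}_\Lambda=\ker c_\Lambda\oplus U$, so that $c_\Lambda|_U$ is injective. By the curvature exact sequence (Theorem~\ref{thm:curvature-exact-sequence}), $\ker c_\Lambda\cong C_B^\vee$, which has dimension $\delta$. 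This gives the decomposition
\[
 \Lambda^pW^{1,0}=\bigoplus_{i+j=p}\Lambda^i\ker c_\Lambda\otimes\Lambda^jU.
\]
The map $\bar d$ is linear over the exterior algebra on $\ker c_\Lambda$, since those generators are $\bar d$-closed and have odd degree. So it suffices to show that $\bar d$ is injective on $R^0\otimes\Lambda^jU$ for $j\ge1$, and moreover injective after tensoring with $\Lambda^i\ker c_\Lambda$. The latter is automatic because $\bar d$ is of the form $\mathrm{id}\otimes\bar d_U$, up to sign.

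The key step, and the main obstacle, is the following claim: with $u_1,\dots,u_r$ a basis of $U$ and $c_k=c_\Lambda(u_k)$ linearly independent in $R^1_\Lambda$, the Koszul-type map
\[
 \Lambda^jU\to R^1_\Lambda\otimes\Lambda^{j-1}U,\qquad u_{k_1}\wedge\cdots\wedge u_{k_j}\mapsto\sum_s\pm c_{k_s}\otimes u_{k_1}\wedge\cdots\widehat{u_{k_s}}\cdots\wedge u_{k_j}
\]
is injective for $j\ge1$. This holds because it only uses the degree-one part $R^1_\Lambda$, with no multiplication in $R_\Lambda$. Under the injection $U\hookrightarrow R^1$, $u\mapsto c(u)$, this map is the restriction of the standard comultiplication $\Lambda^jU\to U\otimes\Lambda^{j-1}U$, which is injective in characteristic zero; composing it with the injective map $U\otimes\Lambda^{j-1}U\to R^1\otimes\Lambda^{j-1}U$ preserves injectivity. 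Concretely, contract with a dual functional $c_k^*$ on $R^1$ to recover the interior product $\iota_{u_k^*}$. An element killed by all interior products with $j\ge1$ vanishes.

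It follows that the kernel of $\bar d$ on $\Lambda^pW^{1,0}_\Lambda$ is exactly $\Lambda^p\ker c_\Lambda$. This subspace is independent of the chosen complement, and it is canonically $\Lambda^pC_B^\vee$ by Theorem~\ref{thm:curvature-exact-sequence}. The dimension is therefore $\binom{\delta}{p}$.
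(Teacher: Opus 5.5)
Your proof is correct and follows the paper's argument: restrict the finite model to bidegree $(p,0)$, where no boundaries arrive, split $W_\Lambda^{1,0}=\ker c_\Lambda\oplus U$, and use injectivity of the exterior comultiplication on $\Lambda^jU$ for $j\geq1$ to get kernel $\Lambda^p\ker c_\Lambda\cong\Lambda^pC_B^\vee$. The only difference is how injectivity is checked: you contract with dual functionals, while the paper composes with exterior multiplication to get multiplication by $j$; the two checks are equivalent.
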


\begin{proof}
Choose a splitting
\[
 W_\Lambda^{1,0}
 =
 \ker c_\Lambda\oplus W'
\]
with $c_\Lambda|_{W'}$ injective. In Dolbeault bidegree $(p,0)$ there are
no incoming boundaries.  On a summand containing $b>0$ factors from
$W'$, its coefficient-degree-zero part is the exterior comultiplication
\[
 \Lambda^bW'\longrightarrow c_\Lambda(W')\otimes\Lambda^{b-1}W'.
\]
It is injective: after identifying $c_\Lambda(W')$ with $W'$, composition
with exterior multiplication is multiplication by $b$.  Hence the kernel
is $\Lambda^p\ker c_\Lambda$. Apply
Theorem~\ref{thm:curvature-exact-sequence}.
\end{proof}

Since $W_\Lambda^{0,1}$ is closed,
\[
 H_{\bar\partial}^{0,q}(\XL)\cong\Lambda^qW_\Lambda^{0,1},
 \qquad
 h^{0,q}(\XL)=\binom mq.
\]

\begin{theorem}\label{thm:first-Hodge-row}
For $0\leq q\leq N$,
\[
 H_{\bar\partial}^{1,q}(\XL)
 \cong
 \left(C_B^\vee\otimes\Lambda^qW_\Lambda^{0,1}\right)
 \oplus
 \left(K_B^\vee\otimes
 \Lambda^{q-1}W_\Lambda^{0,1}\right).
\]
Consequently,
\[
h^{1,q}(\XL)
 =
 \delta\binom{m+1}{q}.
\]
Here exterior powers and binomial coefficients outside their natural
ranges are zero.
\end{theorem}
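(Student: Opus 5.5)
The plan is to compute bidegree $(1,q)$ cohomology directly in the finite model $\mathcal D_\Lambda$ of Theorem~\ref{thm:global-Dolbeault-Cartan}. Since $\bar d$ kills $R_\Lambda$ and $W_\Lambda^{0,1}$, the complex splits as
\[
 \mathcal D_\Lambda=\bigl(R_\Lambda\otimes\Lambda W_\Lambda^{1,0},\bar d\bigr)\otimes\Lambda W_\Lambda^{0,1},
\]
with $W_\Lambda^{0,1}$ a passive tensor factor. So I first compute the cohomology of the curvature Koszul complex $\mathcal K=R_\Lambda\otimes\Lambda W_\Lambda^{1,0}$ in holomorphic degree $p=1$. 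I then tensor with $\Lambda W_\Lambda^{0,1}$ and collect antiholomorphic degrees.

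Elements of $\mathcal K$ in bidegree $(1,*)$ arise in two ways. One is $R^{a}_\Lambda\otimes W_\Lambda^{1,0}$, of bidegree $(a+1,a)$, which has $p=1$ only for $a=0$. The other is $R^{a}_\Lambda$ itself, of bidegree $(a,a)$, which has $p=1$ only for $a=1$. Hence the only pieces of $\mathcal K$ with $p=1$ are
\[
 W_\Lambda^{1,0}\ \text{in bidegree }(1,0),\qquad R_\Lambda^{1,1}\ \text{in bidegree }(1,1).
\]
The differential between them is exactly $c_\Lambda$. Incoming boundaries into $W^{1,0}_\Lambda$ come from $p=1$, $q=-1$, so there are none. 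Outgoing maps from $R^{1,1}_\Lambda$ vanish because $\bar dR_\Lambda=0$.

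Since $\bar d$ preserves $p$ and the $p=1$ strand is this two-term complex, the row-one cohomology of $\mathcal K$ is
\[
 \ker c_\Lambda\ \text{in bidegree }(1,0),\qquad \operatorname{coker}c_\Lambda\ \text{in bidegree }(1,1).
\]
The curvature exact sequence, Theorem~\ref{thm:curvature-exact-sequence}, identifies these as $C_B^\vee$ and $K_B^\vee$, the latter via $\beta_\Lambda$.

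Tensoring with the closed factor $\Lambda W_\Lambda^{0,1}$ (Künneth over $\C$, with trivial differential on the second factor) places $C_B^\vee\otimes\Lambda^qW^{0,1}_\Lambda$ in bidegree $(1,q)$ and $K_B^\vee\otimes\Lambda^{q-1}W^{0,1}_\Lambda$ in bidegree $(1,q)$. This gives the stated isomorphism. For the dimension count, use $\dim W^{0,1}_\Lambda=m$ from Theorem~\ref{thm:Cartan-spaces} and $\dim C_B=\dim K_B=\delta$:
\[
 \delta\binom mq+\delta\binom m{q-1}=\delta\binom{m+1}q.
\]
By Pascal's rule this holds for all $q$, with the stated convention that exterior powers and binomial coefficients outside their natural ranges are zero.

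The only point needing care is the bidegree bookkeeping: checking that no other monomial of $R_\Lambda\otimes\Lambda(W^{1,0}_\Lambda\oplus W^{0,1}_\Lambda)$ has $p=1$. Every generator contributes at least as much to $p$ as any non-$W^{0,1}$ generator does to $q$, so this should be immediate. The weak equivalence of Theorem~\ref{thm:global-Dolbeault-Cartan} preserves bigrading, which transfers the result to $H^{1,q}_{\bar\partial}(\XL)$.
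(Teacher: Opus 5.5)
Your proposal is correct and is essentially the paper's own proof. In holomorphic degree one the curvature complex reduces to the two-term complex $W_\Lambda^{1,0}\xrightarrow{c_\Lambda}R_\Lambda^{1,1}$, whose kernel and cokernel are $C_B^\vee$ and $K_B^\vee$ by the curvature exact sequence; tensoring with the closed exterior algebra $\Lambda W_\Lambda^{0,1}$ then gives the stated decomposition and, by Pascal's rule, $h^{1,q}=\delta\binom{m+1}{q}$.
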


\begin{proof}
The holomorphic curvature complex has
\[
 H^{1,0}=\ker c_\Lambda,\qquad
 H^{1,1}=\operatorname{coker}c_\Lambda,
\]
and no other cohomology in holomorphic degree one. Tensor with the closed
exterior algebra on $W_\Lambda^{0,1}$.
\end{proof}

\begin{corollary}
The Dolbeault--Hochster formula~\eqref{eq:intro-main-formula} can hold only
when $\delta=0$.
\end{corollary}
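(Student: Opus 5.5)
The corollary follows by comparing the first Hodge row of the right-hand side of \eqref{eq:intro-main-formula} with Theorem~\ref{thm:first-Hodge-row}. Both have the same dimension in bidegree $(0,q)$, namely $\binom mq$, so the test must be made in holomorphic degree $p=1$. I therefore specialize \eqref{eq:intro-main-formula} to $p=1$, $q=0$.

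In that case $|I|=1$, and $a+s=0$ with $a,s\geq0$ forces $a=s=0$. For a singleton $I=\{j\}$, the induced subcomplex $K_I$ is a single vertex, because every visible singleton is a face of $K$; this is the observation already used in Lemma~\ref{lem:UK-connected}. A point has $\widetilde H^{-1}=0$, so every summand vanishes and the right-hand side of \eqref{eq:intro-main-formula} gives $h^{1,0}=0$.

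On the other side, Theorem~\ref{thm:holomorphic-forms-defect} with $p=1$, or equivalently Theorem~\ref{thm:first-Hodge-row} with $q=0$, gives $h^{1,0}(\XL)=\delta$. These hold at every realizable rank through the finite model of Theorem~\ref{thm:global-Dolbeault-Cartan}. If the formula holds, then $\delta=0$. As a consistency check, $q=1$ gives the same conclusion: the formula yields $\sum_j\widetilde b_0(\{j\})=0$ there as well, while Theorem~\ref{thm:first-Hodge-row} gives $\delta(m+1)$.

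No step is a real obstacle. The only point needing care is the convention for reduced cohomology: $\widetilde H^{-1}$ is nonzero only for $K_\varnothing$, not for a vertex. This is why the empty set contributes to $p=0$ and not to $p=1$.
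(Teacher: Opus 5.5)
Your proof is correct and is essentially the paper's own argument: the formula forces $h^{1,0}=0$ because every one-vertex induced subcomplex is a point with vanishing reduced cohomology, while Theorem~\ref{thm:holomorphic-forms-defect} gives $h^{1,0}=\delta$ at every realizable rank. Your $q=1$ check (the formula gives $0$, Theorem~\ref{thm:first-Hodge-row} gives $\delta(m+1)$) is a valid redundant confirmation.
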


\begin{proof}
Formula~\eqref{eq:intro-main-formula} gives $h^{1,0}=0$, since every one-vertex induced
subcomplex is a point. Theorem~\ref{thm:holomorphic-forms-defect} gives
$h^{1,0}=\delta$.
\end{proof}

\section{Dolbeault cohomology at arbitrary rank}
\label{sec:universal-curvature-formula}

The closed antiholomorphic generators can be separated from the
finite model.  The remaining differential is the Koszul differential
of the curvature map.

Define the holomorphic curvature complex
\[
 \mathscr K_\Lambda
 =
 \left(
 R_\Lambda\otimes\Lambda W_\Lambda^{1,0},
 \bar d
 \right),
\]
where
\[
 \bar d
 \left(
 r\otimes\xi_1\wedge\cdots\wedge\xi_\ell
 \right)
 =
 \sum_{i=1}^{\ell}
 (-1)^{i-1}
 r\,c_\Lambda(\xi_i)
 \otimes
 \xi_1\wedge\cdots\widehat{\xi_i}\cdots\wedge\xi_\ell.
\]
Write
\[
 \mathcal H_\Lambda^{p,a}
 =
 H^{p,a}(\mathscr K_\Lambda),
 \qquad
 \kappa_\Lambda^{p,a}
 =
 \dim_\C\mathcal H_\Lambda^{p,a}.
\]

\begin{proposition}
For every fixed holomorphic degree $p$,
\[
 \mathscr K_\Lambda^{p,a}
 =
 R_\Lambda^a
 \otimes
 \Lambda^{p-a}W_\Lambda^{1,0}.
\]
Thus the complex in holomorphic degree $p$ is finite, with
\[
 \max(0,p-m)\leq a\leq\min(p,d).
\]
\end{proposition}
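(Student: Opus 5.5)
The statement is pure bookkeeping of bidegrees, so I will derive it directly from the grading conventions of Theorem~\ref{thm:global-Dolbeault-Cartan}. A homogeneous element of $\mathscr K_\Lambda$ has the form $r\otimes\xi$ with $r\in R_\Lambda^a$ and $\xi\in\Lambda^\ell W_\Lambda^{1,0}$. Since $\deg v_j=(1,1)$, the element $r$ has bidegree $(a,a)$. Since $\deg W_\Lambda^{1,0}=(1,0)$, the element $\xi$ has bidegree $(\ell,0)$. The total bidegree is therefore $(a+\ell,a)$.

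Fixing the holomorphic degree $p=a+\ell$ forces $\ell=p-a$. Indexing the complex by the antiholomorphic degree $a$, this gives $\mathscr K_\Lambda^{p,a}=R_\Lambda^a\otimes\Lambda^{p-a}W_\Lambda^{1,0}$, as claimed. I will also check that $\bar d$ respects this indexing. It replaces one factor $\xi_i\in W_\Lambda^{1,0}$ by $c_\Lambda(\xi_i)\in R_\Lambda^{1,1}$, so it sends $(a,\ell)$ to $(a+1,\ell-1)$. It therefore preserves $p$ and raises $a$ by one, and each fixed $p$ gives a subcomplex.

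For the range of $a$, I need two vanishing facts.

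\textbf{The ring factor.} Theorem~\ref{thm:basic-Artinian-Gorenstein} says $\operatorname{Hilb}(R_\Lambda;t)=h_K(t)$, which has degree $d$. Hence $R_\Lambda^a\neq0$ forces $0\leq a\leq d$.

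\textbf{The exterior factor.} By Theorem~\ref{thm:Cartan-spaces}, $\dim W_\Lambda^{1,0}=m$. Hence $\Lambda^{p-a}W_\Lambda^{1,0}\neq0$ forces $0\leq p-a\leq m$, that is, $p-m\leq a\leq p$.

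Intersecting the two ranges gives $\max(0,p-m)\leq a\leq\min(p,d)$. Every term of the complex is finite-dimensional, because $R_\Lambda$ is finite-dimensional by Theorem~\ref{thm:lsop}. With only finitely many nonzero degrees, the complex in holomorphic degree $p$ is finite.

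The only point requiring care is the convention that $p$ counts the holomorphic degree coming from both factors, so that the $(1,1)$-generators contribute to $p$ as well as to $a$. With that settled, there is no real obstacle.
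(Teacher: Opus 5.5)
Your proof is correct and is the same bidegree count the paper uses: an element of $R_\Lambda^a\otimes\Lambda^\ell W_\Lambda^{1,0}$ has bidegree $(a+\ell,a)$, so one sets $p=a+\ell$. Your extra checks --- that $\bar d$ preserves $p$, and that the bounds on $a$ follow from $R_\Lambda^a=0$ outside $0\leq a\leq d$ together with $\dim W_\Lambda^{1,0}=m$ --- make explicit what the paper leaves implicit.
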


\begin{proof}
An element of
$R_\Lambda^a\otimes\Lambda^\ell W_\Lambda^{1,0}$
has bidegree $(a+\ell,a)$. Set $p=a+\ell$.
\end{proof}
We set $\mathcal H_\Lambda^{p,a}=0$ whenever
\[
 a<0,
 \qquad
 a>d,
 \qquad
 p-a<0,
 \text{ or }
 p-a>m.
\]
With this convention, all sums below are finite and their bounds are implicit.

Since the antiholomorphic generators are closed,
\[
 \mathcal D_\Lambda
 =
 \mathscr K_\Lambda
 \otimes
 (\Lambda W_\Lambda^{0,1},0).
\]

\begin{theorem}[Dolbeault cohomology at arbitrary rank]
\label{thm:universal-Hodge-formula}
For $0\leq p,q\leq N$,
\[
 H_{\bar\partial}^{p,q}(\XL)
 \cong
 \bigoplus_{s=0}^{m}
 \mathcal H_\Lambda^{p,q-s}
 \otimes
 \Lambda^sW_\Lambda^{0,1}.
\]
Consequently,
\[
 h^{p,q}(\XL)
 =
 \sum_{s=0}^{m}
 \binom{m}{s}\,
 \kappa_\Lambda^{p,q-s}.
\]
\end{theorem}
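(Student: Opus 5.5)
The proof is a Künneth computation for the finite model. By Theorem~\ref{thm:global-Dolbeault-Cartan}, the Dolbeault complex $(\Omega^{*,*}(\XL),\bar\partial)$ is weakly equivalent, as a differential bigraded algebra, to $\mathcal D_\Lambda$. Weak equivalences preserve bigraded cohomology, so $H_{\bar\partial}^{p,q}(\XL)\cong H^{p,q}(\mathcal D_\Lambda)$. It therefore suffices to compute the cohomology of $\mathcal D_\Lambda$ in each bidegree.

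Next, I will check the tensor decomposition $\mathcal D_\Lambda=\mathscr K_\Lambda\otimes(\Lambda W_\Lambda^{0,1},0)$ stated just before the theorem, as an identification of bigraded complexes. The differential $\bar d$ is a derivation, $\bar d$ kills $R_\Lambda$ and $W_\Lambda^{0,1}$, and $\bar d$ sends $W_\Lambda^{1,0}$ into $R_\Lambda^{1,1}$. Hence $\bar d$ preserves the subalgebra $\mathscr K_\Lambda=R_\Lambda\otimes\Lambda W_\Lambda^{1,0}$ and acts on it by the stated Koszul formula. The Leibniz rule then gives
\[
\bar d(x\otimes y)=\bar d x\otimes y
\]
for $x\in\mathscr K_\Lambda$ and $y\in\Lambda W_\Lambda^{0,1}$, with no sign issue because $\bar d y=0$. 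So $\mathcal D_\Lambda$ is the tensor product of the complex $\mathscr K_\Lambda$ with a complex having zero differential.

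The bigrading splits accordingly. An element of $\Lambda^sW_\Lambda^{0,1}$ has bidegree $(0,s)$, and an element of $\mathscr K_\Lambda^{p,a}$ has bidegree $(p,a)$. Thus
\[
\mathcal D_\Lambda^{p,q}=\bigoplus_s \mathscr K_\Lambda^{p,q-s}\otimes\Lambda^sW_\Lambda^{0,1},
\]
and $\bar d$ preserves each summand, acting on the first factor. The Künneth formula over the field $\C$ with a zero-differential factor is just the statement that tensoring with a fixed finite-dimensional vector space is exact. This gives
\[
H^{p,q}(\mathcal D_\Lambda)\cong\bigoplus_{s=0}^m\mathcal H_\Lambda^{p,q-s}\otimes\Lambda^sW_\Lambda^{0,1}.
\]
The range $0\le s\le m$ holds because $\dim W_\Lambda^{0,1}=m$, by Theorem~\ref{thm:Cartan-spaces}. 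The dimension formula follows immediately, using the vanishing conventions for $\mathcal H_\Lambda^{p,a}$.

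I expect no real obstacle. The one point to be careful about is that the weak equivalence in Theorem~\ref{thm:global-Dolbeault-Cartan} is a zigzag of bigraded quasi-isomorphisms, so it induces an isomorphism on each $(p,q)$ component; this is part of the meaning of a weak equivalence of differential bigraded algebras.
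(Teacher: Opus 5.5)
Your proposal is correct and is the paper's argument: the paper's one-line proof likewise applies the K\"unneth isomorphism to the splitting $\mathcal D_\Lambda=\mathscr K_\Lambda\otimes(\Lambda W_\Lambda^{0,1},0)$ of the global Dolbeault--Cartan model of Theorem~\ref{thm:global-Dolbeault-Cartan}. Your checks of the Leibniz rule, the bidegree bookkeeping, and the bigrading-preserving zigzag spell out details that the paper leaves implicit.
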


\begin{proof}
Apply the K\"unneth isomorphism to the tensor-product decomposition of the
global Dolbeault--Cartan model.
\end{proof}

Define
\[
 \operatorname{Curv}_\Lambda(u,v)
 =
 \sum_{p,a}\kappa_\Lambda^{p,a}u^pv^a.
\]

\begin{corollary}
The Dolbeault polynomial is
\[
 \operatorname{Dol}_{\XL}(u,v)
 =
 (1+v)^m\operatorname{Curv}_\Lambda(u,v).
\]
\end{corollary}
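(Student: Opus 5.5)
The corollary follows by taking generating functions in Theorem~\ref{thm:universal-Hodge-formula}. We start from
\[
 h^{p,q}(\XL)=\sum_{s=0}^{m}\binom ms\,\kappa_\Lambda^{p,q-s},
\]
which rests on the tensor decomposition $\mathcal D_\Lambda=\mathscr K_\Lambda\otimes(\Lambda W_\Lambda^{0,1},0)$ and the K\"unneth isomorphism. We multiply both sides by $u^pv^q$ and sum over all $p,q$. On the right we substitute $a=q-s$, so that $v^q=v^a\cdot v^s$.

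The double sum then factors as
\[
 \Bigl(\sum_{s}\binom ms v^s\Bigr)\Bigl(\sum_{p,a}\kappa_\Lambda^{p,a}u^pv^a\Bigr)=(1+v)^m\operatorname{Curv}_\Lambda(u,v).
\]
In words, the Poincar\'e series of a tensor product of bigraded complexes with finite-dimensional cohomology is the product of the two series. The exterior algebra on the $m$-dimensional space $W_\Lambda^{0,1}$, placed in bidegree $(0,1)$ with zero differential, contributes $(1+v)^m$, since $\dim W_\Lambda^{0,1}=m$ by Theorem~\ref{thm:Cartan-spaces}.

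The only remaining point is bookkeeping of ranges. Theorem~\ref{thm:universal-Hodge-formula} is stated for $0\leq p,q\leq N$, so we must check that both sides vanish outside this range. Outside it, $h^{p,q}=0$ because $\dim_\C\XL=N$. On the model side, $\mathcal D_\Lambda^{p,q}=0$ outside this range by Theorem~\ref{thm:abstract-Hirsch-model}, so every term $\kappa^{p,q-s}$ with $\binom ms\neq0$ contributes zero there. The conventions $\mathcal H_\Lambda^{p,a}=0$ for $a$ out of range make all sums finite. Hence the identity of coefficients holds for all $(p,q)$, and the generating-function identity follows.

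There is no genuine obstacle in this argument. The substantive input, the splitting of the closed antiholomorphic generators, was already established in Theorem~\ref{thm:universal-Hodge-formula}.
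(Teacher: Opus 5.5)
Your proof is correct and is exactly the intended argument: the paper states this corollary without a separate proof, as the generating-function form of Theorem~\ref{thm:universal-Hodge-formula} obtained by multiplying $h^{p,q}=\sum_s\binom ms\kappa_\Lambda^{p,q-s}$ by $u^pv^q$ and summing. Your check that both sides vanish outside $0\leq p,q\leq N$ is a harmless detail that the paper leaves implicit.
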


\subsection{The defect factor}

Put
\[
 Z_\Lambda=\ker c_\Lambda=C_B^\vee,
 \qquad
 U_\Lambda=\operatorname{im}c_\Lambda\subseteq R_\Lambda^{1,1},
\]
and let
\[
 \overline W_\Lambda^{1,0}=W_\Lambda^{1,0}/Z_\Lambda.
\]
The curvature map induces a canonical isomorphism
\[
 \overline c_\Lambda:
 \overline W_\Lambda^{1,0}\overset{\sim}{\longrightarrow}U_\Lambda.
\]
Define the intrinsic reduced curvature complex by
\[
 \mathscr K_{\Lambda,\mathrm{red}}
 =
 R_\Lambda\otimes
 \Lambda\overline W_\Lambda^{1,0},
 \qquad
 \bar d\,\overline w=\overline c_\Lambda(\overline w).
\]

\begin{proposition}
\label{prop:defect-exterior-factor}
Every linear splitting of
$W_\Lambda^{1,0}\twoheadrightarrow\overline W_\Lambda^{1,0}$ induces an
isomorphism of differential bigraded algebras
\[
 \mathscr K_\Lambda
 \cong
 (\Lambda Z_\Lambda,0)
 \otimes
 \mathscr K_{\Lambda,\mathrm{red}}.
\]
Consequently,
\[
 H(\mathscr K_\Lambda)
 \cong
 \Lambda Z_\Lambda
 \otimes
 H(\mathscr K_{\Lambda,\mathrm{red}}).
\]
\end{proposition}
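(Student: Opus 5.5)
Fix a linear splitting $\sigma:\overline W_\Lambda^{1,0}\to W_\Lambda^{1,0}$ of the quotient map, and write $W'=\sigma(\overline W_\Lambda^{1,0})$. The plan has three steps: build an algebra isomorphism from the splitting, check that it commutes with the differentials by testing on generators, and then apply the Künneth formula.

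\textbf{Step 1: the algebra isomorphism.} The splitting gives a direct sum decomposition
\[
 W_\Lambda^{1,0}=Z_\Lambda\oplus W'.
\]
Since exterior algebras convert direct sums into graded tensor products, we obtain an isomorphism of bigraded commutative algebras
\[
 \Phi:\Lambda Z_\Lambda\otimes R_\Lambda\otimes\Lambda\overline W_\Lambda^{1,0}
 \longrightarrow R_\Lambda\otimes\Lambda W_\Lambda^{1,0}.
\]
It is the identity on $R_\Lambda$ and on $Z_\Lambda$, and it sends $\overline w\mapsto\sigma(\overline w)$. All generators keep their bidegrees: elements of $Z_\Lambda$ and $\overline W_\Lambda^{1,0}$ have bidegree $(1,0)$, and $R_\Lambda$ is concentrated in diagonal bidegrees. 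The Koszul sign rule in the graded tensor product matches the exterior-algebra signs, because $R_\Lambda$ lies in even total degree.

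\textbf{Step 2: compatibility with the differentials.} Both differentials are derivations that vanish on $R_\Lambda$. The source differential is $0$ on $\Lambda Z_\Lambda$ and $\overline c_\Lambda$ on $\overline W_\Lambda^{1,0}$. The target differential $\bar d$ on $\mathscr K_\Lambda$ is the Koszul derivation extending $c_\Lambda$, as given by the displayed formula.

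It therefore suffices to check that $\bar d\circ\Phi=\Phi\circ\bar d$ on the generators.
\begin{itemize}[label=$\cdot$]
\item For $z\in Z_\Lambda$, we have $\bar d z=c_\Lambda(z)=0$, since $Z_\Lambda=\ker c_\Lambda$.
\item For $\overline w\in\overline W_\Lambda^{1,0}$, we have $c_\Lambda(\sigma\overline w)=\overline c_\Lambda(\overline w)$. This holds because $\overline c_\Lambda$ is by definition the map induced by $c_\Lambda$ on the quotient, and $\sigma\overline w$ is a lift of $\overline w$.
\end{itemize}
The image $\overline c_\Lambda(\overline w)$ lies in $R_\Lambda^{1,1}$, on which $\Phi$ is the identity.

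Two derivations of the same parity that agree on algebra generators agree everywhere. Hence $\Phi$ is an isomorphism of differential bigraded algebras. The one point needing care is the sign convention: the formula for $\bar d$ with signs $(-1)^{i-1}$ is exactly the odd derivation extending $c_\Lambda$ in which the coefficients from $R_\Lambda$ commute past everything. This makes the Leibniz rule for the tensor-product differential $0\otimes1\pm1\otimes\bar d$ match.

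\textbf{Step 3: cohomology.} Because the factor $(\Lambda Z_\Lambda,0)$ has zero differential and we work over the field $\C$, the Künneth theorem gives
\[
 H(\mathscr K_\Lambda)\cong\Lambda Z_\Lambda\otimes H(\mathscr K_{\Lambda,\mathrm{red}}),
\]
bigraded, in the same way as the proof of Theorem~\ref{thm:universal-Hodge-formula}.

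The only real obstacle is the sign bookkeeping in Step 2. The argument also reproduces Theorem~\ref{thm:holomorphic-forms-defect}, which is a useful consistency check on the sign conventions.
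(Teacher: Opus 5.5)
Your proof is correct and follows the same route as the paper: the splitting gives $W_\Lambda^{1,0}=Z_\Lambda\oplus W'$, the induced exterior-algebra isomorphism intertwines the differentials because $c_\Lambda$ vanishes on $Z_\Lambda$ and restricts to $\overline c_\Lambda$ on $W'$, and the K\"unneth formula gives the cohomology statement. The paper also observes that replacing $W'$ by the graph of some $f:W'\to Z_\Lambda$ changes the isomorphism by the chain isomorphism $w\mapsto w+f(w)$, since $c_\Lambda f=0$; your argument does not need this, because the statement concerns an arbitrary fixed splitting.
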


\begin{proof}
Choose
$W_\Lambda^{1,0}=Z_\Lambda\oplus W'$.
The exterior-algebra isomorphism induced by this direct sum intertwines
the differentials: the curvature differential vanishes on $Z_\Lambda$,
while $W'\to\overline W_\Lambda^{1,0}$ identifies the restriction of
$c_\Lambda$ with $\overline c_\Lambda$.  This proves the tensor
decomposition.  If another complement is the graph of
$f:W'\to Z_\Lambda$, then $w\mapsto w+f(w)$ is a chain isomorphism because
$c_\Lambda f=0$.
\end{proof}

\begin{remark}
The exterior algebra $\Lambda Z_\Lambda=\Lambda C_B^\vee$, the quotient
complex $\mathscr K_{\Lambda,\mathrm{red}}$, and the subspace
$U_\Lambda\subset R_\Lambda^1$ are intrinsic.  The tensor decompositions of complexes and cohomology depend on the chosen complement to $Z_\Lambda$; the dimension factorization is independent of this choice.
\end{remark}

If
\[
 \operatorname{Curv}_{\Lambda,\mathrm{red}}(u,v)
 =
 \sum_{p,a}
 \dim H^{p,a}(\mathscr K_{\Lambda,\mathrm{red}})
 u^pv^a,
\]
then
\[
 \operatorname{Dol}_{\XL}(u,v)
 =
 (1+u)^\delta(1+v)^m
 \operatorname{Curv}_{\Lambda,\mathrm{red}}(u,v).
\]

\begin{proposition}[Exterior factors and the curvature image]
\label{prop:position-sensitive-readout}
The closed antiholomorphic generators $W_\Lambda^{0,1}$ give the factor
$(1+v)^m$ at every rank.  The curvature kernel gives $(1+u)^\delta$.
The integers $m$ and $\delta$ determine all $h^{p,0}(\XL)$ and the entire row $h^{1,q}(\XL)$.
The remaining factor
$\operatorname{Curv}_{\Lambda,\mathrm{red}}$ is the Koszul-homology
polynomial of the pair
\[
 \bigl(R_\Lambda,U_\Lambda\subset R_\Lambda^1\bigr).
\]
The differential depends on multiplication in $R_\Lambda$ and on the inclusion $U_\Lambda\subset R_\Lambda^1$.  For polygons, Theorem~\ref{thm:polygonal-rigidity}
shows that its cohomology dimensions depend only on $m$ and $\delta$.
\end{proposition}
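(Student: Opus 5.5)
We prove the four assertions of Proposition~\ref{prop:position-sensitive-readout} in order, each being a repackaging of an earlier result.

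First, the factor $(1+v)^m$. By Theorem~\ref{thm:global-Dolbeault-Cartan} we have $\mathcal D_\Lambda=\mathscr K_\Lambda\otimes(\Lambda W_\Lambda^{0,1},0)$, since $\bar dW_\Lambda^{0,1}=0$. The Künneth statement of Theorem~\ref{thm:universal-Hodge-formula} then shows that the exterior algebra on the $m$-dimensional space $W_\Lambda^{0,1}$ (Theorem~\ref{thm:Cartan-spaces}) contributes its Poincaré polynomial $(1+v)^m$. None of this uses the rank of $B$.

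Second, the factor $(1+u)^\delta$. Proposition~\ref{prop:defect-exterior-factor} gives $H(\mathscr K_\Lambda)\cong\Lambda Z_\Lambda\otimes H(\mathscr K_{\Lambda,\mathrm{red}})$. Here $Z_\Lambda=\ker c_\Lambda\cong C_B^\vee$ has dimension $\delta$ by the corollary to Theorem~\ref{thm:curvature-exact-sequence}. Its generators sit in bidegree $(1,0)$, so $\Lambda Z_\Lambda$ has Poincaré polynomial $(1+u)^\delta$. Combining the two factors gives $\operatorname{Dol}_{\XL}=(1+u)^\delta(1+v)^m\operatorname{Curv}_{\Lambda,\mathrm{red}}$.

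Third, the statements about $h^{p,0}$ and the row $h^{1,q}$. These are quoted directly from Theorems~\ref{thm:holomorphic-forms-defect} and~\ref{thm:first-Hodge-row}, which give $\binom{\delta}{p}$ and $\delta\binom{m+1}{q}$ respectively.

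Fourth, the identification of $\operatorname{Curv}_{\Lambda,\mathrm{red}}$. We unwind the definition of $\mathscr K_{\Lambda,\mathrm{red}}$. Since $\overline c_\Lambda:\overline W_\Lambda^{1,0}\to U_\Lambda$ is an isomorphism, transporting along it identifies $\mathscr K_{\Lambda,\mathrm{red}}$ with $R_\Lambda\otimes\Lambda U_\Lambda$, with differential $r\otimes u_1\wedge\cdots\wedge u_\ell\mapsto\sum_i(-1)^{i-1}ru_i\otimes\cdots\widehat{u_i}\cdots$. This is exactly the Koszul complex of the $R_\Lambda$-module $R_\Lambda$ with respect to the linear forms in $U_\Lambda\subset R_\Lambda^1$. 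Its differential uses only two pieces of data: multiplication $R_\Lambda^1\otimes R_\Lambda^a\to R_\Lambda^{a+1}$ and the inclusion of $U_\Lambda$. Hence its homology dimensions are an invariant of the pair $(R_\Lambda,U_\Lambda)$.

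The polygon claim is deferred to Theorem~\ref{thm:polygonal-rigidity}. That theorem is the one substantive input. It requires showing, for $d=2$, that the ranks of the maps $\Lambda^\ell U\otimes R^a\to\Lambda^{\ell-1}U\otimes R^{a+1}$ depend only on $\dim U=\rho$. This should follow from nondegeneracy of the pairing $R^1\times R^1\to R^2\cong\C$ together with $\dim R^2=1$. Within the present proposition there is no real obstacle beyond keeping the bigrading conventions consistent: we must track that $U_\Lambda$ sits in bidegree $(1,1)$ while $\overline W_\Lambda^{1,0}$ sits in $(1,0)$, so that the Koszul homological grading matches $p-a$.
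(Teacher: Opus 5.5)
Your proposal is correct and follows essentially the same route as the paper. You read off the exterior factors and the low rows from Theorem~\ref{thm:universal-Hodge-formula}, Proposition~\ref{prop:defect-exterior-factor}, and Theorems~\ref{thm:holomorphic-forms-defect} and~\ref{thm:first-Hodge-row}, identify the reduced complex via $\overline c_\Lambda$ with the Koszul complex of $U_\Lambda\subset R_\Lambda^1$ acting on $R_\Lambda$, and leave the polygonal claim as a citation of Theorem~\ref{thm:polygonal-rigidity} with $\rho=m-\delta$.
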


\begin{proof}
The exterior factors and low rows follow from
Theorem~\ref{thm:universal-Hodge-formula},
Theorem~\ref{thm:holomorphic-forms-defect},
Theorem~\ref{thm:first-Hodge-row}, and
Proposition~\ref{prop:defect-exterior-factor}.  Via
$\overline c_\Lambda$, the reduced complex is the Koszul complex of the
inclusion $U_\Lambda\hookrightarrow R_\Lambda$; its differential uses
multiplication by elements of $U_\Lambda\subset R_\Lambda^1$.
\end{proof}

\begin{corollary}[Exact defect order]
\label{cor:exact-defect-order}
One has
\[
 \operatorname{Dol}_{\XL}(u,0)=(1+u)^\delta,
 \qquad
 \operatorname{ord}_{u=-1}\operatorname{Dol}_{\XL}(u,v)=\delta.
\]
In particular, positive defect is equivalent to divisibility of the
Dolbeault polynomial by $1+u$.
\end{corollary}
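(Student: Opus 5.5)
The plan is to read everything off the factorization
\[
 \operatorname{Dol}_{\XL}(u,v)=(1+u)^\delta(1+v)^m\operatorname{Curv}_{\Lambda,\mathrm{red}}(u,v),
\]
which follows from Theorem~\ref{thm:universal-Hodge-formula} and Proposition~\ref{prop:defect-exterior-factor}. The first identity is just the row $q=0$: by Theorem~\ref{thm:holomorphic-forms-defect}, $\operatorname{Dol}_{\XL}(u,0)=\sum_p h^{p,0}u^p=\sum_p\binom{\delta}{p}u^p=(1+u)^\delta$. Alternatively, set $v=0$ in the factorization. Then one checks that $\operatorname{Curv}_{\Lambda,\mathrm{red}}(u,0)=1$. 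Indeed, the reduced complex in bidegree $(p,0)$ is $\Lambda^p\overline W_\Lambda^{1,0}$, and the comultiplication argument in the proof of Theorem~\ref{thm:holomorphic-forms-defect} shows it is injective for $p>0$, since $\overline c_\Lambda$ is injective.

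For the order of vanishing, I would show that $\operatorname{Curv}_{\Lambda,\mathrm{red}}(-1,v)$ is a nonzero polynomial in $v$. Since $\operatorname{Curv}_{\Lambda,\mathrm{red}}(u,0)=1$, evaluating at $u=-1,v=0$ gives $1\neq0$. Hence $(1+u)$ does not divide $\operatorname{Curv}_{\Lambda,\mathrm{red}}$ in $\C[u,v]$. The factor $(1+v)^m$ is coprime to $1+u$. Therefore the exact power of $1+u$ dividing $\operatorname{Dol}_{\XL}$ is $\delta$, which is what $\operatorname{ord}_{u=-1}$ means here: the order as a polynomial in $u$ over $\C(v)$, or equivalently the multiplicity of the irreducible factor $1+u$.

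The final sentence is then immediate. The polynomial $1+u$ divides $\operatorname{Dol}_{\XL}$ if and only if $\delta\geq1$. A faster argument for this is to put $v=0$: divisibility forces $(1+u)\mid(1+u)^\delta$ in $\C[u]$, so $\delta\geq1$. The converse comes from the factorization.

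The only point needing care is the claim $\operatorname{Curv}_{\Lambda,\mathrm{red}}(u,0)=1$, which requires confirming that the reduced complex has no cohomology in bidegree $(p,0)$ for $p>0$. This is the injectivity argument already given, applied to $W'=\overline W_\Lambda^{1,0}$. Equivalently, divide $(1+u)^\delta$ out of the $q=0$ row of $\operatorname{Dol}_{\XL}$.
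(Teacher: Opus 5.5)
Your proposal is correct and follows essentially the same route as the paper: the $q=0$ row gives $\operatorname{Dol}_{\XL}(u,0)=(1+u)^\delta$ by Theorem~\ref{thm:holomorphic-forms-defect}, the defect factorization gives divisibility by $(1+u)^\delta$, and specializing at $v=0$ rules out $(1+u)^{\delta+1}$. Your check that $\operatorname{Curv}_{\Lambda,\mathrm{red}}(u,0)=1$ is the same specialization argument carried out after dividing out the factor $(1+u)^\delta(1+v)^m$.
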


\begin{proof}
The first identity is
Theorem~\ref{thm:holomorphic-forms-defect}.  The defect factorization
gives divisibility by $(1+u)^\delta$.  No higher power can divide the full
polynomial, because its specialization at $v=0$ has exact order
$\delta$ at $u=-1$.
\end{proof}

\begin{corollary}
If $m>0$, then
\[
 \chi(\XL,\Omega_{\XL}^p)=0
\]
for every $p$. If $\delta>0$, then
\[
 \sum_p(-1)^ph^{p,q}(\XL)=0
\]
for every $q$.
\end{corollary}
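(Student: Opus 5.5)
The plan is to read both statements off the generating-polynomial factorizations established just above. The Dolbeault polynomial factors as
\[
 \operatorname{Dol}_{\XL}(u,v)=(1+v)^m\operatorname{Curv}_\Lambda(u,v)
 =(1+u)^\delta(1+v)^m\operatorname{Curv}_{\Lambda,\mathrm{red}}(u,v).
\]
Both claimed vanishings are specializations of this polynomial at $v=-1$ and at $u=-1$, so the proof reduces to evaluating these factors.

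For the first assertion, fix $p$. By Dolbeault's theorem and the finiteness of all groups (the finite model of Theorem~\ref{thm:global-Dolbeault-Cartan}), we have
\[
 \chi(\XL,\Omega^p_{\XL})=\sum_q(-1)^qh^{p,q}(\XL),
\]
which is the coefficient of $u^p$ in $\operatorname{Dol}_{\XL}(u,-1)$. Theorem~\ref{thm:universal-Hodge-formula} gives $h^{p,q}=\sum_s\binom ms\kappa_\Lambda^{p,q-s}$. Substituting $v=-1$, the factor $(1+v)^m$ vanishes because $m\ge 1$, and $\operatorname{Curv}_\Lambda$ is a genuine polynomial since its sums are finite. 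Equivalently, at the level of coefficients one has $\sum_q(-1)^q\sum_s\binom ms\kappa^{p,q-s}=\bigl(\sum_s(-1)^s\binom ms\bigr)\bigl(\sum_a(-1)^a\kappa^{p,a}\bigr)=0$. I would present this coefficient identity directly, reindexing by $q=a+s$, so that the finite ranges are visibly respected.

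For the second assertion, fix $q$. The quantity $\sum_p(-1)^ph^{p,q}$ is the coefficient of $v^q$ in $\operatorname{Dol}_{\XL}(-1,v)$. Proposition~\ref{prop:defect-exterior-factor} together with Theorem~\ref{thm:universal-Hodge-formula} gives the factor $(1+u)^\delta$, and with $\delta>0$ this vanishes at $u=-1$. At the coefficient level,
\[
 h^{p,q}=\sum_{j}\binom{\delta}{j}\,e^{p-j,q},
\]
where $e^{p',q}$ denotes the dimensions of $H(\mathscr K_{\Lambda,\mathrm{red}})\otimes\Lambda W^{0,1}_\Lambda$. The alternating sum over $p$ then factors as $\sum_j(-1)^j\binom\delta j=0$ times a finite sum.

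There is no real obstacle here. The only point needing care is that the factorizations are identities of polynomials with finitely many nonzero coefficients, not merely formal series, so that evaluating at $-1$ is legitimate. The only inputs beyond that are that the Künneth decompositions preserve bidegree, that the $W_\Lambda^{1,0}$-factor $Z_\Lambda$ sits in bidegree $(1,0)$, and that $W_\Lambda^{0,1}$ sits in bidegree $(0,1)$.
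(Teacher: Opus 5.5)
Your proposal is correct and follows the paper's argument: the paper proves the corollary by evaluating the factorization $\operatorname{Dol}_{\XL}(u,v)=(1+u)^\delta(1+v)^m\operatorname{Curv}_{\Lambda,\mathrm{red}}(u,v)$ at $v=-1$ and at $u=-1$, exactly as you do. Your coefficient-level reindexing ($q=a+s$ in the first case; $p=p'+j$ in the second, with $\Lambda^jZ_\Lambda$ in bidegree $(j,0)$) only makes explicit the finite-sum bookkeeping that the paper leaves implicit.
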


\begin{proof}
Evaluate the Dolbeault polynomial at $v=-1$ and, respectively, at $u=-1$.
\end{proof}

\subsection{The total \texorpdfstring{$E_1$}{E1} dimension}

\begin{proposition}
One has
\[
 \sum_{p,q}h^{p,q}(\XL)
 =
 2^m\dim_\C H(\mathscr K_\Lambda)
 =
 2^{m+\delta}
 \dim_\C H(\mathscr K_{\Lambda,\mathrm{red}}).
\]
\end{proposition}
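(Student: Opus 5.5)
The identity follows from the two factorizations just established, evaluated at $u=v=1$. First, by Theorem~\ref{thm:universal-Hodge-formula}, $\operatorname{Dol}_{\XL}(u,v)=(1+v)^m\operatorname{Curv}_\Lambda(u,v)$. Setting $u=v=1$ gives $\sum_{p,q}h^{p,q}(\XL)=2^m\operatorname{Curv}_\Lambda(1,1)$.

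By definition,
\[
 \operatorname{Curv}_\Lambda(1,1)=\sum_{p,a}\kappa_\Lambda^{p,a}=\dim_\C H(\mathscr K_\Lambda).
\]
This sum is finite because $\mathscr K_\Lambda=R_\Lambda\otimes\Lambda W_\Lambda^{1,0}$ is finite-dimensional by Theorem~\ref{thm:lsop}. That gives the first equality.

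For the second equality, I apply Proposition~\ref{prop:defect-exterior-factor} with any chosen complement to $Z_\Lambda$. This gives $H(\mathscr K_\Lambda)\cong\Lambda Z_\Lambda\otimes H(\mathscr K_{\Lambda,\mathrm{red}})$. By the curvature exact sequence (Theorem~\ref{thm:curvature-exact-sequence}), $Z_\Lambda=\ker c_\Lambda\cong C_B^\vee$ has dimension $\delta$, so $\dim\Lambda Z_\Lambda=2^\delta$. Taking dimensions yields $\dim H(\mathscr K_\Lambda)=2^\delta\dim H(\mathscr K_{\Lambda,\mathrm{red}})$. Equivalently, one can evaluate the displayed factorization $(1+u)^\delta(1+v)^m\operatorname{Curv}_{\Lambda,\mathrm{red}}$ at $u=v=1$.

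There is no real obstacle. The only point to check is that total dimension is independent of the chosen complement, and this holds because dimensions are unaffected by the choice, as noted in the remark following Proposition~\ref{prop:defect-exterior-factor}.
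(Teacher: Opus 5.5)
Your proposal is correct and follows the paper's proof. Both sum the K\"unneth decomposition of Theorem~\ref{thm:universal-Hodge-formula} over all bidegrees, so the closed exterior algebra on the $m$-dimensional space $W_\Lambda^{0,1}$ gives the factor $2^m$, and then use the defect factorization of Proposition~\ref{prop:defect-exterior-factor} with $\dim Z_\Lambda=\delta$ to get the extra factor $2^\delta$.
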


\begin{proof}
Sum the formula of Theorem~\ref{thm:universal-Hodge-formula} over $p$ and $q$.  The closed
exterior algebra on the $m$-dimensional space
$W_\Lambda^{0,1}$ contributes $2^m$.  The defect factorization then
contributes the additional factor $2^\delta$.
\end{proof}

By Theorem~\ref{thm:smooth-moment-angle-model},
\[
 \XL\cong_{\mathrm{homeo}}\mathcal Z_K\times T^m,
\]
so
\[
 \sum_{\ell}b_{\ell}(\XL)
 =
 2^m\sum_{\ell}b_{\ell}(\mathcal Z_K).
\]

\begin{theorem}[Curvature criterion for Fr\"olicher degeneration]
\label{thm:curvature-Frolicher-criterion}
The Fr\"olicher spectral sequence degenerates at $E_1$ if and only if
\[
 \dim_\C H(\mathscr K_\Lambda)
 =
 \sum_{\ell}b_{\ell}(\mathcal Z_K).
\]
\end{theorem}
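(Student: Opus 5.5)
The argument compares two counts: the total $E_1$ dimension $\sum_{p,q}h^{p,q}(\XL)$ and the total Betti number $\sum_\ell b_\ell(\XL)$. It uses the standard fact that the Frölicher spectral sequence degenerates at $E_1$ exactly when these two numbers are equal.

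The first step is to recall why that fact holds. For a compact complex manifold, the Frölicher spectral sequence has $E_1^{p,q}=H_{\bar\partial}^{p,q}(\XL)$ and converges to $H^{p+q}_{dR}(\XL;\C)$. Each page is finite dimensional and each $E_{r+1}$ is a subquotient of $E_r$, so $\dim E_1\geq\dim E_\infty=\sum_\ell b_\ell(\XL)$. Equality holds if and only if every differential $d_r$ with $r\geq1$ vanishes, which is degeneration at $E_1$.

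The second step substitutes the two computations already available. The Proposition computing the total $E_1$ dimension, which rests on Theorem~\ref{thm:universal-Hodge-formula}, gives
\[
\sum_{p,q}h^{p,q}(\XL)=2^m\dim_\C H(\mathscr K_\Lambda).
\]
Theorem~\ref{thm:smooth-moment-angle-model} gives a homeomorphism $\XL\cong\mathcal Z_K\times T^m$. With complex coefficients, Künneth then yields
\[
\sum_\ell b_\ell(\XL)=2^m\sum_\ell b_\ell(\mathcal Z_K).
\]
Dividing the equality $\dim E_1=\dim E_\infty$ by $2^m$ gives exactly the stated condition $\dim_\C H(\mathscr K_\Lambda)=\sum_\ell b_\ell(\mathcal Z_K)$.

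All steps are routine. The one point to check is finite dimensionality, which the inequality $\dim E_1\geq\dim E_\infty$ requires. It holds because $\XL$ is compact; the finiteness of the model $\mathcal D_\Lambda$ gives it as well. The Betti numbers are topological invariants, so the homeomorphism to $\mathcal Z_K\times T^m$ is enough and no diffeomorphism is needed.
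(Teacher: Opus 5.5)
Your proposal is correct and is the same argument as the paper. Degeneration at $E_1$ is equivalent to equality of the total dimensions of $E_1$ and $E_\infty$; the total $E_1$ dimension is $2^m\dim_\C H(\mathscr K_\Lambda)$, the total Betti number is $2^m\sum_\ell b_\ell(\mathcal Z_K)$ via $\XL\cong\mathcal Z_K\times T^m$, and cancelling $2^m$ gives the stated condition.
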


\begin{proof}
A finite-dimensional spectral sequence degenerates at $E_1$ exactly when
the total dimensions of $E_1$ and $E_\infty$ agree. Use the two preceding
formulas and cancel $2^m$.
\end{proof}

\begin{definition}
The \emph{curvature Fr\"olicher excess} is
\[
 \mathfrak F(\Lambda)
 =
 \dim_\C H(\mathscr K_\Lambda)
 -
 \sum_{\ell}b_{\ell}(\mathcal Z_K).
\]
\end{definition}

Then
\[
 \mathfrak F(\Lambda)\geq0,
 \qquad
 \mathfrak F(\Lambda)=0
 \iff
 E_1=E_\infty,
\]
and
\[
\dim E_1-\dim E_\infty
 =
 2^m\mathfrak F(\Lambda).
\]

\begin{lemma}[The first Betti number]
\label{lem:all-rank-b1}
At every geometrically realizable rank,
\[
 b_1(\XL)=m.
\]
\end{lemma}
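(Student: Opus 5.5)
The plan is to use the homeomorphism $\XL\cong\mathcal Z_K\times T^m$ of Theorem~\ref{thm:smooth-moment-angle-model}, valid at every realizable rank, together with the Künneth formula. This gives
\[
 b_1(\XL)=b_1(\mathcal Z_K)+b_0(\mathcal Z_K)\,b_1(T^m)=b_1(\mathcal Z_K)+m,
\]
since $\mathcal Z_K$ is connected (Lemma~\ref{lem:UK-connected}, or directly from the polyhedral product description). The statement therefore reduces to $b_1(\mathcal Z_K)=0$.

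For that vanishing I will use the Hochster decomposition of moment-angle cohomology:
\[
 H^\ell(\mathcal Z_K;\C)\cong\bigoplus_{I\subseteq[N]}\widetilde H^{\ell-|I|-1}(K_I;\C).
\]
A contribution to $\ell=1$ needs $\widetilde H^{-|I|}(K_I)\neq0$. For $I=\varnothing$ this would be $\widetilde H^{0}(K_\varnothing)$, which vanishes, since the augmented complex of $K_\varnothing$ has only degree $-1$ cohomology. For $|I|\geq1$ it would be a reduced group in negative degree of a nonempty complex, which is zero. Hence $b_1(\mathcal Z_K)=0$.

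Alternatively, $\mathcal Z_K$ is simply connected because every visible singleton is a face of $K$. This is the same codimension-two argument used in Lemma~\ref{lem:UK-connected}: $U_K$ is the complement of a coordinate arrangement of complex codimension at least two, so it is simply connected, and $U_K$ deformation retracts onto $\mathcal Z_K$. Either route gives $b_1(\XL)=m$.

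No step is a real obstacle. The only point needing care is that the topological identification holds at every realizable rank, including deficient ones. Theorem~\ref{thm:smooth-moment-angle-model} provides this, since it rests on the connected $\C^m$-quotient of Theorem~\ref{thm:global-ghost-quotient} rather than on the full-rank Cox covering.
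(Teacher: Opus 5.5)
Your proposal is correct and follows the paper's own argument. The homeomorphism $\XL\cong\mathcal Z_K\times T^m$, valid at every admissible rank, together with the K\"unneth formula reduces the claim to $b_1(\mathcal Z_K)=0$, and you read that vanishing off the Hochster decomposition exactly as the paper does. Your alternative via simple connectivity of the codimension-two arrangement complement $U_K\simeq\mathcal Z_K$ is also valid, but it is not needed.
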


\begin{proof}
The moment-angle Hochster decomposition has
$b_1(\mathcal Z_K)=0$: the empty support and every singleton support
have zero reduced cohomology in the degree required to contribute to
$H^1$, and larger supports have degree too large.  At every admissible rank, the homeomorphism $\XL\cong\mathcal Z_K\times T^m$ and the K\"unneth formula give
$b_1(\XL)=m$.
\end{proof}

\begin{theorem}[Fr\"olicher obstruction at deficient curvature rank]
\label{thm:singular-Frolicher-obstruction}
If $\delta>0$, then the Fr\"olicher spectral sequence of $\XL$ does not
degenerate at $E_1$.  Equivalently,
\[
 \delta>0\quad\Longrightarrow\quad\mathfrak F(\Lambda)>0.
\]
The obstruction is already visible in total degree one:
\[
 h^{1,0}(\XL)+h^{0,1}(\XL)=m+\delta>b_1(\XL)=m.
\]
\end{theorem}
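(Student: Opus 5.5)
The plan is to compare total degree one on the $E_1$ and $E_\infty$ pages of the Frölicher spectral sequence. The $E_1$ page in total degree one is $H^{1,0}_{\bar\partial}(\XL)\oplus H^{0,1}_{\bar\partial}(\XL)$. By Theorem~\ref{thm:holomorphic-forms-defect}, $h^{1,0}(\XL)=\binom{\delta}{1}=\delta$. From the closedness of $W_\Lambda^{0,1}$, stated just after that theorem, $h^{0,1}(\XL)=\binom m1=m$. Equivalently, Theorem~\ref{thm:first-Hodge-row} with $q=0$ gives $h^{1,0}=\delta$, and Theorem~\ref{thm:universal-Hodge-formula} gives $h^{0,1}=m\kappa^{0,0}_\Lambda+\kappa_\Lambda^{0,1}=m$. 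The last equality uses $\kappa_\Lambda^{0,1}=0$, which holds because $\mathscr K_\Lambda^{0,a}$ vanishes for $a>0$ by the bidegree bound $a\le p$. So $\dim E_1^{\,\text{total }1}=m+\delta$.

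On the $E_\infty$ side, the total-degree-one piece is the associated graded of $H^1_{dR}(\XL;\C)$, which has dimension $b_1(\XL)=m$ by Lemma~\ref{lem:all-rank-b1}. Each page $E_{r+1}$ is a subquotient of $E_r$, so $\dim E_\infty^{k}\le\dim E_1^{k}$ in every total degree $k$. Degeneration at $E_1$ forces equality in every total degree, in particular $m+\delta=m$. Hence $\delta>0$ excludes degeneration, and the displayed strict inequality is exactly this comparison.

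For the equivalent formulation $\mathfrak F(\Lambda)>0$, the plan is to invoke Theorem~\ref{thm:curvature-Frolicher-criterion}. The total dimensions satisfy $\dim E_1-\dim E_\infty=2^m\mathfrak F(\Lambda)$, and this difference is a sum over total degrees of nonnegative terms. The degree-one term alone is $\delta>0$, so $\mathfrak F(\Lambda)\ge\delta/2^m>0$. Since $\mathfrak F$ is an integer, we get $\mathfrak F\ge1$.

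No step is a real obstacle. The only point needing care is that the Betti number is that of the underlying topological manifold. This is legitimate at every realizable rank, because the homeomorphism of Theorem~\ref{thm:smooth-moment-angle-model} does not use $\det B\neq0$; Lemma~\ref{lem:all-rank-b1} already records this.
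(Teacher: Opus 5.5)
Your proposal is correct and follows essentially the same route as the paper: you get $h^{1,0}=\delta$ from Theorem~\ref{thm:holomorphic-forms-defect}, $h^{0,1}=m$ from the closed antiholomorphic generators, and $b_1=m$ from Lemma~\ref{lem:all-rank-b1}, and you compare these in total degree one. You then deduce the reformulation $\mathfrak F(\Lambda)>0$ from Theorem~\ref{thm:curvature-Frolicher-criterion}; your explicit bound $2^m\mathfrak F(\Lambda)\geq\delta$ is a harmless sharpening of that last step.
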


\begin{proof}
Theorem~\ref{thm:holomorphic-forms-defect} gives
$h^{1,0}=\delta$, while the closed antiholomorphic generators give
$h^{0,1}=m$.  Apply Lemma~\ref{lem:all-rank-b1}.  Degeneration at $E_1$
would identify the total-degree-one dimension of $E_1$ with $b_1$, which
is impossible when $\delta>0$.  The statement about
$\mathfrak F(\Lambda)$ follows from
Theorem~\ref{thm:curvature-Frolicher-criterion}.
\end{proof}

\begin{remark}
At full rank, $c_\Lambda$ identifies $W_\Lambda^{1,0}$ with
$R_\Lambda^{1,1}$.  In Part~II,
$H(\mathscr K_\Lambda)$ is identified with Stanley--Reisner Tor and yields
the Dolbeault--Hochster formula.
\end{remark}

\begin{corollary}[Failure of the \texorpdfstring{$\partial\bar\partial$}{ddbar} lemma]
\label{cor:all-rank-ddbar-failure}
Every minimally stable polytopal LVM manifold with $m>0$ fails the
$\partial\bar\partial$ lemma.  At full curvature rank, its Fr\"olicher spectral sequence degenerates at $E_1$, and
\[
 \sum_{p+q=j}h^{p,q}(\XL)=b_j(\XL)
 \qquad\text{for every }j.
\]
\end{corollary}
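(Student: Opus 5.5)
The corollary makes two claims: failure of the $\partial\bar\partial$ lemma for every $m>0$, and $E_1$-degeneration at full rank. I would treat them separately.

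For the $\partial\bar\partial$ lemma, I would use the standard fact that a compact complex manifold satisfying it has a Hodge decomposition with Hodge symmetry. In particular it has $E_1$-degeneration and $h^{p,q}=h^{q,p}$, so $h^{1,0}=h^{0,1}$. By Theorem~\ref{thm:holomorphic-forms-defect}, $h^{1,0}(\XL)=\delta$. The closed antiholomorphic generators give $h^{0,1}(\XL)=m$. Theorem~\ref{thm:real-indispensable-frame} gives $\delta\leq\lfloor m/2\rfloor<m$ whenever $m>0$. Hence $h^{1,0}<h^{0,1}$ and symmetry fails.

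This argument covers every realizable rank, and no case split is needed. If $\delta>0$, one could alternatively cite Theorem~\ref{thm:singular-Frolicher-obstruction}, but the asymmetry argument is uniform.

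For the full-rank assertion, set $\delta=0$. By Theorem~\ref{thm:curvature-Frolicher-criterion}, it suffices to show that $\dim_\C H(\mathscr K_\Lambda)=\sum_\ell b_\ell(\mathcal Z_K)$.

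When $c_\Lambda:W^{1,0}_\Lambda\to R^{1,1}_\Lambda$ is an isomorphism, I would lift a basis of $R^1_\Lambda$ to linear forms $\ell_1,\dots,\ell_m$. Together with $\theta_1,\dots,\theta_d$ these form a basis of $S_1$. Since $\theta$ is a regular sequence on $\C[K]$ by Theorem~\ref{thm:basic-Artinian-Gorenstein}, the Koszul complex of $\C[K]$ on all $N$ linear forms is quasi-isomorphic to the Koszul complex of $R_\Lambda=\C[K]/(\theta)$ on the images of the $\ell_i$. That second complex is exactly $\mathscr K_\Lambda$ under the identification $c_\Lambda$.

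The first complex computes $\operatorname{Tor}^S(\C[K],\C)$. Its total dimension equals $\sum_\ell b_\ell(\mathcal Z_K)$ by Hochster's formula and the Buchstaber--Panov description $H^*(\mathcal Z_K)\cong\operatorname{Tor}^S(\C[K],\C)$. Degeneration at $E_1$ then forces $\sum_{p+q=j}h^{p,q}=b_j$ for every $j$, since $E_\infty$ is the graded object of $H^j_{dR}$ and every $E_r$ is a subquotient of the previous page.

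The main obstacle is the Koszul comparison step. It needs the change-of-rings statement that, for a regular sequence of linear forms that is part of a basis of $S_1$, Koszul homology over the full basis equals Koszul homology of the quotient over the remaining forms. I would prove this by filtering the Koszul complex by the $\theta$-part, or equivalently by the spectral sequence whose first page is $H(\Kos(\theta;\C[K]))=R_\Lambda$, which collapses by regularity. Everything else is bookkeeping against results already established.
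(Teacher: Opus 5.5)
Your proof is correct and essentially the paper's. For the $\partial\bar\partial$ part you both rest on $h^{1,0}=\delta\le\lfloor m/2\rfloor<m=h^{0,1}$; you invoke Deligne--Griffiths--Morgan--Sullivan Hodge symmetry, whereas the paper checks directly that the lemma would give an antilinear surjection from closed holomorphic one-forms onto $H^{0,1}$, hence $h^{0,1}\le h^{1,0}$. For full rank, the Koszul step you flag as the main obstacle is already Theorems~\ref{thm:Koszul-contraction} and~\ref{thm:curvature-Tor}; the only difference is that you pass through the total-dimension criterion of Theorem~\ref{thm:curvature-Frolicher-criterion} and read off $\sum_{p+q=j}h^{p,q}=b_j$ from degeneration, whereas the paper's Theorem~\ref{thm:exact-full-rank-criterion} first obtains $\operatorname{Dol}_{\XL}(t,t)=\operatorname{Poin}_{\XL}(t)$ and deduces degeneration from it.
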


\begin{proof}
The model at arbitrary rank and
Theorem~\ref{thm:real-indispensable-frame} give
\[
 h^{1,0}(\XL)=\delta
 \leq\left\lfloor\frac m2\right\rfloor
 <m=h^{0,1}(\XL).
\]
The $\partial\bar\partial$ lemma would force the reverse inequality.
Indeed, if $\alpha$ is a $\bar\partial$-closed $(0,1)$-form, then
$\partial\alpha$ is $d$-closed and $\partial$-exact.  The lemma would
give $\partial\alpha=\partial\bar\partial f$ for a smooth function $f$.
Thus $\alpha-\bar\partial f$ would be a $d$-closed representative of
$[\alpha]$, whose conjugate is a holomorphic one-form.  Conjugation
would therefore give an antilinear surjection from the space of closed holomorphic
one-forms onto $H^{0,1}(\XL)$, contradicting the displayed dimensions.

At full rank, the Betti-number equality and $E_1$-degeneration follow
from Theorem~\ref{thm:exact-full-rank-criterion}.
\end{proof}

\section{Polygonal Hodge diamonds}
\label{sec:singular-readout-tests}

We compute the rank-one square and Faucard's rank-two pentagon, then derive
the formula for an arbitrary polygon.  The common reason for rigidity
is the perfect multiplication pairing in degree one.

\subsection{The rank-one square}

For Example~\ref{ex:singular-square}, write $x,y$ for the common classes
of the two pairs of opposite visible variables.  Directly from the
projected relation space and the two minimal nonfaces,
\[
 R_\Lambda
 \cong
 \frac{\C[x,y]}{(x^2,y^2)}.
\]
Here $m=2$, $\delta=1$, and the curvature image is the line
\[
 U_\Lambda
 =
 \C\bigl((3+i)x+(-2+i)y\bigr).
\]

\begin{proposition}[Rank-one square Hodge diamond]
\label{prop:singular-square-Hodge-table}
For the rank-one square,
\[
 \operatorname{Curv}_{\Lambda,\mathrm{red}}(u,v)
 =1+uv+u^2v+u^3v^2
\]
and
\[
 \operatorname{Dol}_{\XL}(u,v)
 =(1+u)(1+v)^2(1+uv+u^2v+u^3v^2).
\]
Its Hodge rows $(h^{p,0},\ldots,h^{p,4})$ are
\[
\begin{array}{c|ccccc}
p\backslash q&0&1&2&3&4\\ \hline
0&1&2&1&0&0\\
1&1&3&3&1&0\\
2&0&2&4&2&0\\
3&0&1&3&3&1\\
4&0&0&1&2&1
\end{array}.
\]
Moreover,
\[
 \mathfrak F(\Lambda)=4,
 \qquad
 \dim E_1=32,
 \qquad
 \dim E_\infty=16.
\]
\end{proposition}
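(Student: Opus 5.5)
The computation runs entirely inside the finite model. By Theorem~\ref{thm:universal-Hodge-formula} and Proposition~\ref{prop:defect-exterior-factor},
\[
 \operatorname{Dol}_{\XL}(u,v)=(1+u)^\delta(1+v)^m\operatorname{Curv}_{\Lambda,\mathrm{red}}(u,v),
\]
with $m=2$ and $\delta=1$ from Example~\ref{ex:singular-square}. So it suffices to compute the cohomology of the reduced Koszul complex $R_\Lambda\otimes\Lambda\overline W_\Lambda^{1,0}$. Here $R_\Lambda=\C[x,y]/(x^2,y^2)$, with basis $1,x,y,xy$ and Hilbert series $1+2t+t^2=h_K(t)$ for the square. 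The space $\overline W_\Lambda^{1,0}$ is one-dimensional, spanned by $w$, with $\bar d w=\ell:=(3+i)x+(-2+i)y$.

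I would first check the stated presentation of $R_\Lambda$ and the line $U_\Lambda$. The minimal nonfaces of $K=\partial\Delta^1*\partial\Delta^1$ are $v_4v_5$ and $v_6v_7$. The projected rays satisfy $a_4=-a_5$ and $a_6=-a_7$: the weights in each pair differ only by sign in the first coordinate, and under Gale duality this makes opposite rays antipodal. Hence $J_\Lambda$ identifies $v_4\sim v_5=:x$ and $v_6\sim v_7=:y$. The curvature image comes from Theorem~\ref{thm:curvature-exact-sequence}. Take a covector $(u,\eta)$ annihilating $\mathfrak h_\Lambda$ and compute $[u]=\sum u_jv_j$, that is, $x(u_4+u_5)+y(u_6+u_7)$, using $A$ and $B$ built from the weights with $a_0=1$. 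I expect this explicit linear algebra, getting exactly the coefficients $(3+i,-2+i)$, to be the main bookkeeping obstacle. However, the dimension count only uses that $\ell\neq0$ and that $\ell$ is not a multiple of $x$ or $y$ alone. Then $\ell^2=2(3+i)(-2+i)xy\neq0$.

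The complex is $0\to R_\Lambda w\to R_\Lambda\to0$, with multiplication by $\ell$; $w$ has bidegree $(1,0)$ and $R^a$ sits in bidegree $(a,a)$. The map $\ell\colon R^a\to R^{a+1}$ has the following ranks:
\begin{itemize}
\item $R^0\to R^1$ is injective, with cokernel of dimension $1$ in bidegree $(1,1)$;
\item $R^1\to R^2$ is surjective because $\ell x,\ell y$ are nonzero multiples of $xy$, so its kernel has dimension $1$, with class in $R^1w$ of bidegree $(2,1)$;
\item $R^2w\to R^3=0$ has kernel $\C xyw$ in bidegree $(3,2)$.
\end{itemize}
Together with $H^{0,0}=\C$, this gives $\operatorname{Curv}_{\mathrm{red}}=1+uv+u^2v+u^3v^2$.

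Expanding $(1+u)(1+v)^2(1+uv+u^2v+u^3v^2)$ gives the table. For example, the $u^0$ row is $(1+v)^2$, and the $u^1$ row is $(1+v)^2(1+v)=(1+v)^3$, consistent with Theorem~\ref{thm:first-Hodge-row}. For the last assertions, $\dim E_1=\operatorname{Dol}(1,1)=2\cdot4\cdot4=32$. On the topological side, $\mathcal Z_K=S^3\times S^3$ for the square, so $\sum_\ell b_\ell(\mathcal Z_K)=4$. Then $\dim E_\infty=2^2\cdot4=16$, and $\mathfrak F=\dim H(\mathscr K_\Lambda)-4=8-4=4$, using $\dim H(\mathscr K_\Lambda)=2\cdot4$.
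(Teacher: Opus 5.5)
Your proposal is correct and follows essentially the same route as the paper: injectivity of multiplication by $\ell$ from $R_\Lambda^0$ to $R_\Lambda^1$ and surjectivity from $R_\Lambda^1$ onto $R_\Lambda^2=\C xy$ give the four one-dimensional reduced classes, after which the factors $(1+u)(1+v)^2$, the identification $\mathcal Z_K\cong S^3\times S^3$, and the factor $2^m=4$ give the table and the page dimensions. The only superfluous point is your remark that $\ell$ must not be a multiple of $x$ or $y$ alone (and that $\ell^2\neq0$). The count uses only $\ell\neq0$, since for any nonzero $\ell$ at least one of $\ell x,\ell y$ is a nonzero multiple of $xy$.
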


\begin{proof}
Let $\ell=(3+i)x+(-2+i)y$.  Multiplication by $\ell$ is injective
from $R_\Lambda^0$ to $R_\Lambda^1$ and surjective from
$R_\Lambda^1$ to $R_\Lambda^2=\C xy$.  The one-generator reduced
Koszul complex therefore has one-dimensional homology in bidegrees
\[
 (p,a)=(0,0),(1,1),(2,1),(3,2),
\]
which gives the reduced polynomial.  The closed kernel generator and the
two closed antiholomorphic generators supply the factors $(1+u)$ and
$(1+v)^2$.  Expanding gives the table.  Finally,
$\mathcal Z_K\cong S^3\times S^3$, so
$\sum b_\ell(\mathcal Z_K)=4$, while
$\dim H(\mathscr K_\Lambda)=8$.  Apply the definition of
$\mathfrak F$ and the factor $2^m=4$.
\end{proof}

\begin{proposition}[A full-rank realization of the same visible square]
\label{prop:full-rank-square-companion}
The visible square of Example~\ref{ex:singular-square} also admits a full-rank realization.  For that realization,
\[
 \operatorname{Dol}_{X_{\square}^{\mathrm{fr}}}(u,v)
 =(1+v)^2(1+2u^2v+u^4v^2)
\]
and its Hodge rows are
\[
\begin{array}{c|ccccc}
p\backslash q&0&1&2&3&4\\ \hline
0&1&2&1&0&0\\
1&0&0&0&0&0\\
2&0&2&4&2&0\\
3&0&0&0&0&0\\
4&0&0&1&2&1.
\end{array}
\]
Thus the full-rank and rank-deficient realizations have the same visible complex
and different Hodge diamonds; the difference appears in bidegree $(1,0)$.
\end{proposition}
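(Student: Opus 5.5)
We need two things: an explicit full-rank configuration with the same visible square, and its Dolbeault polynomial.

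\emph{Existence.} Theorem~\ref{thm:exact-realizable-rank-range} with $m=2$ and $r=2$ applies directly to Example~\ref{ex:singular-square}. The indispensable differences $v_1=\Lambda_2-\Lambda_1=(-1+i,0)$ and $v_2=\Lambda_3-\Lambda_1=(-2-i,0)$ form a real $2$-frame. Choose $F\in\operatorname{GL}_\R(\C^2)$ carrying $(v_1,v_2)$ to $(e_1,e_2)$, and set $\Lambda^{\mathrm{fr}}_i=F(\Lambda_i)$. Since $F$ is real-linear, every convex-hull test is unchanged. Hence the new configuration is admissible and minimally stable, has $D=\{1,2,3\}$ and the visible complex $\partial\Delta^1*\partial\Delta^1$, and remains polytopal. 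Its block $B$ has rows $e_1,e_2$, so $\det B\neq0$.

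\emph{Hodge numbers.} At full rank, $\delta=0$ and $c_\Lambda$ is an isomorphism (Corollary~\ref{cor:curvature-full-rank-criterion}). Identify $W_\Lambda^{1,0}$ with $R_\Lambda^1$. As for the rank-one square, the two minimal nonfaces together with the two visible relations give
\[
 R_\Lambda\cong\C[x,y]/(x^2,y^2),
\]
with Hilbert series $h_K(t)=1+2t+t^2$. If these relations involved other coefficients, one would still get an Artinian Gorenstein quotient with $h=(1,2,1)$. The complex $\mathscr K_\Lambda$ is then the Koszul complex of $R_\Lambda$ on its full degree-one space. Since $x,y$ generate the maximal ideal of $\C[x,y]$ and $R_\Lambda$ is a complete intersection, its homology is
\[
 \operatorname{Tor}^{\C[x,y]}(R_\Lambda,\C),
\]
computed by the Koszul resolution of $(x^2,y^2)$. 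The generators have total internal degrees $0$, $2$, $2$, $4$ in homological positions $0,1,1,2$.

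To convert to bigrading $(p,a)$, recall that a class in $R^a\otimes\Lambda^\ell W$ has $p=a+\ell$. The unit gives $(0,0)$. Each $x\otimes\xi_x$-type cycle lies in $R^1\otimes\Lambda^1$, giving $(2,1)$ with multiplicity two. The top class $xy\otimes\xi_x\wedge\xi_y$ gives $(4,2)$. Thus
\[
 \operatorname{Curv}_\Lambda=1+2u^2v+u^4v^2.
\]
Theorem~\ref{thm:universal-Hodge-formula} then gives
\[
 \operatorname{Dol}=(1+v)^2\operatorname{Curv}_\Lambda.
\]
Expanding yields the table. The vanishing of row $p=1$ matches Theorem~\ref{thm:first-Hodge-row} with $\delta=0$.

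The main care point is checking these Koszul homology classes directly in the finite complex. Degree by degree:
\begin{itemize}[label={}]
\item In $R^0\otimes\Lambda^1$, the map $\xi\mapsto c(\xi)$ is injective.
\item In $R^1\otimes\Lambda^1\to R^2$, the map $(a x+b y)\otimes\cdots$ is surjective onto $\C xy$, so its kernel has dimension $4-1=3$. The boundaries from $\Lambda^2$ are one-dimensional, so $H=2$.
\item In $R^0\otimes\Lambda^2\to R^1\otimes\Lambda^1$, the map is injective, and $R^1\otimes\Lambda^2\to R^2\otimes\Lambda^1$ is surjective (dimensions $2\to2$, nondegenerate pairing). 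This leaves only the top class $R^2\otimes\Lambda^2$.
\end{itemize}

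As a cross-check, the total is $2^2\cdot4=16=\sum b_\ell(S^3\times S^3\times T^2)$, consistent with $E_1$-degeneration.

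Comparing with Proposition~\ref{prop:singular-square-Hodge-table}: there $h^{1,0}=1$, while here $h^{1,0}=0$. So the two realizations have the same visible complex and different Hodge diamonds, differing in bidegree $(1,0)$.
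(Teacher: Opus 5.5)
Your proposal is correct. The existence step matches the paper's. The paper fixes the extension of $F$ explicitly by sending $(0,1),(0,i)$ to $(i,0),(0,i)$, but any invertible real-linear extension works.

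For the Hodge numbers you take a different route.

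\emph{The paper's route.} It invokes the full-rank Dolbeault--Hochster formula (Theorem~\ref{thm:Dolbeault-Hochster-first}, a forward reference to Part~II). It then reads off the supports with nonzero reduced cohomology: the empty set, the two opposite pairs, and the full cycle. This is a one-line consequence of the main theorem. It also shows the combinatorial origin of each term: $2u^2v$ comes from the two disconnected opposite pairs and $u^4v^2$ from $\widetilde H^1$ of the cycle.

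\emph{Your route.} You compute the curvature complex directly in the finite model of Theorem~\ref{thm:universal-Hodge-formula}. This is the $(m,\rho)=(2,2)$ case of the argument in Theorem~\ref{thm:polygonal-rigidity}, and the full-rank analogue of Proposition~\ref{prop:singular-square-Hodge-table}. It stays entirely within Part~I. It needs only that $R_\Lambda$ is Artinian Gorenstein with $h=(1,2,1)$, which also covers your hedge. It exhibits the surviving classes as the explicit Koszul cycles $x\otimes\xi_x$, $y\otimes\xi_y$, and $xy\otimes\xi_x\wedge\xi_y$.

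The hedge is in fact unnecessary. $\mathfrak r_\Lambda$ consists of the vectors $\bigl(\phi(\Lambda_i-\Lambda_{a_0})\bigr)_i$ as $\phi$ runs over real functionals on $\C^m$. Replacing $\Lambda$ by $F\Lambda$ only reparametrizes $\phi$ by $\phi\circ F$, so $\mathfrak r_\Lambda$, and with it $\mathcal L_\Lambda$ and $R_\Lambda$, are literally unchanged. Hence $R_\Lambda=\C[x,y]/(x^2,y^2)$ exactly as for the rank-one square.
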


\begin{proof}
In the notation of Example~\ref{ex:singular-square}, put
\[
 v_1=\Lambda_2-\Lambda_1=(-1+i,0),\qquad
 v_2=\Lambda_3-\Lambda_1=(-2-i,0).
\]
The ordered list
\[
 v_1,\ v_2,\ (0,1),\ (0,i)
\]
is a real basis of $\C^2$.  Let $F\in\operatorname{GL}_{\R}(\C^2)$ be
the real-linear map sending this basis respectively to
\[
 (1,0),\ (0,1),\ (i,0),\ (0,i),
\]
and set $\Lambda_j^{\mathrm{fr}}=F(\Lambda_j)$.  Because $F$ is invertible
and fixes the origin, it preserves every convex-support test, hence
admissibility, minimal stability, indispensability, polytopality, and the
visible square.  Its indispensable difference matrix is the identity,
so it has complex rank $2=m$.  The full-rank Dolbeault--Hochster formula
and the two opposite pairs in the square give the displayed polynomial;
expansion gives the table.
\end{proof}

Explicitly, if $z_1=x+iy$ and $z_2=a+ib$, the map used in the proof is
\[
 F(z_1,z_2)
 =\left(\frac{-x+2y}{3}+ia,
          -\frac{x+y}{3}+ib\right),
\]
whose real determinant is $-1/3$.

\subsection{Faucard's rank-two pentagon}

Consider the configuration from \cite[Example~3]{Faucard2024}, written
with its nine weights as columns:
\[
 \Lambda^{\mathrm F}
 =
 \begin{pmatrix}
 0&0&0&0&2&1+2i&-2+i&-2-i&1-2i\\
 -1-i&i&0&0&1&1&1&1&1\\
 1&1&-1-i&i&1&1&1&1&1
 \end{pmatrix}.
\]
The first four labels are indispensable.  Thus
$(m,n,k)=(3,9,4)$, the visible complex is the five-cycle, and
\[
 \operatorname{rank}_\C B=2,
 \qquad
 \delta=1.
\]
Let $x_1,\ldots,x_5$ denote the visible variables in the displayed order.
The visible linear relations are
\[
 -6x_1+8x_2-9x_3+7x_4=0,
 \qquad
 -12x_1+9x_2-4x_3+7x_5=0,
\]
and
\[
 I_K=(x_1x_2,x_1x_5,x_2x_3,x_3x_4,x_4x_5).
\]
Put $y=x_3$, $z=x_4$, and $w=x_5$.  Then
\[
 x_1=\tfrac76y-\tfrac32z+\tfrac43w,
 \qquad
 x_2=2y-2z+w.
\]
The Gorenstein multiplication pairing
$R_\Lambda^1\otimes R_\Lambda^1\to R_\Lambda^2$ is represented, up to
the choice of a nonzero top class, by
\[
 Q=
 \begin{pmatrix}
 4&0&-8\\
 0&4&0\\
 -8&0&7
 \end{pmatrix}
 \quad\text{in the basis }(y,z,w).
\]
The curvature image is the plane spanned by
\[
 \ell_1=(-3-4i)y+5z,
 \qquad
 \ell_2=(4-3i)y+5w.
\]
One has $\det Q=-144$, and the determinant of the restriction of $Q$ to
$(\ell_1,\ell_2)$ is $-5400-7200i$.

\begin{proposition}[Faucard-pentagon Hodge diamond]
\label{prop:Faucard-pentagon-Hodge-table}
For $\Lambda^{\mathrm F}$,
\[
 \operatorname{Curv}_{\Lambda^{\mathrm F},\mathrm{red}}(u,v)
 =1+uv+4u^2v+u^3v+u^4v^2
\]
and
\[
 \operatorname{Dol}_{X_{\Lambda^{\mathrm F}}}(u,v)
 =(1+u)(1+v)^3
 (1+uv+4u^2v+u^3v+u^4v^2).
\]
Its Hodge rows $(h^{p,0},\ldots,h^{p,5})$ are
\[
\begin{array}{c|cccccc}
p\backslash q&0&1&2&3&4&5\\ \hline
0&1&3&3&1&0&0\\
1&1&4&6&4&1&0\\
2&0&5&15&15&5&0\\
3&0&5&15&15&5&0\\
4&0&1&4&6&4&1\\
5&0&0&1&3&3&1
\end{array}.
\]
Moreover,
\[
 \mathfrak F(\Lambda^{\mathrm F})=4,
 \qquad
 \dim E_1=128,
 \qquad
 \dim E_\infty=96.
\]
\end{proposition}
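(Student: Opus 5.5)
The plan mirrors the rank-one square computation. We reduce to the reduced Koszul complex of the pair $(R_\Lambda,U_\Lambda)$ and apply Proposition~\ref{prop:defect-exterior-factor} together with Theorem~\ref{thm:universal-Hodge-formula}. Here $m=3$ and $\delta=1$, so $\operatorname{Dol}=(1+u)(1+v)^3\operatorname{Curv}_{\mathrm{red}}$. Thus everything reduces to computing the cohomology of
\[
 R_\Lambda\otimes\Lambda\overline W^{1,0},\qquad \dim\overline W^{1,0}=2,
\]
with differential given by multiplication by $\ell_1,\ell_2$.

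The Hilbert function of $R_\Lambda$ is the $h$-vector $(1,3,1)$ of the pentagon (Theorem~\ref{thm:basic-Artinian-Gorenstein}). The Koszul complex splits by internal degree $a\in\{0,1,2\}$ and exterior degree $\ell\in\{0,1,2\}$, with $p=a+\ell$. The spaces are $R^a\otimes\Lambda^\ell\C^2$ of dimensions $1,3,1$ times $1,2,1$. I will compute the ranks of the differentials along each diagonal $p$:
\begin{itemize}
\item $\Lambda^1\otimes R^0\to\Lambda^0\otimes R^1$ has rank $2$, since $\ell_1,\ell_2$ are independent in $R^1$.
\item $\Lambda^2\otimes R^0\to\Lambda^1\otimes R^1$ is injective.
\item $\Lambda^1\otimes R^1\to R^2$, given by $(a,b)\mapsto\ell_1a+\ell_2b$, is surjective, because $\ell_1R^1\neq0$ by nondegeneracy of $Q$.
\item $\Lambda^2\otimes R^1\to\Lambda^1\otimes R^2$, given by $r\mapsto(-\ell_2r,\ell_1r)$ up to sign, has rank $2$ iff the functionals $Q(\ell_1,\cdot)$ and $Q(\ell_2,\cdot)$ are independent, which holds since $Q$ is nondegenerate.
\item $\Lambda^2\otimes R^2$ has no outgoing differential, and its incoming differential has rank $2$.
\end{itemize}

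The main obstacle is the middle spot $\Lambda^1\otimes R^1$ at $p=2$. Its homology dimension is $6-\operatorname{rank}(\text{out})-\operatorname{rank}(\text{in})$, and the incoming map from $\Lambda^2\otimes R^0$ has rank $1$. The outgoing map to $R^2$ has rank $1$, so the homology is $6-1-1=4$. This accounts for the $4u^2v$ term.

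Collecting all diagonals:
\begin{itemize}
\item $p=0$: the class $1$ in $R^0$ gives $1$.
\item $p=1$: at $(1,1)$, $\Lambda^0\otimes R^1$ has cokernel dimension $3-2=1$, giving $uv$. At $(1,0)$, $\Lambda^1\otimes R^0$ has kernel $0$.
\item $p=2$: at $(2,0)$, $\Lambda^2\otimes R^0$ contributes $0$. At $(2,1)$ the homology is $4$, and at $(2,2)$, $R^2$ is hit, giving $0$.
\item $p=3$: at $(3,1)$, $\Lambda^2\otimes R^1$ has kernel $3-2=1$, giving $u^3v$. At $(3,2)$, $\Lambda^1\otimes R^2$ has dimension $2$ and is hit with rank $2$, giving $0$.
\item $p=4$: $\Lambda^2\otimes R^2$ gives $u^4v^2$.
\end{itemize}
This yields $\operatorname{Curv}_{\mathrm{red}}=1+uv+4u^2v+u^3v+u^4v^2$, with total $8$.

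Here the actual determinant $-5400-7200i$ is not even needed for the rank counts. It would only matter for finer invariants, or as a check that the restricted pairing on $U_\Lambda$ is nondegenerate. It is consistent with the above.

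Expanding $(1+u)(1+v)^3\operatorname{Curv}_{\mathrm{red}}$ gives the table. For the spectral sequence numbers:
\begin{itemize}
\item $\dim H(\mathscr K_\Lambda)=2\cdot 8=16$, so $\dim E_1=2^3\cdot16=128$.
\item $\mathcal Z_K$ for the pentagon has total Betti number $12$ by Hochster, with $1+5+5+1$ from the empty set, the five two-vertex diagonal supports and the five three-vertex path supports, and the top class. Hence $\dim E_\infty=8\cdot12=96$.
\item $\mathfrak F=16-12=4$.
\end{itemize}
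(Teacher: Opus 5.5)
Your proposal is correct and follows essentially the paper's argument. You use the defect and antiholomorphic exterior factors to reduce to the two-generator reduced Koszul complex, and you read off its ranks from $\dim U_\Lambda=2$, the Hilbert function $(1,3,1)$, and nondegeneracy of $Q$. You then compare the total with the Betti number $12$ of $\mathcal Z_K$, exactly as the paper does.

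There are two small slips, neither of which affects the count:
\begin{itemize}
\item The space whose incoming differential has rank $2$ is $R^2\otimes\Lambda^1U$. The term $R^2\otimes\Lambda^2U$ has zero incoming map, because $\Lambda^3U=0$.
\item The five contributing three-vertex supports are the disconnected ones, an edge plus an isolated vertex. The three-vertex paths are contractible and contribute nothing.
\end{itemize}
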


\begin{proof}
The displayed relations follow by taking the annihilator of the real
action matrix and projecting to the five visible coordinates.  The five
quadratic nonfaces then give the displayed multiplication pairing.  The
kernel of $B$ is the first action-parameter line, so
$\beta_\Lambda:R_\Lambda^1\to(\ker B)^\vee$ has coefficient vector
\[
 (-2+i,-2-i,1-2i)
\]
in the basis $(y,z,w)$.  Its kernel is precisely the plane generated by
$\ell_1,\ell_2$, in agreement with
Theorem~\ref{thm:curvature-exact-sequence}.

The two-generator reduced Koszul chains have homology dimensions
\[
 1,1,4,1,1
\]
in bidegrees
\[
 (0,0),(1,1),(2,1),(3,1),(4,2),
\]
respectively.  This follows directly from the injectivity of
$U_\Lambda\hookrightarrow R_\Lambda^1$, the one-dimensional top degree,
and the nondegenerate multiplication pairing above.  The defect and
antiholomorphic factors give the polynomial and table.  Finally,
$\mathcal Z_K\cong\#_5(S^3\times S^4)$, so its total Betti dimension is
$12$, whereas $\dim H(\mathscr K_{\Lambda^{\mathrm F}})=16$.
The asserted excess and page dimensions follow.
\end{proof}

At full rank the pentagon polynomial is
\[
 (1+v)^3(1+5u^2v+5u^3v+u^5v^2),
\]
whose $(1,0)$ coefficient is zero, while the rank-two polynomial has coefficient one.

\subsection{Rigidity at fixed rank}
\label{sec:polygonal-rigidity}

The preceding calculations depend on a common feature of every polygon.
If $K=C_l$, then $N=l$, $d=2$, and $m=l-2$.  The basic ring has
\[
 R_\Lambda=R_\Lambda^0\oplus R_\Lambda^1\oplus R_\Lambda^2,
 \qquad
 (\dim R_\Lambda^0,\dim R_\Lambda^1,\dim R_\Lambda^2)=(1,m,1).
\]
Choose a nonzero class $\omega\in R_\Lambda^2$ and write
$xy=Q(x,y)\omega$ for $x,y\in R_\Lambda^1$.  By
Theorem~\ref{thm:basic-Artinian-Gorenstein}, $Q$ is a nondegenerate
symmetric bilinear form.  This pairing determines the ranks needed in
the curvature calculation.

\begin{theorem}[Polygonal rigidity]\label{thm:polygonal-rigidity}
Let $\XL$ be a minimally stable polytopal LVM manifold whose visible
complex is the $l$-cycle, $l\geq3$.  Put
\[
 m=l-2,\qquad \rho=\operatorname{rank}_{\C}B,\qquad \delta=m-\rho.
\]
Then
\begin{equation}\label{eq:polygonal-all-rank}
 \begin{aligned}
 \operatorname{Dol}_{\XL}(u,v)
 &=(1+u)^\delta(1+v)^m
 \left[1+v\sum_{p=1}^{\rho+1}b_{m,\rho,p}u^p
             +u^{\rho+2}v^2\right],\\
 b_{m,\rho,p}
 &=m\binom{\rho}{p-1}-\binom{\rho}{p}-\binom{\rho}{p-2}.
 \end{aligned}
\end{equation}
Binomial coefficients outside their usual ranges are zero.  In
particular, the complete Hodge diamond is determined by $l$ and $\rho$
throughout the realizable interval
$\lceil m/2\rceil\leq\rho\leq m$.
\end{theorem}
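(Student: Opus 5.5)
By the defect factorization of Proposition~\ref{prop:defect-exterior-factor} and the identity
$\operatorname{Dol}_{\XL}=(1+u)^\delta(1+v)^m\operatorname{Curv}_{\Lambda,\mathrm{red}}$, it suffices to compute the bigraded homology of the reduced Koszul complex
\[
 \mathscr K_{\Lambda,\mathrm{red}}=R_\Lambda\otimes\Lambda U,
 \qquad U=U_\Lambda\subseteq R_\Lambda^1,\quad \dim U=\rho,
\]
where a generator of $U$ has bidegree $(1,0)$ and $R^a$ has bidegree $(a,a)$. Since $R_\Lambda=R^0\oplus R^1\oplus R^2$ with dimensions $(1,m,1)$, the column in holomorphic degree $p$ is
\[
 R^0\otimes\Lambda^pU\longrightarrow R^1\otimes\Lambda^{p-1}U\longrightarrow R^2\otimes\Lambda^{p-2}U .
\]
Its terms have dimensions $\binom\rho p$, $m\binom\rho{p-1}$, and $\binom\rho{p-2}$. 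The claimed formula says that homology sits at $(0,0)$ with dimension $1$, at $(\rho+2,2)$ with dimension $1$, and in the middle with dimension $b_{m,\rho,p}$. This is exactly the Euler-characteristic count once we know the complex is otherwise as exact as possible. So the plan is to prove two facts: the first map $d_1\colon R^0\otimes\Lambda^pU\to R^1\otimes\Lambda^{p-1}U$ is injective for $p\geq1$, and the second map $d_2\colon R^1\otimes\Lambda^{p-1}U\to R^2\otimes\Lambda^{p-2}U$ is surjective for $p-2<\rho$. The middle dimension then follows by subtraction.

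Injectivity of $d_1$ comes straight from the $R^0=\C$ part. Here $d_1$ is the Koszul comultiplication $\Lambda^pU\to U\otimes\Lambda^{p-1}U\subseteq R^1\otimes\Lambda^{p-1}U$. It is injective for $p\geq1$ by the same argument as in Theorem~\ref{thm:holomorphic-forms-defect}: composing with exterior multiplication gives $p\cdot\mathrm{id}$. This step uses only that $U\to R^1$ is injective, which holds because $\overline c_\Lambda$ is an isomorphism onto its image.

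Surjectivity of $d_2$ is where the Gorenstein pairing enters. Identify $R^2\cong\C$ via $\omega$. Then $d_2(x\otimes\xi)=\sum_i\pm Q(x,u_i)\,\iota_i\xi$ for a basis $u_1,\ldots,u_\rho$ of $U$. Because $Q$ is nondegenerate on $R^1$ (Theorem~\ref{thm:basic-Artinian-Gorenstein}), the map $R^1\to U^\vee$, $x\mapsto Q(x,-)|_U$, is surjective, even if $Q|_U$ is degenerate; note that the Faucard example shows $Q|_U$ need not be special. Choose $x_1,\ldots,x_\rho\in R^1$ dual to the $u_i$, so that $Q(x_j,u_i)=\delta_{ij}$. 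Then
\[
 d_2\bigl(x_j\otimes u_j\wedge\eta\bigr)=\pm\,\eta\quad\text{for }\eta\in\Lambda^{p-2}U\text{ not involving }u_j .
\]
Every basis monomial of $\Lambda^{p-2}U$ misses some $u_j$ whenever $p-2<\rho$, so $d_2$ is surjective in that range.

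For the top case $p-2=\rho$, the term $R^1\otimes\Lambda^{\rho+1}U$ vanishes, so the homology is $R^2\otimes\Lambda^\rho U\cong\C$ in bidegree $(\rho+2,2)$. For $p=0$ the homology is $R^0=\C$. Putting these together, the middle homology in bidegree $(p,1)$ has dimension
\[
 m\binom\rho{p-1}-\binom\rho p-\binom\rho{p-2}=b_{m,\rho,p},
\]
with the boundary cases handled by the zero conventions for binomial coefficients. Combining with the factors $(1+u)^\delta(1+v)^m$ gives \eqref{eq:polygonal-all-rank}. The only data used are $m$, $\rho$, and the nondegeneracy of $Q$, which gives the rigidity claim. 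The main obstacle I expect is the surjectivity of $d_2$, in particular checking that nondegeneracy of $Q$ on all of $R^1$, and not on $U$, is what is needed. I would also double-check the edge degrees $p=1$ and $p=\rho+1$ against the square and pentagon computations.
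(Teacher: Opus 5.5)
Your proposal is correct and matches the paper's proof essentially step for step. You reduce via the defect factorization to the three-term reduced Koszul column $\Lambda^pU\to R_\Lambda^1\otimes\Lambda^{p-1}U\to R_\Lambda^2\otimes\Lambda^{p-2}U$. You get injectivity of the first map from the comultiplication--multiplication identity. You get surjectivity of the second map for $p\leq\rho+1$ from nondegeneracy of $Q$ on all of $R_\Lambda^1$, using elements dual to a basis of $U$. You then read off $b_{m,\rho,p}$ by a dimension count, with the edge degrees $p=0$ and $p=\rho+2$ treated exactly as in the paper.
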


\begin{proof}
Identify $\overline W_\Lambda^{1,0}$ with
$U=U_\Lambda\subset R_\Lambda^1$ by the curvature map.
The curvature exact sequence gives $\dim U=\rho$.  The reduced
curvature complex in holomorphic degree $p$ has just three terms:
\begin{equation}\label{eq:polygonal-three-term}
 \Lambda^pU
 \xrightarrow{d_0}
 R_\Lambda^1\otimes\Lambda^{p-1}U
 \xrightarrow{d_1}
 \C\omega\otimes\Lambda^{p-2}U.
\end{equation}
Their curvature degrees are $a=0,1,2$, respectively.  The differentials
are
\[
 \begin{aligned}
 d_0(u_1\wedge\cdots\wedge u_p)
 &=\sum_{j=1}^p(-1)^{j-1}u_j\otimes
    u_1\wedge\cdots\widehat{u_j}\cdots\wedge u_p,\\
 d_1(x\otimes u_1\wedge\cdots\wedge u_{p-1})
 &=\sum_{j=1}^{p-1}(-1)^{j-1}Q(x,u_j)\omega\otimes
    u_1\wedge\cdots\widehat{u_j}\cdots\wedge u_{p-1}.
 \end{aligned}
\]
Symmetry of $Q$ gives $d_1d_0=0$.

For $1\leq p\leq\rho$, choose a projection $R_\Lambda^1\to U$.
Projecting the first factor after $d_0$ and then taking exterior
product gives $p$ times the identity on $\Lambda^pU$.  Hence $d_0$ is
injective.  For $2\leq p\leq\rho+1$, nondegeneracy of $Q$ gives a
surjection
\[
 R_\Lambda^1\longrightarrow U^\vee,\qquad
 x\longmapsto Q(x,-)|_U.
\]
The second differential is therefore surjective.  Explicitly, for a
basis wedge $u_J$ with $|J|=p-2$, choose a basis element $u_k$ outside
$J$ and an $x\in R_\Lambda^1$ whose pairing on $U$ is the dual
covector of $u_k$.  Then
$d_1(x\otimes u_k\wedge u_J)=\omega\otimes u_J$.

Thus, for $1\leq p\leq\rho+1$, the cohomology of
\eqref{eq:polygonal-three-term} lies in curvature degree one and has
dimension $b_{m,\rho,p}$.  In degree $p=0$ there is a single copy of
$\C$ in curvature degree zero.  In degree $p=\rho+2$ the only term
is $\C\omega\otimes\Lambda^\rho U$, in curvature degree two.  All
other holomorphic degrees vanish.  This proves
\[
 \operatorname{Curv}_{\Lambda,\mathrm{red}}(u,v)
 =1+v\sum_{p=1}^{\rho+1}b_{m,\rho,p}u^p+u^{\rho+2}v^2.
\]
The two closed exterior factors in
Proposition~\ref{prop:defect-exterior-factor} and
Theorem~\ref{thm:universal-Hodge-formula} now give
\eqref{eq:polygonal-all-rank}.  The realizable interval is
Theorem~\ref{thm:exact-realizable-rank-range}.
\end{proof}

The proof uses the perfect pairing on $R_\Lambda^1$ even when $Q|_U$
is degenerate.  Both the ring and the curvature subspace may vary;
their dimensions fix the ranks of the two arrows in
\eqref{eq:polygonal-three-term}.  Consequently every Hodge number is
constant in a family of the stated polygonal manifolds on which the
curvature rank is constant.

The formula recovers the examples above directly.  For
$(m,\rho)=(2,1)$ it gives
$1+uv+u^2v+u^3v^2$ as the reduced polynomial; for $(m,\rho)=(3,2)$
it gives $1+uv+4u^2v+u^3v+u^4v^2$.  At full rank, Pascal's identity
gives
\[
 b_{m,m,p}
 =(m+2)\binom m{p-1}-\binom{m+2}p.
\]
The coefficients at $p=1,m+1$ vanish.  Substitution of $l=m+2$
recovers the polygonal Hochster formula of
Theorem~\ref{thm:polygonal-example}, including the triangle, whose
proper-support contribution is zero.

\begin{corollary}[Change of curvature rank]
\label{cor:polygonal-rank-change}
Write $D_{m,\rho}(u,v)$ for the polynomial in
\eqref{eq:polygonal-all-rank}.  If $\rho$ and $\rho-1$ are both
geometrically realizable, then
\begin{equation}\label{eq:polygonal-rank-difference}
 D_{m,\rho-1}(u,v)-D_{m,\rho}(u,v)
 =(1+u)^{m-\rho}(1+v)^{m+1}
       \bigl(u+u^{\rho+1}v\bigr).
\end{equation}
Thus every Hodge number weakly increases as the rank decreases.
For defect $\delta$ the curvature Fr\"olicher excess is
\begin{equation}\label{eq:polygonal-Frolicher-excess}
 \mathfrak F(\Lambda)=4(2^\delta-1),
 \qquad
 \dim E_1-\dim E_\infty=2^{m+2}(2^\delta-1).
\end{equation}
\end{corollary}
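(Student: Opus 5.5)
Both assertions follow from the closed formula \eqref{eq:polygonal-all-rank} of Theorem~\ref{thm:polygonal-rigidity}. Write $D_{m,\rho}=(1+u)^{m-\rho}(1+v)^mC_\rho$ with
\[
 C_\rho(u,v)=1+v\sum_{p}b_{m,\rho,p}u^p+u^{\rho+2}v^2 .
\]
Since $D_{m,\rho-1}=(1+u)^{m-\rho+1}(1+v)^mC_{\rho-1}$, the difference equals $(1+u)^{m-\rho}(1+v)^m\bigl[(1+u)C_{\rho-1}-C_\rho\bigr]$. So \eqref{eq:polygonal-rank-difference} reduces to the polynomial identity
\[
 (1+u)C_{\rho-1}-C_\rho=(1+v)(u+u^{\rho+1}v).
\]
I will check this coefficient by coefficient in $v$.

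In $v^0$ the left side is $(1+u)-1=u$. In $v^2$ it is $(1+u)u^{\rho+1}-u^{\rho+2}=u^{\rho+1}$. The right side expands to $u+uv+u^{\rho+1}v+u^{\rho+1}v^2$, so these two match. The substantive check is the $v^1$ coefficient, which must equal $u+u^{\rho+1}$; this is the step I expect to carry the real work. It amounts to
\[
 b_{m,\rho-1,p}+b_{m,\rho-1,p-1}-b_{m,\rho,p}=[p=1]+[p=\rho+1].
\]
I will prove it with Pascal's rule $\binom{\rho-1}{j}+\binom{\rho-1}{j-1}=\binom{\rho}{j}$, applied to each of the three terms of $b$. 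After that, the $m\binom{\cdot}{p-1}$ parts cancel. The two negative binomial parts leave $\binom{\rho-1}{p-3}$-type boundary terms that telescope.

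I will handle the boundary ranges carefully, namely $p=1$, $p=\rho+1$, and the convention $b_{m,\rho-1,p}=0$ outside $1\le p\le\rho$. I expect those edge terms to be exactly the two indicator contributions. As a sanity check, the case $(m,\rho)=(2,2)\to(2,1)$ should reproduce $(1+v)^3(u+u^3v)$, matching the square family in the introduction.

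Monotonicity follows at once, since the right side of \eqref{eq:polygonal-rank-difference} has nonnegative coefficients.

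For \eqref{eq:polygonal-Frolicher-excess}, I will use Proposition~\ref{prop:position-sensitive-readout}, which gives $\dim H(\mathscr K_\Lambda)=2^\delta C_\rho(1,1)$. I will compute $C_\rho(1,1)=2+\sum_p b_{m,\rho,p}=2+m2^\rho-2^\rho-2^\rho+2$. This requires care with the partial sums: $\sum_p\binom{\rho}{p}$ runs over $p\ge1$ and so gives $2^\rho-1$, and similarly $\sum_p\binom{\rho}{p-2}$ should give $2^\rho$. The correct total should come out independent of $\rho$ after the factor $2^\delta$ is included, or at least give $\dim H(\mathscr K_\Lambda)=2^\delta\sum b_\ell(\mathcal Z_K)+4(2^\delta-1)$.

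Simpler route: the difference identity at $u=v=1$ gives $D_{m,\rho-1}(1,1)-D_{m,\rho}(1,1)=2^{m-\rho}2^{m+1}\cdot2$. Dividing by $2^m$ gives an increase of $\mathfrak F$ by $4\cdot2^{\delta-1}$ when the defect goes from $\delta-1$ to $\delta$, because $\sum b_\ell(\mathcal Z_K)$ is independent of the rank. We have $\mathfrak F=0$ at $\delta=0$ by Corollary~\ref{cor:all-rank-ddbar-failure}. Summing the geometric series then gives $\mathfrak F=4(2^\delta-1)$, and multiplying by $2^m$ gives the page-dimension formula. The full-rank base case is what allows the induction to start even when $\rho=m$ is the only anchor.
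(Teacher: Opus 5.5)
Your proposal is correct, and it differs from the paper's proof mainly in how the bookkeeping is organized.

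\textbf{The difference identity.} The paper does not compare coefficients. It sums the middle terms in closed form,
\[
 \sum_{p=1}^{\rho+1}b_{m,\rho,p}u^p=(mu-1-u^2)(1+u)^\rho+1+u^{\rho+2},
\]
which gives $D_{m,\rho}/(1+v)^m=v(mu-1-u^2)(1+u)^m+(1+u)^{m-\rho}(1+v)(1+u^{\rho+2}v)$. The first summand does not depend on $\rho$, so subtracting the two ranks takes one line.

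Your $v$-coefficient check with Pascal's rule is the same fact done termwise, and it works. The mechanism is slightly different from what you describe, though. Write $\tilde b_{m,\rho,p}=m\binom{\rho}{p-1}-\binom{\rho}{p}-\binom{\rho}{p-2}$ for every integer $p$. Pascal's rule then gives $\tilde b_{m,\rho-1,p}+\tilde b_{m,\rho-1,p-1}=\tilde b_{m,\rho,p}$ exactly, for all $p$, so nothing telescopes. The two indicators come only from truncating the sum to $1\le p\le\rho$. The omitted values are $\tilde b_{m,\rho-1,0}=-1$ and $\tilde b_{m,\rho-1,\rho+1}=-1$, and these produce the $+1$ at $p=1$ and at $p=\rho+1$.

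\textbf{The excess.} The paper evaluates directly: $\operatorname{Curv}_{\Lambda,\mathrm{red}}(1,1)=(m-2)2^\rho+4$, so $\dim H(\mathscr K_\Lambda)=(m-2)2^m+2^{\delta+2}$. At full rank this is $(m-2)2^m+4$, which is the total Betti number of $\mathcal Z_K$ by Theorem~\ref{thm:curvature-moment-angle}.

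Your alternative is also valid: evaluate the difference identity at $u=v=1$ and telescope, starting from $\mathfrak F=0$ at full rank. The identity is purely polynomial, so it holds at every intermediate rank. Your anchor is a full-rank realization of the same $l$-cycle, which Theorem~\ref{thm:exact-realizable-rank-range} provides; alternatively, use $\sum_\ell b_\ell(\mathcal Z_K)=\mathscr H_K(1,1)$ directly. The paper's route has the advantage of computing $\dim H(\mathscr K_\Lambda)$ itself, not only its increments.

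\textbf{Errors in your abandoned first attempt.} The commentary there should be deleted or corrected, because it contains three mistakes.
\begin{enumerate}
\item Over $1\le p\le\rho+1$, the sum $\sum_p\binom{\rho}{p-2}$ equals $2^\rho-1$, not $2^\rho$; it misses $\binom{\rho}{\rho}$, which sits at $p=\rho+2$. Your displayed value $2+m2^\rho-2^\rho-2^\rho+2=(m-2)2^\rho+4$ already uses $2^\rho-1$, so it is right and the remark contradicts it.
\item The product $2^\delta C_\rho(1,1)=(m-2)2^m+2^{\delta+2}$ is not independent of $\rho$.
\item The hedged formula should read $\dim H(\mathscr K_\Lambda)=\sum_\ell b_\ell(\mathcal Z_K)+4(2^\delta-1)$, with no factor $2^\delta$ on the Betti sum.
\end{enumerate}
Carried out correctly, that first route is exactly the paper's proof.
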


\begin{proof}
Summing the middle coefficients gives
\[
 \sum_{p=1}^{\rho+1}b_{m,\rho,p}u^p
 =(mu-1-u^2)(1+u)^\rho+1+u^{\rho+2}.
\]
Consequently,
\[
 \frac{D_{m,\rho}(u,v)}{(1+v)^m}
 =v(mu-1-u^2)(1+u)^m
   +(1+u)^{m-\rho}(1+v)(1+u^{\rho+2}v).
\]
Subtracting the expressions at $\rho-1$ and $\rho$ proves
\eqref{eq:polygonal-rank-difference}, whose coefficients are
nonnegative.  The total dimension of the reduced curvature cohomology is
\[
 \operatorname{Curv}_{\Lambda,\mathrm{red}}(1,1)
 =(m-2)2^\rho+4.
\]
After restoring the defect factor, this becomes
\[
 \dim H(\mathscr K_\Lambda)=(m-2)2^m+2^{\delta+2}.
\]
At full rank the corresponding dimension is
$(m-2)2^m+4$, which equals the total Betti number of the fixed
moment-angle manifold by
Theorem~\ref{thm:curvature-moment-angle}.  Subtracting proves the formula for
$\mathfrak F$; the closed antiholomorphic factor contributes $2^m$
to the difference of page dimensions.
\end{proof}

Equation~\eqref{eq:polygonal-rank-difference} compares the cohomology
of configurations at the two ranks.  In a holomorphic family in which
such a rank drop occurs, it gives the exact Hodge-number jumps.
The next subsection constructs such a family for the square.
Tangent-sheaf cohomology also depends on the resonant exponents
(Theorem~\ref{thm:tangent-sector-formula}), as the square in
Section~\ref{sec:tangent-square} illustrates.

\subsection{A holomorphic family through the rank-one square}
\label{sec:holomorphic-square-family}

The rank change can be realized by a holomorphic variation of a single
weight.  We first record the parameter dependence of the classical
quotient construction.

\begin{lemma}[The quotient in a family]
\label{lem:parametric-LVM-quotient}
Let $\Lambda_1(t),\ldots,\Lambda_n(t)\in\C^m$ depend holomorphically
on $t$ in a disk $\Delta$.  Suppose every configuration is admissible
and the subsets whose convex hull contains the origin are independent
of $t$.  Write $V\subset\PP^{n-1}$ for the resulting common open set.
Then
\[
 T\cdot(t,[z])
 =\bigl(t,[e^{\langle\Lambda_1(t),T\rangle}z_1:\cdots:
             e^{\langle\Lambda_n(t),T\rangle}z_n]\bigr)
\]
is a free proper holomorphic $\C^m$-action on $\Delta\times V$.
Its quotient is a complex manifold, and the induced map
\[
 \pi:\mathscr X=(\Delta\times V)/\C^m\longrightarrow\Delta
\]
is a proper holomorphic submersion with fibre $X_{\Lambda(t)}$.
All its fibres are diffeomorphic.
\end{lemma}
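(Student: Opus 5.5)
The plan is to reprove the classical minimization argument of Theorem~\ref{thm:classical-LVM-quotient} with a holomorphic parameter. The aim is to show that the parametrized action map
\[
 \C^m\times\mathscr T\longrightarrow\Delta\times V,\qquad
 \mathscr T=\{(t,[z]):\textstyle\sum_i\Lambda_i(t)|z_i|^2=0\},
\]
is a diffeomorphism and that $\mathscr T\to\Delta$ is a proper submersion. First, for $(t,[z])\in\Delta\times V$, put
\[
 F_{t,z}(T)=\sum_{i\in\operatorname{supp}z}|z_i|^2e^{2\operatorname{Re}\langle\Lambda_i(t),T\rangle}.
\]
Since the supporting subsets are independent of $t$, every $[z]\in V$ has support containing, for each $t$, a full-dimensional supporting simplex by Proposition~\ref{prop:affine-spanning}. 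Hence $F_{t,z}$ is proper and strictly convex, with a unique critical point $T(t,[z])$. The Hessian is positive definite and depends smoothly (real-analytically) on $(t,[z])$. The implicit function theorem then makes $T(t,[z])$ smooth jointly in $(t,[z])$. The critical-point equation is exactly the transversal equation, so $\mathscr T$ is a smooth real submanifold of $\Delta\times V$. The map $(T,x)\mapsto T\cdot x$ is a diffeomorphism $\C^m\times\mathscr T\to\Delta\times V$ with inverse $(t,[z])\mapsto(T(t,[z]),\,T(t,[z])^{-1}\cdot z)$. Freeness and properness of the action follow, since translation on $\C^m$ is free and proper. The action is holomorphic in $(t,[z],T)$ because the weights are holomorphic in $t$. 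The free proper holomorphic quotient theorem then gives a complex manifold $\mathscr X$. The projection to $\Delta$ is invariant, so it descends to a holomorphic map $\pi$ whose fibre over $t$ is $V/\C^m=X_{\Lambda(t)}$.

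Next, $\pi$ is a submersion. The fibre $\{t\}\times V$ is a submanifold of $\Delta\times V$ transverse to the $t$-direction, and it is saturated by the action. The quotient chart identifies $\mathscr X$ locally with $\mathscr T$, and $\mathscr T\to\Delta$ is a submersion: at each point, Proposition~\ref{prop:LVM-dimension} shows that the defining map of $\mathscr T_{\Lambda(t)}$ has surjective differential in the $z$-directions alone. So the full differential in $(t,z)$ is surjective with the $t$-derivative free.

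The main obstacle is properness of $\pi$, and I would handle it through $\mathscr T$. It suffices to treat a compact subdisk $\overline\Delta'\subset\Delta$ and show that $\mathscr T\cap(\overline\Delta'\times\PP^{n-1})$ is compact. This set is closed in $\overline\Delta'\times\PP^{n-1}$, because the equation $\sum\Lambda_i(t)|z_i|^2=0$ is closed. The danger is limit points with $[z]\notin V$. Such a limit $(t_0,[z_0])$ would satisfy $\sum\Lambda_i(t_0)|z_{0,i}|^2=0$ with all coefficients nonnegative and not all zero. That is a convex relation for the origin supported on $\operatorname{supp}z_0$, so $[z_0]\in V$ after all. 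Hence the set lies in $V$ and is compact. Then $\pi^{-1}(\overline\Delta')$ is its continuous image and is compact, so $\pi$ is proper. Finally, Ehresmann's theorem applied to the proper submersion $\pi$ over the contractible disk gives that all fibres are diffeomorphic.
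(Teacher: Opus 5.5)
Your proposal is correct and follows essentially the same route as the paper: minimization of $F_{t,z}$ with the critical point depending smoothly on $(t,[z])$, the relative transversal $\mathscr T$ giving an equivariant diffeomorphism $\C^m\times\mathscr T\cong\Delta\times V$ (hence joint freeness and properness), a submersion argument through $\mathscr T\to\Delta$, properness because $\mathscr T$ over a compact set is a closed subset of $C\times\PP^{n-1}$ that automatically lies in $\Delta\times V$, and finally Ehresmann. One cosmetic sign fix: if $T_0$ is the critical point of $F_{t,z}$, then $T_0\cdot[z]$ is the point on the transversal, so the inverse map is $(t,[z])\mapsto(-T_0,(t,T_0\cdot[z]))$.
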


\begin{proof}
For $(t,[z])\in\Delta\times V$, consider
\[
 F_{t,z}(T)=\sum_i|z_i|^2
 e^{2\operatorname{Re}\langle\Lambda_i(t),T\rangle},
 \qquad T\in\C^m\cong\R^{2m}.
\]
Admissibility places the origin in the interior of the convex hull
of the nonzero support.  As in
Theorem~\ref{thm:classical-LVM-quotient}, $F_{t,z}$ is proper and its
Hessian is positive definite.  Its unique critical point depends
smoothly on both $t$ and $[z]$ by the implicit-function theorem;
it is invariant under rescaling of $z$.

Define the relative transversal
\[
 \mathscr T=
 \left\{(t,[z])\in\Delta\times\PP^{n-1}:
       \sum_i\Lambda_i(t)|z_i|^2=0\right\}.
\]
Every point of $\mathscr T$ belongs to $\Delta\times V$.  At such a
point, the nonzero support affinely spans $\R^{2m}$.  Radial coordinate
variations therefore give rank $2m$ for the defining equations in the
projective directions.  Hence $\mathscr T$ is smooth and
$\mathscr T\to\Delta$ is a submersion.  The minimizing parameter gives
an equivariant diffeomorphism
\[
 \C^m\times\mathscr T\longrightarrow\Delta\times V,
 \qquad (T,(t,x))\longmapsto(t,T\cdot x).
\]
The action becomes translation on the first factor, proving joint
freeness and properness.

The free proper holomorphic quotient theorem gives the complex
structure on $\mathscr X$ and a holomorphic principal bundle
$\Delta\times V\to\mathscr X$.  Projection to $\Delta$ descends to a
holomorphic submersion, since its differential is surjective and
annihilates the orbit directions.  The transversal identifies
$\mathscr X$ smoothly with $\mathscr T$.  For a compact
$C\subset\Delta$, its inverse image is the closed zero set of the
normalized moment map in $C\times\PP^{n-1}$, and is compact.
Thus $\pi$ is proper.  Its fibres are the classical LVM quotients,
and Ehresmann's theorem gives their smooth local triviality.
\end{proof}

Keep the weights of Example~\ref{ex:singular-square} fixed except for
the third, and set
\[
 \Lambda_3(t)=(-1-i,1+t),\qquad
 \Delta=\{t\in\C:|t|<1/20\}.
\]
As before, put $D=\{1,2,3\}$, $E=\{4,5\}$, and $F=\{6,7\}$.

\begin{theorem}[A square family crossing the rank drop]
\label{thm:holomorphic-square-rank-drop}
These weights define a proper holomorphic submersion
$\pi:\mathscr X\to\Delta$ whose fibres $X_t$ are minimally stable
polytopal LVM fourfolds with visible complex
$\partial\Delta^1*\partial\Delta^1$.  The indispensable block is
\[
 B(t)=
 \begin{pmatrix}-1+i&0\\-2-i&t\end{pmatrix},
 \qquad \det B(t)=(-1+i)t.
\]
Consequently the curvature rank is one at $t=0$ and two elsewhere.
The Hodge polynomials are
\[
 \begin{aligned}
 \operatorname{Dol}_{X_0}(u,v)
   &=(1+u)(1+v)^2(1+uv+u^2v+u^3v^2),\\
 \operatorname{Dol}_{X_t}(u,v)
   &=(1+v)^2(1+2u^2v+u^4v^2),\qquad t\ne0,
 \end{aligned}
\]
and their difference is
\begin{equation}\label{eq:square-family-Hodge-jump}
 \operatorname{Dol}_{X_0}(u,v)-\operatorname{Dol}_{X_t}(u,v)
 =(1+v)^3(u+u^3v),\qquad t\ne0.
\end{equation}
Every fibre fails the $\partial\bar\partial$ lemma.  Its Fr\"olicher
spectral sequence degenerates at $E_1$ precisely when $t\ne0$.
\end{theorem}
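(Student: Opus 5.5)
The plan has two parts: first show that the family is a proper holomorphic submersion with fixed combinatorics, then compute the Hodge numbers fibrewise from results already proved. The family part reduces to Lemma~\ref{lem:parametric-LVM-quotient}. For that lemma we need two things for $|t|<1/20$: every configuration $\Lambda(t)$ is admissible, and the collection of subsets whose convex hull contains the origin does not depend on $t$.

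To check this, rerun the convex-relation argument of Example~\ref{ex:singular-square} with $t$ as a parameter.
\begin{itemize}
\item The second-coordinate equation becomes $S-E_0-2F_0+t\alpha_3=0$, where $\alpha_3\in[0,S]$ is the coefficient on $\Lambda_3$. This equation is complex, and its imaginary part forces $\operatorname{Im}(t)\alpha_3=0$.
\item It is simpler to bound than to solve exactly. The coefficients $S,E_0,F_0$ lie within $O(|t|)$ of $3/5,1/5,1/5$.
\item Hence the normalized first coordinate $x+iy$ of the $D$-part satisfies $|x|,|y|\le 1/6+O(|t|)$. With $|t|<1/20$ the barycentric coordinates $(1+2x-y)/3$, $(1-x+2y)/3$ and $(1-x-y)/3$ stay strictly positive. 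So every convex relation still uses all of $D$ and at least one label from each of $E$ and $F$, and weak hyperbolicity persists.
\item For each pair $(e,f)$, perturb the four positive relations in the table to positive relations for $\Lambda(t)$. These relations depend affinely on the parameters, and small $t$ keeps the coefficients positive.
\end{itemize}
Positivity of these relations, together with the necessity argument above, shows that the family of supporting subsets is exactly that of $t=0$. In particular $D$, the visible square $K$ and polytopality are constant. The step I expect to be fiddliest is pinning down an explicit bound showing $1/20$ suffices. I would handle it by bounding $|S-3/5|$, $|E_0-1/5|$ and $|F_0-1/5|$ by a small multiple of $|t|$, then feeding this slack into the $1/6$ estimate, which leaves margin since the barycentric minimum there is $1/3\cdot(1-1/3)>0$.

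Next, with $a_0=1$ we have $B(t)$ with rows $\Lambda_2-\Lambda_1=(-1+i,0)$ and $\Lambda_3(t)-\Lambda_1=(-2-i,t)$. Its determinant is $(-1+i)t$, so $\rho=1$ at $t=0$ and $\rho=2$ otherwise, and this does not depend on the chart by Proposition~\ref{prop:rank-chart-invariance}. Polygonal rigidity (Theorem~\ref{thm:polygonal-rigidity}) with $l=4$ and $m=2$ then gives the two Hodge polynomials; they agree with Propositions~\ref{prop:singular-square-Hodge-table} and~\ref{prop:full-rank-square-companion}. Corollary~\ref{cor:polygonal-rank-change} with $\rho=2$ gives the difference $(1+u)^0(1+v)^3(u+u^3v)$, which is~\eqref{eq:square-family-Hodge-jump}.

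Finally, the failure of the $\partial\bar\partial$ lemma on every fibre follows from Corollary~\ref{cor:all-rank-ddbar-failure}, since $m=2>0$. For the Fr\"olicher spectral sequence, Theorem~\ref{thm:singular-Frolicher-obstruction} gives non-degeneration at $t=0$, where $\delta=1$. For $t\ne0$, degeneration at $E_1$ follows from the full-rank case of Corollary~\ref{cor:all-rank-ddbar-failure}. One can also check it directly: $\operatorname{Dol}_{X_t}(1,1)=4\cdot4=16$, and this equals the total Betti number $\sum_\ell b_\ell(S^3\times S^3\times T^2)=16$.
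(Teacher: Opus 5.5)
Your overall route is the paper's: verify on the whole disk that the supporting subsets are exactly those containing $D$ and meeting both $E$ and $F$, then invoke Lemma~\ref{lem:parametric-LVM-quotient} and read off $B(t)$. After that you apply Theorem~\ref{thm:polygonal-rigidity}, Corollary~\ref{cor:polygonal-rank-change}, Corollary~\ref{cor:all-rank-ddbar-failure} and Theorem~\ref{thm:exact-full-rank-criterion}. Your direct check $16=16$ is exactly how that criterion proves $E_1$-degeneration. The Hodge-theoretic half is fine. The convexity verification, however, contains one wrong step and one unproved step.

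\textbf{The second-coordinate equation is wrong.} You dropped the imaginary parts of the visible second coordinates $-1+i$ and $-2-i$. The correct relation is $S+t\alpha_3+(-1+i)E_0+(-2-i)F_0=0$, that is,
\[
 S-E_0-2F_0+\operatorname{Re}(t)\,\alpha_3=0,\qquad
 E_0-F_0+\operatorname{Im}(t)\,\alpha_3=0 .
\]
Your claim that the imaginary part forces $\operatorname{Im}(t)\alpha_3=0$ is false. Taken literally, it would mean label $3$ occurs in no convex relation when $\operatorname{Im}t\neq0$, contradicting the indispensability you want to prove. With the correct system and $S+E_0+F_0=1$ you get
\[
 E_0=\bigl(1+(\operatorname{Re}t-3\operatorname{Im}t)\alpha_3\bigr)/5,\qquad
 F_0=\bigl(1+(\operatorname{Re}t+2\operatorname{Im}t)\alpha_3\bigr)/5 .
\]
Hence $4/25\leq E_0,F_0\leq6/25$ and $S\geq13/25$ for $|t|<1/20$. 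Then your normalized point satisfies $|x|,|y|\leq3/13$, and all three barycentric coordinates are at least $4/39$. This is the explicit margin you were looking for.

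\textbf{Existence of positive relations on all of $\Delta$ is not shown.} Perturbing the table gives positive relations only for $t$ sufficiently small. The relations are also not affine in $t$: in the paper's explicit solution, $c_3$ has denominator $30+(4-\sigma-\tau)\operatorname{Re}t+(3\sigma-2\tau-2)\operatorname{Im}t$. Since the theorem is stated on the fixed disk $|t|<1/20$, you need one of two fixes. You can write the relations explicitly as the paper does. Alternatively, feed your necessity bound into an open--closed argument:
\begin{itemize}
\item Let $A$ be the set of $t\in\Delta$ with $0\in\operatorname{conv}\{\Lambda_1,\Lambda_2,\Lambda_3(t),\Lambda_e,\Lambda_f\}$. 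It is closed and contains $0$.
\item $A$ is open. Any such relation must use all five labels, so the five points are affinely independent; otherwise Carath\'eodory in their affine span would give a support of at most four. So the origin lies strictly inside a $4$-simplex, and both conditions persist under small changes of $t$.
\end{itemize}
Connectedness of $\Delta$ then gives $A=\Delta$.
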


\begin{proof}
We verify the convex supports throughout the stated disk.  Write
$t=a+ib$ and let $c_j$ be the coefficients of a convex relation for
the origin.  Put
\[
 S=c_1+c_2+c_3,\qquad P=c_4+c_5,\qquad Q=c_6+c_7.
\]
The second coordinate and the normalization give
\[
 S-P-2Q+ac_3=0,\qquad P-Q+bc_3=0,\qquad S+P+Q=1.
\]
Thus
\[
 P=\frac{1+(a-3b)c_3}{5},\qquad
 Q=\frac{1+(a+2b)c_3}{5}.
\]
Since $0\leq c_3\leq1$ and $|a|,|b|\leq1/20$, these imply
\[
 \frac4{25}\leq P,Q\leq\frac6{25},\qquad S\geq\frac{13}{25}.
\]
Let $x=(c_4-c_5)/2$ and $y=(c_6-c_7)/2$.  The first coordinate gives
\[
 c_1=\frac{S-2x+y}{3},\qquad
 c_2=\frac{S+x-2y}{3},\qquad
 c_3=\frac{S+x+y}{3}.
\]
Using $|x|\leq P/2$ and $|y|\leq Q/2$, we obtain
\[
 c_1,c_2\geq\frac4{75},\qquad c_3\geq\frac7{75}.
\]
Every convex relation therefore uses all three labels in $D$ and
at least one label from each of $E$ and $F$.

Conversely, each choice of one label from $E$ and one from $F$
supports a positive relation.  Denote their first coordinates by
$\sigma/2$ and $i\tau/2$, where $\sigma,\tau\in\{1,-1\}$.  Set
\[
 \begin{aligned}
 c_3&=\frac{6+\sigma+\tau}
 {30+(4-\sigma-\tau)a+(3\sigma-2\tau-2)b},\\
 P&=\frac{1+(a-3b)c_3}{5},\qquad
 Q=\frac{1+(a+2b)c_3}{5},\\
 c_1&=c_3-\frac{\sigma P}{2},\qquad
 c_2=c_3-\frac{\tau Q}{2}.
 \end{aligned}
\]
Assign $P,Q$ to the chosen visible labels and zero to the other two.
The denominator is at least $589/20$, so $0<c_3<1$.  Direct
substitution gives the normalized relation.  The preceding bounds on
$P,Q,S$ then give the same positive lower bounds on $c_1,c_2,c_3$.

It follows that a subset supports the origin exactly when it contains
$D$ and meets both $E$ and $F$.  In particular, no four labels support
the origin, exactly $D$ is indispensable, and the visible complex is
the square.  The normalized relation polytope is a quadrilateral,
with the four positive relations above as its vertices.
The common projective open set is
\[
 V=\left\{[z]\in\PP^6:
 z_1z_2z_3\ne0,\ (z_4,z_5)\ne(0,0),\
                         (z_6,z_7)\ne(0,0)\right\}.
\]
Lemma~\ref{lem:parametric-LVM-quotient} supplies the required family.
Its fibres have complex dimension $7-2-1=4$.

Taking indispensable differences gives the displayed matrix $B(t)$.
The two Hodge polynomials follow from
Theorem~\ref{thm:polygonal-rigidity} at $(m,\rho)=(2,1)$ and $(2,2)$;
Corollary~\ref{cor:polygonal-rank-change} gives
\eqref{eq:square-family-Hodge-jump}.  The
$\partial\bar\partial$ assertion is
Corollary~\ref{cor:all-rank-ddbar-failure}, and the Fr\"olicher
assertion follows from
Theorem~\ref{thm:exact-full-rank-criterion}.
\end{proof}

The smooth fibres have the common Poincar\'e polynomial
\[
 \operatorname{Poin}_{X_t}(s)=(1+s)^2(1+s^3)^2.
\]
Their total Betti number is $16$.  The total dimension of the
Dolbeault cohomology is $16$ away from the origin and $32$ at the
origin, where $\mathfrak F=4$.  Thus the sixteen additional
Dolbeault classes occur on a single fibre of a smooth holomorphic
family.  Equation~\eqref{eq:square-family-Hodge-jump} places eight
in holomorphic degree one and eight in holomorphic degree three.

\Needspace{12\baselineskip}
\part{Curvature and the Hochster formula}
\section{Curvature and Stanley--Reisner Tor}
\label{sec:Koszul-Tor-reduction}
\label{sec:Hochster-first-proof}

Assume throughout this part that $B$ is invertible.  Curvature gives
an isomorphism
\[
 c_\Lambda:W_\Lambda^{1,0}\overset{\sim}{\longrightarrow}R_\Lambda^1.
\]
By Theorem~\ref{thm:universal-Hodge-formula}, it remains to compute
\[
 \mathscr K_\Lambda
 =\left(R_\Lambda\otimes\Lambda W_\Lambda^{1,0},\bar d\right).
\]
We compare this algebra with the coordinate Koszul complex of
$\C[K]$.  Regularity of the basic linear relations gives the
comparison, and its squarefree components give the Hochster summands.

Put
\[
 S=\C[v_1,\ldots,v_N],
 \qquad
 \mathcal A_K=\C[K]=S/I_K,
\]
and let
\[
 L_\Lambda=(J_\Lambda)_1\subseteq S_1.
\]
Choose a basis
\[
 \theta_1,\ldots,\theta_d
\]
of $L_\Lambda$.  By Theorems~\ref{thm:lsop}
and~\ref{thm:basic-Artinian-Gorenstein}, this is a regular linear system
of parameters for $\mathcal A_K$, and
\[
 R_\Lambda
 =
 \mathcal A_K/(\theta_1,\ldots,\theta_d).
\]
For an algebra $M$, a finite-dimensional vector space $E$, and a linear
map $\varphi:E\to M$, write
\[
 \Kos_M(E,\varphi)
 =
 (M\otimes\Lambda E,d_\varphi),
 \qquad
 d_\varphi(e)=\varphi(e).
\]

\subsection{From curvature to Stanley--Reisner Tor}

\begin{proposition}[Basis-free curvature complex]
\label{prop:basis-free-curvature-complex}
There is a canonical dga isomorphism
\[
 \mathscr K_\Lambda
 \cong
 \Kos_{R_\Lambda}(R_\Lambda^1,\operatorname{id}).
\]
\end{proposition}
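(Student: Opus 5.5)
The plan is to transport the curvature complex along the isomorphism $c_\Lambda$ itself. Since this part assumes $B$ invertible, Corollary~\ref{cor:curvature-full-rank-criterion} shows that $c_\Lambda:W_\Lambda^{1,0}\to R_\Lambda^{1,1}=R_\Lambda^1$ is an isomorphism of vector spaces. Theorem~\ref{thm:curvature-exact-sequence} describes it canonically as visible projection followed by the quotient by $(J_\Lambda)_1$, so no choice of basis enters. I will define the algebra map
\[
 \Phi=\operatorname{id}_{R_\Lambda}\otimes\Lambda(c_\Lambda):
 R_\Lambda\otimes\Lambda W_\Lambda^{1,0}\longrightarrow
 R_\Lambda\otimes\Lambda R_\Lambda^1 .
\]
Functoriality of the exterior algebra makes $\Phi$ an isomorphism of bigraded algebras, with inverse $\operatorname{id}\otimes\Lambda(c_\Lambda^{-1})$. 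For the bigradings to match, the generators $e\in R_\Lambda^1$ of $\Lambda R_\Lambda^1$ in the Koszul complex must carry bidegree $(1,0)$, while the coefficients in $R_\Lambda^1$ keep bidegree $(1,1)$.

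The next step is to check that $\Phi$ intertwines the differentials. Both differentials are derivations of the same parity. One is $\bar d$ on $\mathscr K_\Lambda$, given by the displayed Koszul formula with $c_\Lambda$. The other is $d_{\operatorname{id}}$ on $\Kos_{R_\Lambda}(R_\Lambda^1,\operatorname{id})$. Both vanish on $R_\Lambda$ and are $R_\Lambda$-linear, so it suffices to compare them on generators $\xi\in W_\Lambda^{1,0}$:
\[
 \Phi(\bar d\xi)=c_\Lambda(\xi)=d_{\operatorname{id}}(c_\Lambda\xi)=d_{\operatorname{id}}(\Phi\xi).
\]
The Leibniz rule then extends this identity to all of $\Lambda W_\Lambda^{1,0}$; the explicit alternating-sum formula in the definition of $\mathscr K_\Lambda$ is exactly the derivation extension. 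One should also confirm $d_{\operatorname{id}}^2=0$, which holds because $R_\Lambda$ is commutative and the Koszul signs cancel in pairs.

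Canonicity is the point needing most care. It has two parts. First, $c_\Lambda$ is independent of the auxiliary invariant connection and of the indispensable chart, by the connection-independence proposition and Proposition~\ref{prop:curvature-chart-covariance}. Second, the Koszul side depends only on the graded algebra $R_\Lambda$. Hence $\Phi$ involves no choices, in contrast with the later comparison with Stanley--Reisner Tor, which uses lifts of a basis of $R_\Lambda^1$. I expect no real obstacle beyond keeping the sign and bidegree conventions consistent between the two Koszul differentials.
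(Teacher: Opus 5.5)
Your proposal is correct and follows the paper's proof: the paper transports the exterior generators along the full-rank curvature isomorphism $c_\Lambda\colon W_\Lambda^{1,0}\to R_\Lambda^1$, so that the identity $\bar d\xi=c_\Lambda(\xi)$ becomes the identity map on $R_\Lambda^1$. Your map $\operatorname{id}\otimes\Lambda(c_\Lambda)$, the check on generators, and the extension by the Leibniz rule supply the details of that one-line argument.
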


\begin{proof}
Transport the exterior generators through the curvature isomorphism
$c_\Lambda:W_\Lambda^{1,0}\to R_\Lambda^1$.  The defining identity
$\bar d(\xi)=c_\Lambda(\xi)$ becomes the identity map on $R_\Lambda^1$.
\end{proof}

Choose a basis
\[
 \bar x_1,\ldots,\bar x_m
\]
of $R_\Lambda^1$, choose linear lifts $x_1,\ldots,x_m\in S_1$, and
choose $\xi_j\in W_\Lambda^{1,0}$ with
$c_\Lambda(\xi_j)=\bar x_j$.  Since $d+m=N$, the ordered list
\[
 \theta_1,\ldots,\theta_d,x_1,\ldots,x_m
\]
is a basis of $S_1$.  A linear change of basis in $S_1$ identifies the
standard Koszul resolution on $v_1,\ldots,v_N$ with the Koszul complex on
this list.  Consequently,
\[
 \operatorname{Tor}^S_\bullet(\mathcal A_K,\C)
 \cong
 H_\bullet\Kos_{\mathcal A_K}(\theta,x).
\]

\begin{theorem}[Contraction of the l.s.o.p.]
\label{thm:Koszul-contraction}
The quotient map $\mathcal A_K\to R_\Lambda$ induces a dga
quasi-isomorphism
\[
 \Kos_{\mathcal A_K}(\theta,x)
 \overset{\simeq}{\longrightarrow}
 \Kos_{R_\Lambda}(\bar x_1,\ldots,\bar x_m).
\]
\end{theorem}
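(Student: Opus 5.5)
The plan is to split the Koszul complex on the list $\theta_1,\ldots,\theta_d,x_1,\ldots,x_m$ as an iterated tensor product and kill the $\theta$-part using regularity. Write $E_\theta=\operatorname{span}(e_{\theta_1},\ldots,e_{\theta_d})$ and $E_x=\operatorname{span}(e_{x_1},\ldots,e_{x_m})$ for the exterior generators. Then, as differential graded algebras,
\[
 \Kos_{\mathcal A_K}(\theta,x)
 \cong
 \Kos_{\mathcal A_K}(\theta)\otimes_{\mathcal A_K}\Kos_{\mathcal A_K}(x)
 =\Kos_{\mathcal A_K}(\theta)\otimes\Lambda E_x,
\]
where the differential on $E_x$ sends $e_{x_j}$ to $x_j\in\mathcal A_K$. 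The comparison map sends $\mathcal A_K\to R_\Lambda$, sends $E_\theta$ to zero, and fixes $E_x$. It is a dga morphism: it is multiplicative by construction, and it commutes with the differentials because $d e_{\theta_\alpha}=\theta_\alpha\mapsto0$ and $d e_{x_j}=x_j\mapsto\bar x_j$.

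To prove it is a quasi-isomorphism, filter both sides by the exterior degree in $E_x$, i.e. by the number of $x$-generators. This filtration is finite and bounded since $\dim E_x=m$. Take it decreasing in $x$-degree, so that the differential component $e_{x_j}\mapsto x_j$ lowers $x$-degree and the associated graded keeps only the $\theta$-differential. On the source, the associated graded piece in $x$-degree $\ell$ is $\Kos_{\mathcal A_K}(\theta)\otimes\Lambda^\ell E_x$. On the target it is $R_\Lambda\otimes\Lambda^\ell E_x$ with zero differential.

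By Theorem~\ref{thm:basic-Artinian-Gorenstein}, $\theta_1,\ldots,\theta_d$ is a regular sequence on $\mathcal A_K$. Hence $\Kos_{\mathcal A_K}(\theta)$ is a resolution of $\mathcal A_K/(\theta)=R_\Lambda$, and the augmentation $\Kos_{\mathcal A_K}(\theta)\to R_\Lambda$ is a quasi-isomorphism. Tensoring with the finite-dimensional vector space $\Lambda^\ell E_x$ preserves this, so the map on associated graded pieces is a quasi-isomorphism. A map of finitely filtered complexes inducing quasi-isomorphisms on graded pieces is a quasi-isomorphism, either by induction on the filtration length with the five lemma or by comparison of the spectral sequences.

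The only point needing care is that the filtration is compatible with both differentials. It is, because each differential either preserves $x$-degree (the $\theta$-part) or lowers it by one (the $x$-part). Internal polynomial degree is preserved throughout, which also gives the bigrading compatibility needed later. Combined with the Tor identification preceding the statement and Proposition~\ref{prop:basis-free-curvature-complex}, this yields $H(\mathscr K_\Lambda)\cong\operatorname{Tor}^S(\mathcal A_K,\C)$.
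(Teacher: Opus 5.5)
Your proof is correct and follows the paper's route: you use the same factorization $\Kos_{\mathcal A_K}(\theta,x)\cong\Kos_{\mathcal A_K}(\theta)\otimes_{\mathcal A_K}\Kos_{\mathcal A_K}(x)$, and the regularity of $\theta_1,\ldots,\theta_d$ to make the augmentation $\Kos_{\mathcal A_K}(\theta)\to R_\Lambda$ a quasi-isomorphism. The paper transfers this quasi-isomorphism by noting that $\Kos_{\mathcal A_K}(x)$ is a bounded complex of finite free modules, hence K-flat; your finite filtration by $x$-exterior degree proves that same transfer step directly and is a valid substitute.
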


\begin{proof}
There is a natural factorization
\[
 \Kos_{\mathcal A_K}(\theta,x)
 \cong
 \Kos_{\mathcal A_K}(\theta)
 \otimes_{\mathcal A_K}
 \Kos_{\mathcal A_K}(x).
\]
Because $\theta_1,\ldots,\theta_d$ is a regular sequence, the augmentation
\[
 \Kos_{\mathcal A_K}(\theta)\longrightarrow R_\Lambda
\]
is a dga quasi-isomorphism: its positive Koszul homology vanishes and its
degree-zero homology is $R_\Lambda$.  The second factor is a bounded complex
of finite free $\mathcal A_K$-modules.  It is therefore $K$-flat, so tensoring
the augmentation with it preserves the quasi-isomorphism and the product.
The target is
\[
 R_\Lambda\otimes_{\mathcal A_K}\Kos_{\mathcal A_K}(x)
 =
 \Kos_{R_\Lambda}(\bar x_1,\ldots,\bar x_m).
\]
This is the usual change-of-rings contraction
\cite[Proposition~4.5.6 and Lemma~A.3.5]{BuchstaberPanov2015}, written
here as an explicit dga map in the degree-one normalization.
\end{proof}

\begin{theorem}[Curvature--Tor comparison]
\label{thm:curvature-Tor}
At full rank there is an isomorphism of graded-commutative
algebras
\[
 H(\mathscr K_\Lambda)
 \cong
 \operatorname{Tor}^{S}(\C[K],\C).
\]
\end{theorem}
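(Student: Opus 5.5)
The plan is to chain the three comparisons already established in this section.

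First, Proposition~\ref{prop:basis-free-curvature-complex} gives a dga isomorphism $\mathscr K_\Lambda\cong\Kos_{R_\Lambda}(R_\Lambda^1,\operatorname{id})$, using full rank so that $c_\Lambda$ is an isomorphism (Corollary~\ref{cor:curvature-full-rank-criterion}). Choosing the basis $\bar x_1,\ldots,\bar x_m$ of $R_\Lambda^1$ identifies the right-hand side with $\Kos_{R_\Lambda}(\bar x_1,\ldots,\bar x_m)$. This is a dga isomorphism because the exterior algebra on $R_\Lambda^1$ is the exterior algebra on the chosen basis, and the differentials agree on generators.

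Second, Theorem~\ref{thm:Koszul-contraction} provides a dga quasi-isomorphism $\Kos_{\mathcal A_K}(\theta,x)\to\Kos_{R_\Lambda}(\bar x)$. Since it is multiplicative, it induces an isomorphism of graded-commutative algebras on homology.

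Third, identify $H\Kos_{\mathcal A_K}(\theta,x)$ with $\operatorname{Tor}^S(\mathcal A_K,\C)$ as algebras. Because $\theta_1,\ldots,\theta_d,x_1,\ldots,x_m$ is a basis of $S_1$, the Koszul complex $\Kos_S(\theta,x)$ is the Koszul resolution of $\C$ over $S$. Indeed, the linear change of variables $GL(S_1)$ induces an isomorphism of dgas $\Kos_S(v)\cong\Kos_S(\theta,x)$, given by $\Lambda$ of the change-of-basis map on generators. Therefore
\[
\operatorname{Tor}^S(\mathcal A_K,\C)=H(\mathcal A_K\otimes_S\Kos_S(\theta,x))=H\Kos_{\mathcal A_K}(\theta,x).
\]
The algebra structure on Tor is the one computed by any dg-algebra resolution that is free over $S$ and commutes with the augmentation. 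The Koszul resolution is such a resolution, so the product on Tor coincides with the homology product of $\Kos_{\mathcal A_K}$. This is the standard fact used for moment-angle cohomology in \cite{BuchstaberPanov2015}.

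Composing the isomorphisms gives $H(\mathscr K_\Lambda)\cong\operatorname{Tor}^S(\C[K],\C)$. The only point requiring care is the last one. There I must check that the Tor product is independent of the resolution, that is, that the Koszul dga structure realizes the canonical product. This follows because the two choices of Koszul complex are related by a dga isomorphism over $S$ lifting the identity on $\C$, and multiplicative comparison maps between semifree dga resolutions induce the same product.

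Gradings should also be tracked so that the statement can later be read bigraded: exterior degree corresponds to homological degree and polynomial degree to internal degree. This bookkeeping is not needed for the ungraded algebra isomorphism asserted here.
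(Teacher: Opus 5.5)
Your proposal is correct and is essentially the paper's proof: identify $\operatorname{Tor}^S(\C[K],\C)$ with $H\Kos_{\mathcal A_K}(\theta,x)$ through the linear change of coordinates in $S_1$, apply the dga quasi-isomorphism of Theorem~\ref{thm:Koszul-contraction}, and transport through Proposition~\ref{prop:basis-free-curvature-complex}; multiplicativity holds because every map is a dga map. The only slip is calling the statement an ungraded algebra isomorphism, since it asserts an isomorphism of graded-commutative algebras, but every map in your chain preserves exterior (homological) degree, so the graded version holds at no extra cost.
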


\begin{proof}
The standard Koszul resolution computes the left side of
\[
 \operatorname{Tor}^{S}(\mathcal A_K,\C)
 \cong
 H\Kos_{\mathcal A_K}(\theta,x).
\]
Apply Theorem~\ref{thm:Koszul-contraction}, and then identify its target
with $\mathscr K_\Lambda$ by
Proposition~\ref{prop:basis-free-curvature-complex}.  All maps are dga
maps, so the product is preserved.
\end{proof}

\begin{remark}[Dependence on choices]
\label{rem:first-bridge-choice-control}
The dga identification in
Proposition~\ref{prop:basis-free-curvature-complex} is canonical.  The
comparison with the standard coordinate Koszul complex requires a basis of
$L_\Lambda$ and linear lifts of a basis of $R_\Lambda^1$.
The resulting isomorphism with $\operatorname{Tor}^S(\C[K],\C)$ preserves
internal degree and multiplication. Tor has its canonical support
multigrading. Transporting it to curvature homology depends on the chosen
comparison; the internal grading, algebra isomorphism type, and bigraded
dimensions are independent of that choice.
\end{remark}

\begin{corollary}
At full rank, the graded-algebra isomorphism class of the
curvature-homology algebra depends only on $K$.  It is independent of the
affine action matrices, the marked-fan realization, the chosen l.s.o.p., and
all auxiliary bases.
\end{corollary}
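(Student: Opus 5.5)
The corollary follows directly from Theorem~\ref{thm:curvature-Tor}, once we check that no data of $\Lambda$ survives on the right-hand side. At full rank that theorem gives an isomorphism of graded-commutative algebras
\[
 H(\mathscr K_\Lambda)\cong\operatorname{Tor}^{S}(\C[K],\C),
\]
and it preserves internal degree. Here $S=\C[v_1,\ldots,v_N]$ and $\C[K]=S/I_K$. The Stanley--Reisner ideal $I_K$ is generated by the squarefree monomials of the nonfaces of $K$, so it is determined by $K$ with its vertex labelling alone. Hence the right-hand side, with its Tor product and its homological and internal gradings, is an invariant of the labelled complex $K$.

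The plan is to list the auxiliary data entering the comparison and check that each one only changes the isomorphism, never the target. First, the matrices $A,B$ and the marked fan $\Sigma_\Lambda$ enter only through the l.s.o.p.\ $\theta_1,\ldots,\theta_d$ spanning $L_\Lambda=(J_\Lambda)_1$. Second, the comparison needs a basis $\bar x_j$ of $R_\Lambda^1$, linear lifts $x_j$, and preimages $\xi_j$ under $c_\Lambda$. By Remark~\ref{rem:first-bridge-choice-control}, each such choice gives some algebra isomorphism onto the same $\operatorname{Tor}^S(\C[K],\C)$. So the isomorphism class of $H(\mathscr K_\Lambda)$ is constant across choices. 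The product used on $H(\mathscr K_\Lambda)$ is intrinsic, because Proposition~\ref{prop:basis-free-curvature-complex} is canonical. To compare two realizations $\Lambda,\Lambda'$ with the same $K$, compose the isomorphism for $\Lambda$ with the inverse of the isomorphism for $\Lambda'$.

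The one point needing care is the algebra structure on Tor. I will use the multiplication induced by the Koszul complex $\Kos_{\mathcal A_K}(v_1,\ldots,v_N)$. This is a dga, and its homology algebra is independent of the chosen basis of $S_1$, since a linear change of basis induces a dga isomorphism of Koszul complexes. That is exactly the identification used before Theorem~\ref{thm:Koszul-contraction}. With this, every map in the chain is a dga map or a dga isomorphism, and the corollary follows.

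If the statement is read up to relabelling of vertices, I will also note that combinatorially isomorphic complexes give isomorphic face rings, hence isomorphic Tor algebras.
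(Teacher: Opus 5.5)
Your proposal is correct and follows the paper's own reasoning. The corollary is read off from Theorem~\ref{thm:curvature-Tor}: $\operatorname{Tor}^S(\C[K],\C)$, with its Koszul product and internal grading, depends only on $K$, and the choices listed in Remark~\ref{rem:first-bridge-choice-control} change only the comparison map, not its target. Your remarks that a linear change of basis in $S_1$ gives a dga isomorphism of Koszul complexes, and that relabelling vertices is harmless, are valid supplements to the same argument.
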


Give each polynomial variable internal degree $1$, and give every Koszul
generator homological degree $1$ and internal degree $1$.  An element of
\[
 R_\Lambda^a\otimes\Lambda^\ell W_\Lambda^{1,0}
\]
has curvature bidegree
\[
 (p,a)=(a+\ell,a).
\]
The corresponding Tor class therefore has
\[
 i=\ell=p-a,
 \qquad
 j=a+\ell=p.
\]

\begin{theorem}[Dolbeault--Tor grading]
\label{thm:Dolbeault-Tor-grading}
At full rank,
\[
 H^{p,a}(\mathscr K_\Lambda)
 \cong
 \operatorname{Tor}^{S}_{p-a}(\C[K],\C)_p.
\]
\end{theorem}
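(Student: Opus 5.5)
The plan is to obtain the bigraded statement by tracking gradings through the chain of dga maps already used in Theorem~\ref{thm:curvature-Tor}, and checking that each map is homogeneous for a pair of gradings (homological Koszul degree, internal degree). On $\Kos_{\mathcal A_K}(\theta,x)$ the polynomial variables have internal degree $1$. Each exterior generator, whether attached to a $\theta_\alpha$ or to an $x_j$, has homological degree $1$ and internal degree $1$. The Koszul differential therefore lowers homological degree by one and preserves internal degree, because each generator maps to a linear form. The linear change of basis in $S_1$ identifying the standard Koszul resolution on $v_1,\ldots,v_N$ with the Koszul complex on $(\theta,x)$ is homogeneous for both gradings. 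Hence $H_i\Kos_{\mathcal A_K}(\theta,x)_j\cong\operatorname{Tor}^S_i(\C[K],\C)_j$ with the standard internal grading.

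Next, I will check that the contraction of Theorem~\ref{thm:Koszul-contraction} is bigraded. The augmentation $\Kos_{\mathcal A_K}(\theta)\to R_\Lambda$ kills the $\theta$-generators and reduces coefficients modulo the ideal, which is homogeneous. Tensoring with $\Kos_{\mathcal A_K}(x)$ keeps the bigrading, so the resulting quasi-isomorphism is one of bigraded complexes. It therefore induces isomorphisms on each bigraded piece of homology. On the target $\Kos_{R_\Lambda}(\bar x)$, an element of $R_\Lambda^a\otimes\Lambda^\ell$ has homological degree $\ell$ and internal degree $a+\ell$.

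Finally, I will transport these gradings through Proposition~\ref{prop:basis-free-curvature-complex}. The isomorphism $c_\Lambda$ sends $W^{1,0}_\Lambda$ to $R^1_\Lambda$, so it matches $R_\Lambda^a\otimes\Lambda^\ell W^{1,0}_\Lambda$, of curvature bidegree $(p,a)=(a+\ell,a)$, with the piece of Koszul bidegree $(i,j)=(\ell,a+\ell)$. Substituting gives $i=p-a$ and $j=p$, which is the stated identification.

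The main obstacle is the bookkeeping in the contraction step: one must confirm that the quasi-isomorphism of Theorem~\ref{thm:Koszul-contraction} is compatible with the internal grading, and not merely with total degree. I would handle this by noting that regularity of $\theta$ can be checked one internal degree at a time, since every map involved is graded, so the positive Koszul homology of $\Kos_{\mathcal A_K}(\theta)$ vanishes in each internal degree separately.
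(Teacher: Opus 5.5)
Your proposal is correct and follows the paper's argument. The paper likewise notes that the linear change of coordinates in $S_1$ and the regular-sequence contraction of Theorem~\ref{thm:Koszul-contraction} preserve internal degree, and then reads off $i=\ell=p-a$, $j=a+\ell=p$ from the curvature bidegree $(a+\ell,a)$. The obstacle you flag is in fact automatic: every complex involved splits as a direct sum of subcomplexes by internal degree, so a graded quasi-isomorphism is already a quasi-isomorphism in each internal degree separately.
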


\begin{proof}
The linear change of polynomial coordinates and the contraction of the
regular sequence preserve internal degree.  The displayed translation is
therefore exactly $i=p-a$ and $j=p$.
\end{proof}

\subsection{The squarefree Hochster calculation}

Give $v_j$ and the corresponding standard Koszul generator $e_j$ the
same $\Z^N$-degree $\mathbf e_j$.  Thus
\[
 \Kos_S(\mathcal A_K)
 =
 \mathcal A_K\otimes\Lambda(e_1,\ldots,e_N),
 \qquad
 d(e_j)=v_j,
\]
decomposes as the algebraic direct sum of its multidegree subcomplexes.

\begin{lemma}[Nonsquarefree contraction]
\label{lem:nonsquarefree-contractible}
If $\alpha_j\geq2$ for some coordinate of
$\alpha\in\NN^N$, then the multidegree-$\alpha$ component of
$\Kos_S(\mathcal A_K)$ is contractible.
\end{lemma}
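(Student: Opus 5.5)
The plan is to build an explicit contracting homotopy on the multidegree-$\alpha$ component, using the coordinate $j$ with $\alpha_j\geq2$. Since $v_j$ appears in a monomial of $\mathcal A_K$ with exponent at least one, the relevant operator is "trade one factor of $v_j$ for the generator $e_j$". The key structural fact is that $\mathcal A_K$ is the face ring of a simplicial complex. Consequently, for a monomial $v^\beta$ with $\beta_j\geq1$, the monomial $v^{\beta}$ is nonzero in $\mathcal A_K$ if and only if $v^{\beta+\mathbf e_j}$ is nonzero, and if and only if $v^{\beta-\mathbf e_j}$ is nonzero provided $\beta_j\geq2$. All three have the same support.

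First describe the component in multidegree $\alpha$. It is spanned by the elements $v^\beta\otimes e_T$, where $T\subseteq[N]$, the exponent vector is $\beta=\alpha-\mathbf 1_T\in\NN^N$, and $\operatorname{supp}\beta\in K$. Split this basis according to whether $j\in T$. If $j\in T$, then $\beta_j=\alpha_j-1\geq1$. If $j\notin T$, then $\beta_j=\alpha_j\geq2$. Write the complex as $\mathcal A_K\otimes\Lambda(e_j)\otimes\Lambda(e_i:i\neq j)$. Up to sign, the differential is $d=d_j+d'$, where $d_j(e_j)=v_j$ and $d'$ acts on the other generators.

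Next define the homotopy. Set $h(v^\beta\otimes e_T)=\pm v^{\beta-\mathbf e_j}\otimes e_j\wedge e_T$ when $j\notin T$, and set $h=0$ when $j\in T$. The sign is chosen so that $e_j$ sits in front. This map is well defined on the quotient: when $j\notin T$ we have $\beta_j\geq2$, so $\beta-\mathbf e_j$ has the same support as $\beta$, and $h$ sends nonzero basis elements to nonzero basis elements. We then check $dh+hd=\mathrm{id}$ by direct computation. The operator $d_jh+hd_j$ equals the identity on both types of basis element: for $j\notin T$ we undo $v_j^{-1}e_j$, and for $j\in T$ we use $hd_j$ with $v^{\beta+\mathbf e_j}$, which is again well defined since the support is unchanged. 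The cross terms $d'h+hd'$ cancel because $d'$ multiplies by $v_i$ with $i\neq j$ and so commutes up to sign with both the division by $v_j$ and the insertion of $e_j$.

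The main obstacle is this cross-term check. The danger is that $d'$ could push a nonzero monomial into $I_K$ or out of it in a way that breaks commutation with $h$. It does not, because membership in $I_K$ depends only on the support, and the support is unchanged by altering the $v_j$ exponent within $\{\geq1\}$. The $j$-exponent never drops to zero here, precisely because $\alpha_j\geq2$. Equivalently, and more cleanly, one can say that in this multidegree the complex is the tensor product of $d'$ with the two-term complex $\C\cdot v_j^{\alpha_j}\to\C\cdot v_j^{\alpha_j-1}e_j$, an isomorphism, over the common support-indexed span. A Künneth argument then gives contractibility. I would present the argument in this tensor form.
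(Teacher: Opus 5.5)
Your proposal is correct and is essentially the paper's proof. The paper uses the same homotopy, $h_j(v^\beta e_J)=v^{\beta-\mathbf e_j}e_j\wedge e_J$ for $j\notin J$ and $h_j=0$ for $j\in J$; it justifies well-definedness as you do (since $\beta_j=\alpha_j\geq2$, dividing by $v_j$ leaves the support unchanged) and checks $dh_j+h_jd=\mathrm{id}$ by the Koszul sign cancellation of cross terms. Your tensor-product form is a repackaging of that sign check, with the remaining factor spanned by the $v^{\beta'}e_{T'}$ with $\{j\}\cup\operatorname{supp}\beta'\in K$; note that the two-term factor runs $\C\,v_j^{\alpha_j-1}e_j\to\C\,v_j^{\alpha_j}$, since $d(e_j)=v_j$, not the reverse.
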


\begin{proof}
A basis element in multidegree $\alpha$ has the form
$v^\beta e_J$ with
\[
 \beta+\mathbf1_J=\alpha.
\]
Write every $e_J$ in increasing order, and fix $j$ with
$\alpha_j\geq2$.  Define
\[
 h_j(v^\beta e_J)=0
 \quad(j\in J),
\]
and, for $j\notin J$,
\[
 h_j(v^\beta e_J)
 =
 (-1)^{\#\{i\in J:i<j\}}
 v^{\beta-\mathbf e_j}e_{J\cup\{j\}},
\]
where the last exterior monomial is again increasingly ordered.
When $j\notin J$, one has $\beta_j=\alpha_j\geq2$, so division by $v_j$ preserves the support of the monomial.  Thus
$h_j$ is well defined in the Stanley--Reisner quotient.  The usual
Koszul sign calculation gives
\[
 dh_j+h_jd=\operatorname{id}.
\]
Indeed, the term in which $d$ removes the newly inserted $e_j$ is the
identity.  For every $k\in J$, the term that first removes $e_k$ and then
inserts $e_j$ has the opposite sign from the term that first inserts $e_j$
and then removes $e_k$; hence all cross terms cancel.  The displayed sign is the coefficient of the ordered monomial
$e_{J\cup\{j\}}$.  Equivalently, one may write the same homotopy as
$v^{\beta-\mathbf e_j}e_j\wedge e_J$ with coefficient $+1$.
\end{proof}

It follows that Tor is supported in squarefree multidegrees.  For
$I\subseteq[N]$, let $\mathscr C_I$ be the
multidegree-$\mathbf1_I$ component.  It has basis
\[
 v_\sigma e_{I\setminus\sigma},
 \qquad
 \sigma\in K_I.
\]
For the ordered set $I$, define
\[
 \epsilon_I(\sigma)
 =
 \sum_{j\in\sigma}\#\{i\in I:i<j\}.
\]
Let $\sigma^\vee$ denote the reduced cochain dual to the oriented simplex
$\sigma$, including the empty simplex in cochain degree $-1$, and set
\[
 \Phi_I(v_\sigma e_{I\setminus\sigma})
 =
 (-1)^{\epsilon_I(\sigma)}\sigma^\vee.
\]

\begin{theorem}[Squarefree sector theorem]
\label{thm:squarefree-sector}
The map $\Phi_I$ is an isomorphism of complexes after the degree
translation
\[
 t=|I|-i-1.
\]
Consequently,
\[
 H_i(\mathscr C_I)
 \cong
 \widetilde H^{|I|-i-1}(K_I;\C).
\]
\end{theorem}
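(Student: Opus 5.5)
The plan is to verify directly that $\Phi_I$ is a bijection on bases and intertwines the differentials up to the stated degree translation. Then I will read off the homology.

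First, the bijection and the degrees. A basis element $v^\beta e_J$ of multidegree $\mathbf 1_I$ has $\beta+\mathbf 1_J=\mathbf 1_I$ with $\beta,\mathbf 1_J$ disjoint $0/1$ vectors. So $\beta=\mathbf 1_\sigma$ and $J=I\setminus\sigma$, and $v_\sigma\neq0$ in $\C[K]$ exactly when $\sigma\in K$, i.e.\ $\sigma\in K_I$. This gives the stated basis and shows $\Phi_I$ is a bijection onto the basis $\{\sigma^\vee\}$ of $\widetilde C^\bullet(K_I;\C)$, with $\varnothing\in K_I$ giving degree $-1$. Homological degree is $i=|I\setminus\sigma|$, and $\sigma^\vee$ has cochain degree $|\sigma|-1=|I|-i-1=t$. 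Under this translation the Koszul differential, which lowers $i$ by one, raises $t$ by one, matching the coboundary.

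Second, the compatibility of differentials. The Koszul differential sends $v_\sigma e_{I\setminus\sigma}$ to
\[
 \sum_{j\in I\setminus\sigma}\pm\, v_{\sigma\cup j}\,e_{I\setminus(\sigma\cup j)},
\]
with sign $(-1)^{\#\{k\in I\setminus\sigma:k<j\}}$, and the terms with $\sigma\cup j\notin K$ vanish in $\C[K]$. The reduced coboundary sends $\sigma^\vee$ to
\[
 \sum_{j}(-1)^{\#\{k\in\sigma:k<j\}}(\sigma\cup j)^\vee ,
\]
summed over $j$ with $\sigma\cup j\in K_I$ (up to the global orientation convention). The two supports agree. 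For the signs, note
\[
 \#\{k\in I\setminus\sigma:k<j\}+\#\{k\in\sigma:k<j\}=\#\{i\in I:i<j\}.
\]
Also $\epsilon_I(\sigma\cup j)-\epsilon_I(\sigma)=\#\{i\in I:i<j\}$. Hence the twisted map satisfies
\[
 \Phi_I d=\delta\,\Phi_I ,
\]
with each term carrying the factor $(-1)^{\#\{i\in I:i<j\}}$ on both sides.

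Third, the homology. Since $\Phi_I$ is a chain isomorphism, $H_i(\mathscr C_I)\cong\widetilde H^{|I|-i-1}(K_I;\C)$ follows at once.

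The main obstacle is the sign bookkeeping in the second step: fixing the Koszul sign convention for removing $e_j$ from an increasingly ordered monomial, and the coboundary orientation convention, so that the parities cancel exactly. If a residual global sign $(-1)^{|\sigma|}$ appears, I will absorb it by adjusting $\epsilon_I$. Such a change is harmless, since any diagonal sign twist gives an isomorphism of complexes.
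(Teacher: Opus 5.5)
Your proposal is correct and is essentially the paper's proof: the same basis bijection $v_\sigma e_{I\setminus\sigma}\leftrightarrow\sigma^\vee$, the same degree count $t=|\sigma|-1=|I|-i-1$, and the same parity check, namely that the Koszul sign $(-1)^{\#\{k\in I\setminus\sigma:k<j\}}$ times the twist change $(-1)^{\#\{i\in I:i<j\}}$ agrees mod $2$ with the coboundary sign $(-1)^{\#\{k\in\sigma:k<j\}}$. Your closing contingency is not needed, since the parities cancel exactly with the given $\epsilon_I$; as phrased it also overstates, because an arbitrary diagonal sign twist is not automatically a chain map.
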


\begin{proof}
Both complexes have one basis vector for each face $\sigma\in K_I$.  The
Koszul differential replaces
$v_\sigma e_{I\setminus\sigma}$ by the signed sum obtained by moving one
vertex $j\in I\setminus\sigma$ from the exterior factor to the monomial.
The reduced coboundary adds the same vertex to $\sigma$.  If
$j\notin\sigma$, put
\[
 r_I(j)=\#\{i\in I:i<j\},\qquad
 r_\sigma(j)=\#\{i\in\sigma:i<j\}.
\]
The Koszul sign for removing $e_j$ from $e_{I\setminus\sigma}$ is
$(-1)^{r_I(j)-r_\sigma(j)}$, while the reduced-coboundary incidence sign
for adjoining $j$ to $\sigma$ is $(-1)^{r_\sigma(j)}$.  Moreover,
\[
 \epsilon_I(\sigma\cup\{j\})-\epsilon_I(\sigma)
 =
 r_I(j).
\]
Consequently,
\[
 r_I(j)-r_\sigma(j)+\epsilon_I(\sigma\cup\{j\})
 \equiv
 r_\sigma(j)+\epsilon_I(\sigma)
 \pmod2,
\]
so the twist $(-1)^{\epsilon_I(\sigma)}$ intertwines the differentials.
Finally,
\[
 i=|I|-|\sigma|,
 \qquad
 t=|\sigma|-1=|I|-i-1.
\]
\end{proof}

For $I=\varnothing$, we use the augmented convention
\[
 K_\varnothing=\{\varnothing\},
 \qquad
 \widetilde H^{-1}(K_\varnothing;\C)=\C.
\]
Here the multidegree-zero Koszul sector is the single copy of $\C$ in
homological degree zero, so the augmented convention agrees with the chain map.

\begin{theorem}[Hochster formula \cite{Hochster1977}]
\label{thm:Hochster}
For every $I\subseteq[N]$,
\[
 \operatorname{Tor}^S_i(\C[K],\C)_{\mathbf1_I}
 \cong
 \widetilde H^{|I|-i-1}(K_I;\C).
\]
All nonsquarefree multidegrees vanish.  Therefore
\[
 \operatorname{Tor}^S_i(\C[K],\C)_j
 \cong
 \bigoplus_{|I|=j}
 \widetilde H^{j-i-1}(K_I;\C).
\]
\end{theorem}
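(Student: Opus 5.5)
The plan is to compute $\operatorname{Tor}^S(\C[K],\C)$ with the standard Koszul complex $\Kos_S(\mathcal A_K)=\mathcal A_K\otimes\Lambda(e_1,\ldots,e_N)$, $d(e_j)=v_j$. The Koszul complex $\Lambda(e_1,\ldots,e_N)\otimes S$ is a free resolution of $\C$ over $S$. Tensoring it with $\mathcal A_K$ gives $\operatorname{Tor}^S_i(\C[K],\C)\cong H_i(\Kos_S(\mathcal A_K))$. This isomorphism respects the $\Z^N$-multigrading, because $I_K$ is a monomial ideal. Hence $\mathcal A_K$ is $\NN^N$-graded, and giving $e_j$ multidegree $\mathbf e_j$ makes $d$ multihomogeneous of degree zero.

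The complex therefore splits as a direct sum over $\alpha\in\NN^N$ of finite-dimensional subcomplexes. Taking homology commutes with this direct sum, so it suffices to treat each multidegree separately.

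\textbf{Nonsquarefree multidegrees.} If some $\alpha_j\geq2$, the multidegree-$\alpha$ component is contractible by Lemma~\ref{lem:nonsquarefree-contractible}, so it contributes nothing to Tor.

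\textbf{Squarefree multidegrees.} For $\alpha=\mathbf1_I$, the component is $\mathscr C_I$, with basis $v_\sigma e_{I\setminus\sigma}$ for $\sigma\in K_I$. A monomial $v_\sigma$ with $\sigma\subseteq I$ survives in $\C[K]$ exactly when $\sigma\in K$, and $\sigma\subseteq I$ then means $\sigma\in K_I$. Theorem~\ref{thm:squarefree-sector} identifies this complex, up to the signs $(-1)^{\epsilon_I(\sigma)}$, with the augmented reduced cochain complex of $K_I$. This gives
\[
 \operatorname{Tor}^S_i(\C[K],\C)_{\mathbf1_I}\cong\widetilde H^{|I|-i-1}(K_I;\C).
\]
For $I=\varnothing$ the component is $\C$ in homological degree $0$, which matches $\widetilde H^{-1}(K_\varnothing;\C)=\C$ under the augmented convention.

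\textbf{Assembling the grading.} The internal degree $j$ is the total degree $|\alpha|$. Among squarefree multidegrees, $|\mathbf1_I|=j$ exactly when $|I|=j$, and every nonsquarefree multidegree vanishes. Summing the component isomorphisms over $|I|=j$ gives the displayed decomposition with cohomological degree $j-i-1$.

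The only point needing care is the identification of the nonzero basis elements of $\mathscr C_I$ with the faces of $K_I$, together with the sign bookkeeping. Both are already handled by the cited lemma and theorem, so no step here should present a real obstacle.
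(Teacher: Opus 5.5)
Your proposal is correct and follows the paper's own proof: Theorem~\ref{thm:squarefree-sector} handles the squarefree multidegrees, Lemma~\ref{lem:nonsquarefree-contractible} handles the nonsquarefree ones, and summing the squarefree components over $|I|=j$ gives the internal-degree formula. Your added remarks on the multigraded splitting of the Koszul complex and on the $I=\varnothing$ convention are consistent with the paper's setup.
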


\begin{proof}
The first assertion is Theorem~\ref{thm:squarefree-sector}; the
nonsquarefree assertion is
Lemma~\ref{lem:nonsquarefree-contractible}.  Summing the squarefree
multidegrees with $|I|=j$ gives the internal-degree formula.
This agrees with
\cite[Theorem~3.2.9]{BuchstaberPanov2015} after halving its internal
multigrading, $2\mathbf1_I\mapsto\mathbf1_I$, and hence $2j\mapsto j$.
\end{proof}

\subsection{The Dolbeault bidegrees}

Substituting $i=p-a$ and $j=p$ into Hochster's formula gives the complete
curvature calculation.

\begin{theorem}[Curvature--Hochster decomposition]
\label{thm:curvature-Hochster}
At full rank,
\[
 H^{p,a}(\mathscr K_\Lambda)
 \cong
 \bigoplus_{|I|=p}
 \widetilde H^{a-1}(K_I;\C).
\]
\end{theorem}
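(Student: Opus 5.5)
The plan is to combine the Dolbeault--Tor grading of Theorem~\ref{thm:Dolbeault-Tor-grading} with Hochster's formula, Theorem~\ref{thm:Hochster}, and then check that the degree bookkeeping lines up. No new construction is needed: full rank enters only through Theorem~\ref{thm:Dolbeault-Tor-grading}, which itself rests on the curvature isomorphism $c_\Lambda:W_\Lambda^{1,0}\to R_\Lambda^1$ from Corollary~\ref{cor:curvature-full-rank-criterion} and the contraction of the regular l.s.o.p.\ in Theorem~\ref{thm:Koszul-contraction}.

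Concretely, fix $p$ and $a$. Theorem~\ref{thm:Dolbeault-Tor-grading} gives
\[
 H^{p,a}(\mathscr K_\Lambda)\cong\operatorname{Tor}^S_{p-a}(\C[K],\C)_p .
\]
We then apply Theorem~\ref{thm:Hochster} with $i=p-a$ and $j=p$. The reduced cohomological degree becomes $j-i-1=p-(p-a)-1=a-1$, and the sum runs over $|I|=j=p$. This is exactly the right-hand side of the statement.

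The only points needing care are the conventions at the edges, and we check each in turn.
\begin{enumerate}[label=(\roman*)]
\item For $I=\varnothing$ (so $p=0$), the multidegree-zero sector is $\C$ in homological degree zero. This gives $a=0$ and matches $\widetilde H^{-1}(K_\varnothing;\C)=\C$, as noted after Theorem~\ref{thm:squarefree-sector}.
\item Outside the range $\max(0,p-m)\le a\le\min(p,d)$, both sides vanish. On the left this holds by the convention $\mathcal H^{p,a}_\Lambda=0$. On the right, $\widetilde H^{a-1}(K_I)=0$ when $a-1>\dim K_I$ or $a<0$. Where the two descriptions of the bounds do not obviously coincide, the isomorphism itself forces the vanishing.
\end{enumerate}

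There is no real obstacle here. The one thing I would verify explicitly is that the internal-degree normalization in Theorem~\ref{thm:Dolbeault-Tor-grading} matches that of Theorem~\ref{thm:Hochster}, namely degree one per variable, with the halving $2j\mapsto j$ relative to \cite{BuchstaberPanov2015}. Given that match, the translation $i=p-a$, $j=p$ is consistent. The decomposition obtained is additive and depends on the chosen comparison, as Remark~\ref{rem:first-bridge-choice-control} explains, but the statement only asserts an isomorphism.
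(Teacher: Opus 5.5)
Your proposal is correct and is exactly the paper's proof: combine Theorem~\ref{thm:Dolbeault-Tor-grading} with Theorem~\ref{thm:Hochster} via $i=p-a$, $j=p$, so that $|I|-(p-a)-1=a-1$. Your edge-case and normalization checks are consistent but not needed, since both cited theorems already hold for all indices with degree one per variable.
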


\begin{proof}
Combine Theorem~\ref{thm:Dolbeault-Tor-grading} with
Theorem~\ref{thm:Hochster}:
\[
 |I|-(p-a)-1=p-(p-a)-1=a-1.
\]
\end{proof}

\begin{theorem}[Dolbeault--Hochster theorem: first proof]
\label{thm:Dolbeault-Hochster-first}
Let $\XL$ be minimally stable and polytopal with $d\geq1$.  If $B$ is
invertible, then for $0\leq p,q\leq N$,
\[
 H^q(\XL,\Omega_{\XL}^p)
 \cong
 \bigoplus_{\substack{I\subseteq[N],\ |I|=p\\
                       0\leq a\leq d,\ 0\leq s\leq m\\
                       a+s=q}}
 \widetilde H^{a-1}(K_I;\C)
 \otimes
 \Lambda^s\GammaLCdual.
\]
Consequently,
\[
 h^{p,q}(\XL)
 =
 \sum_{|I|=p}
 \sum_{a=0}^{d}
 \widetilde b_{a-1}(K_I)
 \binom{m}{q-a}.
\]
\end{theorem}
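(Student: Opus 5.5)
The proof assembles results already established; no new analysis is needed. The starting point is Theorem~\ref{thm:universal-Hodge-formula}, which holds at every realizable rank and splits the Dolbeault groups as
\[
 H_{\bar\partial}^{p,q}(\XL)\cong\bigoplus_{s=0}^{m}\mathcal H_\Lambda^{p,q-s}\otimes\Lambda^sW_\Lambda^{0,1}.
\]
This uses the K\"unneth decomposition of the global Dolbeault--Cartan model of Theorem~\ref{thm:global-Dolbeault-Cartan}. The closed antiholomorphic factor is $(\Lambda W_\Lambda^{0,1},0)$. Under the full-rank hypothesis $\det B\neq0$, Proposition~\ref{prop:deck-Cartan-identification} gives the canonical identification $W_\Lambda^{0,1}\cong\GammaLCdual$. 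Hence $\Lambda^sW_\Lambda^{0,1}\cong\Lambda^s\GammaLCdual$, with $0\le s\le m$ automatically.

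Next, set $a=q-s$ and replace each curvature cohomology group $\mathcal H_\Lambda^{p,a}=H^{p,a}(\mathscr K_\Lambda)$ using Theorem~\ref{thm:curvature-Hochster}:
\[
 \mathcal H_\Lambda^{p,a}\cong\bigoplus_{|I|=p}\widetilde H^{a-1}(K_I;\C).
\]
That theorem was obtained by three steps. First, Theorem~\ref{thm:curvature-Tor} passes from curvature cohomology to $\operatorname{Tor}^S(\C[K],\C)$. Second, the grading translation $i=p-a$, $j=p$ of Theorem~\ref{thm:Dolbeault-Tor-grading} fixes the degrees. Third, Hochster's formula, Theorem~\ref{thm:Hochster}, computes the Tor groups. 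Substituting into the $s$-sum gives the displayed direct sum indexed by $|I|=p$ and $a+s=q$.

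The only point needing care is the range $0\le a\le d$. I would check that $\widetilde H^{a-1}(K_I;\C)=0$ outside this range. For $a<0$ this is immediate from the augmented convention. For $a>d$ it holds because $\dim K_I\le d-1$, so reduced cochains vanish above degree $d-1$. The special case $a=0$, $I=\varnothing$ contributes $\C$ by the augmented convention. This agrees with the degree-zero Koszul sector, as remarked after Theorem~\ref{thm:squarefree-sector}. The restriction $0\le p,q\le N$ is consistent with the vanishing of $\mathcal B_\Lambda^{p,q}$ outside that range.

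The dimension formula then follows by taking dimensions: $\dim\Lambda^{q-a}\GammaLCdual=\binom{m}{q-a}$, with binomial coefficients outside their natural range set to zero. The main obstacle has already been handled in the cited theorems, namely the regular-sequence contraction in Theorem~\ref{thm:Koszul-contraction} and the grading bookkeeping. What remains here is to confirm that the bidegree conventions $p=|I|$, $q=a+s$ match those set in the notation section.
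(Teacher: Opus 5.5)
Your proposal is correct and follows the paper's proof exactly: combine Theorem~\ref{thm:universal-Hodge-formula} with Theorem~\ref{thm:curvature-Hochster} at $a=q-s$, identify $W_\Lambda^{0,1}$ with $\GammaLCdual$ via Proposition~\ref{prop:deck-Cartan-identification}, and then take dimensions. Your extra check that $\widetilde H^{a-1}(K_I;\C)$ vanishes outside $0\le a\le d$ is a harmless bookkeeping refinement that the paper leaves implicit.
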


\begin{proof}
Theorem~\ref{thm:universal-Hodge-formula} gives
\[
 H^q(\XL,\Omega_{\XL}^p)
 \cong
 \bigoplus_{s=0}^{m}
 H^{p,q-s}(\mathscr K_\Lambda)
 \otimes
 \Lambda^sW_\Lambda^{0,1}.
\]
Apply Theorem~\ref{thm:curvature-Hochster} with $a=q-s$, and then use
Proposition~\ref{prop:deck-Cartan-identification} to identify
$W_\Lambda^{0,1}$ with $\GammaLCdual$.  Taking dimensions gives the
second formula.
\end{proof}

\begin{corollary}[Dolbeault-polynomial factorization]
\label{cor:Dolbeault-polynomial-first}
At full rank,
\[
 \operatorname{Dol}_{\XL}(u,v)
 =
 (1+v)^m\mathscr H_K(u,v).
\]
\end{corollary}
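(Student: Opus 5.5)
The statement to prove is Corollary~\ref{cor:Dolbeault-polynomial-first}: at full rank, $\operatorname{Dol}_{\XL}(u,v)=(1+v)^m\mathscr H_K(u,v)$. I will derive it by taking generating functions in the dimension formula of Theorem~\ref{thm:Dolbeault-Hochster-first}.

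That theorem gives, for $0\leq p,q\leq N$,
\[
 h^{p,q}(\XL)=\sum_{|I|=p}\sum_{a=0}^{d}\widetilde b_{a-1}(K_I)\binom{m}{q-a}.
\]
Multiply by $u^pv^q$ and sum over all $p,q$. Set $s=q-a$, which is the deck exterior degree. The binomial coefficient vanishes unless $0\leq s\leq m$, so the sum over $q$ becomes a free sum over $s$:
\[
 \sum_{p,q}h^{p,q}u^pv^q
 =\sum_{I\subseteq[N]}u^{|I|}\sum_{a=0}^{d}\widetilde b_{a-1}(K_I)v^a\sum_{s=0}^m\binom ms v^s.
\]
The inner sum over $s$ is $(1+v)^m$. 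What remains is exactly the Hochster polynomial $\mathscr H_K(u,v)$ as defined in the notation section, so the two sides agree.

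The one point to check is the range of summation. Theorem~\ref{thm:Dolbeault-Hochster-first} is stated only for $0\leq p,q\leq N$, while the right-hand side above sums over all $p$ and all $s$. On the left, $h^{p,q}=0$ outside that range because $\dim_\C\XL=N$. On the right, every term has $p=|I|\leq N$ and $q=a+s\leq d+m=N$, so no term falls outside the range either. Hence the two generating functions coincide term by term, and no other step needs care.
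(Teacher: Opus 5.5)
Your proposal is correct and follows the paper's proof exactly: sum the Hodge-number formula of Theorem~\ref{thm:Dolbeault-Hochster-first} over $p$ and $q$, then collapse the exterior-degree sum with $\sum_{s=0}^{m}\binom ms v^s=(1+v)^m$. The range check you add is correct: both sides vanish outside $0\leq p,q\leq N$ because $d+m=N$. The paper leaves this check implicit.
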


\begin{proof}
Sum the Hodge-number formula over $p$ and $q$, and use
\[
 \sum_{s=0}^{m}\binom ms v^s=(1+v)^m.
\]
\end{proof}

\begin{theorem}[Equivalent full-rank conditions]
\label{thm:exact-full-rank-criterion}
For a minimally stable polytopal LVM manifold, the following are
equivalent:
\begin{enumerate}[label=\textup{(\roman*)}]
\item $B$ is invertible, equivalently $\delta=0$;
\item $c_\Lambda$ is an isomorphism;
\item $H^0(\XL,\Omega_{\XL}^1)=0$;
\item $\operatorname{Dol}_{\XL}(u,0)=1$;
\item
\(
 h^{1,0}(\XL)+h^{0,1}(\XL)=b_1(\XL);
\)
\item the polynomial identity
\[
 \operatorname{Dol}_{\XL}(u,v)
 =(1+v)^m\mathscr H_K(u,v)
\]
holds;
\item the numerical identities
\[
 h^{p,q}(\XL)
 =
 \sum_{|I|=p}
 \sum_{a=0}^{d}
 \widetilde b_{a-1}(K_I)
 \binom{m}{q-a}
 \qquad(0\leq p,q\leq N)
\]
hold;
\item the Fr\"olicher spectral sequence degenerates at $E_1$;
\item $\mathfrak F(\Lambda)=0$.
\end{enumerate}
When $\delta>0$, the identity $h^{1,0}=\delta$ detects the discrepancy with the Dolbeault--Hochster formula~\eqref{eq:intro-main-formula}, and $h^{1,0}+h^{0,1}=m+\delta>b_1$ shows that the Fr\"olicher spectral sequence does not degenerate at $E_1$.  Moreover,
\[
 \delta
 =\operatorname{ord}_{u=-1}\operatorname{Dol}_{\XL}(u,v).
\]
\end{theorem}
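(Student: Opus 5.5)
The plan is to make condition (i) the hub: first show that (i) implies every other condition, then that each of (ii)--(ix) forces $\delta=0$. Conditions (i) and (ii) are equivalent by Corollary~\ref{cor:curvature-full-rank-criterion}. Theorem~\ref{thm:holomorphic-forms-defect} gives $h^{1,0}=\delta$ and $\operatorname{Dol}_{\XL}(u,0)=(1+u)^\delta$ (Corollary~\ref{cor:exact-defect-order}). This yields (i)$\Leftrightarrow$(iii)$\Leftrightarrow$(iv) at once, together with the final identity $\delta=\operatorname{ord}_{u=-1}\operatorname{Dol}_{\XL}$. For (v), I use $h^{0,1}=m$ and Lemma~\ref{lem:all-rank-b1} ($b_1=m$), so (v) reads $\delta=0$.

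For (i)$\Rightarrow$(vi)$\Rightarrow$(vii), I invoke Theorem~\ref{thm:Dolbeault-Hochster-first} and Corollary~\ref{cor:Dolbeault-polynomial-first}. (vi) and (vii) are equivalent because they are coefficientwise the same statement. Conversely, (vii) gives $h^{1,0}=\sum_{|I|=1}\widetilde b_{-1}(K_I)=0$, since a single vertex has vanishing reduced cohomology. That is (iii), which returns us to (i).

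The substantive step is (i)$\Rightarrow$(viii). By Theorem~\ref{thm:curvature-Frolicher-criterion}, (viii) and (ix) are equivalent to each other and to
\[
\dim H(\mathscr K_\Lambda)=\sum_\ell b_\ell(\mathcal Z_K).
\]
At full rank, Theorem~\ref{thm:curvature-Tor} identifies $H(\mathscr K_\Lambda)$ with $\operatorname{Tor}^S(\C[K],\C)$. I then need the total dimension of this Tor to equal the total Betti number of $\mathcal Z_K$. That is the Buchstaber--Panov/Hochster description of moment-angle cohomology, $H^*(\mathcal Z_K)\cong\operatorname{Tor}^{S}(\C[K],\C)$ additively \cite{BuchstaberPanov2015}, which the paper cites as the topological Hochster decomposition (Theorem~\ref{thm:curvature-moment-angle}). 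Equivalently, both sides equal $\sum_{I}\sum_a\widetilde b_{a-1}(K_I)$ by Theorem~\ref{thm:Hochster}. This comparison is the main obstacle: I need care that the regradings match in total dimension, but only total dimensions are required, so the grading conventions should not matter.

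For the converse direction, Theorem~\ref{thm:singular-Frolicher-obstruction} gives (viii)$\Rightarrow$(i): if $\delta>0$, then $h^{1,0}+h^{0,1}>b_1$, so the spectral sequence does not degenerate at $E_1$. The same argument, applied when $\delta>0$, also proves the closing remarks about the discrepancy with the Dolbeault--Hochster formula~\eqref{eq:intro-main-formula} and the failure of $E_1$-degeneration.
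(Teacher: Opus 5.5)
Your proposal is correct and follows the paper's proof essentially step for step. It uses the same hub at (i), the same inputs $h^{1,0}=\delta$, $h^{0,1}=m$, $b_1=m$ and the Dolbeault--Hochster theorem, and Theorem~\ref{thm:singular-Frolicher-obstruction} for (viii)$\Rightarrow$(i). The only difference is cosmetic and occurs in (i)$\Rightarrow$(viii): you compare total dimensions via Theorem~\ref{thm:curvature-Frolicher-criterion} and the curvature--Tor isomorphism, whereas the paper combines (vi) with $\operatorname{Poin}_{\mathcal Z_K}(t)=\mathscr H_K(t,t)$ to obtain $\operatorname{Dol}_{\XL}(t,t)=\operatorname{Poin}_{\XL}(t)$; both rest on the same moment-angle Tor/Hochster input.
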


\begin{proof}
The equivalence of (i) and (ii) is
Corollary~\ref{cor:curvature-full-rank-criterion}.  Theorem~\ref{thm:holomorphic-forms-defect}
gives
\[
 H^0(\XL,\Omega_{\XL}^1)\cong\C^\delta,
\]
and Corollary~\ref{cor:exact-defect-order} gives (iv) and the final order
formula.  Also
\[
 h^{1,0}+h^{0,1}=\delta+m,
 \qquad
 b_1(\XL)=m
\]
by Lemma~\ref{lem:all-rank-b1}, proving the equivalence with (v).

If $B$ is invertible, Theorem~\ref{thm:Dolbeault-Hochster-first} gives
(vi) and (vii).
Conversely, either (vi) or (vii) gives zero in bidegree
$(1,0)$, because every one-vertex induced subcomplex is a point; hence
(iii) follows.  This proves the equivalence of (i)--(vii).

Conditions (viii) and (ix) are equivalent by
Theorem~\ref{thm:curvature-Frolicher-criterion}.  Theorem~\ref{thm:singular-Frolicher-obstruction}
shows that (viii) forces $\delta=0$.  Conversely, at full rank the
classical moment-angle Tor theorem and Hochster formula give
\[
 \operatorname{Poin}_{\mathcal Z_K}(t)=\mathscr H_K(t,t)
\]
\cite{BaskakovBuchstaberPanov2004,Panov2008,BuchstaberPanov2015}.
Together with the polynomial identity (vi) and the homeomorphism at every admissible rank
$\XL\cong\mathcal Z_K\times T^m$, this gives
\[
 \operatorname{Dol}_{\XL}(t,t)
 =\operatorname{Poin}_{\XL}(t).
\]
Thus the total-degree dimensions of $E_1$ and $E_\infty$ agree, and the
Fr\"olicher spectral sequence degenerates at $E_1$.
\end{proof}

\begin{remark}[Relation with the classical $E_2$ bound]
For the holomorphic principal torus bundles considered by Panov--Ustinovsky, the characteristic
differential in Panov--Ustinovsky's model is the curvature map
$c_\Lambda$ used here.  Their Corollary~5.8 gives the bound $E_2=E_\infty$
\cite[Corollary~5.8]{PanovUstinovsky2012}.  For these bundles, the $E_2$ bound and the inequality
$h^{1,0}+h^{0,1}=m+\delta>b_1=m$ imply that positive defect forces a nonzero
$d_1$ in total degree one. At full rank,
Theorem~\ref{thm:exact-full-rank-criterion} gives $E_1=E_\infty$.
\end{remark}

\Needspace{12\baselineskip}
\part{Holomorphic descent}
The second proof starts from a different question: after expanding
holomorphic forms on the Cox charts into Laurent sectors, which sectors
survive deck descent?
Its inputs are the quotient geometry of Part~I, Laurent expansions,
and the group--Koszul resolution of deck invariants.

Throughout this part the minimally stable polytopal configuration is
full rank, so
\[
 \pi:U_K\longrightarrow \XL,
 \qquad
 \GammaL=B^{-1}(2\pi i\Z^m)\cong\Z^m,
 \qquad
 m=N-d.
\]
We write $z_j$ for the visible affine Cox coordinate denoted $w_j$ in
Part~I.  The covering map is
\[
 q=\pi:U_K\longrightarrow\XL,
 \qquad z\longmapsto[(z,\mathbf1)],
\]
and, because it is a holomorphic covering,
\(
 \pi^*\Omega_{\XL}^p\cong\Omega_{U_K}^p.
\)
For a maximal face $\sigma\in K$, the Cox chart $U_\sigma$ allows the
coordinates indexed by $\sigma$ to vanish and requires every other
coordinate to be nonzero.  These charts form the finite Stein cover
$\mathcal U_K$.  On an intersection, a Laurent sector fixes the exponent
$r\in\Z^N$ and the logarithmic differential
$e_I=\bigwedge_{j\in I}dz_j/z_j$, with $I$ written in increasing order;
regularity determines whether $z^re_I$ occurs on that intersection.
Fix $0\leq p\leq N$ and put
\[
 C_p^\bullet
 =
 \check C^\bullet(\mathcal U_K,\Omega_{U_K}^p).
\]
Fix a total order on the finite set $K_{\max}$ and use throughout the
normalized alternating \v{C}ech complex: its degree-$a$ component is indexed
by strictly increasing tuples
\(
 \sigma_0<\cdots<\sigma_a.
\)
Thus $C_p^\bullet$ is bounded, and its signs are the ordinary oriented-simplex
signs.  Every section space carries its compact-open Fr\'echet topology, and
every finite \v{C}ech degree carries the corresponding finite-product topology.
The fixed-exponent contractions apply to the completed descent complex through
the finite-support quasi-isomorphism of
Theorem~\ref{thm:Euler-finite-support}. Each class then involves only finitely many character differences, so its contraction uses only finitely many scalar inverses.  The comparisons are
\[
 \begin{aligned}
 R\Gamma(\XL,\Omega_{\XL}^p)
 &\simeq\operatorname{Tot}\Kos_{\GammaL}^\bullet(C_p^\bullet)
 \simeq\bigoplus_{r\in\Z^N}^{\mathrm{alg}}\mathcal T_{p,r}^\bullet\\
 &\simeq\mathcal T_{p,0}^\bullet
 \simeq\bigoplus_{|I|=p}\widetilde C^{\bullet-1}(K_I;\C)
 \otimes\Lambda^\bullet\GammaLCdual.
 \end{aligned}
\]

\section{Cox--Čech sectors and analytic completion}
\label{sec:Cox-Cech-sectors}
The Cox charts and their intersections are products of copies of
$\C$ and $\C^*$.  Taylor--Laurent expansion therefore describes their
holomorphic forms explicitly.  We compute each fixed-exponent
complex and then specify the topology in which these expansions
converge. For the Cox construction and these charts, see
\cite{Cox1995,BuchstaberPanov2015,Panov2025}.

For $\tau\in K$, define
\[
 U_\tau
 =
 \{z\in U_K:z_j\neq0\text{ for }j\notin\tau\}.
\]

\begin{proposition}[Finite Stein--Leray cover]
\label{prop:finite-Stein-Leray-cover}
\[
 U_\tau
 \cong
 \C^{|\tau|}\times(\C^*)^{N-|\tau|}.
\]
The maximal faces give a finite Stein cover
\[
 \mathcal U_K=\{U_\sigma\}_{\sigma\in K_{\max}},
\]
and
\[
 U_{\sigma_0}\cap\cdots\cap U_{\sigma_a}
 =
 U_{\sigma_0\cap\cdots\cap\sigma_a}.
\]
\end{proposition}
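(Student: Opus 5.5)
The plan is to prove the three assertions of Proposition~\ref{prop:finite-Stein-Leray-cover} directly from the definition of $U_K$, treating them in order.

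\emph{Product structure.} For $\tau\in K$, every subset of $\tau$ is a face, since $K$ is a simplicial complex. Take $z\in\C^N$ with $z_j\neq0$ for $j\notin\tau$. Then $Z(z)\subseteq\tau$, so $Z(z)\in K$ and $z\in U_K$. Hence
\[
 U_\tau=\{z\in\C^N:z_j\neq0\text{ for }j\notin\tau\}
 =\C^{\tau}\times(\C^*)^{[N]\setminus\tau},
\]
which is the stated product after reordering coordinates. A product of copies of $\C$ and $\C^*$ is Stein.

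\emph{Covering.} If $z\in U_K$, its zero set $Z(z)$ is a face of $K$. Every face is contained in a maximal face $\sigma$, and then $z\in U_\sigma$. Since $K_{\max}$ is finite, this gives a finite open Stein cover of $U_K$.

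\emph{Intersections.} A point lies in $U_{\sigma_0}\cap\cdots\cap U_{\sigma_a}$ exactly when each $z_j$ with $j\notin\sigma_i$ is nonzero, for every $i$. Equivalently, $z_j\neq0$ for all $j\notin\bigcap_i\sigma_i$. The intersection $\bigcap_i\sigma_i$ is a face, possibly empty, and the description above identifies the intersection with $U_{\sigma_0\cap\cdots\cap\sigma_a}$. The empty face gives the torus $(\C^*)^N$.

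The Leray property is then immediate, because every finite intersection is again of the form $U_\tau$ and hence Stein. Cartan's Theorem~B gives vanishing of higher cohomology of the coherent sheaves $\Omega^p_{U_K}$ on these intersections.

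There is no real obstacle. The only point requiring care is closure of $K$ under subsets, which is used both to show $U_\tau\subseteq U_K$ and to treat intersections of faces as faces.
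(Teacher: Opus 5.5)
Your proposal is correct and follows essentially the same route as the paper: closure of $K$ under subsets gives the product description of $U_\tau$, the maximal face containing $Z(z)$ gives the cover, and the intersection formula comes directly from combining the nonvanishing conditions. Your remark about Cartan's theorem~B goes slightly beyond the statement; the paper uses it in the corollary that follows.
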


\begin{proof}
Because $\tau$ is a face, every subset of $\tau$ is a face.  Hence on
$U_\tau$ the coordinates indexed by $\tau$ are arbitrary and all remaining
coordinates are nonzero, which gives the displayed product biholomorphism.
If $z\in U_K$, then $Z(z)\in K$ and is contained in a maximal face, so the
maximal charts cover.  The intersection formula follows directly from the
nonvanishing conditions.  Every nonempty finite intersection is therefore a
product of copies of $\C$ and $\C^*$, hence Stein.
\end{proof}

\begin{corollary}[Topological \v{C}ech model]
\label{cor:topological-Cech-model}
For $0\leq p\leq N$,
\[
 R\Gamma(U_K,\Omega_{U_K}^p)
 \simeq
 \check C^\bullet(\mathcal U_K,\Omega_{U_K}^p).
\]
The right-hand side is a bounded complex of Fr\'echet spaces with continuous
differential.
\end{corollary}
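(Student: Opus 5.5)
The plan is to deduce the corollary from Leray's theorem applied to the finite Stein cover of Proposition~\ref{prop:finite-Stein-Leray-cover}, and then to check the topological assertions separately.

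\emph{Step 1: acyclicity.} Every finite intersection $U_{\sigma_0}\cap\cdots\cap U_{\sigma_a}=U_{\sigma_0\cap\cdots\cap\sigma_a}$ is a product of copies of $\C$ and $\C^*$, hence Stein. The sheaf $\Omega_{U_K}^p$ is locally free of finite rank, in fact globally free on $U_K$ with basis $dz_J$, $|J|=p$. Cartan's Theorem~B therefore gives
\[
 H^{>0}(U_{\sigma_0\cap\cdots\cap\sigma_a},\Omega^p)=0 .
\]
So $\mathcal U_K$ is a Leray cover for $\Omega_{U_K}^p$.

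\emph{Step 2: the derived-category statement.} Take the \v{C}ech resolution $\Omega^p\to\check{\mathcal C}^\bullet(\mathcal U_K,\Omega^p)$ by the sheafified \v{C}ech complex. Each term is a finite product of pushforwards $j_*(\Omega^p|_{U_\tau})$ from Stein opens $U_\tau$. Higher direct images of a coherent sheaf under the inclusion of a Stein open vanish on Stein test opens, and the intersections with any Stein open of $U_K$ are again Stein. Hence these terms are $\Gamma$-acyclic. Their global sections form the normalized alternating \v{C}ech complex, so
\[
 R\Gamma(U_K,\Omega^p)\simeq\check C^\bullet(\mathcal U_K,\Omega^p).
\]
Alternatively, one can simply cite the standard Leray theorem. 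Normalization with strictly increasing tuples changes nothing up to homotopy. Boundedness holds because $K_{\max}$ is finite, so tuples have length at most $|K_{\max}|$.

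\emph{Step 3: topology.} Each $\Gamma(U_\tau,\Omega^p)\cong\mathcal O(U_\tau)^{\binom Np}$ with the compact-open topology is Fr\'echet. Finite products of Fr\'echet spaces are Fr\'echet. The \v{C}ech differential is a finite signed sum of restriction maps $\mathcal O(U_\tau)\to\mathcal O(U_{\tau'})$ for $\tau'\subseteq\tau$. Restriction is continuous for compact-open topologies, since a compact set in $U_{\tau'}$ is compact in $U_\tau$ and the seminorms compare directly.

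I expect no real obstacle. The only point needing care is Step 2: one must state Leray's theorem in its derived form, not just on cohomology. The cleanest way is the $\Gamma$-acyclic \v{C}ech resolution described there, which yields an actual quasi-isomorphism in $D(\C)$ compatible with later constructions.
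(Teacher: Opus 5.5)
Your proposal is correct and follows essentially the same route as the paper. You apply Cartan's Theorem~B on the Stein intersections $U_{\sigma_0\cap\cdots\cap\sigma_a}$, invoke the finite Leray theorem, and use continuity of holomorphic restriction in the compact-open topologies, with boundedness coming from the finiteness of $K_{\max}$; your $\Gamma$-acyclic sheafified \v{C}ech resolution simply spells out the derived form of Leray's theorem that the paper cites.
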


\begin{proof}
The sheaf $\Omega_{U_K}^p$ is locally free and therefore coherent.  Cartan's
theorem~B \cite{Cartan1953} makes it acyclic on every nonempty intersection
in Proposition~\ref{prop:finite-Stein-Leray-cover}; the finite Leray theorem
then gives the quasi-isomorphism.  Holomorphic restriction maps are continuous
for the compact-open topologies.  The finite cover and the normalized \v{C}ech convention give a bounded complex, each of whose terms is a finite product of Fr\'echet spaces.
\end{proof}

\subsection{Logarithmic Laurent expansions}

For
\[
 I=\{i_1<\cdots<i_p\},
\]
set
\[
 e_I
 =
 \bigwedge_{i\in I}\frac{dz_i}{z_i},
 \qquad
 z^re_I=z^{r-\mathbf1_I}dz_I.
\]
Here $e_I$ is a meromorphic logarithmic form on $U_K$, holomorphic on
the dense torus; the second equality is the definition of the individual
term on a chart.  All coordinates in this part are the $N$ visible Cox
coordinates.  Indispensable labels enter through the action and its deck lattice;
Laurent exponents are indexed by the visible coordinates.

\begin{lemma}[Regularity criterion]
\label{lem:Cox-regularity}
The term $z^re_I$ is holomorphic on $U_\tau$ if and only if
\[
 r_j-\mathbf1_I(j)\geq0
 \qquad(j\in\tau).
\]
\end{lemma}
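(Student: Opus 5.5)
The plan is to reduce the claim to the one-variable statement that a monomial $z_j^{k}$, possibly with $k<0$, is holomorphic on $\C$ exactly when $k\geq0$, and on $\C^*$ for every $k$. By Proposition~\ref{prop:finite-Stein-Leray-cover}, $U_\tau\cong\C^{\tau}\times(\C^*)^{N-|\tau|}$, so the coordinates $z_j$ with $j\notin\tau$ are nowhere zero on $U_\tau$, while those with $j\in\tau$ range over all of $\C$.

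Using the definition $z^re_I=z^{r-\mathbf1_I}dz_I$, I first note that $dz_I$ is a nowhere-vanishing holomorphic frame element of $\Omega^p$ on the whole chart. Holomorphy of $z^re_I$ is therefore equivalent to holomorphy of the coefficient function $z^{r-\mathbf1_I}=\prod_j z_j^{r_j-\mathbf1_I(j)}$ on $U_\tau$. The factors with $j\notin\tau$ are holomorphic and invertible on $U_\tau$, since $z_j\neq0$ there. The coefficient is thus holomorphic if and only if $\prod_{j\in\tau}z_j^{r_j-\mathbf1_I(j)}$ is holomorphic on $U_\tau$.

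For sufficiency, if every exponent with $j\in\tau$ is nonnegative, the product is a polynomial in those coordinates.

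For necessity, I would argue that if some $j\in\tau$ has $k=r_j-\mathbf1_I(j)<0$, then $z^{r-\mathbf1_I}$ is unbounded near the hyperplane $z_j=0$. To see this, fix the other coordinates at nonzero values, which is allowed because $\{j\}\subseteq\tau$ is a face and so such points lie in $U_\tau$. The restriction to the resulting punctured line is then a nonzero constant times $z_j^{k}$, which has a pole at $0$ and so does not extend holomorphically. Hence the form is not holomorphic on $U_\tau$.

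The only point needing care is that the meromorphic expression is being tested on the actual chart. I expect no real obstacle beyond observing that the line $\{z_j\ \text{varies},\ \text{other coordinates fixed and nonzero}\}$ lies in $U_\tau$ and meets $z_j=0$, which follows because singletons are faces of $K$.
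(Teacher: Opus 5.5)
Your proof is correct and follows the paper's argument. Like the paper, you reduce to the coefficient $z^{r-\mathbf1_I}$ in the standard frame $dz_I$, note that the coordinates indexed outside $\tau$ are invertible on $U_\tau$, and require nonnegative exponents for the coordinate divisors $\{z_j=0\}$, $j\in\tau$, which meet $U_\tau$. Your restriction to a line through a point with zero set $\{j\}$ simply spells out the necessity direction that the paper states in one line.
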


\begin{proof}
In the standard frame $dz_I$, the coefficient is the Laurent monomial
$z^{r-\mathbf1_I}$.  Exactly the coordinate divisors indexed by $\tau$ meet
$U_\tau$, so their exponents must be nonnegative; the inverted coordinates
have arbitrary integral exponents.
\end{proof}

Let $\mathscr P_{\mathrm{vis}}^p(U_\tau)$ be the \v{C}ech coefficient space of
normally convergent
series
\[
 \sum_{\substack{r\in\Z^N\\|I|=p}}
 a_{r,I}z^re_I
\]
satisfying the inequalities of
Lemma~\ref{lem:Cox-regularity}.  It carries the topology of uniform
convergence of the coefficient functions in the standard frames $dz_I$ on
compact subsets.  For $\tau'\subseteq\tau$, restriction to
$U_{\tau'}\subseteq U_\tau$ preserves the coefficients.  The coefficient spaces and restriction maps define
$\mathscr P_{\mathrm{vis}}^p$ on this finite diagram of chart intersections.

\begin{proposition}[Logarithmic Laurent coordinates]
\label{prop:logarithmic-Laurent-coordinates}
Rewriting $dz_I=z_Ie_I$ gives a natural topological isomorphism
\[
 \Gamma(U_\tau,\Omega_{U_K}^p)
 \cong
 \mathscr P_{\mathrm{vis}}^p(U_\tau),
\]
compatible with restrictions. Hence
\[
 \check C^\bullet(\mathcal U_K,\Omega_{U_K}^p)
 \cong
 \check C^\bullet(\mathcal U_K,\mathscr P_{\mathrm{vis}}^p).
\]
\end{proposition}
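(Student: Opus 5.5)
The plan is to reduce everything to the product structure $U_\tau\cong\C^{|\tau|}\times(\C^*)^{N-|\tau|}$ of Proposition~\ref{prop:finite-Stein-Leray-cover}, and to compare the frame $dz_I$ with the logarithmic frame $e_I$ one chart at a time.

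On $U_\tau$, a holomorphic $p$-form is uniquely written as $\sum_{|I|=p}f_I\,dz_I$ with $f_I\in\mathcal O(U_\tau)$.
\begin{enumerate}[label=(\arabic*)]
\item Expand each $f_I$ in its Laurent series on the product domain. This series is Taylor in the coordinates indexed by $\tau$ and Laurent in the others, and it converges normally on compact subsets. This is the standard Cauchy-integral expansion on products of disks and annuli, exhausted by compact polyannuli.
\item Substitute $dz_I=z_Ie_I$. A monomial $z^s\,dz_I$ becomes $z^{r}e_I$ with $r=s+\mathbf 1_I$.
\item Check the exponent condition. The Taylor condition $s_j\ge0$ for $j\in\tau$ is exactly $r_j-\mathbf 1_I(j)\ge0$, the inequality of Lemma~\ref{lem:Cox-regularity}. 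This gives a bijection between $\Gamma(U_\tau,\Omega^p)$ and $\mathscr P^p_{\mathrm{vis}}(U_\tau)$.
\end{enumerate}

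For the topology, the coefficient functions in the frame $dz_I$ are literally the $f_I$. The topology on $\mathscr P^p_{\mathrm{vis}}(U_\tau)$ was defined as uniform convergence of these coefficient functions on compacta, which is the compact-open Fr\'echet topology on sections of the trivialized bundle $\Omega^p|_{U_\tau}\cong\mathcal O^{\binom Np}$. So the map is a homeomorphism essentially by definition. The only substantive input is that the normal convergence of the series agrees with convergence of the sums. This follows from Cauchy estimates on polyannuli: the coefficients of a compactly convergent holomorphic function give a normally convergent series on compacta, and conversely.

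For compatibility with restrictions, take $\tau'\subseteq\tau$, so that $U_{\tau'}\subseteq U_\tau$ is open. The Laurent expansion on the larger domain restricts to a series satisfying the weaker inequalities. It equals the Laurent expansion on $U_{\tau'}$ by uniqueness of Laurent coefficients, since both compute the same Cauchy integrals over tori lying in $U_{\tau'}$. Hence the isomorphisms commute with restriction.

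Finally, by Proposition~\ref{prop:finite-Stein-Leray-cover}, every \v{C}ech intersection is some $U_{\sigma_0\cap\cdots\cap\sigma_a}$. Applying the chartwise isomorphism in each normalized degree and using compatibility with restrictions gives an isomorphism of \v{C}ech complexes that commutes with the differentials.

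The main obstacle is the convergence and uniqueness bookkeeping for Laurent series on the mixed product domains. I would handle it by reducing to the one-variable Laurent theorem applied iteratively on polyannuli, with the $\C$-factors treated as disks. The rest of the argument is formal.
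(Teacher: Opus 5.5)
Your proposal is correct and follows essentially the same route as the paper. Both arguments use the unique Taylor--Laurent expansion of the coefficients $f_I$ in the frame $dz_I$ on $\C^{|\tau|}\times(\C^*)^{N-|\tau|}$, the reindexing $r=s+\mathbf 1_I$ that reproduces the inequalities of Lemma~\ref{lem:Cox-regularity}, Cauchy estimates for the topological comparison, and uniqueness of coefficients on the dense torus for compatibility with restrictions and the \v{C}ech differentials.
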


\begin{proof}
Write a form uniquely as $\sum_{|I|=p}f_I(z)\,dz_I$.  On
$\C^{|\tau|}\times(\C^*)^{N-|\tau|}$, the multivariable Taylor--Laurent
theorem expands each $f_I$ uniquely in monomials $z^{r-\mathbf1_I}$,
normally on compact subsets, with precisely the inequalities in
Lemma~\ref{lem:Cox-regularity}.  Conversely, normal convergence gives a
holomorphic coefficient in every standard frame.  Cauchy coefficient
estimates and uniform convergence on smaller compacta make extraction and
reconstruction continuous inverses.  Uniqueness on the dense torus and
coefficientwise restriction prove compatibility with the \v{C}ech maps.
\end{proof}

\subsection{Fixed Laurent sectors}

For $r\in\Z^N$ and $|I|=p$, define
\[
 \Pole(r,I)
 =
 \{j:r_j<\mathbf1_I(j)\}.
\]
Then
\[
 z^re_I\text{ is regular on }U_\tau
 \iff
 \Pole(r,I)\cap\tau=\varnothing.
\]

Let $\mathscr P_{r,I}^\bullet$ be the coefficient subcomplex carried by
the single term $z^re_I$.

Let $\Delta_{\max}$ be the full simplex on the totally ordered vertex set
$K_{\max}$.  We use normalized simplicial cochains with the induced
orientation.  For $T\subseteq[N]$, define the pole subcomplex
\[
 [\sigma_0,\ldots,\sigma_a]\in\mathfrak D_T
 \iff
 T\cap(\sigma_0\cap\cdots\cap\sigma_a)\neq\varnothing.
\]

\begin{theorem}[Relative simplex model]
\label{thm:relative-sector}
There is a natural isomorphism
\[
 \mathscr P_{r,I}^\bullet
 \cong
 C^\bullet
 (\Delta_{\max},\mathfrak D_{\Pole(r,I)};\C).
\]
\end{theorem}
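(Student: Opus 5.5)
The plan is to compute the subcomplex $\mathscr P_{r,I}^\bullet$ degree by degree and then check that the \v{C}ech differential becomes the simplicial coboundary. In \v{C}ech degree $a$, the normalized alternating complex is indexed by strictly increasing tuples $\sigma_0<\cdots<\sigma_a$ in $K_{\max}$. These tuples are exactly the $a$-simplices of $\Delta_{\max}$ with the induced orientation. By Proposition~\ref{prop:finite-Stein-Leray-cover}, the corresponding open set is $U_\tau$ with $\tau=\sigma_0\cap\cdots\cap\sigma_a$. The coefficient of the single term $z^re_I$ on $U_\tau$ is a scalar. Proposition~\ref{prop:logarithmic-Laurent-coordinates} and the uniqueness of Laurent expansions say this scalar may be nonzero precisely when $z^re_I$ is regular on $U_\tau$. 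By Lemma~\ref{lem:Cox-regularity} in its reformulated form, regularity means $\Pole(r,I)\cap\tau=\varnothing$.

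Next I will identify each degree. The degree-$a$ component of $\mathscr P_{r,I}^\bullet$ is the space of functions on $a$-simplices of $\Delta_{\max}$ that vanish on simplices with $T\cap\tau\neq\varnothing$, where $T=\Pole(r,I)$. By the definition of $\mathfrak D_T$, these are exactly the simplices of $\mathfrak D_T$. So the degree-$a$ component is the space of normalized cochains vanishing on $\mathfrak D_T$, which is $C^a(\Delta_{\max},\mathfrak D_T;\C)$.

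Two checks are needed before this identification is meaningful.

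\textbf{$\mathfrak D_T$ is a subcomplex.} Passing to a face of a simplex can only enlarge the intersection $\tau$. So if $T$ meets $\tau$ for a simplex, it meets the intersection for every face.

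\textbf{The term is preserved by the differential.} The restriction maps preserve Laurent coefficients, so the \v{C}ech differential carries the term $z^re_I$ to itself.

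The \v{C}ech differential is
\[
(\delta c)_{\sigma_0\cdots\sigma_{a+1}}=\sum_k(-1)^k c_{\sigma_0\cdots\widehat{\sigma_k}\cdots\sigma_{a+1}}\big|_{U_{\tau'}}.
\]
On scalar coefficients the restriction acts as the identity, so this formula is literally the simplicial coboundary on normalized oriented cochains of $\Delta_{\max}$. Some face terms could be disallowed because their simplex lies in $\mathfrak D_T$. Those coefficients are zero in the relative cochains, so the identification is compatible with the relative coboundary. Hence the two differentials agree.

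Naturality in $(r,I)$ and under restriction follows because the whole construction is coefficientwise. The step I expect to need the most care is the bookkeeping that the single-term coefficient in each \v{C}ech degree is exactly the degree-$a$ relative cochain group, with no hidden constraints from the other simplices. This holds because a \v{C}ech cochain imposes no compatibility between different tuples.
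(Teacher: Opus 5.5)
Your proposal is correct and follows essentially the same route as the paper's proof. Like the paper, you identify the single-term coefficient line on a \v{C}ech simplex with $\C$ exactly when that simplex lies outside $\mathfrak D_{\Pole(r,I)}$, you check that $\mathfrak D_{\Pole(r,I)}$ is a subcomplex because faces have larger intersections, and you observe that coefficientwise restrictions with normalized \v{C}ech signs give the oriented simplicial coboundary.
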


\begin{proof}
The coefficient line is present on a Čech simplex precisely when
$z^re_I$ is regular on its chart intersection, equivalently when that
simplex is outside $\mathfrak D_{\Pole(r,I)}$.  The set
$\mathfrak D_{\Pole(r,I)}$ is a subcomplex: if a label belongs to the
intersection of a tuple, it belongs to the intersection of each face of
that tuple.  Restrictions preserve the monomial and the normalized
\v{C}ech signs are exactly the oriented-simplex signs.  Declaring the
coefficient to be zero on the pole subcomplex therefore gives an isomorphism of complexes.
\end{proof}

For $j\in T$, let $\mathfrak D_j$ be the simplex on the maximal faces
containing $j$.

\begin{lemma}
For nonempty $J\subseteq T$,
\[
 \bigcap_{j\in J}\mathfrak D_j
\]
is nonempty if and only if $J\in K$, and in that case it is a simplex.
\end{lemma}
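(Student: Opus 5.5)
The plan is to unwind the definitions and reduce the statement to the face-closure property of $K$. By definition $\mathfrak D_j$ is the full simplex on the vertex set
\[
 M_j=\{\sigma\in K_{\max}:j\in\sigma\}\subseteq K_{\max}.
\]
A simplex $[\sigma_0,\ldots,\sigma_a]$ of $\Delta_{\max}$ lies in $\mathfrak D_j$ exactly when every $\sigma_b$ contains $j$, so $\mathfrak D_j\subseteq\mathfrak D_T$. Intersections of full simplices on vertex sets inside $\Delta_{\max}$ are again full simplices, on the intersected vertex sets. It follows that
\[
 \bigcap_{j\in J}\mathfrak D_j
\]
is the full simplex on
\[
 M_J=\bigcap_{j\in J}M_j=\{\sigma\in K_{\max}:J\subseteq\sigma\}.
\]
This settles the second clause automatically: whenever the intersection is nonempty, it is a simplex.

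It remains to show that $M_J\neq\varnothing$ if and only if $J\in K$.

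\emph{If $M_J$ contains a maximal face $\sigma$:} then $J\subseteq\sigma$. Every subset of a face of the simplicial complex $K$ is a face, so $J\in K$.

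\emph{If $J\in K$:} $K$ is a finite simplicial complex, so the face $J$ is contained in some maximal face $\sigma\in K_{\max}$. Then $\sigma\in M_J$. Since $K=\partial Q$ is pure of dimension $d-1$, this $\sigma$ may even be taken to be a facet, but purity is not needed here.

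The only point needing care is the convention that "nonempty" refers to the vertex set, i.e.\ to containing at least one vertex of $\Delta_{\max}$. The empty simplex, if counted in the augmented convention, lies in every subcomplex and must not be used to declare the intersection nonempty. Apart from this bookkeeping there is no real obstacle.
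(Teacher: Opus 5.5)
Your proposal is correct and follows the paper's argument: the paper's one-line proof also identifies $\bigcap_{j\in J}\mathfrak D_j$ as the full simplex on the maximal faces containing $J$. Those are nonempty exactly when $J\in K$, by closure of $K$ under subsets and the existence of a maximal face containing any face.
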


\begin{proof}
Its vertices are the maximal faces containing $J$.
\end{proof}

\begin{theorem}[Hochster nerve]
\label{thm:Hochster-nerve}
For nonempty $T$,
\[
 \mathfrak D_T=\bigcup_{j\in T}\mathfrak D_j
\]
 is a simplicial good cover with nerve $K_T$. Hence
\[
 \mathfrak D_T\simeq K_T.
\]
\end{theorem}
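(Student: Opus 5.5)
The plan is to prove the set-theoretic identity first, then apply the nerve theorem for covers of a simplicial complex by subcomplexes. For the equality $\mathfrak D_T=\bigcup_{j\in T}\mathfrak D_j$, a simplex $[\sigma_0,\ldots,\sigma_a]$ of $\Delta_{\max}$ lies in $\mathfrak D_T$ exactly when some $j\in T$ belongs to $\sigma_0\cap\cdots\cap\sigma_a$. This happens exactly when every $\sigma_i$ contains $j$, that is, when the simplex lies in $\mathfrak D_j$. Each $\mathfrak D_j$ is nonempty, because $\{j\}$ is a face of $K$ and so lies in some maximal face. Each $\mathfrak D_j$ is also a full subsimplex of $\Delta_{\max}$, spanned by the maximal faces containing $j$, and is therefore contractible.

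Next we check that $\{\mathfrak D_j\}_{j\in T}$ is a good cover with nerve $K_T$. The preceding lemma says that for nonempty $J\subseteq T$ the intersection $\bigcap_{j\in J}\mathfrak D_j$ is the full simplex on the maximal faces containing $J$. This intersection is nonempty precisely when $J$ lies in some maximal face, that is, when $J\in K$. Nonempty intersections are simplices, hence contractible, so the cover is good. Its nerve consists of the subsets $J\subseteq T$ with $J\in K$, which is exactly the induced subcomplex $K_T$. The vertex set is all of $T$, since singletons are faces.

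Finally, we apply the nerve theorem for a finite cover of a simplicial complex by subcomplexes with contractible (here simplicial) nonempty intersections; see for instance Borsuk, or Björner's combinatorial version. This gives a homotopy equivalence $\mathfrak D_T\simeq K_T$, and in particular isomorphisms $\widetilde H^\bullet(\mathfrak D_T;\C)\cong\widetilde H^\bullet(K_T;\C)$.

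The only point needing care is that the nerve theorem is applied to closed subcomplexes rather than open sets. We handle this either by citing the simplicial version directly, or by passing to open regular neighbourhoods in the barycentric subdivision, which deformation retract onto the $\mathfrak D_j$ and onto their intersections. For the later cohomological use, it also suffices to note the Mayer--Vietoris spectral sequence of the closed cover: its $E_1$ page is concentrated in row zero and equals the simplicial cochains of the nerve $K_T$. This gives the cohomology isomorphism without homotopy-theoretic input. I expect no genuine obstacle; the main work is just bookkeeping the identification of vertex sets.
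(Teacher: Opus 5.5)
Your proposal is correct and follows the paper's proof essentially step for step. You derive the identity $\mathfrak D_T=\bigcup_{j\in T}\mathfrak D_j$ from the definition, use the preceding lemma to see that a nonempty intersection is a full simplex exactly when $J\in K$ (so the nerve is $K_T$), and conclude $\mathfrak D_T\simeq K_T$ by the simplicial nerve lemma; your optional Mayer--Vietoris double-complex remark is the same device the paper uses later in Proposition~\ref{prop:zero-reduced-cochain}.
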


\begin{proof}
A simplex of $\Delta_{\max}$ lies in $\mathfrak D_T$ exactly when all its
vertices contain some common $j\in T$, so
$\mathfrak D_T=\bigcup_{j\in T}\mathfrak D_j$.  Every nonempty finite
intersection $\bigcap_{j\in J}\mathfrak D_j$ is the full simplex on the
maximal faces containing $J$, by the preceding lemma.  Thus this is a
finite cover by simplicial subcomplexes with all nonempty intersections
contractible.  The simplicial nerve lemma identifies its realization with
the nerve.  A subset $J\subseteq T$ spans that nerve exactly when $J\in K$,
so the nerve is the induced complex $K_T$.
\end{proof}

\begin{theorem}[Fixed Laurent-sector theorem]
\label{thm:fixed-Laurent-sector}
For every $r\in\Z^N$ and $|I|=p$,
\[
 H^a(\mathscr P_{r,I}^\bullet)
 \cong
 \widetilde H^{a-1}(K_{\Pole(r,I)};\C).
\]
\end{theorem}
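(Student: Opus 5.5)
Write $T=\Pole(r,I)$. By Theorem~\ref{thm:relative-sector}, the fixed-sector complex $\mathscr P_{r,I}^\bullet$ is the relative cochain complex $C^\bullet(\Delta_{\max},\mathfrak D_T;\C)$. The plan is to compute its cohomology with the long exact sequence of the pair, using that $\Delta_{\max}$ is a full simplex and hence contractible, and then to identify $\mathfrak D_T$ with $K_T$ through the Hochster nerve theorem.

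\emph{Case $T\neq\varnothing$.} Then $\mathfrak D_T$ is a nonempty subcomplex: $T$ contains some $j$, and $\{j\}\in K$ lies in some maximal face, so $\mathfrak D_j\neq\varnothing$. We use the long exact sequence in reduced cohomology of the pair $(\Delta_{\max},\mathfrak D_T)$. Since $\Delta_{\max}$ is nonempty and contractible, its reduced cohomology vanishes. This gives
\[
 H^a(\Delta_{\max},\mathfrak D_T;\C)\cong\widetilde H^{a-1}(\mathfrak D_T;\C)
\]
for every $a$, including $a=0$, where both sides vanish because $\mathfrak D_T$ is nonempty. Theorem~\ref{thm:Hochster-nerve} gives $\mathfrak D_T\simeq K_T$, and reduced cohomology is a homotopy invariant of the realizations. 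Hence the right side is $\widetilde H^{a-1}(K_T;\C)$.

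\emph{Case $T=\varnothing$.} Here $\mathfrak D_\varnothing$ is empty, so the relative complex is the full normalized cochain complex of the simplex $\Delta_{\max}$. Its cohomology is $\C$ in degree $0$ and zero elsewhere. This matches the augmented convention $K_\varnothing=\{\varnothing\}$ with $\widetilde H^{-1}(K_\varnothing;\C)=\C$ at $a=0$, and zero in all other degrees.

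The main care point is bookkeeping rather than geometry. First, the normalized \v{C}ech signs must agree with the simplicial cochain complex of $\Delta_{\max}$; Theorem~\ref{thm:relative-sector} already records this, so the relative complex really is $C^\bullet(\Delta_{\max})/C^\bullet(\mathfrak D_T)$. Second, the degree shift must hold at the boundary: reduced versus unreduced cohomology is what makes $a=0$ correct when $T\neq\varnothing$. Naturality in $(r,I)$ is not required by the statement.
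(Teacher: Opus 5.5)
Your proof is correct and follows the paper's own argument. Both identify the sector with the relative cochains of $(\Delta_{\max},\mathfrak D_{\Pole(r,I)})$. For a nonempty pole set, both use the connecting isomorphism of the pair with the contractible simplex, together with the Hochster nerve theorem. For an empty pole set, both read off the augmented convention from the cochains of $\Delta_{\max}$.

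Your two explicit checks are details the paper only asserts briefly. The first is that $\mathfrak D_T\neq\varnothing$ because every visible label is a vertex of $K$. The second is that the case $a=0$ comes out correctly in reduced cohomology.
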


\begin{proof}
If $\Pole(r,I)\neq\varnothing$, combine
Theorems~\ref{thm:relative-sector} and~\ref{thm:Hochster-nerve} with the
connecting isomorphism in the long exact sequence of the pair
$(\Delta_{\max},\mathfrak D_{\Pole(r,I)})$; the ambient simplex is contractible and the pole subcomplex is nonempty.  If
$\Pole(r,I)=\varnothing$, then $\mathfrak D_\varnothing$ has no nonempty
simplices (it is $\{\varnothing\}$ in the augmented convention), and the
nonaugmented relative complex is the cochain complex of
$\Delta_{\max}$.  Our reduced-cohomology convention is
\[
 K_\varnothing=\{\varnothing\},
 \qquad
 \widetilde H^{-1}(K_\varnothing)=\C.
\]
\end{proof}

\begin{corollary}
\label{cor:zero-sector-cohomology}
For $r=0$,
\[
 \Pole(0,I)=I,
\]
and therefore
\[
 H^a(\mathscr P_{0,I}^\bullet)
 \cong
 \widetilde H^{a-1}(K_I;\C).
\]
\end{corollary}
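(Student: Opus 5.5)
This is a direct specialization of the fixed Laurent-sector theorem, Theorem~\ref{thm:fixed-Laurent-sector}, to the exponent $r=0$. The only work is to compute the pole set and to make sure the augmented convention is applied correctly when $I$ is empty.

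First I would compute $\Pole(0,I)$ from its definition, $\{j: r_j<\mathbf1_I(j)\}$. With $r=0$ the condition becomes $0<\mathbf1_I(j)$. This holds exactly when $j\in I$, so $\Pole(0,I)=I$.

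Next I would substitute this into Theorem~\ref{thm:fixed-Laurent-sector}, which gives $H^a(\mathscr P_{0,I}^\bullet)\cong\widetilde H^{a-1}(K_I;\C)$. The two cases of that theorem's proof should be checked separately.

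When $I\neq\varnothing$, I would use the pair $(\Delta_{\max},\mathfrak D_I)$ from Theorem~\ref{thm:relative-sector}. Theorem~\ref{thm:Hochster-nerve} identifies $\mathfrak D_I\simeq K_I$. The connecting homomorphism of the pair is an isomorphism because $\Delta_{\max}$ is contractible and $\mathfrak D_I$ is nonempty. Nonemptiness holds because each vertex $j\in I$ is a face of $K$ and hence lies in some maximal face.

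When $I=\varnothing$ (so $p=0$), the relative complex is the full cochain complex of $\Delta_{\max}$. Its cohomology is $\C$ in degree $0$ and zero elsewhere. This matches $\widetilde H^{-1}(K_\varnothing;\C)=\C$ under the augmented convention.

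There is no real obstacle here. The only point needing care is this empty-support case, where the degree shift $a\mapsto a-1$ must land on the augmented class in degree $-1$. The convention fixed in the Notation section handles it.
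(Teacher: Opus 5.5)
Your proposal is correct and is essentially the paper's argument. The corollary is the specialization of Theorem~\ref{thm:fixed-Laurent-sector} to $r=0$, with $\Pole(0,I)=I$ read off from the condition $0<\mathbf 1_I(j)$. Your separate treatment of nonempty and empty supports, including the nonemptiness of $\mathfrak D_I$ and the augmented convention for $I=\varnothing$, is exactly the case split in the proof of that theorem.
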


\subsection{Deck characters}

Every deck transformation is diagonal with nonzero coordinate multipliers;
it preserves zero sets and hence preserves each chart $U_\tau$ and every
intersection in $\mathcal U_K$.

Recall that $A_j$ is the $j$th row of the visible block $A$ in
Section~\ref{sec:affine-ghost-quotient}.  For $r\in\Z^N$, set
\[
 L_r(T)=\sum_{j=1}^Nr_jA_j(T),
 \qquad
 \chi_r(\gamma)=e^{L_r(\gamma)}.
\]

\begin{proposition}
The deck action on $\mathscr P_{r,I}^\bullet$ is through the character $\chi_r$:
\[
 \gamma^*(z^re_I)
 =
 \chi_r(\gamma)z^re_I.
\]
\end{proposition}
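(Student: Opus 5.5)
The plan is to compute the pullback of the single term $z^re_I$ directly from the deck action of Theorem~\ref{thm:full-rank-Cox-quotient}. There, $\gamma\in\GammaL$ acts on $U_K$ by $\gamma\cdot z=e^{A(\gamma)}z$, so each coordinate is rescaled: $z_j\mapsto e^{A_j(\gamma)}z_j$. The claim therefore reduces to two facts: logarithmic differentials are invariant under constant rescaling, and the Laurent monomial picks up a product of scalar factors.

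First, I will check that $e_I$ is invariant. Writing $g=\gamma\cdot(-)$, we have $g^*z_j=c_jz_j$ with $c_j=e^{A_j(\gamma)}\in\C^*$ constant. Then
\[
 g^*\frac{dz_j}{z_j}=\frac{c_j\,dz_j}{c_jz_j}=\frac{dz_j}{z_j}.
\]
Taking wedge products in increasing order gives $g^*e_I=e_I$; no sign issue arises because the order of the factors is unchanged.

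Second, I will compute the factor on the monomial. We have
\[
 g^*z^r=\prod_jc_j^{r_j}z^r=e^{\sum_jr_jA_j(\gamma)}z^r=\chi_r(\gamma)z^r,
\]
where integrality of $r$ makes the negative powers unambiguous on the chart intersections. Multiplicativity of pullback then gives $\gamma^*(z^re_I)=\chi_r(\gamma)z^re_I$.

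It remains to see that this is a genuine action on the complex $\mathscr P_{r,I}^\bullet$. The action preserves each $U_\tau$, as noted before the statement, and it commutes with the \v{C}ech restrictions. Hence it acts on every coefficient line of $\mathscr P_{r,I}^\bullet$ by the same scalar $\chi_r(\gamma)$.

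Finally, $\chi_r$ is a character because $L_r$ is additive, so $\chi_r(\gamma+\gamma')=\chi_r(\gamma)\chi_r(\gamma')$. The only point needing care is the convention for pullback versus pushforward, which could replace $\chi_r$ by $\chi_r^{-1}$. I will fix $\gamma^*$ to mean pullback by the map $z\mapsto e^{A(\gamma)}z$, which gives exactly $\chi_r$; I expect no real obstacle beyond this bookkeeping.
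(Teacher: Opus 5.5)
Your proposal is correct and matches the paper's proof. The paper computes the pullback of $z^{r-\mathbf1_I}dz_I$ in the standard frame by diagonal scaling, then observes that equivalently $z^r$ transforms by $\chi_r$ while each $dz_j/z_j$ is invariant — which is exactly your two-step argument.
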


\begin{proof}
In the standard frame, diagonal scaling gives
\[
 \gamma^*\!\left(z^{r-\mathbf1_I}dz_I\right)
 =
 e^{\sum_j(r_j-\mathbf1_I(j))A_j(\gamma)}
 e^{\sum_{i\in I}A_i(\gamma)}
 z^{r-\mathbf1_I}dz_I
 =
 \chi_r(\gamma)z^re_I.
\]
Equivalently, $z^r$ has character $\chi_r$ and every logarithmic form
$dz_j/z_j$ is invariant.
\end{proof}

\begin{remark}
The analytic \v{C}ech complex is the completion of the algebraic
sum of the $\mathscr P_{r,I}^\bullet$ for normal convergence of
Laurent series.  We describe this completion below.
\end{remark}

\subsection{Analytic completion and unconditional sector projections}
\label{sec:analytic-Laurent-completion}

For $\tau\in K$ and $R\geq2$, define
\[
 \mathfrak A_{\tau,R}
 =
 \left\{
 z\in U_\tau:
 \begin{array}{ll}
 |z_j|\leq R,&j\in\tau,\\
 R^{-1}\leq|z_j|\leq R,&j\notin\tau
 \end{array}
 \right\}.
\]
The integer-radius family $\{\mathfrak A_{\tau,R}\}_{R\in\Z,\,R\geq2}$
is a compact exhaustion of $U_\tau$.  If
\(
 \omega=\sum_{|I|=p}f_I\,dz_I,
\)
put
\[
 M_{\tau,R}(\omega)
 =
 \sum_{|I|=p}\sup_{z\in\mathfrak A_{\tau,R}}|f_I(z)|.
\]
These seminorms generate the compact-open Fr\'echet topology on
$\Gamma(U_\tau,\Omega_{U_K}^p)$.

For an allowed pair $(r,I)$, define the shifted exponent
\[
 \widehat r^{\,I}=r-\mathbf1_I
\]
and the weight
\[
 \omega_{\tau,R}(r,I)
 =
 R^{\nu_\tau(r,I)},
\]
where
\[
 \nu_\tau(r,I)
 =
 \sum_{j\in\tau}\widehat r_j^{\,I}
 +
 \sum_{j\notin\tau}|\widehat r_j^{\,I}|.
\]

Let $\mathfrak E_{\tau,p}$ be the set of pairs with $|I|=p$ and
$\widehat r_j^{\,I}\geq0$ for $j\in\tau$.

\begin{definition}
The local analytic coefficient space is
\[
 \lambda_{\tau,p}^{\mathrm{an}}
 =
 \left\{
 (a_{r,I}):
 q_{\tau,R}(a)
 =
 \sum_{(r,I)\in\mathfrak E_{\tau,p}}
 |a_{r,I}|\omega_{\tau,R}(r,I)
 <\infty
 \text{ for every }R
 \right\}.
\]
It suffices to take integer $R\geq2$. With these countably many norms, this
space is the projective limit of the Banach spaces
\(
 \ell^1(\mathfrak E_{\tau,p},\omega_{\tau,R})
\)
and hence is a complete Fr\'echet space.
\end{definition}

\begin{lemma}\label{lem:weight-estimates}
If $2\leq R<L$, then
\[
 \sum_{(r,I)\in\mathfrak E_{\tau,p}}
 \frac{\omega_{\tau,R}(r,I)}
 {\omega_{\tau,L}(r,I)}
 <\infty.
\]
Moreover, for every $j$ there is $C_{R,L}$ such that
\[
 (|r_j|+1)\omega_{\tau,R}(r,I)
 \leq
 C_{R,L}\omega_{\tau,L}(r,I).
\]
\end{lemma}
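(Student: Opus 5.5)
The plan is to compare the two weights coordinate by coordinate and factor both estimates into one-variable sums. Fix $\tau$ and $p$. For a pair $(r,I)\in\mathfrak E_{\tau,p}$ write $\widehat r=\widehat r^{\,I}=r-\mathbf1_I$. Then
\[
 \frac{\omega_{\tau,R}(r,I)}{\omega_{\tau,L}(r,I)}
 =(R/L)^{\nu_\tau(r,I)},
 \qquad
 \nu_\tau(r,I)=\sum_{j}|\widehat r_j|.
\]
The second equality holds because $\widehat r_j\ge0$ for $j\in\tau$ on $\mathfrak E_{\tau,p}$. Thus $\nu_\tau$ is the $\ell^1$ norm of $\widehat r\in\Z^N$, and the ratio $R/L$ lies in $(0,1)$.

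For the first estimate, reindex the sum by $(\widehat r,I)$. The map $(r,I)\mapsto(\widehat r,I)$ is injective, and $I$ ranges over the $\binom Np$ subsets of size $p$. The sum is therefore bounded by
\[
 \binom Np\sum_{\widehat r\in\Z^N}(R/L)^{\sum_j|\widehat r_j|}
 =\binom Np\Bigl(\sum_{k\in\Z}(R/L)^{|k|}\Bigr)^N
 =\binom Np\Bigl(\frac{1+R/L}{1-R/L}\Bigr)^N<\infty,
\]
which is a geometric series in each coordinate.

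For the second estimate, note that $|r_j|\le|\widehat r_j|+1$, so $|r_j|+1\le|\widehat r_j|+2\le 2(\nu_\tau(r,I)+1)$. It therefore suffices to bound $(\nu+1)(R/L)^{\nu}$ uniformly over integers $\nu\ge0$. This function tends to $0$ as $\nu\to\infty$, because polynomial growth is beaten by geometric decay. Its supremum $M_{R,L}$ is therefore finite; one explicit bound is $M_{R,L}\le 1/(1-R/L)^2$, obtained from $\sum_\nu(\nu+1)x^\nu=(1-x)^{-2}$. Taking $C_{R,L}=2M_{R,L}$ gives
\[
 (|r_j|+1)\,\omega_{\tau,R}(r,I)\le C_{R,L}\,\omega_{\tau,L}(r,I).
\]
This constant is independent of $j$, $r$ and $I$.

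The only point needing care is the sign convention on $\tau$: on $\mathfrak E_{\tau,p}$ the linear terms over $\tau$ coincide with absolute values, and this is what makes $\nu_\tau$ a norm. Everything else reduces to geometric series. I do not expect a real obstacle.
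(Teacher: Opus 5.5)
Your proof is correct and follows the paper's argument. In both, the weight ratio is $(R/L)^{\nu_\tau}$, where $\nu_\tau$ is the $\ell^1$ norm of the shifted exponent $\widehat r^{\,I}$; the first sum factors into geometric series; and the second estimate reduces, via $|r_j|+1\le|\widehat r^{\,I}_j|+2$, to the finiteness of $\sup_{n\ge0}(n+2)(R/L)^n$. The only difference is cosmetic: with $t=R/L$, the paper evaluates the first sum exactly over $\Z_{\ge0}^{\tau}\times\Z^{[N]\setminus\tau}$, obtaining $\binom Np(1-t)^{-|\tau|}\bigl(\tfrac{1+t}{1-t}\bigr)^{N-|\tau|}$, whereas you bound it above by the sum over all of $\Z^N$.
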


\begin{proof}
Write $t=R/L<1$.  Translation $r\mapsto\widehat r^{\,I}$ identifies the
allowed exponents, for each $I$, with
$\Z_{\geq0}^{\tau}\times\Z^{[N]\setminus\tau}$.  Consequently the first
sum is exactly
\[
 \binom Np
 \left(\frac1{1-t}\right)^{|\tau|}
 \left(\frac{1+t}{1-t}\right)^{N-|\tau|},
\]
which is finite.  Moreover $|r_j|+1\leq
|\widehat r_j^{\,I}|+2$ and
$\nu_\tau(r,I)\geq|\widehat r_j^{\,I}|$.  Therefore
\[
 (|r_j|+1)t^{\nu_\tau(r,I)}
 \leq
 \sup_{n\geq0}(n+2)t^n<\infty,
\]
which is precisely the second estimate after multiplication by
$\omega_{\tau,L}(r,I)$.
\end{proof}

\begin{lemma}[Nuclear K\"othe coefficient space]
\label{lem:nuclear-Kothe-space}
The Fr\'echet space $\lambda_{\tau,p}^{\mathrm{an}}$ is nuclear.
\end{lemma}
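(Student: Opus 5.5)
The plan is to apply the Grothendieck--Pietsch criterion for nuclearity of Köthe echelon spaces. We have presented $\lambda_{\tau,p}^{\mathrm{an}}$ as the projective limit of the weighted spaces $\ell^1(\mathfrak E_{\tau,p},\omega_{\tau,R})$ over integer $R\geq2$, with weights increasing in $R$. The criterion states that such a Köthe space $\lambda^1(a)$ with weight matrix $a=(a_R)$ is nuclear if and only if for every $R$ there is $L>R$ with
\[
 \sum_{(r,I)}\frac{a_R(r,I)}{a_L(r,I)}<\infty .
\]
This is exactly the first estimate of Lemma~\ref{lem:weight-estimates}. So the proof consists of checking the hypotheses of the criterion and citing it.

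The steps, in order, are as follows.

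First, I verify that the weights form a Köthe matrix. The index set $\mathfrak E_{\tau,p}$ is countable. Each $\omega_{\tau,R}(r,I)=R^{\nu_\tau(r,I)}$ is positive. Since $\nu_\tau(r,I)\geq0$ on $\mathfrak E_{\tau,p}$ (the $\tau$-coordinates of $\widehat r^{\,I}$ are nonnegative there), the weights are nondecreasing in $R$. Hence the seminorms $q_{\tau,R}$ form an increasing fundamental system.

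Second, I invoke Lemma~\ref{lem:weight-estimates} with $L=R+1$ to obtain summability of $\omega_{\tau,R}/\omega_{\tau,L}$.

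Third, I conclude nuclearity. One route is to cite the criterion, e.g.\ Pietsch's \emph{Nuclear locally convex spaces}, Theorem~6.1.2, or Meise--Vogt, Proposition~28.16. The other route is a self-contained argument. The canonical map $\ell^1(\omega_L)\to\ell^1(\omega_R)$ factors as the diagonal operator with entries $\omega_R/\omega_L$. Its nuclear representation is
\[
 x\longmapsto\sum_{(r,I)}\bigl(\omega_L(r,I)\,x_{r,I}\bigr)\,\frac{\omega_R(r,I)}{\omega_L(r,I)}\cdot\frac{e_{r,I}}{\omega_R(r,I)} .
\]
Here the functionals $x\mapsto\omega_L x_{r,I}$ have norm $\le1$ on $\ell^1(\omega_L)$, and the vectors $e_{r,I}/\omega_R$ have norm $1$ in $\ell^1(\omega_R)$. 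The coefficients are therefore summable, so the linking maps are nuclear, and a projective limit with nuclear linking maps is nuclear.

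There is no serious obstacle. The only point needing care is the monotonicity of the weights in the first step, which relies on the sign condition defining $\mathfrak E_{\tau,p}$ (the coordinates outside $\tau$ enter through absolute values). I would state this explicitly so that the Köthe-matrix hypothesis is visibly satisfied.
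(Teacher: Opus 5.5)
Your proposal is correct and follows the paper's proof. The paper likewise shows that each diagonal inclusion $\ell^1(\mathfrak E_{\tau,p},\omega_{\tau,L})\to\ell^1(\mathfrak E_{\tau,p},\omega_{\tau,R})$ is nuclear, with nuclear norm at most $\sum\omega_{\tau,R}/\omega_{\tau,L}$, which is finite by Lemma~\ref{lem:weight-estimates}, and then applies the K\"othe--Grothendieck criterion to the projective limit. Your explicit check that $\nu_\tau\geq0$ on $\mathfrak E_{\tau,p}$, so that the weights increase with $R$ and form a K\"othe matrix, is a point the paper leaves implicit.
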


\begin{proof}
For integer $2\leq R<L$, the diagonal inclusion
\[
 \ell^1(\mathfrak E_{\tau,p},\omega_{\tau,L})
 \longrightarrow
 \ell^1(\mathfrak E_{\tau,p},\omega_{\tau,R})
\]
is nuclear: in the normalized coordinate bases its nuclear norm is at
most
\[
 \sum_{(r,I)}\omega_{\tau,R}(r,I)/\omega_{\tau,L}(r,I),
\]
finite by Lemma~\ref{lem:weight-estimates}.  The projective limit is
therefore a nuclear Fr\'echet space by the K\"othe--Grothendieck
criterion \cite{Grothendieck1955}.
\end{proof}

\begin{theorem}[Local Köthe--Laurent theorem]
\label{thm:local-Kothe-Laurent}
Coefficient extraction and reconstruction give inverse topological
isomorphisms
\[
 \Gamma(U_\tau,\Omega_{U_K}^p)
 \cong
 \lambda_{\tau,p}^{\mathrm{an}}.
\]
\end{theorem}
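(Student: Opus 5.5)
Write $\omega=\sum_{|I|=p}f_I\,dz_I$ and expand each coefficient as $f_I=\sum_{\widehat r}b_{\widehat r,I}z^{\widehat r}$ on $U_\tau\cong\C^{|\tau|}\times(\C^*)^{N-|\tau|}$, as in Proposition~\ref{prop:logarithmic-Laurent-coordinates}. With $r=\widehat r+\mathbf1_I$, put $a_{r,I}=b_{\widehat r,I}$. The plan is to compare the sup seminorms $M_{\tau,R}$ with the weighted $\ell^1$ norms $q_{\tau,R}$ in both directions, losing only a change of radius. Together with the bijection of index sets already noted in the proof of Lemma~\ref{lem:weight-estimates}, these two estimates make extraction and reconstruction mutually inverse continuous linear maps.

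For extraction, I would apply the Cauchy estimate on a polytorus. Fix $L>R$ and a multi-index $\widehat r$. Choose radii $\rho_j=L$ when $j\in\tau$. When $j\notin\tau$, choose $\rho_j=L$ if $\widehat r_j\geq0$ and $\rho_j=L^{-1}$ if $\widehat r_j<0$. This polytorus lies in $\mathfrak A_{\tau,L}$, so the Cauchy formula gives
\[
|b_{\widehat r,I}|\leq M_{\tau,L}(\omega)\prod_j\rho_j^{-\widehat r_j}=M_{\tau,L}(\omega)\,L^{-\nu_\tau(r,I)},
\]
because the sign choices turn every exponent into $-|\widehat r_j|$ off $\tau$ and into $-\widehat r_j\leq0$ on $\tau$. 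Multiplying by $R^{\nu_\tau}$ and summing, the first estimate of Lemma~\ref{lem:weight-estimates} yields
\[
q_{\tau,R}(a)\leq C_{R,L}\,M_{\tau,L}(\omega).
\]
Hence the coefficient family lies in $\lambda^{\mathrm{an}}_{\tau,p}$ and extraction is continuous.

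For reconstruction, I would bound each monomial pointwise. On $\mathfrak A_{\tau,R}$ one has $|z^{\widehat r}|\leq R^{\nu_\tau(r,I)}$: for $j\in\tau$ the exponent is nonnegative and $|z_j|\leq R$, while for $j\notin\tau$ the bound $R^{-1}\leq|z_j|\leq R$ gives $|z_j|^{\widehat r_j}\leq R^{|\widehat r_j|}$. So for $a\in\lambda^{\mathrm{an}}_{\tau,p}$ the series $\sum_{\widehat r}a_{r,I}z^{\widehat r}$ converges absolutely and uniformly on every $\mathfrak A_{\tau,R}$, and these sets exhaust $U_\tau$. The sum is therefore a holomorphic coefficient $f_I$, and it satisfies
\[
M_{\tau,R}(\omega)\leq q_{\tau,R}(a).
\]
Uniqueness of Laurent expansions on the dense torus, or Proposition~\ref{prop:logarithmic-Laurent-coordinates} directly, shows that the two maps are inverse to each other. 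Both are linear and continuous between Fréchet spaces, so they are topological isomorphisms; the open mapping theorem is not even needed.

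The only delicate step is the extraction estimate, specifically the sign-dependent choice of polytorus radii off $\tau$, which produces the absolute values in $\nu_\tau$. The point to check is that each such polytorus stays inside $\mathfrak A_{\tau,L}$ and that passing from $L$ to $R$ costs only the summable factor controlled by Lemma~\ref{lem:weight-estimates}.
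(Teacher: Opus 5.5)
Your proposal is correct and follows essentially the same route as the paper's proof. The pointwise bound $|z^{\widehat r}|\le R^{\nu_\tau}$ on $\mathfrak A_{\tau,R}$ gives $M_{\tau,R}(\omega)\le q_{\tau,R}(a)$. The Cauchy formula on sign-adapted polytori inside $\mathfrak A_{\tau,L}$, together with the summability in Lemma~\ref{lem:weight-estimates}, gives continuity of extraction, and uniqueness of Taylor--Laurent coefficients makes the two maps inverse.
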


\begin{proof}
If $a\in\lambda_{\tau,p}^{\mathrm{an}}$, then
\[
 \sum_{r,I}|a_{r,I}z^{\widehat r^{\,I}}|
 \leq
 q_{\tau,R}(a)
\]
coefficientwise on $\mathfrak A_{\tau,R}$, so reconstruction in the
standard frames $dz_I$ converges normally and
\[
 M_{\tau,R}(\omega)\leq q_{\tau,R}(a).
\]

Conversely, choose $L>R$ and apply the product Cauchy formula to the
Taylor variables and, according to the sign of each exponent, the outer
or inner circle in every Laurent variable.  All these integration tori
lie in $\mathfrak A_{\tau,L}$, and
\[
 |a_{r,I}|
 \leq
 M_{\tau,L}(\omega)
 \omega_{\tau,L}(r,I)^{-1}.
\]
Thus
\[
 q_{\tau,R}(a)
 \leq
 M_{\tau,L}(\omega)\cdot
 \sum_{(r,I)\in\mathfrak E_{\tau,p}}
 \frac{\omega_{\tau,R}(r,I)}{\omega_{\tau,L}(r,I)}.
\]
Lemma~\ref{lem:weight-estimates} proves membership and continuity of
coefficient extraction.  The first inequality proves continuity of
reconstruction, while uniqueness of Taylor--Laurent coefficients makes
the maps inverse.
\end{proof}

If $\tau'\subseteq\tau$, restriction preserves every coefficient and
\[
 \omega_{\tau',R}(r,I)=\omega_{\tau,R}(r,I)
\]
for pairs allowed on $U_\tau$.

Let $\mathfrak S_a$ be the finite set of normalized \v{C}ech simplices
$\boldsymbol\sigma=(\sigma_0<\cdots<\sigma_a)$ and set
\[
 \tau(\boldsymbol\sigma)
 =
 \bigcap_i\sigma_i.
\]
For
\[
 c_{\boldsymbol\sigma}
 =
 \sum_{r,I}c_{\boldsymbol\sigma;r,I}z^re_I,
\]
define
\[
 \mathfrak q_{a,R}(c)
 =
 \sum_{\boldsymbol\sigma\in\mathfrak S_a}
 \sum_{(r,I)\in\mathfrak E_{\tau(\boldsymbol\sigma),p}}
 |c_{\boldsymbol\sigma;r,I}|
 \omega_{\tau(\boldsymbol\sigma),R}(r,I).
\]

\begin{definition}
The analytically completed sector sum
\[
 \widehat{\bigoplus}_{r,I}^{\,\mathrm{an}}\mathscr P_{r,I}^a
\]
is the space of sector families with finite $\mathfrak q_{a,R}$ for every $R$.
\end{definition}

\begin{theorem}[Global analytic sector decomposition]
\label{thm:global-analytic-sector}
For every $a$,
\[
 C_p^a
 \cong
 \widehat{\bigoplus}_{r,I}^{\,\mathrm{an}}\mathscr P_{r,I}^a
\]
as Fréchet spaces. Compatibility with restriction gives
\[
 C_p^\bullet
 \cong
 \widehat{\bigoplus}_{r,I}^{\,\mathrm{an}}\mathscr P_{r,I}^\bullet.
\]
\end{theorem}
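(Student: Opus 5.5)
The plan is to assemble the global statement from the local Köthe--Laurent theorem, applied one Čech simplex at a time. Since the cover is finite and the complex is normalized, each degree is a finite product
\[
 C_p^a=\prod_{\boldsymbol\sigma\in\mathfrak S_a}\Gamma\bigl(U_{\tau(\boldsymbol\sigma)},\Omega_{U_K}^p\bigr),
\]
and Proposition~\ref{prop:finite-Stein-Leray-cover} identifies each factor with the sections on $U_{\tau(\boldsymbol\sigma)}$. By Theorem~\ref{thm:local-Kothe-Laurent}, each factor is topologically isomorphic to $\lambda^{\mathrm{an}}_{\tau(\boldsymbol\sigma),p}$ via coefficient extraction.

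The first step is to regroup the coefficients of a cochain by sector rather than by simplex. For a fixed $(r,I)$, the family $(c_{\boldsymbol\sigma;r,I})_{\boldsymbol\sigma}$ is nonzero only on simplices with $\Pole(r,I)\cap\tau(\boldsymbol\sigma)=\varnothing$, which is exactly $\mathfrak E_{\tau(\boldsymbol\sigma),p}$. So this family is an element of $\mathscr P^a_{r,I}$ by Theorem~\ref{thm:relative-sector}. The map from cochains to sector families is a bijection of indexing data. Under it, the finite sum over $\boldsymbol\sigma$ of the local norms $q_{\tau(\boldsymbol\sigma),R}$ equals $\mathfrak q_{a,R}$ by definition.

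Because a finite product of Fréchet spaces carries the maximum (equivalently, the sum) of the factor seminorms, the local two-sided estimates transfer directly. These are $M_{\tau,R}\le q_{\tau,R}$ and $q_{\tau,R}\le C_{R,L}M_{\tau,L}$ for $L>R$, from Lemma~\ref{lem:weight-estimates}. Summing them over the finitely many $\boldsymbol\sigma$ shows that the families $\{\mathfrak q_{a,R}\}_R$ and $\{\sum_{\boldsymbol\sigma}M_{\tau(\boldsymbol\sigma),R}\}_R$ are equivalent systems of seminorms. This gives the topological isomorphism in each degree $a$, and completeness of the target follows.

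The remaining step, which I expect to need the most care, is compatibility with the Čech differential, so that the degreewise isomorphisms form an isomorphism of complexes. The Čech differential is a signed alternating sum of restrictions $U_{\tau(\boldsymbol\sigma)}\to U_{\tau(\boldsymbol\sigma')}$ for $\boldsymbol\sigma'\supset\boldsymbol\sigma$, and $\tau(\boldsymbol\sigma')\subseteq\tau(\boldsymbol\sigma)$. Restriction preserves every Laurent coefficient, by uniqueness of expansions on the dense torus as in Proposition~\ref{prop:logarithmic-Laurent-coordinates}. Hence the differential acts sector by sector, and on each sector it is the relative simplicial coboundary of Theorem~\ref{thm:relative-sector}.

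Continuity in the completed norms follows from the weight remark before the theorem: $\omega_{\tau',R}(r,I)=\omega_{\tau,R}(r,I)$ for pairs allowed on $U_\tau$. This gives
\[
 \mathfrak q_{a+1,R}(\check\delta c)\le (a+2)\,\mathfrak q_{a,R}(c).
\]
The sectorwise differential therefore extends to the completed sum and agrees with $\check\delta$ under the identification, which yields the isomorphism of complexes $C_p^\bullet\cong\widehat{\bigoplus}^{\,\mathrm{an}}_{r,I}\mathscr P^\bullet_{r,I}$.
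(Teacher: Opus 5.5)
This is essentially the paper's proof: apply Theorem~\ref{thm:local-Kothe-Laurent} on each of the finitely many Čech intersections $U_{\tau(\boldsymbol\sigma)}$, regroup the coefficients by sector, and use that restrictions preserve both coefficients and weights to obtain continuity of the Čech differential. One small slip: since $\mathfrak q_{a,R}$ sums over simplices, the coboundary constant must count cofaces rather than faces, so the correct bound is $\mathfrak q_{a+1,R}(\check\delta c)\le(|K_{\max}|-a-1)\,\mathfrak q_{a,R}(c)\le|K_{\max}|\,\mathfrak q_{a,R}(c)$, as in the paper, and not $(a+2)\,\mathfrak q_{a,R}(c)$ (for example, over the square a $0$-cochain supported on one chart has three nonzero coboundary components); since any finite constant suffices, nothing else in your argument changes.
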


\begin{proof}
Apply
Theorem~\ref{thm:local-Kothe-Laurent}
to the finitely many Čech intersections. Restrictions are coefficientwise
and continuous.  Indeed, if $\tau'\subseteq\tau$, regularity on $U_\tau$
gives $\widehat r_j^{\,I}\geq0$ on $\tau\setminus\tau'$, so replacing
$\widehat r_j^{\,I}$ by its absolute value does not change the weight.
Thus
\[
 \mathfrak q_{a+1,R}(\delta c)
 \leq |K_{\max}|\,\mathfrak q_{a,R}(c).
\]
The local factors are nuclear by
Lemma~\ref{lem:nuclear-Kothe-space}, and finite products preserve
nuclearity.  The normalized complex vanishes in degrees
$a\geq|K_{\max}|$.
\end{proof}

\begin{theorem}[Unconditional sector projections]
\label{thm:unconditional-sector-projections}
For any set of sectors $\mathcal E$, the coefficient projection
$\Pi_\mathcal E$ is continuous and
\[
 \mathfrak q_{a,R}(\Pi_\mathcal E c)\leq \mathfrak q_{a,R}(c).
\]
It commutes with the Čech differential.
\end{theorem}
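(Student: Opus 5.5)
The plan is to work coefficientwise, since by Theorem~\ref{thm:global-analytic-sector} every element $c\in C_p^a$ is identified with the family of its coefficients $c_{\boldsymbol\sigma;r,I}$, indexed by \v{C}ech simplices $\boldsymbol\sigma\in\mathfrak S_a$ and allowed pairs $(r,I)\in\mathfrak E_{\tau(\boldsymbol\sigma),p}$. For a set $\mathcal E$ of sectors $(r,I)$, I define $\Pi_{\mathcal E}c$ by retaining the coefficients with $(r,I)\in\mathcal E$ and setting all others to zero. I need $\Pi_{\mathcal E}c$ to lie in the completed space. Every term of the sum defining $\mathfrak q_{a,R}(\Pi_{\mathcal E}c)$ either equals the corresponding term of $\mathfrak q_{a,R}(c)$ or is zero. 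All terms are nonnegative, so
\[
 \mathfrak q_{a,R}(\Pi_{\mathcal E}c)\leq\mathfrak q_{a,R}(c)
\]
for every $R$. In particular $\Pi_{\mathcal E}c$ is finite in each seminorm and belongs to $\widehat{\bigoplus}^{\,\mathrm{an}}_{r,I}\mathscr P^a_{r,I}$. The map $\Pi_{\mathcal E}$ is linear, and the inequality bounds each generating seminorm of its image by the same seminorm of its argument, so $\Pi_{\mathcal E}$ is continuous on the Fr\'echet space. The word ``unconditional'' refers to the fact that the bound does not depend on $\mathcal E$: the partial sums over any family of sectors converge absolutely in every weighted $\ell^1$ norm. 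I may add this as a remark.

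For commutation with the \v{C}ech differential, I use the fact that the differential is coefficientwise in the sector index. By Proposition~\ref{prop:logarithmic-Laurent-coordinates} and the remark after Theorem~\ref{thm:local-Kothe-Laurent}, restriction from $U_\tau$ to $U_{\tau'}$ preserves each coefficient $a_{r,I}$. Hence the coefficient $(\delta c)_{\boldsymbol\sigma';r,I}$ is the signed sum, over the faces $\boldsymbol\sigma$ of $\boldsymbol\sigma'$ obtained by deleting one vertex, of the coefficients $c_{\boldsymbol\sigma;r,I}$ with the same $(r,I)$. The normalized oriented signs do not depend on $(r,I)$. Therefore the $(r,I)$-coefficient of $\delta(\Pi_{\mathcal E}c)$ equals $\mathbf 1_{\mathcal E}(r,I)$ times the $(r,I)$-coefficient of $\delta c$, which is the $(r,I)$-coefficient of $\Pi_{\mathcal E}\delta c$.

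The one point needing care is that this coefficient formula for $\delta$ is valid on the completed spaces and not only on finite sums. I would handle it with the estimate $\mathfrak q_{a+1,R}(\delta c)\leq|K_{\max}|\,\mathfrak q_{a,R}(c)$ from the proof of Theorem~\ref{thm:global-analytic-sector}. That estimate shows $\delta$ is continuous, and the algebraic sum is dense in the completed space because the seminorms are weighted $\ell^1$ norms. I would therefore verify the identity on finite sector sums, where it is immediate, and extend it by continuity of both $\delta$ and $\Pi_{\mathcal E}$. Alternatively, absolute convergence of the Laurent series lets one read off coefficients directly after restriction.

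The main obstacle is this last justification. The norm inequality and continuity are routine once coefficientwise restriction is granted.
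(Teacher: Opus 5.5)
Your proposal is correct and follows the paper's own argument. The seminorm bound holds because you delete terms from a weighted $\ell^1$ sum, and commutation with the \v{C}ech differential holds because restrictions preserve each sector $(r,I)$, with signs independent of $(r,I)$. Your density-and-continuity justification on the completed spaces is a harmless elaboration of the coefficientwise-restriction fact that the paper takes as given.
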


\begin{proof}
Deleting coefficients can only decrease every positive weighted
$\ell^1$ seminorm.  Since restrictions preserve the pair $(r,I)$,
coefficient deletion commutes with every alternating restriction sum.
\end{proof}

\begin{corollary}
The algebraic finite-sector subcomplex
\[
 C_{p,\mathrm{fin}}^\bullet
 =
 \bigoplus_{r,I}^{\mathrm{alg}}\mathscr P_{r,I}^\bullet
\]
is dense in $C_p^\bullet$.
\end{corollary}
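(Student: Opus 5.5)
The plan is to approximate an arbitrary cochain in each fixed Čech degree $a$ by finite truncations of its sector expansion. By Theorem~\ref{thm:global-analytic-sector}, $C_p^a$ is identified with the analytically completed sector sum, whose topology is generated by the countable family of seminorms $\mathfrak q_{a,R}$, with $R\geq2$ an integer. An element is a sector family $c=(c_{\boldsymbol\sigma;r,I})$ with $\mathfrak q_{a,R}(c)<\infty$ for every $R$. Since the normalized Čech complex is bounded and each $C_p^a$ is a finite product over $\mathfrak S_a$, density can be checked degree by degree. Density of the subcomplex means that each $C_{p,\mathrm{fin}}^a$ is dense in $C_p^a$, and the differential plays no role beyond the fact, from Theorem~\ref{thm:unconditional-sector-projections}, that finite-sector families form a subcomplex.

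First I enumerate the countable index set $\Z^N\times\{I:|I|=p\}$ by an exhausting increasing sequence of finite sets $\mathcal E_1\subseteq\mathcal E_2\subseteq\cdots$, for example the pairs with $\sum_j|r_j|\leq k$. I set $c_k=\Pi_{\mathcal E_k}c$. By Theorem~\ref{thm:unconditional-sector-projections}, $c_k$ is a legitimate element of $C_p^a$. It is supported on finitely many sectors, so it lies in the algebraic sum $C_{p,\mathrm{fin}}^a$. This uses that each $\mathscr P_{r,I}^a$ is finite-dimensional, being a line for each simplex not in the pole subcomplex by Theorem~\ref{thm:relative-sector}.

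Next, for each fixed $R$, I write
\[
 \mathfrak q_{a,R}(c-c_k)=\sum_{\boldsymbol\sigma}\sum_{(r,I)\notin\mathcal E_k}|c_{\boldsymbol\sigma;r,I}|\,\omega_{\tau(\boldsymbol\sigma),R}(r,I).
\]
This is the tail of the convergent series defining $\mathfrak q_{a,R}(c)$, summed over the finitely many $\boldsymbol\sigma$. Hence it tends to $0$ as $k\to\infty$ by dominated (tail) convergence of absolutely summable series. Convergence for every seminorm in the countable generating family gives $c_k\to c$ in the Fréchet topology, which proves the density.

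The only point needing care, and so the main obstacle, is that the identification of Theorem~\ref{thm:global-analytic-sector} is topological. Without this, convergence in the weighted $\ell^1$ seminorms would not be convergence in the compact-open topology of holomorphic forms. Theorem~\ref{thm:local-Kothe-Laurent} supplies exactly this: the inequality $M_{\tau,R}(\omega)\leq q_{\tau,R}(a)$ transfers the approximation back to uniform convergence on the exhaustion $\mathfrak A_{\tau,R}$. With that in hand, the argument is the standard density of finitely supported sequences in a Köthe echelon space.
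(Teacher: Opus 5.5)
Your proposal is correct and is essentially the paper's argument: truncate the sector expansion to an exhausting sequence of finite sets of pairs $(r,I)$, simultaneously in all of the finitely many \v{C}ech components, and note that for each $a$ and $R$ the discarded part is the tail of the convergent weighted $\ell^1$ sum $\mathfrak q_{a,R}(c)$, so it tends to zero. Your additional remark is also right: because the identification of Theorem~\ref{thm:global-analytic-sector} is topological, convergence in these seminorms is the same as convergence in the Fr\'echet topology of $C_p^a$.
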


\begin{proof}
Enumerate the countable set of pairs $(r,I)$ and truncate to its first
$n$ elements, simultaneously in every one of the finitely many \v{C}ech
components.  For each $a$ and $R$, the discarded tail tends to zero in
the weighted $\ell^1$ seminorm $\mathfrak q_{a,R}$.  These truncations
belong to the algebraic finite-sector subcomplex.
\end{proof}

\begin{remark}
Finite-sector cochains are dense in the \v{C}ech complex. After deck
descent, Theorem~\ref{thm:Euler-finite-support} shows that they also compute
cohomology in the corresponding group--Koszul total complexes.
\end{remark}

Let
\[
 \mathcal L_j
 =
 \mathcal L_{z_j\partial/\partial z_j}.
\]
Then
\[
 \mathcal L_j(z^re_I)=r_jz^re_I.
\]

\begin{proposition}
For $R<L$,
\[
 \mathfrak q_{a,R}(\mathcal L_jc)
 \leq
 C_{R,L}\mathfrak q_{a,L}(c).
\]
Hence every polynomial in the Euler operators is continuous.
\end{proposition}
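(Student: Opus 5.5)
The estimate is sectorwise: $\mathcal L_j$ acts diagonally on Laurent coefficients, scaling the coefficient of $z^re_I$ by $r_j$, and it commutes with restrictions. For a cochain $c$ with coefficients $c_{\boldsymbol\sigma;r,I}$, the cochain $\mathcal L_jc$ has coefficients $r_jc_{\boldsymbol\sigma;r,I}$ in the same allowed sectors $\mathfrak E_{\tau(\boldsymbol\sigma),p}$. Indeed, the Lie derivative along $z_j\partial_{z_j}$ preserves each chart $U_\tau$, and the preceding identity $\mathcal L_j(z^re_I)=r_jz^re_I$ holds termwise. Continuity of $\mathcal L_j$ on each local Fréchet space, which is a holomorphic differential operator, together with Theorem~\ref{thm:local-Kothe-Laurent}, justifies applying it termwise to the normally convergent expansion. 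So $\mathcal L_j$ has no off-diagonal terms in the weighted $\ell^1$ description of Theorem~\ref{thm:global-analytic-sector}.

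Next I would bound the weighted seminorm directly. We have
\[
 \mathfrak q_{a,R}(\mathcal L_jc)
 =\sum_{\boldsymbol\sigma}\sum_{(r,I)}|r_j|\,|c_{\boldsymbol\sigma;r,I}|\,\omega_{\tau(\boldsymbol\sigma),R}(r,I).
\]
For each of the finitely many $\tau=\tau(\boldsymbol\sigma)$, the second estimate of Lemma~\ref{lem:weight-estimates} gives $|r_j|\omega_{\tau,R}(r,I)\le(|r_j|+1)\omega_{\tau,R}(r,I)\le C^{\tau}_{R,L}\,\omega_{\tau,L}(r,I)$. I would take $C_{R,L}$ to be the maximum of these constants over the finitely many faces $\tau$ and over $j$. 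Summing then gives $\mathfrak q_{a,R}(\mathcal L_jc)\le C_{R,L}\mathfrak q_{a,L}(c)$. The lemma is stated for integer radii $2\le R<L$. For general $R<L$, it suffices to bound $\mathfrak q_{a,R}$ by $\mathfrak q_{a,\lceil R\rceil}$ (weights increase with $R$ since $\nu_\tau\ge0$), and $\mathfrak q_{a,L}$ from below by an integer radius if needed; otherwise I would simply note that the proof of the lemma works verbatim for real $t=R/L<1$.

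Finally, the seminorms $\mathfrak q_{a,R}$ define the Fréchet topology of $C_p^a$ by Theorem~\ref{thm:global-analytic-sector}. The inequality therefore shows that each $\mathcal L_j$ is a continuous linear endomorphism of $C_p^a$, and it commutes with the Čech differential because it is sectorwise. Compositions and linear combinations of continuous operators are continuous, so every polynomial in $\mathcal L_1,\dots,\mathcal L_N$ is continuous; iterating gives explicit bounds through a chain $R<R_1<\cdots<L$. The only point needing care is the first step, justifying the termwise action on the completed sum, and this follows from the topological isomorphism in Theorem~\ref{thm:local-Kothe-Laurent}.
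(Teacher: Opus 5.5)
Your proposal is correct and is essentially the paper's argument: $\mathcal L_j$ acts on each sector by the scalar $r_j$, you sum the second estimate of Lemma~\ref{lem:weight-estimates} over the finitely many \v{C}ech components, and iteration and addition handle polynomials. One small correction: that lemma is already stated for real radii $2\leq R<L$, so the detour through integer radii is unnecessary, and the ceiling trick would in fact fail when $\lceil R\rceil\geq L$; your fallback remark that the proof works verbatim for real $t=R/L<1$ already covers this case.
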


\begin{proof}
On a sector the multiplier is $r_j$.  Summing the second estimate of
Lemma~\ref{lem:weight-estimates} over the finitely many \v{C}ech
components gives the displayed bound.  Iteration gives the estimate for products of Euler operators, and addition
gives it for polynomials.
\end{proof}

For $\gamma\in\GammaL$, define
\[
 M_\gamma
 =
 \max\left(
 1,e^{|\operatorname{Re}A_1(\gamma)|},\ldots,
 e^{|\operatorname{Re}A_N(\gamma)|}
 \right).
\]

\begin{proposition}
The deck action is continuous and
\[
 \mathfrak q_{a,R}(\gamma^*c)
 \leq
 M_\gamma^p
 \mathfrak q_{a,M_\gamma R}(c).
\]
\end{proposition}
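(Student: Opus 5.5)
The plan is to compute the deck action termwise on Laurent coefficients and then compare weights. A deck transformation $\gamma$ acts diagonally, $z_j\mapsto e^{A_j(\gamma)}z_j$, and it preserves every chart $U_\tau$ and every \v{C}ech intersection. By the preceding proposition, it acts on the sector $\mathscr P_{r,I}^\bullet$ through the character $\chi_r(\gamma)$. Hence
\[
 (\gamma^*c)_{\boldsymbol\sigma;r,I}=\chi_r(\gamma)\,c_{\boldsymbol\sigma;r,I},
\]
with no mixing between sectors or between \v{C}ech components. It therefore suffices to bound $|\chi_r(\gamma)|\,\omega_{\tau,R}(r,I)$ by $M_\gamma^p\,\omega_{\tau,M_\gamma R}(r,I)$ for every allowed pair $(r,I)\in\mathfrak E_{\tau,p}$.

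First rewrite the character in terms of the shifted exponent $\widehat r=r-\mathbf 1_I$:
\[
 |\chi_r(\gamma)|=\exp\Bigl(\sum_j\widehat r_j\operatorname{Re}A_j(\gamma)\Bigr)\cdot\exp\Bigl(\sum_{i\in I}\operatorname{Re}A_i(\gamma)\Bigr).
\]
The second factor is at most $M_\gamma^{|I|}=M_\gamma^p$. For the first factor, each summand satisfies $\widehat r_j\operatorname{Re}A_j(\gamma)\le|\widehat r_j|\log M_\gamma$. The first factor is therefore at most $M_\gamma^{\sum_j|\widehat r_j|}$.

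On an allowed pair one has $\widehat r_j\ge0$ for $j\in\tau$. This gives
\[
 \sum_j|\widehat r_j|=\nu_\tau(r,I)\ge0 .
\]
Combining the two bounds,
\[
 |\chi_r(\gamma)|\,R^{\nu_\tau(r,I)}\le M_\gamma^p\,(M_\gamma R)^{\nu_\tau(r,I)}=M_\gamma^p\,\omega_{\tau,M_\gamma R}(r,I).
\]
Summing over sectors and over the finitely many normalized \v{C}ech simplices gives
\[
 \mathfrak q_{a,R}(\gamma^*c)\le M_\gamma^p\,\mathfrak q_{a,M_\gamma R}(c).
\]
This seminorm estimate proves continuity on the completed sector sum, and through Theorem~\ref{thm:global-analytic-sector} it gives continuity on $C_p^a$.

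The only delicate point is a bookkeeping issue with the radii. The seminorms were defined for integer radii $R\ge2$, and $M_\gamma R$ need not be an integer. The seminorm $\mathfrak q_{a,R}$ makes sense for every real $R\ge2$ and is monotone in $R$, because $\nu_\tau(r,I)\ge0$. So $M_\gamma R$ can be replaced by $\lceil M_\gamma R\rceil$ whenever integer radii are required. The sign condition $\nu_\tau\ge0$ on allowed pairs is exactly what makes both the weight comparison and this monotonicity work.
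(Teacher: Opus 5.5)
Your proof is correct and follows essentially the same route as the paper: each deck map acts on the sector $(r,I)$ by the scalar $\chi_r(\gamma)$, and you bound $|\chi_r(\gamma)|\leq M_\gamma^{p}M_\gamma^{\nu_\tau(r,I)}$ and absorb the second factor by enlarging the weight radius to $M_\gamma R$. The paper obtains this bound from $\sum_j|r_j|\leq p+\nu_\tau(r,I)$, whereas you split $r=\widehat r^{\,I}+\mathbf 1_I$; your remark on non-integer radii and monotonicity in $R$ is a harmless bookkeeping point that the paper leaves implicit.
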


\begin{proof}
On sector $(r,I)$, the action is multiplication by $\chi_r(\gamma)$.
For an allowed pair on $U_\tau$,
\(
 \sum_j|r_j|\leq p+\nu_\tau(r,I).
\)
Hence
\[
 |\chi_r(\gamma)|
 \leq
 M_\gamma^pM_\gamma^{\nu_\tau(r,I)}.
\]
The estimate follows by increasing the weight radius.
\end{proof}

\begin{proposition}[Continuous \v{C}ech product]
\label{prop:continuous-Cech-product}
The Alexander--Whitney \v{C}ech product followed by wedge,
\[
 C_p^a\times C_{p'}^b\longrightarrow C_{p+p'}^{a+b},
\]
is jointly continuous and satisfies
\[
 \mathfrak q_{a+b,R}(c\smile d)
 \leq
 \mathfrak q_{a,R}(c)\,\mathfrak q_{b,R}(d).
\]
On individual sectors, the product is zero unless $I\cap J=\varnothing$;
in that case
\[
 z^re_I\wedge z^se_J
 =
 (-1)^{\operatorname{inv}(I,J)}z^{r+s}e_{I\cup J},
\]
where
\(
 \operatorname{inv}(I,J)=\#\{(i,j)\in I\times J:i>j\},
\)
and $\chi_{r+s}=\chi_r\chi_s$.  Moreover,
\[
 \omega_{\tau,R}(r+s,I\cup J)
 \leq
 \omega_{\tau,R}(r,I)
 \omega_{\tau,R}(s,J).
\]
\end{proposition}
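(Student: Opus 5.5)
The proof works sector by sector and then assembles the estimates over the finitely many \v{C}ech simplices.

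\emph{Sector formula.} On the torus, the identity $z^re_I\wedge z^se_J=z^{r+s}e_I\wedge e_J$ is immediate. The form $e_I\wedge e_J$ vanishes when $I\cap J\neq\varnothing$, because it contains a repeated factor $dz_j/z_j$. When $I\cap J=\varnothing$, reordering the one-forms into increasing order gives the sign $(-1)^{\operatorname{inv}(I,J)}$. Multiplicativity of characters follows from linearity of $L_r$ in $r$.

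\emph{Weight estimate.} Put $\widehat{(r+s)}^{\,I\cup J}=\widehat r^{\,I}+\widehat s^{\,J}$. This holds because $\mathbf1_{I\cup J}=\mathbf1_I+\mathbf1_J$ for disjoint $I,J$. On $j\in\tau$ the weight exponent is additive. Off $\tau$, the triangle inequality $|x+y|\leq|x|+|y|$ gives $\nu_\tau(r+s,I\cup J)\leq\nu_\tau(r,I)+\nu_\tau(s,J)$. Since $R\geq2>1$, exponentiating gives the claimed inequality for $\omega_{\tau,R}$.

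\emph{Allowedness.} We must check that a product of allowed pairs is allowed on the intersection chart. The Alexander--Whitney product evaluates $c$ on $(\sigma_0,\dots,\sigma_a)$ and $d$ on $(\sigma_a,\dots,\sigma_{a+b})$, restricted to $\tau(\boldsymbol\sigma)$. That chart is contained in both $\tau$'s, so $c$ and $d$ restrict to allowed pairs on the common chart $U_{\tau(\boldsymbol\sigma)}$. By the restriction remark after Theorem~\ref{thm:local-Kothe-Laurent}, their weights there are computed by the same formula with $\tau(\boldsymbol\sigma)$. Nonnegativity of $\widehat r_j^{\,I}+\widehat s_j^{\,J}$ on $\tau(\boldsymbol\sigma)$ then holds, so the product pair is allowed.

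\emph{Seminorm bound.} Expand $c_{\boldsymbol\sigma'}\wedge d_{\boldsymbol\sigma''}$ as a double series over sectors. Several pairs $((r,I),(s,J))$ may contribute to the same output sector, but the $\ell^1$ seminorm of the sum is at most the sum of the weighted absolute values. Using the weight estimate, this is bounded by the product of the $\ell^1$ norms of the two factors on their respective faces. Every $\boldsymbol\sigma\in\mathfrak S_{a+b}$ determines its front face and back face uniquely. Summing over $\boldsymbol\sigma$ therefore yields a subset of the terms of $\mathfrak q_{a,R}(c)\,\mathfrak q_{b,R}(d)$, which proves the inequality.

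\emph{Conclusions.} Absolute convergence of the double series justifies the regrouping and shows that the product is a genuine element of $C^{a+b}_{p+p'}$. Joint continuity follows because the bound is bilinear in seminorms with the same $R$. The expected main obstacle is bookkeeping: restriction to the smaller chart $\tau(\boldsymbol\sigma)$ must not increase weights. This holds because $\omega_{\tau',R}=\omega_{\tau,R}$ on pairs allowed on $U_\tau$, which is exactly the identity recorded after Theorem~\ref{thm:local-Kothe-Laurent}.
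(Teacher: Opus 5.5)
Your proposal is correct and follows the paper's proof. The steps match: additivity of the shifted exponents for disjoint $I,J$, equality on $\tau$ and the triangle inequality off $\tau$, the weighted $\ell^1$ convolution bound, weight preservation under restriction to the total intersection, and summation over normalized simplices. One small correction in the last step: what makes the sum over $\boldsymbol\sigma$ a subset of the terms of $\mathfrak q_{a,R}(c)\,\mathfrak q_{b,R}(d)$ is the converse of what you state, namely that $\boldsymbol\sigma$ is recovered from its front and back faces. This holds because $\boldsymbol\sigma$ is their concatenation along $\sigma_a$.
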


\begin{proof}
When $I\cap J=\varnothing$, the shifted exponent of the product is
\[
 \widehat{r+s}^{\,I\cup J}
 =
 \widehat r^{\,I}+\widehat s^{\,J}.
\]
The triangle inequality in every inverted coordinate, and equality in
every noninverted coordinate, give the displayed weight inequality.
Weighted $\ell^1$ convolution therefore has norm at most the product of
the two norms.  For the Alexander--Whitney product, restriction from each
front or back intersection to the total intersection preserves its
weight.  Summing over the normalized simplices gives the global estimate.
\end{proof}

\begin{theorem}[Completed Laurent-sector theorem]
\label{thm:completed-Laurent-sector}
The Cox--Čech complex admits the unconditional analytic decomposition
\[
 \check C^\bullet(\mathcal U_K,\Omega_{U_K}^p)
 \cong
 \widehat{\bigoplus}_{\substack{r\in\Z^N\\|I|=p}}^{\,\mathrm{an}}
 \mathscr P_{r,I}^\bullet
\]
as a bounded complex of nuclear Fréchet spaces. Coefficient extraction,
reconstruction, sector projections, Čech restrictions, Euler operators,
deck transformations, and multiplication are continuous.
\end{theorem}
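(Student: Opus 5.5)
The theorem is essentially a summary of the results established in this subsection, so I will prove it by assembling them in order rather than by any new estimate. First, the isomorphism
\[
 \check C^\bullet(\mathcal U_K,\Omega_{U_K}^p)\cong
 \widehat{\bigoplus}^{\,\mathrm{an}}_{r,I}\mathscr P_{r,I}^\bullet
\]
in each Čech degree comes from Theorem~\ref{thm:local-Kothe-Laurent}, applied to the finitely many intersections $U_{\tau(\boldsymbol\sigma)}$ described in Proposition~\ref{prop:finite-Stein-Leray-cover}. Compatibility with the differential is the content of Theorem~\ref{thm:global-analytic-sector}. The key point there is that restriction from $U_\tau$ to $U_{\tau'}$ keeps the coefficients and does not change the weight $\omega_{\tau,R}(r,I)$ for pairs allowed on $U_\tau$. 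This gives $\mathfrak q_{a+1,R}(\delta c)\leq|K_{\max}|\,\mathfrak q_{a,R}(c)$, so the Čech differential is continuous on the completed sums.

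Next I check that the complex is bounded and consists of nuclear Fréchet spaces. The normalized alternating convention makes the complex vanish in degrees $a\geq|K_{\max}|$. Each local factor $\lambda^{\mathrm{an}}_{\tau,p}$ is nuclear by Lemma~\ref{lem:nuclear-Kothe-space}, and each degree is a finite product of such factors. Finite products of nuclear Fréchet spaces are again nuclear Fréchet, so each term of the complex has the required type.

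The continuity assertions I will verify one at a time.
\begin{itemize}
\item Coefficient extraction and reconstruction are continuous by the two inequalities $M_{\tau,R}\leq q_{\tau,R}$ and $q_{\tau,R}\leq C\,M_{\tau,L}$ in the proof of Theorem~\ref{thm:local-Kothe-Laurent}.
\item Sector projections are continuous and commute with $\delta$ by Theorem~\ref{thm:unconditional-sector-projections}.
\item Euler operators and polynomials in them are continuous by the estimate $\mathfrak q_{a,R}(\mathcal L_jc)\leq C_{R,L}\mathfrak q_{a,L}(c)$, which rests on Lemma~\ref{lem:weight-estimates}.
\item Deck transformations are continuous by the bound $\mathfrak q_{a,R}(\gamma^*c)\leq M_\gamma^p\,\mathfrak q_{a,M_\gamma R}(c)$. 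Since $M_\gamma R$ need not be an integer, I will replace it by the next integer radius, using that the seminorms increase with $R$.
\item Multiplication is jointly continuous by Proposition~\ref{prop:continuous-Cech-product}.
\end{itemize}

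The only step I expect to need care is checking that "unconditional decomposition" really follows from these facts. I interpret it as two properties. First, every cochain is the sum of its sector components in every seminorm $\mathfrak q_{a,R}$, for any enumeration of the sectors. Second, every subfamily of sectors can be projected onto continuously. The first property holds because the seminorms are weighted $\ell^1$ norms, so a tail of the series tends to zero regardless of ordering. This is the density argument of the corollary following Theorem~\ref{thm:unconditional-sector-projections}. The second property is the uniform bound $\mathfrak q_{a,R}(\Pi_{\mathcal E}c)\leq\mathfrak q_{a,R}(c)$.
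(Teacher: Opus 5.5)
Your proposal is correct and matches the paper's proof, which likewise only assembles the topological \v{C}ech model, the global analytic sector decomposition (including its nuclearity and boundedness), the unconditional sector projections, the continuous \v{C}ech product, and the Euler and deck estimates. Your extra remarks are harmless refinements consistent with the paper's setup: rounding $M_\gamma R$ up to an integer radius, and reading ``unconditional'' as order-independent weighted-$\ell^1$ convergence together with uniformly bounded projections onto subfamilies.
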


\begin{proof}
Combine Corollary~\ref{cor:topological-Cech-model},
Theorems~\ref{thm:global-analytic-sector} and
\ref{thm:unconditional-sector-projections}, and
Proposition~\ref{prop:continuous-Cech-product}, together with the Euler
and deck estimates above.  Nuclearity and boundedness were proved in
Theorem~\ref{thm:global-analytic-sector}.
\end{proof}

The finite-support quasi-isomorphism after deck descent is proved in
Theorem~\ref{thm:Euler-finite-support}.

For each fixed exponent, cohomology is now expressed by an induced
complex.  We next determine the pole complexes of resonant exponents.

\section{Resonance and convex vanishing}
\label{sec:resonance-Gale}

We now compute the sectors whose deck character is trivial.  Such
an exponent extends to an integral affine relation among the weights.
A positive Gale normalization turns that relation into a height on a
convex realization of $K$.  The pole set consists of the vertices of negative height together with selected vertices of height zero. The induced subcomplex is contractible when the resonant exponent is nonzero.

Write the indispensable labels as
\[
 D=\{a_0,a_1,\ldots,a_m\}.
\]
For $r\in\Z^N$, set
\[
 L_r=\sum_{j=1}^Nr_jA_j,
 \qquad
 \chi_r(\gamma)=e^{L_r(\gamma)}.
\]

\begin{definition}
The resonance lattice is
\[
 M_{\mathrm{res}}
 =
 \{r\in\Z^N:\chi_r=1\text{ on }\GammaL\}.
\]
\end{definition}

\begin{theorem}[Integral resonance criterion]
\label{thm:integral-resonance}
An exponent $r$ is resonant if and only if there is a unique
$\ell\in\Z^m$ such that
\[
 L_r=\sum_{\mu=1}^m\ell_\mu B_\mu.
\]
\end{theorem}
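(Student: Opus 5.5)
The plan is to reduce resonance to a statement about complex-linear functionals on $\C^m$ that take values in $2\pi i\Z$ on the lattice $\GammaL$. We are at full rank, so $B$ is invertible and $\GammaL=B^{-1}(2\pi i\Z^m)$ is generated by $\gamma_\nu=B^{-1}(2\pi i e_\nu)$ for $\nu=1,\ldots,m$. These vectors also form a complex basis of $\C^m$, by Proposition~\ref{prop:deck-Cartan-identification}.

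Since $L_r$ is complex-linear, the condition $\chi_r=1$ on $\GammaL$ means $L_r(\gamma)\in2\pi i\Z$ for every $\gamma\in\GammaL$. By additivity it suffices to check this on the generators, so the condition becomes $L_r(\gamma_\nu)=2\pi i\,\ell_\nu$ with $\ell_\nu\in\Z$ for each $\nu$. Given such integers, put $F=\sum_\mu\ell_\mu B_\mu$, a complex-linear functional. By construction of $\gamma_\nu$ we have $B_\mu(\gamma_\nu)=2\pi i\,\delta_{\mu\nu}$, so $F(\gamma_\nu)=2\pi i\,\ell_\nu=L_r(\gamma_\nu)$. Two complex-linear functionals that agree on a complex basis are equal, hence $L_r=F$. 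This proves the forward direction.

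For the converse, suppose $L_r=\sum_\mu\ell_\mu B_\mu$ with $\ell\in\Z^m$. Then for $\gamma\in\GammaL$ we get $L_r(\gamma)=\sum_\mu\ell_\mu B_\mu(\gamma)\in2\pi i\Z$, because $B(\gamma)\in2\pi i\Z^m$. Therefore $\chi_r(\gamma)=1$.

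Uniqueness of $\ell$ holds because the rows $B_1,\ldots,B_m$ are linearly independent when $\det B\neq0$. In any case, $\ell_\nu$ is recovered directly as $L_r(\gamma_\nu)/2\pi i$.

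The only point needing care is that the argument uses both halves of the structure of $\GammaL$: it is a lattice, so the condition can be checked on generators, and its generators form a complex basis, so a complex-linear functional is determined by its values on them. The second half fails at deficient rank, which is why the statement is confined to full rank. I do not expect a real obstacle. It may be worth noting, for the later Gale interpretation, that $L_r-\sum\ell_\mu B_\mu=0$ says that $\sum_j r_j(\Lambda_j-\Lambda_{a_0})-\sum_\mu\ell_\mu(\Lambda_{a_\mu}-\Lambda_{a_0})$ pairs to zero with every $T$. That expression is an integral affine relation among the weights, with coefficient $-\sum r_j+\sum\ell_\mu$ on $\Lambda_{a_0}$.
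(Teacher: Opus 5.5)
Your proposal is correct and follows essentially the paper's argument. The paper sets $\alpha_r=L_r\circ B^{-1}$, deduces from $\chi_r=1$ that $\alpha_r(\mathbb{Z}^m)\subseteq\mathbb{Z}$, and concludes that $\alpha_r$ has integer coefficients $\ell_\mu$, which is your evaluation $L_r(\gamma_\nu)=2\pi i\,\ell_\nu$ on the generators $\gamma_\nu=B^{-1}(2\pi i e_\nu)$ in different notation; the converse and the uniqueness of $\ell$ via invertibility of $B$ are handled the same way.
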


\begin{proof}
If $\chi_r=1$, define
\[
 \alpha_r=L_r\circ B^{-1}.
\]
For every $k\in\Z^m$,
\[
 1=e^{\alpha_r(2\pi ik)}=e^{2\pi i\alpha_r(k)},
\]
so
\[
 \alpha_r(k)\in\Z.
\]
Thus $\alpha_r(z)=\sum_\mu\ell_\mu z_\mu$ for a unique
$\ell\in\Z^m$. The converse is immediate on
$\GammaL=B^{-1}(2\pi i\Z^m)$.
\end{proof}

Define the integral affine relation lattice
\[
 \operatorname{Rel}_{\Z}(\Lambda)
 =
 \left\{
 c\in\Z^{N+m+1}:
 \sum_i c_i=0,\
 \sum_i c_i\Lambda_i=0
 \right\}.
\]

For resonant $r$, define
\[
 c_j=r_j,\qquad
 c_{a_\mu}=-\ell_\mu,\qquad
 c_{a_0}=-\sum_jr_j+\sum_\mu\ell_\mu.
\]

\begin{theorem}[Resonance--relation equivalence]
\label{thm:resonance-relation}
The extension above gives a lattice isomorphism
\[
 M_{\mathrm{res}}
 \overset{\sim}{\longrightarrow}
 \operatorname{Rel}_{\Z}(\Lambda).
\]
Its inverse is restriction to the visible labels.
\end{theorem}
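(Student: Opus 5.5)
The plan is to check directly that the extension map $r\mapsto c$ lands in $\operatorname{Rel}_\Z(\Lambda)$, that it is additive, and that restriction to the visible labels is a two-sided inverse. Additivity is immediate once we note that $\ell$ depends additively on $r$. Indeed, by Theorem~\ref{thm:integral-resonance}, $\ell$ is the unique integer vector with $L_r=\sum_\mu\ell_\mu B_\mu$. Since $B$ is invertible, $\ell$ is the coordinate vector of $L_r\circ B^{-1}$, and $L_r$ is linear in $r$.

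\emph{Membership in the relation lattice.} Integrality holds because $r\in\Z^N$ and $\ell\in\Z^m$. The coefficient sum vanishes by the definition of $c_{a_0}$. For the weight relation, we expand using the definitions $A_j(T)=\langle\Lambda_j-\Lambda_{a_0},T\rangle$ and $B_\mu(T)=\langle\Lambda_{a_\mu}-\Lambda_{a_0},T\rangle$. The identity $L_r=\sum_\mu\ell_\mu B_\mu$ then reads
\[
 \Bigl\langle\sum_j r_j\Lambda_j-\sum_\mu\ell_\mu\Lambda_{a_\mu}
 -\Bigl(\sum_jr_j-\sum_\mu\ell_\mu\Bigr)\Lambda_{a_0},\,T\Bigr\rangle=0
\]
for every $T\in\C^m$. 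Nondegeneracy of the complex-bilinear pairing gives $\sum_ic_i\Lambda_i=0$.

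\emph{Inverse map.} Take $c\in\operatorname{Rel}_\Z(\Lambda)$ and put $r=c|_{[N]}$. Using $\sum_ic_i=0$ to eliminate $c_{a_0}$, the relation $\sum_ic_i\Lambda_i=0$ becomes
\[
 \sum_jc_j(\Lambda_j-\Lambda_{a_0})+\sum_\mu c_{a_\mu}(\Lambda_{a_\mu}-\Lambda_{a_0})=0 .
\]
Pairing with $T$ gives $L_r=\sum_\mu(-c_{a_\mu})B_\mu$ with $-c_{a_\mu}\in\Z$. By the converse direction of Theorem~\ref{thm:integral-resonance}, $r$ is resonant: on $\GammaL$ we have $L_r(\gamma)=\sum_\mu(-c_{a_\mu})B_\mu(\gamma)\in2\pi i\Z$, so $\chi_r=1$. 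The uniqueness clause forces $\ell_\mu=-c_{a_\mu}$. Hence the extension of $r$ recovers $c_{a_\mu}$ for $\mu\geq1$, and recovers $c_{a_0}$ because both vectors have coefficient sum zero. Conversely, restricting the extension of $r$ to the visible labels clearly returns $r$. So the two maps are mutually inverse homomorphisms, which gives a lattice isomorphism.

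The only delicate point is the uniqueness of $\ell$, which needs $\det B\neq0$. It is used twice: to make the extension well defined, and to show that extension after restriction is the identity. It is supplied by Theorem~\ref{thm:integral-resonance} under the standing full-rank hypothesis of this part. The rest is bookkeeping with the chart convention $A_j,B_\mu$ measured relative to $\Lambda_{a_0}$.
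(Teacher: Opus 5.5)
Your proposal is correct and follows the paper's own argument. You expand the resonance identity $L_r=\sum_\mu\ell_\mu B_\mu$ into the integral affine relation and reverse that expansion for the inverse, using $\det B\neq0$ for the uniqueness of $\ell$. Beyond that, you only write out the bookkeeping the paper leaves implicit: additivity, integrality, and recovering $c_{a_0}$ from the zero-sum condition.
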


\begin{proof}
The resonance equation expands to
\[
 \sum_jr_j(\Lambda_j-\Lambda_{a_0})
 -
 \sum_\mu\ell_\mu
 (\Lambda_{a_\mu}-\Lambda_{a_0})
 =0,
\]
which is the affine relation for the displayed coefficients. Conversely,
every integral affine relation gives this resonance identity. Uniqueness
uses the invertibility of $B$.
\end{proof}

Write
\[
 \operatorname{Sat}_{\Z^N}(M_{\mathrm{res}})
 =
 \{r\in\Z^N:\text{some }k\geq1\text{ has }kr\in M_{\mathrm{res}}\}.
\]

\begin{proposition}
\[
 \operatorname{Sat}_{\Z^N}(M_{\mathrm{res}})
 =
 \{r:\chi_r\text{ has finite order}\}.
\]
\end{proposition}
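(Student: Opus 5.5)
We prove both inclusions by rewriting the finite-order condition on $\chi_r$ as a resonance condition for the multiple $kr$. The character map $r\mapsto\chi_r$ is a homomorphism from $\Z^N$ to $\operatorname{Hom}(\GammaL,\C^*)$, because $L_r$ is additive in $r$. Hence $\chi_{kr}=\chi_r^k$ for every integer $k\geq1$.

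First let $r\in\operatorname{Sat}_{\Z^N}(M_{\mathrm{res}})$, and choose $k\geq1$ with $kr\in M_{\mathrm{res}}$. Then $\chi_r^k=\chi_{kr}=1$ on $\GammaL$, so $\chi_r$ has finite order dividing $k$.

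Conversely, suppose $\chi_r$ has finite order $k\geq1$. Then $\chi_{kr}=\chi_r^k=1$ on $\GammaL$, so $kr\in M_{\mathrm{res}}$ by the definition of the resonance lattice. Therefore $r$ lies in the saturation.

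As an optional check, the statement can also be read through Theorem~\ref{thm:integral-resonance}. Since $\GammaL=B^{-1}(2\pi i\Z^m)$, the character $\chi_r$ has finite order exactly when $\alpha_r=L_r\circ B^{-1}$ takes rational values on $\Z^m$. This is the same as saying that $L_r$ is a rational combination of the rows $B_\mu$. Clearing denominators then recovers the integral criterion for $kr$.

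There is no genuine obstacle in this argument. The only point that needs care is that multiplicativity, $\chi_{r+s}=\chi_r\chi_s$, holds on the whole lattice $\GammaL$, and this follows directly from the linearity of $r\mapsto L_r$.
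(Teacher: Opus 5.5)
Your proof is correct and takes essentially the same approach as the paper, which also argues that $\chi_r$ has finite order exactly when $\chi_{kr}=1$ for some $k>0$. You spell out the multiplicativity $\chi_{kr}=\chi_r^k$ that the paper leaves implicit; your optional rational-coefficient reformulation via $\alpha_r=L_r\circ B^{-1}$ is also valid but not needed.
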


\begin{proof}
The character $\chi_r$ has finite order if and only if
$\chi_{kr}=1$ for some $k>0$.
\end{proof}

\subsection{Positive normalization of the affine Gale transform}

Let
\[
 \widetilde\Lambda_{\R}:
 \R^{N+m+1}\longrightarrow\R^{2m+1},
 \qquad
 e_i\longmapsto
 (1,\operatorname{Re}\Lambda_i,\operatorname{Im}\Lambda_i),
\]
and put
\[
 \mathcal R_\Lambda=\ker\widetilde\Lambda_{\R}.
\]
For every label define
\[
 q_i=\varepsilon_i|_{\mathcal R_\Lambda}
 \in\mathcal R_\Lambda^\vee.
\]
Then
\[
 \sum_iq_i=0.
\]

The vectors $q_i$ are the raw affine Gale vectors.  Gale transforms and
the freedom of positive scaling in Gale diagrams are classical
\cite[Chapter~5]{Grunbaum2003}; the same positive scaling enters the
LVM--toric correspondence of \cite{MeerssemanVerjovsky2004}.
Positive dehomogenization turns these vectors into the vertices of
the convex realization of $K$ used below.

Let
\[
 \Lambda_{\R}:\R^{N+m+1}\longrightarrow\R^{2m},
 \qquad
 e_i\longmapsto(\operatorname{Re}\Lambda_i,
                 \operatorname{Im}\Lambda_i),
 \qquad
 \mathcal H_\Lambda=\ker\Lambda_{\R}.
\]
Then
\(
 \mathcal R_\Lambda=\mathcal H_\Lambda\cap\ker(\sum_i-)
\)
and $\dim\mathcal H_\Lambda=d+1$.

\begin{lemma}[Positive Gale normalization]
\label{lem:positive-Gale-calibration}
There is an element
\[
 h=(h_i)\in\mathcal H_\Lambda,
 \qquad h_i>0,
 \qquad \sum_i h_i=1.
\]
For any such $h$, put
\[
 \widehat q_i=\frac{q_i}{h_i}\in\mathcal R_\Lambda^\vee.
\]
Define the visible polytope by
\[
 Q_h=\operatorname{conv}\{\widehat q_j:j\in[N]\}.
\]
Then $Q_h$ is a $d$-dimensional simplicial convex polytope whose boundary
is identified with $K$ by the visible labels.  The points $\widehat q_j$,
$j\in[N]$, are precisely its vertices.  Every indispensable point
$\widehat q_{a_\mu}$ lies in $Q_h$ and is a nonvertex; hence adjoining
these marked points does not change the convex hull.
\end{lemma}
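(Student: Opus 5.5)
Existence of $h$ is the first step.  By admissibility the origin lies in $\operatorname{conv}\{\Lambda_i\}$, and by Proposition~\ref{prop:affine-spanning} and Carath\'eodory it lies in the relative interior of a full-dimensional supporting simplex.  Every label outside a given minimal support is nonindispensable and lies in some other minimal support (this is the content of Lemma~\ref{lem:indispensable-bound} and the definition of visible labels).  Averaging the positive relations of finitely many supports that together cover all labels therefore gives a convex relation $0=\sum_ih_i\Lambda_i$ with every $h_i>0$ and $\sum_i h_i=1$.  This says precisely that $h\in\mathcal H_\Lambda$ with the required normalization.

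Next I dehomogenize.  The pairing $\langle x,\widehat q_i\rangle=x_i/h_i$ for $x\in\mathcal R_\Lambda$ identifies $\mathcal R_\Lambda^\vee$ with an affine chart.  Concretely, the vectors $(h_i,q_i)$ are the Gale vectors of $\mathcal H_\Lambda$.  Positive rescaling of Gale vectors preserves all positive-dependence patterns \cite[Chapter~5]{Grunbaum2003}.  After rescaling by $1/h_i$, the first coordinate of every vector equals $1$, so the rescaled vectors are the points $(1,\widehat q_i)$.

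I then apply the standard Gale criterion.  For a subset $J$, the condition $0\in\operatorname{relint}\operatorname{conv}\{\Lambda_i:i\notin J\}$ holds if and only if $\{\widehat q_j:j\in J\}$ spans a face of $\operatorname{conv}\{\widehat q_i\}$, in the sense that some linear functional on $\mathcal R_\Lambda$ is zero on those points and positive on the rest.  I would prove this directly.  A strictly positive relation $y$ supported on the complement of $J$ satisfies $y\in\mathcal H_\Lambda$; subtracting a suitable multiple $ch$ produces an element of $\mathcal R_\Lambda$, which gives the supporting functional $x\mapsto\langle x,\cdot\rangle$.  The converse runs the same computation backwards.

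Combining this criterion with Proposition~\ref{prop:Gale-dictionary} gives the vertex and face statements.  Singletons $\{j\}$ with $j$ visible are faces, so each $\widehat q_j$ is a vertex.  For an indispensable label $a$, the singleton $\{a\}$ is not in $\mathcal K_\Lambda$, so $\widehat q_a$ is not a vertex and hence lies in the hull of the others.  The faces are exactly the $J\in\mathcal K_\Lambda$, and these lie in $V$.  Weak hyperbolicity makes every proper face simplicial: a face with affinely dependent points would correspond to a complement support of size at most $2m$.  Full dimension $d$ follows from $\dim\mathcal R_\Lambda^\vee=d$ together with the fact that the $\widehat q_i$ sum, with positive weights $h_i$, to $0$ and are not contained in a hyperplane.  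The latter holds because $\sum_i q_i=0$ is the only relation, inherited from the spanning of the augmented vectors.

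The main obstacle is making the Gale-duality face correspondence precise in the affine, positively rescaled form, with signs correct, rather than in the linear, centrally symmetric form found in the standard references.  I would handle it by the explicit $y-ch$ computation above instead of citing it.
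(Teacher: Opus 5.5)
Your route is the paper's. You average convex relations to get $h$; your covering by minimal supports works, though the paper's interior-point argument is shorter. You then rescale the Gale vectors $(h_i,q_i)$ by $1/h_i$ and combine the face criterion with Proposition~\ref{prop:Gale-dictionary} and weak hyperbolicity. The only difference is that you check the face criterion by hand instead of citing its homogeneous form. Two steps are stated incorrectly, and one is missing.

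\textbf{The supporting function must be affine.} It cannot be linear on $\mathcal R_\Lambda^\vee$; you wrote ``on $\mathcal R_\Lambda$'', but the elements of $\mathcal R_\Lambda$ are the linear functionals on $\mathcal R_\Lambda^\vee$. For $x\in\mathcal R_\Lambda$ one has $\sum_i h_i\langle x,\widehat q_i\rangle=\sum_i x_i=0$. Hence a linear functional that is $\geq0$ at every $\widehat q_i$ vanishes at all of them, and your criterion as stated fails whenever the complement of $J$ is nonempty. Your own computation gives the right object. Let $y\in\mathcal H_\Lambda$ vanish on $J$ and be positive off $J$, and put $c=\sum_i y_i$ and $x=y-ch\in\mathcal R_\Lambda$. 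Then the \emph{affine} function $\phi=\langle x,\cdot\rangle+c$ satisfies $\phi(\widehat q_i)=y_i/h_i$, and conversely $y=x+ch$. Keeping the constant $c$ is exactly the sign issue you flagged.

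\textbf{The relint step is missing.} Your criterion is phrased with $0\in\operatorname{relint}$, whereas the defining condition of $\mathcal K_\Lambda$ is $0\in\operatorname{conv}\{\Lambda_i:i\notin J\}$. You need to record that the latter already implies the former, by Proposition~\ref{prop:affine-spanning}.

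\textbf{The full-dimensionality justification is wrong.} It is not true that $\sum_i q_i=0$ is the only relation among the $q_i$. Their relation space is $\mathcal R_\Lambda^\perp$, the row space of $\widetilde\Lambda_{\R}$, which has dimension $2m+1$. Full dimensionality follows instead from $\R h\cap\mathcal R_\Lambda=0$. Suppose every $\widehat q_i$ satisfied $\langle x,\widehat q_i\rangle=c$ with $0\neq x\in\mathcal R_\Lambda$. Then $x=ch$, and summing coordinates gives $c=0$ and $x=0$, a contradiction.

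Your simpliciality sketch is right once unpacked. An affine dependence among the $\widehat q_j$, $j\in J$, rescales by $1/h_j$ to a nonzero vector supported on $J$ that annihilates $\mathcal H_\Lambda$. Such a vector has the form $(\ell(\Lambda_i))_i$ for a real functional $\ell\neq0$. The complementary weights then lie in $\ker\ell$, and Carath\'eodory there gives a support of at most $2m$ weights, contradicting weak hyperbolicity. With these corrections the argument is complete.
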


\begin{proof}
Admissibility makes the origin an interior point of the convex hull of the
weights.  For each label $i$, choose a convex relation for the origin in
which the coefficient of $\Lambda_i$ is positive; averaging these finitely
many relations and normalizing gives $h_i>0$, $\Lambda_{\R}h=0$, and
$\sum_i h_i=1$.

Choose a basis $c^{(1)},\ldots,c^{(d)}$ of $\mathcal R_\Lambda$.  Since
$\mathcal H_\Lambda=\R h\oplus\mathcal R_\Lambda$, the rows
\[
 h,c^{(1)},\ldots,c^{(d)}
\]
form a basis matrix for $\ker\Lambda_{\R}$.  Its $i$-th column is
\[
 h_i\bigl(1,\widehat q_i\bigr),
\]
because the coordinates of $q_i$ in the dual basis are
$c_i^{(1)},\ldots,c_i^{(d)}$.  All $h_i$ are positive, so homogeneous
Gale duality applies to the dehomogenized point configuration
$(\widehat q_i)$.  More explicitly, if $P_h$ is its augmented point
matrix and $D_h=\operatorname{diag}(h_i)$, then
\[
 P_hD_h=[h;c^{(1)};\cdots;c^{(d)}],
\]
so the columns $h_i\Lambda_i$ form a Gale transform of $P_h$.  Positive
radial rescaling of the weights preserves every zero-in-convex-hull test.
The homogeneous Gale face criterion, weak hyperbolicity,
Proposition~\ref{prop:affine-spanning}, and
Proposition~\ref{prop:Gale-dictionary} therefore identify the boundary
face complex with $K$.  Its vertices are indexed exactly by the visible labels and its nonvertices exactly the indispensable labels, proving the
assertions about the convex hull.
\end{proof}

Fix such a positive normalization $h$ for the remainder of this section.

\begin{remark}[Why the positive normalization is necessary]
Positive radial rescaling preserves all convex-support tests and
rescales the Gale vectors inversely.  Before normalization, a visible Gale vector $q_i$ may lie in the convex hull of the other Gale vectors. The halfspace theorem is applied to the normalized vertices $\widehat q_i=q_i/h_i$ of $Q_h$.
\end{remark}

For $c\in\mathcal R_\Lambda$, define
\[
 \phi_c(q)=q(c).
\]
Then
\[
 \phi_c(q_i)=c_i,
 \qquad
 \phi_c(\widehat q_i)=\frac{c_i}{h_i}.
\]

\begin{definition}
An affine function on $\mathcal R_\Lambda^\vee$ is
\emph{normalized with respect to $h$} if
\[
 \sum_i h_i\phi(\widehat q_i)=0.
\]
It is \emph{$h$-integral} if
\[
 h_i\phi(\widehat q_i)\in\Z
\]
for every visible and indispensable label.
\end{definition}

Since
\(
 \sum_i h_i\widehat q_i=\sum_iq_i=0
\)
and $\sum_i h_i=1$, the affine functions normalized with respect to $h$ are precisely the
linear functions.  Define
\[
 \operatorname{Ht}_{\Z}(Q_h,\widehat{\mathbf q};h)
 =
 \left\{
 \phi|_{Q_h}:
 \begin{array}{l}
 \phi\text{ is an affine function normalized with respect to $h$ on }
       \mathcal R_\Lambda^\vee,\\
 h_i\phi(\widehat q_i)\in\Z
       \text{ for every visible and indispensable label }i
 \end{array}
 \right\}.
\]
Because $Q_h$ is full-dimensional, restriction to $Q_h$ is injective on
linear functions.

\begin{theorem}[Relation--height equivalence]
\label{thm:relation-height}
For every positive normalization $h$, the map
$c\mapsto\phi_c|_{Q_h}$ gives a lattice isomorphism
\[
 \operatorname{Rel}_{\Z}(\Lambda)
 \overset{\sim}{\longrightarrow}
 \operatorname{Ht}_{\Z}(Q_h,\widehat{\mathbf q};h).
\]
\end{theorem}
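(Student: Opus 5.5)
The map $c\mapsto\phi_c|_{Q_h}$ is a composite of two linear identifications, and I would check well-definedness, injectivity, and surjectivity in that order. First, every $c\in\operatorname{Rel}_{\Z}(\Lambda)$ lies in $\mathcal R_\Lambda$. Indeed, $\sum_ic_i=0$ and $\sum_ic_i\Lambda_i=0$ are exactly the statements that the real and imaginary parts of the weight equation and the augmentation vanish, so $\widetilde\Lambda_{\R}c=0$. Hence $\phi_c(q)=q(c)$ is a linear function on $\mathcal R_\Lambda^\vee$, and by the remark preceding the theorem it is normalized with respect to $h$. The integrality condition is immediate:
\[
 h_i\phi_c(\widehat q_i)=\phi_c(q_i)=c_i\in\Z
\]
for every visible and indispensable label. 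Thus the map lands in $\operatorname{Ht}_{\Z}(Q_h,\widehat{\mathbf q};h)$. It is additive, since $c\mapsto\phi_c$ is linear.

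For injectivity, if $\phi_c|_{Q_h}=0$, then $\phi_c=0$ as a linear function, because $Q_h$ is full-dimensional by Lemma~\ref{lem:positive-Gale-calibration} and restriction is injective on linear functions. Therefore $c_i=\phi_c(q_i)=0$ for every $i$, so $c=0$. Equivalently, the evaluation pairing between $\mathcal R_\Lambda$ and $\mathcal R_\Lambda^\vee$ is perfect.

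For surjectivity, let $\phi|_{Q_h}$ be an element of the target. By the normalization remark, $\phi$ is linear on $\mathcal R_\Lambda^\vee$. By finite-dimensional duality, $\phi=\phi_c$ for a unique real vector $c\in\mathcal R_\Lambda$, namely the vector with $c_i=\phi(q_i)=h_i\phi(\widehat q_i)$, since the $q_i$ are the coordinate functionals restricted to $\mathcal R_\Lambda$. The $h$-integrality hypothesis makes every $c_i$ an integer. Membership $c\in\mathcal R_\Lambda$ gives $\sum_ic_i=0$ and $\sum_ic_i\Lambda_i=0$ as real equations on the real and imaginary parts, hence as a complex equation. So $c\in\operatorname{Rel}_{\Z}(\Lambda)$ and $\phi_c=\phi$. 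Finally, the inverse map $\phi\mapsto(h_i\phi(\widehat q_i))_i$ is additive, so the bijection is a lattice isomorphism.

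The only point requiring care is the identification of the normalized affine functions with the linear ones, together with the reconstruction of $c$ from $\phi$. The normalization remark settles the first: for an affine $\phi$, $\sum_ih_i\phi(\widehat q_i)$ equals $\phi\bigl(\sum_ih_i\widehat q_i\bigr)$ because $\sum_ih_i=1$, and $\sum_ih_i\widehat q_i=\sum_iq_i=0$, so this quantity is $\phi(0)$. Hence $\phi$ is normalized exactly when $\phi(0)=0$, i.e. when it is linear. For the reconstruction, the vector $c$ obtained from $\phi$ must agree with the abstract dual vector, which holds because $q_i(c)=c_i$ by the definition $q_i=\varepsilon_i|_{\mathcal R_\Lambda}$. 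No convexity of $Q_h$ beyond full-dimensionality is needed.
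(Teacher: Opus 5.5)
Your proposal is correct and follows the paper's proof. The paper likewise identifies $\operatorname{Rel}_{\Z}(\Lambda)$ with $\mathcal R_\Lambda\cap\Z^{N+m+1}$, uses biduality to match $\mathcal R_\Lambda$ with the linear (equivalently, $h$-normalized) functions on $\mathcal R_\Lambda^\vee$, and reads off integrality in both directions from $c_i=\phi_c(q_i)=h_i\phi_c(\widehat q_i)$. You additionally spell out two steps the paper leaves implicit: that normalization is equivalent to linearity, and that restriction to the full-dimensional $Q_h$ is injective.
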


\begin{proof}
The relation lattice is
$\mathcal R_\Lambda\cap\Z^{N+m+1}$.
Canonical biduality identifies $\mathcal R_\Lambda$ with the linear
functions on $\mathcal R_\Lambda^\vee$.  Since
$q_i=h_i\widehat q_i$, evaluation gives
\[
 c_i=\phi_c(q_i)=h_i\phi_c(\widehat q_i).
\]
This proves both directions of the weighted integrality condition.
\end{proof}

\begin{theorem}[Resonant exponents as Gale heights]
\label{thm:resonance-Gale}
For every positive normalization $h$, there is a lattice isomorphism
\[
 M_{\mathrm{res}}
 \overset{\sim}{\longrightarrow}
 \operatorname{Ht}_{\Z}(Q_h,\widehat{\mathbf q};h),
 \qquad r\longmapsto\phi_{r,h}.
\]
At every visible vertex,
\[
 h_j\phi_{r,h}(\widehat q_j)=r_j,
 \qquad
 \operatorname{sign}\phi_{r,h}(\widehat q_j)
 =\operatorname{sign}r_j.
\]
\end{theorem}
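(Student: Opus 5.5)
The isomorphism will be obtained as the composite of the two lattice isomorphisms already established. First, Theorem~\ref{thm:resonance-relation} sends $r\in M_{\mathrm{res}}$ to the integral affine relation $c=c(r)\in\operatorname{Rel}_{\Z}(\Lambda)$ with $c_j=r_j$ on visible labels, $c_{a_\mu}=-\ell_\mu$, and $c_{a_0}$ determined by $\sum_i c_i=0$. This uses the unique $\ell$ of Theorem~\ref{thm:integral-resonance}, and the inverse is restriction to the visible labels. Second, Theorem~\ref{thm:relation-height} sends $c$ to $\phi_c|_{Q_h}$. We then define
\[
 \phi_{r,h}=\phi_{c(r)}|_{Q_h}.
\]
A composite of lattice isomorphisms is a lattice isomorphism, and both maps are additive, so no further argument is needed for the first assertion.

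For the pointwise identity, the proof of Theorem~\ref{thm:relation-height} gives
\[
 c_i=\phi_c(q_i)=h_i\phi_c(\widehat q_i)
\]
for every label $i$. Specializing to a visible label $j$ gives $h_j\phi_{r,h}(\widehat q_j)=c_j=r_j$. The sign statement follows because $h_j>0$ by Lemma~\ref{lem:positive-Gale-calibration}, so dividing by $h_j$ preserves sign, including the zero case.

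The one point to check carefully is that $c(r)$ really lies in $\mathcal R_\Lambda\cap\Z^{N+m+1}$. This means it must be integral, which holds since $r$ and $\ell$ are integral, and it must satisfy both the real augmented relations: the complex relation $\sum_i c_i\Lambda_i=0$ together with $\sum_i c_i=0$. A real integral vector satisfying these is killed by $\widetilde\Lambda_{\R}$, because taking real and imaginary parts of the complex relation gives the real equations. With that verified, the map is well defined, and the rest is bookkeeping. I expect this identification to be the only mildly delicate step.
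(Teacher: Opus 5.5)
Your proposal is correct and is exactly the paper's argument: compose Theorem~\ref{thm:resonance-relation} with Theorem~\ref{thm:relation-height}, then obtain $h_j\phi_{r,h}(\widehat q_j)=c_j=r_j$ and the sign statement from $h_j>0$. The membership check $c(r)\in\mathcal R_\Lambda\cap\Z^{N+m+1}$ that you flag is correct but not an extra step, since those two theorems already cover it: the paper identifies $\operatorname{Rel}_{\Z}(\Lambda)$ with $\mathcal R_\Lambda\cap\Z^{N+m+1}$.
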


\begin{proof}
Compose Theorems~\ref{thm:resonance-relation} and
\ref{thm:relation-height}.  The value and sign statements follow from
$h_j>0$.
\end{proof}

\subsection{The two-sign theorem}

\begin{theorem}\label{thm:two-sign-resonance}
If $0\neq r\in M_{\mathrm{res}}$, then $r$ has at least one positive and at
least one negative visible coordinate.
\end{theorem}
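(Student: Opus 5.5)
Via Theorem~\ref{thm:resonance-Gale}, I will turn $r$ into a nonzero linear height on the visible polytope $Q_h$; a nonzero linear function on a full-dimensional polytope containing the origin in its interior must take both signs at the vertices. Fix a positive normalization $h$ as in Lemma~\ref{lem:positive-Gale-calibration}, and let $\phi=\phi_{r,h}$ be the height attached to $r$. Since the map $r\mapsto\phi_{r,h}$ is a lattice isomorphism, $r\neq0$ gives $\phi\neq0$ as a linear function on $\mathcal R_\Lambda^\vee$. By the normalization, $\phi$ is linear, because the normalized affine functions are exactly the linear ones. Moreover, $\operatorname{sign}\phi(\widehat q_j)=\operatorname{sign}r_j$ at every visible vertex. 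So it suffices to show that a nonzero linear function takes a strictly positive value and a strictly negative value on the vertex set $\{\widehat q_j:j\in[N]\}$.

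First I show that the origin of $\mathcal R_\Lambda^\vee$ lies in the interior of $Q_h$. We have $\sum_i h_i\widehat q_i=\sum_iq_i=0$ with all $h_i>0$ and $\sum_ih_i=1$. Hence $0$ is a convex combination with strictly positive coefficients of all the points $\widehat q_i$. These points include the indispensable ones, but by Lemma~\ref{lem:positive-Gale-calibration} the indispensable points lie in $Q_h$, so $0\in Q_h$. Because every visible coefficient is positive and the visible points are the vertices of the full-dimensional polytope $Q_h$, the origin is a relative-interior point. Concretely, a strictly positive combination of points of $Q_h$ that uses all vertices with positive weight cannot lie on a proper face. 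Indeed, a supporting hyperplane vanishing at such a combination would have to contain every point with positive weight, hence every vertex, which contradicts $\dim Q_h=d\geq1$. So $0\in\operatorname{int}Q_h$.

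Next, the values of $\phi$ on the vertices cannot all be $\geq0$. If they were, then since $\phi$ is linear and $\phi(0)=0$, the origin would minimize $\phi$ on $Q_h$. An interior minimizer of a linear function forces the function to vanish identically, which contradicts $\phi\neq0$. Applying the same argument to $-\phi$ excludes the case that all values are $\leq0$. Therefore some vertex has $\phi(\widehat q_j)>0$ and some vertex has $\phi(\widehat q_{j'})<0$, and so $r_j>0$ and $r_{j'}<0$.

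Alternatively, one can avoid polytope language. From $\sum_ih_i\phi(\widehat q_i)=0$ with all $h_i>0$, the values $c_i=h_i\phi(\widehat q_i)$ already take both signs unless they all vanish. The real point is then that the visible coordinates alone take both signs, and this needs the fact that the indispensable points are not vertices. The main obstacle is to use correctly the fact, from Lemma~\ref{lem:positive-Gale-calibration}, that the indispensable points $\widehat q_{a_\mu}$ lie in $Q_h$. This is exactly what rules out the possibility that the visible coordinates of $r$ all vanish or all have one sign while the indispensable coefficients $c_{a_\mu}$ absorb the opposite sign.
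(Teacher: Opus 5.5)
Your proposal is correct, and it follows a genuinely different route from the paper.

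\textbf{The paper's argument.} It is analytic. If $r\geq0$, the monomial $z^r$ is holomorphic on $U_K$, and resonance makes it $\Gamma_\Lambda$-invariant. It therefore descends to a holomorphic function on the compact connected manifold $X_\Lambda$, so it must be constant. That is impossible for $r\neq0$ on the dense torus. The case $r\leq0$ is the same with $-r$.

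\textbf{Your argument.} You stay entirely on the convex side. Theorem~\ref{thm:resonance-Gale} turns $r$ into a nonzero linear height $\phi_{r,h}$. From $\sum_i h_i\widehat q_i=0$, the positivity of every $h_i$, and $\widehat q_{a_\mu}\in Q_h$, you show the origin is an interior point of the full-dimensional polytope $Q_h$. Hence the height takes both signs at vertices, and the sign identity transfers this to $r$.

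\textbf{Comparison.} The paper's proof is shorter and uses nothing from the Gale normalization, only the covering and compactness. Yours avoids complex analysis and never uses integrality. It in fact shows that every nonzero real affine relation among the weights has visible coefficients of both signs. It also proves directly the assertion, used in the halfspace proposition that follows, that $\phi_{r,h}$ takes both signs on the vertices of $Q_h$; the paper deduces that assertion from the present theorem.

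\textbf{What your argument costs.} It rests on the Gale-dictionary part of Lemma~\ref{lem:positive-Gale-calibration}, namely that the indispensable points lie in $Q_h$. That fact is the Gale dual of indispensability, and you can bypass it in relation coordinates. Suppose the visible coefficients $c_j=r_j$ are $\geq0$ and not all zero. Then $\sum_\mu c_{a_\mu}=-\sum_j r_j<0$, so some $c_{a_\mu}<0$. Increase $t$ in $h+tc$ until a coordinate first vanishes. Only indispensable coordinates decrease, so the result is a convex relation for the origin that omits an indispensable label, which is a contradiction.
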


\begin{proof}
Compactness rules out either one-sided sign pattern.  If $r\geq0$, then $z^r$ is holomorphic on $U_K$. Resonance makes it
$\GammaL$-invariant, so it descends to a holomorphic function on the
compact connected manifold $\XL$. It must be constant. Since
$(\C^*)^N\subset U_K$ and $r\neq0$, the monomial $z^r$ is nonconstant, a
contradiction. Apply the same argument to $-r$ to exclude $r\leq0$.
\end{proof}

For
\[
 V_r^-=\{j:r_j<0\},
 \qquad
 V_r^0=\{j:r_j=0\},
\]
one has
\[
 \Pole(r,I)=V_r^-\cup(I\cap V_r^0).
\]

\begin{proposition}[Halfspace form of the pole set]
For resonant $r$,
\[
 \Pole(r,I)
 =
 V_{\phi_{r,h}}^-\cup E',
\]
where
\[
 V_{\phi_{r,h}}^-
 =
 \{j:\phi_{r,h}(\widehat q_j)<0\},
 \qquad
 V_{\phi_{r,h}}^0
 =
 \{j:\phi_{r,h}(\widehat q_j)=0\},
 \qquad
 E'=I\cap V_{\phi_{r,h}}^0.
\]
If $r\neq0$, the height $\phi_{r,h}$ takes both signs on the vertices of
$Q_h$.
\end{proposition}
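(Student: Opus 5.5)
The plan is to derive the halfspace description directly from the combinatorial pole formula $\Pole(r,I)=V_r^-\cup(I\cap V_r^0)$ recorded just before the statement, together with the sign dictionary of Theorem~\ref{thm:resonance-Gale}. First I would verify that combinatorial formula itself from the definition $\Pole(r,I)=\{j:r_j<\mathbf 1_I(j)\}$. For $j\notin I$ the condition reads $r_j<0$. For $j\in I$ it reads $r_j<1$, that is $r_j\leq0$ since $r_j$ is an integer. The pole set is therefore $V_r^-\cup(I\cap V_r^0)$.

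Next I would translate the visible sign sets. By Theorem~\ref{thm:resonance-Gale}, for every visible $j$ one has $h_j\phi_{r,h}(\widehat q_j)=r_j$ with $h_j>0$, so $\operatorname{sign}\phi_{r,h}(\widehat q_j)=\operatorname{sign}r_j$. This gives $V_r^-=V_{\phi_{r,h}}^-$ and $V_r^0=V_{\phi_{r,h}}^0$ as subsets of $[N]$, and hence $I\cap V_r^0=E'$. Substituting these identities into the formula of the first step yields the displayed halfspace form. No additional care is needed for the indispensable points: the sets involved are indexed only by visible labels, and Lemma~\ref{lem:positive-Gale-calibration} shows that the points $\widehat q_j$, $j\in[N]$, are exactly the vertices of $Q_h$.

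For the final assertion, take $r\neq0$ resonant. Theorem~\ref{thm:two-sign-resonance} supplies visible labels $j,k$ with $r_j>0$ and $r_k<0$. The sign dictionary then gives $\phi_{r,h}(\widehat q_j)>0$ and $\phi_{r,h}(\widehat q_k)<0$. Since $\widehat q_j$ and $\widehat q_k$ are vertices of $Q_h$, the height takes both signs on the vertices.

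There is no real obstacle here; the only point to be careful about is the integrality step $r_j<1\iff r_j\le0$ in the first step. All the convex-geometric content has already been absorbed into Theorem~\ref{thm:resonance-Gale} and the compactness argument of Theorem~\ref{thm:two-sign-resonance}.
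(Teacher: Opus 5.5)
Your proposal is correct and follows essentially the same route as the paper's proof: you use the integrality step $r_j<1\iff r_j\leq0$ to get $\Pole(r,I)=V_r^-\cup(I\cap V_r^0)$, then the sign identity $h_j\phi_{r,h}(\widehat q_j)=r_j$ with $h_j>0$ from Theorem~\ref{thm:resonance-Gale}, and finally Theorem~\ref{thm:two-sign-resonance} for the two-sign assertion. Your added remark that the visible points $\widehat q_j$ are exactly the vertices of $Q_h$ (Lemma~\ref{lem:positive-Gale-calibration}) makes explicit a point the paper leaves implicit.
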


\begin{proof}
For $j\notin I$, the pole condition is $r_j<0$. For $j\in I$, it is
$r_j<1$, which is equivalent to $r_j\leq0$ because $r_j$ is integral.
The sign identity in Theorem~\ref{thm:resonance-Gale} gives the displayed
halfspace form, and Theorem~\ref{thm:two-sign-resonance} gives both signs.
\end{proof}

\subsection{Halfspace contractibility}
\label{sec:halfspace-contractibility}

Let $Q$ be a simplicial convex $d$-polytope, let
\[
 K=\partial Q,
\]
and let $\phi$ be affine with both positive and negative vertex values.
Set
\[
 V_\phi^-=\{v:\phi(v)<0\},
 \qquad
 V_\phi^0=\{v:\phi(v)=0\},
 \qquad
 V_\phi^+=\{v:\phi(v)>0\}.
\]

\subsubsection{Local polytope lemmas}

\begin{lemma}[Tangent-cone principle]
\label{lem:tangent-cone-principle}
For an affine function $\phi$ on $Q$, if a vertex $v$ has no adjacent vertex of smaller $\phi$-value, then $\phi$ attains its global minimum on $Q$ at $v$. If it has no adjacent vertex of larger value, then $\phi$ attains its global maximum at $v$.
\end{lemma}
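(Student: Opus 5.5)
The plan is to reduce the statement to the elementary fact that a linear function which does not decrease along the edges at a vertex of a convex polytope cannot decrease in any direction into the polytope. Fix the vertex $v$ of $Q$, and let $w_1,\ldots,w_k$ be the vertices adjacent to $v$, that is, the other endpoints of the edges of $Q$ through $v$. The key input is the standard tangent-cone description of a convex polytope at a vertex:
\[
 Q\subseteq v+\operatorname{cone}\{w_1-v,\ldots,w_k-v\}.
\]
I would justify this by writing $Q$ as the intersection of its facet halfspaces. Only the facets through $v$ are active at $v$, and the cone cut out by these active halfspaces is a pointed polyhedral cone with apex $v$. Its extreme rays are exactly the edge directions $w_i-v$, since one-dimensional faces of $Q$ through $v$ correspond to extreme rays of this cone. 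A pointed cone is generated by its extreme rays (Minkowski), and $Q$ is contained in the intersection of the active halfspaces, which gives the inclusion. In the simplicial or simple settings used in the paper this can be checked concretely, but the general fact suffices.

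With the inclusion in hand, write $\phi=\ell+c$ with $\ell$ linear. Any $x\in Q$ can be expressed as $x=v+\sum_i t_i(w_i-v)$ with all $t_i\ge 0$, and then
\[
 \phi(x)-\phi(v)=\sum_i t_i\bigl(\phi(w_i)-\phi(v)\bigr).
\]
Suppose no adjacent vertex has a smaller $\phi$-value. Then every summand is nonnegative, so $\phi(x)\ge\phi(v)$ for all $x\in Q$, and the global minimum is attained at $v$. The statement about the maximum follows by applying the same argument to $-\phi$.

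The main obstacle is the tangent-cone inclusion itself, specifically the claim that the cone of active facet inequalities is generated by the edge directions. I would cite this as standard polytope theory (for instance Ziegler, or Grünbaum, which the paper already cites) rather than reprove it. If a self-contained argument is wanted, one can take a hyperplane slicing off $v$ just beyond it: the slice is a $(d-1)$-polytope whose vertices lie on the edges through $v$, and the cone over this slice is the tangent cone.
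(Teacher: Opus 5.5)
Your proposal is correct and follows the paper's argument. In both, $Q-v$ lies in the tangent cone generated by the edge directions $w-v$, so nonnegativity of the linear part of $\phi$ on those directions gives a global minimum at $v$, and applying the same argument to $-\phi$ gives the maximum. The only difference is that you justify the tangent-cone inclusion through active facets and Minkowski's theorem, whereas the paper simply asserts it.
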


\begin{proof}
The tangent cone at $v$ is generated by the incident edge directions
$w-v$. If the linear part of $\phi$ is nonnegative on every such
direction, it is nonnegative on the tangent cone. Since $Q-v$ lies in this
cone, $v$ is a global minimum. Apply the argument to $-\phi$ for maxima.
\end{proof}

\begin{lemma}[Vertex-figure height]
\label{lem:vertex-figure-height}
Let $\phi$ be an affine function on $Q$ and let $Q_v$ be a vertex figure at $v$. There is an affine function $\phi_v$
on $Q_v$ whose value at the vertex corresponding to the edge $[v,w]$ has
the sign of
\[
 \phi(w)-\phi(v).
\]
If $Q$ is simplicial, then $Q_v$ is a simplicial $(d-1)$-polytope and
\[
 \partial Q_v\cong\operatorname{lk}_K(v).
\]
This is an isomorphism of abstract simplicial complexes: the vertex on
$[v,w]$ corresponds to the link vertex $w$.
\end{lemma}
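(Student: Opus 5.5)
The plan is to build the vertex figure as a hyperplane section near $v$ and restrict $\phi$ to it. Choose an affine function $\psi$ that equals $0$ at $v$ and is positive at every other vertex of $Q$; such a function exists because $v$ is a vertex. For small $\varepsilon>0$, put
\[
 Q_v=Q\cap\{\psi=\varepsilon\}.
\]
Standard polytope theory gives three facts, which I would take as known. First, $Q_v$ is a $(d-1)$-polytope. Second, its vertices are exactly the points
\[
 p_w=v+t_w(w-v),\qquad t_w=\frac{\varepsilon}{\psi(w)}\in(0,1),
\]
one for each edge $[v,w]$ of $Q$. Third, its faces correspond bijectively to the faces of $Q$ that properly contain $v$, by intersecting them with the hyperplane.

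Define $\phi_v=\phi|_{Q_v}-\phi(v)$. This is affine on $Q_v$, since it is the restriction of an affine function to an affine hyperplane. At the vertex on the edge $[v,w]$,
\[
 \phi_v(p_w)=t_w\bigl(\phi(w)-\phi(v)\bigr),
\]
and $t_w>0$, so $\phi_v(p_w)$ has the sign of $\phi(w)-\phi(v)$. That settles the first assertion.

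For the simplicial case, a face $F\ni v$ of $Q$ is a simplex with vertex set $\{v\}\cup\tau$. Its section $F\cap\{\psi=\varepsilon\}$ is then the simplex with vertices $p_w$ for $w\in\tau$. The map $F\mapsto\tau$ identifies the proper faces of $Q_v$ with the nonempty faces of $\operatorname{lk}_K(v)$. These are exactly the sets $\tau$ with $v\notin\tau$ and $\tau\cup\{v\}\in K$. Proper faces of $Q$ through $v$ correspond to proper faces of $Q_v$, so every facet of $Q_v$ is a simplex, and hence $Q_v$ is simplicial. Under this bijection the vertex $p_w$ goes to the link vertex $w$, which gives $\partial Q_v\cong\operatorname{lk}_K(v)$ as abstract simplicial complexes.

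The step needing the most care is the face correspondence between $Q_v$ and the faces of $Q$ through $v$. My approach is to choose $\varepsilon<\min_{w\neq v}\psi(w)$, so the hyperplane $\{\psi=\varepsilon\}$ separates $v$ from all other vertices. Each face containing $v$ then meets the hyperplane in a nonempty section of dimension one less. Faces not containing $v$ miss the hyperplane entirely, because all their vertices satisfy $\psi>\varepsilon$. Everything after that is routine bookkeeping.
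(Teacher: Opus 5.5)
Your proof is correct and follows the same route as the paper: cut $Q$ by a hyperplane separating $v$ from the other vertices, restrict $\phi-\phi(v)$, and read off the value $t_w(\phi(w)-\phi(v))$, with $t_w>0$, at the vertex on $[v,w]$. Your check of $\partial Q_v\cong\operatorname{lk}_K(v)$, via the correspondence between faces of $Q_v$ and faces of $Q$ through $v$, supplies the simplicial part that the paper treats as standard.
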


\begin{proof}
Choose the vertex figure by intersecting incident edges with a separating
hyperplane. The intersection point on $[v,w]$ is
$(1-t_w)v+t_ww$ with $t_w>0$. Restrict $\phi-\phi(v)$ to the hyperplane.
Its value is
\[
 t_w(\phi(w)-\phi(v)).
\]
\end{proof}

\begin{lemma}[Cone attachment]
\label{lem:cone-attachment}
Let $T$ be a vertex set and $v\notin T$. Then
\[
 K_{T\cup\{v\}}
 =
 K_T\cup
 \left(
 v*(\operatorname{lk}_K(v))_T
 \right).
\]
If $K_T$ and $(\operatorname{lk}_K(v))_T$ are contractible and the latter
is nonempty, then $K_{T\cup\{v\}}$ is contractible.
\end{lemma}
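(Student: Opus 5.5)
The set identity is a face-by-face comparison, and the contractibility claim then follows from gluing two contractible complexes along a contractible intersection. Let $L=\operatorname{lk}_K(v)$ and $T'=T\cup\{v\}$.

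For the equality, take a face $\sigma\in K_{T'}$, so $\sigma\in K$ and $\sigma\subseteq T'$. There are two cases.
\begin{itemize}
\item If $v\notin\sigma$, then $\sigma\subseteq T$, hence $\sigma\in K_T$.
\item If $v\in\sigma$, write $\sigma=\{v\}\cup\tau$ with $v\notin\tau$. Then $\tau\in L$ by definition of the link, and $\tau\subseteq T$. So $\tau\in L_T$ and $\sigma\in v*L_T$.
\end{itemize}
Conversely, $K_T\subseteq K_{T'}$ trivially. A simplex of $v*L_T$ is either $\tau$ or $\{v\}\cup\tau$ with $\tau\in L_T$. Both are faces of $K$, since $\tau\cup\{v\}\in K$ is exactly the link condition and faces are closed under subsets. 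Both also lie in $T'$. This gives equality of complexes, not only of vertex sets.

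Next we identify the intersection. Since $v\notin T$, $K_T\cap(v*L_T)$ consists of the simplices of $v*L_T$ not containing $v$, namely $L_T$. Indeed, every $\tau\in L_T$ is a face of $K$ contained in $T$, so it lies in $K_T$.

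The union $K_{T'}$ is therefore a pushout of geometric realizations $|K_T|\cup_{|L_T|}|v*L_T|$ along a subcomplex inclusion, which is a cofibration. The cone $v*L_T$ is contractible in any case. By hypothesis $K_T$ and $L_T$ are contractible, and $L_T$ is nonempty, so it is a genuine subcomplex and not the augmented empty one. Gluing two contractible CW complexes along a contractible subcomplex gives a contractible space. One way to see this: collapse the contractible subcomplex $|L_T|$, which is a homotopy equivalence for CW pairs. The result is a wedge of $|K_T|/|L_T|\simeq|K_T|$ and the cone modulo its base, which is $\simeq *$ because the cone is contractible. Alternatively, van Kampen gives trivial $\pi_1$, Mayer--Vietoris gives vanishing reduced homology, and Whitehead's theorem finishes.

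The only delicate point is the nonemptiness hypothesis. If $L_T=\varnothing$, then $v*L_T$ is the isolated vertex $v$, and the union would be disconnected. This is why the hypothesis is stated, and the pushout argument uses it exactly where the intersection must be a nonempty contractible subcomplex. Nothing else requires work beyond routine bookkeeping.
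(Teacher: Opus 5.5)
Your proof is correct and follows essentially the same route as the paper: you split faces according to whether they contain $v$, identify the intersection as the induced link $(\operatorname{lk}_K(v))_T$, and conclude from the gluing along a cofibration, where all three pieces are contractible. Your collapsing or Whitehead argument makes explicit the one-line homotopy-pushout step that the paper simply asserts.
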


\begin{proof}
The combinatorial decomposition is immediate by separating faces according
to whether they contain $v$. The two subcomplexes meet in the induced
link. Their geometric realizations form a homotopy pushout because
subcomplex inclusions are cofibrations. It is the homotopy pushout of two
contractible spaces over a contractible space, hence is contractible.
\end{proof}

\subsubsection{The negative subcomplex}

\begin{lemma}[Generic positive perturbation]
\label{lem:generic-positive-perturbation}
There is an affine function $\psi$ such that
\[
 V_\psi^-=V_\phi^-,
\]
all vertices in $V_\phi^0\cup V_\phi^+$ have positive $\psi$-value, and all
vertex values of $\psi$ are distinct.
\end{lemma}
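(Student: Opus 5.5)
We perturb $\phi$ by adding a small positive constant and then a small generic linear term. The vertex set of $Q$ is finite, so choose
\[
 \varepsilon_0=\min\{|\phi(v)|:\phi(v)\neq0\}>0,
\]
using the hypothesis that $\phi$ takes nonzero values on some vertices. Put $\phi_1=\phi+\varepsilon_0/2$. Vertices with $\phi(v)<0$ satisfy $\phi(v)\leq-\varepsilon_0$, hence $\phi_1(v)\leq-\varepsilon_0/2<0$. Vertices with $\phi(v)\geq0$ satisfy $\phi_1(v)\geq\varepsilon_0/2>0$. Thus $V_{\phi_1}^-=V_\phi^-$, and every vertex of $V_\phi^0\cup V_\phi^+$ has positive $\phi_1$-value. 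Moreover $\phi_1$ has no vertex zeros, and
\[
 \min_v|\phi_1(v)|\geq\varepsilon_0/2 .
\]

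Next we separate the values. For each pair of distinct vertices $v\neq w$, the linear functionals $\lambda$ with $\lambda(v)+\phi_1(v)=\lambda(w)+\phi_1(w)$ form an affine hyperplane in the dual space $(\R^d)^\vee$. It is a proper hyperplane because $v-w\neq0$, so the condition $\lambda(v-w)=\phi_1(w)-\phi_1(v)$ is a nontrivial linear equation. A finite union of proper affine hyperplanes has empty interior. We can therefore choose $\lambda$ outside all of them with
\[
 \max_v|\lambda(v)|<\varepsilon_0/4 ,
\]
which is possible because the set of $\lambda$ satisfying this bound is a nonempty open neighbourhood of $0$.

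Set $\psi=\phi_1+\lambda$. The vertex values of $\psi$ are pairwise distinct by the choice of $\lambda$. Each vertex value changes by less than $\varepsilon_0/4$ relative to $\phi_1$. Since $|\phi_1(v)|\geq\varepsilon_0/2$, the sign of every vertex value is preserved. Hence $V_\psi^-=V_\phi^-$, and all other vertices have positive $\psi$-value.

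The argument involves no real obstacle; the only point requiring attention is that the perturbation must be strictly smaller than the margin $\varepsilon_0/2$.
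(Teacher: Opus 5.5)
Your proof is correct and follows the same route as the paper: first shift $\phi$ by a positive constant smaller than the least nonzero vertex modulus, then add a small generic linear term. The paper states this in two lines. Your explicit margins $\varepsilon_0/2$ and $\varepsilon_0/4$, and your avoidance of the finitely many hyperplanes $\lambda(v-w)=\phi_1(w)-\phi_1(v)$, supply exactly the details it leaves implicit.
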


\begin{proof}
First add a sufficiently small positive constant to $\phi$, preserving the
signs of the originally nonzero vertex values and making the zero vertices
positive. Then add a sufficiently small generic affine perturbation.
\end{proof}

\begin{proposition}\label{prop:negative-halfspace-contractible}
The induced complex $K_{V_\phi^-}$ is contractible.
\end{proposition}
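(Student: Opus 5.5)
Replace $\phi$ by the generic perturbation $\psi$ of Lemma~\ref{lem:generic-positive-perturbation}. This does not change $V^-$, and now all vertex values are distinct and the nonnegative vertices are strictly positive. Since $\phi$ takes a negative value, $V_\psi^-$ is nonempty. Order its vertices by increasing $\psi$-value,
\[
 v_1,\ v_2,\ \ldots,\ v_k,
 \qquad
 \psi(v_1)<\psi(v_2)<\cdots<\psi(v_k)<0,
\]
and put $T_i=\{v_1,\ldots,v_i\}$. The plan is to prove by induction on $i$ that $K_{T_i}$ is contractible, adding one vertex at a time with the cone attachment Lemma~\ref{lem:cone-attachment}. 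The base case is immediate: $K_{T_1}$ is a single point.

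For the inductive step, pass from $T_{i-1}$ to $T_i$ by adjoining $v=v_i$. The new intersection is
\[
 (\operatorname{lk}_K(v))_{T_{i-1}}.
\]
Because $\psi(v)<0$ and every vertex of $T_{i-1}$ has smaller value, $T_{i-1}$ is exactly the set of vertices with $\psi$-value less than $\psi(v)$. Hence this complex is the induced subcomplex of the link on the link vertices $w$ with $\psi(w)<\psi(v)$.

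By Lemma~\ref{lem:vertex-figure-height}, this link is the boundary of the simplicial $(d-1)$-polytope $Q_v$. The link vertices in question are the vertices of $Q_v$ where the affine height $\psi_v$, with values of the sign of $\psi(w)-\psi(v)$, is negative. So the intersection is again a negative-halfspace induced subcomplex, one dimension lower, and I will run the whole argument as an induction on $d$.

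\begin{itemize}
\item \emph{Negative side.} $\psi_v$ has a negative vertex: otherwise $v$ would have no smaller neighbour, and Lemma~\ref{lem:tangent-cone-principle} would make $v$ a global minimum of $\psi$ on $Q$, contradicting $\psi(v_1)<\psi(v)$ for $i\geq2$.
\item \emph{Positive side.} $\psi_v$ has a positive vertex: otherwise $v$ would be the global maximum of $\psi$. That is impossible, since $\psi(v)<0$ and $\psi$ is positive at the vertices of $V^0\cup V^+$, which is nonempty because $\phi$ takes a positive value.
\end{itemize}

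Distinctness of the vertex values rules out zero values of $\psi_v$. Hence $\psi_v$ takes both signs on the vertices of $Q_v$, and the induction hypothesis in dimension $d-1$ says the induced link complex is nonempty and contractible. Lemma~\ref{lem:cone-attachment} then gives contractibility of $K_{T_i}$, and $i=k$ is the claim.

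The main obstacle is the base of the dimension induction. For $d=1$, $Q$ is a segment, $K$ is two points, and a both-sign $\phi$ has exactly one negative vertex, so $K_{V^-}$ is a point. For $d=2$, $K$ is a polygon and $V^-$ is a proper nonempty arc cut out by a line, hence a path and contractible. It must also be checked that the vertex figure inherits the genericity needed for the inductive hypothesis. If necessary, I would state the proposition for generic $\psi$ with nonzero distinct vertex values, prove it in that form, and deduce the general case through Lemma~\ref{lem:generic-positive-perturbation}. Positivity of the vertex-figure parameters $t_w$ preserves the strict signs.
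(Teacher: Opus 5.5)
Your proposal is correct and follows essentially the same argument as the paper: you perturb to $\psi$ via Lemma~\ref{lem:generic-positive-perturbation} and add the negative vertices in increasing $\psi$-order. You then identify each attaching link with the negative induced subcomplex of the vertex figure $Q_{v_i}$, use Lemma~\ref{lem:tangent-cone-principle} to see that this height takes both signs, and close with induction on $d$ and Lemma~\ref{lem:cone-attachment}. Your concern about genericity passing to the vertex figure is unnecessary, since the induction hypothesis is the full statement for every affine function taking both signs, so the perturbation lemma is simply reapplied inside $Q_{v_i}$; likewise the $d=1$ base case (a single negative endpoint) suffices and the separate $d=2$ check is not needed.
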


\begin{proof}
Induct on $d$. The case $d=1$ is a single negative endpoint.

Choose $\psi$ from
Lemma~\ref{lem:generic-positive-perturbation} and order the negative
vertices
\[
 \psi(v_1)<\cdots<\psi(v_s)<0.
\]
Let $S_i=\{v_1,\ldots,v_i\}$. The first induced complex is a point.

For $i\geq2$, the vertex $v_i$ is neither a global minimum nor a global
maximum. By
Lemma~\ref{lem:tangent-cone-principle}, it has a lower and a higher
neighbour. On the vertex figure, the induced height from
Lemma~\ref{lem:vertex-figure-height} takes both signs, and its negative
vertices are exactly the neighbours lying in $S_{i-1}$.  Under the
simplicial identification with $\operatorname{lk}_K(v_i)$, its negative
induced subcomplex is the attaching link.  By the induction
hypothesis in dimension $d-1$, the attaching link is contractible and
nonempty. Apply
Lemma~\ref{lem:cone-attachment}.
\end{proof}

\subsubsection{Arbitrary zero-level vertices}

\begin{theorem}[Halfspace contractibility]
\label{thm:halfspace-contractibility}
For every
\[
 E'\subseteq V_\phi^0,
\]
\[
 K_{V_\phi^-\cup E'}\text{ is contractible}.
\]
\end{theorem}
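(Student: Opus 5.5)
The plan is to reduce to Proposition~\ref{prop:negative-halfspace-contractible} by a perturbation trick. Given $E'\subseteq V_\phi^0$, I would replace $\phi$ by an affine function $\phi'$ whose negative vertex set is exactly $V_\phi^-\cup E'$, and which still takes both signs on the vertices. Then Proposition~\ref{prop:negative-halfspace-contractible}, applied to $\phi'$, gives the result directly. However, a single affine perturbation cannot in general make an arbitrary subset $E'$ of the zero level negative while keeping the rest of the zero level nonnegative. The zero-level vertices lie on a hyperplane $H=\{\phi=0\}$, and an arbitrary subset of $V_\phi^0$ need not be cut out by a halfspace within $H$. So this direct reduction works only for ``halfspace-cut'' subsets $E'$. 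For general $E'$ I would instead add the zero-level vertices one at a time using Lemma~\ref{lem:cone-attachment}.

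Concretely, enumerate $E'=\{w_1,\ldots,w_t\}$ and set $T_0=V_\phi^-$ and $T_k=T_{k-1}\cup\{w_k\}$. The base case $K_{T_0}$ is contractible by Proposition~\ref{prop:negative-halfspace-contractible}, and it is nonempty because $\phi$ takes a negative value at some vertex. For the inductive step, Lemma~\ref{lem:cone-attachment} requires the induced link $(\operatorname{lk}_K(w_k))_{T_{k-1}}$ to be nonempty and contractible. To analyse it, pass to the vertex figure $Q_{w_k}$ with the height $\phi_{w_k}$ of Lemma~\ref{lem:vertex-figure-height}. Since $\phi(w_k)=0$, the sign of $\phi_{w_k}$ at the link vertex $u$ equals the sign of $\phi(u)$. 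Hence the link vertices in $T_{k-1}$ are the $\phi_{w_k}$-negative vertices of $Q_{w_k}$, together with the zero-level vertices $w_1,\ldots,w_{k-1}$ adjacent to $w_k$, which are zero vertices of $\phi_{w_k}$. The induced link is therefore of the form $(\partial Q_{w_k})_{V^-\cup E''}$ for the height $\phi_{w_k}$, with $E''\subseteq V^0_{\phi_{w_k}}$. This is exactly the statement of the theorem in dimension $d-1$. So the whole argument should be an induction on $d$, with the theorem itself as the inductive hypothesis, and within each dimension an induction on $|E'|$. The base case $d=1$ is a check on an interval: the vertex set is two points, and the two-signs hypothesis forces one to be negative and one positive, so no zero-level vertex exists.

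The main obstacle is the two-signs hypothesis on the vertex figure, which is needed for both nonemptiness and the application of induction. By Lemma~\ref{lem:tangent-cone-principle}, $\phi_{w_k}$ takes both signs on $Q_{w_k}$ unless $w_k$ is a global extremum of $\phi$. Since $\phi(w_k)=0$ and $\phi$ has vertices of both signs, $w_k$ is neither the minimum nor the maximum. Hence $w_k$ has a strictly lower neighbour, which lies in $V_\phi^-$, and a strictly higher neighbour. The lower neighbour also makes the induced link nonempty. One subtle point I would check is that the tangent-cone lemma gives only non-strict information for non-generic heights. It must yield a neighbour with $\phi<0$, not merely $\phi\le 0$. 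This holds because a vertex with no strictly lower neighbour is a global minimum, and the global minimum is negative. With this, the inductive hypothesis applies to $(\partial Q_{w_k},\phi_{w_k},E'')$, Lemma~\ref{lem:cone-attachment} closes the step, and $K_{V_\phi^-\cup E'}$ is contractible.
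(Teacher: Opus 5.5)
Your proposal is correct and follows the paper's proof essentially step for step. Both argue by induction on $d$ with the full statement as inductive hypothesis, for an arbitrary selected zero set. Both add the zero-level vertices one at a time by Lemma~\ref{lem:cone-attachment}, and both identify each attaching link through the vertex figure with $(\partial Q_{w_k})_{V^-_{\phi_{w_k}}\cup E''}$ in dimension $d-1$; this link is nonempty and carries a two-signed height because $w_k$ is neither a global minimum nor a global maximum. Your preliminary remark that a single affine perturbation cannot handle an arbitrary $E'$ is also correct, and it explains why the argument, like the paper's, proceeds vertex by vertex.
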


\begin{proof}
Induct on $d$. The strict negative complex is contractible by
Proposition~\ref{prop:negative-halfspace-contractible}.

Order the selected zero vertices
\[
 E'=\{z_1,\ldots,z_t\}
\]
and add them successively. Suppose
\[
 T_{i-1}
 =
 V_\phi^-\cup\{z_1,\ldots,z_{i-1}\}.
\]
The zero vertex $z_i$ is neither a global minimum nor a global maximum,
because $\phi$ has both negative and positive vertices. Hence it has a
negative and a positive neighbour.

On the vertex figure $Q_{z_i}$, the induced height has the signs of
$\phi$ on the neighbouring vertices.  Under the simplicial identification
$\partial Q_{z_i}\cong\operatorname{lk}_K(z_i)$, the attaching link is
isomorphic to
\[
 (\partial Q_{z_i})_{V_{\phi_{z_i}}^-\cup E_i''},
\]
where $E_i''$ is the set of previously selected zero neighbours. By the
full induction hypothesis in dimension $d-1$, this link is contractible;
it is nonempty because there is a negative neighbour. Apply
Lemma~\ref{lem:cone-attachment}.
\end{proof}

\begin{corollary}
For every $E'\subseteq V_\phi^0$,
\[
 K_{V_\phi^+\cup E'}
\]
is contractible.
\end{corollary}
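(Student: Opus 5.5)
The plan is to reduce to Theorem~\ref{thm:halfspace-contractibility} applied to $-\phi$. Put $\phi'=-\phi$. It is again affine on $Q$ and takes both positive and negative vertex values, since $\phi$ does. Its level sets are
\[
 V_{\phi'}^-=V_\phi^+,\qquad V_{\phi'}^0=V_\phi^0,\qquad V_{\phi'}^+=V_\phi^-.
\]
Hence any $E'\subseteq V_\phi^0$ is also a subset of $V_{\phi'}^0$.

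Theorem~\ref{thm:halfspace-contractibility}, applied to $\phi'$ and this same $E'$, then shows that
\[
 K_{V_{\phi'}^-\cup E'}=K_{V_\phi^+\cup E'}
\]
is contractible, which is the statement.

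The only check needed is that the hypotheses of the theorem are symmetric under sign change. They are: the theorem assumes only that $\phi$ is affine with vertex values of both signs. Its proof goes through Lemmas~\ref{lem:tangent-cone-principle}, \ref{lem:vertex-figure-height}, \ref{lem:generic-positive-perturbation} and Proposition~\ref{prop:negative-halfspace-contractible}, and none of these uses anything beyond those two assumptions. So I expect no real obstacle here.
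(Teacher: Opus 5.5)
Your proposal is correct and is the paper's own proof: apply Theorem~\ref{thm:halfspace-contractibility} to $-\phi$, which swaps $V_\phi^+$ and $V_\phi^-$ and leaves $V_\phi^0$ unchanged. The theorem is stated for any affine function taking both signs on the vertices of a simplicial polytope. So you only need to observe that $-\phi$ satisfies these hypotheses; there is no need to re-examine the lemmas used in its proof.
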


\begin{proof}
Apply the theorem to $-\phi$.
\end{proof}

\subsubsection{Resonant sectors}

Let $0\neq r\in M_{\mathrm{res}}$. Then the positively normalized Gale
height $\phi_{r,h}$ takes both signs and
\[
 \Pole(r,I)
 =
 V_{\phi_{r,h}}^-\cup(I\cap V_{\phi_{r,h}}^0).
\]

\begin{theorem}[Vanishing of nonzero resonant sectors]
\label{thm:nonzero-resonant-vanishing}
For every $0\neq r\in M_{\mathrm{res}}$ and every $I$,
\[
 K_{\Pole(r,I)}
\]
is contractible. Consequently,
\[
 H^a(\mathscr P_{r,I}^\bullet)=0
\]
for every $a$.
\end{theorem}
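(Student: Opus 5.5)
The plan is to assemble three earlier results, and essentially no new argument is needed.

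Fix $0\neq r\in M_{\mathrm{res}}$ and $I$ with $|I|=p$. First, use a positive normalization $h$ from Lemma~\ref{lem:positive-Gale-calibration}. By Theorem~\ref{thm:resonance-Gale}, this gives a linear height $\phi=\phi_{r,h}$ on $\mathcal R_\Lambda^\vee$ satisfying $\operatorname{sign}\phi(\widehat q_j)=\operatorname{sign}r_j$ at every visible vertex of the simplicial $d$-polytope $Q_h$, with $\partial Q_h\cong K$ via the visible labels. By Theorem~\ref{thm:two-sign-resonance}, $r$ has coordinates of both signs, so $\phi$ takes both strict signs on the vertices of $Q_h$. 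This is exactly the hypothesis of Section~\ref{sec:halfspace-contractibility}.

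Second, the halfspace form of the pole set gives
\[
 \Pole(r,I)=V_\phi^-\cup E',\qquad E'=I\cap V_\phi^0\subseteq V_\phi^0 .
\]
Theorem~\ref{thm:halfspace-contractibility} therefore shows that $K_{\Pole(r,I)}$ is contractible. The pole set is nonempty, since $V_\phi^-\neq\varnothing$, so we are not in the augmented case $K_\varnothing$, where $\widetilde H^{-1}=\C$.

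Third, for the cohomological consequence, apply Theorem~\ref{thm:fixed-Laurent-sector}:
\[
 H^a(\mathscr P^\bullet_{r,I})\cong\widetilde H^{a-1}(K_{\Pole(r,I)};\C).
\]
This vanishes for every $a$ because a nonempty contractible complex has zero reduced cohomology. Equivalently, one can argue directly from Theorem~\ref{thm:relative-sector} and Theorem~\ref{thm:Hochster-nerve}: the pair $(\Delta_{\max},\mathfrak D_{\Pole(r,I)})$ consists of a contractible simplex and a nonempty subcomplex homotopy equivalent to $K_{\Pole(r,I)}$. That subcomplex is contractible, so the relative cohomology vanishes by the long exact sequence.

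The only point needing care is the identification of combinatorics in the first step. The halfspace theorem is stated for an abstract simplicial polytope $Q$ with $K=\partial Q$, and we must check that vertex labels, link identifications and induced subcomplexes of $\partial Q_h$ agree with those of $K$. This is supplied by Lemma~\ref{lem:positive-Gale-calibration}, which identifies the face complex of $Q_h$ with $K$ and shows that the indispensable points are nonvertices and play no role. I expect this bookkeeping to be the only mild obstacle. The rest is direct citation.
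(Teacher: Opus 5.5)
Your proposal is correct and follows the paper's proof: the paper applies Theorem~\ref{thm:halfspace-contractibility} to the Gale height $\phi_{r,h}$ on $Q_h$ with selected zero set $I\cap V^0_{\phi_{r,h}}$, and then applies the fixed Laurent-sector theorem, with the two signs supplied by Theorem~\ref{thm:two-sign-resonance}. Your remark that $\Pole(r,I)\supseteq V_r^-\neq\varnothing$, so that the augmented group $\widetilde H^{-1}$ also vanishes, makes explicit a point the paper leaves implicit.
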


\begin{proof}
Apply
Theorem~\ref{thm:halfspace-contractibility}
to $\phi_{r,h}$ on $Q_h$ and the selected zero set
$I\cap V_{\phi_{r,h}}^0$, then use the
fixed Laurent-sector theorem.
\end{proof}

\section{Derived deck descent and character contraction}
\label{sec:derived-deck-descent}

To pass from $U_K$ to $\XL$, we resolve the invariant subsheaf by
the Koszul complex of the deck generators.  On an evenly covered open
set, bilateral summation proves exactness of this resolution.

Choose an ordered $\Z$-basis
$\gamma_1,\ldots,\gamma_m$ of the deck lattice and put
\[
 V_\Gamma=\GammaL\otimes_\Z\C,
 \qquad
 V_\Gamma^\vee\cong\GammaLCdual,
\]
and let $\xi_1,\ldots,\xi_m$ be the dual basis.

For a $\GammaL$-module $M$, set
\[
 \Delta_i=\gamma_i^*-\operatorname{id}.
\]

\begin{definition}
The group--Koszul complex is
\[
 \Kos_{\GammaL}^s(M)
 =
 M\otimes\Lambda^sV_\Gamma^\vee
\]
with
\[
 d_\Gamma(m\otimes\eta)
 =
 \sum_{i=1}^m
 \Delta_i(m)\otimes\xi_i\wedge\eta.
\]
\end{definition}

We write
\(
 R\Gamma(\GammaL,-)=R((-)^{\GammaL})
\)
for derived invariants.  The derived invariant object is independent of the deck basis; a chosen ordered basis gives the displayed group--Koszul complex, and a change of basis gives a chain-homotopy-equivalent complex.

\begin{proposition}[Koszul model for derived invariants]
\label{prop:group-Koszul-derived-invariants}
Let $M^\bullet$ be a complex of $\C[\GammaL]$-modules.  The chosen
ordered basis identifies
\[
 \C[\GammaL]
 \cong
 \C[t_1^{\pm1},\ldots,t_m^{\pm1}].
\]
Then the total group--Koszul complex with operators
$t_i-1$ represents $R\Gamma(\GammaL,M^\bullet)$.
\end{proposition}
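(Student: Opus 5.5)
The plan is to use the standard fact that derived invariants are $\operatorname{RHom}_{\C[\GammaL]}(\C,-)$, and to resolve the trivial module $\C$ by the Koszul resolution over the Laurent polynomial ring. Under the chosen basis, $\C[\GammaL]\cong\C[t_1^{\pm1},\ldots,t_m^{\pm1}]$ with $t_i\leftrightarrow\gamma_i$. The augmentation ideal is generated by $t_1-1,\ldots,t_m-1$.

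First, I will show that $t_1-1,\ldots,t_m-1$ is a regular sequence in the Laurent ring. After the substitution $s_i=t_i-1$, the ring becomes a localization of the polynomial ring $\C[s_1,\ldots,s_m]$, and the $s_i$ are regular there. Localization is flat, and the quotient is $\C\neq0$ because the ideal is proper. Alternatively, one can induct on $m$: multiplication by $t_m-1$ on $\C[t_1^{\pm1},\ldots,t_{m-1}^{\pm1}][t_m^{\pm1}]$ is injective, with cokernel $\C[t_1^{\pm1},\ldots,t_{m-1}^{\pm1}]$. Hence the Koszul complex $\Kos(\C[\GammaL];t-1)$ is a finite free resolution of $\C$.

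Second, apply $\operatorname{Hom}_{\C[\GammaL]}(-,M^\bullet)$ to this resolution and form the total complex. Since the resolution is a bounded complex of finite free modules, $\operatorname{Hom}$ from it computes $\operatorname{RHom}(\C,M^\bullet)$ for an arbitrary complex $M^\bullet$, not only bounded below ones. In the direction of $M$ we use $K$-projectivity of bounded complexes of projectives, so no injective resolution of $M^\bullet$ is needed. Identifying $\operatorname{Hom}(\Lambda^s\C[\GammaL]^m,M)$ with $M\otimes\Lambda^sV_\Gamma^\vee$ through the dual basis $\xi_i$, the dual Koszul differential becomes $m\otimes\eta\mapsto\sum_i(t_i-1)m\otimes\xi_i\wedge\eta$. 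Because $t_i$ acts as $\gamma_i^*$, this is exactly $d_\Gamma$. The degree-zero cohomology is $\bigcap\ker\Delta_i=M^{\GammaL}$, which confirms that the augmentation is compatible with the invariants.

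Third, independence of the basis follows because two bases give two free resolutions of $\C$, which are chain homotopy equivalent over $\C[\GammaL]$. Applying $\operatorname{Hom}$ then gives homotopy equivalent total complexes. The signs in the total complex follow the usual Koszul convention, and I would fix them to match the total complex $\operatorname{Tot}\Kos_{\GammaL}^\bullet(C_p^\bullet)$ used later.

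I expect the only delicate point to be the claim for unbounded or general complexes $M^\bullet$. That is why I rely on $K$-projectivity of the bounded finite free resolution rather than on injective resolutions. The regularity of $t_i-1$ is routine.
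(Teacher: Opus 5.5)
Your proposal is correct and follows the paper's proof: show that $t_1-1,\ldots,t_m-1$ is a regular sequence with quotient $\C$, use the Koszul complex as a finite free resolution of $\C$, then apply $\operatorname{Hom}_{\C[\GammaL]}(-,M^\bullet)$ and totalize to recover $d_\Gamma$. Your explicit appeal to $K$-projectivity of the bounded free resolution to cover unbounded $M^\bullet$ is a point the paper leaves implicit, and your remark on basis independence is stated in the paper just before the proposition rather than proved inside it.
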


\begin{proof}
The sequence $t_1-1,\ldots,t_m-1$ is regular in the Laurent polynomial
ring, and its quotient is the trivial module $\C$.  The homological Koszul
complex on this sequence is therefore a finite free resolution of $\C$.
Applying
$\operatorname{Hom}_{\C[\GammaL]}(-,M^\bullet)$
and totalizing gives exactly the displayed cochain differential
$\sum_i(\gamma_i^*-1)\xi_i\wedge(-)$, with the standard total sign.
\end{proof}

\subsection{Bilateral summation}

Let $W$ be a complex vector space of arbitrary dimension.  For the continuity statements, assume that $W$
is locally convex and give each sequence space its product topology.  Put
\[
 \operatorname{Seq}(W)=\operatorname{Map}(\Z,W),
 \qquad
 (Tf)(\nu)=f(\nu+1),
 \qquad
 \Delta=T-\operatorname{id}.
\]
Define
\[
 (Hg)(\nu)
 =
 \begin{cases}
 \sum_{t=0}^{\nu-1}g(t),&\nu>0,\\
 0,&\nu=0,\\
 -\sum_{t=\nu}^{-1}g(t),&\nu<0.
 \end{cases}
\]

\begin{lemma}\label{lem:bilateral-summation}
Let $\iota:W\to\operatorname{Seq}(W)$ be the constant inclusion and
$\varepsilon_0(f)=f(0)$. Then
\[
 \Delta H=\operatorname{id},
 \qquad
 H\Delta=\operatorname{id}-\iota\varepsilon_0.
\]
Consequently,
\[
 0\to W\to\operatorname{Seq}(W)
 \xrightarrow{\Delta}\operatorname{Seq}(W)\to0
\]
is exact.
\end{lemma}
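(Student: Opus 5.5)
The plan is to check the two identities termwise by splitting into the cases $\nu>0$, $\nu=0$ and $\nu<0$. Exactness of the short sequence then follows formally from them. Only finite sums occur, so no topology or convergence is needed. The continuity remarks follow because each coordinate of $Hg$ is a finite linear combination of coordinates of $g$.

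For $\Delta H=\operatorname{id}$, I will compute $(Hg)(\nu+1)-(Hg)(\nu)$ in each case.
\begin{itemize}
\item If $\nu\geq0$, the difference is $\sum_{t=0}^{\nu}g(t)-\sum_{t=0}^{\nu-1}g(t)=g(\nu)$. At $\nu=0$ the second sum is the value $(Hg)(0)=0$.
\item If $\nu\leq-2$, both terms are negative partial sums, and $-\sum_{t=\nu+1}^{-1}g(t)+\sum_{t=\nu}^{-1}g(t)=g(\nu)$.
\item If $\nu=-1$, the difference is $0-(-g(-1))=g(-1)$.
\end{itemize}

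For $H\Delta$, I will take $g=\Delta f$ and use telescoping.
\begin{itemize}
\item If $\nu>0$, then $\sum_{t=0}^{\nu-1}(f(t+1)-f(t))=f(\nu)-f(0)$.
\item If $\nu<0$, then $-\sum_{t=\nu}^{-1}(f(t+1)-f(t))=-(f(0)-f(\nu))=f(\nu)-f(0)$.
\item If $\nu=0$, both $(H\Delta f)(0)=0$ and $f(0)-f(0)=0$.
\end{itemize}
In every case $H\Delta f=f-\iota\varepsilon_0 f$.

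Exactness follows from these identities.
\begin{itemize}
\item Injectivity of $\iota$ holds because $\varepsilon_0\iota=\operatorname{id}$.
\item $\Delta\iota=0$ because constant sequences are shift invariant.
\item If $\Delta f=0$, then $f=H\Delta f+\iota\varepsilon_0f=\iota(f(0))$, so $\ker\Delta=\operatorname{im}\iota$.
\item $\Delta$ is surjective because $\Delta H=\operatorname{id}$.
\end{itemize}

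There is no genuine obstacle. The only point requiring care is the boundary index $\nu=-1$ in the first identity, together with the sign convention for negative $\nu$; this is why the three-case split is written out explicitly.
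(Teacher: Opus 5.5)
Your proposal is correct and follows the same route as the paper, which proves both identities as finite telescoping sums. You additionally write out the three-case index split, including the $\nu=-1$ boundary, and the formal derivation of exactness, which the paper leaves implicit.
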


\begin{proof}
Both identities are finite telescoping sums.
\end{proof}

If $W$ is locally convex and the sequence space has the product topology,
all the displayed operators are continuous because each output coordinate depends on only finitely many coordinates.

\begin{theorem}[Coinduced Koszul resolution]
\label{thm:coinduced-Koszul}
For
\[
 \operatorname{Coind}_m(W)=\operatorname{Map}(\Z^m,W),
\]
let the $i$-th generator act by
\[
 (\gamma_i f)(\nu)=f(\nu+e_i).
\]
Then the augmented complex
\[
 0\to W\to \Kos_{\Z^m}^\bullet(\operatorname{Coind}_m(W))
\]
is exact. If $W$ is locally convex, the product topology gives a continuous deformation
retraction onto $W$ in degree zero.
\end{theorem}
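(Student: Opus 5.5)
The plan is to reduce to the one-variable case of Lemma~\ref{lem:bilateral-summation} by a Künneth/tensor-product argument, implemented as an explicit contracting homotopy so that continuity is visible. Identify $\operatorname{Coind}_m(W)=\operatorname{Map}(\Z^m,W)$ with the iterated sequence space $\operatorname{Seq}(\operatorname{Seq}(\cdots\operatorname{Seq}(W)))$, the $i$-th factor carrying the shift $T_i$, so that $\Delta_i=\gamma_i-\operatorname{id}$ acts only in the $i$-th variable. The operators $\Delta_i$ commute pairwise, and the group--Koszul complex $\Kos_{\Z^m}^\bullet(\operatorname{Coind}_m(W))$ is the total complex of the commuting $m$-cube built from the two-term complexes $\operatorname{Seq}\xrightarrow{\Delta_i}\operatorname{Seq}$, i.e.\ the Koszul complex of the commuting family $\Delta_1,\ldots,\Delta_m$.

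I will argue by induction on $m$ through a filtration/homotopy-perturbation step. For each $i$ let $H_i$ be the bilateral summation operator in the $i$-th variable, $\iota_i$ the constant inclusion in that variable, and $\varepsilon_{0,i}$ evaluation at $\nu_i=0$. Lemma~\ref{lem:bilateral-summation} applied with coefficient space $\operatorname{Map}(\Z^{m-1},W)$ gives $\Delta_iH_i=\operatorname{id}$ and $H_i\Delta_i=\operatorname{id}-\iota_i\varepsilon_{0,i}$. Moreover $H_i,\iota_i,\varepsilon_{0,i}$ commute with $\Delta_j$ for $j\neq i$, since they act in different variables. Define $h_m(f\otimes\xi_m\wedge\eta)=H_m f\otimes\eta$ and $h_m(f\otimes\eta)=0$ for $\eta$ not containing $\xi_m$. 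This is the standard contraction of the $m$-th Koszul factor. Then $d_\Gamma h_m+h_md_\Gamma=\operatorname{id}-P_m$, where $P_m$ is the chain projection onto the subcomplex of $\xi_m$-free cochains whose values are constant in $\nu_m$ (that is, $\iota_m\varepsilon_{0,m}$ on $\xi_m$-free terms, zero otherwise). That subcomplex is precisely $\Kos_{\Z^{m-1}}^\bullet(\operatorname{Coind}_{m-1}(W))$. The cross-term cancellations use only the commutation just noted and the Koszul sign for moving $\xi_m$ past $\xi_j$, exactly as in the proof of Lemma~\ref{lem:nonsquarefree-contractible}.

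Composing these steps, $P=P_1\cdots P_m$ retracts the complex onto $W$ in degree zero via evaluation at $\nu=0$ followed by the constant inclusion. The total homotopy is $h=h_m+h_{m-1}P_m+\cdots$, the usual composite of strong deformation retractions. This yields exactness of $0\to W\to\Kos^\bullet$. The augmentation is injective, and its image consists of the $\Delta$-invariant elements in degree zero, since $\ker$ of all $\Delta_i$ is the constant maps. For continuity, each coordinate of $H_i f$, $\iota_i\varepsilon_{0,i} f$ and $\Delta_i f$ depends on finitely many coordinates of $f$, so all maps are continuous for the product topology; finite compositions and sums preserve this.

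The main obstacle is the sign bookkeeping in the identity $d_\Gamma h_m+h_md_\Gamma=\operatorname{id}-P_m$. There $d_\Gamma$ places $\xi_i$ on the left, so I would order $\xi_m$ first (or use left contraction $\iota_{\xi_m}$) to make the sign uniform. I would also check that $P_m$ is genuinely a chain map, which follows because $\varepsilon_{0,m}$ commutes with the $\Delta_j$ for $j<m$ and $\Delta_m\iota_m=0$.
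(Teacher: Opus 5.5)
Your proposal is correct and follows essentially the same route as the paper: induction on $m$, the homotopy $h_m$ given by $H_m$ composed with contraction of $\xi_m$, the identity $d_\Gamma h_m+h_m d_\Gamma=\operatorname{id}-P_m$ retracting onto the $(m-1)$-variable complex, iteration, and continuity because every output coordinate is a finite coordinate sum. Your telescoping homotopy $h_m+h_{m-1}P_m+\cdots$ and your sign check with left contraction are exactly the details the paper compresses into ``iterate.''
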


\begin{proof}
Induct on $m$. In the last coordinate, let $H_m$ be the bilateral summation
operator of Lemma~\ref{lem:bilateral-summation} and set
\(
 \mathsf h_m=H_m\iota_{e_m},
\)
where $\iota_{e_m}$ contracts the exterior factor.  The operator $H_m$
commutes with the first $m-1$ shifts, while
\(
 \iota_{e_m}(\xi_i\wedge)+\xi_i\wedge\iota_{e_m}=0
\)
for $i<m$.  Lemma~\ref{lem:bilateral-summation} therefore gives
\[
 d_\Gamma\mathsf h_m+\mathsf h_m d_\Gamma
 =\operatorname{id}-P_m,
\]
where $P_m$ projects onto sequences constant in the last coordinate with
no $\xi_m$ factor.  Thus the full Koszul complex retracts onto the
$(m-1)$-variable complex; iterate.  Every output coordinate of every
resulting homotopy is a finite coordinate sum, so the deformation
retraction is continuous for the product topology.
\end{proof}

\subsection{The sheaf resolution}

Set
\[
 \mathcal F=\Omega_{U_K}^p,
 \qquad
 \mathcal E=\Omega_{\XL}^p.
\]
On an evenly covered open set $\mathcal V\subseteq \XL$,
\[
 \pi_*\mathcal F(\mathcal V)
 \cong
 \operatorname{Map}(\GammaL,\mathcal E(\mathcal V)).
\]
Indeed, choose one sheet $\widetilde{\mathcal V}$ and let
$j_\gamma:\mathcal V\to\gamma\widetilde{\mathcal V}$ be the inverse
covering chart.  The identification is
\[
 \Theta(s)(\gamma)
 =j_\gamma^*\!\left(s|_{\gamma\widetilde{\mathcal V}}\right),
 \qquad
 \Theta(\delta^*s)(\gamma)=\Theta(s)(\delta\gamma).
\]
Thus the deck basis acts by the coordinate shifts above, and the inclusion
\(\mathcal E\hookrightarrow\pi_*\mathcal F\)
is the constant-family inclusion.  In particular,
\[
 \mathcal E=(\pi_*\mathcal F)^{\GammaL}.
\]

\begin{theorem}[Group--Koszul sheaf resolution]
\label{thm:group-Koszul-sheaf}
The sequence
\[
 0\to\mathcal E
 \to\pi_*\mathcal F
 \to\pi_*\mathcal F\otimes V_\Gamma^\vee
 \to\cdots
 \to\pi_*\mathcal F\otimes\Lambda^mV_\Gamma^\vee
 \to0
\]
is exact.
\end{theorem}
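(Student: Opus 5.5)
Exactness of a complex of sheaves is a stalkwise statement, and it suffices to check it on sections over a basis of open sets. I will therefore verify it over evenly covered open sets $\mathcal V\subseteq\XL$ (these form a basis of the topology, since $\pi$ is a covering by Theorem~\ref{thm:full-rank-Cox-quotient}). On such $\mathcal V$ I use the identification $\Theta$ displayed just before the statement:
\[
 \pi_*\mathcal F(\mathcal V)\cong\operatorname{Map}(\GammaL,\mathcal E(\mathcal V)).
\]
Under $\Theta$ the deck action becomes left translation, and $\mathcal E(\mathcal V)$ sits inside as the constant maps.

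The first step is to transport the differential. The chosen ordered basis $\gamma_1,\ldots,\gamma_m$ gives an isomorphism $\GammaL\cong\Z^m$, $\nu\mapsto\sum\nu_i\gamma_i$. Since $\GammaL$ is abelian, the rule $\Theta(\delta^*s)(\gamma)=\Theta(s)(\delta\gamma)$ says that $\gamma_i^*$ acts by the coordinate shift $f(\nu)\mapsto f(\nu+e_i)$. Hence, on sections over $\mathcal V$, the complex in the statement is identified term by term with the augmented complex
\[
 0\to W\to\Kos^\bullet_{\Z^m}(\operatorname{Coind}_m(W)),\qquad W=\mathcal E(\mathcal V).
\]
Here the maps $\Delta_i=\gamma_i^*-\operatorname{id}$ and the wedge with $\xi_i$ correspond exactly. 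The augmentation $\mathcal E\to\pi_*\mathcal F$ is pullback, which under $\Theta$ is the constant inclusion.

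The second step is to invoke Theorem~\ref{thm:coinduced-Koszul}, which gives exactness of this augmented complex for an arbitrary vector space $W$; no topology is needed for exactness. Consequently the sequence of section groups over every evenly covered $\mathcal V$ is exact. Passing to the direct limit over evenly covered neighbourhoods of a point gives exactness at every stalk, because filtered colimits are exact.

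The only point I expect to require care is compatibility with restriction. The identification $\Theta$ depends on a choice of sheet $\widetilde{\mathcal V}$, and the complexes over different $\mathcal V$ must match under restriction for the stalk argument to work. I would handle this by observing that for $\mathcal V'\subseteq\mathcal V$ one can take the sheet $\widetilde{\mathcal V}\cap\pi^{-1}(\mathcal V')$, so that restriction commutes with $\Theta$ and with the shifts. Moreover, the Koszul differential is defined intrinsically via $\gamma_i^*$ on $\pi_*\mathcal F$, not via $\Theta$. So the differentials are independent of the chosen sheet, and the identifications only serve to check exactness on each $\mathcal V$ separately. That is all the argument needs.
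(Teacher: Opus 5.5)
Your proposal is correct and follows the paper's own argument: on evenly covered opens the augmented sequence becomes the coinduced group--Koszul complex of Theorem~\ref{thm:coinduced-Koszul}, and stalkwise exactness follows by passing to the colimit over a neighbourhood basis. Your remark that the differentials are defined intrinsically, so that the sheet-dependent identification $\Theta$ is only needed separately on each open, correctly fills in a point the paper leaves implicit.
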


\begin{proof}
Evenly covered opens form a basis.  On each such open the augmented
sequence is the coinduced resolution of
Theorem~\ref{thm:coinduced-Koszul}; hence the displayed sequence is exact
stalkwise.
\end{proof}

\begin{proposition}\label{prop:covering-direct-image}
For $q>0$,
\[
 R^q\pi_*\mathcal F=0.
\]
Consequently,
\[
 R\Gamma(\XL,\pi_*\mathcal F)
 \simeq
 R\Gamma(U_K,\mathcal F).
\]
\end{proposition}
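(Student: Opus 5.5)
The statement concerns the holomorphic covering $\pi:U_K\to\XL$ from Theorem~\ref{thm:full-rank-Cox-quotient}. The plan is to compute the higher direct images stalkwise and then deduce the comparison of derived global sections from the Leray spectral sequence. Equivalently, $R\Gamma(\XL,-)\circ R\pi_*=R\Gamma(U_K,-)$, so it suffices to show that $\mathcal F\to\pi_*\mathcal F$ computes $R\pi_*\mathcal F$.

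\emph{Step 1: stalks of $R^q\pi_*\mathcal F$.} Recall that $R^q\pi_*\mathcal F$ is the sheafification of $\mathcal V\mapsto H^q(\pi^{-1}\mathcal V,\mathcal F)$. Fix $x\in\XL$ and take a cofinal system of evenly covered neighbourhoods $\mathcal V$ of $x$, each chosen Stein (for instance small coordinate polydiscs). Then
\[
 \pi^{-1}\mathcal V=\bigsqcup_{\gamma\in\GammaL}\gamma\widetilde{\mathcal V}
\]
is a disjoint union of open sets, each biholomorphic to $\mathcal V$. Sheaf cohomology of a disjoint union is the product of the cohomologies of the pieces, so
\[
 H^q(\pi^{-1}\mathcal V,\mathcal F)\cong\prod_{\gamma}H^q(\gamma\widetilde{\mathcal V},\Omega^p).
\]
Each factor vanishes for $q>0$ by Cartan's theorem~B, because $\Omega^p$ is coherent and $\gamma\widetilde{\mathcal V}\cong\mathcal V$ is Stein. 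The presheaf therefore already vanishes on a basis of neighbourhoods, and so $R^q\pi_*\mathcal F=0$ for all $q>0$.

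A Stein basis is not strictly needed. Since each piece is a small polydisc, one could instead use a Dolbeault or Poincaré-type vanishing on polydiscs directly.

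\emph{Step 2: the comparison.} With the higher direct images gone, the Leray spectral sequence $H^a(\XL,R^b\pi_*\mathcal F)\Rightarrow H^{a+b}(U_K,\mathcal F)$ collapses to the row $b=0$. At the derived level this says that the natural map $\pi_*\mathcal F\to R\pi_*\mathcal F$ is a quasi-isomorphism. Applying $R\Gamma(\XL,-)$ and using the composition $R\Gamma(\XL,R\pi_*\mathcal F)\simeq R\Gamma(U_K,\mathcal F)$ gives the claim.

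The same conclusion also follows from a single explicit resolution. Take the Dolbeault resolution of $\mathcal F$ by fine sheaves; because $\pi$ is a local homeomorphism, $\pi_*$ preserves softness, and the pushforward is exact by Step~1. Taking global sections on both sides then yields the same complex $\Gamma(U_K,\mathcal A^{p,\bullet})$.

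I expect no real obstacle here. The one point that needs care is that cohomology of the infinite disjoint union $\pi^{-1}\mathcal V$ is the full product over the components. This holds for sheaf cohomology on any topological coproduct, since flasque resolutions restrict componentwise and $\Gamma$ of a coproduct is the product of the $\Gamma$'s. With that in place, exactness of the product of the componentwise vanishing groups is immediate.
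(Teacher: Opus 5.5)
Your proof is correct and follows essentially the same route as the paper. Both arguments take evenly covered Stein neighbourhoods whose preimages are disjoint unions of Stein sheets, write the cohomology of the disjoint union as the product over the sheets, kill each factor with Cartan's theorem~B, pass to the colimit to get vanishing stalks of $R^q\pi_*\mathcal F$, and conclude by derived Leray; your Dolbeault-resolution variant is a valid extra but not needed.
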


\begin{proof}
Every point has a basis of evenly covered coordinate balls $\mathcal V$.
Their inverse images are disjoint unions of Stein sheets. Cartan's theorem~B
\cite{Cartan1953} gives vanishing of the higher cohomology of $\mathcal F$
on every sheet.  Sheaves on a disjoint union form the product of the sheaf
categories on its components, and products of complex vector spaces are
exact.  Thus cohomology of the disjoint union is the product of the sheet
cohomologies and also vanishes in positive degree.  Taking the filtered
colimit over this basis shows that every stalk of
$R^q\pi_*\mathcal F$ vanishes.  Derived Leray then gives the asserted
equivalence
\cite[Tags~01F4 and~01F5]{StacksProject}.
\end{proof}

\begin{theorem}[Derived deck descent]
\label{thm:derived-deck-descent}
There is a canonical equivalence
\[
 R\Gamma(\XL,\Omega_{\XL}^p)
 \simeq
 R\Gamma\!\left(
 \GammaL,R\Gamma(U_K,\Omega_{U_K}^p)
 \right).
\]
Here the inner derived global-sections object is regarded as an object of
$D(\C[\GammaL])$.
After the ordered basis of $\GammaL$ and an explicit equivariant representative
of $R\Gamma(U_K,\Omega_{U_K}^p)$ are chosen, the right-hand side is represented
by the corresponding group--Koszul total complex.
\end{theorem}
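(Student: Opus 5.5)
The plan is to combine the two sheaf-level inputs already proved: the group--Koszul resolution of Theorem~\ref{thm:group-Koszul-sheaf} and the vanishing of higher direct images in Proposition~\ref{prop:covering-direct-image}. Write $\mathcal E=\Omega_{\XL}^p$ and $\mathcal F=\Omega_{U_K}^p$.

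By Theorem~\ref{thm:group-Koszul-sheaf}, $\mathcal E$ is quasi-isomorphic to the bounded complex $\pi_*\mathcal F\otimes\Lambda^\bullet V_\Gamma^\vee$ with differential $d_\Gamma$. Applying $R\Gamma(\XL,-)$ termwise, a finite filtration or hypercohomology argument gives $R\Gamma(\XL,\mathcal E)$ as the total complex obtained from $R\Gamma(\XL,\pi_*\mathcal F)\otimes\Lambda^sV_\Gamma^\vee$ with the induced operators $\gamma_i^*-1$. Proposition~\ref{prop:covering-direct-image} replaces each $R\Gamma(\XL,\pi_*\mathcal F)$ by $R\Gamma(U_K,\mathcal F)$, and this replacement must be made $\GammaL$-equivariantly.

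To make the equivariance concrete, I would use the finite Stein cover $\mathcal U_K$ from Corollary~\ref{cor:topological-Cech-model}. Each chart $U_\sigma$ is deck-invariant, being defined by nonvanishing of coordinates, so $C_p^\bullet$ is a strict complex of $\C[\GammaL]$-modules representing $R\Gamma(U_K,\mathcal F)$. A \v{C}ech or Godement resolution of $\mathcal E$ compatible with $\pi$ then identifies the double complex $\Gamma(\XL,\pi_*\mathcal F^\bullet\otimes\Lambda^\bullet V_\Gamma^\vee)$ with $\operatorname{Tot}\Kos_{\GammaL}^\bullet(C_p^\bullet)$, up to an equivariant quasi-isomorphism.

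One technical point is that the images $\pi(U_\sigma)$ need not be Stein, so I would instead use equivariant Godement resolutions on $U_K$, whose pushforwards are $\pi_*$-acyclic and give canonical equivariant representatives. Finally, Proposition~\ref{prop:group-Koszul-derived-invariants} identifies the Koszul total complex with $R\Gamma(\GammaL,-)$ in $D(\C[\GammaL])$. This yields the canonical equivalence, and independence of the chosen basis follows from the remark preceding that proposition.

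The main obstacle is the equivariant comparison in the middle step. Derived Leray gives $R\Gamma(\XL,\pi_*\mathcal F)\simeq R\Gamma(U_K,\mathcal F)$ only in $D(\C)$, and I must upgrade it to an equivalence in $D(\C[\GammaL])$ that intertwines $\gamma_i^*$. I would handle this by functoriality: the deck action on $\pi_*\mathcal F$ is by sheaf automorphisms over $\XL$, and the canonical Godement construction is functorial in such automorphisms. Every comparison map therefore commutes strictly with the $\gamma_i^*$, and the Koszul totalizations match on the nose.
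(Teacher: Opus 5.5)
Your proof is correct and shares the paper's skeleton. Both arguments apply derived global sections to the resolution of Theorem~\ref{thm:group-Koszul-sheaf}, use the finite filtration by exterior degree, and invoke $R^q\pi_*\Omega_{U_K}^p=0$ from Proposition~\ref{prop:covering-direct-image}. You diverge in how you produce a strict equivariant model.

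The paper uses the saturated quotient cover $\mathcal V_\sigma=\pi(U_\sigma)$ directly. It never needs these sets to be Stein, only acyclic for each term $\pi_*\mathcal F\otimes\Lambda^sV_\Gamma^\vee$. That acyclicity follows from Leray for $U_\tau=\pi^{-1}(\pi(U_\tau))\to\pi(U_\tau)$ together with Cartan's theorem~B on the Stein Cox intersections upstairs. Since $\Gamma(\pi(U_\tau),\pi_*\mathcal F)=\Gamma(U_\tau,\mathcal F)$ literally, the \v{C}ech complex of the resolution on this cover is exactly $\operatorname{Tot}\Kos_{\GammaL}^\bullet(C_p^\bullet)$, deck action included, and the acyclic-cover theorem finishes. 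So the obstacle you flag, that $\pi(U_\sigma)$ need not be Stein, is true but harmless, and on this cover equivariance holds on the nose.

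Your Godement route is nevertheless valid. $\pi_*$ of the Godement resolution is a flasque resolution of $\pi_*\mathcal F$ with a strict deck action. The Leray comparison then becomes the identity $\Gamma(\XL,\pi_*\mathcal G^\bullet)=\Gamma(U_K,\mathcal G^\bullet)$ of $\C[\GammaL]$-complexes. This buys a cover-free representative in $D(\C[\GammaL])$, which makes the canonicity in the first sentence of the statement transparent. It costs an extra equivariant \v{C}ech--Godement zigzag, using that each $U_\sigma$ is deck-invariant, to reach the Fr\'echet model $C_p^\bullet$ on which the later analytic arguments run.

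One minor slip: the group--Koszul total complex represents $R\Gamma(\GammaL,-)$ as an object of $D(\C)$, not of $D(\C[\GammaL])$.
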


\begin{proof}
Apply derived global sections to
Theorem~\ref{thm:group-Koszul-sheaf}.
The group--Koszul complex is bounded and its exterior factors are
finite-dimensional. Then use
Proposition~\ref{prop:covering-direct-image}.
\end{proof}

For every maximal face $\sigma$, put
\(
 \mathcal V_\sigma=\pi(U_\sigma).
\)
The Cox charts are $\GammaL$-invariant and saturated, so the
$\mathcal V_\sigma$ form a finite open cover of $\XL$ and
\[
 \mathcal V_{\sigma_0}\cap\cdots\cap\mathcal V_{\sigma_a}
 =
 \pi(U_{\sigma_0\cap\cdots\cap\sigma_a}).
\]
Sections of $\pi_*\mathcal F$ on this intersection are exactly the
sections of $\mathcal F$ on the corresponding Stein Cox intersection.
Proposition~\ref{prop:covering-direct-image} and Cartan's theorem~B show
that the quotient cover is acyclic for every term of the sheaf resolution.
The acyclic-cover theorem
\cite[Tag~0FLH]{StacksProject} therefore identifies its
\v{C}ech hypercohomology with
\[
 R\Gamma(\XL,\Omega_{\XL}^p)
 \simeq
 \operatorname{Tot}
 \Kos_{\GammaL}^\bullet(C_p^\bullet).
\]
This is an equivalence in $D(\C)$ represented by a bounded double complex; every group differential is continuous because it is a
finite sum of continuous deck maps.

All derived-category assertions in this paragraph are made in $D(\C)$
after forgetting topology.  The concrete representative remains a bounded
complex of nuclear Fr\'echet spaces, and all displayed differentials and
comparison maps are continuous.  Since the exterior factors are
finite-dimensional, algebraic and completed tensor products with them agree.
For $c\in C_p^a$,
\[
 d_{\mathrm{tot}}(c\otimes\eta)
 =
 \delta_{\check C}c\otimes\eta
 +
 (-1)^a
 \sum_i(\gamma_i^*-1)c\otimes\xi_i\wedge\eta.
\]

\subsection{Fixed-character contraction}

On the Laurent sector $(r,I)$,
\[
 \gamma_i^*=\chi_r(\gamma_i)\operatorname{id}.
\]
Put
\[
 \lambda_i(r)=\chi_r(\gamma_i)-1,
 \qquad
 \alpha_r=\sum_i\lambda_i(r)\xi_i.
\]
The group differential is $\alpha_r\wedge(-)$.

\begin{theorem}[Nontrivial-character contraction]
\label{thm:nontrivial-character-contraction}
If $\chi_r\neq1$, then
\[
 \operatorname{Tot}
 \Kos_{\GammaL}^\bullet(\mathscr P_{r,I}^\bullet)
\]
is contractible.
\end{theorem}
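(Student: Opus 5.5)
The plan is to show that the group--Koszul factor is already contractible, and then propagate this through the \v{C}ech direction by a filtration argument. On the sector $(r,I)$ each deck generator $\gamma_i$ acts by the scalar $\chi_r(\gamma_i)$. Hence the group differential on
\[
 \mathscr P_{r,I}^a\otimes\Lambda^\bullet V_\Gamma^\vee
\]
is $(-1)^a\alpha_r\wedge(-)$. Since $\GammaL$ is generated by $\gamma_1,\ldots,\gamma_m$ and $\chi_r$ is a character, the hypothesis $\chi_r\neq1$ forces $\chi_r(\gamma_k)\neq1$ for some $k$. Therefore $\lambda_k(r)\neq0$, and $\alpha_r$ is a nonzero covector.

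Next I construct an explicit contraction of the exterior Koszul complex $(\Lambda^\bullet V_\Gamma^\vee,\alpha_r\wedge)$. Let $\iota_k$ be contraction with the basis vector dual to $\xi_k$, and set
\[
 h=\lambda_k(r)^{-1}\iota_k .
\]
The graded derivation identity
\[
 \iota_k(\alpha_r\wedge\eta)+\alpha_r\wedge\iota_k\eta=\lambda_k(r)\eta
\]
gives
\[
 (\alpha_r\wedge)h+h(\alpha_r\wedge)=\operatorname{id}.
\]
Because $\mathscr P_{r,I}^\bullet$ is a single coefficient line per \v{C}ech simplex, $h$ acts only on the exterior factor. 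It therefore commutes with the \v{C}ech differential $\delta_{\check C}$, which acts only on the first factor.

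The step needing care is the sign in the total complex. Define the total homotopy by
\[
 \mathsf h(c\otimes\eta)=(-1)^a c\otimes h\eta \qquad (c\in\mathscr P_{r,I}^a).
\]
The anticommutator $d_{\mathrm{tot}}\mathsf h+\mathsf h d_{\mathrm{tot}}$ then splits into two parts. The group--group part gives $c\otimes(\alpha_r h+h\alpha_r)\eta=c\otimes\eta$, using $(-1)^{2a}=1$. The \v{C}ech--homotopy cross terms are
\[
 (-1)^a\delta c\otimes h\eta+(-1)^{a+1}\delta c\otimes h\eta=0,
\]
so they cancel. Thus $\mathsf h$ is a contracting homotopy of
\[
 \operatorname{Tot}\Kos_{\GammaL}^\bullet(\mathscr P_{r,I}^\bullet).
\]
If the sign conventions resist, there is a cleaner fallback. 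Filter by \v{C}ech degree; the complex is bounded, and each graded piece is the acyclic exterior Koszul complex tensored with a finite-dimensional space. The filtration spectral sequence then gives acyclicity, and acyclicity of a complex of vector spaces gives contractibility.

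Finally, I record that the contraction uses the single scalar inverse $\lambda_k(r)^{-1}$, with $k$ chosen for instance as the least index where $\chi_r(\gamma_k)\neq1$. This is the finiteness feature the paper relies on later when it sums over finitely many sectors per class. I expect no real obstacle beyond this sign bookkeeping.
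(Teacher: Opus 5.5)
Your proposal is correct and follows the paper's proof: it also chooses an index with $\lambda_{i_0}(r)\neq0$, contracts the exterior Koszul factor by $\lambda_{i_0}(r)^{-1}\iota_{i_0}$, and twists this homotopy by $(-1)^a$ in \v{C}ech degree $a$ so that the mixed \v{C}ech terms cancel. Your explicit check of the cross-term signs is the computation the paper states without writing out.
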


\begin{proof}
Choose $i_0$ with $\lambda_{i_0}(r)\neq0$ and let $\iota_{i_0}$ be
contraction by the basis vector dual to $\xi_{i_0}$. Then
\[
 (\alpha_r\wedge)\iota_{i_0}
 +
 \iota_{i_0}(\alpha_r\wedge)
 =
 \lambda_{i_0}(r)\operatorname{id}.
\]
Thus
\[
 h_{\Gamma,r}
 =
 \lambda_{i_0}(r)^{-1}\iota_{i_0}
\]
contracts the group complex. On Čech degree $a$, set
\[
 H_r=(-1)^a\operatorname{id}\otimes h_{\Gamma,r}.
\]
The mixed Čech terms cancel and
\[
 d_{\mathrm{tot}}H_r+H_rd_{\mathrm{tot}}=\operatorname{id}.
\]
\end{proof}

\noindent This applies equally to infinite-order and finite-order nontrivial
characters.

If $r\in M_{\mathrm{res}}$, the group differential vanishes and
\[
 H^q\left(
 \operatorname{Tot}
 \Kos_{\GammaL}^\bullet(\mathscr P_{r,I}^\bullet)
 \right)
 \cong
 \bigoplus_{a+s=q}
 H^a(\mathscr P_{r,I}^\bullet)
 \otimes\Lambda^sV_\Gamma^\vee.
\]

\begin{remark}
The differences $\chi_r(\gamma_i)-1$ may approach zero as $r$ varies. We
therefore apply the character contractions only after the finite-support
reduction of Theorem~\ref{thm:Euler-finite-support}.
\end{remark}

For a fixed exponent, a nontrivial character gives a contractible
total complex, while a trivial character leaves the \v{C}ech
cohomology tensored with $\Lambda^\bullet\GammaLCdual$.  To pass to
all exponents, it remains to prove the finite-support theorem.

\section{Finite Laurent support and reduction to the zero exponent}
\label{sec:Euler-finite-support}

The preceding contractions apply to one exponent at a time.
We now prove that every cohomology class has finite Laurent support.
Euler division is continuous away from an integral eigenvalue, and
compactness makes the induced Euler action finite-dimensional.

Define
\[
 \mathcal T_p^\bullet
 =
 \operatorname{Tot}
 \Kos_{\GammaL}^\bullet
 \left(
 \check C^\bullet(\mathcal U_K,\Omega_{U_K}^p)
 \right).
\]
For $r\in\Z^N$, let
\[
 \mathcal T_{p,r}^\bullet
 =
 \bigoplus_{|I|=p}
 \operatorname{Tot}
 \Kos_{\GammaL}^\bullet(\mathscr P_{r,I}^\bullet).
\]
Then
\[
 \mathcal T_p^\bullet
 \cong
 \widehat{\bigoplus}_{r\in\Z^N}^{\,\mathrm{an}}
 \mathcal T_{p,r}^\bullet.
\]
Choose the exterior basis of $\Lambda^\bullet V_\Gamma^\vee$ induced
by $\xi_1,\ldots,\xi_m$.  If
\(
c=\sum_{a+s=q,J}c_{a,J}\otimes\xi_J
\)
has total degree $q$, define
\[
 \mathfrak q_{q,R}^{\mathrm{tot}}(c)
 =
 \sum_{a+s=q}
 \sum_{|J|=s}
 \mathfrak q_{a,R}(c_{a,J}).
\]
These seminorms define the finite direct-sum Fr\'echet topology in each
total degree.
Define
\[
 \mathcal T_{p,\mathrm{fin}}^\bullet
 =
 \bigoplus_{r\in\Z^N}^{\mathrm{alg}}
 \mathcal T_{p,r}^\bullet.
\]

We take cohomology in the category of complex vector spaces. All operators on cochains considered below are continuous.

For each $j$, the Euler Lie derivative $\mathcal L_j$ is a continuous
chain endomorphism and
\[
 \mathcal L_j|_{\mathcal T_{p,r}^\bullet}
 =
 r_j\operatorname{id}.
\]

\subsection{Continuous Euler division}

Fix $\lambda\in\C$. Let $\Pi_{j,\lambda}$ be the projection onto exponents
with $r_j=\lambda$, interpreted as zero when $\lambda\notin\Z$, and let
$\Pi_{j,\lambda}^{\perp}=1-\Pi_{j,\lambda}$.

\begin{lemma}\label{lem:integral-spectral-gap}
The number
\[
 \delta(\lambda)
 =
 \inf\{|\nu-\lambda|:\nu\in\Z,\ \nu\neq\lambda\}
\]
is positive.
\end{lemma}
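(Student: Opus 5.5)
The plan is to bound the distance from $\lambda$ to the nonequal integers from below by an explicit positive number, splitting into two cases according to whether $\lambda$ is an integer.

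If $\lambda\notin\Z$, let $n=\lfloor\operatorname{Re}\lambda\rfloor$. Every integer $\nu$ satisfies
\[
 |\nu-\lambda|\geq|\nu-\operatorname{Re}\lambda|.
\]
Hence the infimum is attained among the two nearest integers $n$ and $n+1$, because all other integers lie farther away in real part. Since $\lambda$ is not an integer, both distances $|n-\lambda|$ and $|n+1-\lambda|$ are nonzero. The infimum is therefore a minimum of two positive numbers, and so it is positive.

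If $\lambda\in\Z$, the excluded value $\nu=\lambda$ is the only integer at distance zero. For every other integer,
\[
 |\nu-\lambda|\geq1,
\]
so $\delta(\lambda)=1>0$.

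In both cases the infimum is taken over a discrete set that contains no accumulation point at $\lambda$. There is no real obstacle here; the only point requiring care is the complex case, which is handled by the reduction to the real part above.
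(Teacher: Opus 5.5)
Your proof is correct and is essentially the paper's argument, which simply observes that $\Z$ is closed and discrete in $\C$ and that $\delta(\lambda)=1$ when $\lambda\in\Z$; you make the discreteness explicit. One small point: the inequality $|\nu-\lambda|\geq|\nu-\operatorname{Re}\lambda|$ alone does not show the infimum is attained at $n$ or $n+1$. That needs $|\nu-\lambda|^2=(\nu-\operatorname{Re}\lambda)^2+(\operatorname{Im}\lambda)^2$, although positivity already follows because every integer other than $n,n+1$ is at distance at least $1$.
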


\begin{proof}
The set $\Z\subset\C$ is closed and discrete. If $\lambda\in\Z$, the
displayed infimum is one.
\end{proof}

On the complementary subcomplex define
\[
 R_{j,\lambda}^{(M)}
 |_{\mathcal T_{p,r}^\bullet}
 =
 (r_j-\lambda)^{-M}\operatorname{id}.
\]

\begin{proposition}[Euler division]
\label{prop:Euler-division}
The operator $\mathcal L_j-\lambda$ is a continuous chain automorphism of
the subcomplex supported on $r_j\neq\lambda$. Its inverse is
$R_{j,\lambda}^{(1)}$. For every Köthe seminorm,
\[
 \mathfrak q_{q,R}^{\mathrm{tot}}
 \left(
 R_{j,\lambda}^{(M)}c
 \right)
 \leq
 \delta(\lambda)^{-M}
 \mathfrak q_{q,R}^{\mathrm{tot}}(c).
\]
\end{proposition}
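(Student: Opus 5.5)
The plan is to work sector by sector and then extend to the completed complex using the unconditional sector projections. Everything in the statement is diagonal in the exponent $r$. On $\mathcal T_{p,r}^\bullet$ the operator $\mathcal L_j$ acts as the scalar $r_j$, and the Köthe seminorms $\mathfrak q_{q,R}^{\mathrm{tot}}$ are weighted $\ell^1$ sums over sectors. So the whole proof reduces to scalar bounds on the multipliers $(r_j-\lambda)^{-M}$.

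The first step is to set up the subcomplex. Write $\mathcal T_{p}^{\perp}$ for the image of $\Pi_{j,\lambda}^\perp$ in $\mathcal T_p^\bullet$, namely the families whose sector coefficients vanish whenever $r_j=\lambda$. By Theorem~\ref{thm:unconditional-sector-projections}, applied in each \v{C}ech degree and each exterior component, $\Pi_{j,\lambda}$ is a continuous projection that does not increase any $\mathfrak q_{a,R}$. It commutes with the \v{C}ech differential. It also commutes with the group differential, because the deck action is through the scalar character $\chi_r$ on each sector and therefore preserves sectors. Hence $\mathcal T_p^\perp$ is a closed subcomplex and a direct summand.

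The second step is the bound, done coefficientwise. On a sector with $r_j\neq\lambda$ we have $r_j\in\Z\setminus\{\lambda\}$, so $|r_j-\lambda|\geq\delta(\lambda)$ by Lemma~\ref{lem:integral-spectral-gap}, and therefore $|(r_j-\lambda)^{-M}|\leq\delta(\lambda)^{-M}$. Define $R_{j,\lambda}^{(M)}$ on $\mathcal T_p^\perp$ by multiplying every coefficient $c_{\boldsymbol\sigma;r,I}\otimes\xi_J$ by this scalar. Each term of the weighted sum defining $\mathfrak q_{q,R}^{\mathrm{tot}}$ is then multiplied by a number of modulus at most $\delta(\lambda)^{-M}$. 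This gives the displayed inequality, and in particular shows that the operator is well defined and continuous on the completed space.

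The third step checks the chain property and the inverse. Because the operator is a sector-wise scalar, it commutes with every map that preserves sectors. These are the \v{C}ech restrictions, which are coefficientwise by Theorem~\ref{thm:global-analytic-sector}, and the group differential, which is $\alpha_r\wedge(-)$ on each sector. Hence $R_{j,\lambda}^{(M)}$ is a chain map. On each sector, $(\mathcal L_j-\lambda)R_{j,\lambda}^{(1)}$ and $R_{j,\lambda}^{(1)}(\mathcal L_j-\lambda)$ are both multiplication by $1$. Both $\mathcal L_j$ (by the Euler continuity estimate) and $R_{j,\lambda}^{(1)}$ (by the bound just proved) are continuous. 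Since a continuous operator on the completed space is determined by its values on sectors, through coefficient extraction, these identities hold on all of $\mathcal T_p^\perp$. Thus $\mathcal L_j-\lambda$ is a continuous chain automorphism with inverse $R_{j,\lambda}^{(1)}$.

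I expect the only delicate point to be justifying this sectorwise verification on the completion. The intended argument is that coefficient extraction is injective on the completed sum (Theorem~\ref{thm:local-Kothe-Laurent}) and that every operator involved acts coefficientwise, so the identities can be checked coefficient by coefficient. If $\lambda\notin\Z$, the projection $\Pi_{j,\lambda}$ is zero, $\mathcal T_p^\perp$ is the whole complex, and the same argument applies unchanged with $\delta(\lambda)=\operatorname{dist}(\lambda,\Z)>0$.
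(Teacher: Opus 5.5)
Your proposal is correct and follows essentially the same route as the paper. Both arguments rest on three facts: $\mathcal L_j-\lambda$ acts on each exponent sector as the scalar $r_j-\lambda$; the inverse scalars are bounded uniformly by $\delta(\lambda)^{-1}$ via Lemma~\ref{lem:integral-spectral-gap}; and the K\"othe seminorms are weighted $\ell^1$ sums of sector coefficients. You additionally spell out points the paper leaves implicit: that the complementary subcomplex is closed and stable under both differentials, and that the inverse identities hold coefficientwise on the completed complex.
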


\begin{proof}
On exponent $r$, the operator is multiplication by $r_j-\lambda$.
Lemma~\ref{lem:integral-spectral-gap} bounds all inverse scalars uniformly.
The Köthe seminorms are weighted $\ell^1$-sums of sector coefficients.
\end{proof}

\subsection{Generalized eigenclasses}

By derived descent,
\[
 H^q(\mathcal T_p^\bullet)
 \cong
 H^q(\XL,\Omega_{\XL}^p).
\]
Compactness and the Cartan--Serre finiteness theorem
\cite{CartanSerre1953} imply that this vector space is finite-dimensional.

The commuting Euler operators therefore admit a finite simultaneous
generalized-eigenspace decomposition.

\begin{lemma}\label{lem:eigen-hyperplane}
Let $[c]$ belong to the joint generalized eigenspace with eigenvalue
$\lambda=(\lambda_1,\ldots,\lambda_N)$. Then for every $j$,
\[
 [c]=[\Pi_{j,\lambda_j}c].
\]
If $\lambda_j\notin\Z$, then $[c]=0$.
\end{lemma}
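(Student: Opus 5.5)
The plan is to split $c$ into its $r_j=\lambda_j$ part and its complement, and to show that the induced map of $\mathcal L_j-\lambda_j$ on the complement is invertible, which forces the complement to contribute nothing to $[c]$. Since $[c]$ lies in the joint generalized eigenspace, there is an $M\geq1$ with $(\mathcal L_j-\lambda_j)^M[c]=0$ in $H^q(\mathcal T_p^\bullet)$. Concretely, we can write
\[
 (\mathcal L_j-\lambda_j)^Mc=d_{\mathrm{tot}}b
\]
for some cochain $b$. Put $c^\perp=\Pi_{j,\lambda_j}^\perp c$.

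By Theorem~\ref{thm:unconditional-sector-projections}, the sector projections are continuous and commute with the \v{C}ech differential. They also commute with the group differential, because the deck action is diagonal in sectors, and with the Euler operators. Hence $\Pi_{j,\lambda_j}$ and $\Pi^\perp_{j,\lambda_j}$ are complementary chain projections, and both $\Pi_{j,\lambda_j}c$ and $c^\perp$ are cocycles. Applying $\Pi^\perp$ to the displayed identity gives
\[
 (\mathcal L_j-\lambda_j)^Mc^\perp=d_{\mathrm{tot}}\,\Pi^\perp b.
\]
On the complementary subcomplex, Proposition~\ref{prop:Euler-division} supplies the continuous chain inverse $R_{j,\lambda_j}^{(M)}$ of $(\mathcal L_j-\lambda_j)^M$. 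This inverse commutes with $d_{\mathrm{tot}}$ because it acts by scalars on sectors. Therefore
\[
 c^\perp=R^{(M)}_{j,\lambda_j}d_{\mathrm{tot}}\Pi^\perp b=d_{\mathrm{tot}}\bigl(R^{(M)}_{j,\lambda_j}\Pi^\perp b\bigr),
\]
so $[c^\perp]=0$ and $[c]=[\Pi_{j,\lambda_j}c]$.

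If $\lambda_j\notin\Z$, then $\Pi_{j,\lambda_j}=0$ by convention. The complement is the whole complex, and the same argument gives $[c]=0$.

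The one point that needs care is that $R^{(M)}_{j,\lambda_j}$ is a well-defined operator on the completed K\"othe complex, and not only on finite sums. This follows from the uniform bound $\delta(\lambda_j)^{-M}$ in Proposition~\ref{prop:Euler-division} together with the weighted-$\ell^1$ description of the seminorms. No convergence issue remains, since the scalars $(r_j-\lambda_j)^{-M}$ are uniformly bounded by the spectral gap of Lemma~\ref{lem:integral-spectral-gap}.
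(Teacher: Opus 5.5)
Your proof is correct and follows the paper's argument essentially verbatim. You project the relation $(\mathcal L_j-\lambda_j)^{M}c=d_{\mathrm{tot}}b$ onto the complementary subcomplex and apply the bounded inverse $R^{(M)}_{j,\lambda_j}$ of Proposition~\ref{prop:Euler-division}, which exhibits $\Pi^\perp_{j,\lambda_j}c$ as exact. Your remarks that the projections commute with the group differential, and that $R^{(M)}_{j,\lambda_j}$ is well defined on the completed complex, make explicit two points the paper leaves implicit.
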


\begin{proof}
Choose $M_j$ with
\[
 (\mathcal L_j-\lambda_j)^{M_j}[c]=0.
\]
Choose a closed representative $c$ of the class and a cochain $b$ such
that
\[
 (\mathcal L_j-\lambda_j)^{M_j}c=d_{\mathrm{tot}}b.
\]
Decompose
\[
 c=\Pi_{j,\lambda_j}c+\Pi_{j,\lambda_j}^{\perp}c.
\]
The complementary component is killed in cohomology by a power of
$\mathcal L_j-\lambda_j$. On the complementary subcomplex this operator
is a chain automorphism by
Proposition~\ref{prop:Euler-division}; hence that component is exact.
Explicitly, after replacing $b$ by its complementary projection
$b_\perp=\Pi_{j,\lambda_j}^{\perp}b$, one has
\[
 (\mathcal L_j-\lambda_j)^{M_j}c_\perp
 =d_{\mathrm{tot}}b_\perp.
\]
Therefore
\[
 c_\perp
 =
 d_{\mathrm{tot}}\left(
 R_{j,\lambda_j}^{(M_j)}b_\perp
 \right).
\]
\end{proof}

\begin{proposition}\label{prop:single-exponent-representative}
Every nonzero joint generalized eigenclass has eigenvalue
$\lambda\in\Z^N$ and admits a representative supported on the single
Laurent exponent $r=\lambda$.
\end{proposition}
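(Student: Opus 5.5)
The plan is to apply Lemma~\ref{lem:eigen-hyperplane} once for each coordinate $j=1,\ldots,N$, composing the resulting projections. Let $[c]\neq0$ lie in the joint generalized eigenspace with eigenvalue $\lambda$. If some $\lambda_j\notin\Z$, the lemma gives $[c]=0$, which is excluded. Hence $\lambda\in\Z^N$.

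For the representative, start from a closed cochain $c$ and set $c_1=\Pi_{1,\lambda_1}c$. By the lemma, $c_1$ is a closed representative of the same class; closedness holds because $\Pi_{1,\lambda_1}$ commutes with $d_{\mathrm{tot}}$. That commutation follows from Theorem~\ref{thm:unconditional-sector-projections}, together with the fact that the deck operators act sectorwise by scalars $\chi_r(\gamma_i)$.

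Then proceed inductively. The class $[c_1]=[c]$ is still in the same joint generalized eigenspace, so the lemma applies to $[c_1]$ with index $j=2$. This gives $[c]=[\Pi_{2,\lambda_2}c_1]$. Set $c_2=\Pi_{2,\lambda_2}\Pi_{1,\lambda_1}c$, and continue in the same way up to $j=N$. The coordinate projections commute and are continuous. Their composite $\Pi_{N,\lambda_N}\cdots\Pi_{1,\lambda_1}$ is exactly the projection onto the single exponent $r=\lambda$, because each factor cuts out one coordinate value. The final cochain $c_N$ therefore lies in $\mathcal T_{p,\lambda}^\bullet$ and represents $[c]$.

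The one point that needs care is the hypothesis of Lemma~\ref{lem:eigen-hyperplane} at each step. The lemma is stated for a class; the representative $c$ in its statement is arbitrary, and its proof chooses $b$ afterwards. So it suffices that the intermediate class $[c_{j-1}]$ equals $[c]$, and the previous step guarantees this. No finite-dimensionality of the Laurent support is needed beyond what the lemma already uses.
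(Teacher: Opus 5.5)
Your proof is correct and follows the paper's argument. You apply Lemma~\ref{lem:eigen-hyperplane} successively for $j=1,\ldots,N$, get integrality of $\lambda$ from the lemma's second assertion, and use that the coordinate projections commute with each other and with $d_{\mathrm{tot}}$, so their composite is the projection onto the single exponent $\lambda$. Your remark that the lemma applies to any closed representative, so each intermediate cochain $c_{j-1}$ can be fed back in, makes explicit a point the paper leaves implicit.
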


\begin{proof}
Apply
Lemma~\ref{lem:eigen-hyperplane}
successively for $j=1,\ldots,N$. All projections commute with one another,
the total differential, and the Euler operators.
\end{proof}

\begin{corollary}
The Euler action on $H^q(\mathcal T_p^\bullet)$ is simultaneously
semisimple with integral joint spectrum.
\end{corollary}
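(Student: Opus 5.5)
The statement is the corollary that the Euler operators $\mathcal L_1,\ldots,\mathcal L_N$ act on $H^q(\mathcal T_p^\bullet)$ simultaneously semisimply, with joint spectrum contained in $\Z^N$. The plan is to deduce it directly from Proposition~\ref{prop:single-exponent-representative}, combined with the finite-dimensionality of $H^q(\mathcal T_p^\bullet)\cong H^q(\XL,\Omega_{\XL}^p)$ obtained from derived descent and Cartan--Serre.

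\emph{Step 1.} Since the $\mathcal L_j$ are commuting continuous chain endomorphisms, they induce commuting operators on the finite-dimensional space $H^q(\mathcal T_p^\bullet)$. That space therefore splits as the direct sum of its joint generalized eigenspaces $H_\lambda$, with $\lambda\in\C^N$.

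\emph{Step 2.} By Proposition~\ref{prop:single-exponent-representative}, each nonzero $H_\lambda$ has $\lambda\in\Z^N$; this is the integrality of the spectrum. The same proposition says every class in $H_\lambda$ has a representative $c$ supported on the single exponent $r=\lambda$. To apply it to an arbitrary element of $H_\lambda$, I would note that the proof of Lemma~\ref{lem:eigen-hyperplane} only uses that the class is killed by powers of each $\mathcal L_j-\lambda_j$. That condition holds for every element of $H_\lambda$, not just for a basis of eigenvectors.

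\emph{Step 3.} On the sector subcomplex $\mathcal T_{p,\lambda}^\bullet$, each $\mathcal L_j$ acts as the scalar $\lambda_j$ already at the cochain level. Hence, for a representative $c$ supported on $r=\lambda$, we get $\mathcal L_j[c]=[\mathcal L_jc]=\lambda_j[c]$. So every generalized eigenvector is a genuine joint eigenvector.

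\emph{Conclusion.} Each $H_\lambda$ is therefore an honest joint eigenspace, and the Euler action is simultaneously diagonalizable with integral joint spectrum.

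The only point requiring care is the claim in Step 2 that single-exponent representatives exist for \emph{all} elements of a generalized eigenspace. Here one must confirm that the projections $\Pi_{j,\lambda_j}$ commute with $d_{\mathrm{tot}}$, which is Theorem~\ref{thm:unconditional-sector-projections}. One must also confirm that successive projections keep the class unchanged, which follows because each projection preserves generalized eigenclasses. Both facts are already recorded earlier in the paper, so I expect no real obstacle.
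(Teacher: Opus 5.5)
Your argument is correct and is essentially the paper's: the corollary is stated without separate proof as an immediate consequence of Proposition~\ref{prop:single-exponent-representative}. Every element of a joint generalized eigenspace has a representative on the single exponent $r=\lambda\in\Z^N$, where each $\mathcal L_j$ acts by the scalar $\lambda_j$ on cochains, so generalized eigenvectors are genuine eigenvectors. Your observation that Lemma~\ref{lem:eigen-hyperplane} applies to every element of a generalized eigenspace, not only to eigenvectors, is exactly what makes the deduction work.
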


\subsection{The quasi-isomorphism}

\begin{proposition}[Finite Laurent representatives]
Every class in $H^q(\mathcal T_p^\bullet)$ has a finite Laurent-support
representative.
\end{proposition}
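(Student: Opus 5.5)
The plan is to combine finite-dimensionality of $H^q(\mathcal T_p^\bullet)$ with the single-exponent representatives of Proposition~\ref{prop:single-exponent-representative}. Since $H^q(\mathcal T_p^\bullet)\cong H^q(\XL,\Omega_{\XL}^p)$ is finite-dimensional by Cartan--Serre, the commuting Euler operators $\mathcal L_1,\ldots,\mathcal L_N$ act on it with finitely many joint eigenvalues. Standard linear algebra for commuting operators on a finite-dimensional space then splits the cohomology as a finite direct sum of joint generalized eigenspaces.

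Let $[c]$ be an arbitrary class. First I would write $[c]=\sum_{k=1}^{K}[c_k]$, with $[c_k]$ in the joint generalized eigenspace of eigenvalue $\lambda^{(k)}$. These components are obtained by applying polynomials in the induced Euler operators to $[c]$. On cochains the same polynomials act continuously, by the Euler-operator estimates, and commute with $d_{\mathrm{tot}}$. So each $[c_k]$ has a closed cochain representative, and nothing analytic is needed at this stage.

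Next I would apply Proposition~\ref{prop:single-exponent-representative} to each nonzero $[c_k]$. This shows that $\lambda^{(k)}\in\Z^N$ and gives a representative $c_k'=\Pi_{1,\lambda_1^{(k)}}\cdots\Pi_{N,\lambda_N^{(k)}}c_k$ supported on the single exponent $r=\lambda^{(k)}$. Each such cochain lies in $\mathcal T_{p,\lambda^{(k)}}^\bullet$, which is a finite sum over $|I|=p$ of finitely many Čech and exterior components. The finite sum $\sum_k c_k'$ is then a closed element of $\mathcal T_{p,\mathrm{fin}}^\bullet$ representing $[c]$.

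The main point requiring care is closedness and exactness within the completed complex. The exactness of the complementary part in Lemma~\ref{lem:eigen-hyperplane} has already been handled through continuous Euler division, so here I only need two facts. The coordinate projections $\Pi_{j,\lambda}$ commute with $d_{\mathrm{tot}}$; this follows from Theorem~\ref{thm:unconditional-sector-projections}, since deck maps act diagonally on sectors. Consequently the projected cochains remain closed and cohomologous to $c_k$. No convergence issue arises, because only finitely many eigenvalues, and hence finitely many projections, are used.
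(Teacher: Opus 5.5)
Your proposal is correct and follows the paper's own proof. You decompose the finite-dimensional cohomology into finitely many joint generalized Euler eigencomponents, replace each nonzero component by its single-exponent representative from Proposition~\ref{prop:single-exponent-representative}, and sum. The extra bookkeeping you supply is left implicit in the paper's two-line argument: the eigencomponents come from polynomials in the Euler operators, which are continuous chain maps, and the sector projections commute with $d_{\mathrm{tot}}$.
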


\begin{proof}
Decompose the class into the finite sum of its simultaneous generalized
eigencomponents. By
Proposition~\ref{prop:single-exponent-representative},
each nonzero component has a representative on one integral exponent.
\end{proof}

\begin{proposition}[Finite-support primitives]
Let $c\in\mathcal T_{p,\mathrm{fin}}^q$ and suppose $c=d_{\mathrm{tot}}b$ in the completed
complex. If $F$ is the finite exponent support of $c$, then
\[
 c=d_{\mathrm{tot}}(\Pi_Fb),
\]
and $\Pi_Fb$ has finite support.
\end{proposition}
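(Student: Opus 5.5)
The plan is to use that the sector projection $\Pi_F$ (projection onto the finitely many exponents in $F$, summed over all $I$ with $|I|=p$) is a continuous chain map on the completed total complex $\mathcal T_p^\bullet$ that is idempotent. It commutes with $d_{\mathrm{tot}}$ for two reasons. The \v{C}ech restrictions preserve every pair $(r,I)$ (Theorem~\ref{thm:unconditional-sector-projections}). The group differential acts on the sector $r$ by the scalars $\chi_r(\gamma_i)-1$ tensored with wedge by $\xi_i$, so it also preserves exponents. Continuity comes from the weighted $\ell^1$ bound $\mathfrak q^{\mathrm{tot}}_{q,R}(\Pi_F c)\le\mathfrak q^{\mathrm{tot}}_{q,R}(c)$, obtained by summing the estimate of Theorem~\ref{thm:unconditional-sector-projections} over the finitely many exterior basis elements $\xi_J$.

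The argument then takes the following steps.

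\begin{enumerate}
\item Since $c\in\mathcal T_{p,\mathrm{fin}}^q$ has exponent support contained in $F$, we have $\Pi_F c=c$.
\item Apply $\Pi_F$ to the identity $c=d_{\mathrm{tot}}b$ and use that $\Pi_F$ commutes with $d_{\mathrm{tot}}$. This gives
\[
 c=\Pi_F c=\Pi_F d_{\mathrm{tot}}b=d_{\mathrm{tot}}(\Pi_F b).
\]
\item Check that $\Pi_F b$ has finite support: its support lies in $F$, which is finite. In each \v{C}ech degree and exterior degree it is a finite sum over $(r,I)$ with $r\in F$ and $|I|=p$, so it lies in $\mathcal T_{p,\mathrm{fin}}^{q-1}$.
\end{enumerate}

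The only point needing care is the commutation $\Pi_F d_{\mathrm{tot}}=d_{\mathrm{tot}}\Pi_F$ on the \emph{completed} complex, where the cochain $b$ is an infinite convergent sum of sectors. Because $d_{\mathrm{tot}}$ is continuous and acts sectorwise (coefficientwise on each $(r,I)$), the identity $(d_{\mathrm{tot}}b)_{r,I}=d_{\mathrm{tot}}(b_{r,I})$ holds coefficient by coefficient. That is all the commutation requires, so I expect no genuine obstacle beyond recording this.

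Together with the preceding proposition on finite Laurent representatives, this yields injectivity and surjectivity of $H(\mathcal T_{p,\mathrm{fin}}^\bullet)\to H(\mathcal T_p^\bullet)$, which is the finite-support quasi-isomorphism that the paper refers to as Theorem~\ref{thm:Euler-finite-support}.
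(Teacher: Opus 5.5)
Your proof is correct and follows the paper's own argument: sector projections commute with $d_{\mathrm{tot}}$, since both the \v{C}ech restrictions and the deck characters act coefficientwise on each $(r,I)$. Hence $d_{\mathrm{tot}}(\Pi_F b)=\Pi_F d_{\mathrm{tot}}b=\Pi_F c=c$, and $\Pi_F b$ is supported on the finite set $F$. Your continuity remarks are harmless but not needed, because the commutation is a coefficient-by-coefficient identity.
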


\begin{proof}
Sector projections commute with the total differential:
\[
 d_{\mathrm{tot}}(\Pi_Fb)=\Pi_F(d_{\mathrm{tot}}b)=\Pi_Fc=c.
\]
\end{proof}

\begin{theorem}[Euler finite-support quasi-isomorphism]
\label{thm:Euler-finite-support}
The inclusion
\[
 \mathcal T_{p,\mathrm{fin}}^\bullet
 \hookrightarrow
 \mathcal T_p^\bullet
\]
is a quasi-isomorphism.
\end{theorem}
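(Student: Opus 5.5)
The inclusion $\mathcal T_{p,\mathrm{fin}}^\bullet\hookrightarrow\mathcal T_p^\bullet$ is a quasi-isomorphism once we show that it is surjective and injective on cohomology in each total degree $q$. Both halves follow from the two propositions immediately preceding the statement. The one point to watch is that the cohomology of the subcomplex is computed with finite-support cochains only, so each argument must produce finite-support cochains.

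\emph{Surjectivity.} Let $[c]\in H^q(\mathcal T_p^\bullet)$. The proposition on finite Laurent representatives supplies a cocycle $c'$ with finite exponent support and $[c']=[c]$ in the completed complex. Concretely:
\begin{itemize}
\item $H^q(\mathcal T_p^\bullet)\cong H^q(\XL,\Omega_{\XL}^p)$ by derived descent, and this space is finite-dimensional by Cartan--Serre.
\item The Euler operators therefore split the class into finitely many joint generalized eigencomponents.
\item By Proposition~\ref{prop:single-exponent-representative}, each nonzero component is represented by a cocycle supported on one integral exponent.
\end{itemize}
The sum $c'$ of these representatives is a cocycle of $\mathcal T_{p,\mathrm{fin}}^\bullet$, because $d_{\mathrm{tot}}$ preserves sectors. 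So $[c]$ lies in the image of $H^q(\mathcal T_{p,\mathrm{fin}}^\bullet)$.

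There is a subtle point in Lemma~\ref{lem:eigen-hyperplane}. Its proof replaces a class by the projection of a representative. This requires the sector projections $\Pi_{j,\lambda}$ to commute with $d_{\mathrm{tot}}$ on the completed complex, including the group--Koszul part. That holds because deck transformations act on each sector by a scalar character (the Proposition on deck characters), and the \v{C}ech restrictions preserve sectors by Theorem~\ref{thm:unconditional-sector-projections}. I would state this commutation explicitly before using it.

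\emph{Injectivity.} Suppose $c\in\mathcal T_{p,\mathrm{fin}}^q$ is a cocycle with $c=d_{\mathrm{tot}}b$ for some $b$ in the completed complex. The proposition on finite-support primitives gives
\[
 c=d_{\mathrm{tot}}(\Pi_Fb),
\]
where $F$ is the finite exponent support of $c$. Since $\Pi_Fb$ is a finite-support cochain, $[c]=0$ already in $H^q(\mathcal T_{p,\mathrm{fin}}^\bullet)$.

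The main obstacle, which I would address carefully first, is the well-definedness of the Euler action on completed cohomology, on which the surjectivity step rests. Each $\mathcal L_j$ must be a chain endomorphism of the total complex, not only of the \v{C}ech complex. It commutes with the deck pullbacks $\gamma^*$ because both act diagonally on sectors, so it commutes with the group differential. It is continuous by the Euler estimate. Once this is in place, finite-dimensionality gives a genuine finite generalized-eigenspace decomposition, and both halves close.
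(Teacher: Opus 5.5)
Your proposal is correct and follows the paper's proof: surjectivity on cohomology comes from the finite generalized-eigenspace decomposition of the Euler action, using Cartan--Serre finiteness and Lemma~\ref{lem:eigen-hyperplane}, and injectivity comes from projecting a completed primitive onto the finite exponent support of the cocycle. The commutation facts you flag (the Euler operators $\mathcal L_j$ and the sector projections commute with the full $d_{\mathrm{tot}}$, because deck maps act on each sector by a scalar character) are the ones the paper states at the start of Section~\ref{sec:Euler-finite-support} and in the proof of Proposition~\ref{prop:single-exponent-representative}.
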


\begin{proof}
Surjectivity on cohomology is the finite generalized-eigenspace argument
above, and injectivity is the finite-support projection argument.
\end{proof}

Consequently,
\[
 H^q(\mathcal T_p^\bullet)
 \cong
 \bigoplus_{r\in\Z^N}
 H^q(\mathcal T_{p,r}^\bullet).
\]
Here cohomology commutes with the algebraic direct sum because kernels and
images are computed componentwise.  The left-hand side is finite-dimensional
by Cartan--Serre, so only finitely many sector-cohomology summands can be
nonzero in any fixed degree.

\subsection{The zero sector}
\label{sec:zero-sector}

By the finite-support theorem, cohomology is the algebraic sum of
the sector cohomologies.  Nontrivial characters contribute zero by
Theorem~\ref{thm:nontrivial-character-contraction}.  Nonzero resonant
exponents contribute zero by
Theorem~\ref{thm:nonzero-resonant-vanishing}.  Hence only $r=0$ remains.

\begin{theorem}[Reduction to the zero exponent]
\label{thm:zero-sector-survival}
The inclusion
\[
 \mathcal T_{p,0}^\bullet
 \hookrightarrow
 \mathcal T_p^\bullet
\]
is a quasi-isomorphism. Consequently,
\[
 R\Gamma(\XL,\Omega_{\XL}^p)
 \simeq
 \mathcal T_{p,0}^\bullet.
\]
\end{theorem}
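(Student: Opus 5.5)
The plan is to factor the inclusion through the finite-support subcomplex:
\[
 \mathcal T_{p,0}^\bullet
 \hookrightarrow
 \mathcal T_{p,\mathrm{fin}}^\bullet
 =\bigoplus_{r\in\Z^N}^{\mathrm{alg}}\mathcal T_{p,r}^\bullet
 \hookrightarrow
 \mathcal T_p^\bullet .
\]
The second inclusion is a quasi-isomorphism by Theorem~\ref{thm:Euler-finite-support}. It therefore suffices to show that the first inclusion, the inclusion of the summand $r=0$ into the algebraic direct sum, is a quasi-isomorphism. Because the sum is algebraic and every $\mathcal T_{p,r}^\bullet$ is a subcomplex, cohomology commutes with it. So the claim reduces to
\[
 H^\bullet(\mathcal T_{p,r}^\bullet)=0
 \qquad\text{for every }r\neq0 .
\]
This vanishing is a statement about one fixed exponent at a time. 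No uniformity in $r$ is needed, and the remark about small scalars $\chi_r(\gamma_i)-1$ does not arise.

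For $r\neq0$ I will split into two cases according to the character $\chi_r$.

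\emph{Case $\chi_r\neq1$.} Each summand $\operatorname{Tot}\Kos_{\GammaL}^\bullet(\mathscr P_{r,I}^\bullet)$ is contractible by Theorem~\ref{thm:nontrivial-character-contraction}. A finite direct sum over $|I|=p$ of contractible complexes is contractible.

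\emph{Case $\chi_r=1$, i.e.\ $r\in M_{\mathrm{res}}\setminus\{0\}$.} The group differential vanishes, so the total complex is $\mathscr P_{r,I}^\bullet\otimes\Lambda^\bullet V_\Gamma^\vee$ with the Čech differential on the first factor only, up to sign. Its cohomology is $H^\bullet(\mathscr P_{r,I}^\bullet)\otimes\Lambda^\bullet V_\Gamma^\vee$ by the displayed Künneth identity. This is zero by Theorem~\ref{thm:nonzero-resonant-vanishing}, which rests on the two-sign theorem and halfspace contractibility. Summing over $I$ again gives zero.

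It follows that $\mathcal T_{p,0}^\bullet\to\mathcal T_{p,\mathrm{fin}}^\bullet$ induces an isomorphism on cohomology. Composing with Theorem~\ref{thm:Euler-finite-support} proves the first assertion.

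For the consequence, I will combine this with the representative from the derived deck descent paragraph,
\[
 R\Gamma(\XL,\Omega_{\XL}^p)\simeq\mathcal T_p^\bullet ,
\]
which is the Čech hypercohomology of the quotient cover after Theorem~\ref{thm:derived-deck-descent}. Both equivalences hold in $D(\C)$ after forgetting topology, so the composite equivalence lives there too.

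The only point needing care is that the sector decomposition of $\mathcal T_{p,\mathrm{fin}}^\bullet$ is really a direct sum of subcomplexes. The Čech differential preserves $(r,I)$ because restrictions are coefficientwise, and the group differential preserves $r$ because deck transformations act on each sector by the scalar $\chi_r(\gamma)$. Both facts were established earlier, so I expect no genuine obstacle beyond this bookkeeping. All the analytic difficulty was absorbed into Theorem~\ref{thm:Euler-finite-support}.
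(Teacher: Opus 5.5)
Your proposal is correct and is essentially the paper's proof. You factor through $\mathcal T_{p,\mathrm{fin}}^\bullet$, show the algebraic sum of nonzero sectors is acyclic sector by sector (using Theorem~\ref{thm:nontrivial-character-contraction} when $\chi_r\neq1$ and Theorem~\ref{thm:nonzero-resonant-vanishing} when $0\neq r\in M_{\mathrm{res}}$), and compose with Theorem~\ref{thm:Euler-finite-support} and the descent representative $R\Gamma(\XL,\Omega_{\XL}^p)\simeq\mathcal T_p^\bullet$.
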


\begin{proof}
The finite-support complex splits as the direct sum of the zero sector and
the sum of all nonzero sectors.  Every cocycle in the latter sum has finite
support; choose a primitive in each of its finitely many acyclic sector
components and add them.  Hence the nonzero-sector algebraic sum is acyclic.
Compose with derived descent and the finite-support quasi-isomorphism.
\end{proof}

Since $\chi_0=1$, the group--Koszul differential vanishes on the zero
sector. Therefore
\[
 \mathcal T_{p,0}^\bullet
 =
 \bigoplus_{|I|=p}
 \mathscr P_{0,I}^\bullet
 \otimes
 \Lambda^\bullet\GammaLCdual.
\]

\subsubsection{The Hochster complex}

For $I\subseteq[N]$, the logarithmic generator in the zero-exponent sector is
\[
 e_I=\bigwedge_{i\in I}\frac{dz_i}{z_i}.
\]
Its pole set is
\[
 \Pole(0,I)=I.
\]

Let $\Delta_{\max}$ be the full simplex on the maximal faces of $K$, and
let $\mathfrak D_I\subseteq\Delta_{\max}$ be the subcomplex of Čech simplices whose
common face meets $I$. The fixed-sector theorem gives
\[
 \mathscr P_{0,I}^\bullet
 \cong
 C^\bullet(\Delta_{\max},\mathfrak D_I;\C).
\]

\begin{proposition}[Reduced-cochain model]
\label{prop:zero-reduced-cochain}
After ordering the finite cover indexed by $I$, there is an explicit
zigzag of cochain quasi-isomorphisms
\[
 \mathscr P_{0,I}^\bullet
 \simeq
 \widetilde C^{\bullet-1}(\mathfrak D_I;\C)
 \simeq
 \widetilde C^{\bullet-1}(K_I;\C).
\]
\end{proposition}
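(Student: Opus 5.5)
The plan has two steps, each a chain of explicit cochain maps. First, $\mathscr P_{0,I}^\bullet\cong C^\bullet(\Delta_{\max},\mathfrak D_I;\C)$ is related to $\widetilde C^{\bullet-1}(\mathfrak D_I;\C)$. Second, $\mathfrak D_I$ is related to $K_I$ through its cover by the simplices $\mathfrak D_j$, $j\in I$.

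\emph{First step.} The case $I=\varnothing$ is handled separately. There $\mathfrak D_\varnothing$ has no nonempty simplices, and the relative complex is the full cochain complex of the contractible simplex $\Delta_{\max}$. Its cohomology is $\C$ in degree $0$, matching $\widetilde H^{-1}(K_\varnothing;\C)=\C$. The comparison map is the augmentation $C^\bullet(\Delta_{\max})\to\C$, evaluation on a chosen vertex, which is a quasi-isomorphism.

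For $I\neq\varnothing$, I will use the short exact sequence
\[
 0\to C^\bullet(\Delta_{\max},\mathfrak D_I)\to C^\bullet(\Delta_{\max})\to C^\bullet(\mathfrak D_I)\to0
\]
and pass to augmented reduced cochains. Since $\Delta_{\max}$ is a simplex, $\widetilde C^\bullet(\Delta_{\max})$ is acyclic, with an explicit cone contraction at the first vertex in the fixed order. The relative complex is therefore quasi-isomorphic to the shifted cone, i.e.\ to $\widetilde C^{\bullet-1}(\mathfrak D_I)$. To make this a zigzag rather than an abstract triangle, I take the mapping cone $\operatorname{Cone}\bigl(\widetilde C(\Delta_{\max})\to\widetilde C(\mathfrak D_I)\bigr)[-1]$. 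It maps quasi-isomorphically to the relative complex via the inclusion of the kernel, and it projects quasi-isomorphically onto $\widetilde C^{\bullet-1}(\mathfrak D_I)$ because the $\Delta_{\max}$ summand is acyclic. Using the cone vertex, one can even write a direct map $\widetilde C^{\bullet-1}(\mathfrak D_I)\to C^\bullet(\Delta_{\max},\mathfrak D_I)$, extend-by-zero followed by coning. Signs follow the normalized oriented-simplex convention already fixed for the Čech complex.

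\emph{Second step.} This realizes Theorem~\ref{thm:Hochster-nerve} at the cochain level by a Mayer--Vietoris (generalized nerve) double complex. Order $I$ and form
\[
 E^{s,t}=\prod_{j_0<\cdots<j_s\in I}\widetilde C^t\Bigl(\bigcap_k\mathfrak D_{j_k}\Bigr),
\]
with the convention that the empty intersection contributes zero. There are two edge maps:
\begin{itemize}
\item $\widetilde C^\bullet(\mathfrak D_I)\to\operatorname{Tot}E$, by restriction into the $s=0$ column;
\item $\widetilde C^{\bullet}(K_I)\to\operatorname{Tot}E$, by sending a simplex cochain $J^\vee$ to the constant cochain, meaning the augmentation part in $t=-1$ on $\bigcap_{j\in J}\mathfrak D_j$, placed in column $s=|J|-1$.
\end{itemize}
Acyclicity of the rows makes the first edge map a quasi-isomorphism. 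Each row is the alternating restriction sequence for a finite cover by subcomplexes, exact simplexwise because the simplices of $\mathfrak D_I$ containing a given simplex form a full simplex on a nonempty set of indices. By the preceding lemma, each nonempty intersection is a simplex, so its reduced cochains are acyclic except for the augmentation. Hence the columns, taken with the augmentation, reduce to the reduced cochains of $K_I$, and the second edge map is also a quasi-isomorphism. The Hochster nerve theorem guarantees exactly that the nonempty intersections are indexed by $K_I$.

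The degree bookkeeping is the main obstacle. The augmentation is placed in $t=-1$, and the shift $\bullet-1$ must come out consistently, so that $\widetilde H^{a-1}(K_I)$ lands in Čech degree $a$, matching Theorem~\ref{thm:fixed-Laurent-sector}. I would check this on small cases, $|I|=1$ and two opposite vertices of the square, to fix the total-complex signs.
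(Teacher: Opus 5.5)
Your first step is correct and matches the paper: the relative complex maps into the shifted cone of $C^\bullet(\Delta_{\max})\to C^\bullet(\mathfrak D_I)$, and acyclicity of the simplex lets you project onto $\widetilde C^{\bullet-1}(\mathfrak D_I;\C)$. One small correction: the natural map runs from the kernel into the shifted cone, not the reverse, but this still gives a zigzag.

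The second step has a genuine gap: you put \emph{reduced} cochains, including the augmentation line in $t=-1$, into the Mayer--Vietoris double complex. Every nonempty intersection $\bigcap_k\mathfrak D_{j_k}$ is a simplex, and empty intersections contribute zero. So every column of your $E$ is acyclic, and therefore $\operatorname{Tot}E\simeq0$. Neither edge map can then be a quasi-isomorphism unless $K_I$ is acyclic.

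The row-exactness argument breaks exactly at $t=-1$. The tuples carrying the augmentation line are those with nonempty intersection, which are the faces of $K_I$, not a full simplex. So the augmented $t=-1$ row is the augmented cochain complex of $K_I$, which is not exact in general. Your own test case shows this. For two opposite vertices $I=\{1,3\}$ of the square, $\mathfrak D_I$ is two disjoint edges with $\widetilde H^0=\C$, while $E$ is the single acyclic column $s=0$.

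Your second edge map is also not a chain map. Placing $J^\vee$ as the augmentation unit in $E^{|J|-1,-1}$ gives a nonzero vertical coboundary, namely the constant $0$-cochain on $\bigcap_{j\in J}\mathfrak D_j$.

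The repair, which is the paper's proof, is to augment only at the end. Use unreduced cochains
$B_I^{u,v}=\bigoplus_{i_0<\cdots<i_u}C^v(\mathfrak D_{i_0}\cap\cdots\cap\mathfrak D_{i_u};\C)$ with $v\geq0$.

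\begin{itemize}
\item Your simplexwise argument is now valid in every row. An actual simplex of $\mathfrak D_I$ lies in $\mathfrak D_j$ for a nonempty set of $j\in I$, and lies in an intersection exactly for tuples drawn from that set. Hence $C^\bullet(\mathfrak D_I)\to\operatorname{Tot}B_I$ is a quasi-isomorphism.
\item Each column has cohomology only in $v=0$, namely the constants on the nonempty intersections, and these assemble into $C^u(K_I)$. So sending $J^\vee$ to the constant function $1$ in $B_I^{|J|-1,0}$ (row $v=0$, not $v=-1$) gives a quasi-isomorphism $C^\bullet(K_I)\to\operatorname{Tot}B_I$.
\end{itemize}

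Both maps send the constant $1$ to the same element of $B_I^{0,0}$, so they are compatible with the coaugmentations from $\C$. Coaugment $\operatorname{Tot}B_I$ as well and apply the five lemma to get the zigzag of augmented complexes; the final shift by one is the one from your first step. The degree bookkeeping you flagged is harmless; the real issue is where the augmentation enters.
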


\begin{proof}
The relative complex admits a canonical quasi-isomorphism to
\[
 \operatorname{Cone}
 \left(
 C^\bullet(\Delta_{\max})
 \longrightarrow C^\bullet(\mathfrak D_I)
 \right)[-1].
\]
Since $\Delta_{\max}$ is contractible, replace its cochains by $\C$ in
degree zero. The resulting cone is the shifted reduced cochain complex of
$\mathfrak D_I$.

For $I\neq\varnothing$, use the finite Mayer--Vietoris double complex
\[
 B^{u,v}_I
 =
 \bigoplus_{i_0<\cdots<i_u\in I}
 C^v(\mathfrak D_{i_0}\cap\cdots\cap\mathfrak D_{i_u};\C).
\]
The horizontal augmentation is a quasi-isomorphism from
$C^\bullet(\mathfrak D_I)$ to $\operatorname{Tot}B_I$: on each simplicial
cochain the indices of cover members containing its support form a simplex,
so the augmented horizontal row is exact.  Every nonempty intersection in
the vertical direction is a simplex, and it is nonempty precisely when
$\{i_0,\ldots,i_u\}\in K_I$.  Collapsing these intersections to constants
therefore gives a second quasi-isomorphism
\[
 C^\bullet(K_I;\C)\longrightarrow\operatorname{Tot}B_I.
\]
Passing to augmented cochains and shifting yields the claimed reduced
zigzag.  If $I=\varnothing$, then
$\mathscr P_{0,\varnothing}^\bullet=C^\bullet(\Delta_{\max})\simeq\C[0]$,
which agrees with the convention
\[
 \widetilde C^{-1}(K_\varnothing)=\C.
\]
\end{proof}

Taking cohomology recovers Corollary~\ref{cor:zero-sector-cohomology}.

\subsubsection{Chain-level splitting}

\begin{theorem}[Additive Cox--Čech Dolbeault--Hochster splitting]
\label{thm:Cox-Cech-Hochster-splitting}
Let $\XL$ be minimally stable and polytopal with $d\geq1$, and assume
$B$ is invertible.  For $0\leq p\leq N$, there is an isomorphism in the
derived category $D(\C)$
\[
 R\Gamma(\XL,\Omega_{\XL}^p)
 \simeq
 \bigoplus_{|I|=p}
 \widetilde C^{\bullet-1}(K_I;\C)
 \otimes
 \Lambda^\bullet\GammaLCdual.
\]
An element of
\[
 \widetilde C^{a-1}(K_I)
 \otimes
 \Lambda^s\GammaLCdual
\]
has total degree $a+s$; the exterior factor has zero differential and the
right-hand side carries the total tensor-product differential.
\end{theorem}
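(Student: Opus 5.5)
The plan is to compose the chain of comparisons assembled in Part~III; at this point every link has already been established. First, derived deck descent (Theorem~\ref{thm:derived-deck-descent}) and the acyclic quotient cover give
\[
 R\Gamma(\XL,\Omega_{\XL}^p)\simeq\mathcal T_p^\bullet
 =\operatorname{Tot}\Kos_{\GammaL}^\bullet(C_p^\bullet)
\]
in $D(\C)$. Theorem~\ref{thm:zero-sector-survival} then replaces $\mathcal T_p^\bullet$ by the zero-exponent subcomplex $\mathcal T_{p,0}^\bullet$ through a quasi-isomorphic inclusion. That result rests on three ingredients: the Euler finite-support quasi-isomorphism (Theorem~\ref{thm:Euler-finite-support}), the contraction of nontrivial characters (Theorem~\ref{thm:nontrivial-character-contraction}), and the vanishing of nonzero resonant sectors (Theorem~\ref{thm:nonzero-resonant-vanishing}).

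Next, since $\chi_0=1$, the group differential vanishes on the zero sector. This gives the chain-level identification
\[
 \mathcal T_{p,0}^\bullet=\bigoplus_{|I|=p}\mathscr P_{0,I}^\bullet\otimes\Lambda^\bullet\GammaLCdual,
\]
with total tensor differential. Two points need checking here. The sign $(-1)^a$ in $d_{\mathrm{tot}}$ multiplies only the group term, so it drops out. The exterior factor sits in degree $s$ with zero differential, so total degree is $a+s$. One also identifies $V_\Gamma^\vee$ with $\GammaLCdual$, using Proposition~\ref{prop:deck-Cartan-identification} or directly from $V_\Gamma=\GammaL\otimes\C$.

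Finally, apply the zigzag of Proposition~\ref{prop:zero-reduced-cochain} to each summand, $\mathscr P_{0,I}^\bullet\simeq\widetilde C^{\bullet-1}(K_I;\C)$. Because each exterior power $\Lambda^s\GammaLCdual$ is finite-dimensional over a field, tensoring with it is exact. Hence each quasi-isomorphism in the zigzag tensored with $\Lambda^\bullet\GammaLCdual$ remains a quasi-isomorphism of total complexes. The direct sum over the finitely many $I$ with $|I|=p$ preserves quasi-isomorphisms as well. Composing all isomorphisms in $D(\C)$ yields the stated splitting, and the bidegree bookkeeping $p=|I|$, $q=a+s$ follows.

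The main obstacle is not the assembly but making sure the individual comparisons are genuinely compatible. Derived descent is stated in $D(\C)$ for the completed Fréchet complex, whereas the sector arguments use algebraic sums. I would check that Theorem~\ref{thm:zero-sector-survival} is applied to the same concrete representative $\mathcal T_p^\bullet$ produced by the acyclic-cover identification, and that the case $I=\varnothing$ (when $p=0$) matches the augmented convention $\widetilde C^{-1}(K_\varnothing)=\C$. Both points were handled in the cited results, so the proof reduces to this composition.
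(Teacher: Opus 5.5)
Your proposal is correct and follows the paper's proof. The paper combines Theorem~\ref{thm:zero-sector-survival}, which already incorporates derived descent, finite Laurent support, and the two sector-vanishing results, with the reduced-cochain zigzag of Proposition~\ref{prop:zero-reduced-cochain}, using that the group differential vanishes on the zero sector. Your extra checks on the sign, on exactness of tensoring with the finite-dimensional exterior factor, and on the $I=\varnothing$ convention are correct and consistent with that argument.
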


\begin{proof}
Combine
Theorem~\ref{thm:zero-sector-survival}
with
Proposition~\ref{prop:zero-reduced-cochain}.
\end{proof}

\begin{remark}[Products and the second comparison]
The second proof gives an additive isomorphism in $D(\C)$ (Theorem~\ref{thm:Cox-Cech-Hochster-splitting}). Multiplicative descent requires a
compatible cochain product: the tensor/wedge product satisfies
\[
 (\gamma_i^*-1)(xy)
 =
 (\gamma_i^*-1)(x)\,\gamma_i^*(y)
 +x(\gamma_i^*-1)(y),
\]
so the group--Koszul differential obeys a twisted Leibniz rule.
A multiplicative comparison requires a bar or group-cochain diagonal
and compatible multiplicative nerve maps.  Part~II gives the graded-algebra identification.
\end{remark}

\begin{corollary}[Dolbeault--Hochster decomposition: second proof]
\label{thm:Dolbeault-Hochster-second-proof}
For $0\leq p,q\leq N$,
\[
 H^q(\XL,\Omega_{\XL}^p)
 \cong
 \bigoplus_{\substack{I\subseteq[N]\\|I|=p}}
 \ \bigoplus_{a+s=q}
 \widetilde H^{a-1}(K_I;\C)
 \otimes
 \Lambda^s\GammaLCdual.
\]
\end{corollary}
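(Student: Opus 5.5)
The corollary follows by taking cohomology in the derived-category isomorphism of Theorem~\ref{thm:Cox-Cech-Hochster-splitting}. An isomorphism in $D(\C)$ induces isomorphisms on every cohomology group. The left side in degree $q$ is $H^q(\XL,\Omega_{\XL}^p)$ by definition of $R\Gamma$. So it remains to compute the degree-$q$ cohomology of the right-hand complex
\[
 \bigoplus_{|I|=p}\widetilde C^{\bullet-1}(K_I;\C)\otimes\Lambda^\bullet\GammaLCdual .
\]

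The first step is a Künneth computation over the field $\C$. The exterior factor $\Lambda^\bullet\GammaLCdual$ has zero differential and is finite-dimensional in each degree. The tensor product with total grading $a+s$ therefore has cohomology
\[
 \bigoplus_{a+s=q}H^a\bigl(\widetilde C^{\bullet-1}(K_I;\C)\bigr)\otimes\Lambda^s\GammaLCdual .
\]
The shift convention gives $H^a(\widetilde C^{\bullet-1}(K_I;\C))=\widetilde H^{a-1}(K_I;\C)$. This includes the augmented case $I=\varnothing$, where the complex is $\C$ in degree $0$, matching the convention $\widetilde H^{-1}(K_\varnothing;\C)=\C$ fixed in the Notation section. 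Cohomology commutes with the finite direct sum over $|I|=p$, which yields exactly the displayed formula.

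The step most needing care is the bookkeeping of degrees. One must check that the total degree assigned in Theorem~\ref{thm:Cox-Cech-Hochster-splitting}, namely $a+s$ for an element of $\widetilde C^{a-1}(K_I)\otimes\Lambda^s$, matches the sheaf-cohomological degree $q$. This is how the zero sector $\mathcal T_{p,0}^\bullet$ was graded: \v{C}ech degree $a$ plus group-Koszul degree $s$, as in the definition of $d_{\mathrm{tot}}$. The identification of $\mathscr P_{0,I}^a$ with reduced cochains of degree $a-1$ comes from Proposition~\ref{prop:zero-reduced-cochain}.

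The summation ranges are implicit. Terms with $a>d$ vanish since $\dim K_I\le d-1$, and terms with $s>m$ vanish since $\Lambda^s\GammaLCdual=0$ there. The formula therefore agrees with the first proof, Theorem~\ref{thm:Dolbeault-Hochster-first}, and no further input is needed.
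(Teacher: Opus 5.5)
Your proposal is correct and matches the paper's proof, which simply takes cohomology in total degree $q$ of the $D(\C)$-isomorphism of Theorem~\ref{thm:Cox-Cech-Hochster-splitting}. Your K\"unneth computation over $\C$ and the degree-shift bookkeeping, including the augmented case $I=\varnothing$, spell out details that the paper leaves implicit.
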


\begin{proof}
Take cohomology in total degree $q$ in
Theorem~\ref{thm:Cox-Cech-Hochster-splitting}.
\end{proof}

This proves \eqref{eq:intro-main-formula} by holomorphic descent.  The consequences of the formula are developed in
Section~\ref{sec:full-rank-consequences}.

\Needspace{12\baselineskip}
\part{Examples and deformations}
\section{Consequences of the full-rank formula}
\label{sec:full-rank-consequences}

We apply the full-rank formula to the edge groups, duality, genera,
and Betti numbers.  Alexander duality pairs complementary supports;
the exterior deck factor makes every Hodge-row polynomial divisible by $(1+v)^m$.

Assume throughout that $B$ is invertible.  Let $K$ be the visible
$(d-1)$-sphere on $N$ vertices and set
\[
 m=N-d=\operatorname{rank}\GammaL.
\]
Since every $d$-polytope has at least $d+1$ facets, one has $m\geq1$.
For fixed holomorphic degree $p$, write
\[
 \mathscr H_{K,p}(v)
 =
 \sum_{a=0}^{d}
 \left(
 \sum_{\substack{I\subseteq[N]\\|I|=p}}
 \widetilde b_{a-1}(K_I)
 \right)v^a,
 \qquad
 H_p(v)=\sum_{q=0}^{N}h^{p,q}(\XL)v^q.
\]
The Dolbeault--Hochster theorem gives the row identity
\begin{equation}
 \label{eq:row-binomial-packet}
 H_p(v)=(1+v)^m\mathscr H_{K,p}(v).
\end{equation}
The identities for $\mathscr H_K$ use only the sphere $K$.
Their Dolbeault interpretations use the full-rank realization
assumed in this section.

\subsection{The edge groups}

\begin{theorem}[Edge Dolbeault groups]
\label{thm:edge-Dolbeault-groups}
There are natural identifications
\[
 H^\bullet(\XL,\mathcal O_{\XL})
 \cong
 \Lambda^\bullet\GammaLCdual
\]
as graded algebras, and
\[
 H^0(\XL,\Omega_{\XL}^p)=0
 \qquad(1\leq p\leq N).
\]
Moreover,
\[
 H^q(\XL,\Omega_{\XL}^N)
 \cong
 \widetilde H^{d-1}(K;\C)
 \otimes
 \Lambda^{q-d}\GammaLCdual,
\]
where the right-hand side is zero unless $d\leq q\leq N$.  Consequently,
\[
 h^{0,q}(\XL)=\binom mq,
 \qquad
 h^{N,q}(\XL)=\binom m{q-d}.
\]
\end{theorem}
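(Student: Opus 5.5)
The plan is to read all three assertions off the Dolbeault--Hochster theorem (Theorem~\ref{thm:Dolbeault-Hochster-first}, or equivalently Corollary~\ref{thm:Dolbeault-Hochster-second-proof}) by specializing to the extreme values $p=0$, $q=0$, and $p=N$. The algebra statement for $\mathcal O_{\XL}$ needs a separate multiplicative input, which the finite model supplies.

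\emph{Case $p=0$.} The only support is $I=\varnothing$. By the augmented convention, $\widetilde H^{a-1}(K_\varnothing;\C)$ is $\C$ for $a=0$ and zero otherwise. Hence $H^q(\XL,\mathcal O_{\XL})\cong\Lambda^q\GammaLCdual$ as vector spaces.

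For the algebra structure I would not use the additive descent comparison. Instead I would use the global Dolbeault--Cartan model (Theorem~\ref{thm:global-Dolbeault-Cartan}). In holomorphic degree zero it is $R_\Lambda^0\otimes\Lambda W_\Lambda^{0,1}=\Lambda W_\Lambda^{0,1}$ with zero differential. Since nothing of holomorphic degree zero involves $W^{1,0}$, there are no boundaries entering this piece. The model is a differential bigraded algebra weakly equivalent to $(\Omega^{*,*},\bar\partial)$, so the $(0,\bullet)$ cohomology is the exterior algebra $\Lambda W_\Lambda^{0,1}$ as a graded algebra. Proposition~\ref{prop:deck-Cartan-identification} then identifies $W_\Lambda^{0,1}$ with $\GammaLCdual$. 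Naturality of this identification is the point to check; I would cite that the identification comes from the canonical isomorphisms $V_\Gamma\cong\C^m\cong\mathfrak h_\Lambda$.

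\emph{Case $q=0$, $p\geq1$.} Here $a+s=0$ forces $a=0$. A nonempty $I$ has $\widetilde H^{-1}(K_I)=0$, so $H^0(\XL,\Omega^p_{\XL})=0$. This is also Theorem~\ref{thm:holomorphic-forms-defect} with $\delta=0$.

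\emph{Case $p=N$.} The only support is $I=[N]$, with $K_{[N]}=K$. Since $K$ is a simplicial $(d-1)$-sphere, $\widetilde H^{a-1}(K;\C)$ vanishes unless $a=d$, where it is $\C$. Substituting $a=d$ and $s=q-d$ gives the displayed formula, and it is nonzero only for $0\leq q-d\leq m$, that is, $d\leq q\leq N$.

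The dimension formulas follow immediately. The only mildly delicate point is the multiplicativity in the $p=0$ statement, which the finite model handles.
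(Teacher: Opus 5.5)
Your proposal is correct and follows the paper's proof essentially verbatim. You specialize the Dolbeault--Hochster formula at $p=0$, at $q=0$ with $p\geq1$, and at $p=N$. You then take the algebra structure on $H^\bullet(\XL,\mathcal O_{\XL})$ from the multiplicative Dolbeault--Cartan model, with $W_\Lambda^{0,1}\cong\GammaLCdual$ supplied by Proposition~\ref{prop:deck-Cartan-identification}. Your added observation that $\bar d$ preserves holomorphic degree, so $\mathcal D_\Lambda^{0,\bullet}=\Lambda W_\Lambda^{0,1}$ has zero differential and no incoming boundaries, is exactly the detail the paper leaves implicit.
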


\begin{proof}
For $p=0$, the only support is $I=\varnothing$, and
\[
 \widetilde H^{-1}(K_\varnothing;\C)=\C.
\]
The multiplicative Dolbeault--Cartan model identifies the surviving
closed antiholomorphic generators with $\GammaLCdual$, giving the exterior
algebra statement.  If $p>0$ and $q=0$, then $a=s=0$ would be required,
but $K_I$ is nonempty and has no reduced cohomology in degree $-1$.
Finally, $p=N$ forces $I=[N]$, and $K\simeq S^{d-1}$ has reduced
cohomology only in degree $d-1$, so $a=d$ and $s=q-d$.
\end{proof}

On the regular LVMB torus-bundle locus, the first-edge identities agree
with the kernel formula and $h^{0,1}=m$ obtained from Thiella's bigraded
model \cite[Proposition~2.8]{Thiella2026}.

\begin{corollary}[Irregularity, geometric genus, and non-K\"ahlerness]
\label{cor:non-Kahler-signature}
One has
\[
 h^{0,1}(\XL)=m=N-d,
 \qquad
 h^{1,0}(\XL)=0,
 \qquad
 h^{N,0}(\XL)=0.
\]
In particular, every full-rank minimally stable polytopal LVM manifold in
the range $d\geq1$ is non-K\"ahler.
\end{corollary}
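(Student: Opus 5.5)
The three Hodge numbers follow from Theorem~\ref{thm:edge-Dolbeault-groups}, and non-K\"ahlerness then comes from a mismatch with Hodge symmetry.

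First, the edge identifications give $h^{0,1}=\binom m1=m$, and $m=N-d$ by Proposition~\ref{prop:minimal-dimension-ledger}. Since $N\geq1$, the case $p=1$ of the vanishing $H^0(\XL,\Omega^p_{\XL})=0$ gives $h^{1,0}=0$. The case $p=N$ gives $h^{N,0}=0$. Equivalently, $h^{N,0}=\binom m{0-d}$, which vanishes because $d\geq1$ makes $0-d$ negative. All of this uses only full rank, through the Dolbeault--Hochster formula. A quick cross-check is that each value is also visible from the finite model at rank $\rho=m$: Theorem~\ref{thm:holomorphic-forms-defect} gives $h^{p,0}=\binom{0}{p}$.

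For non-K\"ahlerness, suppose $\XL$ carried a K\"ahler metric. Hodge symmetry would then force $h^{1,0}=h^{0,1}$. Here $h^{1,0}=0$ while $h^{0,1}=m\geq1$; the bound $m\geq1$ holds because a simple $d$-polytope with $d\geq1$ has at least $d+1$ facets, as recorded in the Hypotheses subsection. This is a contradiction. As an alternative, Corollary~\ref{cor:all-rank-ddbar-failure} shows that the $\partial\bar\partial$ lemma fails, and the $\partial\bar\partial$ lemma holds on every compact K\"ahler manifold. A third route is through $b_1$: by Lemma~\ref{lem:all-rank-b1}, $b_1(\XL)=m$, which is odd whenever $m$ is odd; this parity argument does not cover even $m$, so it is not the main argument.

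There is no real obstacle. The only point to check is that the positivity $m\geq1$ and the hypothesis $d\geq1$ are both used explicitly. The first makes the inequality $h^{1,0}\neq h^{0,1}$ strict. The second is what places $h^{N,0}$ outside the range where $\binom m{q-d}$ is nonzero.
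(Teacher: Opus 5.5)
Your proposal is correct and follows the paper's own argument: read off $h^{0,1}=m$, $h^{1,0}=0$, and $h^{N,0}=0$ from Theorem~\ref{thm:edge-Dolbeault-groups}, then use $m\geq1$ to contradict Hodge symmetry. The cross-checks you add (the finite model at $\delta=0$ and failure of the $\partial\bar\partial$ lemma) are valid but not needed.
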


\begin{proof}
The numerical statements follow from
Theorem~\ref{thm:edge-Dolbeault-groups}.  Since $m\geq1$, the bidegrees
$(1,0)$ and $(0,1)$ have different dimensions, contradicting the Hodge
symmetry of a compact K\"ahler manifold.
\end{proof}

\subsection{Alexander--Serre duality and polynomial reciprocity}

For a proper nonempty subset $I\subset[N]$, normalization of the
barycentric coordinates indexed by $I^c$ gives a deformation retraction
\[
 |K|\setminus |K_I|\simeq |K_{I^c}|.
\]
Topological Alexander duality in the sphere $|K|\cong S^{d-1}$ therefore
gives
\[
 \widetilde H^{j}(K_I;\C)
 \cong
 \widetilde H^{d-j-2}(K_{I^c};\C)^\vee
 \otimes
 \widetilde H^{d-1}(K;\C).
\]
The same formula holds for $I=\varnothing$ and $I=[N]$ by the augmented
convention.  This is the induced-subcomplex form of combinatorial
Alexander duality; compare
\cite[Theorem~1.1 and the discussion following it]{BjornerTancer2009}.

\begin{theorem}[Curvature duality]
\label{thm:curvature-duality}
Choose the curvature--Tor comparison of Remark~\ref{rem:first-bridge-choice-control}. Alexander duality on complementary supports then gives the vector-space dualities
\[
 H^{p,a}(\mathscr K_\Lambda)
 \cong
 H^{N-p,d-a}(\mathscr K_\Lambda)^\vee
 \otimes
 \widetilde H^{d-1}(K;\C).
\]
Consequently,
\[
 \kappa_\Lambda^{p,a}
 =
 \kappa_\Lambda^{N-p,d-a}.
\]
\end{theorem}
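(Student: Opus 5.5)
The plan is to transport the statement to Stanley--Reisner Tor and apply the induced-subcomplex Alexander duality recorded just above. First, Theorem~\ref{thm:curvature-Hochster}, through the comparison fixed in Remark~\ref{rem:first-bridge-choice-control}, gives
\[
 H^{p,a}(\mathscr K_\Lambda)\cong\bigoplus_{|I|=p}\widetilde H^{a-1}(K_I;\C).
\]
Similarly, $H^{N-p,d-a}(\mathscr K_\Lambda)$ is the sum over supports $J$ with $|J|=N-p$ of $\widetilde H^{d-a-1}(K_J;\C)$. Complementation $I\mapsto I^c$ is a bijection between $p$-subsets and $(N-p)$-subsets of $[N]$. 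So it suffices to match summands support by support.

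For a fixed $I$, apply the Alexander duality isomorphism with $j=a-1$:
\[
 \widetilde H^{a-1}(K_I;\C)\cong\widetilde H^{d-a-1}(K_{I^c};\C)^\vee\otimes\widetilde H^{d-1}(K;\C).
\]
Here $d-j-2=d-a-1$, which is exactly the cohomological degree of the $I^c$ summand in bidegree $(N-p,d-a)$. Summing over $I$ and using that finite direct sums commute with duals and with tensoring by the line $\widetilde H^{d-1}(K;\C)$ gives the stated isomorphism.

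The boundary cases must be checked against the augmented conventions. For $I=\varnothing$ one has $p=0$, and the only nonzero group is $a=0$, namely $\widetilde H^{-1}(K_\varnothing)=\C$. Its partner is $I^c=[N]$ in degree $d-1$, which is also $\C$ because $K\simeq S^{d-1}$. The case $I=[N]$ is symmetric, and the text already asserts that the formula holds in both cases. Indices with $a<0$ or $a>d$ vanish on both sides.

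Taking dimensions, and using that $\widetilde H^{d-1}(K;\C)$ is one-dimensional, gives $\kappa_\Lambda^{p,a}=\kappa_\Lambda^{N-p,d-a}$. The only delicate point is that the support decomposition of curvature homology depends on the chosen comparison. The statement fixes that choice, so the isomorphism is asserted only as one of vector spaces, and the dimension identity is independent of it.
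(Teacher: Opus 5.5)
Your proposal is correct and follows essentially the same route as the paper: pass through the chosen curvature--Tor comparison to the support-graded Hochster decomposition, pair the summand indexed by $I$ with the one indexed by $I^c$ via induced-subcomplex Alexander duality (with $j=a-1$, which lands in degree $d-a-1$), and note that the numerical symmetry does not depend on the comparison. Your explicit check of the augmented boundary cases $I=\varnothing$ and $I=[N]$ is a small addition; the paper covers these only in its remark, stated before the theorem, that Alexander duality holds for them by the augmented convention.
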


\begin{proof}
Choose the internal-degree-preserving curvature--Tor comparison of
Remark~\ref{rem:first-bridge-choice-control}.  On the Tor side, the
Hochster decomposition is canonically multigraded by supports $I$ of
cardinality $p$.  Apply Alexander duality to the summand indexed by $I$
and pair it with the summand indexed by $I^c$, of cardinality $N-p$ and
curvature degree $d-a$.  Transporting the resulting duality back to
curvature homology uses the chosen comparison, whereas the numerical
symmetry is independent of every choice.
\end{proof}

Exterior duality gives
\[
 \Lambda^s\GammaLCdual
 \cong
 \left(\Lambda^{m-s}\GammaLCdual\right)^\vee
 \otimes
 \Lambda^m\GammaLCdual.
\]

\begin{theorem}[Combinatorial Serre symmetry]
\label{thm:combinatorial-Serre-symmetry}
The Hochster formula gives the Serre symmetry
\[
 h^{p,q}(\XL)=h^{N-p,N-q}(\XL).
\]
Moreover, analytic Serre duality supplies the canonical perfect pairing
\[
 H_{\bar\partial}^{p,q}(\XL)
 \otimes H_{\bar\partial}^{N-p,N-q}(\XL)
 \longrightarrow H_{\bar\partial}^{N,N}(\XL)\cong\C.
\]
\end{theorem}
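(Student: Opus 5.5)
The symmetry $h^{p,q}(\XL)=h^{N-p,N-q}(\XL)$ follows from the Hochster formula by pairing three independent dualities: support complementation, Alexander duality on induced subcomplexes, and exterior duality on the deck factor. The perfect pairing is analytic Serre duality; its only input is the identification of the top-degree group.

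Begin with Theorem~\ref{thm:Dolbeault-Hochster-first}:
\[
 h^{p,q}(\XL)=\sum_{|I|=p}\sum_{a}\widetilde b_{a-1}(K_I)\binom{m}{q-a}.
\]
Send a summand indexed by $(I,a,s)$ with $a+s=q$ to the summand indexed by $(I^c,d-a,m-s)$.

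\begin{itemize}
\item The support map $I\mapsto I^c$ is a bijection from $p$-subsets of $[N]$ onto $(N-p)$-subsets.
\item Alexander duality, as displayed before Theorem~\ref{thm:curvature-duality}, gives $\widetilde b_{a-1}(K_I)=\widetilde b_{d-a-1}(K_{I^c})$, since $d-(a-1)-2=d-a-1$. This holds for every $I$, including $I=\varnothing$ and $I=[N]$ by the augmented convention.
\item Exterior duality gives $\binom ms=\binom m{m-s}$.
\end{itemize}

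The new total degree is $(d-a)+(m-s)=d+m-q=N-q$, using $m=N-d$. The ranges $0\le a\le d$ and $0\le s\le m$ are carried to themselves, so the map is a bijection of index sets preserving dimensions. Equivalently, at the polynomial level the identity reads
\[
 \mathscr H_K(u,v)=u^Nv^d\mathscr H_K(u^{-1},v^{-1}),
\]
and multiplying by $(1+v)^m=v^m(1+v^{-1})^m$ gives $\operatorname{Dol}_{\XL}(u,v)=u^Nv^N\operatorname{Dol}_{\XL}(u^{-1},v^{-1})$. One can also read the result off Theorem~\ref{thm:curvature-duality} together with Theorem~\ref{thm:universal-Hodge-formula}.

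For the pairing, $\XL$ is a compact complex manifold of dimension $N$. Serre duality gives
\[
 H^q(\XL,\Omega^p)\cong H^{N-q}(\XL,\Omega^{N-p})^\vee,
\]
realized by wedge product followed by integration. This factors through $H^{N,N}_{\bar\partial}(\XL)$, and integration identifies $H^{N,N}_{\bar\partial}(\XL)$ with $\C$. Theorem~\ref{thm:edge-Dolbeault-groups} confirms this identification combinatorially: the top group is $\widetilde H^{d-1}(K)\otimes\Lambda^m\GammaLCdual\cong\C$. Nothing is hard here; the only care needed is the boundary cases $I=\varnothing$ and $I=[N]$, which the augmented convention handles, together with the degree bookkeeping $d+m=N$.
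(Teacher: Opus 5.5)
Your proposal is correct and follows the paper's proof. The paper combines the curvature duality $\kappa_\Lambda^{p,a}=\kappa_\Lambda^{N-p,d-a}$ of Theorem~\ref{thm:curvature-duality}, which is itself Alexander duality on complementary supports, with exterior duality and $d+m=N$; this is exactly your summand-level bijection $(I,a,s)\mapsto(I^c,d-a,m-s)$. For the pairing, the paper likewise identifies the top group $\widetilde H^{d-1}(K;\C)\otimes\Lambda^m\GammaLCdual$ as one-dimensional and then invokes Serre's canonical pairing.
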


\begin{proof}
Combine the dimension statement in
Theorem~\ref{thm:curvature-duality} with exterior duality and use
$d+m=N$.  The top-degree group is
\[
 H_{\bar\partial}^{N,N}(\XL)
 \cong
 \widetilde H^{d-1}(K;\C)
 \otimes
 \Lambda^m\GammaLCdual,
\]
which is one-dimensional.  This is Serre's canonical perfect pairing \cite{Serre1955}. The support summands express it in the
chosen Hochster decomposition.
\end{proof}

\begin{corollary}[Reciprocity]
\label{cor:Dolbeault-reciprocity}
The Hochster and Dolbeault polynomials satisfy
\[
 \mathscr H_K(u,v)
 =
 u^Nv^d\mathscr H_K(u^{-1},v^{-1}),
\]
and
\[
 \operatorname{Dol}_{\XL}(u,v)
 =
 u^Nv^N
 \operatorname{Dol}_{\XL}(u^{-1},v^{-1}).
\]
Moreover,
\[
 H^N(\XL,\Omega_{\XL}^p)=0
 \quad(p<N),
 \qquad
 H^N(\XL,\Omega_{\XL}^N)\cong\C.
\]
\end{corollary}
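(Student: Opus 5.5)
The plan is to derive all three assertions from the numerical dualities already established and the full-rank formula.

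\emph{Reciprocity for the Hochster polynomial.} By definition $\mathscr H_K(u,v)=\sum_{p,a}\bigl(\sum_{|I|=p}\widetilde b_{a-1}(K_I)\bigr)u^pv^a$. At full rank, Theorem~\ref{thm:curvature-Hochster} identifies this coefficient with $\kappa_\Lambda^{p,a}$. Alternatively, one can argue directly from Alexander duality in the sphere $K$: the displayed duality gives
\[
 \widetilde b_{a-1}(K_I)=\widetilde b_{d-a-1}(K_{I^c}),
\]
with the augmented convention covering $I=\varnothing,[N]$. Summing over $|I|=p$ and reindexing by $I^c$ (which has cardinality $N-p$) gives the coefficient identity $[u^pv^a]=[u^{N-p}v^{d-a}]$. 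This identity is exactly
\[
 \mathscr H_K(u,v)=u^Nv^d\mathscr H_K(u^{-1},v^{-1}).
\]
This also matches $\kappa^{p,a}=\kappa^{N-p,d-a}$ from Theorem~\ref{thm:curvature-duality}.

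\emph{Reciprocity for the Dolbeault polynomial.} Use Corollary~\ref{cor:Dolbeault-polynomial-first}, $\operatorname{Dol}=(1+v)^m\mathscr H_K$. The factor satisfies
\[
 (1+v)^m=v^m(1+v^{-1})^m .
\]
Multiplying this by the Hochster identity, and using $d+m=N$, gives
\[
 \operatorname{Dol}(u,v)=u^Nv^{N}\operatorname{Dol}(u^{-1},v^{-1}).
\]
Equivalently, this is the numerical Serre symmetry of Theorem~\ref{thm:combinatorial-Serre-symmetry}.

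\emph{Top row.} By reciprocity, $h^{p,N}=h^{N-p,0}$. Theorem~\ref{thm:edge-Dolbeault-groups} gives $h^{N-p,0}=0$ for $N-p\ge1$, i.e.\ $p<N$, and $h^{0,0}=1$. Hence $H^N(\XL,\Omega^p_{\XL})=0$ for $p<N$, and $H^N(\XL,\Omega^N_{\XL})$ is one-dimensional, hence $\cong\C$. For $p=N$ one can also read this off directly from the edge theorem: it gives
\[
 H^N(\XL,\Omega^N_{\XL})\cong\widetilde H^{d-1}(K)\otimes\Lambda^m\GammaLCdual .
\]

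No real obstacle is expected. The only care needed is bookkeeping the augmented convention at $I=\varnothing$ and $I=[N]$ in the reindexing, where $\widetilde b_{-1}(K_\varnothing)=1=\widetilde b_{d-1}(K)$ must pair correctly.
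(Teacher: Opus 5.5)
Your proposal is correct and follows essentially the same route as the paper's proof. Alexander duality on complementary supports gives the coefficient symmetry of $\mathscr H_K$, and multiplying by $(1+v)^m=v^m(1+v^{-1})^m$ with $d+m=N$ gives Dolbeault reciprocity. The top row then follows from this symmetry together with the vanishing of positive-degree holomorphic forms and $h^{0,0}=1$.
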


\begin{proof}
Pair the coefficient of $u^pv^a$ in $\mathscr H_K$ with the coefficient
of $u^{N-p}v^{d-a}$ using Alexander duality.  Multiply by
$(1+v)^m$ and use $d+m=N$ for the Dolbeault reciprocity.  The top-row
statement follows from Serre symmetry and the vanishing of positive-degree
holomorphic forms.
\end{proof}

\subsection{Exterior factors and vanishing genera}

\begin{theorem}[Divisibility of the Hodge rows]
\label{thm:vertical-binomial-divisibility}
For every $0\leq p\leq N$, the row polynomial $H_p(v)$ is divisible by
$(1+v)^m$.  Equivalently, for every integer $0\leq\nu<m$,
\[
 \sum_{q=0}^{N}
 (-1)^q\binom q\nu h^{p,q}(\XL)=0.
\]
\end{theorem}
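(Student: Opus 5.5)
The divisibility itself comes directly from the row identity \eqref{eq:row-binomial-packet}, $H_p(v)=(1+v)^m\mathscr H_{K,p}(v)$. That identity in turn follows from Theorem~\ref{thm:Dolbeault-Hochster-first}, or equivalently from Corollary~\ref{thm:Dolbeault-Hochster-second-proof}: fix $p$, multiply the Hodge-number formula by $v^q$, and sum over $q$. The inner sum over $s=q-a$ of $\binom ms v^s$ is $(1+v)^m$, and what remains is $\sum_a\bigl(\sum_{|I|=p}\widetilde b_{a-1}(K_I)\bigr)v^a=\mathscr H_{K,p}(v)$. Since $\mathscr H_{K,p}$ is a polynomial with integer coefficients, this gives divisibility of $H_p(v)$ by $(1+v)^m$ in $\Z[v]$.

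Next I derive the numerical reformulation by the standard criterion for the multiplicity of a root. A polynomial $f(v)=\sum_q c_qv^q$ is divisible by $(1+v)^m$ exactly when $f^{(\nu)}(-1)=0$ for $0\leq\nu<m$. We have
\[
 \frac{f^{(\nu)}(-1)}{\nu!}=\sum_q\binom q\nu c_q(-1)^{q-\nu}.
\]
Up to the nonzero factor $(-1)^\nu$, this is precisely the displayed alternating sum with $c_q=h^{p,q}(\XL)$. Both implications therefore hold over $\C$, and since the polynomials have rational coefficients the condition is the same over $\Z$. The only point needing care is that $q$ ranges up to $N$, which covers the full support of $H_p$, because $h^{p,q}=0$ for $q>N$.

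No step is a genuine obstacle; the content is entirely the full-rank formula. The one thing I would check is the indexing, namely that $a\leq d$ and $s\leq m$ together give $q\leq N$, so that truncating the sum at $N$ loses nothing. As a remark, I could add that the factor $(1+v)^m$ is exactly the exterior algebra $\Lambda^\bullet\GammaLCdual$, so the divisibility reflects the free action of $H^\bullet(\XL,\mathcal O_{\XL})$ described in Theorem~\ref{thm:edge-Dolbeault-groups}. This is not needed for the proof.
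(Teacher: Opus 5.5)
Your proposal is correct and matches the paper's proof: it reads off the divisibility from the row identity \eqref{eq:row-binomial-packet}, which comes from the full-rank Dolbeault--Hochster formula, and then converts it into the alternating binomial sums via $\frac{1}{\nu!}H_p^{(\nu)}(-1)=(-1)^\nu\sum_q(-1)^q\binom q\nu h^{p,q}(\XL)$. Your added remarks on the converse direction of the root-multiplicity criterion and on the range $q\leq d+m=N$ are accurate and only make explicit what the paper leaves implicit.
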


\begin{proof}
The divisibility is equation~\eqref{eq:row-binomial-packet}.  Its
$\nu$-th derivative vanishes at $v=-1$ for $\nu<m$.  Since
\[
 \frac{1}{\nu!}H_p^{(\nu)}(-1)
 =
 (-1)^\nu
 \sum_q(-1)^q\binom q\nu h^{p,q}(\XL),
\]
the stated identities follow.
\end{proof}

\begin{corollary}[Vanishing holomorphic Euler characteristics]
\label{cor:vanishing-chi-y}
For every $p$,
\[
 \chi(\XL,\Omega_{\XL}^p)
 =
 \sum_q(-1)^qh^{p,q}(\XL)
 =0.
\]
Consequently, the Hirzebruch $\chi_y$-genus vanishes identically:
\[
 \chi_y(\XL)
 =
 \sum_{p=0}^{N}
 \chi(\XL,\Omega_{\XL}^p)y^p
 =0.
\]
In particular,
\[
 \chi(\XL,\mathcal O_{\XL})=0.
\]
\end{corollary}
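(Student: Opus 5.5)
The plan is to read this off directly from the row identity \eqref{eq:row-binomial-packet}, which is already a consequence of Theorem~\ref{thm:Dolbeault-Hochster-first} at full rank:
\[
 H_p(v)=\sum_q h^{p,q}(\XL)v^q=(1+v)^m\mathscr H_{K,p}(v).
\]
By definition, $\chi(\XL,\Omega_{\XL}^p)=\sum_q(-1)^qh^{p,q}(\XL)$ is the value $H_p(-1)$. Here the sum over $q$ is finite, since the groups vanish outside $0\leq q\leq N$.

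The standing hypotheses give $m\geq1$, because a simple $d$-polytope has at least $d+1$ facets. So the factor $(1+v)^m$ vanishes at $v=-1$. Since $\mathscr H_{K,p}$ is a polynomial, $H_p(-1)=0$ for every $p$. This is also the case $\nu=0$ of Theorem~\ref{thm:vertical-binomial-divisibility}, and I would simply cite that theorem.

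The $\chi_y$-genus is $\sum_p\chi(\XL,\Omega_{\XL}^p)y^p$, a finite sum over $0\leq p\leq N$. Every coefficient vanishes by the previous step, so the genus is identically zero. Specializing to $p=0$, or equivalently $y=0$, gives $\chi(\XL,\mathcal O_{\XL})=0$.

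As a cross-check, this last case agrees with Theorem~\ref{thm:edge-Dolbeault-groups}: it gives $h^{0,q}=\binom mq$, whose alternating sum is $(1-1)^m=0$.

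There is no real obstacle. The only points to state explicitly are that $m\geq1$ holds under the standing hypotheses and that the alternating sums are finite, so that evaluating at $v=-1$ is legitimate. One could also note that the first identity holds at every rank, by the corollary after Corollary~\ref{cor:exact-defect-order}, but the full-rank argument above suffices here.
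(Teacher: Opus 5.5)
Your proposal is correct and matches the paper's proof, which takes $\nu=0$ in Theorem~\ref{thm:vertical-binomial-divisibility} (that is, evaluates $H_p(v)=(1+v)^m\mathscr H_{K,p}(v)$ at $v=-1$, using $m\geq1$) and then sums over $p$. Your side remark is also correct: the first identity holds at every rank, because the factor $(1+v)^m$ is present in the Dolbeault polynomial at every rank.
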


\begin{proof}
Take $\nu=0$ in
Theorem~\ref{thm:vertical-binomial-divisibility}, and then sum over $p$.
\end{proof}

\subsection{Moment-angle cohomology, Betti numbers, and Fr\"olicher degeneration}

The additive cohomology calculation builds on the work of Goresky--MacPherson and Jewell on arrangement complements
\cite{GoreskyMacPherson1988,Jewell1994}.  Bosio--Meersseman give the
integral calculation in polytope form, together with the induced-complex
homology formulation
\cite[Theorem~10.1 and Remark~10.2]{BosioMeersseman2006}.
Their published proof uses the cohomology of moment-angle manifolds.
An earlier proof used the Goresky--MacPherson formula and de Longueville's
product formula \cite{DeLongueville2000}, as explained in
\cite[Remark~10.7]{BosioMeersseman2006}.
In the Hochster--Tor notation for the visible moment-angle manifold
\cite{Panov2008,BuchstaberPanov2015}, its complexification is
\[
 H^{\ell}(\mathcal Z_K;\C)
 \cong
 \bigoplus_{I\subseteq[N]}
 \widetilde H^{\ell-|I|-1}(K_I;\C).
\]

\begin{theorem}[Curvature groups and moment-angle cohomology]
\label{thm:curvature-moment-angle}
For every $\ell$,
\[
 H^{\ell}(\mathcal Z_K;\C)
 \cong
 \bigoplus_{p+a=\ell}
 H^{p,a}(\mathscr K_\Lambda).
\]
Consequently,
\[
 \sum_{p+a=\ell}\kappa_\Lambda^{p,a}
 =
 b_\ell(\mathcal Z_K),
\]
and
\[
 \operatorname{Poin}_{\mathcal Z_K}(t)
 =
 \mathscr H_K(t,t).
\]
\end{theorem}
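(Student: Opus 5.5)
The plan is to regroup the Hochster decomposition of Theorem~\ref{thm:curvature-Hochster} by total degree and match it with the Hochster--Tor description of moment-angle cohomology quoted just before the statement. Since this section assumes full rank, Theorem~\ref{thm:curvature-Hochster} gives
\[
 H^{p,a}(\mathscr K_\Lambda)\cong\bigoplus_{|I|=p}\widetilde H^{a-1}(K_I;\C).
\]
I would sum this over all pairs $(p,a)$ with $p+a=\ell$. Setting $p=|I|$ gives $a-1=\ell-|I|-1$. The double sum over $p$ and over supports of size $p$ therefore becomes a single sum over all $I\subseteq[N]$ of $\widetilde H^{\ell-|I|-1}(K_I;\C)$. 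By the displayed formula \cite{Panov2008,BuchstaberPanov2015}, this sum is exactly $H^\ell(\mathcal Z_K;\C)$. This gives the first isomorphism.

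The bounds should match without extra work. On the curvature side $0\le a\le d$. A summand with $a-1>d-1$ vanishes, since $K_I$ has dimension at most $d-1$. A summand with $a-1<-1$ is zero by convention. So the restriction to the range of Theorem~\ref{thm:curvature-Hochster} discards nothing. The empty support sits in $(p,a)=(0,0)$ and contributes $H^0(\mathcal Z_K)=\C$. This agrees with the augmented convention $\widetilde H^{-1}(K_\varnothing)=\C$ used on both sides.

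Taking dimensions gives $\sum_{p+a=\ell}\kappa_\Lambda^{p,a}=b_\ell(\mathcal Z_K)$. The Poincaré identity then follows by substituting $u=v=t$ in $\mathscr H_K(u,v)=\sum_I u^{|I|}\sum_a\widetilde b_{a-1}(K_I)v^a$. The coefficient of $t^\ell$ collects exactly the terms with $|I|+a=\ell$, which is $b_\ell(\mathcal Z_K)$. Equivalently, $\operatorname{Curv}_\Lambda=\mathscr H_K$ at full rank, and one specializes this identity.

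The main obstacle is not computational; it is making sure the two gradings are compatible. The Tor/Hochster formula for $\mathcal Z_K$ is usually stated with doubled internal degree, and the cohomological degree is $-i+2j$, halved here to $j\mapsto|I|$. This is the same halving noted in the proof of Theorem~\ref{thm:Hochster}. I would check this against Theorem~\ref{thm:Dolbeault-Tor-grading}. There $i=p-a$ and $j=p$, so the topological degree $-i+2j$ equals $p+a=\ell$. That confirms the matching of degrees with no sign or shift discrepancy. The identification is only an additive isomorphism of vector spaces that depends on the chosen comparison (Remark~\ref{rem:first-bridge-choice-control}). That is all the statement claims.
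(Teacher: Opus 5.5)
Your proposal is correct and follows the paper's proof. Both arguments match the summands $\widetilde H^{a-1}(K_I;\C)$ of Theorem~\ref{thm:curvature-Hochster} with the summands $\widetilde H^{\ell-|I|-1}(K_I;\C)$ of the quoted moment-angle Hochster--Tor formula via $\ell-|I|-1=a-1$. Your extra checks on the degree range, the empty support, and the halved Tor grading $-i+2j=p+a$ are consistent with the paper's conventions.
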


\begin{proof}
For $|I|=p$ and $\ell=p+a$,
\[
 \ell-|I|-1=a-1.
\]
Thus the moment-angle and curvature--Hochster decompositions have the same
summands.
\end{proof}

By Theorem~\ref{thm:smooth-moment-angle-model},
\[
 \XL\cong_{\mathrm{homeo}}\mathcal Z_K\times T^m.
\]

\begin{theorem}[Betti numbers and total Hodge dimensions]
\label{thm:Betti-Hodge-equality}
\label{thm:Frolicher-degeneration}
The Poincar\'e polynomial is
\[
 \operatorname{Poin}_{\XL}(t)
 =
 (1+t)^m\mathscr H_K(t,t)
 =
 \operatorname{Dol}_{\XL}(t,t).
\]
Equivalently,
\[
 b_\ell(\XL)
 =
 \sum_{I\subseteq[N]}
 \sum_{a=0}^{d}
 \widetilde b_{a-1}(K_I)
 \binom m{\ell-|I|-a},
\]
where the binomial coefficient is zero outside its natural range.  Hence
for every $\ell$,
\[
 \sum_{p+q=\ell}h^{p,q}(\XL)=b_\ell(\XL).
\]
The Fr\"olicher spectral sequence degenerates at $E_1$, as established in
Theorem~\ref{thm:exact-full-rank-criterion}.
\end{theorem}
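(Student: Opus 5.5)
The plan is to combine the topological product decomposition with the Hochster description of moment-angle cohomology, and then compare the result with the full-rank Dolbeault polynomial. By Theorem~\ref{thm:smooth-moment-angle-model}, $\XL$ is homeomorphic to $\mathcal Z_K\times T^m$. The Künneth formula over $\C$ therefore gives
\[
 \operatorname{Poin}_{\XL}(t)=(1+t)^m\operatorname{Poin}_{\mathcal Z_K}(t).
\]
Theorem~\ref{thm:curvature-moment-angle} supplies $\operatorname{Poin}_{\mathcal Z_K}(t)=\mathscr H_K(t,t)$. It does so by matching the summand $\widetilde H^{\ell-|I|-1}(K_I)$ with the curvature summand in bidegree $(p,a)=(|I|,\ell-|I|)$. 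Together these prove the first equality.

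For the second equality, I specialize Corollary~\ref{cor:Dolbeault-polynomial-first}, $\operatorname{Dol}_{\XL}(u,v)=(1+v)^m\mathscr H_K(u,v)$, to $u=v=t$. This gives $\operatorname{Dol}_{\XL}(t,t)=(1+t)^m\mathscr H_K(t,t)$, which is the first expression.

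The explicit Betti formula comes from expanding the product. The coefficient of $t^\ell$ in
\[
 (1+t)^m\sum_I t^{|I|}\sum_a\widetilde b_{a-1}(K_I)t^a
\]
collects the terms with $s=\ell-|I|-a$ from the factor $(1+t)^m$. Each such term carries the weight $\binom ms$. The convention that binomials vanish outside $0\leq s\leq m$ takes care of the boundary cases.

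Comparing coefficients of $t^\ell$ in $\operatorname{Dol}_{\XL}(t,t)=\operatorname{Poin}_{\XL}(t)$ gives $\sum_{p+q=\ell}h^{p,q}=b_\ell$. For the Frölicher assertion, I note that $E_1$ has total-degree dimensions $\sum_{p+q=\ell}h^{p,q}$. The sequence converges to de Rham cohomology, so $E_\infty$ has total dimension $b_\ell$. Each page is a subquotient of the previous one, so dimensions can only drop. Equality of the totals therefore forces every differential $d_r$ with $r\geq1$ to vanish. This is also the content of Theorem~\ref{thm:exact-full-rank-criterion}, which may simply be cited.

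There is no substantial obstacle. The only point needing care is bookkeeping: the homeomorphism is only topological, so it gives Betti numbers and nothing more, and the grading conventions (internal degree $|I|$, plus a shift by $a$) must be aligned exactly as in Theorem~\ref{thm:curvature-moment-angle}. I expect that check to be the only place where an index error could creep in.
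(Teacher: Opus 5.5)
Your proposal is correct and follows the paper's proof: the homeomorphism $\XL\cong\mathcal Z_K\times T^m$ with K\"unneth, Theorem~\ref{thm:curvature-moment-angle} for $\operatorname{Poin}_{\mathcal Z_K}(t)=\mathscr H_K(t,t)$, expansion of $(1+t)^m$ for the Betti formula, and the full-rank identity $\operatorname{Dol}_{\XL}(u,v)=(1+v)^m\mathscr H_K(u,v)$ specialized at $u=v=t$. Your dimension-count argument for $E_1$-degeneration is the same one used in the proof of Theorem~\ref{thm:exact-full-rank-criterion}, which the paper cites at this point.
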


\begin{proof}
Use the product homeomorphism with $T^m$ and
Theorem~\ref{thm:curvature-moment-angle}.  The coefficient formula follows
by expanding $(1+t)^m$; the diagonal Dolbeault equality follows from
\[
 \operatorname{Dol}_{\XL}(u,v)
 =
 (1+v)^m\mathscr H_K(u,v).
\]
\end{proof}

\begin{corollary}[Low Betti numbers and Euler characteristic]
\label{cor:Betti-Euler-consequences}
One has
\[
 b_1(\XL)=b_{2N-1}(\XL)=m,
\]
and
\[
 \chi_{\mathrm{top}}(\XL)=0.
\]
More generally, for $0\leq\nu<m$,
\[
 \sum_{\ell=0}^{2N}
 (-1)^\ell\binom\ell\nu b_\ell(\XL)=0.
\]
In particular, the total even and odd Betti numbers are equal.
\end{corollary}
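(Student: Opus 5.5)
The plan is to read everything off the Poincar\'e polynomial identity of Theorem~\ref{thm:Betti-Hodge-equality},
\[
 \operatorname{Poin}_{\XL}(t)=(1+t)^m\mathscr H_K(t,t),
\]
or, more directly, off the homeomorphism $\XL\cong\mathcal Z_K\times T^m$ of Theorem~\ref{thm:smooth-moment-angle-model}. The route mirrors the proof of Theorem~\ref{thm:vertical-binomial-divisibility}, with $t$ in place of $v$. The identity shows that $\operatorname{Poin}_{\XL}(t)$ is divisible by $(1+t)^m$ with $m\geq1$. For $0\leq\nu<m$, the $\nu$-th derivative of $\operatorname{Poin}_{\XL}$ therefore vanishes at $t=-1$. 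Expanding
\[
 \frac1{\nu!}\operatorname{Poin}^{(\nu)}_{\XL}(-1)=(-1)^{\nu}\sum_\ell(-1)^\ell\binom\ell\nu b_\ell(\XL)
\]
gives the binomial identities. The case $\nu=0$ gives $\chi_{\mathrm{top}}(\XL)=\operatorname{Poin}_{\XL}(-1)=0$, which says precisely that the total even and total odd Betti numbers agree.

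For $b_1$, I would invoke Lemma~\ref{lem:all-rank-b1}, which already gives $b_1(\XL)=m$. If I wanted it from the formula instead, I would note that the coefficient of $t$ in $(1+t)^m\mathscr H_K(t,t)$ equals $m$ times the constant term $1$, coming from $I=\varnothing$ and $a=0$, plus the $t$-coefficient of $\mathscr H_K(t,t)$. That coefficient would need $|I|+a=1$. With $|I|=1$ and $a=0$ it would require $\widetilde b_{-1}$ of a point, which is zero. The case $I=\varnothing$, $a=1$ does not occur, since $K_\varnothing$ has only degree $-1$ cohomology.

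For $b_{2N-1}$, I would use Poincar\'e duality on the closed manifold $\XL$, which has real dimension $2N$ and is orientable as a complex manifold; this gives $b_{2N-1}=b_1$. As an alternative within the paper's framework, I could use the reciprocity $\operatorname{Dol}_{\XL}(u,v)=u^Nv^N\operatorname{Dol}_{\XL}(u^{-1},v^{-1})$ of Corollary~\ref{cor:Dolbeault-reciprocity} together with $\operatorname{Poin}_{\XL}(t)=\operatorname{Dol}_{\XL}(t,t)$, which gives $\operatorname{Poin}_{\XL}(t)=t^{2N}\operatorname{Poin}_{\XL}(t^{-1})$.

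There is no real obstacle. The only points needing care are the sign bookkeeping in the derivative expansion and making sure the binomial coefficients are used with the stated conventions outside their natural ranges.
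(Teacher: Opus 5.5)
Your proposal is correct and is essentially the paper's own argument. The paper likewise reads $b_1=m$ off the coefficient of $t$ in $(1+t)^m\operatorname{Poin}_{\mathcal Z_K}(t)$, using that the empty support contributes only $\widetilde H^{-1}$ and a singleton support has no reduced cohomology; it then gets $b_{2N-1}$ from Poincar\'e duality, and obtains the binomial identities, hence $\chi_{\mathrm{top}}(\XL)=0$, by differentiating the factor $(1+t)^m$ at $t=-1$.
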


\begin{proof}
The Hochster decomposition gives $b_0(\mathcal Z_K)=1$ and
$b_1(\mathcal Z_K)=0$: in degree one, the empty support contributes
$\widetilde H^0(K_\varnothing)=0$, while a singleton support contributes
$\widetilde H^{-1}$ of a point.  Hence the coefficient of $t$ in
$(1+t)^m\operatorname{Poin}_{\mathcal Z_K}(t)$ is $m$.  The value of
$b_{2N-1}$ follows from Poincar\'e duality.  The remaining identities
follow by differentiating the factor $(1+t)^m$ at $t=-1$.
\end{proof}

\subsection{Join multiplicativity}

Let $K_i=\partial Q_i$ be simplicial polytopal $(d_i-1)$-spheres on
disjoint vertex sets of cardinalities $N_i$, and put
$m_i=N_i-d_i$.  For
$I=I_1\sqcup I_2$,
\[
 K_I=(K_1)_{I_1}*(K_2)_{I_2}.
\]
The reduced cohomology of joins gives
\[
 \widetilde H^{a-1}(L*M)
 \cong
 \bigoplus_{i+j=a}
 \widetilde H^{i-1}(L)
 \otimes
 \widetilde H^{j-1}(M).
\]

\begin{theorem}[Join multiplicativity]
\label{thm:join-multiplicativity}
One has
\[
 \mathscr H_{K_1*K_2}(u,v)
 =
 \mathscr H_{K_1}(u,v)
 \mathscr H_{K_2}(u,v).
\]
Consequently, if $X$ is a full-rank minimally stable polytopal LVM
manifold with visible sphere $K_1*K_2$, then
\[
 \operatorname{Dol}_{X}(u,v)
 =(1+v)^{m_1+m_2}
 \mathscr H_{K_1}(u,v)\mathscr H_{K_2}(u,v).
\]
If $X_i$ are full-rank realizations with visible spheres $K_i$, this is
\(
 \operatorname{Dol}_{X}=\operatorname{Dol}_{X_1}\operatorname{Dol}_{X_2}
\).
\end{theorem}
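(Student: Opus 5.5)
The plan is to prove the polynomial identity first, by decomposing supports, and then read off the Dolbeault statements from Corollary~\ref{cor:Dolbeault-polynomial-first}.

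The vertex set of $K=K_1*K_2$ is $[N_1]\sqcup[N_2]$. Every $I$ splits uniquely as $I=I_1\sqcup I_2$, and then $|I|=|I_1|+|I_2|$ and $K_I=(K_1)_{I_1}*(K_2)_{I_2}$. By the displayed join formula for reduced cohomology, with degrees $i+j=a$,
\[
 \widetilde b_{a-1}(K_I)=\sum_{i+j=a}\widetilde b_{i-1}\bigl((K_1)_{I_1}\bigr)\,\widetilde b_{j-1}\bigl((K_2)_{I_2}\bigr).
\]
Substituting this into the definition of $\mathscr H_K$, the monomial $u^{|I|}v^a$ factors as $u^{|I_1|}v^i\cdot u^{|I_2|}v^j$. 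The double sum over $(I_1,I_2)$ and $(i,j)$ is then exactly the product $\mathscr H_{K_1}(u,v)\,\mathscr H_{K_2}(u,v)$.

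The main point to check is the augmented convention at empty supports. The join formula must hold when $L$ or $M$ is $\{\varnothing\}$, since $L*\{\varnothing\}=L$ and $\widetilde H^{-1}(\{\varnothing\})=\C$ sits in the $i=0$ slot. This makes the $I_1=\varnothing$ terms reproduce $\mathscr H_{K_2}$ with the correct shift. I would also confirm that the summation ranges $0\le a\le d$ and $0\le i\le d_1$, $0\le j\le d_2$ are compatible. This holds because $d=d_1+d_2$ and reduced cohomology of a $(d_i-1)$-dimensional complex vanishes above degree $d_i-1$, so no terms are lost in either direction.

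For the Dolbeault statements, note that $K_1*K_2$ is again the boundary of a simplicial polytope, namely the free sum $Q_1\oplus Q_2$. It has $N=N_1+N_2$ vertices and dimension $d=d_1+d_2$, so $m=m_1+m_2$. Corollary~\ref{cor:Dolbeault-polynomial-first} applied to $X$ then gives $(1+v)^{m_1+m_2}\mathscr H_{K_1}\mathscr H_{K_2}$. Applying the same corollary to each $X_i$ gives $\operatorname{Dol}_{X_i}=(1+v)^{m_i}\mathscr H_{K_i}$, and multiplying these yields $\operatorname{Dol}_X=\operatorname{Dol}_{X_1}\operatorname{Dol}_{X_2}$.
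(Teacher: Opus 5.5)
Your proposal is correct and follows essentially the same route as the paper. You split each support as $I_1\sqcup I_2$, use $K_I=(K_1)_{I_1}*(K_2)_{I_2}$ and the reduced-cohomology join formula to factor $\mathscr H_{K_1*K_2}$, and then multiply by $(1+v)^{m_1+m_2}$ via $N=N_1+N_2$, $d=d_1+d_2$, $m=m_1+m_2$ and Corollary~\ref{cor:Dolbeault-polynomial-first}. Your checks of the augmented empty-support convention and of the summation ranges are details the paper leaves implicit, and you handle them correctly.
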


\begin{proof}
The induced complex on $I_1\sqcup I_2$ is the join of the two induced
complexes.  Sum the reduced-cohomology join formula over all supports and
obtain the combinatorial identity.  Since
\[
 N=N_1+N_2,\qquad d=d_1+d_2,\qquad m=m_1+m_2,
\]
the full-rank Dolbeault statement follows by multiplying by
$(1+v)^{m_1+m_2}$.
\end{proof}

\section{Full-rank examples}
\label{sec:explicit-examples}

Assume throughout this section that $B$ is invertible.  We evaluate
the Hochster formula for simplices, products of simplices, cubes,
and polygons.  Write
\[
 H_p(v)=\sum_q h^{p,q}(\XL)v^q.
\]
For the square we use the full-rank realization of
Proposition~\ref{prop:full-rank-square-companion}.  In each case,
\[
 \operatorname{Dol}_{\XL}(u,v)=(1+v)^m\mathscr H_K(u,v),
 \qquad m=N-d.
\]
The Hopf, square, and polygonal formulas overlap with
\cite{KatzarkovLeeLupercioMeersseman2025}; the precise comparisons
are given below.

\subsection{The simplex}

Let $P=\Delta^d$.  Then
\[
 N=d+1,
 \qquad
 m=1,
 \qquad
 K=\partial\Delta^d.
\]
Every proper nonempty induced subcomplex of $K$ is a simplex.  The entire
Hodge diamond is therefore carried by the empty and full supports.

\begin{proposition}[Simplex formula]
\label{prop:simplex-example}
For the full-rank LVM manifold associated with $\Delta^d$,
\[
 \mathscr H_K(u,v)=1+u^{d+1}v^d
\]
and
\[
 \operatorname{Dol}_{\XL}(u,v)
 =
 (1+v)(1+u^{d+1}v^d).
\]
Equivalently, the only nonzero Hodge numbers are
\[
 h^{0,0}=h^{0,1}=h^{d+1,d}=h^{d+1,d+1}=1.
\]
\end{proposition}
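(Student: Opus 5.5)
The computation reduces entirely to the Hochster polynomial $\mathscr H_K$, because at full rank Corollary~\ref{cor:Dolbeault-polynomial-first} gives $\operatorname{Dol}_{\XL}(u,v)=(1+v)^m\mathscr H_K(u,v)$. Here $N=d+1$ forces $m=N-d=1$. So I will compute
\[
 \mathscr H_K(u,v)=\sum_{I\subseteq[d+1]}u^{|I|}\sum_{a=0}^{d}\widetilde b_{a-1}(K_I)v^a
\]
for $K=\partial\Delta^d$, sorting the subsets $I$ into three cases.

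\emph{Empty support.} For $I=\varnothing$, the augmented convention $\widetilde H^{-1}(K_\varnothing;\C)=\C$ contributes $u^0v^0=1$.

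\emph{Proper nonempty supports.} Take $\varnothing\neq I\subsetneq[d+1]$. The only nonface of $\partial\Delta^d$ is the full vertex set, so every proper subset is a face. Hence $K_I$ is the full simplex on $I$, which is contractible with all reduced cohomology zero, including in degree $-1$ since $K_I$ is nonempty. These supports contribute nothing.

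\emph{Full support.} For $I=[d+1]$, we have $K_I=K\cong S^{d-1}$, whose only nonzero reduced cohomology is $\widetilde H^{d-1}\cong\C$. This corresponds to $a=d$ and contributes $u^{d+1}v^d$. This is also the $p=N$ case of Theorem~\ref{thm:edge-Dolbeault-groups}.

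Summing the three cases gives $\mathscr H_K=1+u^{d+1}v^d$. Multiplying by $(1+v)^1$ yields the stated Dolbeault polynomial, and expanding it gives exactly the four nonzero Hodge numbers $h^{0,0}=h^{0,1}=h^{d+1,d}=h^{d+1,d+1}=1$.

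The only step needing care is the edge case $d=1$, where $K$ is two points and $S^0$ has $\widetilde H^0\cong\C$. The formula still gives $1+u^2v$, so no separate treatment is needed. There is no real obstacle, since all the work sits in the full-rank Hochster theorem already established.
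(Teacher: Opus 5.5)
Your proposal is correct and is essentially the paper's argument. With $N=d+1$ and $m=1$, the full-rank factorization $\operatorname{Dol}_{\XL}=(1+v)^m\mathscr H_K$ reduces everything to $\mathscr H_K$, and since every proper nonempty induced subcomplex of $\partial\Delta^d$ is a simplex, only the empty support and the full support $K\cong S^{d-1}$ contribute. (Incidentally, for $m=1$ the bound $\operatorname{rank}_{\C}B\geq\lceil m/2\rceil$ of Theorem~\ref{thm:real-indispensable-frame} makes the full-rank hypothesis automatic.)
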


For $d=1$, this is the Hopf-surface polynomial
$(1+v)(1+u^2v)$, namely the $m=1$ case of
\cite[formulas~(4) and~(10)]{KatzarkovLeeLupercioMeersseman2025}.
The diagonal specialization gives the Poincaré polynomial.  The smooth
model gives
\[
 \mathcal Z_K\cong S^{2d+1},
 \qquad
 \XL\cong_{\mathrm{diff}}S^{2d+1}\times S^1.
\]
Accordingly,
\[
 \operatorname{Dol}_{\XL}(t,t)
 =
 (1+t)(1+t^{2d+1}),
\]
the Poincaré polynomial of this smooth manifold.

\subsection{Products of simplices}

Let
\[
 P=\Delta^{d_1}\times\cdots\times\Delta^{d_r},
 \qquad
 d=d_1+\cdots+d_r.
\]
Here every $d_j\geq1$.
Then
\[
 N=d+r,
 \qquad
 m=r,
\]
and the dual boundary complex is
\[
 K
 =
 \partial\Delta^{d_1}*\cdots*\partial\Delta^{d_r}.
\]

\begin{proposition}[Product-of-simplices formula]
\label{prop:product-simplices-example}
One has
\[
 \mathscr H_K(u,v)
 =
 \prod_{j=1}^r
 \left(1+u^{d_j+1}v^{d_j}\right)
\]
and therefore
\[
 \operatorname{Dol}_{\XL}(u,v)
 =
 (1+v)^r
 \prod_{j=1}^r
 \left(1+u^{d_j+1}v^{d_j}\right).
\]
\end{proposition}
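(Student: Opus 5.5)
The plan is to prove the Hochster-polynomial identity by join multiplicativity, and then obtain the Dolbeault polynomial from the full-rank formula. Since $K=\partial\Delta^{d_1}*\cdots*\partial\Delta^{d_r}$, an induction on $r$ using Theorem~\ref{thm:join-multiplicativity} reduces the first identity to the single-factor case
\[
 \mathscr H_{\partial\Delta^{d_j}}(u,v)=1+u^{d_j+1}v^{d_j}.
\]
This is the simplex computation of Proposition~\ref{prop:simplex-example}. It is purely combinatorial and does not depend on any LVM realization, so it applies to each factor.

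For completeness I would recheck the single-factor identity directly. In $\partial\Delta^{d_j}$, on $d_j+1$ vertices, every proper nonempty induced subcomplex $K_I$ is a full simplex and therefore has vanishing reduced cohomology. The empty support contributes $\widetilde H^{-1}(K_\varnothing)=\C$, which is the monomial $u^0v^0$. The full support is a $(d_j-1)$-sphere with reduced cohomology in degree $d_j-1$, so $a=d_j$ and it contributes $u^{d_j+1}v^{d_j}$.

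The join step needs one verification. The factor dimensions satisfy $d=\sum d_j$ and $N=\sum(d_j+1)=d+r$, and these add under joins. Hence the degree bound $0\le a\le d$ in the definition of $\mathscr H_K$ is compatible with the join formula for reduced cohomology $\widetilde H^{a-1}(L*M)\cong\bigoplus_{i+j=a}\widetilde H^{i-1}(L)\otimes\widetilde H^{j-1}(M)$ quoted before Theorem~\ref{thm:join-multiplicativity}.

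Next I would record the parameters: $N=d+r$ and $m=N-d=r$. The polytope is a product of simplices, so it is simple, and its dual boundary is the stated join. Corollary~\ref{cor:Dolbeault-polynomial-first}, in the full-rank form $\operatorname{Dol}_{\XL}=(1+v)^m\mathscr H_K$, then gives the second displayed formula with $m=r$.

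No step here is a real obstacle. The one point needing care is the hypothesis $d_j\ge1$. It guarantees that each $\partial\Delta^{d_j}$ is a nonempty sphere with the augmented conventions in force, and that the identity $\partial\Delta^{d_j}*\partial\Delta^{d_k}$ equals the dual of $\Delta^{d_j}\times\Delta^{d_k}$ holds as stated.
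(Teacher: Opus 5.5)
Your proposal is correct and follows essentially the same route as the paper: the paper also applies the simplex calculation of Proposition~\ref{prop:simplex-example} to each factor $\partial\Delta^{d_j}$ and then invokes join multiplicativity (Theorem~\ref{thm:join-multiplicativity}), with the Dolbeault formula following from the full-rank factorization $(1+v)^m\mathscr H_K$ with $m=r$. Your direct recheck of the single-simplex supports and of the degree range under joins is a harmless addition.
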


\begin{proof}
Apply the simplex calculation to every factor and use join
multiplicativity.
\end{proof}

The same product structure is visible topologically.  The moment-angle
join identity gives
\[
 \mathcal Z_K
 \cong
 \prod_{j=1}^r S^{2d_j+1}.
\]
Hence
\[
 \XL
 \cong_{\mathrm{diff}}
 \left(\prod_{j=1}^rS^{2d_j+1}\right)\times T^r. \]
The corresponding Poincar\'e polynomial is
\[
 \operatorname{Dol}_{\XL}(t,t)
 =
 (1+t)^r
 \prod_{j=1}^r(1+t^{2d_j+1}).
\]

\paragraph{The triangular prism.}
For
\[
 P=\Delta^2\times\Delta^1,
\]
one has $(N,d,m)=(5,3,2)$ and
\[
 \mathscr H_K(u,v)
 =
 1+u^2v+u^3v^2+u^5v^3.
\]
The nonzero Hodge rows are
\[
 \begin{array}{c|c}
 p&H_p(v)\\ \hline
 0&(1+v)^2\\
 2&v(1+v)^2\\
 3&v^2(1+v)^2\\
 5&v^3(1+v)^2.
 \end{array}
\]
In particular,
\[
 \operatorname{Dol}_{\XL}(t,t)
 =
 (1+t)^2(1+t^3)(1+t^5),
\]
matching
\[
 \XL\cong_{\mathrm{diff}}S^3\times S^5\times T^2.
\]

\subsection{Cubes}

Let $P=[0,1]^d=(\Delta^1)^d$.  The complex $K$ is the join of $d$
copies of $S^0$.  Group its $2d$ vertices into the $d$ opposite pairs.
An induced subcomplex is contractible if it selects exactly one vertex
from some pair.  If it selects both vertices from exactly $k$ pairs and
no vertices from the other pairs, then it is
\[
 (S^0)^{*k}\simeq S^{k-1}.
\]

\begin{proposition}[Cube formula]
\label{prop:cube-example}
For the $d$-cube,
\[
 \mathscr H_K(u,v)
 =
 (1+u^2v)^d
\]
and
\[
 \operatorname{Dol}_{\XL}(u,v)
 =
 (1+v)^d(1+u^2v)^d.
\]
More precisely,
\[
 H_{2k}(v)
 =
 \binom dk v^k(1+v)^d,
 \qquad
 0\leq k\leq d,
\]
and every odd holomorphic row vanishes.
\end{proposition}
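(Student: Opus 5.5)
The plan is to compute the Hochster polynomial $\mathscr H_K$ for $K=(S^0)^{*d}$ and then apply the full-rank factorization $\operatorname{Dol}_{\XL}=(1+v)^m\mathscr H_K$ from Corollary~\ref{cor:Dolbeault-polynomial-first}. Here $N=2d$, so $m=N-d=d$. Two routes are available, and I will use the shorter one.

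\emph{Join route.} $K$ is the $d$-fold join of $\partial\Delta^1=S^0$, and $[0,1]^d=(\Delta^1)^d$ is the product-of-simplices case with every $d_j=1$ and $r=d$. Proposition~\ref{prop:product-simplices-example} then gives directly
\[
 \mathscr H_K=\prod_{j=1}^{d}(1+u^{2}v)=(1+u^2v)^d .
\]
Alternatively, one can apply Theorem~\ref{thm:join-multiplicativity} inductively to the simplex formula of Proposition~\ref{prop:simplex-example} at $d=1$, where $\mathscr H_{S^0}=1+u^2v$. One only needs to note that $\partial\Delta^1$ is a polytopal $0$-sphere, so the combinatorial join identity applies to it.

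\emph{Direct cross-check.} The induced-subcomplex description already recorded before the statement confirms the result. Write $I=I_1\sqcup\cdots\sqcup I_d$, with $I_j$ contained in the $j$th opposite pair. Then $K_I$ is the join of the $(K_j)_{I_j}$. If some $I_j$ is a single vertex, that factor is a cone point and the join is contractible, so the support contributes nothing. Otherwise each $I_j$ is either empty or the full pair. If exactly $k$ pairs are full, then $K_I\simeq S^{k-1}$, with $|I|=2k$ and reduced cohomology only in degree $k-1$, that is, $a=k$. There are $\binom dk$ such supports. Summing gives
\[
 \mathscr H_K=\sum_k\binom dk u^{2k}v^k=(1+u^2v)^d .
\]

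Multiplying by $(1+v)^d$ gives the Dolbeault polynomial. Reading off the coefficient of $u^{2k}$ gives $H_{2k}(v)=\binom dk v^k(1+v)^d$, and since $\mathscr H_K$ contains only even powers of $u$, every odd holomorphic row vanishes. The one point that needs care is the augmented convention at $k=0$: there $\widetilde H^{-1}(K_\varnothing)=\C$, which yields $H_0=(1+v)^d$. There is no real obstacle beyond this bookkeeping.
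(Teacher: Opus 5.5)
Your proposal is correct and matches the paper. The paper justifies the cube formula by exactly your direct enumeration: a support meeting some opposite pair in a single vertex induces a cone, and $k$ full pairs give $(S^0)^{*k}\simeq S^{k-1}$, contributing $u^{2k}v^k$. Your join route is the $d_j=1$ case of Proposition~\ref{prop:product-simplices-example}, which the paper proves by join multiplicativity; multiplying by $(1+v)^d$ and reading off the $u^{2k}$ coefficients gives the row formulas just as you describe.
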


The smooth model is
\[
 \XL\cong_{\mathrm{diff}}(S^3)^d\times T^d,
\]
so
\[
 \operatorname{Dol}_{\XL}(t,t)
 =
 (1+t)^d(1+t^3)^d
\]
is its Poincaré polynomial.

\paragraph{The three-cube.}
Here $(N,d,m)=(6,3,3)$ and
\[
 \mathscr H_K(u,v)
 =
 1+3u^2v+3u^4v^2+u^6v^3.
\]
The Hodge rows are
\[
 \begin{array}{c|c}
 p&H_p(v)\\ \hline
 0&(1+v)^3\\
 2&3v(1+v)^3\\
 4&3v^2(1+v)^3\\
 6&v^3(1+v)^3.
 \end{array}
\]
The coefficient $3u^4v^2$ comes from the three choices of two opposite
pairs.  Each corresponding proper four-vertex induced subcomplex is
\[
 S^0*S^0\simeq S^1.
\]
These are proper supports with nonzero $\widetilde H^1$.
The remaining nonzero support contributions come from the empty set, the three disconnected pairs of opposite vertices, and the full sphere $K$.

\subsection{The polygonal family}

For polygons the entire proper-support contribution is governed by the
number of connected components of an induced subgraph.  Let $K=C_l$ with
$l\geq3$. Then
\[
 N=l,
 \qquad
 d=2,
 \qquad
 m=l-2.
\]
For
\[
 1\leq p\leq l-1,
\]
set
\[
 a_{l,p}
 =
 \sum_{|I|=p}
 \widetilde b_0((C_l)_I).
\]

\begin{proposition}
\label{prop:polygon-run-count}
\[
 a_{l,p}
 =
 l\binom{l-2}{p-1}
 -
 \binom lp.
\]
\end{proposition}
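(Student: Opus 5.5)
We count, for each $p$-subset $I$ of the cyclic vertex set $[l]$ with $1\leq p\leq l-1$, the number of connected components of the induced subgraph $(C_l)_I$. Since $I$ is a proper nonempty subset of the cycle, $(C_l)_I$ is a disjoint union of paths, its ``runs'' of cyclically consecutive vertices. Thus $\widetilde b_0((C_l)_I)=c(I)-1$, where $c(I)\geq1$ is the number of runs. Summing over all $I$ gives
\[
 a_{l,p}=\sum_{|I|=p}c(I)-\binom lp,
\]
so it suffices to prove $\sum_{|I|=p}c(I)=l\binom{l-2}{p-1}$.

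To count runs we count their initial vertices. A vertex $j\in I$ starts a run exactly when its cyclic predecessor $j-1$ does not lie in $I$. Hence $c(I)$ equals the number of ordered pairs $(j-1,j)$ with $j\in I$ and $j-1\notin I$. This description uses $I\neq[l]$: for $I=[l]$ there would be no such pair although there is one component, and that is why the range $p\leq l-1$ is imposed.

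Exchanging the order of summation, we fix one of the $l$ cyclic edges $(j-1,j)$. The number of $p$-subsets containing $j$ and avoiding $j-1$ is obtained by choosing the remaining $p-1$ elements from the other $l-2$ vertices, which gives $\binom{l-2}{p-1}$. Summing over the $l$ edges yields $l\binom{l-2}{p-1}$, as required.

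The only delicate point is the boundary case $p=1$. There each singleton has one run and $\widetilde b_0=0$, and the formula gives $l-l=0$, which is consistent. No further obstacle arises. For $l=3$ the formula also correctly returns $0$, since every proper induced subcomplex of the triangle is connected.
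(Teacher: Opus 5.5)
Your proposal is correct and follows essentially the same argument as the paper: you write $\widetilde b_0=r(I)-1$ for proper nonempty $I$, count the cyclic runs of $I$ through their initial vertices (selected vertex with unselected predecessor) to get $\sum_{|I|=p}r(I)=l\binom{l-2}{p-1}$, and then subtract $\binom lp$. Your remarks on the boundary cases $p=1$ and $l=3$, and on why $I=[l]$ is excluded, are accurate additions.
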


\begin{proof}
For a nonempty proper subset $I$, let $r(I)$ be the number of cyclic runs.
Then
\[
 \widetilde b_0((C_l)_I)=r(I)-1.
\]
A fixed vertex begins a run when it is selected and its predecessor is not;
the remaining $p-1$ selected vertices are chosen among the other $l-2$
vertices. Thus
\[
 \sum_{|I|=p}r(I)
 =
 l\binom{l-2}{p-1}.
\]
Subtract $\binom lp$.
\end{proof}

The polygonal calculation was considered in
\cite[Theorems~1 and~3, formulas~(8)--(9)]{KatzarkovLeeLupercioMeersseman2025}.
The theorem below gives its coefficients directly from induced
subcomplexes, at the minimally stable value $m=l-2$.  In particular, the
proper-support polynomial is
\[
 A_l(u):=\sum_{p=2}^{l-2}a_{l,p}u^p
 =1+u^l-(1+u)^l+lu(1+u)^{l-2}.
\]
For $l\geq4$, the normalization in formulas~(8)--(9) of that paper is
$s_lu^2P_{l-4}=A_l$, where $s_l=l(l-3)/2$.\footnote{On p.~5 of
\cite{KatzarkovLeeLupercioMeersseman2025}, the definition following~(8)
has $2u$ in place of $2u^l$, and formula~(12) substitutes
$(1+u)^{l-4}$ for $P_{l-4}$.  For the hexagon the correct polynomial is
$A_6=9u^2+16u^3+9u^4$, whereas that substitution gives a middle
coefficient of $18$.}
The comparison here is at full curvature rank.  The polynomial
$P_{l-4}$ in that paper is normalized by $s_l$; the appendix
below uses the unnormalized polynomial $A_l/u^2$.  For $l=3$, the identity
gives $A_3=0$ and recovers the simplex calculation.

\begin{theorem}[Polygonal formula]
\label{thm:polygonal-example}
For the full-rank minimally stable $l$-gon,
\[
 \operatorname{Dol}_{X_l}(u,v)
 =
 (1+v)^{l-2}
 \left[
 1+
 v\sum_{p=2}^{l-2}
 \left(
 l\binom{l-2}{p-1}-\binom lp
 \right)u^p
 +
 u^lv^2
 \right].
\]
\end{theorem}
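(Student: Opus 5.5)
We compute the Hochster polynomial $\mathscr H_{C_l}(u,v)$ directly and then multiply by $(1+v)^m$ with $m=l-2$, using Corollary~\ref{cor:Dolbeault-polynomial-first} (equivalently Theorem~\ref{thm:Dolbeault-Hochster-first}).

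Throughout, $K=C_l$ has $d=2$, so only $a\in\{0,1,2\}$ contributes to $\mathscr H_K$. We sort the supports $I\subseteq[l]$ into three cases.

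\emph{The empty support.} Here $\widetilde b_{-1}(K_\varnothing)=1$ with $a=0$, which gives the constant term $1$.

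\emph{The full support.} Here $K_{[l]}=C_l\simeq S^1$, so only $\widetilde b_1=1$ contributes, with $a=2$. This gives $u^lv^2$.

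\emph{Proper nonempty supports.} An induced subcomplex $(C_l)_I$ with $I$ proper and nonempty is a disjoint union of paths. It is therefore a nonempty union of contractible components. Its reduced cohomology is then concentrated in degree $0$, with $\widetilde b_0=r(I)-1$, where $r(I)$ is the number of cyclic runs. Thus only $a=1$ contributes, and the coefficient of $u^pv$ is $a_{l,p}$.

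Proposition~\ref{prop:polygon-run-count} evaluates this coefficient as $a_{l,p}=l\binom{l-2}{p-1}-\binom lp$ for $1\le p\le l-1$.

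It remains to restrict the summation range from $1\le p\le l-1$ to $2\le p\le l-2$. At $p=1$ the formula gives $l-l=0$. At $p=l-1$ it gives $l(l-2)-l(l-1)+\dots$; more directly, a single vertex and the complement of a single vertex each induce one path, so both have $r(I)=1$ and contribute nothing. Hence the sum may be taken over $2\le p\le l-2$, and
\[
\mathscr H_{C_l}(u,v)=1+v\sum_{p=2}^{l-2}a_{l,p}u^p+u^lv^2.
\]
Multiplying by $(1+v)^{l-2}$ gives the stated formula. For $l=3$ the middle sum is empty, which recovers Proposition~\ref{prop:simplex-example}.

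As a cross-check, the same polynomial should come out of Theorem~\ref{thm:polygonal-rigidity} at $\rho=m$. The text already notes that Pascal's identity gives $b_{m,m,p}=(m+2)\binom m{p-1}-\binom{m+2}p$, with vanishing coefficients at $p=1$ and $p=m+1$. Substituting $m+2=l$ makes this agree term by term with $a_{l,p}$.

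The only point needing care is the reduced-cohomology claim for proper induced subgraphs. It holds because deleting at least one vertex of the cycle leaves a forest of paths, so there is no $\widetilde H^1$. The remaining steps are bookkeeping, so I expect no real obstacle.
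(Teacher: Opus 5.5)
Your proof is correct and matches the paper's own argument: the empty support gives $1$, each proper nonempty induced subcomplex is a disjoint union of paths and contributes only in reduced degree zero (counted by Proposition~\ref{prop:polygon-run-count}), the full cycle gives $u^lv^2$, and you then multiply by $(1+v)^{l-2}$. One small slip: at $p=l-1$ the formula evaluates to $l\binom{l-2}{l-2}-\binom{l}{l-1}=l-l=0$, not the garbled expression you wrote, though your direct combinatorial argument already settles that endpoint.
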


\begin{proof}
The empty support contributes $1$.  Every proper nonempty induced subgraph
is a disjoint union of paths and contributes only reduced degree zero.
The full cycle contributes
\[
 \widetilde H^1(C_l;\C)=\C.
\]
\end{proof}

\paragraph{The square.}
The polynomial below is also the $m=2$ specialization of
\cite[formula~(23)]{KatzarkovLeeLupercioMeersseman2025}.
There are exactly two disconnected two-vertex induced subcomplexes, the
two pairs of opposite vertices.  Thus
\[
 \mathscr H_{C_4}(u,v)
 =
 1+2u^2v+u^4v^2
\]
and
\[
 \operatorname{Dol}_{X_4}(u,v)
 =
 (1+v)^2(1+2u^2v+u^4v^2).
\]
The Hodge table is given in
Proposition~\ref{prop:full-rank-square-companion}.
The diagonal specialization is
\[
 (1+t)^2(1+t^3)^2,
\]
matching
\[
 \mathcal Z_{C_4}\cong S^3\times S^3,
 \qquad
 X_4\cong_{\mathrm{diff}}S^3\times S^3\times T^2.
\]

\paragraph{The pentagon.}
The run count gives
\[
 a_{5,2}=a_{5,3}=5.
\]
Hence
\[
 \mathscr H_{C_5}(u,v)
 =
 1+5u^2v+5u^3v+u^5v^2
\]
and
\[
 \operatorname{Dol}_{X_5}(u,v)
 =
 (1+v)^3(1+5u^2v+5u^3v+u^5v^2).
\]
The nonzero rows are
\[
 \begin{array}{c|c}
 p&H_p(v)\\ \hline
 0&(1+v)^3\\
 2&5v(1+v)^3\\
 3&5v(1+v)^3\\
 5&v^2(1+v)^3.
 \end{array}
\]
On the diagonal,
\[
 \operatorname{Dol}_{X_5}(t,t)
 =
 (1+t)^3(1+5t^3+5t^4+t^7),
\]
which matches the standard smooth identification
\[
 \mathcal Z_{C_5}
 \cong
 \mathop{\#}_{5}(S^3\times S^4)
\]
together with the torus factor $T^3$.

\subsection{Exact induced-subcomplex enumeration}

The accompanying program \texttt{induced\_subcomplex\_enumeration.py} computes the augmented boundary matrices
of every $K_I$ over $\mathbb Q$, sums their reduced Betti numbers
in Hochster bidegrees, and multiplies by $(1+v)^{N-d}$.  The results are listed below.

\paragraph{Induced-subcomplex calculation.}
\label{prop:computer-example-verification}
Exact enumeration of all induced subcomplexes gives
\[
 \begin{array}{c|c|c}
 \text{polytope}&(N,d,m)&\mathscr H_K(u,v)\\ \hline
 \Delta^2&(3,2,1)&1+u^3v^2\\
 \text{square}&(4,2,2)&1+2u^2v+u^4v^2\\
 \text{pentagon}&(5,2,3)&1+5u^2v+5u^3v+u^5v^2\\
 \text{three-cube}&(6,3,3)&1+3u^2v+3u^4v^2+u^6v^3\\
 \text{triangular prism}&(5,3,2)&1+u^2v+u^3v^2+u^5v^3.
 \end{array}
\]
In each case, the Hodge numbers satisfy
\[
 h^{p,q}=h^{N-p,N-q},
 \qquad
 h^{0,q}=\binom mq,
 \qquad
 h^{N,N}=1.
\]
The diagonal specialization of the Hodge polynomial equals the
Poincar\'e polynomial of the corresponding smooth model. The enumeration reproduces the formulas above, including the reduced-degree shift and the empty-support convention.

\section{Tangent-sheaf cohomology}
\label{sec:tangent-sheaf}

The tangent calculation starts from the same Cox covering as the
calculation for forms.  Its shifted pole condition allows nonzero
resonant sectors to survive.  We compute these sectors, including
their positive \v{C}ech degrees.

Meersseman studied global holomorphic vector fields in the original
construction \cite{Meersseman2000}; fundamental fields and their zero
loci are treated in \cite[Theorem~4]{BiswasDumitrescuMeersseman2020}.
The formula includes the invariant polynomial vector fields in degree zero and the cohomology of shifted pole complexes in higher \v{C}ech degrees.

Throughout this section, $\XL$ is minimally stable and polytopal, $d\geq1$,
and $B$ is invertible.  Thus $\pi:U_K\to\XL$ is the holomorphic covering of
Theorem~\ref{thm:full-rank-Cox-quotient}, with deck group $\GammaL\cong\Z^m$.
Write $G^\vee=\GammaLCdual$, and retain the ordered deck basis used in
Section~\ref{sec:derived-deck-descent}.  For $r\in\Z^N$, put
\[
 \chi_r(\gamma)=\exp\left(\sum_{j=1}^N r_jA_j(\gamma)\right),
 \qquad
 M_{\mathrm{res}}=\{r\in\Z^N:\chi_r=1\}.
\]
The regularity condition is expressed by
\[
 E_i=z_i\frac{\partial}{\partial z_i},\qquad
 z^rE_i=z^{r+e_i}\frac{\partial}{\partial z_i},\qquad
 T_i(r)=\{j:r_j+\delta_{ij}<0\},
\]
where $e_i$ is the $i$th coordinate vector in $\Z^N$.  As before,
$\widetilde H^{-1}(K_\varnothing;\C)=\C$.

\begin{theorem}[Tangent-sheaf sector formula]
\label{thm:tangent-sector-formula}
Let $\XL$ be minimally stable and polytopal with $d\geq1$, and suppose
$B$ is invertible.  For every $q\geq0$ there is an additive isomorphism
\begin{equation}\label{eq:tangent-sector-formula}
 H^q(\XL,\Theta_{\XL})
 \cong
 \bigoplus_{r\in M_{\mathrm{res}}}\ \bigoplus_{i=1}^N
 \ \bigoplus_{\substack{0\leq a\leq d,\ 0\leq s\leq m\\a+s=q}}
 \widetilde H^{a-1}(K_{T_i(r)};\C)\otimes\Lambda^sG^\vee.
\end{equation}
Only finitely many pairs $(r,i)$ contribute nonzero cohomology.  With the
chosen Cox coordinates and deck basis, the formula is induced by an
isomorphism in $D(\C)$
\begin{equation}\label{eq:tangent-derived-formula}
 R\Gamma(\XL,\Theta_{\XL})
 \simeq
 \bigoplus_{r\in M_{\mathrm{res}}}\ \bigoplus_{i=1}^N
 \widetilde C^{\bullet-1}(K_{T_i(r)};\C)
 \otimes\Lambda^\bullet G^\vee.
\end{equation}
The sum on the right is algebraic.  The exterior factor has zero
differential; an element with reduced cochain degree $a-1$ and exterior
degree $s$ has total degree $a+s$.
\end{theorem}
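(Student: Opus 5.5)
I will rerun the holomorphic-descent proof of Part~III with $\Omega^p$ replaced by $\Theta$. Where a resonant sector there was killed by the two-sign theorem, here the resonant sectors are simply kept.

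\emph{Chart step.} On $U_K$ the tangent sheaf is free with frame $E_1,\ldots,E_N$ over the dense torus, and $\pi^*\Theta_{\XL}\cong\Theta_{U_K}$ because $\pi$ is a covering. A section on $U_\tau$ is written $\sum_i f_i\,\partial/\partial z_i=\sum_{r,i}a_{r,i}z^rE_i$. In the frame $\partial/\partial z_i$ the coefficient monomial is $z^{r+e_i}$, so $z^rE_i$ is regular on $U_\tau$ exactly when $T_i(r)\cap\tau=\varnothing$. This is the analogue of Lemma~\ref{lem:Cox-regularity}, with $T_i(r)$ playing the role of $\Pole(r,I)$. The fixed-sector complex is therefore $C^\bullet(\Delta_{\max},\mathfrak D_{T_i(r)};\C)$. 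By Theorems~\ref{thm:relative-sector} and~\ref{thm:Hochster-nerve} and the zigzag of Proposition~\ref{prop:zero-reduced-cochain}, this complex is quasi-isomorphic to $\widetilde C^{\bullet-1}(K_{T_i(r)};\C)$. That proposition was written for a general subset, not only for $I$.

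\emph{Analytic step.} The Köthe weights of Section~\ref{sec:analytic-Laurent-completion} now use the shifted exponent $r+e_i$, and the Cauchy estimates, nuclearity, sector projections, Euler continuity and deck estimates carry over verbatim. Since $[E_j,z^rE_i]=r_jz^rE_i$, the Euler operators still act on each sector by the scalar $r_j$. The deck group acts on sector $(r,i)$ by $\chi_r$, because $E_i$ is invariant under diagonal scaling. Derived deck descent (Theorem~\ref{thm:derived-deck-descent}) holds for any locally free sheaf, so it applies to $\Theta$. The Euler finite-support theorem (Theorem~\ref{thm:Euler-finite-support}) uses only the Cartan--Serre finiteness of $H^q(\XL,\Theta)$ and the spectral gap, so it also applies. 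Together these give $R\Gamma(\XL,\Theta)\simeq\bigoplus^{\mathrm{alg}}_{r,i}\operatorname{Tot}\Kos_{\GammaL}(\mathscr P_{r,i}^\bullet)$. Theorem~\ref{thm:nontrivial-character-contraction} kills every sector with $\chi_r\neq1$. On resonant sectors the group differential vanishes, leaving $\widetilde C^{\bullet-1}(K_{T_i(r)})\otimes\Lambda^\bullet G^\vee$. This yields \eqref{eq:tangent-derived-formula}, and taking cohomology gives \eqref{eq:tangent-sector-formula}. The bounds $a\le d$ and $s\le m$ hold automatically, since $K_T$ has dimension at most $d-1$.

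\emph{Main obstacle: finiteness.} I must show that only finitely many $(r,i)$ have nonacyclic $K_{T_i(r)}$. This does not follow cleanly from the argument above, because Cartan--Serre only bounds the number of nonzero summands in each fixed degree. That already suffices for the ``only finitely many'' clause, since $q\le N$. For a concrete description I would still use the Gale-height picture. For $r\in M_{\mathrm{res}}$, $T_i(r)=V^-_{\phi_{r,h}}$ when $r_i\neq-1$. When $r_i=-1$, the vertex $i$ is dropped, so $T_i(r)=V^-\setminus\{i\}$. When $r_i\ge0$, or when $r_i\le-2$, Proposition~\ref{prop:negative-halfspace-contractible} makes the complex contractible whenever $\phi_{r,h}$ takes both signs; this covers every $r\neq0$ by Theorem~\ref{thm:two-sign-resonance}. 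So the only survivors are $r=0$, which contributes $N$ copies of $\Lambda^\bullet G^\vee$, and pairs with $r_i=-1$ for which removing $i$ from the negative set leaves a non-contractible induced complex. For the latter I would argue that this removal can change homotopy type only when $i$ is the unique minimizer region. That should force $-r$ to lie in a bounded region of a fixed lattice, and hence give a finite set.
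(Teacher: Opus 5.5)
Your proposal is correct and follows the paper's proof essentially step for step. The steps are: the shifted regularity condition giving $\mathscr Q_{r,i}^\bullet\cong C^\bullet(\Delta_{\max},\mathfrak D_{T_i(r)};\C)$; the Köthe completion with weights in the shifted exponent $r+e_i$; group--Koszul descent and Euler finite support; character contraction; and the Mayer--Vietoris zigzag to $\widetilde C^{\bullet-1}(K_{T_i(r)};\C)$. Finiteness is deduced, exactly as you first say, from finite-dimensionality of each $H^q(\XL,\Theta_{\XL})$ together with the bounded range of $q$.

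The Gale-height discussion at the end is not needed for the theorem; the paper uses $T_i(r)=S(r)\setminus\{i\}$ only afterwards, to locate the possible resonant contributions. Your ``unique minimizer'' heuristic for bounding $-r$ is unproved, so it should not be presented as part of the argument.
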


The shifted Laurent estimates and Euler division reduce the completed descent
complex to an algebraic sum of sectors.

\subsection{The shifted pole complex}

The covering differential identifies
$\pi^*\Theta_{\XL}$ with $\Theta_{U_K}$.  On a Cox intersection $U_\tau$,
the standard frame $\partial/\partial z_i$ gives
\[
 z^rE_i\in\Gamma(U_\tau,\Theta_{U_K})
 \quad\Longleftrightarrow\quad
 r_j+\delta_{ij}\geq0\quad(j\in\tau).
\]
Let $\mathscr Q_{r,i}^\bullet$ be the coefficient complex of this term
in $\check C^\bullet(\mathcal U_K,\Theta_{U_K})$.  With the notation of
Section~\ref{sec:Cox-Cech-sectors}, regularity gives
\begin{equation}\label{eq:tangent-relative-sector}
 \mathscr Q_{r,i}^\bullet
 \cong C^\bullet(\Delta_{\max},\mathfrak D_{T_i(r)};\C).
\end{equation}
Indeed, the coefficient line occurs on a normalized \v{C}ech simplex
$[\sigma_0,\ldots,\sigma_a]$ exactly when
$T_i(r)\cap\bigcap_\nu\sigma_\nu=\varnothing$.  Restrictions preserve the
monomial and have the oriented-simplex signs.  Thus \eqref{eq:tangent-relative-sector} is an isomorphism of cochain
complexes.

For nonempty $T$, the subcomplex $\mathfrak D_T$ is covered by the
simplices of maximal faces containing each vertex of $T$.  A finite
intersection is a simplex precisely when the corresponding vertices
span a face of $K_T$.  The nerve is therefore $K_T$.  The relative
cohomology sequence of the contractible simplex $\Delta_{\max}$ gives
\begin{equation}\label{eq:tangent-fixed-cohomology}
 H^a(\mathscr Q_{r,i}^\bullet)
 \cong\widetilde H^{a-1}(K_{T_i(r)};\C).
\end{equation}
For $T_i(r)=\varnothing$, the relative complex is the ordinary cochain
complex of $\Delta_{\max}$, with one copy of $\C$ in cohomological degree
zero.  This is exactly the augmented empty-complex convention.

\subsection{Analytic completion and descent}

For an allowed pair $(r,i)$ on $U_\tau$, set
\[
 \alpha=r+e_i,\qquad
 \nu_\tau(r,i)=\sum_{j\in\tau}\alpha_j
                  +\sum_{j\notin\tau}|\alpha_j|,
 \qquad
 \omega_{\tau,R}(r,i)=R^{\nu_\tau(r,i)}.
\]
The Taylor--Laurent expansion of a holomorphic vector field converges
normally on every compact subset of $U_\tau$.  In the standard frame its
coefficient seminorms are equivalent to
\[
 q_{\tau,R}(c)
 =\sum_{(r,i)\ \mathrm{allowed}}|c_{r,i}|\omega_{\tau,R}(r,i),
 \qquad R\geq2.
\]
Here and below integer radii suffice.  For $2\leq R<L$, translation
$r\mapsto r+e_i$ gives the exact estimate
\begin{equation}\label{eq:tangent-weight-sum}
 \sum_{(r,i)\ \mathrm{allowed}}
 \frac{\omega_{\tau,R}(r,i)}{\omega_{\tau,L}(r,i)}
 =N\left(\frac1{1-R/L}\right)^{|\tau|}
    \left(\frac{1+R/L}{1-R/L}\right)^{N-|\tau|}<\infty.
\end{equation}
If $M_{\tau,L}$ is the sum of the suprema of the standard coefficients
on $\mathfrak A_{\tau,L}$, Cauchy's formula gives
$|c_{r,i}|\leq M_{\tau,L}\omega_{\tau,L}(r,i)^{-1}$.
Together with \eqref{eq:tangent-weight-sum}, this proves continuity of
coefficient extraction.  Conversely, $M_{\tau,R}\leq q_{\tau,R}$ proves
normal convergence and continuity of reconstruction.  Restrictions
preserve the coefficients and their weights.  Consequently
\begin{equation}\label{eq:tangent-analytic-completion}
 \check C^\bullet(\mathcal U_K,\Theta_{U_K})
 \cong
 \mathop{\widehat{\bigoplus}}\limits_{r\in\Z^N,\,1\leq i\leq N}^{\,\mathrm{an}}
 \mathscr Q_{r,i}^\bullet.
\end{equation}
The completion is defined by the finite sums of these seminorms over
the \v{C}ech intersections.  Projection onto any set of pairs $(r,i)$ is continuous and decreases every coefficient seminorm.

For vector fields, deck pullback means
$\gamma^*V=(d\gamma)^{-1}(V\circ\gamma)$.  Thus
\[
 \gamma^*(z^rE_i)=\chi_r(\gamma)z^rE_i,
 \qquad
 [E_j,z^rE_i]=r_jz^rE_i.
\]
These identities also show that deck maps and Euler Lie derivatives
commute with restrictions and with all exponent projections.  They are
continuous: $\sum_j|r_j|\leq\nu_\tau(r,i)+1$ bounds a deck multiplier by
a constant times an increase of radius, and
$|r_j|+1\leq|\alpha_j|+2$ gives
\[
 (|r_j|+1)\omega_{\tau,R}(r,i)
 \leq C_{R,L}\omega_{\tau,L}(r,i).
\]

Put
\[
 \mathcal T_\Theta^\bullet
 =\operatorname{Tot}\Kos_{\GammaL}^\bullet
       \bigl(\check C^\bullet(\mathcal U_K,\Theta_{U_K})\bigr).
\]
The sheaf descent argument of
Section~\ref{sec:derived-deck-descent} applies to this tangent bundle.
On an evenly covered open set $\mathcal V$, one has
\[
 (\pi_*\Theta_{U_K})(\mathcal V)
 \cong\operatorname{Map}(\GammaL,\Theta_{\XL}(\mathcal V)),
\]
with the deck generators acting by shifts.  The bilateral-summation
resolution is exact on these opens and hence resolves $\Theta_{\XL}$.
Evenly covered coordinate balls have disjoint unions of Stein sheets
as inverse images, so Cartan's theorem~B gives
$R^b\pi_*\Theta_{U_K}=0$ for $b>0$.  Every Cox intersection is Stein
as well.  The saturated quotient cover is therefore acyclic for every
term of the group--Koszul resolution.  Its \v{C}ech total complex gives
\begin{equation}\label{eq:tangent-descent}
 R\Gamma(\XL,\Theta_{\XL})\simeq\mathcal T_\Theta^\bullet.
\end{equation}
On \v{C}ech degree $a$, the total differential is
\[
 d_{\mathrm{tot}}(c\otimes\eta)
 =\delta_{\check C}c\otimes\eta
  +(-1)^a\sum_{\mu=1}^m(\gamma_\mu^*-1)c\otimes\xi_\mu\wedge\eta.
\]

\subsection{Euler finite support}

Write $\mathcal T_{\Theta,r}^\bullet$ for the sum of the $N$ fixed-field
total complexes with exponent $r$.  The commuting operators
$\mathcal L_j=[E_j,-]$ act on this complex by the scalars $r_j$.
For $\lambda\in\C$, let $\Pi_{j,\lambda}$ project onto exponents with
$r_j=\lambda$, with zero projection if $\lambda\notin\Z$.  On its
complement, division by $(r_j-\lambda)^M$ is continuous, since
\[
 \inf_{\nu\in\Z,\,\nu\ne\lambda}|\nu-\lambda|>0.
\]
The inverse multiplier commutes with $d_{\mathrm{tot}}$ and is bounded
in every coefficient seminorm by the inverse $M$th power of this gap.

By \eqref{eq:tangent-descent} and coherent cohomology finiteness on
the compact manifold $\XL$, each $H^q(\mathcal T_\Theta^\bullet)$ is
finite-dimensional.  It therefore decomposes into simultaneous
generalized eigenspaces of the $\mathcal L_j$.  Suppose a closed cochain
$c$ represents a class in the generalized eigenspace with eigenvalue
$\lambda=(\lambda_1,\ldots,\lambda_N)$.  For each $j$, choose $M_j$ and
$b_j$ with
\[
 (\mathcal L_j-\lambda_j)^{M_j}c=d_{\mathrm{tot}}b_j.
\]
Applying $1-\Pi_{j,\lambda_j}$ and then the inverse multiplier gives
\[
 (1-\Pi_{j,\lambda_j})c
 =d_{\mathrm{tot}}\!\left(
   (\mathcal L_j-\lambda_j)^{-M_j}
   (1-\Pi_{j,\lambda_j})b_j\right),
\]
where the inverse is taken only on the complementary subcomplex.
Thus projection onto $r_j=\lambda_j$ preserves the class.  Successive
commuting projections give a representative on the single exponent
$r=\lambda\in\Z^N$, or show that the class is zero.  A general class
is a finite sum of such representatives.

Conversely, if a closed finite-support cochain $c$ satisfies
$c=d_{\mathrm{tot}}b$ in the completed complex, projection of $b$ onto
the finite exponent support of $c$ gives a finite-support primitive.
We have proved the quasi-isomorphism
\begin{equation}\label{eq:tangent-finite-support}
 \bigoplus_{r\in\Z^N}^{\mathrm{alg}}\mathcal T_{\Theta,r}^\bullet
 \ \longrightarrow\ \mathcal T_\Theta^\bullet.
\end{equation}
This argument uses the integral Euler spectrum and finite-dimensional
coherent cohomology. The character contractions then apply on each
finite exponent support.

\begin{proof}[Proof of Theorem~\ref{thm:tangent-sector-formula}]
For a fixed pair $(r,i)$, the group differential is exterior
multiplication by
$\sum_\mu(\chi_r(\gamma_\mu)-1)\xi_\mu$.
If this vector is nonzero, choose an index $\mu$ with nonzero
coefficient.  Contraction by the dual basis vector, divided by that
coefficient and multiplied by $(-1)^a$ on \v{C}ech degree $a$, contracts
the total sector.  If $r\in M_{\mathrm{res}}$, the group differential
is zero and the total sector is
$\mathscr Q_{r,i}^\bullet\otimes\Lambda^\bullet G^\vee$.
After \eqref{eq:tangent-finite-support}, these operations take place
in an algebraic direct sum.  Combining
\eqref{eq:tangent-descent} and \eqref{eq:tangent-fixed-cohomology} proves
\eqref{eq:tangent-sector-formula}.

For the derived statement, the relative simplex complex in
\eqref{eq:tangent-relative-sector} is quasi-isomorphic to the shifted
reduced cochains of $\mathfrak D_{T_i(r)}$.  The finite
Mayer--Vietoris double complex for its cover by simplices gives a zigzag
to the reduced cochains of its nerve $K_{T_i(r)}$, just as in
Proposition~\ref{prop:zero-reduced-cochain}.  Taking the algebraic sum
of these zigzags proves \eqref{eq:tangent-derived-formula}.
Finally, each degree on the left has finite dimension.  Hence only
finitely many pairs contribute in each degree; since $0\leq a\leq d$
and $0\leq s\leq m$, there are only finitely many contributing pairs
altogether.
\end{proof}

\subsection{Polynomial fields and the first cohomology group}

The halfspace theorem still locates the possible nonzero resonant
contributions.  Put $S(r)=\{j:r_j<0\}$.  For $0\ne r\in M_{\mathrm{res}}$,
the induced complex $K_{S(r)}$ is contractible by
Theorem~\ref{thm:nonzero-resonant-vanishing}, applied with $I=\varnothing$.
The shifted set satisfies
\[
 T_i(r)=
 \begin{cases}
  S(r)\setminus\{i\},&r_i=-1,\\
  S(r),&r_i\ne-1.
 \end{cases}
\]
Consequently, a nonzero resonant pair can contribute only when $r_i=-1$.
Its contribution is the reduced cohomology left after deleting that
vertex from $K_{S(r)}$.

\begin{corollary}\label{cor:tangent-low-degrees}
The global holomorphic vector fields have the finite monomial basis
\begin{equation}\label{eq:tangent-global-polynomials}
 H^0(\XL,\Theta_{\XL})
 \cong
 \bigoplus_{i=1}^N\ \bigoplus_{\substack{\alpha\in\Z_{\geq0}^N\\
                                      \chi_{\alpha-e_i}=1}}
 \C\,z^\alpha\frac{\partial}{\partial z_i}.
\end{equation}
Moreover, with the additive identifications above,
\begin{equation}\label{eq:tangent-first-cohomology}
 \begin{split}
 H^1(\XL,\Theta_{\XL})\cong{}&
 \bigl(H^0(\XL,\Theta_{\XL})\otimes G^\vee\bigr)\\
 &\quad\oplus
 \bigoplus_{r\in M_{\mathrm{res}}}\ \bigoplus_{i=1}^N
 \widetilde H^0(K_{T_i(r)};\C).
 \end{split}
\end{equation}
If $M_{\mathrm{res}}=\{0\}$, then
\[
 H^q(\XL,\Theta_{\XL})
 \cong\left(\bigoplus_{i=1}^N\C E_i\right)\otimes\Lambda^qG^\vee.
\]
\end{corollary}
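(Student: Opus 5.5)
We specialize Theorem~\ref{thm:tangent-sector-formula} to $q=0$ and $q=1$, using the constraints $0\le a\le d$, $0\le s\le m$ and $a+s=q$. We then sort the contributing pairs $(r,i)$ according to whether $T_i(r)$ is empty. Throughout we use the convention $\widetilde H^{-1}(K_\varnothing;\C)=\C$, together with the fact that $\widetilde H^{-1}(K_T)=0$ for $T\neq\varnothing$.

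For $q=0$ we must have $a=s=0$. The summand $\widetilde H^{-1}(K_{T_i(r)})$ is nonzero exactly when $T_i(r)=\varnothing$. By definition of $T_i(r)$, this says $r_j+\delta_{ij}\ge0$ for all $j$, that is, $\alpha=r+e_i\in\Z_{\ge0}^N$. Resonance reads $\chi_r=\chi_{\alpha-e_i}=1$. Each such pair is represented in the zero-\v{C}ech-degree sector by the monomial field $z^rE_i=z^\alpha\partial/\partial z_i$, which is regular on all of $U_K$ and deck-invariant. This gives \eqref{eq:tangent-global-polynomials}, with the sector summands serving as the monomial basis. Finiteness of this basis is already part of Theorem~\ref{thm:tangent-sector-formula}.

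For $q=1$ there are two cases. When $(a,s)=(0,1)$, we again need $T_i(r)=\varnothing$. These pairs are exactly those indexing $H^0$, each tensored with $\Lambda^1G^\vee=G^\vee$, which gives the first summand of \eqref{eq:tangent-first-cohomology}. When $(a,s)=(1,0)$, each pair contributes $\widetilde H^0(K_{T_i(r)};\C)$, which is the second summand. We may keep all pairs in this sum, since $\widetilde H^0(K_\varnothing)=0$ in the augmented convention and the empty pole sets therefore contribute nothing here. The identification is additive because \eqref{eq:tangent-sector-formula} is.

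For the nonresonant case $M_{\mathrm{res}}=\{0\}$, only $r=0$ occurs. Then $T_i(0)=\{j:\delta_{ij}<0\}=\varnothing$ for every $i$, so every $a>0$ term vanishes and only $a=0$, $s=q$ survives. This yields $\bigoplus_i\C E_i\otimes\Lambda^qG^\vee$.

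The only delicate point is bookkeeping: we must keep the augmented convention consistent, so that the empty pole sets contribute exactly in reduced degree $-1$ and not in degree $0$. We also have to check that the $H^0$ identification holds at the level of actual fields, meaning that the degree-zero \v{C}ech sector with $s=0$ consists of global sections. This holds because $\mathscr Q_{r,i}^0$ with $T_i(r)=\varnothing$ is a cocycle constant across the charts.
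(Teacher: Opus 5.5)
Your proposal is correct and matches the paper's own proof. The paper also specializes Theorem~\ref{thm:tangent-sector-formula} to $q=0,1$, and it uses that $\widetilde H^{-1}(K_{T_i(r)})\neq0$ exactly when $T_i(r)=\varnothing$, equivalently $r+e_i\in\Z_{\geq0}^N$. The paper justifies this by noting that every visible label is a vertex of $K$, which is the reason behind the fact you assert. It then splits degree one into $(a,s)=(0,1)$ and $(1,0)$, and uses $T_i(0)=\varnothing$ in the nonresonant case, just as you do.
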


\begin{proof}
Every visible label is a vertex of $K$.  Hence
$\widetilde H^{-1}(K_T)$ is nonzero exactly when $T=\varnothing$.
For a tangent sector this means $r+e_i\in\Z_{\geq0}^N$, giving
\eqref{eq:tangent-global-polynomials}; finiteness follows from the
theorem.  In degree one the possibilities are $(a,s)=(0,1)$ and
$(1,0)$, which give \eqref{eq:tangent-first-cohomology}.  For $r=0$,
every $T_i(0)$ is empty, proving the last assertion.
\end{proof}

\subsection{A resonant square}
\label{sec:tangent-square}

Let $m=2$, $N=4$, and let $K$ be the four-cycle with nonfaces
$\{1,3\}$ and $\{2,4\}$.  Consider the blocks
\begin{equation}\label{eq:tangent-square-blocks}
 V=\begin{pmatrix}1&0\\0&1\\1&0\\2&1\end{pmatrix},
 \qquad A=iV,\qquad B=I_2.
\end{equation}
These blocks arise from an admissible LVM configuration.  To see this
directly, put
\[
 c=\left(-\frac15-\frac{3i}{100},-\frac15-\frac{i}{100}\right),
 \qquad
 \Lambda_{a_0}=c,\quad
 \Lambda_{a_1}=c+(1,0),\quad
 \Lambda_{a_2}=c+(0,1),\quad
 \Lambda_j=c+iV_j.
\]
A normalized nonnegative relation among the weights, with coefficients
$s_0,s_1,s_2$ on the indispensable labels and $t_1,\ldots,t_4$ on the
visible labels, is equivalent to
\begin{equation}\label{eq:tangent-square-relations}
 s_1=s_2=\frac15,\qquad
 t_1+t_3+2t_4=\frac3{100},\qquad
 t_2+t_4=\frac1{100},\qquad
 s_0=\frac35-\sum_jt_j.
\end{equation}
The visible solutions form the quadrilateral
\[
 0\leq t_4\leq\frac1{100},\qquad
 0\leq t_3\leq\frac3{100}-2t_4,
 \qquad
 t_1=\frac3{100}-t_3-2t_4,\quad t_2=\frac1{100}-t_4.
\]
Here $s_0=14/25+2t_4\geq14/25$.  The interior has all seven
coefficients positive, so the Siegel condition holds.  Each relation
uses at least two visible labels: the vector $(3/100,1/100)$ is on none
of the four rays generated by the rows of $V$.  Together with the three
positive $s_\mu$, this gives at least five labels, proving weak
hyperbolicity for $2m=4$.  All three $a_\mu$ are indispensable, and each
visible coefficient vanishes on an edge of the quadrilateral.  Its
vertex zero sets are
\[
 \{3,4\},\quad\{1,4\},\quad\{2,3\},\quad\{1,2\}.
\]
Thus the visible complex is the required four-cycle, there are exactly
three indispensable labels, and the configuration is minimally stable.
The differences of the indispensable weights give $B=I_2$, so it has
full rank.  The quotient is a compact complex fourfold.

Write $q_0=e^{-2\pi}$.  The two generators of
$\GammaL=2\pi i\Z^2$ act by
\[
 \begin{aligned}
  g_1(z)&=(q_0z_1,z_2,q_0z_3,q_0^2z_4),\\
  g_2(z)&=(z_1,q_0z_2,z_3,q_0z_4).
 \end{aligned}
\]
Since $0<q_0<1$, the resonance equations are exactly
\[
 r_1+r_3+2r_4=0,\qquad r_2+r_4=0,
 \qquad
 r=(a,b,-a+2b,-b),\quad a,b\in\Z.
\]

For a polynomial field $z^\alpha\partial_i$, invariance is equivalent
to
\[
 \alpha_1+\alpha_3+2\alpha_4=V_{i1},\qquad
 \alpha_2+\alpha_4=V_{i2},\qquad\alpha_j\geq0.
\]
The solutions give the following basis of global fields:
\[
 \begin{gathered}
 z_1\partial_1,\quad z_3\partial_1,\quad
 z_1\partial_3,\quad z_3\partial_3,\quad
 z_2\partial_2,\quad z_4\partial_4,\\
 z_1^2z_2\partial_4,\quad z_1z_3z_2\partial_4,\quad
 z_3^2z_2\partial_4.
 \end{gathered}
\]
Indeed, for $i=1,3$ the second equation forces
$\alpha_2=\alpha_4=0$ and the first becomes $\alpha_1+\alpha_3=1$.
For $i=2$ there is just $\alpha=e_2$.  For $i=4$, either
$\alpha=e_4$, or $\alpha_4=0$, $\alpha_2=1$, and
$\alpha_1+\alpha_3=2$.  In particular, three fields are cubic.

\begin{proposition}\label{prop:tangent-square-count}
For the square configuration \eqref{eq:tangent-square-blocks},
\begin{equation}\label{eq:tangent-square-polynomial}
 \sum_{q\geq0}h^q(\XL,\Theta_{\XL})t^q
 =(9+t)(1+t)^2=9+19t+11t^2+t^3.
\end{equation}
The unique positive-\v{C}ech-degree resonant contribution is
\[
 r_*=(-1,-1,-1,1),\qquad i=2,\qquad T_2(r_*)=\{1,3\},
\]
and it occurs in \v{C}ech degree one.
\end{proposition}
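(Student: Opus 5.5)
The plan is to apply Theorem~\ref{thm:tangent-sector-formula} together with Corollary~\ref{cor:tangent-low-degrees}. That reduces the computation to two things: enumerating the resonant pairs $(r,i)$ whose shifted pole complex $K_{T_i(r)}$ has nonzero reduced cohomology, and then tensoring with $\Lambda^\bullet G^\vee$, which has Poincar\'e polynomial $(1+t)^2$. Concretely, I will show that the sector contributions are nine classes in reduced degree $-1$ (total degree $a=0$) and exactly one class in reduced degree $0$ (total degree $a=1$). The total is then $(9+t)(1+t)^2$.

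For the \v{C}ech degree $a=0$ part I use the monomial basis already listed. A pair $(r,i)$ has $T_i(r)=\varnothing$ exactly when $\alpha=r+e_i\geq0$, and resonance means $\chi_{\alpha-e_i}=1$. These are precisely the nine invariant polynomial fields, so the coefficient of $t^0$ in the sector polynomial is $9$.

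For $a\geq1$ I need nonempty $T=T_i(r)$ with $\widetilde H^{a-1}(K_T)\neq0$. The case $r=0$ is excluded because $T_i(0)=\varnothing$. By the halfspace discussion preceding Corollary~\ref{cor:tangent-low-degrees}, for $r\neq0$ resonant the complex $K_{S(r)}$ is contractible, so a contribution requires $r_i=-1$ and $T=S(r)\setminus\{i\}$. In the four-cycle, the only induced subcomplexes with nonzero reduced cohomology in degree $\geq0$ are the two opposite pairs $\{1,3\}$ and $\{2,4\}$, each giving $\widetilde H^0=\C$, and the whole cycle, which gives $\widetilde H^1$. The whole cycle cannot occur, because $T\subsetneq S(r)$ and $S(r)$ is never all of $[4]$ by Theorem~\ref{thm:two-sign-resonance}.

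It therefore remains to solve two small problems over the resonant lattice $r=(a,b,-a+2b,-b)$. In the first, I want $S(r)\setminus\{i\}=\{1,3\}$ with $r_i=-1$, $i\notin\{1,3\}$, and $r_j\geq0$ for the remaining index. In the second, I want $S(r)\setminus\{i\}=\{2,4\}$ with $r_i=-1$ and $i\in\{1,3\}$. For the first problem, take $i=2$: then $b=-1$, $r_4=1\geq0$, and I need $a<0$ and $-a-2<0$, so $a=-1$. This gives $r_*=(-1,-1,-1,1)$. Taking $i=4$ instead forces $-b=-1$, so $b=1$ and $r_2=1$. Then $a<0$ together with $-a+2<0$ is impossible. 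For the second problem, $r_2=b<0$ and $r_4=-b<0$ cannot both hold, so no case arises. The main obstacle is checking this sign case analysis carefully, including that $S(r)$ equals $T\cup\{i\}$ exactly. Once that is done, the single class sits in $a=1$, i.e.\ \v{C}ech degree one, and multiplying $9+t$ by $(1+t)^2$ gives $9+19t+11t^2+t^3$.
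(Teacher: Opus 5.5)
Your proof is correct and follows the paper's argument: you enumerate the induced subcomplexes of the four-cycle with nonzero reduced cohomology, solve the sign conditions on the resonant lattice $r=(a,b,-a+2b,-b)$ to isolate the single pair $(r_*,2)$, add the nine polynomial fields, and tensor with $(1+t)^2$. The only difference is that you first use the halfspace reduction to force $r_i=-1$ and $T=S(r)\setminus\{i\}$, whereas the paper argues directly from the shifted exponents: it excludes $\{2,4\}$ and the full cycle because the shifted exponents in coordinates $2,4$ sum to $\delta_{i2}+\delta_{i4}\geq0$, and it treats $\{1,3\}$ by the same inequalities. Your appeal to the two-sign theorem to exclude the full cycle is also unnecessary, since $|T|\leq3$ already.
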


\begin{proof}
The only nonempty induced subcomplexes of a four-cycle with nonzero
reduced cohomology are its two opposite pairs and the full cycle.
For $r=(a,b,-a+2b,-b)$, the shifted exponents in coordinates $2,4$
have sum $\delta_{i2}+\delta_{i4}\geq0$.  They cannot both be negative.
Thus neither $\{2,4\}$ nor the full cycle occurs as a tangent pole set.

If $T_i(r)=\{1,3\}$, the negative shifted exponents in these two
coordinates have sum $2b+\delta_{i1}+\delta_{i3}<0$, so $b<0$.
Regularity in coordinate $2$ forces $i=2$ and $b=-1$.  The remaining
strict inequalities are $a<0$ and $-a-2<0$, giving $a=-1$.
Conversely, this pair has exactly the pole set $\{1,3\}$, whose reduced
$H^0$ is one-dimensional.  Adding this class to the nine degree-zero pairs already enumerated gives
\v{C}ech cohomology of dimension nine in degree zero and one in degree one.
Theorem~\ref{thm:tangent-sector-formula} supplies the exterior factor
$(1+t)^2$.
\end{proof}

The additional class has a concrete \v{C}ech representative.  Order the
maximal faces as $12,14,23,34$ and set
\[
 W=z^{r_*}E_2=\frac{z_4}{z_1z_3}\partial_2.
\]
Give these four faces the scalar values $0,0,1,1$.  On an ordered pair of faces with distinct assigned values, use $W$ as the coefficient of a one-cochain;
on the other pairs use zero.  The intersections of faces with distinct assigned values invert both
$z_1$ and $z_3$, so these coefficients are holomorphic.  The scalar
coefficients are a simplicial coboundary on the full simplex, and hence
the one-cochain is closed.  Its fixed-field sector has no degree-zero
term: every maximal face contains $1$ or $3$, where $W$ has a pole.
It therefore represents a nonzero class.  Resonance makes its group
differential zero, and projection onto $(r_*,2)$ shows that it remains
nonzero in the completed descent complex.

The square has
\[
 h^0(\XL,\Theta_{\XL})=9,\qquad
 h^1(\XL,\Theta_{\XL})=19>2\cdot9.
\]
The extra dimension comes from the disconnected pole complex.
We now compute the Kodaira--Spencer map and the primary obstruction
using the quotient gluing maps.

\section{The Kodaira--Spencer map and deformations}
\label{sec:deformations}

Assume minimal stability, polytopality, $d\geq1$, and invertibility
of $B$.  Meersseman constructed deformation families with parameter spaces of dimension
$m(n-m-1)=Nm$, with universality under additional hypotheses
\cite{Meersseman2000}.  We compute the Kodaira--Spencer map of the
normalized family: its image is the zero-exponent part of
$H^1(\Theta)$.  This gives a criterion for semiuniversality with
smooth base.  The resonant square requires additional parameters
and has a nonzero primary obstruction.

\subsection{The normalized family}

Put $C_0=AB^{-1}\in\operatorname{Mat}_{N\times m}(\C)$.  A complex
linear change of the acting parameter replaces the indispensable
differences by the standard basis.  The weights then have the form
\[
 c,\ c+e_1,\ldots,c+e_m,\qquad c+(C_0)_1,\ldots,c+(C_0)_N,
\]
where rows are viewed as weights.  Keep $c$ and the indispensable
weights fixed and replace $C_0$ by a variable matrix $C$.

\begin{proposition}\label{prop:normalized-deformation-family}
There is a polydisk $S$ about $C_0$ on which these weights define a
proper holomorphic submersion $\mathscr X_S\to S$.  Every fibre is
minimally stable and has the same visible complex $K$.  The family has
the presentation
\begin{equation}\label{eq:normalized-deformation-family}
 \mathscr X_S\cong(S\times U_K)/G,\qquad G=2\pi i\Z^m,
 \qquad
 g_{\mu,C}(z)_i=e^{2\pi i C_{i\mu}}z_i .
\end{equation}
The $G$-action on the total space is free and proper.
\end{proposition}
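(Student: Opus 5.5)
We reduce to the parametric quotient lemma (Lemma~\ref{lem:parametric-LVM-quotient}) and the full-rank slice construction (Theorem~\ref{thm:full-rank-Cox-quotient}), run with parameters.

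\textbf{Step 1: normalization.} Replace the acting parameter $T$ by $T'=BT$. This is a complex-linear automorphism of $\C^m$, so it does not change the orbits. With $\Lambda_{a_0}=c$, the new weights are $c$, $c+e_\mu$ and $c+(C_0)_j$, and they define the same open set $V_\Lambda$ and the same quotient. In the new parameter the blocks are $B=I_m$ and $A=C_0$, so the deck lattice is $2\pi i\Z^m$, independent of $C$.

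\textbf{Step 2: openness of combinatorics.} Now let $C$ vary. I must show that for $C$ in a small polydisk $S$ about $C_0$, the family of subsets $I$ with $0\in\operatorname{conv}\{\Lambda_i(C):i\in I\}$ is the same as at $C_0$.

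\emph{Admissibility is open.} Nonmembership $0\notin\operatorname{conv}$ is an open condition on finitely many weights, since the convex hull of finitely many points depends continuously on them. This gives weak hyperbolicity and the failure of the Siegel condition for non-supporting subsets. For a supporting subset $I$, Proposition~\ref{prop:affine-spanning} gives a minimal support inside $I$ which is a full-dimensional simplex with $0$ in its interior. Interior membership is preserved under small perturbation, so $I$ still supports the origin.

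\emph{Conclusion for $V$, $D$, $K$.} Since the support family is unchanged, the open set $V$, the indispensable set $D$ and the visible complex $K$ are constant on $S$. Polytopality also persists, because $K$ is unchanged and is the boundary of the Gale polytope. The indispensable block remains $I_m$, so every fibre has full rank.

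\textbf{Step 3: the family.} Lemma~\ref{lem:parametric-LVM-quotient} now gives a proper holomorphic submersion $\mathscr X_S\to S$ whose fibres are the LVM quotients $X_{\Lambda(C)}$, together with a free proper $\C^m$-action on $S\times V$.

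\textbf{Step 4: the slice presentation.} By Theorem~\ref{thm:global-affine-product}, $S\times V\cong S\times U_K\times(\C^*)^m$. The action is
\[
T\cdot(C,w,t)=\bigl(C,\,e^{C(T)}w,\,e^{T}t\bigr).
\]
The map $T\mapsto e^T$ is surjective onto $(\C^*)^m$ with kernel $G=2\pi i\Z^m$. Hence every orbit meets the slice $t=\mathbf 1$, and two points of the slice lie in the same orbit exactly when they differ by some $g\in G$, acting by
\[
g_{\mu,C}(z)_i=e^{2\pi i C_{i\mu}}z_i .
\]

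I then repeat the proof of Theorem~\ref{thm:full-rank-Cox-quotient} with parameters. The local gauge
\[
(C,w,t)\mapsto\bigl(C,\,e^{-C\log t}w\bigr)
\]
is holomorphic in $(C,w,t)$ and gives local covering charts. Freeness of the $G$-action follows from freeness of the connected action. For proper discontinuity, take compact sets $C_1,C_2\subset S\times U_K$. Properness of the connected action on the total space makes $\{T:T\widetilde C_1\cap\widetilde C_2\neq\varnothing\}$ compact, and its intersection with the lattice $G$ is finite. This yields the biholomorphism~\eqref{eq:normalized-deformation-family}.

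\textbf{Main obstacle.} The delicate point is Step 2: showing uniformly that no new supporting subset appears and no existing one disappears. I handle this with the finite list of subsets. Each non-support has a positive distance from the origin, and each support contains a simplex with $0$ in its interior and a positive margin. Taking $S$ small enough to respect both margins for all finitely many subsets settles it. Properness of the total action on $S\times V$ is supplied by the relative transversal argument of Lemma~\ref{lem:parametric-LVM-quotient}.
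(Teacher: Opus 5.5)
Your proposal is correct and follows essentially the same route as the paper. The steps match one for one: the finitely many convex-support tests are locally constant, because minimal supports are full-dimensional simplices with positive barycentric coordinates and non-supports stay at positive distance from the origin. The relative minimizing transversal gives joint properness and properness of the family. The slice $t=\mathbf 1$, with gauges holomorphic in $C$, passes to the discrete quotient by $G=2\pi i\Z^m$.

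One point to state explicitly: Lemma~\ref{lem:parametric-LVM-quotient} is formulated over a one-dimensional disk. You should say, as the paper does, that its implicit-function and transversal argument applies verbatim over the multi-parameter polydisk $S$.
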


\begin{proof}
The subsets of labels whose convex hull contains the origin are
locally constant at an admissible configuration.  Indeed, a subset
containing the origin contains a minimal such subset.  Carath\'eodory's
theorem and weak hyperbolicity imply that this minimal subset has
$2m+1$ elements and is a full-dimensional simplex with positive
barycentric coordinates for the origin.  Affine independence and
positivity persist under small perturbations.  For a subset not
containing the origin, the distance from its compact convex hull to the
origin is positive and remains positive after a small perturbation.
There are only finitely many subsets.  Shrinking $S$ preserves all
these tests simultaneously, and hence admissibility, the indispensable
labels, and $K$.

The proof of Lemma~\ref{lem:parametric-LVM-quotient} applies over a polydisk: the Hessian is taken in the acting parameter,
and the implicit-function theorem permits any finite-dimensional
parameter space.  The relative minimizing transversal is a smooth
submersion to $S$, and its intersection with the inverse image of a
compact subset of $S$ is compact.  This proves joint properness of the
connected action and properness of the holomorphic family.

Deprojectivization at the distinguished indispensable coordinate gives
$S\times U_K\times(\C^*)^m$, with action
\[
 T\cdot(C,z,w)=(C,e^{CT}z,e^Tw).
\]
Locally choose a logarithm of $w$ and set $T=-\log w$.  The slice $w=1$
meets each orbit, and two of its points represent the same orbit
exactly when they differ by $G=2\pi i\Z^m$.  These local gauges are
holomorphic in $C$ as well.  The argument of
Theorem~\ref{thm:full-rank-Cox-quotient} therefore gives the relative
covering and \eqref{eq:normalized-deformation-family}.  Joint
properness also follows by restricting the proper connected action
to this closed discrete subgroup and the slice.
\end{proof}

Let $\gamma_\mu=2\pi i e_\mu$ and let $\xi_\mu\in G^\vee$ be the dual
basis, so that $\xi_\mu(\gamma_\nu)=\delta_{\mu\nu}$.  Write
\[
 \mathfrak e=\bigoplus_{i=1}^N\C E_i,\qquad
 \mathfrak g=H^0(X_{C_0},\Theta_{X_{C_0}})
             =\mathfrak e\oplus\mathfrak g_{\ne0},
 \qquad
 \mathcal C_\Theta=
 \bigoplus_{r\in M_{\mathrm{res}},\,i}
       \widetilde H^0(K_{T_i(r)};\C).
\]
Here $\mathfrak g_{\ne0}$ is the span of the nonzero-exponent
polynomial fields in \eqref{eq:tangent-global-polynomials}.  These
are vector-space decompositions relative to the chosen coordinates.

\begin{theorem}[Kodaira--Spencer map]\label{thm:normalized-KS}
With deck pullback and the total differential of
Section~\ref{sec:tangent-sheaf}, the Kodaira--Spencer map of
\eqref{eq:normalized-deformation-family} is
\begin{equation}\label{eq:normalized-KS}
 \quad
 \operatorname{KS}_{C_0}(\dot C)
 =-2\pi i\sum_{i=1}^N\sum_{\mu=1}^m
       \dot C_{i\mu}\,[E_i\otimes\xi_\mu].
 \quad
\end{equation}
It is injective, with image $\mathfrak e\otimes G^\vee$, and
\begin{equation}\label{eq:normalized-KS-cokernel}
 \operatorname{coker}\operatorname{KS}_{C_0}
 \cong(\mathfrak g_{\ne0}\otimes G^\vee)\oplus\mathcal C_\Theta.
\end{equation}
For an unnormalized variation of the blocks, the matrix in
\eqref{eq:normalized-KS} is
\[
 \dot C=\dot A B^{-1}-AB^{-1}\dot B B^{-1}.
\]
\end{theorem}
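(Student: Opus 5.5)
The plan is to compute the Kodaira--Spencer class directly in the group--Koszul \v{C}ech model $\mathcal T_\Theta^\bullet$ of Section~\ref{sec:tangent-sheaf}, where the family is a family of deck actions on the fixed manifold $U_K$ (Proposition~\ref{prop:normalized-deformation-family}).

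\emph{Step 1: the cocycle.} The family is trivial over $S$ upstairs, $S\times U_K$, and only the gluing varies. The infinitesimal deformation is therefore represented by a degree-one total cochain with zero \v{C}ech component and group component $\sum_\mu \theta_\mu\otimes\xi_\mu$. Here $\theta_\mu$ is the derivative in $C$, along $\dot C$, of $g_{\mu,C}$, pulled back to a vector field, concretely $\theta_\mu=(dg_\mu)^{-1}\partial_t g_{\mu,C+t\dot C}|_{t=0}$. Since $g_{\mu,C}(z)_i=e^{2\pi iC_{i\mu}}z_i$, this gives $\theta_\mu=2\pi i\sum_i\dot C_{i\mu}E_i$. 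The class $[E_i\otimes\xi_\mu]$ is closed: $E_i$ is a global $r=0$ field, deck-invariant, so both its \v{C}ech differential and its group differential vanish.

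The sign $-2\pi i$ has to be fixed against the conventions $\gamma^*V=(d\gamma)^{-1}(V\circ\gamma)$ and $d_{\mathrm{tot}}$. The way to do this is to trivialize the family locally by a family of biholomorphisms $\Phi_t$ of $U_K$ that intertwine $g_{\mu,C}$ with $g_{\mu,C+t\dot C}$ on a fundamental domain. Differentiating that intertwining yields the coboundary relation $\Delta_\mu(\dot\Phi)=\pm\theta_\mu$, which fixes the orientation. I expect this sign and normalization bookkeeping, and matching it to the standard Kodaira--Spencer convention (transition derivative versus its negative), to be the main obstacle. The first thing I would try is the Hopf case $N=1$, where everything can be checked by hand. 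The rest of the argument is formal.

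\emph{Step 2: injectivity and image.} By Theorem~\ref{thm:tangent-sector-formula}, the $r=0$ sectors contribute $\bigoplus_i\C E_i\otimes\Lambda^\bullet G^\vee$, because every $T_i(0)$ is empty, and this summand is a direct summand in the algebraic sector decomposition of \eqref{eq:tangent-finite-support}. The classes $[E_i\otimes\xi_\mu]$ then form a basis of $\mathfrak e\otimes G^\vee$ inside $H^1$. Formula \eqref{eq:normalized-KS} is an isomorphism onto this summand, so $\operatorname{KS}_{C_0}$ is injective with image $\mathfrak e\otimes G^\vee$, of dimension $Nm$.

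\emph{Step 3: the cokernel.} Corollary~\ref{cor:tangent-low-degrees} gives $H^1=(\mathfrak g\otimes G^\vee)\oplus\mathcal C_\Theta$ with $\mathfrak g=\mathfrak e\oplus\mathfrak g_{\ne0}$. Quotienting by the image found in Step 2 yields \eqref{eq:normalized-KS-cokernel}.

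\emph{Step 4: unnormalized variations.} The normalization is $C=AB^{-1}$, so differentiating gives $\dot C=\dot AB^{-1}-AB^{-1}\dot BB^{-1}$. By Proposition~\ref{prop:rank-chart-invariance} and the reparametrization of the acting group, the unnormalized family is locally isomorphic to the normalized one with $C=AB^{-1}$. The chain rule then gives the stated formula.
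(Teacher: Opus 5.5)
Steps~2--4 are the paper's own argument. The zero-exponent sector with empty pole sets $T_i(0)$ gives the basis $[E_i\otimes\xi_\mu]$ of $\mathfrak e\otimes G^\vee$, and Corollary~\ref{cor:tangent-low-degrees} gives the cokernel. The unnormalized formula is the derivative of $C=AB^{-1}$; the paper adds that a common translation of the weights does not change the quotient, which your reparametrization remark covers.

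The real gap is in Step~1: the sign in the statement is left open, and as written it comes out wrong. You take the group component to be $\theta_\mu=(dg_\mu)^{-1}\partial_t g_{\mu,C_0+t\dot C}|_{t=0}=2\pi i\sum_i\dot C_{i\mu}E_i$, which gives $+2\pi i$. Your proposed repair does not work as stated: it asks for biholomorphisms $\Phi_t$ of $U_K$ intertwining $g_{\mu,C}$ with $g_{\mu,C+t\dot C}$ on a fundamental domain. A global intertwiner would descend to $X_C\cong X_{C+t\dot C}$ and force the Kodaira--Spencer class to vanish. Restricting to a fundamental domain does not make sense either, since the deck maps move it off itself.

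The missing step is to compute the Kodaira--Spencer map as the connecting homomorphism of $0\to\Theta_{X_{C_0}}\to\Theta_{\mathscr X_S}|_{X_{C_0}}\to\mathcal O_{X_{C_0}}\otimes T_{C_0}S\to0$ inside the descent model. Lift $\dot C$ to the constant horizontal field $H$ on $S\times U_K$. It agrees on all Cox intersections, so $d_{\mathrm{tot}}H$ has zero \v{C}ech part. In \v{C}ech degree zero its group part is $\sum_\mu(F_\mu^*H-H)\otimes\xi_\mu$, with $F_\mu(C,z)=(C,g_{\mu,C}(z))$ and the convention $F^*V=(dF)^{-1}(V\circ F)$. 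The matrix $dF_\mu$ is block lower triangular with diagonal blocks $I$ and $dg_\mu$, so its inverse has off-diagonal block $-(dg_\mu)^{-1}\partial_C g_\mu$. Hence $F_\mu^*H-H=-\theta_\mu=-2\pi i\sum_i\dot C_{i\mu}E_i$, which is exactly \eqref{eq:normalized-KS}.

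Injectivity, the image and the cokernel do not depend on this sign, so the rest of your proof stands. Also, there is no case $N=1$ in this class for a hand check; the Hopf surface is $N=2$, $d=m=1$.
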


\begin{proof}
The Kodaira--Spencer map is the connecting homomorphism of
\[
 0\longrightarrow\Theta_{X_{C_0}}
 \longrightarrow\Theta_{\mathscr X_S}|_{X_{C_0}}
 \longrightarrow\mathcal O_{X_{C_0}}\otimes T_{C_0}S
 \longrightarrow0.
\]
On $S\times U_K$, lift $\dot C$ by the constant horizontal vector
$H$.  If $F_\mu(C,z)=(C,g_{\mu,C}(z))$, differentiation of its inverse
block matrix gives
\[
 F_\mu^*H-H
 =-(dg_{\mu,C_0})^{-1}
      \left.\frac{d}{dt}\right|_{t=0}g_{\mu,C_0+t\dot C}
 =-2\pi i\sum_i\dot C_{i\mu}E_i .
\]
The horizontal lifts agree on all Cox intersections, so the
\v{C}ech-degree-one component vanishes.  The group differential is pullback
minus the identity, giving exactly \eqref{eq:normalized-KS}.

For $r=0$ all tangent pole sets are empty.  Theorem
~\ref{thm:tangent-sector-formula} therefore identifies this part of
$H^1(\Theta)$ with $\mathfrak e\otimes G^\vee$, with its displayed
basis.  This proves injectivity and the image assertion.
Corollary~\ref{cor:tangent-low-degrees} gives the cokernel.
Finally, differentiate $C=AB^{-1}$.  A common translation of the weights multiplies every projective
coordinate by the same scalar, preserving the quotient. Thus the
family of all diagonal weights has the same Kodaira--Spencer image.
\end{proof}

\subsection{A semiuniversal family}

We use deformations with a fixed identification of the central fibre.
A semiuniversal family is a versal family whose Kodaira--Spencer map
is an isomorphism. Universality further requires uniqueness of the
inducing map.
Kuranishi's existence theorem gives a semiuniversal family over an analytic
germ in $H^1(X,\Theta_X)$, with that vector space as its Zariski
tangent space \cite{Kuranishi1962}; see also
\cite[Section~I]{MeerssemanKuranishi2017} for the distinction between
completeness, semiuniversality, and universality, including nonreduced
bases.

\begin{theorem}[Semiuniversality of the normalized family]\label{thm:diagonal-semiuniversality}
Under the standing full-rank hypotheses, the following are equivalent:
\begin{enumerate}[label=\textup{(\roman*)}]
 \item the normalized family is infinitesimally complete at $C_0$;
 \item $h^1(X_{C_0},\Theta_{X_{C_0}})=Nm$;
 \item $\mathfrak g_{\ne0}=0$ and $\mathcal C_\Theta=0$.
\end{enumerate}
When they hold, the Kuranishi germ is smooth of dimension $Nm$, and
the normalized family is semiuniversal at $C_0$.  In particular,
$M_{\mathrm{res}}=\{0\}$ is sufficient for these conclusions.
The locus defined by \textup{(ii)} is open in $S$.
\end{theorem}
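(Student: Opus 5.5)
The proof runs on Theorem~\ref{thm:normalized-KS} together with Kuranishi's theorem. Theorem~\ref{thm:normalized-KS} shows that $\operatorname{KS}_{C_0}$ is injective, with image $\mathfrak e\otimes G^\vee$ of dimension $Nm$. Its cokernel is $(\mathfrak g_{\ne0}\otimes G^\vee)\oplus\mathcal C_\Theta$.

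\emph{Equivalences.} Infinitesimal completeness in (i) means that $\operatorname{KS}_{C_0}$ is surjective. Since the map is injective, this is equivalent to $\dim H^1(\Theta)=Nm$, which is (ii). It is also equivalent to vanishing of the cokernel. Because $G^\vee\neq0$ (as $m\geq1$), the cokernel vanishes exactly when $\mathfrak g_{\ne0}=0$ and $\mathcal C_\Theta=0$. This is (iii).

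\emph{Smoothness and semiuniversality.} Assume these conditions hold. Then $\operatorname{KS}_{C_0}$ is an isomorphism $T_{C_0}S\to H^1(\Theta)$. Let $(\mathcal K,0)$ be the Kuranishi germ; its Zariski tangent space is $H^1(\Theta)$. By versality, the family over $S$ is induced by a germ map $f:(S,C_0)\to(\mathcal K,0)$ whose differential is $\operatorname{KS}_{C_0}$, hence an isomorphism onto the Zariski tangent space. The image of $f$ therefore contains a smooth germ of dimension $Nm=\dim T_0\mathcal K$. A germ whose Zariski tangent space has dimension $Nm$ and which contains a smooth $Nm$-dimensional subgerm through $0$ is itself that smooth germ; for instance, use an immersion into $\C^{Nm}$ and compare ideals. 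So $\mathcal K$ is smooth and $f$ is an isomorphism of germs. The normalized family is then the pullback of the Kuranishi family by an isomorphism, which makes it versal with bijective Kodaira--Spencer map, that is, semiuniversal.

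The delicate point is this germ argument: one must check that the image-containment step is legitimate for possibly nonreduced $\mathcal K$. I would argue directly on the local ring. Since $f^*:\mathcal O_{\mathcal K}\to\mathcal O_S$ induces an isomorphism on cotangent spaces, choose a surjection $\C\{x_1,\dots,x_{Nm}\}\to\mathcal O_{\mathcal K}$. The composite with $f^*$ is then an isomorphism onto $\mathcal O_S$, which forces the kernel of the surjection to be zero.

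\emph{Sufficiency of $M_{\mathrm{res}}=\{0\}$.} If $M_{\mathrm{res}}=\{0\}$, then every term in $\mathcal C_\Theta$ and in $\mathfrak g_{\ne0}$ is indexed by a nonzero resonant exponent, so both vanish. Alternatively, Corollary~\ref{cor:tangent-low-degrees} gives $h^1=N\cdot m$ directly.

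\emph{Openness.} By Theorem~\ref{thm:normalized-KS}, applied at every $C\in S$ (each fibre is full rank after shrinking $S$, since $B=I$ throughout), $h^1(X_C,\Theta)\geq Nm$ everywhere. Upper semicontinuity of $h^1$ in the proper holomorphic family $\mathscr X_S\to S$ (Grauert) gives $h^1(X_C)\leq Nm$ near any point where equality holds. Hence the locus where (ii) holds is open.
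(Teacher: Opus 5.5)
Your proposal is correct and follows the paper's proof essentially step by step. You derive the equivalences from the injectivity of the Kodaira--Spencer map and its cokernel in Theorem~\ref{thm:normalized-KS}, and you prove smoothness of the Kuranishi germ exactly as the paper does, by pulling its defining ideal back along a map whose composite to $H^1(\Theta)$ is a local biholomorphism. Sufficiency of $M_{\mathrm{res}}=\{0\}$ comes from Corollary~\ref{cor:tangent-low-degrees}, and openness from upper semicontinuity against the bound $h^1\geq Nm$ on all of $S$. Your looser ``image contains a smooth germ'' phrasing is superseded by the local-ring argument you then give, which is the paper's argument.
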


\begin{proof}
By Theorem~\ref{thm:normalized-KS}, the Kodaira--Spencer map is
injective of rank $Nm$, and its cokernel is
\eqref{eq:normalized-KS-cokernel}.  Since $m\geq1$, this proves the
equivalences.

Suppose they hold.  Let $\mathcal K\subset H^1(X_{C_0},\Theta_{X_{C_0}})$
be a Kuranishi germ, with its analytic structure.  Completeness gives
a holomorphic map of germs $f:(S,C_0)\to(\mathcal K,0)$ inducing the
normalized family.  The derivative of the composite
$S\to\mathcal K\hookrightarrow H^1(\Theta)$ is its Kodaira--Spencer
map, after the standard tangent identification, and is an isomorphism.
The holomorphic inverse-function theorem makes this composite a local
biholomorphism.  Every function in the defining ideal of $\mathcal K$
pulls back to zero.  Since the composite induces an isomorphism of
local analytic rings, the defining ideal of $\mathcal K$ is zero. Thus $\mathcal K$ is a smooth reduced germ and $f$ is a local isomorphism.  The normalized family is consequently
semiuniversal.

If $M_{\mathrm{res}}=\{0\}$, Corollary~\ref{cor:tangent-low-degrees}
gives (ii).  Finally, upper semicontinuity in the proper family gives
$h^1(X_C,\Theta_{X_C})\leq Nm$ near a point satisfying (ii), whereas
Theorem~\ref{thm:normalized-KS} gives the reverse inequality throughout
$S$.  This proves openness.
\end{proof}

The normalized family can be semiuniversal with smooth base even when $H^2(X,\Theta_X)\neq0$.  On the nonresonant
locus,
\[
 h^2(X,\Theta_X)=N\binom m2,
\]
which is positive for $m\geq2$.  The locus $h^1(\Theta)=Nm$ is open in the normalized parameter space and contains the nonresonant configurations. Nonresonance is specified by countably many character conditions. When the equality fails, the
cokernel of the Kodaira--Spencer map records the infinitesimal
deformations outside the normalized family.

\subsection{Second-order descent}
\label{sec:second-order-descent}

The sector isomorphism of Theorem~\ref{thm:tangent-sector-formula} is
additive. We compute the primary obstruction by expanding the chart
transitions and deck transformations to second order.  Let
$\operatorname{ob}_2(\alpha)\in H^2(X,\Theta_X)$ denote the primary
obstruction to lifting the first-order deformation $\alpha$ from
$\C[\epsilon]/(\epsilon^2)$ to $\C[\epsilon]/(\epsilon^3)$.
We use the convention $\operatorname{ob}_2(\alpha)=
\tfrac12[\alpha,\alpha]$.

These descent equations compute obstructions to arbitrary
deformations of $X$.  Indeed, pull an infinitesimally deformed
structure sheaf back to the underlying topological covering
$U_K\to X$.  This gives a $G$-equivariant deformation of $U_K$.
The Stein Cox charts admit infinitesimal trivializations, by the
vanishing of their first tangent cohomology.  Their transition maps
and the lifted deck maps then describe the deformation.  On
extending a trivialization by one order, its failure to satisfy
descent is a total degree-two cocycle; changing the extension adds
a total coboundary.  When the Cox cover deforms, the corrections include changes to both its chart
transitions and its deck maps.

\begin{proposition}\label{prop:deck-primary-obstruction}
For $\alpha=\sum_\mu V_\mu\otimes\xi_\mu$, with
$V_\mu\in\mathfrak g$, one has
\begin{equation}\label{eq:deck-primary-obstruction}
 \operatorname{ob}_2(\alpha)=
 \sum_{\mu<\nu}[V_\mu,V_\nu]\otimes\xi_\mu\wedge\xi_\nu.
\end{equation}
In particular, if $m\geq2$ and $\mathfrak g$ is nonabelian, $X$ has
an obstructed infinitesimal deformation.
\end{proposition}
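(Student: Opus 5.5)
The plan is to compute the primary obstruction directly in the group--Koszul/\v{C}ech total complex $\mathcal T_\Theta^\bullet$ of Section~\ref{sec:tangent-sheaf}, using the second-order descent description just given. A class $\alpha=\sum_\mu V_\mu\otimes\xi_\mu$ with $V_\mu\in\mathfrak g$ has \v{C}ech degree zero and exterior degree one. First I will check that it is a total cocycle. Each $V_\mu$ is a global invariant field, so $\delta_{\check C}V_\mu=0$ and $(\gamma_\nu^*-1)V_\mu=0$. The first-order deformation is then realized by keeping the Cox charts and their transitions undeformed and replacing each deck generator $\gamma_\mu$ by $\gamma_\mu\circ\exp(\epsilon V_\mu)$, which is the sign convention consistent with Theorem~\ref{thm:normalized-KS}.

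To extend to order two, I take the deformed deck maps $g_\mu=\gamma_\mu\circ\exp(\epsilon V_\mu+\epsilon^2W_\mu)$, still with trivial chart transitions. Since the \v{C}ech components are untouched, the only descent condition is that the $g_\mu$ generate an action of $\Z^m$, that is, $g_\mu g_\nu=g_\nu g_\mu$ modulo $\epsilon^3$. Because $V_\mu$ is $\GammaL$-invariant, $\gamma_\nu$ commutes with $\exp(\epsilon V_\mu)$. The commutator then reduces to $\exp(\epsilon V_\mu)\exp(\epsilon V_\nu)\exp(-\epsilon V_\mu)\exp(-\epsilon V_\nu)$ together with the $W$-terms. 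By BCH this equals $\exp\bigl(\epsilon^2[V_\mu,V_\nu]+\epsilon^2((\gamma^*-1)W\text{-terms})\bigr)$. The failure cocycle is therefore $\sum_{\mu<\nu}[V_\mu,V_\nu]\otimes\xi_\mu\wedge\xi_\nu$ plus $d_{\mathrm{tot}}$ of the correction $\sum W_\mu\otimes\xi_\mu$. This is exactly the general statement that changing the extension adds a total coboundary. With the normalization $\operatorname{ob}_2=\tfrac12[\alpha,\alpha]$, the factor $\tfrac12$ cancels against the two orderings $\mu<\nu$ and $\nu<\mu$ in the graded bracket of $\alpha$ with itself.

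The step I expect to be the main obstacle is fixing the signs and factors consistently. These come from the vector-field pullback convention $\gamma^*V=(d\gamma)^{-1}(V\circ\gamma)$, the factor $(-1)^a$ in $d_{\mathrm{tot}}$, and the identification of the Gerstenhaber-type bracket on the group--Koszul complex with the commutator of deck maps. I would fix these by checking against a one-parameter abelian case (where the answer must vanish) and by verifying that the resulting cochain is closed. That closedness check is the Jacobi identity, and with zero \v{C}ech differential it is automatic.

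For the final assertion, the plan is the following. Suppose $m\geq2$ and $\mathfrak g$ is nonabelian, and choose $V_1,V_2$ with $[V_1,V_2]\neq0$. Set $\alpha=V_1\otimes\xi_1+V_2\otimes\xi_2$. Then $\operatorname{ob}_2(\alpha)=[V_1,V_2]\otimes\xi_1\wedge\xi_2$, which lies in the summand $H^0(\Theta)\otimes\Lambda^2G^\vee$ of $H^2(X,\Theta_X)$ given by Theorem~\ref{thm:tangent-sector-formula} with $(a,s)=(0,2)$. That summand injects into cohomology, since degree-zero \v{C}ech cocycles with zero group differential are not coboundaries in the sector decomposition. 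Hence the obstruction is nonzero.
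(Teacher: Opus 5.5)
Your proposal is correct and follows essentially the paper's argument. You keep the \v{C}ech transitions trivial and deform the deck generators by flows of the invariant fields. You read off $[V_\mu,V_\nu]$ on the oriented group two-cell from the second-order commutation relation, where the corrections only add $d_{\mathrm{tot}}$ of a degree-one cochain. You then prove nonvanishing by projecting onto a resonant sector, where the group differential dies and no \v{C}ech differential reaches \v{C}ech degree zero.

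Two small sign points. First, with the convention of Theorem~\ref{thm:normalized-KS}, the class $V_\mu\otimes\xi_\mu$ corresponds to $\gamma_\mu\circ\exp(-\epsilon V_\mu)$, not $\exp(+\epsilon V_\mu)$; this is harmless because $\operatorname{ob}_2$ is quadratic. Second, your proposed checks (the abelian case and closedness) cannot detect the overall sign of the bracket term. The paper fixes that sign by the explicit expansion $e^{-\epsilon V_\nu}e^{-\epsilon V_\mu}-e^{-\epsilon V_\mu}e^{-\epsilon V_\nu}=-\epsilon^2[V_\mu,V_\nu]\pmod{\epsilon^3}$ on functions, using that pullbacks compose in the reverse order of point maps.
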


\begin{proof}
Lift the global fields to $G$-invariant fields on $U_K$.  To first
order, deform the $\mu$th deck map by
$\exp(-\epsilon V_\mu)\circ g_\mu$.  Its horizontal-lift cocycle is
$V_\mu$, with the sign convention in
Theorem~\ref{thm:normalized-KS}.  Write a second-order lift on the
$j$th Cox chart as
\[
 F_{\mu,j}
 =g_\mu\circ\exp(-\epsilon V_\mu-\epsilon^2A_{\mu,j}).
\]
The first-order expression is unchanged, since $V_\mu$ is deck
invariant.  Point maps compose from right to left, and their
pullbacks act in the reverse order.  On functions this gives
\[
 e^{-\epsilon V_\nu}e^{-\epsilon V_\mu}
 -e^{-\epsilon V_\mu}e^{-\epsilon V_\nu}
 =-\epsilon^2[V_\mu,V_\nu]\pmod{\epsilon^3}.
\]
Conjugation by $g_\nu$ replaces $A_{\mu,j}$ by $g_\nu^*A_{\mu,j}$.
The relation $F_{\mu,j}F_{\nu,j}=F_{\nu,j}F_{\mu,j}$ is therefore
equivalent, through order two, to
\begin{equation}\label{eq:deck-second-correction}
 (g_\mu^*-1)A_{\nu,j}-(g_\nu^*-1)A_{\mu,j}
 =[V_\mu,V_\nu].
\end{equation}
This identifies the coefficient on the oriented group two-cell
$(\mu,\nu)$, including its sign and normalization.
The first-order \v{C}ech transitions are unchanged, so the
uncorrected obstruction has only \v{C}ech degree zero.
Second-order changes to the local trivializations and transitions
add the differential of a total degree-one cochain.

Every bracket is again a $G$-invariant global field.  Its class in
\[
 \mathfrak g\otimes\Lambda^2G^\vee
 \ \subset\ H^2(X,\Theta_X)
\]
is the corresponding direct summand of the sector formula.  More
explicitly, projection onto a resonant exponent kills the group
differential, and a \v{C}ech differential cannot produce
\v{C}ech degree zero.  Thus every nonzero bracket in this summand represents a nonzero class in total degree two.  Choosing two noncommuting fields proves the last
assertion.
\end{proof}

For $m=1$, $\Lambda^2G^\vee=0$, so the displayed obstruction term vanishes.
The criterion above concerns the full-rank descent presentation with $m\geq2$.

\subsection{The primary obstruction of the square}
\label{sec:square-deformations}

Return to the configuration of Section~\ref{sec:tangent-square} and
write
\[
 (x,y,u,v)=(z_1,z_3,z_2,z_4),\qquad q_0=e^{-2\pi}.
\]
Then $U_K=(\C^2_{x,y}\setminus0)\times(\C^2_{u,v}\setminus0)$ and
\[
 g_1(x,y,u,v)=(q_0x,q_0y,u,q_0^2v),\qquad
 g_2(x,y,u,v)=(x,y,q_0u,q_0v).
\]
Every global field has a unique expression
\begin{equation}\label{eq:square-global-Lie-algebra}
 V=\left(M\binom{x}{y}\right)\!\cdot
       \binom{\partial_x}{\partial_y}
       +b\,u\partial_u+c\,v\partial_v
       +P(x,y)u\partial_v,
 \qquad P\in\C[x,y]_2,
\end{equation}
where $M\in\operatorname{Mat}_{2\times2}(\C)$ and $\C[x,y]_2$
is the vector space of homogeneous quadratic polynomials.
Define the linear functional
\[
 \ell(V)=-\operatorname{tr}M-b+c.
\]
Let $\eta$ be the additional \v{C}ech class represented by
$W=v(xy)^{-1}\partial_u$ in Section~\ref{sec:tangent-square}.
The cover there refines the two sets $y\ne0$ and $x\ne0$, in that
order.  We retain that ordering.  The sector formula gives
\[
 \begin{aligned}
 H^1(X,\Theta_X)&=(\mathfrak g\otimes G^\vee)\oplus\C\eta,\\
 H^2(X,\Theta_X)&=(\mathfrak g\otimes\Lambda^2G^\vee)
                         \oplus(\C\eta\otimes G^\vee).
 \end{aligned}
\]

\begin{theorem}\label{thm:square-primary-obstruction}
For $\alpha=V_1\otimes\xi_1+V_2\otimes\xi_2+s\eta$, the primary
obstruction, with \v{C}ech degree placed before group degree, is
\begin{equation}\label{eq:square-primary-obstruction}
 \quad
 \operatorname{ob}_2(\alpha)=
 [V_1,V_2]\otimes\xi_1\wedge\xi_2
 -s\,\eta\otimes\bigl(\ell(V_1)\xi_1+\ell(V_2)\xi_2\bigr).
 \quad
\end{equation}
Consequently $\alpha$ lifts to second order if and only if
\begin{equation}\label{eq:square-second-order-equations}
 [V_1,V_2]=0,\qquad s\ell(V_1)=s\ell(V_2)=0.
\end{equation}
This full-rank square example has obstructed infinitesimal deformations.  Its normalized diagonal family has
Kodaira--Spencer rank $8$ and cokernel dimension $11$.
\end{theorem}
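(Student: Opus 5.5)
The plan is to compute $\operatorname{ob}_2(\alpha)=\tfrac12[\alpha,\alpha]$ directly in the total \v{C}ech--group-Koszul model of Section~\ref{sec:tangent-sheaf}. The bracket decomposes into three parts: the deck--deck pairing, the \v{C}ech--\v{C}ech pairing, and the mixed \v{C}ech--deck pairing.

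\textbf{Deck--deck part.} This is exactly Proposition~\ref{prop:deck-primary-obstruction}, which gives $[V_1,V_2]\otimes\xi_1\wedge\xi_2$ in the summand $\mathfrak g\otimes\Lambda^2G^\vee$.

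\textbf{\v{C}ech--\v{C}ech part.} The term $\tfrac12[s\eta,s\eta]$ is a \v{C}ech two-cochain. It vanishes for two reasons. First, the cover of Section~\ref{sec:tangent-square} refines the two-set cover $\{y\neq0\},\{x\neq0\}$, so $\eta$ can be represented on that two-set cover, which has no \v{C}ech two-simplices. Second, $[W,W]=0$ directly.

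\textbf{Mixed part.} This is the main computation. At first order, $\eta$ deforms the chart transition on the overlap $xy\neq0$ by $\exp(\epsilon sW)$, while $V_\mu$ deforms the deck maps by $\exp(-\epsilon V_\mu)\circ g_\mu$. The second-order compatibility of deck maps with transitions produces a total-degree-two cocycle with \v{C}ech degree one and group degree one. Its coefficient on $\xi_\mu$ is the \v{C}ech one-cochain $s[V_\mu,W]$, up to the sign fixed by placing \v{C}ech degree first, as in the total differential $(-1)^a$.

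I then compute $[V,W]$ for the general field \eqref{eq:square-global-Lie-algebra}, with $W=vx^{-1}y^{-1}\partial_u$.

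\begin{itemize}
\item \emph{The $M$ part.} Since $W$ is a weight vector for the diagonal Euler fields, the diagonal part of $M$ contributes $-(M_{11}+M_{22})W$. The off-diagonal terms $y\partial_x$ and $x\partial_y$ give $-v\,x^{-2}\partial_u$ and $-v\,y^{-2}\partial_u$ respectively. These have exponents $(r_1,r_3)=(-2,0)$ and $(0,-2)$, with the $u$-frame. Their pole sets are singletons, so their sectors carry no $\widetilde H^0$, and the terms are \v{C}ech-exact after sector projection by Theorem~\ref{thm:tangent-sector-formula}.
\item \emph{The $b$ and $c$ parts.} The field $b\,u\partial_u$ gives $-bW$, and $c\,v\partial_v$ gives $+cW$.
\item \emph{The $P$ part.} The bracket $[P u\partial_v,W]$ equals $W(Pu)\partial_v-Pu\,\partial_v(v/xy)\partial_u$, which is $P\,(xy)^{-1}(v\partial_v-u\partial_u)$. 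Its exponents lie in sectors with $i=2$ or $i=4$ and $r_4=0$. These sectors are either nonresonant or have pole sets that are singletons or empty with a different degree. I would check that none is the $(r_*,2)$ sector, so this part also vanishes in cohomology.
\end{itemize}

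Projecting onto the $(r_*,2)$ sector therefore gives $[V,W]\equiv\ell(V)\,W$, with $\ell(V)=-\operatorname{tr}M-b+c$. Including the factor $\tfrac12\cdot 2$ from the symmetric pairing and the sign convention yields the term $-s\,\eta\otimes\sum_\mu\ell(V_\mu)\xi_\mu$.

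The main obstacle I expect is fixing this sign. It comes from commuting a degree-$(1,0)$ class past a degree-$(0,1)$ class, combined with the $(-1)^a$ in $d_{\mathrm{tot}}$ and the pullback-order conventions from the proof of Proposition~\ref{prop:deck-primary-obstruction}. I would settle it with the explicit cocycle for $\eta$ on faces $12,14,23,34$: verify the second-order relation $F_{\mu}\circ\phi=\phi\circ F_\mu$ at order $\epsilon^2$ on one overlap, in the same way \eqref{eq:deck-second-correction} was derived. I would also have to check that corrections by second-order changes of trivialization only add total coboundaries, exactly as in the deck case.

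\textbf{Conclusions.} The two summands of $H^2$ listed before the theorem are direct, so \eqref{eq:square-second-order-equations} follows componentwise, provided both components represent nonzero classes. For the bracket component this is as in Proposition~\ref{prop:deck-primary-obstruction}; for the $\eta$ component it follows by projection onto $(r_*,2)$.

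Obstructedness follows by exhibiting a nonzero obstruction. One option is $V_1=z_1\partial_1$, $V_2=z_1^2z_2\partial_4$, which do not commute, so $[V_1,V_2]\neq0$. The other is $s=1$ with $V_1=u\partial_u$, so that $\ell(V_1)=-1$.

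Finally, Theorem~\ref{thm:normalized-KS} gives KS rank $Nm=8$. Since $h^1(\Theta)=19$ by Proposition~\ref{prop:tangent-square-count}, the cokernel has dimension $11$.
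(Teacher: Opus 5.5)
Your proposal is essentially the paper's proof: the same splitting of the second-order descent equations into bidegrees $(0,2)$, $(1,1)$, $(2,0)$, and the same reduction $[V,W]\equiv\ell(V)W$ modulo \v{C}ech-exact resonant sectors. Where the paper exhibits the invariant primitives by extending the off-diagonal and $P$-terms to $\{x\neq0\}$ or $\{y\neq0\}$, you appeal to their singleton or empty pole sets; the direct-sum argument in $H^2$ and the Kodaira--Spencer count are also the same. The sign you defer is fixed in the paper exactly as you propose: it expands $F_{\mu,j}\circ T_{ij}=T_{ij}\circ F_{\mu,i}$ to order $\epsilon^2$ with $T_{ij}=\exp(-\epsilon s c_{ij}W-\epsilon^2B_{ij})$, whose sign convention matches $\exp(-\epsilon V_\mu)\circ g_\mu$ (so your $\exp(\epsilon sW)$ must be the transition from the $\{x\neq0\}$ charts to the $\{y\neq0\}$ charts), and this gives $A_{\mu,j}-A_{\mu,i}-\Delta_\mu B_{ij}=-s\,c_{ij}[V_\mu,W]$ and hence the minus sign.
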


\begin{proof}
We first compute the action of global fields on the \v{C}ech class:
\begin{equation}\label{eq:square-Cech-action}
 [V,\eta]=\ell(V)\eta.
\end{equation}
The four diagonal fields act on $W$ with eigenvalues $-1,-1,-1,1$
in the order $x\partial_x,y\partial_y,u\partial_u,v\partial_v$.
For the remaining generators, direct differentiation gives
\[
 \begin{aligned}
 [y\partial_x,W]&=-\frac{v}{x^2}\partial_u,&
 [x\partial_y,W]&=-\frac{v}{y^2}\partial_u,\\
 [P(x,y)u\partial_v,W]&=
       \frac{P(x,y)}{xy}(u\partial_u-v\partial_v).
 \end{aligned}
\]
The first expression extends to $x\ne0$ and the second to $y\ne0$.
For $P=x^2,xy,y^2$, the last coefficients are $x/y,1,y/x$,
respectively, and extend to one of these sets.  They are therefore
\v{C}ech coboundaries.  The extending vector fields are
$G$-invariant, so their primitives have zero group differential.
This proves \eqref{eq:square-Cech-action} in the total complex; the
associated group-degree-two term vanishes.

We now expand the descent equations with arbitrary second-order
corrections.  On the four Cox charts let $c_{ij}W$ denote the
\v{C}ech representative of $\eta$, so $c_{ij}$ is the difference
of the scalar values $0,0,1,1$ assigned in
Section~\ref{sec:tangent-square}.  Put
\[
 F_{\mu,j}=g_\mu\circ
       \exp(-\epsilon V_\mu-\epsilon^2A_{\mu,j}),\qquad
 T_{ij}=\exp(-\epsilon s c_{ij}W-\epsilon^2B_{ij}).
\]
Here $T_{ij}$ maps chart $i$ to chart $j$, and
$\Delta_\mu=g_\mu^*-1$.  The group relations give
\eqref{eq:deck-second-correction}.  Expanding
$F_{\mu,j}\circ T_{ij}=T_{ij}\circ F_{\mu,i}$ gives
\begin{equation}\label{eq:mixed-second-correction}
 A_{\mu,j}-A_{\mu,i}-\Delta_\mu B_{ij}
 =-s c_{ij}[V_\mu,W].
\end{equation}
Indeed, after removing the common factor $g_\mu$, the
second-order coefficient of the left point map minus the right
point map is
$-A_{\mu,j}+A_{\mu,i}+\Delta_\mu B_{ij}
-s c_{ij}[V_\mu,W]$.  This expansion gives the relative minus sign in the mixed obstruction term.

The triple-overlap relation is $\delta_{\check C}B=0$.
The quadratic term vanishes because all first-order transitions
are commuting constant multiples of $W$, and
$c_{ij}+c_{jk}=c_{ik}$.  Thus the three correction equations,
in bidegrees $(0,2)$, $(1,1)$, and $(2,0)$, are
\[
 \delta_G A=[V_1,V_2]\otimes\xi_1\wedge\xi_2,\qquad
 \delta_{\check C}A-\delta_G B
       =-s\sum_\mu[V_\mu,cW]\otimes\xi_\mu,\qquad
 \delta_{\check C}B=0.
\]
Their right-hand sides represent the primary obstruction in the
total convention of Section~\ref{sec:tangent-sheaf}.

Apply Proposition~\ref{prop:deck-primary-obstruction} and
\eqref{eq:square-Cech-action}.  This gives
\eqref{eq:square-primary-obstruction}.  The two summands of $H^2$
are independent, which proves \eqref{eq:square-second-order-equations}.
Sufficiency can also be seen directly.  The invariant primitives
constructed above give a zero-cochain $H_\mu$ with
\[
 \delta_{\check C}H_\mu
 =[V_\mu,cW]-\ell(V_\mu)cW,\qquad
 \Delta_\nu H_\mu=0\quad\text{for every }\nu.
\]
If \eqref{eq:square-second-order-equations} holds, take
$A_\mu=-sH_\mu$ and $B=0$.  All three correction equations then
hold.  The corrected transitions and deck maps therefore define
a deformation over $\C[\epsilon]/(\epsilon^3)$.

For example,
\[
 V_1=y\partial_x,\qquad V_2=x\partial_y,\qquad s=0
\]
give
\[
 \operatorname{ob}_2(\alpha)=
 (y\partial_y-x\partial_x)\otimes\xi_1\wedge\xi_2\ne0.
\]
This is a nonzero zero-exponent class in $H^2(\Theta)$.
There are also mixed obstructions: for
$\alpha=u\partial_u\otimes\xi_1+\eta$, the obstruction is
$\eta\otimes\xi_1\ne0$.
Finally, $Nm=8$ and $h^1(\Theta)=19$, so
Theorem~\ref{thm:normalized-KS} gives the rank and cokernel.
\end{proof}

Equations~\eqref{eq:square-second-order-equations} give the conditions for
lifting a deformation to second order. Higher obstruction equations, and hence
the full Kuranishi germ, remain to be determined. The diagonal family is not versal: its Kodaira--Spencer image has dimension eight, whereas $h^1(X,\Theta_X)=19$.

\subsection{A family integrating the extra \v{C}ech class}

The class $\eta$ is tangent to a proper holomorphic family. We construct the
family from extensions of line bundles over a scalar Hopf surface.

\begin{proposition}\label{prop:square-Cech-family}
There is a proper holomorphic submersion $\mathscr Y\to\Delta$ with
central fibre the resonant square and
$\operatorname{KS}_0(\partial_s)=\eta$.
\end{proposition}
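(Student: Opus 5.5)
We realize $\eta$ as the extension class of a rank-two bundle over a scalar Hopf surface, deform that bundle by the explicit transition $u\mapsto u+s\,v/(xy)$, and read off the Kodaira--Spencer class from the derivative of the gluing map. In the coordinates $(x,y,u,v)$ of Section~\ref{sec:square-deformations}, $g_1$ acts on $(x,y)$ by the scalar $q_0$, fixes $u$ and multiplies $v$ by $q_0^2$, while $g_2$ is the scalar $q_0$ on $(u,v)$. First divide $\C^2_{x,y}\setminus0$ by $g_1$, obtaining the Hopf surface $H=(\C^2\setminus0)/\langle q_0\rangle$. Over $H$, the pair $(u,v)$ becomes a section of $E_0=\mathcal O\oplus L$, where $L$ is the line bundle with factor of automorphy $q_0^2$. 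Then $X$ is the quotient of $E_0$ minus its zero section by the fibrewise scalar $q_0$. I will check this description directly against Theorem~\ref{thm:full-rank-Cox-quotient}: the slice $U_K$ modulo $\GammaL$ is exactly this iterated quotient.

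Next I deform the bundle. Take the two-set cover $\{y\ne0\},\{x\ne0\}$ of $\C^2\setminus0$, with the ordering fixed in Section~\ref{sec:square-deformations}, and for $s\in\Delta$ glue $\{y\ne0\}\times\C^2_{u,v}$ to $\{x\ne0\}\times\C^2_{u,v}$ over the overlap by
\[
 (x,y,u,v)\longmapsto\Bigl(x,y,u+s\frac{v}{xy},v\Bigr).
\]
This gluing is linear in $(u,v)$, so it commutes with $g_2$. It also commutes with $g_1$, because $v/(xy)$ is $g_1$-invariant: $v$ scales by $q_0^2$ and $xy$ by $q_0^2$. Since the cover has only two sets, there is no cocycle condition to check. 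The result is a holomorphic family $\mathcal E\to\Delta\times H$ of extensions $0\to\mathcal O\to E_s\to L\to0$, whose extension class is $s$ times the class of $1/(xy)$ in $H^1(H,L^{-1})$. Define $\mathscr Y$ as $\mathcal E$ minus its zero section, divided by the fibrewise scalar $q_0$. Because that scalar multiplies the whole fibre, the division commutes with the gluing. At $s=0$ we recover $X$.

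Then comes the geometric verification, which I expect to be the main obstacle. Freeness is clear. For properness and compactness, choose a hermitian metric $h$ on $\mathcal E$ over $\Delta'\times H$, with $\Delta'$ a relatively compact subdisk. The scalar $q_0$ multiplies $h$ by $q_0^2<1$, so the sphere bundle $\{h=1\}$ is a compact fundamental domain over compact parameter sets. This gives a proper, free action of $\Z$ and shows that $\mathscr Y\to\Delta$ is a proper holomorphic submersion, as in Lemma~\ref{lem:parametric-LVM-quotient}. Smoothness and submersivity are local and follow from the product charts.

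Finally I compute the Kodaira--Spencer map. The $\partial_s$-derivative of the transition is $v/(xy)\,\partial_u=W$ on the two-set overlap and zero on the deck maps. Restricted to the four Cox charts ordered $12,14,23,34$, this is the cochain $c_{ij}W$ of Section~\ref{sec:tangent-square}, with group component zero. By the connecting-homomorphism computation in the proof of Theorem~\ref{thm:normalized-KS}, it represents $\pm\eta$. If the sign is negative, I replace $s$ by $-s$, which gives $\operatorname{KS}_0(\partial_s)=\eta$.
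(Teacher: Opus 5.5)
Your construction is essentially the paper's. You use the same unipotent extension transition with entry $\pm s/(xy)$ from $\{y\ne0\}$ to $\{x\ne0\}$, descend through $g_1$ (the paper's lift $\operatorname{diag}(1,q_0^2)$) to the scalar Hopf surface, divide by the fibrewise scalar $q_0$, and read off the Kodaira--Spencer cocycle as $W$ with zero deck component. The paper instead fixes the sign by taking $-s/(xy)$ from the start, and gets properness from the local product structure with compact Hopf-surface fibres over $H\times\Delta$.

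One small slip: $\{h=1\}$ is a transversal to the real scaling, not a fundamental domain for $\langle q_0\rangle$. The compact fundamental domain is the shell $\{q_0^2\le h\le 1\}$, and it gives the same conclusion.
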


\begin{proof}
Let $B^\circ=\C^2_{x,y}\setminus0$ and cover it by
$B_y=\{y\ne0\}$ and $B_x=\{x\ne0\}$.  For $s\in\Delta$, glue a
rank-two holomorphic vector bundle $E_s$ by the fibre transition
\begin{equation}\label{eq:square-extension-transition}
 \binom{u_1}{v_1}=
 T_s(x,y)\binom{u_0}{v_0},\qquad
 T_s(x,y)=
 \begin{pmatrix}1&-s/(xy)\\0&1\end{pmatrix}
\end{equation}
from $B_y$ to $B_x$.  The matrix is holomorphic on their
intersection and has determinant one, and hence defines a
holomorphic bundle over $B^\circ\times\Delta$.

The contraction $(x,y)\mapsto(q_0x,q_0y)$ lifts on each
trivialization by $D=\operatorname{diag}(1,q_0^2)$, since
\[
 T_s(q_0x,q_0y)D=D\,T_s(x,y).
\]
It follows that $E_s$ descends to a vector bundle $F_s$ on the
scalar Hopf surface
$H=B^\circ/\langle q_0\rangle$, holomorphically in $s$.
Fibrewise scalar multiplication by $q_0$ commutes with the
transitions.  Set
\[
 Y_s=(F_s\setminus0)/\langle q_0\,\mathrm{id}\rangle.
\]
Locally over $H\times\Delta$ this is the product with the compact
Hopf surface $(\C^2\setminus0)/\langle q_0\rangle$.
The local products glue by the bundle transitions, which commute
with the scalar action.  They give a complex manifold
$\mathscr Y$, a holomorphic fibre bundle over $H\times\Delta$,
and a proper holomorphic submersion $\mathscr Y\to\Delta$.

At $s=0$, the transition is the identity and the two quotient
generators are exactly $g_1,g_2$ above.  Thus $Y_0=X$.
Differentiating \eqref{eq:square-extension-transition}, the
horizontal lift from the second chart pulled back to the first
differs from its horizontal lift by
$v(xy)^{-1}\partial_u=W$.  The deck lifts are independent of $s$.
The Kodaira--Spencer cocycle is therefore precisely $\eta$.
\end{proof}

\section{Fixed curvature rank in visible dimension three}
\label{sec:three-dimensional-rigidity}

For a three-dimensional visible polytope, the basic ring has
Hilbert function $(1,m,m,1)$.  The middle multiplication maps now take values in the $m$-dimensional space
$R_\Lambda^2$.  We prove fixed-rank
rigidity for every normal fan of a triangular prism or a cube and
give holomorphic families with varying curvature image.

There is a geometric restriction on the subspaces which can occur as
curvature images.  Conjugation on the real presentation of the basic
ring will be denoted by a bar.

\begin{lemma}[Curvature and conjugation]
\label{lem:curvature-conjugation}
For every configuration under the standing hypotheses,
\[
 U_\Lambda+\overline{U_\Lambda}=R_\Lambda^1.
\]
Consequently,
$\dim(U_\Lambda\cap\overline{U_\Lambda})=m-2\delta$.
In particular, if $m=2$ and $\operatorname{rank}B=1$, the curvature
image is a nonreal line.
\end{lemma}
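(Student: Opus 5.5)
The plan is to work one level up, with covectors on $E_\Lambda=\C^{\widehat N}$, and use the visible-projection description of $c_\Lambda$ from the Corollary after Theorem~\ref{thm:explicit-curvature}. First I will fix the real structure. The Stanley--Reisner ideal $I_K$ is generated by monomials, so it is real. The relation space $\mathcal L_\Lambda=\operatorname{Ann}(\mathfrak h_\Lambda+\overline{\mathfrak h_\Lambda})$ is stable under complex conjugation of covectors, so $(J_\Lambda)_1$, its visible projection by Lemma~\ref{lem:visible-relation-quotient}, is conjugation-stable as well. Hence coordinatewise conjugation $u\mapsto\bar u$ on $(\C^N)^\vee$ descends to an antilinear involution of $R_\Lambda^1=(\C^N)^\vee/(J_\Lambda)_1$. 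This is the bar in the statement.

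Next I will describe both subspaces as visible projections. By the Corollary following Theorem~\ref{thm:explicit-curvature},
\[
 U_\Lambda=\pi_{\mathrm{vis}}\bigl(\operatorname{Ann}(\mathfrak h_\Lambda)\bigr)\bmod (J_\Lambda)_1 .
\]
Conjugating a covector $(u,\eta)$ that annihilates $\mathfrak h_\Lambda$ gives $(\bar u,\bar\eta)$, which annihilates $\overline{\mathfrak h_\Lambda}$. Therefore
\[
 \overline{U_\Lambda}=\pi_{\mathrm{vis}}\bigl(\operatorname{Ann}(\overline{\mathfrak h_\Lambda})\bigr)\bmod (J_\Lambda)_1 .
\]

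The key step is the identity
\[
 \operatorname{Ann}(\mathfrak h_\Lambda)+\operatorname{Ann}(\overline{\mathfrak h_\Lambda})
 =\operatorname{Ann}\bigl(\mathfrak h_\Lambda\cap\overline{\mathfrak h_\Lambda}\bigr)=E_\Lambda^\vee ,
\]
which holds because $\mathfrak h_\Lambda\cap\overline{\mathfrak h_\Lambda}=0$ by Proposition~\ref{prop:real-part-injective}. The visible projection $E_\Lambda^\vee\to(\C^N)^\vee$ is surjective, and so is the quotient map onto $R_\Lambda^1$. Together these give $U_\Lambda+\overline{U_\Lambda}=R_\Lambda^1$. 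The only point needing care is that the conjugation on $R_\Lambda^1$ is well defined and compatible with visible projection; this is exactly the reality of $(J_\Lambda)_1$ established in the first step.

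For the dimension count, the curvature exact sequence (Theorem~\ref{thm:curvature-exact-sequence}) gives $\dim U_\Lambda=\dim\overline{U_\Lambda}=\rho=m-\delta$. Since the sum is all of $R_\Lambda^1$, which has dimension $m$ by Proposition~\ref{prop:degree-one-basic-ring}, the intersection has dimension $2(m-\delta)-m=m-2\delta$. When $m=2$ and $\rho=1$ this intersection is $0$, so $U_\Lambda\neq\overline{U_\Lambda}$ and the curvature line is not real.
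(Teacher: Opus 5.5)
Your proof is correct and matches the paper's argument: $\mathfrak h_\Lambda\cap\overline{\mathfrak h_\Lambda}=0$ gives $\operatorname{Ann}(\mathfrak h_\Lambda)+\operatorname{Ann}(\overline{\mathfrak h_\Lambda})=E_\Lambda^\vee$, and the surjective visible projection followed by the quotient to $R_\Lambda^1$ carries these two annihilators onto $U_\Lambda$ and $\overline{U_\Lambda}$; the dimension count $2(m-\delta)-m$ then gives the rest. You also check explicitly that $(J_\Lambda)_1$ is conjugation-stable, so the bar on $R_\Lambda^1$ is well defined and commutes with visible projection, a point the paper leaves implicit.
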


\begin{proof}
The real-part injectivity of Proposition~\ref{prop:real-part-injective}
gives $\mathfrak h_\Lambda\cap\overline{\mathfrak h_\Lambda}=0$.  Hence
\[
 \operatorname{Ann}(\mathfrak h_\Lambda)
 +\operatorname{Ann}(\overline{\mathfrak h_\Lambda})
 =E_\Lambda^\vee.
\]
Visible projection followed by the quotient to $R_\Lambda^1$ is
surjective.  Its image on the first annihilator is $U_\Lambda$ by
Theorem~\ref{thm:explicit-curvature}, and its image on the second is
$\overline{U_\Lambda}$.  The dimension formula follows from
$\dim U_\Lambda=m-\delta$.
\end{proof}

\subsection{The prism and the cube}

In visible dimension three, $m=N-3$.  The tetrahedron has $m=1$,
so Theorem~\ref{thm:real-indispensable-frame} permits only full rank.
For the triangular prism, $m=2$ and the only deficient rank is one. For the cube, $m=3$ and the only deficient rank is two.  These statements concern the simple
polytope $P$; their dual simplicial polytopes are the tetrahedron,
triangular bipyramid, and octahedron.

\begin{theorem}[Prism rigidity]
\label{thm:prism-fixed-rank}
Suppose the visible simple polytope is combinatorially
$\Delta^2\times\Delta^1$.  Every minimally stable realization of
curvature rank one has
\[
 \operatorname{Dol}_{X_\Lambda}(u,v)
 =(1+u)(1+v)^2(1+uv)(1+u^3v^2).
\]
At rank two its polynomial is
$(1+v)^2(1+u^2v)(1+u^3v^2)$.
Thus its Hodge diamond is determined by the curvature rank.
\end{theorem}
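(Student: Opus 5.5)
The plan is to treat the two ranks separately. Rank two is the full-rank case, so it follows from Theorem~\ref{thm:Dolbeault-Hochster-first}. Rank one reduces, by the defect factorization, to a single Koszul computation for one line $U_\Lambda\subset R_\Lambda^1$. Here $N=5$, $d=3$, $m=2$, and the dual simplicial polytope is the bipyramid, so $K=\partial\Delta^2*\partial\Delta^1$.

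\emph{Rank two.} By Corollary~\ref{cor:Dolbeault-polynomial-first}, $\operatorname{Dol}=(1+v)^2\mathscr H_K$. By join multiplicativity (Theorem~\ref{thm:join-multiplicativity}) and the simplex formula (Proposition~\ref{prop:simplex-example}),
\[
 \mathscr H_K=(1+u^3v^2)(1+u^2v),
\]
exactly as in Proposition~\ref{prop:product-simplices-example}. This uses only the combinatorial type of $K$, not the normal fan.

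\emph{Rank one: reduction.} Here $\delta=1$, and Proposition~\ref{prop:defect-exterior-factor} gives
\[
 \operatorname{Dol}=(1+u)(1+v)^2\operatorname{Curv}_{\Lambda,\mathrm{red}},
\]
where the reduced complex is the one-generator Koszul complex of $\ell\in R_\Lambda^1$, with $U_\Lambda=\C\ell$. The Hilbert function of $R_\Lambda$ is $(1,2,2,1)$ by Theorem~\ref{thm:basic-Artinian-Gorenstein}. The homology of this complex is
\[
 \operatorname{coker}(\ell:R^{a-1}\to R^a)\ \text{in bidegree }(a,a),
 \qquad
 \ker(\ell:R^a\to R^{a+1})\ \text{in bidegree }(a+1,a).
\]
Multiplication by $\ell$ is injective on $R^0$. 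By the Gorenstein pairing it is surjective onto $R^3$. Hence, as soon as $\ell\cdot{}:R^1\to R^2$ is an isomorphism, the homology is one-dimensional in bidegrees $(0,0),(1,1),(3,2),(4,3)$. That gives
\[
 \operatorname{Curv}_{\Lambda,\mathrm{red}}=1+uv+u^3v^2+u^4v^3=(1+uv)(1+u^3v^2),
\]
which is the asserted formula. Everything therefore reduces to the middle map.

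\emph{Main obstacle: the middle map.} I must show that $\mu_\ell:R^1\to R^2$ is an isomorphism for every admissible rank-one curvature image. The plan is to exploit the real structure.
\begin{enumerate}
 \item The fan $\Sigma_\Lambda$ is real, so $R_\Lambda$ is the complexification of a real Artinian Gorenstein ring and conjugation preserves multiplication.
 \item The function $\det\mu_\ell$ is a binary quadratic form in $\ell\in R^1\cong\C^2$ with real coefficients.
 \item Lemma~\ref{lem:curvature-conjugation} shows that $U_\Lambda$ is a nonreal line.
\end{enumerate}
It then suffices to show that the quadratic form is not identically zero and that its zero lines are real.

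Next I would use the combinatorics. Write $x_1,x_2,x_3$ for the triangle vertices and $y_1,y_2$ for the apexes; the minimal nonfaces are $x_1x_2x_3$ and $y_1y_2$. The $\theta$-relations express $y_1$ and $y_2$ as real multiples of one class $y$ modulo the $x$'s. I expect the following relations in $R_\Lambda$:
\begin{itemize}
 \item $y^2=0$, obtained by multiplying a relation by $y_1$ and using $y_1y_2=0$;
 \item $x$-classes satisfying $x^2\neq0$ and $xy\neq0$, with $x^2$ and $xy$ spanning $R^2$.
\end{itemize}
In the basis $(x,y)$ one then finds $\det\mu_{\alpha x+\beta y}=c\,\alpha^2$ with $c\neq0$, or at worst a real form whose roots are real. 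I would verify these relations from the facet-wise l.s.o.p.\ of Proposition~\ref{prop:facet-rank} for an arbitrary real fan of this type. If that computation works, the only degenerate lines are real, such as $\C y$, and the nonreal $\ell$ gives an isomorphism. If it does not work for some normal fan, I would fall back on the real Hodge--Riemann form of the Gorenstein pairing on $R^1$.
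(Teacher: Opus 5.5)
Your rank-two argument and your rank-one reduction are the paper's. The full-rank case is the product-of-simplices formula. At rank one, the defect factorization leaves the one-generator Koszul complex of $\ell$, with exactly the homology you list once $\ell\cdot{}:R^1\to R^2$ is an isomorphism.

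\textbf{The gap.} That middle step is only anticipated, and the relations you anticipate are wrong in general.
\begin{itemize}
\item A nonzero class $y\in R^1$ with $y^2=0$ exists only when the apex classes $a=[y_1]$ and $b=[y_2]$ are proportional. This happens exactly when the three rays dual to $x_1,x_2,x_3$ are coplanar, as for the product fan.
\item For a general normal fan of this type, $a,b$ is a basis of $R^1$ and $a^2,b^2$ is a basis of $R^2$. No nonzero class squares to zero, and $\det\mu_{\alpha a+\beta b}$ is a nonzero multiple of $\alpha\beta$.
\item Your hedge ``a real form whose roots are real'' is then exactly the statement to be proved. A real binary quadratic form can have two conjugate nonreal zero lines, and nothing in your plan excludes this.
\item The Hodge--Riemann fallback cannot exclude it by itself. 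Take the apolar algebra of the cubic $S^3-3ST^2$: it is real Artinian Gorenstein with Hilbert function $(1,2,2,1)$ and quadratic relation $s^2+t^2$. For every nonzero real $\ell$, the pairing $(c,c')\mapsto \ell cc'\in R^3$ has signature $(1,1)$. Yet its degenerate lines $\C(s\pm it)$ are nonreal.
\end{itemize}
So you need input specific to the quadratic nonface.

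\textbf{The missing idea.}
\begin{itemize}
\item View $R$ as a quotient of $\operatorname{Sym}(R^1)\cong\C[s,t]$. Since $\dim\operatorname{Sym}^2R^1=3$ and $\dim R^2=2$, the quadratic relations form a line.
\item That line contains the image $ab$ of the nonface monomial $y_1y_2$. This image is nonzero, since each $[y_j]\neq0$ by completeness of the fan. So the line is $\C\,ab$, with $a$ and $b$ real classes.
\item Suppose $\ell c=0$ in $R^2$ with $c\neq0$. Then $\ell c=\lambda ab$ in the polynomial ring, and $\lambda\neq0$ because the polynomial ring is a domain.
\item Unique factorization then forces $\ell\in\C a\cup\C b$. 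Lemma~\ref{lem:curvature-conjugation} excludes both lines, since they are real.
\end{itemize}
This is the paper's argument. In your language, it shows that the zero set of $\det\mu_\ell$ is exactly the real pair $\C a\cup\C b$. That closes your framework uniformly over all normal fans, whether or not $a$ and $b$ are proportional.
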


\begin{proof}
Write $R=R_\Lambda$ and let $U=\C\ell$ be the curvature line.
The dual complex is $\partial\Delta^2*\partial\Delta^1$, with one
minimal nonface of cardinality two and one of cardinality three.
Eliminating the three real linear relations presents $R$ as a
quotient of $\operatorname{Sym}(R^1)$, where $\dim R^1=2$.
Its Hilbert function is $(1,2,2,1)$.
The quadratic part of its ideal is therefore one-dimensional,
generated by a nonzero product $ab$ of two real linear forms:
$a$ and $b$ are the classes of the two disjoint facets.
They may be proportional.

If multiplication by $\ell$ from $R^1$ to $R^2$ had a nonzero
kernel, then $\ell c$ would be a nonzero scalar multiple of $ab$
in $\operatorname{Sym}^2(R^1)$ for some nonzero $c\in R^1$.
Unique factorization would make $\ell$ proportional to $a$ or $b$.
This would make $U$ real, contrary to
Lemma~\ref{lem:curvature-conjugation}.  Thus
$\ell:R^1\to R^2$ is an isomorphism.
Multiplication from $R^0$ to $R^1$ is injective, and the perfect
pairing $R^1\otimes R^2\to R^3$ makes
$\ell:R^2\to R^3$ surjective.

The one-generator reduced Koszul complex consequently has one
class in each of the bidegrees
\[
 (0,0),\quad(1,1),\quad(3,2),\quad(4,3).
\]
Its polynomial is $(1+uv)(1+u^3v^2)$.
The defect and closed antiholomorphic generators give the stated
exterior factors.  The full-rank formula follows from
Proposition~\ref{prop:product-simplices-example}.
\end{proof}

For the product normal fan, $R=\C[x,y]/(x^3,y^2)$.
For the line $\C y$, multiplication from $R^1$ to $R^2$ loses rank, giving
additional reduced classes in bidegrees $(2,1)$ and $(2,2)$.
Lemma~\ref{lem:curvature-conjugation} excludes this line from the curvature images of admissible LVM configurations.

\begin{lemma}[A quadratic Artinian quotient]
\label{lem:quadratic-hyperplane}
Let $R$ be a graded Artinian algebra generated by $R^1$, with an ideal
of relations generated in degree two.  For every hyperplane
$U\subset R^1$, one has $UR^1=R^2$.
\end{lemma}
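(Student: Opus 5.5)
The plan is to reduce the claim to a one-variable computation by passing to the quotient ring $R/UR$. Write $V=R^1$ and present $R=\operatorname{Sym}(V)/I$, where by hypothesis the homogeneous ideal $I$ is generated by $I_2\subseteq\operatorname{Sym}^2V$. Choose $x\in V\setminus U$, so that $V=U\oplus\C x$.

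First I will do the degree-two bookkeeping in $\operatorname{Sym}(V)$. The subspace $U\cdot V\subseteq\operatorname{Sym}^2V$ is spanned by all products $u w$ with $u\in U$ and $w\in V$. A complement to it is the line $\C x^2$. Passing to $R$, this shows that $R^2/UR^1$ is spanned by the image of $x^2$. Hence it suffices to prove
\[
 x^2\in U\cdot V+I_2 .
\]

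Next I will consider the graded quotient
\[
 R/UR\;\cong\;\operatorname{Sym}(V)/\bigl(I+(U)\bigr).
\]
Since $\operatorname{Sym}(V)/(U)\cong\C[x]$, this quotient is $\C[x]/\bar I$, where $\bar I$ is the image of $I$ in $\C[x]$. The image of an ideal under a surjective ring map is an ideal. So $\bar I$ is generated by the images of the degree-two generators of $I$. Each such image lies in $\C[x]_2=\C x^2$.

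This gives a dichotomy. Either $\bar I=(x^2)$, which means that some element of $I_2$ is congruent to $x^2$ modulo $U\cdot V$; this is exactly the required membership. Or $\bar I=0$, in which case $R/UR\cong\C[x]$ is infinite-dimensional. The second alternative contradicts the hypothesis: $R$ is Artinian, hence finite-dimensional, and so is every quotient of it. Therefore $x^2\in UR^1$ in $R$, and $UR^1=R^2$.

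The main point to check carefully is the identification of $\bar I$ with the ideal generated by images of degree-two elements only. This is where the hypothesis that the relations are generated in degree two is essential. Without it, a relation of higher degree, such as $x^3$, could make $R/UR$ finite while $x^2\notin UR^1$; so the argument genuinely needs the quadratic generation. Beyond that step the proof is routine linear algebra.
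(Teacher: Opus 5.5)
Your proof is correct and takes the same route as the paper. The paper also passes to $R/(U)$, a quotient of a one-variable polynomial ring by the images of the quadratic relations, and notes that if those images all vanished the quotient would be $\C[z]$, contradicting the Artinian hypothesis. Your version additionally spells out the degree-two step $\operatorname{Sym}^2V=U\cdot V\oplus\C x^2$ that converts this into $UR^1=R^2$.
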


\begin{proof}
The quotient $R/(U)$ is a quotient of $\C[z]$ whose ideal is
generated by homogeneous quadratics.  If its degree-two part were
nonzero, every quadratic relation would vanish identically after
restriction, and the quotient would be $\C[z]$.  This contradicts
the Artinian hypothesis.
\end{proof}

\begin{theorem}[Cube rigidity]
\label{thm:cube-fixed-rank}
Suppose the visible simple polytope is combinatorially a cube.
Every minimally stable realization of curvature rank two has
\[
 \operatorname{Dol}_{X_\Lambda}(u,v)
 =(1+u)(1+v)^3(1+uv)(1+u^2v)^2.
\]
At rank three its polynomial is $(1+v)^3(1+u^2v)^3$.
Thus its Hodge diamond is determined by the curvature rank.
\end{theorem}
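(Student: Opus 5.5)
The plan follows the prism argument: compute the reduced one-step Koszul data in each degree from the structure of $R=R_\Lambda$, then restore the exterior factors. For the cube, $m=3$ and the only deficient rank is $\rho=2$, so $\delta=1$. The dual complex is the octahedron $\partial\Delta^1*\partial\Delta^1*\partial\Delta^1$, whose three minimal nonfaces are the opposite pairs. After eliminating the three linear relations, $R$ is a quotient of $\operatorname{Sym}(R^1)$ with $\dim R^1=3$ and Hilbert function $(1,3,3,1)$. Its ideal is generated by the three quadrics $a_kb_k$ ($k=1,2,3$), the products of the classes of opposite facets; these are the images of the minimal nonfaces, and their span has dimension $6-3=3$. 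Thus $R$ is quadratic Artinian Gorenstein, and Lemma~\ref{lem:quadratic-hyperplane} applies.

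\textbf{Step 1: the first multiplication.} Let $U=U_\Lambda$, a plane in $R^1$. By Lemma~\ref{lem:quadratic-hyperplane}, $UR^1=R^2$. By Theorem~\ref{thm:basic-Artinian-Gorenstein}, the pairing $R^1\otimes R^2\to R^3$ is perfect, so multiplication $U\otimes R^2\to R^3$ is surjective.

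\textbf{Step 2: the reduced complex.} In holomorphic degree $p$ the reduced complex has the four terms
\[
 \Lambda^pU\to R^1\otimes\Lambda^{p-1}U\to R^2\otimes\Lambda^{p-2}U\to R^3\otimes\Lambda^{p-3}U.
\]
Since $\dim U=2$, only $p\le 5$ occurs. The first arrow is injective, by the comultiplication argument of Theorem~\ref{thm:polygonal-rigidity}. The last arrow is surjective whenever it is nonzero, by Step~1.

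\textbf{Step 3: the middle homology (main obstacle).} The real issue is the homology at the two middle terms. Euler characteristics fix only differences, so I must decide whether there is homology at $R^1\otimes\Lambda^{p-1}U$ and at $R^2\otimes\Lambda^{p-2}U$ separately. The target reduced polynomial is
\[
 (1+uv)(1+u^2v)^2=1+uv+2u^2v+2u^3v^2+u^4v^2+u^5v^3.
\]
The homology counts are as follows.
\begin{itemize}
\item $p=1$: the complex is $U\to R^1$ with cokernel of dimension $1$, contributing $uv$.
\item $p=2$: the Euler characteristic is $1-6+3=-2$. I need $d_1$ surjective in this degree, i.e.\ $U\cdot R^1=R^2$, which is Step~1. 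The homology is then two-dimensional at the $R^1$ term.
\item $p=3$: the complex is $0\to R^1\otimes\Lambda^2U\to R^2\otimes U\to R^3$, of dimensions $3,6,1$, so the Euler characteristic is $-2$. The desired answer $2u^3v^2$ needs the first arrow injective.
\end{itemize}

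To check injectivity at $p=3$: writing $U=\langle\ell_1,\ell_2\rangle$, injectivity means that no nonzero $x\in R^1$ satisfies $x\ell_1=x\ell_2=0$ in $R^2$. Suppose such an $x$ exists. In $\operatorname{Sym}^2$ the span of $x\ell_1,x\ell_2$ would then lie in the three-dimensional span of the $a_kb_k$. I would rule this out by the same unique-factorization device as in the prism case. That device forces $x$ to divide two independent combinations of the $a_kb_k$, which makes $x$ proportional to some $a_k$ or $b_k$, and hence makes $U$ contain, or be constrained by, real forms. I would then contradict Lemma~\ref{lem:curvature-conjugation}, which gives $\dim(U\cap\overline U)=1$, by a short case check using the reality of the $a_k,b_k$.

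Given this, $p=4$ has the complex $R^2\otimes\Lambda^2U\to R^3\otimes U$, of dimensions $3\to 2$, which is surjective, contributing $u^4v^2$. Degree $p=5$ contributes $R^3\otimes\Lambda^2U$, giving $u^5v^3$.

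\textbf{Step 4: assembly.} Multiply the reduced polynomial by the factors $(1+u)^\delta(1+v)^m=(1+u)(1+v)^3$, using Proposition~\ref{prop:defect-exterior-factor}. The rank-three case is Proposition~\ref{prop:cube-example}.
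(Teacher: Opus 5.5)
Your overall plan matches the paper's: the reduced Koszul complex degree by degree, Lemma~\ref{lem:quadratic-hyperplane}, the Gorenstein pairing, and then the factors $(1+u)(1+v)^3$. Your homology counts in degrees $0,1,2,4,5$ are correct.

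The gap is the step you flag yourself: injectivity of $R^1\otimes\Lambda^2U\to R^2\otimes U$ at $p=3$. Your proposed argument does not establish it, for two reasons.

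\begin{itemize}
\item For the prism, the quadratic relations span the single product $ab$, so factoriality pins down the factors of $\ell c$. For the cube they span the three-dimensional net $\langle a_1b_1,a_2b_2,a_3b_3\rangle$. Reducible members of this net can factor in ways unrelated to the $a_k,b_k$; for the product fan, $x^2+y^2=(x+iy)(x-iy)$. So unique factorization does not carry you from ``$x$ divides two members'' to ``$x$ is proportional to some $a_k$ or $b_k$.''
\item Even granting that step, a real form in $U$ gives no contradiction. Lemma~\ref{lem:curvature-conjugation} gives $\dim(U\cap\overline U)=m-2\delta=1$, so $U$ always contains a real line.
\end{itemize}
The promised ``short case check'' therefore has nothing to check against, and the count $2u^3v^2$ is unproved as written.

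The missing idea is Gorenstein duality, and it makes reality irrelevant. Suppose $x\in R^1$ satisfies $xU=0$ in $R^2$. Then $xy\ell=0$ in $R^3$ for all $y\in R^1$ and $\ell\in U$, so $x$ annihilates $UR^1$. By your Step~1, $UR^1=R^2$, and the perfect pairing $R^1\otimes R^2\to R^3$ then forces $x=0$.

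Equivalently, set $C^{p,a}=R^a\otimes\Lambda^{p-a}U$. The pairing $C^{p,a}\otimes C^{5-p,3-a}\to R^3\otimes\Lambda^2U$ makes the $p=3$ arrow $C^{3,1}\to C^{3,2}$ the transpose, up to sign, of the surjective $p=2$ arrow $R^1\otimes U\to R^2$. This is how the paper argues ("identifies the remaining maps with transposes"). It proves injectivity for every plane $U\subset R^1$, real or not. With this replacement, the rest of your argument goes through unchanged.
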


\begin{proof}
Here $R=R_\Lambda$ has Hilbert function $(1,3,3,1)$.
The three minimal nonfaces are the three pairs of opposite facets.
After eliminating the linear relations, the ideal is generated by
three quadratics.  This holds for every normal fan with this
combinatorial type.

Let $U\subset R^1$ be the two-dimensional curvature image.
Lemma~\ref{lem:quadratic-hyperplane} gives $UR^1=R^2$.
The first Koszul map $\Lambda^2U\to R^1\otimes U$ is injective,
and the perfect pairing $R^1\otimes R^2\to R^3$ makes
$R^2\otimes U\to R^3$ surjective.
The Gorenstein pairing together with exterior multiplication on $U$
identifies the remaining maps with transposes of these maps, up to
sign.  Explicitly, the dimensions and ranks in the chains
$C^{p,a}=R^a\otimes\Lambda^{p-a}U$ are
\[
\begin{array}{c|c|c}
 p&(\dim C^{p,0},\dim C^{p,1},\dim C^{p,2},\dim C^{p,3})
   &(\operatorname{rank}d_0,\operatorname{rank}d_1,
      \operatorname{rank}d_2)\\ \hline
 0&(1,0,0,0)&(0,0,0)\\
 1&(2,3,0,0)&(2,0,0)\\
 2&(1,6,3,0)&(1,3,0)\\
 3&(0,3,6,1)&(0,3,1)\\
 4&(0,0,3,2)&(0,0,2)\\
 5&(0,0,0,1)&(0,0,0).
\end{array}
\]
The reduced polynomial is
\[
 1+uv+2u^2v+2u^3v^2+u^4v^2+u^5v^3
 =(1+uv)(1+u^2v)^2.
\]
Restoring the exterior factors gives the formula at deficient curvature rank.
The full-rank formula again follows from
Proposition~\ref{prop:product-simplices-example}.
\end{proof}

\subsection{Admissible families at constant rank}

Product configurations give the constant-rank families below and the Panov--Ustinovsky comparison over a six-dimensional visible polytope.

\begin{lemma}[Product configurations]
\label{lem:product-real-frame}
Choose integers $d_1,\ldots,d_m\geq1$.
In $\R^m\oplus\R^m$ put
\[
 g_0=(-\mathbf1,\mathbf1),\qquad
 g_\mu=(e_\mu,\mathbf1)\quad(1\leq\mu\leq m).
\]
For each $\mu$, take a group $G_\mu$ of $d_\mu+1$ visible labels,
all with weight $(0,-e_\mu)$.
If $F:\R^{2m}\to\C^m$ is an invertible real-linear map, the images
of these weights form an admissible minimally stable configuration
with
\[
 P=\Delta^{d_1}\times\cdots\times\Delta^{d_m},
 \qquad
 R_\Lambda=\C[x_1,\ldots,x_m]/
            (x_1^{d_1+1},\ldots,x_m^{d_m+1}).
\]
Here $x_\mu$ is the common class of the variables in $G_\mu$.
\end{lemma}

\begin{proof}
It suffices to check convex relations before applying $F$.
Write $c_0,\ldots,c_m$ for the ghost coefficients and $t_\mu$ for
the sum of the coefficients in $G_\mu$.  The first real block gives
$c_\mu=c_0$.  The second gives $t_\mu=(m+1)c_0$.
Normalization therefore forces
\[
 c_0=\cdots=c_m=\frac1{(m+1)^2},
 \qquad t_1=\cdots=t_m=\frac1{m+1}.
\]
Conversely these equalities give a convex relation for every
distribution of the group masses.  Every support contains all
$m+1$ ghosts and at least one label in each of the $m$ groups.
Its cardinality is at least $2m+1$, so weak hyperbolicity holds.
The supports with one label from each group are positive affine
simplices of this cardinality.  Every visible label can be omitted,
whereas every ghost is indispensable.  The relation polytope is
the product of the simplices of distributions within the groups,
which proves the assertion about $P$.

For the ring, write $\sigma_\mu=\sum_{j\in G_\mu}u_j$ for the group
sums of a visible coefficient vector and write $\eta$ for its ghost
coefficients in the affine chart based at $g_0$.
Annihilation of the real affine action matrix first gives
$(I+\mathbf1\mathbf1^t)\sigma=0$, and then
$(I+\mathbf1\mathbf1^t)\eta=0$.
Thus $\sigma=\eta=0$.  This calculation is unchanged by $F$,
and describes
$\operatorname{Ann}(\mathfrak h_\Lambda+
\overline{\mathfrak h_\Lambda})$.
The visible linear relations are exactly the differences within
each group.  The minimal nonfaces are the full groups, giving the
displayed ring.
\end{proof}

Let $\mathbb H=\{s\in\C:\operatorname{Im}s>0\}$.
For the prism take the three ghost weights
\[
 (-1-i,1+s),\qquad(1,1+s),\qquad(i,1+s),
\]
and visible weights $(0,-1)$ on a group of three labels and
$(0,-s)$ on a group of two labels.
They are the images in Lemma~\ref{lem:product-real-frame} for the
real frame
\[
 (e_1,e_2;f_1,f_2)\longmapsto
 ((1,0),(i,0);(0,1),(0,s)).
\]
The frame is invertible for $s\in\mathbb H$.
The indispensable block has rows $(2+i,0)$ and $(1+2i,0)$,
and the curvature exact sequence gives
\[
 R=\C[x,y]/(x^3,y^2),\qquad
 U_s=\C\bigl((1+2s)x-(2+s)y\bigr).
\]
Indeed, $\ker B=\C(0,1)$ and the two visible group values of
$A|_{\ker B}$ are $-(2+s)$ and $-(1+2s)$.
Writing $s=a+ib$, the determinant of the real and imaginary
coefficient vectors of the displayed generator is $3b$.
The curvature line is nonreal and varies with $s$.

For the cube take the four ghost weights
\[
\begin{split}
 &(-1-i,-1+i,1+s),\qquad(1,i,1+s),\\
 &(i,i,1+s),\qquad(0,1+i,1+s),
\end{split}
\]
and the three visible groups of size two with weights
\[
 (0,-i,0),\qquad(0,0,-1),\qquad(0,0,-s).
\]
The corresponding real frame is
\[
\begin{split}
 (e_1,e_2,e_3)&\longmapsto
 ((1,0,0),(i,0,0),(0,1,0)),\\
 (f_1,f_2,f_3)&\longmapsto
 ((0,i,0),(0,0,1),(0,0,s)).
\end{split}
\]
Its indispensable block and curvature image are
\[
 B=\begin{pmatrix}2+i&1&0\\1+2i&1&0\\1+i&2&0\end{pmatrix},
 \qquad \operatorname{rank}B=2,
\]
\[
\begin{split}
 R&=\C[x,y,z]/(x^2,y^2,z^2),\\
 U_s&=\{\alpha x+\beta y+\gamma z:
       (1+s)\alpha+(2+s)\beta+(1+2s)\gamma=0\}.
\end{split}
\]
The two real rows of the normal vector have a $2$ by $2$ minor
$-\operatorname{Im}s$, so $U_s+\overline{U_s}=R^1$.
These planes also vary with $s$.

\begin{proposition}[Constant-rank holomorphic families]
\label{prop:three-dimensional-constant-rank-families}
The prism and cube configurations above define proper holomorphic
submersions over $\mathbb H$.  Their fibres have complex dimensions
five and six, respectively.  Their curvature subspaces vary, their
curvature ranks are constant, and every Hodge number is constant.
\end{proposition}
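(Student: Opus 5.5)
The plan is to treat the prism and cube families in parallel through Lemma~\ref{lem:parametric-LVM-quotient}, with the polydisk version from Proposition~\ref{prop:normalized-deformation-family} covering the whole half-plane. The first step is to fix $s\in\mathbb H$ and observe that each configuration is the image of the product configuration of Lemma~\ref{lem:product-real-frame} under a real frame $F_s$. This frame is real-linear in the coordinates and invertible exactly when $\operatorname{Im}s\neq0$. Invertible real-linear maps preserve every zero-in-convex-hull test, as in the proof of Theorem~\ref{thm:exact-realizable-rank-range}. Consequently the collection of supporting subsets is the same for every $s\in\mathbb H$: it is the one computed in Lemma~\ref{lem:product-real-frame}. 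The weights depend holomorphically on $s$, since only $1+s$ and $-s$ enter. Lemma~\ref{lem:parametric-LVM-quotient} is stated over a disk, but its proof uses only the implicit function theorem and a relative transversal, so it applies verbatim over any open $\mathbb H$, or one can cover $\mathbb H$ by disks and glue. This yields proper holomorphic submersions. The fibre dimensions come from Proposition~\ref{prop:minimal-dimension-ledger}, $\dim_\C X=N$, with $N=5$ for the prism (groups of sizes $3,2$) and $N=6$ for the cube (three pairs).

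The second step computes the ranks. Taking indispensable differences gives the displayed $B$, independent of $s$. For the prism it has rows $(2+i,0),(1+2i,0)$ and rank one. For the cube the displayed $3\times3$ matrix has zero last column and rows that are not proportional, so its rank is two. Thus $\delta=1$ throughout, and the curvature rank is constant.

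The third step handles the curvature subspaces. For the prism I will derive $U_s$ from Theorem~\ref{thm:curvature-exact-sequence} as $\ker\beta_\Lambda$: with $\ker B=\C(0,1)$ for the prism and $\C e_3$ for the cube, $\beta_\Lambda$ evaluates $A^\vee u$ on the kernel line. Summing over visible labels within each group, using the ring of Lemma~\ref{lem:product-real-frame}, gives the displayed line $U_s$ and plane $U_s$. Variation in $s$ is then immediate, since the ratio $(1+2s):(2+s)$, respectively the normal vector, is nonconstant.

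The final step is constancy of Hodge numbers. Theorems~\ref{thm:prism-fixed-rank} and~\ref{thm:cube-fixed-rank} show that for these combinatorial types the Dolbeault polynomial depends only on the curvature rank. Since the rank is constant, every $h^{p,q}$ is constant. The main point requiring care is the first step, namely that a single fixed open set $V$ works over all of $\mathbb H$. The frame argument settles this cleanly, because the support tests are independent of $F_s$, and weak hyperbolicity is inherited as well.
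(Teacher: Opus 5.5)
Your proposal is correct and follows essentially the same route as the paper: the invertible real frames make all convex-support tests independent of $s$ via Lemma~\ref{lem:product-real-frame}, the minimizing-transversal argument of Lemma~\ref{lem:parametric-LVM-quotient} runs over all of $\mathbb H$, the displayed matrices $B$ give constant rank, and Theorems~\ref{thm:prism-fixed-rank} and~\ref{thm:cube-fixed-rank} give constant Hodge numbers. Your checks of the fibre dimensions ($N=5,6$) and of the variation of $U_s$ via $\ker\beta_\Lambda$ spell out what the paper records in the text just before the proposition.
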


\begin{proof}
The real frames are invertible throughout $\mathbb H$.
Lemma~\ref{lem:product-real-frame} therefore gives all convex-support
tests independently of $s$.  The weights are holomorphic in $s$.
Lemma~\ref{lem:parametric-LVM-quotient} applies on every disk in
$\mathbb H$; its minimizing-transversal proof applies over
$\mathbb H$ itself and gives joint properness and a proper
holomorphic submersion.  The displayed matrices give the ranks,
and Theorems~\ref{thm:prism-fixed-rank}
and~\ref{thm:cube-fixed-rank} give the Hodge numbers.
\end{proof}

\subsection{A higher-dimensional comparison}

Panov and Ustinovsky compare two products of Calabi--Eckmann
manifolds over
$\Delta^1\times\Delta^1\times\Delta^2\times\Delta^2$ and obtain
different values of $h^{2,1}$ \cite[Examples~5.12--5.13]{PanovUstinovsky2012}.
The same calculation has the following realization within the
minimally stable class.

\begin{proposition}[A fixed-rank comparison over a six-dimensional polytope]
\label{prop:known-six-dimensional-fixed-rank}
There are two admissible minimally stable configurations with the same
visible simple polytope
$\Delta^1\times\Delta^1\times\Delta^2\times\Delta^2$,
with $m=4$, $\operatorname{rank}B=2$, and complex dimension ten,
whose Hodge numbers $h^{2,1}$ are $9$ and $8$.
\end{proposition}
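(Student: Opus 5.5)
The plan is to build both configurations from Lemma~\ref{lem:product-real-frame} with $m=4$ and $(d_1,d_2,d_3,d_4)=(1,1,2,2)$. Then $N=10$, $d=6$, the complex dimension is ten, and
\[
 R_\Lambda=\C[x_1,x_2,x_3,x_4]/(x_1^2,x_2^2,x_3^3,x_4^3),
\]
with Hilbert function beginning $(1,4,8,\ldots)$. The only freedom is the invertible real-linear map $F:\R^8\to\C^4$. I will choose two such maps, imitating the prism and cube constructions: each sends the real frame $(e_1,\dots,e_4;f_1,\dots,f_4)$ to a real frame of $\C^4$ whose first four vectors have complex span of dimension two, so that $\operatorname{rank}B=2$ and $\delta=2$. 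The curvature image $U=U_\Lambda\subset R^1_\Lambda$ is then read off from Theorem~\ref{thm:curvature-exact-sequence} as $\ker\beta_\Lambda$. The group values of $A|_{\ker B}$ give two linear functionals on $R^1=\operatorname{span}(x_1,\dots,x_4)$, and $U$ is their common kernel. I aim for
\[
 U'=\operatorname{span}(x_1+ix_2,\ x_3+ix_4),
 \qquad
 U''=\operatorname{span}(x_1+ix_3,\ x_2+ix_4).
\]
Both satisfy $U+\overline U=R^1$, as Lemma~\ref{lem:curvature-conjugation} requires, and I will choose the frame images so that the kernel functionals cut out exactly these planes.

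Next I will reduce $h^{2,1}$ to a single rank. By Theorem~\ref{thm:universal-Hodge-formula},
\[
 h^{2,1}=\kappa^{2,1}+4\kappa^{2,0},
 \qquad
 \kappa^{2,0}=h^{2,0}=\tbinom{\delta}{2}=1,
\]
the last equality by Theorem~\ref{thm:holomorphic-forms-defect}. Proposition~\ref{prop:defect-exterior-factor} with $\dim Z=2$ gives
\[
 \kappa^{2,1}=2\dim H^{1,1}_{\mathrm{red}}+\dim H^{2,1}_{\mathrm{red}}.
\]
Here $H^{1,1}_{\mathrm{red}}=R^1/U$ has dimension $2$. The group $H^{2,1}_{\mathrm{red}}$ is the homology of
\[
 \Lambda^2U\longrightarrow R^1\otimes U\longrightarrow R^2 .
\]
The first map is injective, and the second has image $UR^1$, so
\[
 \dim H^{2,1}_{\mathrm{red}}=8-1-\dim(UR^1).
\]
Altogether
\[
 h^{2,1}=15-\dim(UR^1).
\]

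It remains to compute $\dim(UR^1)$ in the two cases.
\begin{itemize}
\item For $U'$, put $\ell_1=x_1+ix_2$ and $\ell_2=x_3+ix_4$. Then $\ell_1x_1$ and $\ell_1x_2$ are both multiples of $x_1x_2$. The products $\ell_2x_3$ and $\ell_2x_4$ give two independent vectors involving the squares $x_3^2,x_4^2$. The four mixed products $\ell_1x_3,\ell_1x_4,\ell_2x_1,\ell_2x_2$ span only a $3$-dimensional subspace of $\operatorname{span}(x_1x_3,x_1x_4,x_2x_3,x_2x_4)$, because $\ell_1\ell_2$ is counted twice. Hence $\dim(U'R^1)=6$ and $h^{2,1}=9$.
\item For $U''$, the analogous check should give $\dim(U''R^1)=7$, since the single coincidence $\ell_1\ell_2$ is the only relation. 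Then $h^{2,1}=8$.
\end{itemize}

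The main obstacle is realizability: choosing the real frames so that $\operatorname{rank}B$ is exactly two and the two functionals from $\beta_\Lambda$ cut out exactly $U'$, respectively $U''$. Here I will first write the generic frame and solve the resulting linear conditions, as in the cube example. If $U''$ cannot be hit exactly, any plane with $U+\overline U=R^1$ and $\dim(UR^1)=7$ suffices, and $7$ is the generic value, so a perturbation of the frame should work.
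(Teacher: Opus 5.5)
Your Hodge count is correct, and it is organized differently from the paper's. The reduction $h^{2,1}=15-\dim(UR^1)$ via Theorem~\ref{thm:universal-Hodge-formula}, Theorem~\ref{thm:holomorphic-forms-defect} and Proposition~\ref{prop:defect-exterior-factor} is right. So are $\dim(U'R^1)=6$ and $\dim(U''R^1)=7$. For $U''$, the products of $x_1+ix_3$ with $x_1,x_3$ and of $x_2+ix_4$ with $x_2,x_4$ give $x_1x_3,x_3^2,x_2x_4,x_4^2$. The four remaining products lie in the span of $x_1x_2,x_1x_4,x_2x_3,x_3x_4$, with the single relation coming from $(x_1+ix_3)(x_2+ix_4)$.

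The paper argues differently. For these planes it factors the reduced Koszul complex as a tensor product of two one-generator complexes over rings $\C[x,y]/(x^{p+1},y^{q+1})$. It computes the whole reduced polynomials $A_{1,1}A_{2,2}$ and $A_{1,2}^2$ and reads off the coefficient of $u^2v$. That gives the full Hodge diamonds; your rank count is shorter and gives exactly what the statement needs. Your planes $U'$ and $U''$ are precisely the paper's $\ker T_0$ and $\ker T_1$.

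What is missing is the existence half, which you leave as a plan. The statement asserts that such configurations exist, so they must be exhibited. Your perturbation fallback cannot cover the first case: $\dim(U'R^1)=6$ is a non-generic value, so no perturbation produces $h^{2,1}=9$, and $U'$ has to be hit exactly.

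The missing step is short. In Lemma~\ref{lem:product-real-frame} with groups of sizes $2,2,3,3$, put $M=I_4+\mathbf1\mathbf1^t$, pick a complex $2\times4$ matrix $T$, and set $F(p,q)=(p_1+ip_2,\,p_3+ip_4,\,TM^{-1}q)$.
\begin{itemize}
\item The ghost differences are $F(Me_\mu,0)$. They span $\C^2\times0$, so $\operatorname{rank}B=2$ and $\ker B$ consists of the last two action coordinates.
\item The visible differences $F(\mathbf1,-Me_\mu)$ restrict to $\ker B$ as the functionals $-Te_\mu$. Hence $\beta_\Lambda=-T$ on group sums, and $U_\Lambda=\ker T$.
\item $F$ is real-invertible exactly when $T:\R^4\to\C^2$ is, that is, when $\ker T\cap\overline{\ker T}=0$.
\end{itemize}
The rows $(1,i,0,0),(0,0,1,i)$ give $U'$, and the rows $(1,0,i,0),(0,1,0,i)$ give $U''$; this is the paper's construction. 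With this inserted, your argument is complete.
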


\begin{proof}
Use Lemma~\ref{lem:product-real-frame} with group sizes $(2,2,3,3)$.
Set
\[
 M=I_4+\mathbf1\mathbf1^t,\qquad
 M^{-1}=I_4-\frac15\mathbf1\mathbf1^t,
\]
and choose, in turn,
\[
 T_0=\begin{pmatrix}1&i&0&0\\0&0&1&i\end{pmatrix},
 \qquad
 T_1=\begin{pmatrix}1&0&i&0\\0&1&0&i\end{pmatrix}.
\]
Define a real-linear map $F_j:\R^4\oplus\R^4\to\C^4$ by
\[
 F_j(p,q)=
 \bigl(p_1+ip_2,\ p_3+ip_4,\ (T_jM^{-1}q)_1,\
                                  (T_jM^{-1}q)_2\bigr).
\]
Both maps are invertible over $\R$, since $T_j:\R^4\to\C^2$ is
invertible over $\R$.
The ghost differences span the first two complex coordinates,
so $B$ has rank two and its kernel consists of the last two action
coordinates.
On visible group sums the map $\beta_\Lambda$ in the curvature
exact sequence is $-T_jM^{-1}M=-T_j$.
Consequently the two curvature images are $\ker T_0$ and $\ker T_1$
in
\[
 R=\C[x_1,x_2,x_3,x_4]/(x_1^2,x_2^2,x_3^3,x_4^3).
\]
This proves admissibility, minimal stability, and the stated rank
with identical convex-support tests.

The one-generator Koszul complex over
$\C[x,y]/(x^{p+1},y^{q+1})$, $p\leq q$, with differential $x-y$,
has reduced polynomial
\[
 A_{p,q}(u,v)=
 \left(\sum_{a=0}^{p}(uv)^a\right)(1+u^{q+1}v^q).
\]
Its cokernel is $\C[x]/x^{p+1}$.
The kernel is generated as a vector space by
$x^a\sum_{k=0}^{q}x^{q-k}y^k$, $0\leq a\leq p$:
these elements are annihilated by $x-y$, are independent by their
distinct degrees, and have the required dimension $p+1$.
Every nonzero coefficient in the two chosen pairings can be
absorbed by rescaling the corresponding variable.
The reduced polynomials are therefore
\[
 A_{1,1}A_{2,2},\qquad A_{1,2}^2.
\]
The full polynomials have the common exterior factor
$(1+u)^2(1+v)^4$.
Their coefficients of $u^2v$ are respectively $9$ and $8$.
\end{proof}

Theorem~\ref{thm:prism-fixed-rank} includes all prism normal fans,
and Theorem~\ref{thm:cube-fixed-rank} includes all cube normal fans.
They leave open the corresponding question for other simple
three-polytopes.  The higher-dimensional comparison records the
Panov--Ustinovsky phenomenon in the present curvature notation.

\appendix
\section{Hypergeometric formulas for the polygonal coefficients}
\label{sec:lock-hypergeometric-appendix}

The cyclic-run coefficients define a sequence of reciprocal polynomials
$P_\nu$. We derive a terminating Gauss representation, an Euler integral, a
rational generating function, and a recurrence for this sequence.

Let $l\geq4$ and put
\[
 \nu=l-4.
\]
For $0\leq k\leq\nu$, define
\[
 c_{\nu,k}
 =
 a_{\nu+4,k+2},
 \qquad
 a_{l,p}
 =
 l\binom{l-2}{p-1}-\binom lp,
\]
and set
\[
 P_\nu(u)
 =
 \sum_{k=0}^{\nu}c_{\nu,k}u^k.
\]
Thus the polygonal Dolbeault polynomial is
\begin{equation}
\label{eq:polygon-Dol-Pnu}
 \operatorname{Dol}_{X_{\nu+4}}(u,v)
 =
 (1+v)^{\nu+2}
 \left[
 1+u^2vP_\nu(u)+u^{\nu+4}v^2
 \right].
\end{equation}

\subsection{The coefficient family}

\begin{proposition}[Closed coefficient formula]
\label{prop:polygon-hypergeom-coeff}
For $0\leq k\leq\nu$,
\[
 c_{\nu,k}
 =
 \frac{(\nu+1)(\nu+2)(\nu+4)}
 {(k+2)(\nu-k+2)}
 \binom{\nu}{k}.
\]
Equivalently, for $2\leq p\leq l-2$,
\[
 a_{l,p}
 =
 \frac{l(l-2)(l-3)}{p(l-p)}
 \binom{l-4}{p-2}.
\]
\end{proposition}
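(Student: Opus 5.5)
The plan is to prove the second (equivalent) form first, directly from the run-count formula of Proposition~\ref{prop:polygon-run-count},
\[
 a_{l,p}=l\binom{l-2}{p-1}-\binom lp,
\]
and then obtain the first form by substituting $l=\nu+4$ and $p=k+2$.

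The substitution is mechanical: $l(l-2)(l-3)=(\nu+4)(\nu+2)(\nu+1)$, $p(l-p)=(k+2)(\nu-k+2)$, and $\binom{l-4}{p-2}=\binom{\nu}{k}$.

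For the main identity, write both terms as multiples of $\binom{l-4}{p-2}=\frac{(l-4)!}{(p-2)!\,(l-p-2)!}$. Two expressions are needed.

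\textbf{First term.} Since
\[
 \binom{l-2}{p-1}=\frac{(l-2)!}{(p-1)!\,(l-p-1)!},
\]
we get
\[
 l\binom{l-2}{p-1}=\frac{l(l-2)(l-3)}{(p-1)(l-p-1)}\binom{l-4}{p-2}.
\]

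\textbf{Second term.} Similarly,
\[
 \binom lp=\frac{l(l-1)(l-2)(l-3)}{p(p-1)(l-p)(l-p-1)}\binom{l-4}{p-2}.
\]

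\textbf{Combining.} Factor out $\frac{l(l-2)(l-3)}{p(p-1)(l-p)(l-p-1)}\binom{l-4}{p-2}$. Then
\[
 a_{l,p}=\frac{l(l-2)(l-3)}{p(p-1)(l-p)(l-p-1)}\binom{l-4}{p-2}\,\bigl[p(l-p)-(l-1)\bigr].
\]
The key observation is the factorization
\[
 p(l-p)-(l-1)=(p-1)(l-p-1),
\]
which one checks by expanding: $(p-1)(l-p-1)=pl-p^2-p-l+p+1=pl-p^2-l+1$. This cancels the extra denominator factors and yields the claimed formula.

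\textbf{Range of validity.} In the range $2\le p\le l-2$ the factors $p-1$ and $l-p-1$ are nonzero, so the divisions above are legitimate. The binomial $\binom{l-4}{p-2}$ is a genuine binomial coefficient there, with no boundary conventions needed.

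I do not expect a real obstacle: the only nontrivial step is spotting that factorization, and the rest is bookkeeping of factorials.
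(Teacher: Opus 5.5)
Your proposal is correct and follows the paper's proof: put $l\binom{l-2}{p-1}$ and $\binom{l}{p}$ over a common factorial denominator, cancel using $p(l-p)-(l-1)=(p-1)(l-p-1)$, and substitute $l=\nu+4$, $p=k+2$. Your version simply writes out the cancellation that the paper calls ``direct,'' and your check that $p-1$ and $l-p-1$ are nonzero in the range $2\le p\le l-2$ is a sound addition.
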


\begin{proof}
Starting from
\[
 a_{l,p}
 =
 l\binom{l-2}{p-1}-\binom lp,
\]
write both binomial coefficients over the common factorial denominator.
A direct cancellation gives
\[
 a_{l,p}
 =
 \frac{l(l-2)(l-3)}{p(l-p)}
 \frac{(l-4)!}{(p-2)!(l-p-2)!}.
\]
Substitute $l=\nu+4$ and $p=k+2$.
\end{proof}

\begin{corollary}[Positivity and reciprocity]
\label{cor:polygon-hypergeom-reciprocity}
Every coefficient of $P_\nu$ is a positive integer, and
\[
 c_{\nu,k}=c_{\nu,\nu-k}.
\]
Consequently,
\[
 P_\nu(u)=u^\nu P_\nu(u^{-1}).
\]
In particular, $P_\nu$ is a reciprocal polynomial of degree $\nu$.
\end{corollary}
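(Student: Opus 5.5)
The plan is to derive everything from the closed formula of Proposition~\ref{prop:polygon-hypergeom-coeff},
\[
 c_{\nu,k}=\frac{(\nu+1)(\nu+2)(\nu+4)}{(k+2)(\nu-k+2)}\binom{\nu}{k},
\]
and to treat symmetry, positivity, and integrality as three separate checks.

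\emph{Reciprocity.} Under $k\mapsto\nu-k$ the denominator $(k+2)(\nu-k+2)$ is unchanged, and so is $\binom{\nu}{k}$. Hence $c_{\nu,k}=c_{\nu,\nu-k}$. Reindexing the sum defining $P_\nu$ then gives
\[
 u^\nu P_\nu(u^{-1})=\sum_k c_{\nu,k}u^{\nu-k}=\sum_k c_{\nu,\nu-k}u^{k}=P_\nu(u).
\]

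\emph{Positivity.} For $0\le k\le\nu$, every factor in the closed formula is a positive rational number, so $c_{\nu,k}>0$.

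\emph{Integrality.} Rather than from the closed formula, integrality comes from the defining expression
\[
 c_{\nu,k}=a_{\nu+4,k+2}=l\binom{l-2}{p-1}-\binom lp,
\]
which is a difference of integers. Alternatively, Proposition~\ref{prop:polygon-run-count} identifies $a_{l,p}$ with a sum of reduced Betti numbers $\widetilde b_0$, hence a nonnegative integer. Combining this with positivity gives positive integers.

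\emph{Degree.} The extreme coefficients are
\[
 c_{\nu,0}=c_{\nu,\nu}=\frac{(\nu+1)(\nu+4)}{2}\ne0,
\]
so $P_\nu$ has exact degree $\nu$. It is therefore reciprocal of degree $\nu$.

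No step here is a real obstacle; the only point needing care is to take integrality from the defining expression rather than from the rational closed form.
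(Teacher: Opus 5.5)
Your proof is correct. The symmetry step is the same as the paper's. The difference lies in where positivity and integrality come from.

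The paper takes both from the cyclic-run interpretation of Proposition~\ref{prop:polygon-run-count}, namely $c_{\nu,k}=\sum_{|I|=k+2}\widetilde b_0((C_{\nu+4})_I)$. That interpretation gives nonnegative integers at once. Strict positivity, however, tacitly needs a disconnected induced subgraph on $k+2$ vertices, for example an isolated vertex together with a separate run of $k+1$ vertices, which fits on the cycle because $k+2\le l-2$.

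You read strict positivity directly off the closed formula, where every factor is positive for $0\le k\le\nu$. You take integrality from the defining difference $l\binom{l-2}{p-1}-\binom lp$. This route is purely algebraic and needs no existence argument. The paper's route instead explains what the coefficients count.

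You also prove the exact-degree claim explicitly, via $c_{\nu,0}=c_{\nu,\nu}=(\nu+1)(\nu+4)/2\neq0$; the paper leaves this implicit.
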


\begin{proof}
Positivity and integrality follow from the cyclic-run interpretation in
Proposition~\ref{prop:polygon-run-count}.  Symmetry is immediate from the closed coefficient formula,
since
\[
 (k+2)(\nu-k+2)
\]
and $\binom{\nu}{k}$ are invariant under $k\leftrightarrow\nu-k$.
\end{proof}

\subsection{The terminating Gauss representation}

\begin{theorem}[Terminating hypergeometric form]
\label{thm:polygon-hypergeometric-form}
For every $\nu\geq0$,
\[
 P_\nu(u)
 =
 \frac{(\nu+1)(\nu+2)}2
 \left[
 {}_2F_1(-\nu,2;3;-u)
 +
 u^\nu{}_2F_1(-\nu,2;3;-u^{-1})
 \right].
\]
Both Gauss series terminate. Expanding the second summand gives a polynomial in $u$, including at $u=0$.
\end{theorem}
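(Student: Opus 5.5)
The plan is to expand the terminating Gauss series termwise, rewrite the result in the basis $u^k$, and compare coefficients with the closed formula of Proposition~\ref{prop:polygon-hypergeom-coeff}. Since $(2)_k/(3)_k=2/(k+2)$ and $(-\nu)_k(-1)^k/k!=\binom{\nu}{k}$, one has
\[
 {}_2F_1(-\nu,2;3;-u)=\sum_{k=0}^{\nu}\frac{2}{k+2}\binom{\nu}{k}u^k .
\]
The series stops at $k=\nu$ because $(-\nu)_k=0$ for $k>\nu$. Multiplying the same expansion at $-u^{-1}$ by $u^\nu$ gives
\[
 u^\nu{}_2F_1(-\nu,2;3;-u^{-1})=\sum_{k=0}^{\nu}\frac{2}{k+2}\binom{\nu}{k}u^{\nu-k}
 =\sum_{k=0}^{\nu}\frac{2}{\nu-k+2}\binom{\nu}{k}u^{k},
\]
after reindexing $k\mapsto\nu-k$ and using $\binom{\nu}{\nu-k}=\binom{\nu}{k}$. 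Only nonnegative powers $u^{\nu-k}$ with $0\le k\le\nu$ occur, so this is a polynomial in $u$ and is defined at $u=0$. That settles the last assertion of Theorem~\ref{thm:polygon-hypergeometric-form}.

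Next, add the two sums and use the partial-fraction identity
\[
 \frac{1}{k+2}+\frac{1}{\nu-k+2}=\frac{\nu+4}{(k+2)(\nu-k+2)}.
\]
Multiplying by the prefactor $(\nu+1)(\nu+2)/2$, the coefficient of $u^k$ on the right-hand side becomes
\[
 \frac{(\nu+1)(\nu+2)(\nu+4)}{(k+2)(\nu-k+2)}\binom{\nu}{k}.
\]
This is exactly $c_{\nu,k}$ by Proposition~\ref{prop:polygon-hypergeom-coeff}. Summing over $0\le k\le\nu$ then gives $P_\nu(u)$, which proves the identity.

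I do not expect a real obstacle here. The only care needed is with the Pochhammer sign convention, namely $(-\nu)_k(-1)^k=\nu!/(\nu-k)!$, and with the reindexing of the reciprocal summand. As a sanity check, $\nu=0$ gives $\tfrac12\cdot2\cdot(1+1)=2=c_{0,0}=a_{4,2}$, which agrees with the two opposite pairs of the square.
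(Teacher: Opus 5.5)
Your proposal is correct and matches the paper's proof essentially step for step. Both expand the terminating series using $(2)_k/(3)_k=2/(k+2)$, reindex the reciprocal summand by $k\mapsto\nu-k$, and combine the two coefficients via $\frac1{k+2}+\frac1{\nu-k+2}$ to obtain the closed formula of Proposition~\ref{prop:polygon-hypergeom-coeff}.
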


\begin{proof}
The upper parameter $-\nu$ makes the series terminate.  Using
\[
 (-\nu)_k=(-1)^k\frac{\nu!}{(\nu-k)!},
 \qquad
 \frac{(2)_k}{(3)_k}=\frac2{k+2},
\]
one obtains
\[
 {}_2F_1(-\nu,2;3;-u)
 =
 \sum_{k=0}^{\nu}
 \binom{\nu}{k}\frac2{k+2}u^k.
\]
After replacing $k$ by $\nu-k$ in the reciprocal summand,
\[
 u^\nu{}_2F_1(-\nu,2;3;-u^{-1})
 =
 \sum_{k=0}^{\nu}
 \binom{\nu}{k}\frac2{\nu-k+2}u^k.
\]
The coefficient of $u^k$ on the right-hand side of the asserted formula is
therefore
\[
 (\nu+1)(\nu+2)\binom{\nu}{k}
 \left(
 \frac1{k+2}+\frac1{\nu-k+2}
 \right),
\]
which equals
\[
 \frac{(\nu+1)(\nu+2)(\nu+4)}
 {(k+2)(\nu-k+2)}\binom{\nu}{k}.
\]
Apply Proposition~\ref{prop:polygon-hypergeom-coeff}.
\end{proof}

\subsection{Euler's integral and the elementary closed form}

\begin{corollary}[Beta-integral representation]
\label{cor:polygon-beta-integral}
For every $\nu\geq0$,
\[
 P_\nu(u)
 =
 (\nu+1)(\nu+2)
 \int_0^1
 t\left((1+ut)^\nu+(u+t)^\nu\right)\,dt.
\]
The identity is polynomial in $u$ and is valid over any field of
characteristic zero after interpreting the integral coefficientwise.
\end{corollary}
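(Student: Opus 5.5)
The plan is to derive the integral formula from the terminating Gauss representation of Theorem~\ref{thm:polygon-hypergeometric-form}, using Euler's integral for ${}_2F_1$ with parameters $b=2$, $c=3$. The choice $c-b=1$ makes the Beta weight elementary.

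First, apply Euler's formula
\[
 {}_2F_1(a,b;c;z)=\frac{\Gamma(c)}{\Gamma(b)\Gamma(c-b)}\int_0^1 t^{b-1}(1-t)^{c-b-1}(1-zt)^{-a}\,dt .
\]
For $a=-\nu$, $b=2$, $c=3$ it gives
\[
 {}_2F_1(-\nu,2;3;-u)=2\int_0^1 t(1+ut)^\nu\,dt .
\]
Since the integrand is polynomial, there is no convergence issue. One can also verify the identity coefficientwise without any hypergeometric machinery. Expanding $(1+ut)^\nu$ and using $\int_0^1 t^{k+1}dt=1/(k+2)$ gives $\sum_k\binom{\nu}{k}\frac{2}{k+2}u^k$, which is exactly the expansion recorded in the proof of Theorem~\ref{thm:polygon-hypergeometric-form}.

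Second, treat the reciprocal summand. Multiplying the same identity with $u$ replaced by $u^{-1}$ by $u^\nu$ gives
\[
 u^\nu\cdot 2\int_0^1 t(1+u^{-1}t)^\nu\,dt=2\int_0^1 t(u+t)^\nu\,dt .
\]
This is already polynomial in $u$ and agrees at $u=0$. A cleaner route is to check coefficientwise that $\int_0^1 t(u+t)^\nu dt=\sum_k\binom{\nu}{k}u^k/(\nu-k+2)$, which matches the second expansion in that proof.

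Finally, add the two pieces and multiply by $(\nu+1)(\nu+2)/2$; the factor $2$ cancels and the stated formula follows. The validity over any field of characteristic zero comes for free: every step is an identity of polynomial coefficients involving only the rational numbers $1/(k+2)$, with the integral read as the linear functional $t^j\mapsto1/(j+1)$. The only point needing care is this polynomial interpretation of the second summand, which the coefficientwise computation settles directly. I do not expect a genuine obstacle.
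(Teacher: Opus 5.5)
Your proposal is correct and follows the paper's proof: both apply Euler's beta integral with $(a,b,c)=(-\nu,2,3)$ to each of the two terminating Gauss series in Theorem~\ref{thm:polygon-hypergeometric-form}, then substitute the results. Your coefficientwise check, reading the integral as the linear functional $t^{j}\mapsto 1/(j+1)$, is an optional extra; it makes the polynomial and characteristic-zero readings explicit.
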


\begin{proof}
Euler's beta integral gives
\[
 {}_2F_1(-\nu,2;3;-u)
 =
 2\int_0^1t(1+ut)^\nu\,dt.
\]
The reciprocal terminating polynomial satisfies
\[
 u^\nu{}_2F_1(-\nu,2;3;-u^{-1})
 =
 2\int_0^1t(u+t)^\nu\,dt.
\]
Substitute these two identities into
Theorem~\ref{thm:polygon-hypergeometric-form}.
\end{proof}

\begin{proposition}[Elementary rational form]
\label{prop:polygon-rational-form}
The polynomial $P_\nu$ admits the closed expression
\[
 P_\nu(u)
 =
 \frac{
 1+u^{\nu+4}
 -(1+u)^{\nu+2}
 \left(u^2-(\nu+2)u+1\right)
 }{u^2}.
\]
The numerator is divisible by $u^2$, so the displayed quotient belongs to
$\mathbb Z[u]$.
\end{proposition}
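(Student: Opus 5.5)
The plan is to compute the generating polynomial $\sum_{p}a_{l,p}u^p$ in closed form and then isolate $P_\nu$. From the definition, $P_\nu(u)=u^{-2}\sum_{p=2}^{l-2}a_{l,p}u^p$ with $l=\nu+4$. This is exactly $u^{-2}A_l(u)$, where $A_l$ is the proper-support polynomial of the polygonal subsection. We first verify the identity
\[
 \sum_{p=0}^{l}\Bigl(l\binom{l-2}{p-1}-\binom lp\Bigr)u^p
 =lu(1+u)^{l-2}-(1+u)^l .
\]
Here $\binom{l-2}{p-1}$ vanishes outside $1\le p\le l-1$. This is immediate from the binomial theorem.

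Next we remove the terms $p\notin[2,l-2]$ from this full sum.

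\begin{itemize}
\item At $p=0$ the coefficient is $-1$.
\item At $p=l$ the coefficient is $-1$.
\item At $p=1$ the coefficient is $l\cdot 1-l=0$.
\item At $p=l-1$ the coefficient is $l-l=0$.
\end{itemize}

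Hence
\[
 A_l(u)=lu(1+u)^{l-2}-(1+u)^l+1+u^l,
\]
which recovers the formula already displayed for $A_l$. Factoring out $(1+u)^{l-2}$ gives
\[
 (1+u)^{l-2}\bigl(lu-(1+u)^2\bigr)=-(1+u)^{l-2}\bigl(u^2-(l-2)u+1\bigr).
\]
Substituting $l=\nu+4$ then yields the numerator $1+u^{\nu+4}-(1+u)^{\nu+2}(u^2-(\nu+2)u+1)$.

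Divisibility of the numerator by $u^2$ holds because $A_l$ has only terms of degree $p\ge2$. Concretely, the constant term and the $u$-coefficient of the numerator vanish, as the $p=0$ and $p=1$ checks above show. The quotient has integer coefficients because each $a_{l,p}$ is an integer.

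No step is a real obstacle. The only care needed is the bookkeeping of the boundary indices $p=1,l-1$, where the two binomial contributions cancel. As a sanity check we use $\nu=2$ (the hexagon): there the formula must reproduce $9+16u+9u^2$.
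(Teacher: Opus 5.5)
Your proof is correct, but it takes a different route from the paper's. The paper starts from the Beta-integral representation (Corollary~\ref{cor:polygon-beta-integral}), which itself rests on the closed coefficient formula and the terminating Gauss representation. It evaluates the two elementary integrals for $u\neq0$, multiplies by $(\nu+1)(\nu+2)$, and simplifies in $\mathbb C(u)$. It then obtains divisibility by $u^2$ only a posteriori, because the left-hand side is already known to be a polynomial.

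You instead sum the defining expression $a_{l,p}=l\binom{l-2}{p-1}-\binom lp$ directly with the binomial theorem. You then remove the boundary terms $p=0,1,l-1,l$, whose coefficients are $-1,0,0,-1$, and factor $lu-(1+u)^2=-(u^2-(l-2)u+1)$. Your numerator is then literally $\sum_{p=2}^{l-2}a_{l,p}u^p$, so divisibility by $u^2$ and integrality of the quotient are visible immediately rather than inferred.

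What each route buys: your argument is shorter, self-contained, and independent of the hypergeometric machinery. The paper's derivation instead ties the closed form to the Gauss and Euler-integral descriptions developed in the appendix.
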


\begin{proof}
Evaluating the integrals in Corollary~\ref{cor:polygon-beta-integral} gives,
for $u\neq0$,
\[
 \int_0^1t(1+ut)^\nu\,dt
 =
 \frac1{u^2}
 \left[
 \frac{(1+u)^{\nu+2}-1}{\nu+2}
 -
 \frac{(1+u)^{\nu+1}-1}{\nu+1}
 \right],
\]
while
\[
 \int_0^1t(u+t)^\nu\,dt
 =
 \frac{(1+u)^{\nu+2}-u^{\nu+2}}{\nu+2}
 -
 u\frac{(1+u)^{\nu+1}-u^{\nu+1}}{\nu+1}.
\]
Multiplication by $(\nu+1)(\nu+2)$ and simplification gives the stated
formula in $\mathbb C(u)$.  Its left-hand side is the polynomial
$P_\nu(u)$, so the numerator on the right is divisible by $u^2$ and the
identity extends to $u=0$.
\end{proof}

\begin{corollary}
\label{cor:polygon-hypergeom-special-values}
The first values and the specialization at $u=1$ are
\[
 \begin{aligned}
 P_0(u)&=2,\\
 P_1(u)&=5(1+u),\\
 P_2(u)&=9+16u+9u^2,\\
 P_3(u)&=14+35u+35u^2+14u^3,
 \end{aligned}
\]
and
\[
 P_\nu(1)=2+\nu 2^{\nu+2}.
\]
\end{corollary}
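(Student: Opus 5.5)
The four listed polynomials follow from Proposition~\ref{prop:polygon-rational-form}, or more directly from the coefficient definition $c_{\nu,k}=a_{\nu+4,k+2}$ with $a_{l,p}=l\binom{l-2}{p-1}-\binom lp$ (equivalently the closed formula of Proposition~\ref{prop:polygon-hypergeom-coeff}). For $\nu=0$ ($l=4$, $p=2$) one gets $4\binom21-\binom42=2$. For $\nu=1$ ($l=5$) one gets $a_{5,2}=a_{5,3}=5$, which agrees with the pentagon computation. For $\nu=2$ ($l=6$) the values are $a_{6,2}=24-15=9$, $a_{6,3}=36-20=16$, and $a_{6,4}=24-15=9$; this matches $A_6$ in the footnote. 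For $\nu=3$ ($l=7$) the values are $7\cdot5-21=14$ and $7\cdot10-35=35$, and reciprocity (Corollary~\ref{cor:polygon-hypergeom-reciprocity}) supplies the remaining two coefficients. Each check is a single line of arithmetic.

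For the specialization, evaluate the rational form of Proposition~\ref{prop:polygon-rational-form} at $u=1$. This is legitimate because the identity holds in $\Z[u]$. The denominator becomes $1$, and the numerator becomes
\[
 2-2^{\nu+2}\bigl(1-(\nu+2)+1\bigr)=2-2^{\nu+2}(-\nu)=2+\nu2^{\nu+2}.
\]

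As a cross-check, use the combinatorial meaning of $P_\nu(1)$. It equals $\sum_p a_{l,p}$ over $2\le p\le l-2$. By the proper-support polynomial $A_l(u)=1+u^l-(1+u)^l+lu(1+u)^{l-2}$ at $u=1$, this sum is $2-2^l+l2^{l-2}=2+(l-4)2^{l-2}$. This agrees with the previous value, and also with $\operatorname{Curv}_{\Lambda,\mathrm{red}}(1,1)=(m-2)2^m+4$ from Corollary~\ref{cor:polygonal-rank-change} after subtracting the two extreme terms.

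No step is a real obstacle. The only point that needs care is the index shift between $\nu$ and $(l,p)$, so that the constant term of $P_\nu$ corresponds to $p=2$.
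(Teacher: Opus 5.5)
Your proposal is correct and follows the paper's proof: the four polynomials come from the coefficient formula $c_{\nu,k}=a_{\nu+4,k+2}$, and $P_\nu(1)=2+\nu 2^{\nu+2}$ comes from setting $u=1$ in Proposition~\ref{prop:polygon-rational-form}. I checked your arithmetic, and your cross-check through $A_l(1)$ also comes out right.
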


\begin{proof}
The first identities follow from the coefficient formula.  Set $u=1$ in
Proposition~\ref{prop:polygon-rational-form} for the last one.
\end{proof}

\subsection{The rational generating function}

\begin{theorem}[Generating function]
\label{thm:polygon-hypergeom-generating}
In $\mathbb Z[u][[q]]$,
\[
 \sum_{\nu\geq0}P_\nu(u)q^\nu
 =
 \frac{2-(1+u)q}
 {(1-q)(1-uq)(1-(1+u)q)^2}.
\]
\end{theorem}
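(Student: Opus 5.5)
The plan is to start from the elementary rational form of Proposition~\ref{prop:polygon-rational-form} and sum the resulting geometric series term by term, so that no coefficient-level manipulation is needed. Writing
\[
 u^2P_\nu(u)=1+u^{\nu+4}-(1+u)^{\nu+2}\bigl(u^2-(\nu+2)u+1\bigr),
\]
I multiply by $q^\nu$ and sum over $\nu\geq0$ in $\mathbb Q(u)[[q]]$. The four pieces are:
\[
 \sum_\nu q^\nu=\frac1{1-q},\qquad
 \sum_\nu u^{\nu+4}q^\nu=\frac{u^4}{1-uq},\qquad
 \sum_\nu(1+u)^{\nu+2}q^\nu=\frac{(1+u)^2}{1-(1+u)q},
\]
and
\[
 \sum_\nu(\nu+2)(1+u)^{\nu+2}q^\nu
 =\frac{(1+u)^2\bigl(2-(1+u)q\bigr)}{(1-(1+u)q)^2},
\]
the last obtained by differentiating the geometric series. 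The derivative piece is the only source of the squared denominator.

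Collecting gives $u^2\sum_\nu P_\nu q^\nu$ as an explicit rational function with denominator $(1-q)(1-uq)(1-(1+u)q)^2$. The main step is to check that its numerator equals $u^2\bigl(2-(1+u)q\bigr)$. I will do this by clearing denominators and comparing polynomials in $q$ of degree at most $3$ with coefficients in $\mathbb Z[u]$. That is a finite expansion, and I expect it to be the only laborious part.

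As a sanity check, and to fix any sign slips, I will compare the first coefficients $P_0=2$ and $P_1=5(1+u)$ from Corollary~\ref{cor:polygon-hypergeom-special-values} with the expansion of the claimed right-hand side. The right-hand side gives $2$ at $q^0$, and at $q^1$ it gives
\[
 -(1+u)+2\bigl(1+u+2(1+u)\bigr)=5(1+u).
\]
Because both sides are rational of the stated shape, agreement of the numerators is equivalent to the identity. As a cheaper alternative, I can check agreement of enough initial coefficients: the numerator degree bound together with the linear recurrence implied by the denominator then forces equality.

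Finally, the identity holds in $\mathbb Q(u)[[q]]$, and both sides have coefficients in $\mathbb Z[u]$: the left side because each $P_\nu$ is integral, the right side because its denominator has constant term $1$. Hence the identity holds in $\mathbb Z[u][[q]]$.
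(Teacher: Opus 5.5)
Your proposal is correct and follows essentially the paper's route: start from the elementary rational form of Proposition~\ref{prop:polygon-rational-form}, sum the geometric and differentiated geometric series, and combine everything over $(1-q)(1-uq)(1-(1+u)q)^2$. The paper differs only cosmetically, splitting $\nu+2=(\nu+1)+1$ in the coefficients before summing rather than after, and the numerator check you defer does reduce to $u^2\bigl(2-(1+u)q\bigr)$.
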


\begin{proof}
Put $A=1+u$.  The closed form of
Proposition~\ref{prop:polygon-rational-form} can be rewritten in $\mathbb C(u)$ as
\[
 P_\nu(u)
 =
 \frac1{u^2}
 +u^2u^\nu
 -\frac{A^2(u^2-u+1)}{u^2}A^\nu
 +\frac{A^2}{u}(\nu+1)A^\nu.
\]
Summing the four elementary series gives
\[
 \sum_{\nu\geq0}P_\nu(u)q^\nu
 =
 \frac1{u^2(1-q)}
 +\frac{u^2}{1-uq}
 -\frac{A^2(u^2-u+1)}{u^2(1-Aq)}
 +\frac{A^2}{u(1-Aq)^2}.
\]
Combining the fractions yields the asserted rational function.  Since the coefficients on the left lie in $\mathbb Z[u]$, this is an identity in $\mathbb Z[u][[q]]$.
\end{proof}

\subsection{The holonomic recurrence}

Let $E$ denote the forward shift in the polygonal parameter:
\[
 (EP)_\nu=P_{\nu+1}.
\]

\begin{corollary}[Fourth-order recurrence]
\label{cor:polygon-hypergeom-recurrence}
The sequence $\{P_\nu(u)\}_{\nu\geq0}$ is annihilated by
\[
 (E-1)(E-u)(E-(1+u))^2.
\]
Equivalently, for every $\nu\geq0$,
\[
 \begin{aligned}
 P_{\nu+4}
 ={}&3(1+u)P_{\nu+3}
 -(3u^2+7u+3)P_{\nu+2}\\
 &+(u^3+5u^2+5u+1)P_{\nu+1}
 -u(1+u)^2P_\nu.
 \end{aligned}
\]
Together with the four initial polynomials in
Corollary~\ref{cor:polygon-hypergeom-special-values}, this determines the complete
polygonal coefficient family.
\end{corollary}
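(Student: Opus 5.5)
The characteristic polynomial of the operator $(E-1)(E-u)(E-(1+u))^2$ is $(x-1)(x-u)(x-(1+u))^2$, and I would read this off from the denominator of the generating function in Theorem~\ref{thm:polygon-hypergeom-generating}. Put
\[
 F(q)=\sum_{\nu\geq0}P_\nu(u)q^\nu,
 \qquad
 D(q)=(1-q)(1-uq)(1-(1+u)q)^2 .
\]
That theorem gives $D(q)F(q)=2-(1+u)q$, a polynomial of degree at most one in $q$.

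Expand $D(q)=1-e_1q+e_2q^2-e_3q^3+e_4q^4$. Here the $e_i$ are the elementary symmetric functions of the multiset $\{1,u,1+u,1+u\}$, which is exactly the reciprocal-root data of $D$. A direct expansion gives
\[
 e_1=3(1+u),\qquad e_2=3u^2+7u+3,\qquad e_3=u^3+5u^2+5u+1,\qquad e_4=u(1+u)^2 .
\]
The coefficient of $q^{\nu+4}$ in $D(q)F(q)$ is therefore
\[
 P_{\nu+4}-e_1P_{\nu+3}+e_2P_{\nu+2}-e_3P_{\nu+1}+e_4P_\nu .
\]
Since $D(q)F(q)$ has no terms of degree two or higher, this coefficient vanishes for every $\nu\geq0$, which is the displayed recurrence. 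The same expression is the $\nu$-th term of the sequence obtained by applying the polynomial $x^4-e_1x^3+e_2x^2-e_3x+e_4=(x-1)(x-u)(x-(1+u))^2$ in $E$ to $\{P_\nu\}$. So the annihilation statement and the explicit recurrence are the same identity.

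As a cross-check, the four-term decomposition of $P_\nu$ in the proof of Theorem~\ref{thm:polygon-hypergeom-generating} has the form $a+bu^\nu+(c+e\nu)(1+u)^\nu$. Each piece is killed by the corresponding factor: $E-1$ kills the constant, $E-u$ kills $u^\nu$, and $(E-(1+u))^2$ kills $(c+e\nu)(1+u)^\nu$. Since these factors commute, the whole operator annihilates the sequence over $\mathbb C(u)$, and hence over $\mathbb Z[u]$.

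For the determination statement, the recurrence has leading coefficient $1$. It therefore expresses $P_{\nu+4}$ in terms of $P_\nu,\dots,P_{\nu+3}$, and induction from $P_0,\dots,P_3$ in Corollary~\ref{cor:polygon-hypergeom-special-values} determines the whole family. The only step that needs any care is the symmetric-function arithmetic for $e_2$ and $e_3$, which is routine.
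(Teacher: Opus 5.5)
Your proposal is correct and follows the paper's own proof. Both read the recurrence off the denominator $(1-q)(1-uq)(1-(1+u)q)^2$ of the generating function in Theorem~\ref{thm:polygon-hypergeom-generating}; your coefficient extraction (using that the numerator $2-(1+u)q$ has degree one), the elementary-symmetric-function expansion, and the cross-check against the four-term closed form spell out what the paper states in a few lines.
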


\begin{proof}
The denominator of the generating function is
\[
 (1-q)(1-uq)(1-(1+u)q)^2.
\]
Therefore its coefficient sequence satisfies the constant-coefficient
recurrence with characteristic polynomial
\[
 (z-1)(z-u)(z-(1+u))^2.
\]
Replacing $z$ by the shift $E$ gives the operator statement.  Expanding the
operator gives
\[
 E^4-3(1+u)E^3+(3u^2+7u+3)E^2
 -(u^3+5u^2+5u+1)E+u(1+u)^2,
\]
which is the displayed recurrence.
\end{proof}

\subsection{The cyclic-run interpretation}

The coefficient $c_{\nu,k}$ is the sum of the reduced zeroth Betti numbers of
the $(k+2)$-vertex induced subcomplexes of $C_{\nu+4}$:
\[
 c_{\nu,k}
 =
 \sum_{\substack{I\subseteq[\nu+4]\\|I|=k+2}}
 \widetilde b_0\!\left((C_{\nu+4})_I\right).
\]
Thus the coefficient of $u^k$ in $P_\nu$ counts connected components minus
one, summed over the $(k+2)$-vertex induced subgraphs; the full cycle supplies the separate
$u^{\nu+4}v^2$ term in \eqref{eq:polygon-Dol-Pnu}.

\section*{Acknowledgments}
E.L. gratefully acknowledges Cinvestav for a sabbatical leave during
which this work was prepared. He thanks the International Center for
Mathematical Sciences at the Institute of Mathematics and Informatics
of the Bulgarian Academy of Sciences for its hospitality.
His work was supported by the Simons Foundation, grant
SFI-MPS-T-Institutes-00007697, and by the Ministry of Education and Science
of the Republic of Bulgaria, grant DO1-239/10.12.2024.

ChatGPT and Claude were used to assist with checking calculations and
arguments and refining the exposition. ChatGPT also assisted with
bibliography and citation checks. The authors take responsibility
for the mathematical results and their presentation.

\begingroup
\small
\raggedright
\printbibliography[heading=bibintoc,title={References}]
\endgroup

\Needspace{20\baselineskip}
\section*{Author information}
\noindent
\textsc{Ludmil Katzarkov}. Simons Center for Geometry and Physics,
Stony Brook University, Stony Brook, NY 11794-3636, USA; and Institute
of Mathematics and Informatics, Bulgarian Academy of Sciences,
Sofia 1113, Bulgaria.  \texttt{l.katzarkov@miami.edu}

\medskip
\noindent
\textsc{Kyoung-Seog Lee}. Department of Mathematics, Pohang University of
Science and Technology (POSTECH), Pohang 37673, Republic of Korea.
\texttt{kyoungseog@postech.ac.kr}

\medskip
\noindent
\textsc{Ernesto Lupercio} (corresponding author). Departamento de
Matem\'aticas, Centro de Investigaci\'on y de Estudios Avanzados del Instituto
Polit\'ecnico Nacional (CINVESTAV--IPN), Mexico City 07360, Mexico; and
International Center for Mathematical Sciences, Institute of Mathematics
and Informatics, Bulgarian Academy of Sciences, Acad. G. Bonchev St.,
bl.~8, Sofia 1113, Bulgaria.
\texttt{lupercio@math.cinvestav.mx}

\medskip
\noindent
\textsc{Laurent Meersseman}. Univ Angers, CNRS, LAREMA, SFR MATHSTIC,
F-49000 Angers, France.  \texttt{laurent.meersseman@univ-angers.fr}

\end{document}